\documentclass{amsart}

\newif\ifignoretikz\ignoretikzfalse
\newif\iftwist\twistfalse

\newif\iflarge\largetrue

\usepackage{amsmath}       
\usepackage{amsthm}        
\usepackage{amssymb}       
\usepackage{amsfonts}
\usepackage[all]{xy}
\usepackage{xspace}
\usepackage{calc}
\usepackage{pgfplots}
\usepgflibrary{fpu}
\usepackage{mathtools}
\usepackage{tabularx}
\usepackage{ifthen}	
\usepackage{standalone}
\usepackage{tensor}


\usepackage[margin=1.5in]{geometry}
\setlength{\footskip}{30pt}

\usepackage[multiple]{footmisc}

\tikzset{morgray/.style={black!50}}  



\makeatletter
\renewcommand\subsubsection{\@startsection{subsubsection}{3}%
  \z@{.5\linespacing\@plus.7\linespacing}{-.5em}%
  {\normalfont\bfseries}}
\renewcommand\paragraph{\@startsection{paragraph}{4}%
  \z@{.5\linespacing\@plus.35\linespacing}{-.5em}
  {\normalfont\itshape}}
\makeatother

\usepackage{graphicx}
\usepackage{array}
\usepackage{subfloat} 
\usepackage{bbm}            
\usepackage{etoolbox}  
\usepackage{verbatim}
\usepackage{stmaryrd}
\usepackage[hyperfootnotes=false,draft=false,pdftex, colorlinks=true, linkcolor=black, citecolor=black, urlcolor=black, hypertexnames=false
	]{hyperref}


\pagestyle{plain}


\newcommand{ \ignore}[1]{}
\newcommand{\sgn}{\mathrm{sgn}}

\newcommand{\tr}{\mathrm{tr}}

\newcommand{\Iz}{\mathrm{I}}
\newcommand{\Io}{\mathnormal{1}}
\newcommand{\It}{1}

\newcommand{\xz}{\sq}
\newcommand{\xo}{\circ}
\newcommand{\xt}{\cdot}

\newcommand{\sig}{{\Gamma^\sigma}}

\newcommand{\zh}{\widehat{Z}}
\newcommand{\ZH}{\widehat{Z}^{\oplus}}

\newcommand{\sk}{\omega}

\newcommand{\conj}[1]{\overline{#1}}

\newcommand{\dm}{\mathrm{d}}
\newcommand{\dmo}{\mathrm{d}}

\newcommand{\lan}{\langle}
\newcommand{\ran}{\rangle}

\newcommand{\Lcomp}{\mathrm{LC}}
\newcommand{\Scomp}{\mathrm{SC}}

\newcommand{\tdim}[1]{\dm\!\left(\vphantom{a_x^x}[#1]\right)}
\newcommand{\odim}[1]{\dmo\!\left(\vphantom{a_x^x}[#1]\right)}
\newcommand{\fs}[1]{[#1]}
\newcommand{\Xs}[1]{[#1]}

\newcommand{\idm}{\triangledown}

\newcommand{\zero}{\mathfrak{0}}
\renewcommand{\zero}{0} 
\newcommand\B{\mathrm{B}}

\newcommand\V[1]{V^+(#1)}

\def\labeleqgap{\hspace{0.2cm}}

\newcommand\shrink[1]{\text{\scalebox{.95}{$#1$}}}
\newcommand\shrinker[2]{\text{\scalebox{#1}{$#2$}}}
\newcommand\shrinkalign[2]{\text{\centerline{\scalebox{#1}{\parbox{\linewidth}{#2}}}}}

\newcommand{\skiptocparagraph}[1]{\paragraph*{#1}
\addtocontents{toc}{\SkipTocEntry}
}

\newcommand{\swapeq}{\sim}



\DeclareRobustCommand{\SkipTocEntry}[5]{}




\makeatletter
\def\slashedarrowfill@#1#2#3#4#5{%
  $\m@th\thickmuskip0mu\medmuskip\thickmuskip\thinmuskip\thickmuskip
   \relax#5#1\mkern-7mu%
   \cleaders\hbox{$#5\mkern-2mu#2\mkern-2mu$}\hfill
   \mathclap{#3}\mathclap{#2}%
   \cleaders\hbox{$#5\mkern-2mu#2\mkern-2mu$}\hfill
   \mkern-7mu#4$%
}
\def\rightslashedarrowfill@{%
  \slashedarrowfill@\relbar\relbar\mapstochar\rightarrow}
\newcommand\hto[2][]{%
  \ext@arrow 0055{\rightslashedarrowfill@}{#1}{#2}}
\makeatother


\usepackage{tikz}
\usetikzlibrary{matrix}
\usetikzlibrary{decorations.pathreplacing}
\usetikzlibrary{decorations.markings}
\usetikzlibrary{arrows}
\usetikzlibrary{calc}
\usetikzlibrary{shapes.misc}
\usetikzlibrary{fit}
\usepgflibrary{decorations.pathmorphing}
\usepgflibrary{shapes.geometric}
\usetikzlibrary{arrows.meta}
%





\newenvironment{tz}[1][]{%
			\begin{tikzpicture}[baseline={([yshift=-.8ex]current bounding 					box.center)},#1] %
				}{%
			\end{tikzpicture} %
			}





\makeatletter
\pgfkeys{%
  /tikz/on layer/.code={
    \pgfonlayer{#1}\begingroup
    \aftergroup\endpgfonlayer
    \aftergroup\endgroup
  },
  /tikz/node on layer/.code={
     \gdef\node@@on@layer{%
      \setbox\tikz@tempbox=\hbox\bgroup\pgfonlayer{#1}\unhbox\tikz@tempbox\endpgfonlayer\egroup}
    \aftergroup\node@on@layer
  },
  /tikz/end node on layer/.code={
    \endpgfonlayer\endgroup\endgroup
  }
}

\def\node@on@layer{\aftergroup\node@@on@layer}
\renewcommand{\-}[0]{\nobreakdash-\hspace{0pt}}

\makeatletter
\def\calign@preamble{%
   &\hfil\strut@
    \setboxz@h{\@lign$\m@th\displaystyle{##}$}%
    \ifmeasuring@\savefieldlength@\fi
    \set@field
    \hfil
    \tabskip\alignsep@
}
\let\cmeasure@\measure@
\patchcmd\cmeasure@{\divide\@tempcntb\tw@}{}{}{}
\patchcmd\cmeasure@{\divide\@tempcntb\tw@}{}{}{}
\patchcmd\cmeasure@{\ifodd\maxfields@
  \global\advance\maxfields@\@ne
  \fi}{}{}{}    
\newenvironment{calign}
{%
  \let\align@preamble\calign@preamble
  \let\measure@\cmeasure@
  \align
}
{%
  \endalign
}  
\makeatother

\tikzset{
    master/.style={
        execute at end picture={
            \coordinate (lower right) at (current bounding box.south east);
            \coordinate (upper left) at (current bounding box.north west);
        }
    },
    slave/.style={
        execute at end picture={
            \pgfresetboundingbox
            \path (upper left) rectangle (lower right);
        }
    }
}

\tikzset{blob/.style={draw, circle, fill=white, inner sep=1pt, minimum width=15pt, font=\scriptsize, line width=0.7pt}}
\tikzset{greenregion/.style={fill=green, fill opacity=0.3, draw=none}}
\tikzset{redregion/.style={fill=red, fill opacity=0.3, draw=none}}
\tikzset{blueregion/.style={fill=blue, fill opacity=0.3, draw=none}}
\tikzset{yellowregion/.style={fill=yellow, fill opacity=0.5, draw=none}}
\tikzset{cyanregion/.style={fill=cyan, fill opacity=0.3, draw=none}}
\tikzset{orangeregion/.style={fill=orange, fill opacity=0.6, draw=none}}
\tikzset{solidgreenregion/.style={fill=green!30, fill opacity=1, draw=none}}
\tikzset{solidredregion/.style={fill=red!30, fill opacity=1, draw=none}}
\tikzset{solidblueregion/.style={fill=blue!30, fill opacity=1, draw=none}}
\tikzset{solidyellowregion/.style={fill=yellow!30, fill opacity=1, draw=none}}
\tikzset{string/.style={line width=0.7pt}}
\tikzset{zig/.style={decoration={zigzag,segment length=3, amplitude=0.5}}}
\tikzset{bnd/.style={draw,string}}   
\tikzset{projector/.style={circle, draw, font=\scriptsize, inner sep=-5pt, minimum width=0.35cm, string, fill=white}}
\tikzset{dimension/.style={font=\scriptsize, inner sep=1pt}}


\tikzset{zx/.style = {string, scale=\zxscale}}
\tikzset{zxnode/.style n args={1}{blob,scale=\zxnodescale,fill=#1}}
\tikzset{zxvertex/.style n args={1}{draw,fill=#1,circle,scale=0.75*\vertexscale}}
\tikzset{zxdown/.style={yshift=-\zxshift}}
\tikzset{zxup/.style={yshift=\zxshift}}

\tikzset{arrow data/.style 2 args={
      decoration={
         markings,
         mark=at position #1 with {\arrow[scale=0.75]{#2}}}, 
         postaction=decorate}
}

\usetikzlibrary{3d}
\def\dl{-135}
\def\dr{-45}
\def\ul{135}
\def\ur{45}
\def\dlcusp{-105}
\def\drcusp{-75}
\def\ulcusp{105}
\def\urcusp{75}
\def\stdr{0.36}
\def\stdl{0.59}
\pgfdeclarelayer{back}
\pgfdeclarelayer{front}
\pgfdeclarelayer{superfront}
\pgfdeclarelayer{superback}
\pgfdeclarelayer{fronta}
\pgfdeclarelayer{frontb}
\pgfdeclarelayer{frontc}
\pgfdeclarelayer{backa}
\pgfsetlayers{superback,back,backa,main,fronta,frontb,frontc,front,superfront}
\tikzset{xyplane/.style={canvas is yx plane at z=#1}}
\tikzset{xzplane/.style={canvas is yz plane at x=#1}}
\tikzset{yzplane/.style={canvas is xz plane at y=#1}}
\tikzset{td/.style={
					y={(0.4cm, 0.6cm)},
					x={(-1cm, 0cm)},					
                    z  = {(0cm,1cm)},
                    scale = 0.5}}
\tikzset{slice/.style={draw = gray!50, line width = 0.7pt}}
\tikzset{braid slice/.style={draw = white, double distance =0.7pt, line width =1.4pt, double = gray!50}}
\tikzset{wire/.style={black, line width=0.7pt}}
\tikzset{braid wire/.style={draw=white, double distance=0.7pt, line width=1.4pt, double=black}}
\tikzset{dot/.style={circle, scale=0.15, fill=black, thick, draw}}
\tikzset{obj/.style={scale=0.6, color=gray}}
\tikzset{omor/.style={scale=0.6, color=black}}   
\tikzset{tmor/.style={scale=0.6, color=black}}   
\def\lw{1.4pt}           
\tikzset{short/.style={shorten >=-0.26*\lw,shorten <=-0.25*\lw}}    
\def\h{3}
\tikzset{tinydash/.style={on layer=back,gray!50,line width = 0.4pt,densely dotted}}  
              
\def\maskpointheight{7pt}
\tikzset{mask point/.style={ transform shape, sloped, 
minimum width=17pt, minimum height=\maskpointheight, inner sep=0pt, ultra thin, font=\tiny}}

\tikzset{
clip even odd rule/.code={\pgfseteorule},
invclip/.style={clip,insert path=[clip even odd rule]{
   [reset cm](-\maxdimen,-\maxdimen)rectangle(\maxdimen,\maxdimen)
    }}} 
\newcommand\clipintersection[1]{(#1.north west) -- (#1.north east) -- (#1.south east) -- (#1.south west) -- (#1.north west)}

\newcommand\cliparoundthree[4]{\begin{pgfscope}\begin{scope}[overlay]
    \path [invclip] \clipintersection{#1} -- \clipintersection{#2}--\clipintersection{#3} -- (#1.north west);#4
\end{scope}\end{pgfscope}}  
\newcommand\cliparoundone[2]{\begin{pgfscope}\begin{scope}[overlay]
\path[invclip] \clipintersection{#1};#2
\end{scope}\end{pgfscope}}
\newcommand\cliparoundtwo[3]{\begin{pgfscope}\begin{scope}[overlay]
    \path [invclip] \clipintersection{#1}  \clipintersection{#2};#3
\end{scope}\end{pgfscope}}


\def\coronawidth{4pt}   
\def\coronacolor{gray!30}
\def\coronaopacity{0.5}

\newcommand{\insidepath}[3][main]{
\begin{scope}[on layer=#1] \clip #3;
\draw[\coronacolor, opacity = \coronaopacity,line width =\coronawidth] #3;
\end{scope}
\draw[slice,#2] #3;}

\tikzset{invclip/.style={clip,insert path=[clip even odd rule]{
   [reset cm](-\maxdimen,-\maxdimen)rectangle(\maxdimen,\maxdimen)
    }}}

\newcommand{\rightpathup}[5][main]{
\begin{pgfinterruptboundingbox} 
\begin{scope} [on layer=#1]
\path [clip]  #3 (#5) to (\maxdimen, \maxdimen) to (\maxdimen, -\maxdimen) to (#4);
\draw[\coronacolor, opacity = \coronaopacity,line width =\coronawidth] #3;
\end{scope}
\end{pgfinterruptboundingbox}
\draw[slice,#2] #3;}

\newcommand{\rightpathdown}[5][main]{
\begin{pgfinterruptboundingbox} 
\begin{scope} [on layer=#1]
\path [clip] #3 (#4) to (\maxdimen, -\maxdimen) to (\maxdimen, \maxdimen) to (#5);
\draw[\coronacolor, opacity =\coronaopacity,line width =\coronawidth] #3;
\end{scope}
\end{pgfinterruptboundingbox}
\draw[slice,#2] #3;}

\newcommand{\leftpathup}[5][main]{
\begin{pgfinterruptboundingbox} 
\begin{scope} [on layer=#1]
\path [clip, on layer=#1] #3 (#5) to (-\maxdimen, \maxdimen) to (-\maxdimen, -\maxdimen) to (#4);
\draw[\coronacolor, opacity =\coronaopacity,line width =\coronawidth] #3;
\end{scope}
\end{pgfinterruptboundingbox}
\draw[slice,#2] #3;}

\tikzset{halfarrowleftdown/.style={
      decoration={
         markings,
         mark=between positions 4pt and 1 step 4pt with {\arrow[scale=1,gray!50,opacity=0.5]{Straight Barb[left,angle = 90:2mm 1]}}}, 
         preaction=decorate}
}

\tikzset{halfarrowrightdown/.style={
      decoration={
         markings,
         mark=between positions 4pt and 1 step 4pt with {\arrow[scale=1,gray!50,opacity=0.5]{Straight Barb[right,angle = 90:2mm 1]}}}, 
         preaction=decorate}
}

\tikzset{halfarrowleftup/.style={
      decoration={
         markings,
         mark=between positions 2pt and 1 step 4pt with {\arrow[scale=1,gray!50, opacity=0.5]{Straight Barb[left,angle = 270:2mm 1]}}}, 
         preaction=decorate}
}
\tikzset{halfarrowrightup/.style={
      decoration={
         markings,
         mark=between positions 2pt and 1 step 4pt with {\arrow[scale=1,gray!50,  opacity=0.5]{Straight Barb[right,angle = 270:2mm 1]}}}, 
         preaction=decorate}
}


\newcommand{\Tr}{\mathrm{Tr}}

\newcommand\smod[1]{\ensuremath{\left\llbracket #1\right\rrbracket}}

\newcommand\superequals[1]{\stackrel {\raisebox{1pt}{\makebox[0pt]{\tiny #1}}} =}

\newcommand\eqgap{\hspace{5pt}}
\newcommand\planareqgap{\hspace{10pt}}
\newcommand\tdeqgap{\hspace{10pt}}
\newcommand\seqgap{\hspace{3pt}}

\def\sqt{1.414}


\renewcommand{\to}[1][]{\ensuremath{\xrightarrow{#1}}}
\newcommand{\To}[1][]{\ensuremath{\xRightarrow{#1}}}

\allowdisplaybreaks[1]

\newcommand\vc[1]{\begin{tabular}{@{}c@{}}#1\end{tabular}}

\newcommand\lix[2]{\tensor[^{#1}]{#2}{}}

\newcommand{\sI}{e}
\newcommand{\sII}{s}
\newcommand{\sIII}{\kappa}
\newcommand{\sIV} {\mu}
\newcommand{\s}{\tau}
\newcommand{\lk}{\mathrm{lk}}
\newcommand{\K}{K} 
\newcommand{\interior}[1]{i(#1)}

\theoremstyle{plain} 
\newtheorem{mainthm}{Theorem}
\newtheorem{maindef}[mainthm]{Definition}
\newtheorem{theorem}{Theorem}[subsection]
\newtheorem{lemma}[theorem]{Lemma}
\newtheorem{corollary}[theorem]{Corollary}          
\newtheorem{proposition}[theorem]{Proposition}              
          
\newtheorem{prop}[theorem]{Proposition}      
\newtheorem{theorem*}[]{Theorem} 
\newtheorem*{convention*}{Convention}

\newtheorem{apptheorem}{Theorem}[section]
\newtheorem{appcorollary}[apptheorem]{Corollary}
\newtheorem{applemma}[apptheorem]{Lemma}
\newtheorem{appprop}[apptheorem]{Proposition}

\theoremstyle{definition} 
\newtheorem{definition}[theorem]{Definition}
\newtheorem{definition*}[theorem*]{Definition} 


\theoremstyle{remark}  
\newtheorem{remark}[theorem]{Remark}
\newtheorem{example}[theorem]{Example}
\newtheorem{recollection}[theorem]{Recollection}
\newtheorem{notation}[theorem]{Notation}

\newtheorem{conjecture*}[theorem*]{Conjecture}
\newtheorem{construction}[theorem]{Construction}
\newtheorem{warning}[theorem]{{Warning}}

\newtheorem{question*}[theorem*]{{Question}}
\newtheorem*{guide*}{Guide}
\newtheorem*{outline*}{Outline}
\newtheorem*{remarkohc*}{Remark on higher categories and the cobordism hypothesis}
\newtheorem*{assumpfield*}{Assumptions on the base field}

\newtheoremstyle{special_statement} 
	{\topskip}
	{\topskip}
	{\addtolength{\leftskip}{2.5em} \itshape }
	{}
	{\bfseries}
	{:}
	{.5em}
	{}
\theoremstyle{special_statement}



\newcommand{\nid}{\noindent}
\newcommand{\ra}{\rightarrow}

\newcommand{\xra}{\xrightarrow}

\DeclareMathOperator{\Hom}{Hom}
\DeclareMathOperator{\End}{End}

\newcommand{\id}{\mathrm{id}}
\newcommand{\op}{\mathrm{op}}
\newcommand{\mpt}{\mathrm{mp}}

\newcommand{\iso}{\cong}
\let\origequiv\equiv
\renewcommand{\equiv}{\simeq}

\newcommand{\bplus}{\boxplus}

\newcommand\sq{\mathbin{\text{\scalebox{.84}{$\square$}}}}

\newcommand{\vp}{\vphantom{\frac{a}{b}}}
\newcommand{\bigboxplus}{
  \mathop{
    \vphantom{\bigoplus} 
    \mathchoice
      {\vcenter{\hbox{\resizebox{\widthof{$\displaystyle\bigoplus$}}{!}{$\boxplus$}}}}
      {\vcenter{\hbox{\resizebox{\widthof{$\bigoplus$}}{!}{$\boxplus$}}}}
      {\vcenter{\hbox{\resizebox{\widthof{$\scriptstyle\oplus$}}{!}{$\boxplus$}}}}
      {\vcenter{\hbox{\resizebox{\widthof{$\scriptscriptstyle\oplus$}}{!}{$\boxplus$}}}}
  }\displaylimits 
}

\newcommand{\tVect}{\mathrm{2Vect}}

\newcommand{\Rep}{\mathrm{Rep}}
\newcommand{\tRep}{\mathrm{2Rep}}
\newcommand{\tVep}{\mathrm{2Vep}}

\newcommand{\Aut}{\mathrm{Aut}}

\newcommand{\Vect}{\mathrm{Vect}}

\newcommand{\LMod}{\mathrm{LMod}}
\newcommand{\RMod}{\mathrm{RMod}}
\newcommand{\Mod}{\mathrm{Mod}}

\def\mfus{\oc{C}}%

\newcommand{\colim}{\mathrm{colim}}

\newcommand\oc[1]{\mathrm{#1}}
\newcommand\tc[1]{\mathcal{#1}}

\newcommand\asmod[1]{\ensuremath{\left\llangle #1\right\rrangle}}

\def\cA{\mathcal A}\def\cB{\mathcal B}\def\cC{\mathcal C}\def\cD{\mathcal D}

\def\CC{\mathbb C}

\def\LL{\mathbb L}

\def\QQ{\mathbb Q}\def\RR{\mathbb R}

\def\ZZ{\mathbb Z}

\makeatletter
\DeclareFontFamily{OMX}{MnSymbolE}{}
\DeclareSymbolFont{MnLargeSymbols}{OMX}{MnSymbolE}{m}{n}
\SetSymbolFont{MnLargeSymbols}{bold}{OMX}{MnSymbolE}{b}{n}
\DeclareFontShape{OMX}{MnSymbolE}{m}{n}{
    <-6>  MnSymbolE5
   <6-7>  MnSymbolE6
   <7-8>  MnSymbolE7
   <8-9>  MnSymbolE8
   <9-10> MnSymbolE9
  <10-12> MnSymbolE10
  <12->   MnSymbolE12
}{}
\DeclareFontShape{OMX}{MnSymbolE}{b}{n}{
    <-6>  MnSymbolE-Bold5
   <6-7>  MnSymbolE-Bold6
   <7-8>  MnSymbolE-Bold7
   <8-9>  MnSymbolE-Bold8
   <9-10> MnSymbolE-Bold9
  <10-12> MnSymbolE-Bold10
  <12->   MnSymbolE-Bold12
}{}

\let\llangle\@undefined
\let\rrangle\@undefined
\DeclareMathDelimiter{\llangle}{\mathopen}%
                     {MnLargeSymbols}{'164}{MnLargeSymbols}{'164}
\DeclareMathDelimiter{\rrangle}{\mathclose}%
                     {MnLargeSymbols}{'171}{MnLargeSymbols}{'171}
\makeatother


\setlength{\marginparwidth}{2.8cm}
\definecolor{DRcolor}{rgb}{0.0,0.5,0.75}	
\definecolor{CDcolor}{rgb}{0.8,0.0,0.2}

\newcommand{\DRs}[1]{}
\newcommand{\CDs}[1]{}

\let\ccalign\calign
\let\endccalign\endcalign
\let\ttz\tz
\let\endttz\endtz

\newcommand\excludecommands{
	\renewenvironment{calign}{
	\[\framebox[4cm]{equation}
	\]
	\expandafter\comment}{\expandafter\endcomment}
	\renewenvironment{tz}{
	\expandafter\comment
	}
	{\expandafter\endcomment}
	\let\endtz\relax
	\renewcommand\smod[1]{\llbracket}
	\renewcommand\asmod[1]{\llangle }
}

\ifignoretikz
	\excludecommands
\fi

\newcommand\resetcommands{
	\let\tz\ttz
	\let\endtz\endttz
	\renewcommand\smod[1]{\ensuremath{\left\llbracket ##1\right\rrbracket}}
	\renewcommand\asmod[1]{\ensuremath{\left\llangle ##1\right\rrangle}}
	\let\calign\ccalign
	\let\endcalign\endccalign
}

\usepackage{pict2e}
\makeatletter
\newcommand{\adjunction}{\@ifstar\named@adjunction\normal@adjunction}
\newcommand{\normal@adjunction}[4]{%
  #1\colon #2%
  \mathrel{\vcenter{%
    \offinterlineskip\m@th
    \ialign{%
      \hfil$##$\hfil\cr
      \longrightharpoonup\cr
      \noalign{\kern-.3ex}
      \smallbot\cr
      \longleftharpoondown\cr
    }%
  }}%
  #3 \noloc #4%
}
\newcommand{\named@adjunction}[4]{%
  #2%
  \mathrel{\vcenter{%
    \offinterlineskip\m@th
    \ialign{%
      \hfil$##$\hfil\cr
      \scriptstyle#1\cr
      \noalign{\kern.1ex}
      \longrightharpoonup\cr
      \noalign{\kern-.3ex}
      \smallbot\cr
      \longleftharpoondown\cr
      \scriptstyle#4\cr
    }%
  }}%
  #3%
}
\newcommand{\longrightharpoonup}{\relbar\joinrel\rightharpoonup}
\newcommand{\longleftharpoondown}{\leftharpoondown\joinrel\relbar}
\newcommand\noloc{%
  \nobreak
  \mspace{6mu plus 1mu}
  {:}
  \nonscript\mkern-\thinmuskip
  \mathpunct{}
  \mspace{2mu}
}
\newcommand{\smallbot}{%
  \begingroup\setlength\unitlength{.15em}%
  \begin{picture}(1,1)
  \roundcap
  \polyline(0,0)(1,0)
  \polyline(0.5,0)(0.5,1)
  \end{picture}%
  \endgroup
}
\makeatother

\title{Fusion 2-categories and a state-sum invariant for 4-manifolds}

\author{Christopher L. Douglas}
\address{Mathematical Institute\\ University of Oxford\\ Oxford OX2 6GG\\ United Kingdom}
\email{cdouglas@maths.ox.ac.uk}
\urladdr{http://www.christopherleedouglas.com}
      	
\author{David J. Reutter}
\address{Department of Computer Science\\ University of Oxford\\ Oxford OX1 3QD\\ United Kingdom}
\email{david.reutter@cs.ox.ac.uk}
\urladdr{https://www.cs.ox.ac.uk/people/david.reutter/}


\begin{document}

\vspace*{-16pt}
\maketitle

\begin{abstract}
\vspace*{-15pt}
We introduce semisimple 2-categories, fusion 2-categories, and spherical fusion 2-categories.  For each spherical fusion 2-category, we construct a state-sum invariant of oriented singular piecewise-linear 4-manifolds.
\vspace*{15pt}
\end{abstract}

\setcounter{tocdepth}{3}
\tableofcontents


\pagebreak

\section*{Introduction}

\renewcommand{\thesubsection}{I.{\arabic{subsection}}}

One of the early successes of quantum topology was Turaev and Viro's construction of a 3-manifold invariant based on the representation theory of quantum $\mathfrak{sl}_2$~\cite{TV}, and Barrett and Westbury's generalization of this construction to an invariant based on any spherical fusion category~\cite{BW}.  These invariants are defined by a `state sum', a weighted average of numbers associated to fusion-categorical labelings of a triangulated manifold.  From a more recent cobordism-hypothesis perspective, associated to a fusion category there is a local 3-dimensional field theory~\cite{DTC}, and the classical Turaev--Viro--Barrett--Westbury invariant is obtained by restricting to closed 3-manifolds.  As the cobordism-hypothesis is non-constructive and invariants produced from it are not in general directly computable, explicit state sum constructions of invariants remain informative and useful, both mathematically and for their role in physical lattice field theories~\cite{LevinWen} and consequent relevance for condensed matter physics and topological quantum computation~\cite{KKR}.

In contrast to the situation in dimension 3, and despite a wealth of important field-theoretically-inspired 4-manifold invariants~\cite{donaldson, witten, ozsvathszabo, kronheimermrowka}, constructions of true 4-dimensional topological field theory invariants have been sparse and sporadic.  The earliest was the Crane--Yetter 4-manifold invariant based on the modular data of the representations of quantum $\mathfrak{sl}_2$~\cite{CY}, and its Crane--Yetter--Kauffman generalization using the data of any semisimple ribbon category~\cite{CYK}.  Around the same time, given the data of a finite 2-group, Yetter defined a state sum (in any dimension in fact) generalizing the Dijkgraaf--Witten invariant associated to a finite group~\cite{Yetter}; this was later generalized by Faria Martins--Porter to include a twisting cocycle~\cite{fariamartinsporter}.  Mackaay attempted to systematize the data needed for a 4-dimensional state sum in a framework of certain monoidal 2-categories with trivial endomorphism categories, but the resulting notion did not encompass either the Crane--Yetter--Kauffman invariants or the Yetter--Dijkgraaf--Witten invariants and appears to only accommodate a twisted version of classical Dijkgraaf--Witten theory~\cite{Mackaay}.  More recently, given the data of a crossed-braided spherical fusion category, Cui constructed a state sum invariant of 4-manifolds that subsumes both the Crane--Yetter--Kauffman invariant and the Yetter--Dijkgraaf--Witten invariant, but does not incorporate either the twisted Yetter--Dijkgraaf--Witten case or hypothetical other instances of the Mackaay invariant~\cite{Cui}.

The quantum topology community has long expected that all these constructions should be expressible in a unified framework that associates a 4-dimensional field theory to some sort of `spherical fusion 2-category' (analogous to the Barrett--Westbury framework for 3-dimensional theories from spherical fusion 1-categories), but the appropriate notion of fusion 2-category and of sphericality has remained unclear.  In his recent survey article, \emph{Beyond Anyons}, Wang notes, ``One problem is to formulate a higher category theory that underlies all these theories, and study their application in 3-dimensional topological phases of matter"~\cite{Wang}.  In this paper, we completely address the relevant higher category theory by introducing a general purpose notion of fusion 2-category, based on a new notion of semisimple 2-category, and providing an appropriate corresponding sphericality condition.  We define, given the data of a spherical fusion 2-category, a piecewise-linear 4-manifold invariant that specializes (for appropriate choices of the fusion 2-category) to all the aforementioned invariants, and therefore provides a unified framework for 4-dimensional semisimple topological field theory.

\pagebreak

\addtocontents{toc}{\SkipTocEntry}

\subsection{Semisimple 2-categories}

We restrict attention to $k$-linear categories and 2-categories, where $k$ is an algebraically closed field of characteristic zero.

For a monoidal linear 1-category to produce a full-fledged 3-dimensional topological field theory, it must be fully-dualizable in some 3-category of monoidal linear 1-categories.  A convenient such 3-category is the 3-category of finite tensor categories; a finite tensor category is a category equivalent to the category of finite-dimensional modules over a finite-dimensional algebra, equipped with a monoidal structure such that every object has left and right duals.  Any fully-dualizable finite tensor category must be semisimple~\cite{DTC}; its underlying linear 1-category is therefore the category of modules for a finite-dimensional semisimple algebra.  It therefore stands to reason that in building a categorical framework for 4-dimensional topological field theory, we should look for a notion of monoidal semisimple 2-category, and that we might expect the underlying semisimple linear 2-category to be the 2-category of modules for a finite semisimple tensor category, i.e.\ a ``multifusion category".
\begin{maindef}
A \emph{semisimple 2-category} is a locally semisimple 2-category, admitting adjoints for 1-morphisms, that is additive and idempotent complete.
\end{maindef}
\vspace{-1pt}
\nid This is Definition~\ref{def:ss2cat} in the main text.  Here `locally semisimple' means that the Hom categories are semisimple linear categories, and \emph{idempotent complete} is shorthand for the property that every separable algebra 1-morphism admits a separable splitting (see Section~\ref{sec:ic2cat} and Appendix~\ref{app:ic} for extensive discussion of this condition).\footnote{A separable splitting of a separable algebra 1-morphism $E: B \to B$ (i.e.\ a separable monad) in a 2-category $\tc{C}$ is an adjunction $\iota \vdash \rho$ with right invertible counit, together with an isomorphism of algebras $E \cong \iota \xo \rho$.  The condition that a separable algebra 1-morphism in a locally idempotent complete 2-category admits a separable splitting is equivalent to the condition that it admits a universal left module, that is an `Eilenberg--Moore object', and also equivalent to the condition that it admits a universal right module, that is a `Kleisli object'.}\footnote{Morrison and Walker have sketched an elegant theory of completeness for $n$-categories.  We speculate that the notion of completeness we describe for $2$-categories is, informally speaking, related to their notion of completeness in the same way that framed 2-dimensional field theory is related to oriented 2-dimensional field theory.}\footnote{In the context of a modular tensor category representing excitations of a 2-dimensional topological phase of matter, the splitting of a commutative separable algebra can be thought of as `anyon condensation'~\cite{Kong}.}
The definition of semisimple 2-category does not explicitly demand the existence of any sort of additive decomposition of objects; nevertheless the local semisimplicity and the idempotent completeness conditions combine to ensure that, as one might hope given the name, in a semisimple 2-category every object decomposes as a finite direct sum of simple objects.  A semisimple 2-category is called \emph{finite} if it is locally finite semisimple and it has finitely many equivalence classes of simple objects.\footnote{Finite semisimple 1-categories are a categorification of finite-dimensional vector spaces, and so are often referred to as (finite) `2-vector spaces'.  Similarly, finite semisimple 2-categories are a categorification of finite semisimple 1-categories, and so may be thought of as (finite) `3-vector spaces'.}  

The above definition of semisimple 2-category does indeed have the desired relation to modules for multifusion categories.
\vspace{-3pt}
\begin{mainthm}
The 2-category of finite semisimple module categories of a multifusion category is a finite semisimple 2-category.
\end{mainthm}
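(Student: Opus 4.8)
The plan is to check that the 2-category $\tMod(\tc{C})$ --- with objects the finite semisimple left module categories over the multifusion category $\tc{C}$, $1$-morphisms the $\tc{C}$-module functors, and $2$-morphisms the $\tc{C}$-module natural transformations --- satisfies each clause of the definition of a finite semisimple 2-category in turn: additivity, local finite semisimplicity, existence of adjoints for $1$-morphisms, idempotent completeness, and finiteness of the set of equivalence classes of simple objects. Additivity is immediate: the pointwise direct sum $\tc{M} \boxplus \tc{N}$ of finite semisimple module categories is again finite semisimple, the zero module category is a zero object, and each Hom category of module functors between abelian categories is $k$-linear and additive. For local finite semisimplicity I would invoke the standard description of module functors: every finite semisimple left $\tc{C}$-module category is equivalent to $\Mod_{\tc{C}}(A)$ for a separable algebra $A$ in $\tc{C}$ (take $A = \IHom(m,m)$ for a generator $m$; separability can be arranged since $\tc{M}$ is semisimple and $k$ has characteristic zero), and then $\Fun_{\tc{C}}(\Mod_{\tc{C}}(A), \Mod_{\tc{C}}(B))$ is equivalent to the category of $B$--$A$-bimodule objects in $\tc{C}$, which is finite semisimple (by a direct averaging argument using the separability of $A$ and $B$, or by realizing it as a module category over a suitable multifusion category). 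Hence all Hom categories of $\tMod(\tc{C})$ are finite semisimple.

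Next, adjoints for $1$-morphisms. A $\tc{C}$-module functor $F \colon \tc{M} \to \tc{N}$ between finite semisimple categories is $k$-linear, and since $\tc{M}$ and $\tc{N}$ are equivalent to finite direct sums of copies of $\Vect_{k}$ it admits a left adjoint $F^{L}$ and a right adjoint $F^{R}$ as $k$-linear functors. Each adjoint carries a canonical $\tc{C}$-module structure assembled from that of $F$ and the duality data of $\tc{C}$: the coherence isomorphisms for $F^{R}$, of the form $c \otimes F^{R}(n) \cong F^{R}(c \otimes n)$, are the mates under $F \dashv F^{R}$ of the structure isomorphisms of $F$ composed with the unit and counit of a dual pair $(c, c^{\vee})$, and the module-functor axioms for $F^{R}$ follow from those for $F$ by a routine diagram chase in which the rigidity of $\tc{C}$ is exactly what gets used; $F^{L}$ is symmetric. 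Thus $F^{L} \dashv F \dashv F^{R}$ inside $\tMod(\tc{C})$, so $\tMod(\tc{C})$ admits adjoints for $1$-morphisms; in particular each endomorphism category $\End_{\tc{C}}(\tc{M}) = \Fun_{\tc{C}}(\tc{M}, \tc{M})$ is rigid, hence a (multi)fusion category.

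The substantive step is idempotent completeness, and I expect its core to be the main obstacle. Unwinding definitions, a separable algebra $1$-morphism on $\tc{M}$ is a separable algebra object $A$ in the multifusion category $\End_{\tc{C}}(\tc{M})$, acting on $\tc{M}$ by evaluation. For its separable splitting I would take the Eilenberg--Moore object: the category $\tc{M}_{A}$ of $A$-modules in $\tc{M}$ (objects $m$ with an action $A(m) \to m$), with forgetful functor $U_{A} \colon \tc{M}_{A} \to \tc{M}$ and free functor $F_{A} \colon \tc{M} \to \tc{M}_{A}$, $m \mapsto (A(m), \mu_{m})$, satisfying $F_{A} \dashv U_{A}$ and $U_{A} \circ F_{A} \cong A$ as algebras. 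Since $A$ is a $\tc{C}$-module endofunctor, the $\tc{C}$-action on $\tc{M}$ lifts to $\tc{M}_{A}$ (an $A$-module $(m, A(m) \to m)$ is sent by $c$ to $(c \otimes m,\ A(c \otimes m) \cong c \otimes A(m) \to c \otimes m)$), and $U_{A}$ and $F_{A}$ become $\tc{C}$-module functors. The key claim is that $\tc{M}_{A}$ is again \emph{finite semisimple}, and this is precisely where separability of $A$ --- rather than merely its being an algebra --- is used: a retraction of the multiplication $A \circ A \to A$ as an $A$-bimodule map allows one, by the usual averaging argument, to produce an $A$-module splitting of any short exact sequence of $A$-modules from a splitting of the underlying sequence in the semisimple category $\tc{M}$, so $\tc{M}_{A}$ is semisimple; it is finite because $U_{A}$ is faithful and exact (forcing every object of $\tc{M}_{A}$ to have finite length and finite-dimensional Hom spaces) and because the simple objects of $\tc{M}_{A}$ are exactly the simple summands of the finitely many free modules $F_{A}(m)$ on simple $m \in \tc{M}$. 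Granting this, $F_{A} \dashv U_{A}$ is an adjunction internal to $\tMod(\tc{C})$ with $U_{A} \circ F_{A} \cong A$ as algebras and with right-invertible counit $F_{A} \circ U_{A} \Rightarrow \id_{\tc{M}_{A}}$ (the same separability retraction furnishing the section), i.e.\ exactly a separable splitting of $A$ in the sense of the definition; hence $\tMod(\tc{C})$ is idempotent complete. That this genuinely had to be checked, rather than holding formally, is exactly the phenomenon that forces idempotent completeness to appear as a separate axiom.

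Finally, the set of equivalence classes of simple objects of $\tMod(\tc{C})$ --- that is, of indecomposable finite semisimple $\tc{C}$-module categories --- is finite, by the finiteness theorem for module categories over a multifusion category (every indecomposable such category is $\Mod_{\tc{C}}(A)$ for a connected separable algebra $A$, and there are only finitely many Morita classes of these) \cite{ENO}. Combined with the previous paragraphs, this shows that $\tMod(\tc{C})$ satisfies every clause of the definition, so it is a finite semisimple 2-category.
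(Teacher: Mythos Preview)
Your proposal is correct and follows the same overall outline as the paper: verify additivity, local finite semisimplicity, adjoints for $1$-morphisms, idempotent completeness, and finiteness of simples in turn, with the first four handled by standard module-category facts and the last by the ENO/EGNO finiteness theorem.

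The one place where the two arguments differ in spirit is the idempotent-completeness step. The paper treats a separable monad $p$ on $\oc{N}$ as a separable algebra in the dual multifusion category $\End_{\Mod(\oc{C})}(\oc{N})$, forms the module category $p\text{-}\Mod$ over that dual, and then realizes the separable splitting via the relative Deligne tensor $p\text{-}\Mod \boxtimes_{\End(\oc{N})} \oc{N}$; separability of the adjunction is read off from the bimodule equivalence $p\text{-}\Mod\text{-}p \simeq \End_{\End(\oc{N})}(p\text{-}\Mod)$. You instead take the direct Eilenberg--Moore route: build $\tc{M}_A$ as $A$-modules \emph{inside} $\tc{M}$, lift the $\tc{C}$-action using that $A$ is a module endofunctor, and use separability of $A$ to show $\tc{M}_A$ is finite semisimple and that the free--forgetful adjunction has split counit. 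The two constructions produce equivalent splitting objects; your approach is more elementary and self-contained (no relative tensor product needed), while the paper's approach ties more visibly into the module-category formalism used elsewhere in the text. One small point worth tightening in your write-up: the section of the counit $F_A U_A \Rightarrow \id_{\tc{M}_A}$ is not literally ``the same'' separability retraction but is built from it --- on an $A$-module $(m,\rho)$ one uses the bimodule section $\sigma: A \Rightarrow A\circ A$ together with $\rho$ to produce an $A$-module map $m \to A(m)$ --- so a sentence making that explicit would close the only loose end.
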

\vspace*{-6pt}
\begin{mainthm}
Every finite semisimple 2-category is equivalent to the 2-category of finite semisimple module categories of a multifusion category.
\end{mainthm}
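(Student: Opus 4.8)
The plan is to realize the sought-after multifusion category as the category of $1$-endomorphisms of a generating object of $\mathcal{C}$, and then to recover $\mathcal{C}$ from it via a Yoneda-type $2$-functor into its module categories, with the idempotent completeness hypothesis supplying essential surjectivity. Concretely: by the structure theory established above, a finite semisimple $2$-category $\mathcal{C}$ has finitely many equivalence classes of simple objects and every object is a finite direct sum of simples. Let $C$ be the direct sum of one representative of each simple type, and set $\mathcal{B} := \mathcal{C}(C,C)$, a monoidal $k$-linear category under composition of $1$-morphisms, with unit $\mathrm{id}_C$. First I would check that $\mathcal{B}$ is a multifusion category: it is finite semisimple because $\mathcal{C}$ is locally finite semisimple, and it is rigid because the left and right adjoints of a $1$-endomorphism of $C$ — which exist by hypothesis — are exactly its left and right duals in $\mathcal{B}$. (In general $\mathrm{id}_C$ need not be simple in $\mathcal{B}$, which is precisely why one lands on a \emph{multi}fusion rather than a fusion category.)

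Next I would define the comparison $2$-functor
\[
F \colon \mathcal{C} \longrightarrow \Mod(\mathcal{B}), \qquad X \longmapsto \mathcal{C}(C,X),
\]
where $\mathcal{C}(C,X)$ carries the $\mathcal{B}$-module structure given by precomposition on the source slot, $F$ sends a $1$-morphism $f\colon X\to Y$ to the module functor $f\circ(-)$, and $2$-morphisms act by whiskering. Each $\mathcal{C}(C,X)$ is finite semisimple by local finite semisimplicity, so $F$ lands in the $2$-category of finite semisimple $\mathcal{B}$-module categories (which by the preceding Theorem is itself finite semisimple), and module-linearity of $F(f)$ is just associativity of composition. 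It then remains to show that $F$ is a biequivalence, i.e.\ essentially surjective on objects and an equivalence on every Hom-category.

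For the local equivalence I would use that $C$ is a \emph{generator}: every object is a finite direct sum of simples and every simple is a retract of $C$, so every object is a retract of some $C^{\oplus n}$. Both $\mathcal{C}(X,Y)$ and $\Fun_{\mathcal{B}}(\mathcal{C}(C,X),\mathcal{C}(C,Y))$ are additive in each variable and preserve retracts, compatibly with $F$, so using additivity and idempotent completeness on both sides the statement reduces to $X = Y = C$. There $F$ becomes the functor $\mathcal{B} = \mathcal{C}(C,C) \to \Fun_{\mathcal{B}}(\mathcal{B},\mathcal{B})$, $b \mapsto b\circ(-)$, which is an equivalence by the fundamental theorem of module categories for the rigid category $\mathcal{B}$: every $\mathcal{B}$-module endofunctor of the regular module is canonically tensoring with its value on the unit.

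Essential surjectivity is where the idempotent completeness hypothesis does its work, and is the main obstacle. Given a finite semisimple $\mathcal{B}$-module category $\mathcal{M}$, the classification of module categories over a multifusion category (using semisimplicity and characteristic zero) presents $\mathcal{M} \simeq \LMod_{\mathcal{B}}(A)$ for a separable algebra $A$ in $\mathcal{B}$ — for instance the internal endomorphism algebra of a generator of $\mathcal{M}$, separable by semisimplicity. Viewed in $\mathcal{C}$, $A$ is a separable algebra $1$-morphism $C\to C$, i.e.\ a separable monad on $C$; by idempotent completeness it admits a separable splitting, producing an object $C_A\in\mathcal{C}$ together with an adjunction between $C$ and $C_A$ with right-invertible counit and an algebra isomorphism realizing $A$ as the resulting composite endomorphism of $C$. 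This exhibits $C_A$ as the Eilenberg–Moore object of the monad $A$, whence $\mathcal{C}(C,C_A) \simeq \LMod_{\mathcal{C}(C,C)}(A) = \LMod_{\mathcal{B}}(A) \simeq \mathcal{M}$, and one checks that this identification is module-linear, so $F(C_A)\simeq\mathcal{M}$. The points needing care are: that the Eilenberg–Moore object really does compute $\LMod_{\mathcal{B}}(A)$ on the relevant Hom-category (part of the idempotent-completeness machinery developed above); that separability of $A$ makes $\LMod_{\mathcal{B}}(A)$ finite semisimple, so it genuinely reproduces $\mathcal{M}$ rather than some completion of it; and that the comparison equivalence respects the $\mathcal{B}$-actions. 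Combining the local equivalence with essential surjectivity then yields $\mathcal{C}\simeq\Mod(\mathcal{B})$ with $\mathcal{B}$ multifusion, which is the assertion.
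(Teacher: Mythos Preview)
Your proposal is correct and follows essentially the same strategy as the paper: take $C$ to be the direct sum of representative simples, set $\mathcal{B}=\mathcal{C}(C,C)$, and show the Yoneda-type $2$-functor $\mathcal{C}(C,-)\colon\mathcal{C}\to\Mod(\mathcal{B})$ is a biequivalence, with essential surjectivity coming from realizing an arbitrary module category as left modules over a separable algebra, reinterpreting that algebra as a separable monad on $C$, and invoking idempotent completeness to produce its Eilenberg--Moore object.

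The one organizational difference is in the proof of local equivalence. The paper establishes essential surjectivity on $1$-morphisms and full faithfulness on $2$-morphisms by direct construction: for a module functor $\Psi\colon\mathcal{C}(C,X_i)\to\mathcal{C}(C,X_j)$ between simples it explicitly writes down the $1$-morphism $\Psi(\rho_i)\circ\iota_i$ and checks by hand that it represents $\Psi$, and similarly unwinds module transformations. Your generator/retract reduction---reducing to $X=Y=C$ via additivity and idempotent completeness on both sides, then invoking $\Fun_{\mathcal{B}}(\mathcal{B},\mathcal{B})\simeq\mathcal{B}$---is more conceptual and avoids these elementwise computations; it works because direct sums and (separable) splittings are absolute, hence preserved by $F$, and because the target is already known to be finite semisimple by the preceding theorem. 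Either route is fine.
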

\vspace{-3pt}
\nopagebreak[4]
\nid These appear as Theorems~\ref{thm:multifusiontosemisimple} and~\ref{thm:semisimplefrommultifusion}.

Since finite semisimple 2-categories are exactly the 2-categories of modules for multifusion categories, one might wonder what utility finite semisimple 2-categories provide over the existing theory of multifusion categories.  The crucial advantage becomes apparent when we add a monoidal structure to these 2-categories.  In general, an additional monoidal structure on a multifusion category $\oc{C}$ would have to be encoded, somewhat intractably, as a $(\oc{C} \boxtimes \oc{C})$--$\oc{C}$-bimodule together with further associativity structures and conditions.  By contrast, a monoidal structure on a semisimple 2-category $\tc{C}$ will be describable \emph{functorially}, that is simply as an ordinary 2-functor $\tc{C} \times \tc{C} \to \tc{C}$.  A priori, a bimodule between tensor categories induces a 2-distributor between the associated 2-categories of modules.  (A finite semisimple 2-distributor $\tc{C} \hto{} \tc{D}$ is a bilinear 2-functor $\tc{D}^{\mathrm{op}} \times \tc{C} \to \tVect$, where $\tVect$ is the 2-category of `2-vector spaces', that is finite semisimple 1-categories.)  However, it turns out that, thanks to the idempotent completeness of semisimple 2-categories, any 2-distributor between semisimple 2-categories is a 2-functor.

\begin{mainthm}
Every finite semisimple 2-distributor between finite semisimple 2-categories is equivalent to a 2-functor.
\end{mainthm}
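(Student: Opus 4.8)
The plan is to prove the stronger assertion that every finite semisimple linear $2$-distributor is \emph{representable}: given $P \colon \tc{D}^{\op} \times \tc{C} \to \tVect$ between finite semisimple $2$-categories, I want to produce a $2$-functor $F \colon \tc{C} \to \tc{D}$ with $P \simeq \tc{D}(-, F(-))$. Finiteness and semisimplicity make everything ``small'', and idempotent completeness of source and target is what forces the objects one needs --- splittings of separable monads --- to exist. The argument has two parts: a reduction, via a parametrized Yoneda argument, of the full statement to the representability of a single-variable linear $2$-functor $\tc{D}^{\op} \to \tVect$; and the construction of a representing object for such a functor, where idempotent completeness does the work.

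For the reduction, fix $c \in \tc{C}$ and, granting the single-variable case, pick $F(c) \in \tc{D}$ with $P(-, c) \simeq \tc{D}(-, F(c))$ $2$-naturally in the first variable. A $1$-morphism $f \colon c \to c'$ induces via $P$ a $2$-natural transformation $P(-, c) \to P(-, c')$, hence $\tc{D}(-, F(c)) \to \tc{D}(-, F(c'))$, hence --- by the bicategorical Yoneda lemma --- a $1$-morphism $F(c) \to F(c')$; $2$-morphisms of $\tc{C}$ likewise yield modifications, hence $2$-morphisms of $\tc{D}$. The compositor and unitor of $F$, and the upgrade of these pointwise equivalences to an equivalence $P \simeq \tc{D}(-, F(-))$ of $2$-distributors, follow from the coherence data of $P$ and the essential uniqueness of representing objects. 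This is standard if laborious parametrized-Yoneda bookkeeping, which I would not write out in full.

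The substance is the single-variable case: every linear $2$-functor $G \colon \tc{D}^{\op} \to \tVect$ is representable. Here I would choose representatives $d_1, \dots, d_n$ of the finitely many equivalence classes of simple objects, set $D := d_1 \oplus \cdots \oplus d_n$, and let $\oc{B} := \End_{\tc{D}}(D)$, which is a multifusion category. Since $G$ is additive, $G(D) \simeq \bigoplus_i G(d_i)$ is a finite semisimple category, and $G$ applied to endomorphisms of $D$ makes it a module category over $\oc{B}$ (on the side dictated by the contravariance of $G$). Choosing a progenerator of this module category and forming its internal endomorphism algebra produces a separable algebra in $\oc{B}$, that is, a separable monad on the object $D$ of $\tc{D}$. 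By idempotent completeness of $\tc{D}$ this monad splits, yielding an Eilenberg--Moore object $d_G \in \tc{D}$, and unwinding the universal property together with Morita theory identifies $\tc{D}(D, d_G)$ --- the category of modules over the monad inside $\tc{D}(D, D) = \oc{B}$ --- with $G(D)$ as $\oc{B}$-module categories. I would take $d_G$ as the candidate representing object.

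It then remains to upgrade the equivalence $G(D) \simeq \tc{D}(D, d_G)$ of $\oc{B}$-module categories to an equivalence $G \simeq \tc{D}(-, d_G)$ of $2$-functors $\tc{D}^{\op} \to \tVect$. Both sides are linear $2$-functors into the additive, idempotent-complete $2$-category $\tVect$, so both carry direct sums and separable splittings to direct sums and separable splittings; and since every object of $\tc{D}$ is a direct sum of simples and every simple is a separable splitting of a monad on $D$, such a functor is determined up to equivalence by its restriction to the full sub-$2$-category on $D$ --- equivalently, by the associated $\oc{B}$-module category. As $G$ and $\tc{D}(-, d_G)$ restrict to $G(D) \simeq \tc{D}(D, d_G)$, they are equivalent, and representability --- hence the theorem --- follows. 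I expect this final determination to be the main obstacle: it asserts that a linear $2$-functor out of $\tc{D}^{\op}$ is pinned down by its restriction to the generating object $D$ and the resulting $\oc{B}$-action, which rests on the separable splittings guaranteed by idempotent completeness being \emph{absolute} --- simultaneously Eilenberg--Moore and Kleisli objects for separable monads, hence preserved and reflected by every linear $2$-functor regardless of target. This is exactly the feature that distinguishes idempotent completeness as the right completeness condition here, and it is what fails for a non-idempotent-complete $\tc{D}$: there one finds genuine $2$-distributors whose would-be representing object materialises only after splitting a separable monad.
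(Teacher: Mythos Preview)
Your proposal is correct and follows essentially the same strategy as the paper: reduce to representability of a single $\tVect$-valued presheaf on $\tc{D}$, then use that $\tc{D}$ is equivalent to modules over the multifusion category $\oc{B} = \End_{\tc{D}}(D)$ (the paper's Theorem~\ref{thm:semisimplefrommultifusion}) together with the fact that linear $2$-functors out of $\Mod(\oc{B})$ are determined by their restriction to $\B\oc{B}$ (the paper's Corollary~\ref{cor:extensionmultifusion}, which is exactly your ``determined by restriction to $D$'' step, and which rests on precisely the absoluteness of separable splittings you identify). The paper packages this more efficiently by first proving that the absolute Yoneda embedding $\tc{D} \to \mathrm{Func}(\tc{D}^{\op}, \tVect)$ is an \emph{equivalence} (Proposition~\ref{prop:yonedaequivalence}) --- this absorbs your parametrized-Yoneda bookkeeping into a single postcomposition with its inverse, and identifies the representing object directly as the $\oc{B}$-module category $G(D)$ itself, bypassing your explicit progenerator/separable-algebra/splitting construction.
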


\nid This result appears as Corollary~\ref{cor:2dist}.

As an elementary example of a semisimple 2-category, consider the 2-category \linebreak $\Mod(\Vect(\ZZ_2))$ of finite semisimple module categories for the category $\Vect(\ZZ_2)$ of $\ZZ_2$-graded vector spaces.  This 2-category has two simple objects, namely the modules $\Vect$ and $\Vect(\ZZ_2)$; both these objects have endomorphism categories $\Vect(\ZZ_2)$, and the Hom category either direction between the two objects is $\Vect$.  The 2-category may therefore be drawn as follows:
\[\begin{tz}
\node[dot,scale=1.25] at (0,0){};
\node[dot,scale=1.25] at (2,0){};
\draw[->,shorten <=0.2cm, shorten >=0.2cm] (0,0) to [out=35, in=145]node[above, sloped] {$\scriptstyle \Vect$}  (2,0);
\draw[<-,shorten <=0.2cm, shorten >=0.2cm] (0,0) to [out=-35, in=-145]node[below, sloped] {$\scriptstyle \Vect$}  (2,0);
\draw[->] (-0.1,-0.1) to [out=-135, in=135, looseness=15] node[left,xshift=.05cm] {$\scriptstyle \Vect(\ZZ_2)$} (-0.1,0.1);
\draw[->] (2.1,-0.1) to [out=-45, in=45, looseness=15] node[right,xshift=-.05cm] {$\scriptstyle \Vect(\ZZ_2)$} (2.1,0.1);
\end{tz}
\] 
\nid This and other examples are described in Section~\ref{sec:egss2cat}.  Note well that, as illustrated here and quite unlike the situation for semisimple 1-categories, in a semisimple 2-category there can be nontrivial morphisms between inequivalent simple objects.

\addtocontents{toc}{\SkipTocEntry}

\subsection{Fusion 2-categories}

Because transformations between semisimple 2-categories can be encoded functorially, a monoidal structure on a semisimple 2-category can be encoded as an ordinary `2-functorial' monoidal 2-category.

\begin{maindef}
A \emph{fusion 2-category} is a finite semisimple monoidal 2-category that has left and right duals for objects and a simple monoidal unit.
\end{maindef}

\nid This appears as Definition~\ref{def:fusion2cat}.  Examples of fusion 2-categories include 2-representations of a 2-group, 2-group-graded 2-vector spaces, modules for a braided fusion category, semisimple completions of crossed-braided fusion categories, and twisted versions thereof---see Section~\ref{sec:egfus}.  

As a simple example, consider the fusion 2-category of modules of the symmetric fusion category $\Vect(\ZZ_2)$. There are two simple objects, the identity $I$ (namely the module category $\Vect(\ZZ_2)$) and an object $X$ (namely the module category $\Vect$), with nontrivial fusion rule $X \xz X \simeq X \boxplus X$.  We may therefore depict this fusion 2-category as follows:
\[\begin{tz}
\node[dot,scale=1.25] at (0,0){};
\node[dot,scale=1.25] at (1.5,0){};
\draw[->,shorten <=0.1cm, shorten >=0.1cm,morgray,yshift=-1] (0,0) to [out=-35, in=-145]node[above, sloped,yshift=-1.25] {}  (1.5,0); 
\draw[<-,shorten <=0.1cm, shorten >=0.1cm,morgray,yshift=-4] (0,0) to [out=-35, in=-145]node[below, sloped,yshift=1.25] {}  (1.5,0); 
\draw[->,morgray] (-0.05,-0.1) to [out=-100, in=-170, looseness=25] node[left,xshift=1.25] {$\scriptscriptstyle [2]$} (-0.1,-0.05); 
\draw[->,morgray] (1.55,-0.1) to [out=-80, in=-10, looseness=25] node[right,xshift=-1.25] {$\scriptscriptstyle [2]$} (1.6,-0.05); 
\draw[arrow data={0.5}{>}] (0,0) to (1.5,0);
\draw[arrow data ={0.5}{>}] (1.5,0) to (1.6,0.1) to [out=45, in=135, looseness=12] node[above] {$2$} (1.4,0.1) to (1.5,0);
\end{tz}
\]
\nid Here the black directed edges represent multiplication by the object $X$ and the label indicates the multiplicity.  The gray edges record the morphism categories: an unlabeled edge indicates a rank 1 category, that is $\Vect$, and the label $[2]$ indicates a rank 2 category, that is $\Vect \boxplus \Vect$.  (In this case those rank 2 endomorphism fusion categories are $\Vect(\ZZ_2)$.)  This sort of fusion graph, where an object has no fusion product containing an identity factor, is a completely new phenomenon in fusion 2-categories---in a fusion 1-category, the product of an object and its dual always has an identity summand, but in a fusion 2-category, this need not happen thanks to the existence of nontrivial nonequivalence morphisms between simple objects.

As another example, the fusion 2-category obtained as the semisimple completion of a $\ZZ_4$-crossed-braided structure on the Ising fusion category has the following structure:
\[
\begin{tz}
\node[dot,scale=1.25] (A) at (.5,.5){};
\node[dot,scale=1.25] (B) at (-.5,.5){};
\node[dot,scale=1.25] (C) at (-.5,-.5){};
\node[dot,scale=1.25] (D) at (.5,-.5){};
\draw[arrow data={0.5}{>}] (A) to [out=135, in=45] (B);
\draw[arrow data={0.5}{>}] (B) to [out=-135, in=135] (C);
\draw[arrow data={0.5}{>}] (C) to [out=-45, in=-135] (D);
\draw[arrow data={0.5}{>}] (D) to [out=45, in=-45] (A);
\draw[->,shorten <=0.1cm, shorten >=0.1cm, morgray, xshift=1.25, yshift=-1.25] (.5,.5) to node[below] {} (-.5,-.5);
\draw[->,shorten <=0.1cm, shorten >=0.1cm,morgray, xshift=-1.25, yshift=1.25] (-.5,-.5) to node[below] {} (.5,.5);
\draw[->,shorten <=0.1cm, shorten >=0.1cm, morgray, xshift=1.25, yshift=1.25] (-.5,.5) to node[below] {} (.5,-.5);
\draw[->,shorten <=0.1cm, shorten >=0.1cm, morgray, xshift=-1.25, yshift=-1.25] (.5,-.5) to node[below] {} (-.5,.5);
\draw[->,morgray] (.6,.5) to [out=0,in=90,looseness=15] node[right] {$\scriptscriptstyle [2]$} (.5,.6);
\draw[->,morgray] (-.5,.6) to [out=90,in=180,looseness=15] node[left] {$\scriptscriptstyle [2]$} (-.6,.5);
\draw[->,morgray] (-.6,-.5) to [out=180,in=-90,looseness=15] node[left] {$\scriptscriptstyle [2]$} (-.5,-.6);
\draw[->,morgray] (.5,-.6) to [out=-90,in=0,looseness=15] node[right] {$\scriptscriptstyle [2]$} (.6,-.5);
\end{tz}
\]
\nid The fusion structure of the simple objects is just the cyclic group $\ZZ_4$; the black edges denote multiplication by a generating object.  Again the gray edges record the rank of the morphism categories.  (In this case, the rank 2 endomorphism fusion categories are all $\Vect(\ZZ_2)$.)  Note, again quite unlike what can happen in the context of fusion 1-categories, that there are only two connected components of simples in this fusion 2-category, despite the underlying order four fusion group of simple objects.

We expect that fusion 2-categories are fully-dualizable objects of an appropriate 4-category of tensor 2-categories, and therefore provide framed 4-dimensional local field theories,\footnote{The 4-category in question may be defined, using the framework developed by Johnson-Freyd--Scheimbauer~\cite{JFS}, as the $(\infty,4)$-category of algebras in the 3-category of finite semisimple 2-categories, linear 2-functors, natural transformations, and modifications.  We conjecture that fusion 2-categories are fully-dualizable objects of that 4-category.} 
but to obtain oriented field theories and therefore oriented 4-manifold invariants, we need additional structure and properties on the fusion 2-categories.  Recall that a \emph{planar pivotal 2-category} is a 2-category with a functorial involutive coherent choice of adjoint for each 1-morphism.  A \emph{monoidal planar pivotal 2-category} is a planar pivotal 2-category with a compatible monoidal structure.  A \emph{pivotal 2-category} is a monoidal planar pivotal 2-category with a compatible involutive coherent choice of dual for each object.  These definitions are given in detail in Section~\ref{sec:piv}.  (Note that what we call a `pivotal 2-category' is presumably equivalent to what Barrett--Meusburger--Schaumann call a `spatial Gray monoid'~\cite{BMS}.)

A pivotal 2-category will still not provide an oriented 4-dimensional field theory, just as a pivotal 1-category does not provide an oriented 3-dimensional field theory---what is needed is a sphericality condition.  Recall that a pivotal 1-category is called `spherical' when the left and right `circular' traces of any 1-endomorphism $f: A \to A$ agree:
\[
\begin{tz}[scale=0.5] 
\draw[wire, arrow data = {0.1}{>}, arrow data = {0.5}{>}, arrow data={0.93}{>}] (0.5,1.5) to (0.5, 1.75) to [out=up, in=up, looseness=2] (-0.5, 1.75) to (-0.5, 1.25) to [out=down, in=down, looseness=2] (0.5, 1.25) to (0.5, 1.5);
\node[dot] at (0.5,1.5){};
\node[tmor, right] at (0.5, 1.5) {$f$};
\node[obj, black, below right] at (0.45,1.15){$A$};
\end{tz}
~~ = ~~
\begin{tz}[scale=0.5,xscale=-1] 
\draw[wire, arrow data = {0.1}{>}, arrow data = {0.5}{>}, arrow data={0.93}{>}] (0.5,1.5) to (0.5, 1.75) to [out=up, in=up, looseness=2] (-0.5, 1.75) to (-0.5, 1.25) to [out=down, in=down, looseness=2] (0.5, 1.25) to (0.5, 1.5);
\node[dot] at (0.5,1.5){};
\node[tmor, left] at (0.5, 1.5) {$f$};
\node[obj, black, below left] at (0.5,1.15){$A$};
\end{tz}
\]
\nid Analogously, for a 2-endomorphism $\alpha: \Io_A \To \Io_A$ in a pivotal 2-category, there is a `front' and a `back 2-spherical trace' construction, which may be depicted graphically as follows:

\[
\begin{tz}[scale=0.5]
  \draw[slice] (0,0) circle (2cm);
  \draw[tinydash] (-2,0) arc (180:360:2 and 0.3);
  \draw[tinydash] (2,0) arc (0:180:2 and 0.3);
  \node[dot] at (0,0.3){};
  \node[below right,tmor] at (0,0.3){$\alpha$};
     \node[obj,above left] at (\sqt, -\sqt) {$A^\#$};
\end{tz}
\hspace{30pt}
\begin{tz}[scale=0.5]
  \draw[slice] (0,0) circle (2cm);
  \draw[tinydash] (-2,0) arc (180:360:2 and 0.3);
  \draw[tinydash] (2,0) arc (0:180:2 and 0.3);
  \node[dot] at (0,-0.3){};
  \node[above right,tmor] at (0,-0.3){$\alpha$};
   \node[obj,above left] at (\sqt, -\sqt) {$A$};
\end{tz}
\]

\begin{maindef}
A \emph{spherical 2-category} is a pivotal 2-category such that the front and back 2-spherical traces agree.
\end{maindef}

\nid This appears as Definition~\ref{def:spherical} in the main text.  Examples of spherical fusion 2-categories include 2-representations of a 2-group, 2-group-graded 2-vector spaces, modules for a ribbon fusion category, and the semisimple completion of a crossed-braided spherical fusion category.\footnote{Note that Mackaay~\cite{Mackaay} used the term `spherical fusion 2-category' to refer to what we might call `circo-spherical endotrivial fusion 2-categories'---the `sphericality' condition there is the equivalence of two categorical circular traces, not two 2-spherical traces, and the endomorphism fusion category of every indecomposable object is the trivial fusion category $\Vect$.  A `circo-spherical endotrivial fusion 2-category' is a spherical fusion 2-category in our sense, but none of the aforementioned examples of spherical fusion 2-categories satisfy Mackaay's much more restrictive conditions.}  Spherical fusion 2-categories provide the desired data for constructing a state sum invariant of 4-manifolds.

\addtocontents{toc}{\SkipTocEntry}

\subsection{The state sum for combinatorial 4-manifolds}

Recall that a combinatorial $n$-sphere is a simplicial complex piecewise-linearly homeomorphic to the boundary of the standard $(n+1)$-simplex, and a combinatorial $n$-manifold is a simplicial complex such that the link of every vertex is a combinatorial $(n-1)$-sphere.  Furthermore, a singular combinatorial $n$-manifold is a simplicial complex such that the link of every vertex is a combinatorial $(n-1)$-manifold.  Given the data of a spherical fusion 1-category, Barrett and Westbury defined a state sum invariant of oriented combinatorial 3-manifolds.  They prove invariance of their state sum by explicitly showing invariance under each 3-dimensional bistellar move.  A bistellar move on a combinatorial $n$-manifold replaces a triangulation of a subcomplex isomorphic to a ball in $\partial \Delta^{n+1}$ by the complementary ball; these moves are known to generate piecewise-linear equivalence by a theorem of Pachner~\cite{pachner}.  Barrett and Westbury showed that in fact their invariant extends to singular combinatorial 3-manifolds by proving a generalization of the 3-dimensional case of Pachner's theorem: two singular combinatorial 3-manifolds are piecewise-linearly homeomorphic if and only if they are bistellar equivalent.

We will similarly show that our state sum is a piecewise-linear 4-manifold invariant by explicitly showing invariance under each 4-dimensional bistellar move.  And, in fact, this invariant will also extend to singular combinatorial 4-manifolds because of the following result.

\begin{mainthm}
Two singular combinatorial 4-manifolds are piecewise-linearly homeomorphic if and only if they are bistellar equivalent.
\end{mainthm}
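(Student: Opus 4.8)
The plan is to adapt the proof scheme that Pachner used for closed combinatorial manifolds and that Barrett--Westbury extended to singular combinatorial $3$-manifolds, and to pinpoint the single place where the singular four-dimensional case genuinely differs from the non-singular one. Both implications require some care, but the ``if'' implication is the more routine: a bistellar move is, by definition, the replacement of a ball in $\partial\Delta^5$ by the complementary ball glued along the common boundary, hence a piecewise-linear homeomorphism, so it suffices to check that it preserves the property of being a singular combinatorial $4$-manifold. I would verify this by downward induction on dimension, observing that a bistellar move on a complex $K$ restricts on the link of each affected vertex to either the identity or to a bistellar move of one lower dimension; since ``combinatorial $m$-manifold'' is visibly preserved by $m$-dimensional bistellar moves (with the trivial base case $m\le 1$), it follows that ``the link of every vertex is a combinatorial $3$-manifold'' is preserved. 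Alternatively one can note that this property is a piecewise-linear invariant of the underlying polyhedron and invoke the homeomorphism statement directly.

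For the ``only if'' implication I would first pass from piecewise-linear homeomorphism to stellar equivalence: by the classical stellar subdivision theorem of Alexander (with Newman's refinements), two simplicial complexes with piecewise-linearly homeomorphic polyhedra are connected by a finite sequence of stellar subdivisions and inverse stellar subdivisions, and because each of these moves is a homeomorphism, every complex occurring in the sequence is again a singular combinatorial $4$-manifold. It therefore suffices to realise a single stellar subdivision at a simplex $\sigma$, and its inverse, by a sequence of bistellar moves.

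The crucial observation, special to dimension four, is that the only simplices at which a \emph{non-trivial} starring can occur have spherical links. Indeed, if $\sigma$ is a vertex then $\partial\sigma=\emptyset$, so starring at $\sigma$ merely relabels $\sigma$ and may be discarded; and if $\sigma$ is positive-dimensional then, choosing a vertex $v\le\sigma$, writing $\sigma=v\ast\sigma'$, and setting $N=\lk_K(v)$, we have $\lk_K(\sigma)=\lk_N(\sigma')$, where $N$ is a genuine combinatorial $3$-manifold and $\sigma'$ is a genuine simplex of it; hence $\lk_K(\sigma)$ is a combinatorial sphere of dimension $2$, $1$, $0$, or $-1$. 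In other words the singularities of a singular combinatorial $4$-manifold are confined to vertices, and all starrings and all bistellar moves that ever arise are performed at simplices whose links are combinatorial spheres. At this point Pachner's local argument applies unchanged: using that every combinatorial $2$-, $1$-, or $0$-sphere is bistellar equivalent to the boundary of a simplex, one transfers a suitable sequence of bistellar moves in $\lk_K(\sigma)$ to bistellar moves of $K$ supported inside the closed star $\overline{\mathrm{st}}_K(\sigma)$, thereby reducing to the case $\lk_K(\sigma)=\partial\tau$ in which the starring is literally the single bistellar move $\sigma\ast\partial\tau\leftrightarrow\partial\sigma\ast\tau$. Since all of this takes place inside the closed star, which is a genuine combinatorial ball, the presence of singular vertices elsewhere in $K$ is immaterial. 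Concatenating these local replacements along the stellar-equivalence sequence exhibits the two given manifolds as bistellar equivalent.

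I expect the main obstacle to be the bookkeeping hidden in the last step --- the verification, carried out by Pachner and by Lickorish in the non-singular setting, that a well-chosen sequence of bistellar moves in the link of $\sigma$ does lift to a legal sequence of bistellar moves in $K$. The naive ``suspension'' of a single link move is not itself a bistellar move, so one must order the link moves appropriately (and, if necessary, pre-subdivide $\sigma$) and then check inductively that every lifted move is again performed at a simplex with spherical link, so that no step is obstructed by a singular vertex. A secondary point requiring care is stating the stellar subdivision theorem in exactly the form needed and confirming that the class of singular combinatorial $4$-manifolds is closed under the specific moves that appear.
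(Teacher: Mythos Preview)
Your proposal is correct and follows essentially the same route as the paper: reduce to stellar equivalence via Alexander's theorem, observe that starring at a vertex is trivial, and observe that for a positive-dimensional simplex $\sigma$ the link is a combinatorial sphere of dimension at most $2$ (so the singularities are invisible to the argument). The only difference is in how you propose to realise a single stellar subdivision by bistellar moves. The paper invokes the fact that combinatorial spheres of dimension $\le 2$ are \emph{shellable} (the first nonshellable spheres appear in dimension $3$) and then cites Lickorish's result that a stellar subdivision at a simplex with shellable link is a bistellar equivalence; this packages exactly the ``bookkeeping'' you were worried about into a clean citation. Your alternative---reduce the link to $\partial\Delta$ by low-dimensional bistellar moves and lift---is Pachner's original mechanism and also works, but the shellability formulation avoids having to verify the lifting step by hand.
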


\nid This appears as Theorem~\ref{thm:singular}.  The crucial fact that makes such a result possible is that the first nonshellable spheres do not appear until dimension 3.

We can now describe the state sum invariant itself, depending on a spherical fusion 2-category and a combinatorial 4-manifold.  The sum is over labelings of the 1-simplices of the manifold by simple objects of the 2-category, and of 2-simplices of the manifold by simple 1-morphisms; the numbers being summed are, roughly speaking, the 10j symbols of the fusion 2-category corresponding to the given configuration of object and 1-morphism labels.
\begin{maindef}
Given an oriented singular combinatorial 4-manifold $K$ and a spherical fusion 2-category $\tc{C}$, the \emph{state sum} is the number
\[
Z_{\tc{C}}(K) := 
\text{\scalebox{.9}{$
\sum_{\Gamma} 
\Big(\prod_{K_0} \dim(\tc{C})^{-1}\Big) 
\Big(\prod_{\sI \in K_1} \big(\dim(\Gamma(\sI)) \dim(\End_{\tc{C}}(\Gamma(\sI))) n(\Gamma(\sI))\big)^{-1}\Big) 
\Big(\prod_{\sII \in K_2} \dim(\Gamma(\sII))\Big) 
Z(\Gamma).
$}}
\]
\end{maindef}

\nid A more precise version of this definition appears as Definition~\ref{def:ss} in the main text.  Here, $K_i$ refers to the set of $i$-simplices of $K$, the sum is over the labelings $\Gamma$ as described above, the dimension $\dim(\tc{C})$ is a kind of `global dimension' of the underlying semisimple 2-category of $\tc{C}$, the dimensions $\dim(\Gamma(\sI))$ and $\dim(\Gamma(\sII))$ refer to appropriate scalar `2-spherical traces', $\dim(\End_{\tc{C}}(\Gamma(\sI)))$ is the dimension of that fusion 1-category, $n(\Gamma(\sI))$ is the number of equivalence classes of simple objects in the connected component of the object $\Gamma(\sI)$, and finally $Z(\Gamma)$ is the aforementioned 10j symbol, which may be thought of as (derivable from) a piece of structural data of the fusion 2-category.  This state sum definition is inspired by a combination of the state sums of Barrett--Westbury~\cite{BW}, Mackaay~\cite{Mackaay}, and Cui~\cite{Cui}.

Barrett and Westbury's state sum does not really need a full spherical fusion category as input---because the sum restricts attention to simple object labels, the nonsimple objects play essentially no role and therefore one need not insist that the input category have all direct sums.  Our situation is somewhat analogous---because our state sum uses only simple object and simple 1-morphism labels, the necessary information is contained in the simples, and their tensor products, and the maps between such.  In particular, we can drop the assumption that our fusion 2-category has direct sums, and even that it is idempotent complete.  This leads to a notion of `prefusion 2-category', a locally semisimple 2-category such that 1-morphisms have adjoints and every object decomposes as a direct sum of objects with simple identity, equipped with a monoidal structure for which objects have duals and the unit is simple.  Our state sum works without modification for spherical prefusion 2-categories.  This is convenient because various examples, for instance the deloop of a braided fusion category or a $G$-crossed-braided fusion category, are in the first instance prefusion 2-categories and must be additive and idempotent completed to obtain fusion 2-categories per se.

\begin{mainthm}
For an oriented singular piecewise-linear 4-manifold $M$ and a spherical prefusion 2-category $\tc{C}$, the numerical state sum $Z_{\tc{C}}(K)$ is independent of the choice of triangulation $K$ of $M$ and therefore defines an oriented piecewise-linear invariant of the singular 4-manifold $M$.
\end{mainthm}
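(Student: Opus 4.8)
The plan is to prove triangulation-independence by reducing it to invariance under the elementary bistellar (Pachner) moves and then verifying each such move as an algebraic identity in the spherical prefusion 2-category $\tc{C}$. By Pachner's theorem \cite{pachner}, together with its extension to the singular setting (Theorem~\ref{thm:singular}), any two triangulations of a fixed oriented singular piecewise-linear 4-manifold $M$ are related by a finite sequence of the bistellar moves $1\leftrightarrow 5$, $2\leftrightarrow 4$, and $3\leftrightarrow 3$, so it suffices to check that $Z_{\tc{C}}(K)$ is unchanged by each of these three moves. Since the state sum records only labels on edges (by simple objects) and on triangles (by simple 1-morphisms), and the summand for each 4-simplex is built from tensor products of simples and morphisms between them, the whole argument can be carried out using only the prefusion data of $\tc{C}$; no direct sums or idempotent completions are needed.

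Before addressing the moves I would first pin down the 10j symbol and set up its formal calculus. For a 4-simplex $\sigma$ the symbol $Z(\Gamma|_\sigma)$ is the scalar extracted from the coherence data of the monoidal 2-category $\tc{C}$ determined by the labeled faces of $\sigma$, after choosing bases for the relevant $2$-morphism Hom-spaces and closing up via the pivotal structure; one checks that it is independent of these auxiliary choices, using local semisimplicity and naturality of the pivotal data, and that the factor $Z(\Gamma)$ in the definition is the product of the orientation-signed symbols over all 4-simplices. I would then record the basic identities of this calculus: (i) a \emph{bubble}/completeness relation expressing the semisimplicity of the Hom categories, which carries a dimension factor appearing in the state sum as the per-edge weight $(\dim(a)\,\dim(\End_{\tc{C}}(a))\,n(a))^{-1}$ for an edge labeled by the simple object $a$; (ii) a \emph{sphere} relation, evaluating a small closed $3$-sphere's worth of data to $\dim(\tc{C})$, the source of the per-vertex factor $\dim(\tc{C})^{-1}$; and (iii) compatibility of the $2$-spherical trace with the pivotal structure, which is precisely the \emph{sphericality} hypothesis and is what renders the construction insensitive to the local orientations forced on us by working with an oriented, rather than framed, triangulation.

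The heart of the argument is then the $3\leftrightarrow 3$ move, which exchanges the three 4-simplices around a triangle for the three around its ``dual'' triangle while fixing all vertices and edges; the required identity equates the two weighted sums, one over the label of the removed triangle (with its $\dim$ face factor and its three 10j symbols) and one over the corresponding data for the added triangle. I would prove this by exhibiting both sides as evaluations of a single fixed labeled configuration on $\partial\Delta^{5}$, so that their equality becomes the pentagonator/associahedron ($K_5$) coherence of the Gray-monoidal structure on $\tc{C}$ decorated by the pivotal data --- a categorified Biedenharn--Elliott identity. I expect the main obstacle to lie here, and in the preceding set-up: not the abstract coherence of monoidal 2-categories, which is available, but the combinatorial bookkeeping needed to match the $3\leftrightarrow 3$ move, the orientation signs, and the pivotal closures against a clean diagrammatic statement, and in particular the proof that the 10j symbol is well-defined in the first place.

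Finally I would deduce invariance under the remaining two moves from the $3\leftrightarrow 3$ identity together with the bubble and sphere relations, following the pattern of Barrett--Westbury \cite{BW} and Cui \cite{Cui}. For the $2\leftrightarrow 4$ move (two 4-simplices across a tetrahedron versus four around an edge) one composes $3\leftrightarrow 3$ identities and collapses an internal configuration by a bubble, using sphericality (iii) to match the two possible ways of closing up the resulting $2$-morphism. For the $1\leftrightarrow 5$ move --- the one that introduces a new interior vertex --- one reduces the five-simplex star by the already-verified $2\leftrightarrow 4$ and $3\leftrightarrow 3$ moves, whereupon the accumulated bubble evaluations over the five new edges and ten new triangles, together with a single sphere evaluation, combine to produce exactly the per-vertex, per-edge, and per-triangle weights prescribed in the definition of $Z_{\tc{C}}$; in other words, the normalization was chosen precisely so that this move is an identity. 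Having verified all three moves, $Z_{\tc{C}}(K)$ descends to an invariant of the bistellar-equivalence class of $K$, and hence by Theorem~\ref{thm:singular} to a piecewise-linear invariant of the oriented singular 4-manifold $M$, as claimed.
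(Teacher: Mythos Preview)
Your bistellar strategy is essentially the paper's: reduce to Pachner moves via Theorem~\ref{thm:singular}, identify the $3\leftrightarrow 3$ move with the pentagonator coherence (the paper carries this out by factoring the 10j symbol as a composite $\Scomp_+^{-1}\circ\Lcomp_+$ of ``compose along the short side'' and ``compose along the long side'' of the pentagon, then showing the two ways around the associahedron agree), and then cascade to $2\leftrightarrow 4$ and $1\leftrightarrow 5$ using dimension identities. Your identification of the per-vertex weight $\dim(\tc{C})^{-1}$ as the normalization forced by the $1\leftrightarrow 5$ move is exactly right.

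However, there is a genuine gap. The quantity $Z_{\tc{C}}(K)$ as defined depends not only on the triangulation but on two auxiliary choices: a total order $o$ on the vertices of $K$ (used to orient all simplices consistently and hence to make sense of the labelings) and a simplicial skeleton $\Delta\tc{C}^{\sk}$ (a choice of one representative in each equivalence class of simple objects and 1-morphisms). The paper's proof of the theorem occupies all of Section~4, and two of its three parts are devoted to eliminating these dependences \emph{before} the bistellar argument can even begin. You have not addressed either.

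Both are more delicate than they look. For skeleton independence, equivalent simple objects need not have the same dimension (dimension is only invariant under \emph{pivotal} adjoint equivalence), so replacing each label by an equivalent one introduces a scalar factor $\lambda_\sI^2$ on each edge $\sI$, and compensating factors on the 2-, 3-, and 4-simplex contributions; the paper shows these cancel by collecting the exponents into Euler characteristics of links of simplices, which vanish precisely because $K$ is a (singular) combinatorial 4-manifold. For ordering independence, a transposition of two vertices replaces object labels by their duals and 1-morphism labels by composites involving adjoints and folds, so one must exhibit explicit isomorphisms between the associator state spaces before and after the transposition and check that they intertwine both the pairings and the 10j symbols; this is where the full pivotal structure (including the surface-ribbon condition) and sphericality are actually used. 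Your item (iii) gestures at this but does not engage with the difficulty.
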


\nid This Theorem appears, in a more precise form, as Theorem~\ref{thm:maintheorem}, and the proof, occupying all of Section~\ref{sec:statesumproof}, proceeds by direct combinatorial analysis of the effect of each bistellar move.  As mentioned previously, for appropriate choices of the spherical prefusion 2-category, our invariant specializes to the Crane--Yetter--Kauffman invariant for ribbon categories, the (twisted) Yetter--Dijkgraaf--Witten invariant for finite 2-groups, the Mackaay invariant for endotrivial fusion 2-categories, and the Cui invariant for crossed-braided fusion categories.\footnote{B\"arenz and Barrett use the handle calculus to define a smooth 4-manifold invariant given the data of a pivotal functor from a spherical fusion 1-category to a ribbon fusion 1-category~\cite{BB}.  When the target ribbon category is modularizable, this invariant can be reformulated as a state sum invariant.  We wonder whether there a fusion 2-category for which our state sum invariant gives the modularizable-target case of the B\"arenz--Barrett invariant.}

Note that 4-dimensional field theories associated to fusion 2-categories may not directly distinguish distinct smooth structures on closed 4-manifolds---for that one would presumably need nonsemisimple or derived algebraic structures, among other modifications.  Nevertheless, as fusion 1-categories and particularly their classification have proven compelling quite apart from their associated closed 3-manifold invariants, we imagine fusion 2-categories and their classification will be a worthwhile subject in its own right.

\addtocontents{toc}{\SkipTocEntry}

\subsection{Overview}

\skiptocparagraph{Guide}

Readers exclusively interested in the theory of fusion 2-categories can restrict attention to Sections 1 and 2 and Appendices A and B.  Readers primarily interested in the state sum construction will want to focus on Sections 3 and 4 and Appendix B, but will still need to at least skim Sections 1 and 2.  Many readers, whether expert or novice, will want to merely skim Section 1.3, which is both more motivational and more technical, and most readers will want to skip Appendix A (containing the proofs about idempotent completion) and Appendix B (containing the proofs of the dimension formulas).  Experts can skim or skip Sections 3.1, 3.2, and 3.4, and only the hardiest souls will want to get into the combinatorial invariance proof in Section 4.3.  The recommended fast-track is therefore Sections 1.1, 1.2, 1.4, 2.1, 2.2, 2.3, 3.3, and then only as much of the proof in 4.1, 4.2, and 4.3 as required for one's purposes.

\skiptocparagraph{Outline}

Section 1 concerns linear 2-categories; it begins by recalling the notions of linear 2-categories and their functors, and by setting our conventions and notation for compositions and identities in 2-categories.  Section 1.1 discusses zero objects and direct sums in 2-categories and defines additive 2-categories.  Section 1.2, after recalling the landscape of presemisimple and semisimple 1-categories, defines presemisimple 2-categories, analyzes the direct sum decomposition of objects in such 2-categories, and defines the dimension of a presemisimple 2-category.  Section 1.3 motivates categorified notions of idempotent and idempotent splitting, and defines idempotent complete 2-categories and an idempotent completion operation on 2-categories.  Appendix A is a companion to Section 1.3, providing further technical details and many proofs concerning idempotent completion that are omitted from the main text.  Section 1.4 defines semisimple 2-categories, proves the crucial result that semisimple 2-categories are exactly 2-categories of modules of multifusion categories, and shows that 2-distributors between semisimple 2-categories are 2-functors; it then gives a variety of explicit constructions and examples of semisimple 2-categories.

Section 2 investigates monoidal structures on linear 2-categories.  Section 2.1 gives a precise definition of a monoidal 2-category, defines prefusion and fusion 2-categories, and describes the graphical calculus of surfaces with defects that encodes 2-morphisms in a fusion 2-category; it then gives an extensive list of constructions and examples of fusion 2-categories.  Section 2.2 recalls the notion and graphical calculus of planar pivotal 2-categories, and describes further the notion and graphical calculus for pivotal 2-categories; then it defines 2-spherical traces in pivotal 2-categories.  Section 2.3 uses this notion of 2-spherical trace to define the notion of spherical 2-categories, mentions examples of spherical 2-categories, and then discusses dimensions of objects and 1-morphisms in spherical 2-categories.

Section 3 defines our state sum invariant of singular piecewise-linear 4-manifolds.  Section 3.1 recalls the relevant notions of combinatorial and singular combinatorial manifolds.  Section 3.2 recalls bistellar transformations of combinatorial manifolds and the fundamental theorem that they generate piecewise-linear equivalence, and then proves that in fact bistellar moves generate piecewise-linear equivalence of singular combinatorial 4-manifolds.  Section 3.3 motivates and discusses the technicalities of our state-sum labeling scheme, defines the 10j symbol and 10j action maps that form the core numerical information in the state sum, and finally gives the full state sum expression.  Section 3.4 discusses how the state sum specializes, given specific choices of fusion 2-categories, to the Crane--Yetter--Kauffman, Yetter--Dijkgraaf--Witten, Mackaay, and Cui invariants.

Section 4 proves the invariance of the state sum.  Section 4.1 shows that the sum is invariant of the labeling skeleton, that is of the choice of the particular representative simple objects and 1-morphisms used as labels; despite sounding innocuous, this invariance is nontrivial, in part because the dimensions of equivalent objects need not be the same, and the proof relies crucially on the local combinatorial manifold structure.  Section 4.2 shows that the state sum is invariant under changing the chosen global order on the vertices, which was used to orient all the simplices and thus define the labeling scheme for the state sum; again despite sounding like a minor matter, the terms of the state sum change substantially under vertex reordering because object and 1-morphism labels are being replaced by composites of duals and adjoints, and proving invariance here uses much of the power of the pivotal 2-categorical context.  Finally, Section 4.3 proves the state sum is invariant under changing the combinatorial structure of the 4-manifold, that is under the bistellar moves; it does so by defining a state sum for manifolds with boundary, in order to isolate the effect of a local bistellar move, and then explicitly computing each of the three 4-dimensional bistellar moves.  This last bistellar proof uses an extensive collection of formulas relating the dimensions of objects and 1-morphisms in fusion 2-categories, formulas which are established in Appendix~B.

\addtocontents{toc}{\SkipTocEntry}
\subsection*{Acknowledgments}

We are grateful to Scott Morrison and Kevin Walker, whose work in higher category theory and topological field theory has been an ongoing inspiration for us.  We thank Bruce Bartlett, Andr\'e Henriques, Chris Schommer-Pries, Noah Snyder, and Jamie Vicary for helpful discussions. We thank Patrick Hayden and the Stanford Institute for Theoretical Physics for hosting us for an extended research visit, and we especially thank Shawn Cui for sharing and explaining his recent work, which substantially influenced our research.

\newpage

\renewcommand{\thesubsection}{\thesection.{\arabic{subsection}}}

\addtocontents{toc}{\protect\vspace{8pt}}

\section{Linear $2$-categories}\label{sec:linear2cats}

Except where otherwise noted, we assume the field $k$ is algebraically closed and of characteristic zero.  Let $\Vect$ denote the $1$-category of finite-dimensional $k$-vector spaces and linear maps. In the following, a \emph{linear $1$-category} is a $\Vect$-enriched $1$-category and a \emph{linear $1$-functor} is a $\Vect$-enriched functor.

By a $2$-category, we mean a weak $2$-category, though throughout we will suppress unitors and associators from our notation.  We define a \emph{linear $2$-category} to be a $\Vect$-enriched $2$-category, that is, a $2$-category $\tc{C}$ whose $2$-morphism sets are $k$-vector spaces such that horizontal and vertical composition of $2$-morphisms are $k$-bilinear operations. A \emph{linear $2$-functor} will be a $2$-functor that is locally $k$-linear. 

Similar to~\cite[Sec 2.1]{DTC}, we will distinguish between the \emph{geometric} and the \emph{functorial} direction of composition. If $A$, $B$ and $C$ are $i$-morphisms in an $n$-category, and $f:A\to B$ and $g:B\to C$ are $(i+1)$-morphisms, we denote their composition `$f$ followed by $g$' in functorial notation as $g\circ f$ and in geometric notation as $f\otimes_B g:= g\circ f$. The object $B$ is often left implicit and $f\otimes_B g$ is simply denoted by $f\otimes g$. For monoidal $2$-categories, we will reserve the following symbols for the various composition conventions:
\[
\begin{array}{l|c|c|c}
&
\text{object}
&
\text{$1$-morphism}
&
\text{$2$-morphism}
\\ \hline
\text{functorial}
& 
\sq & \circ & \cdot
\\ \hline
\text{geometric} & \boxtimes & \otimes & \times
\end{array}
\]
For uniformity, we will always use the functorial composition.  Note that this is not the most naive categorification of the typical convention for tensor categories, which uses functorial composition for morphisms but geometric composition for objects.

We adopt the following notation:
\begin{itemize}
\item[-] Given an object $A$ in a $2$-category, we denote its identity $1$-morphism by $\Io_A$; for a $1$-morphism $f:A\to B$, we denote its identity $2$-morphism by $\It_f$. 
\item[-] Given 1-morphisms $f,f':A \to B$ and $g,g':B\to C$ and 2-morphisms $\alpha: f\To f'$ and $\beta: g\To g'$, we often abbreviate the composites $\It_g\xo \alpha$ and $\beta \xo \It_f$ by $g\xo \alpha$ and $\beta \xo f$, respectively.
\item[-] We write $A\equiv B$ for equivalent objects in a $2$-category, and $f\iso g$ for isomorphic $1$-morphisms.
\item[-] For $1$-morphisms $f:A\to B$ and $g:B\to A$ in a $2$-category, we write $f \dashv g$, and say $g$ is right adjoint to $f$ or equivalently $f$ is left adjoint to $g$, if there are $2$-morphisms $\epsilon: f \circ g \To \Io_B,~\eta: \Io_A \To g \circ f$ such that $(\epsilon \circ \It_{f}) \cdot (\It_{f} \circ \eta) = \It_{f}$ and $(\It_g \circ \epsilon) \cdot (\eta \circ \It_g) = \It_g$. In the following, we will often refer to these equations as the \emph{cusp equations}. 
\end{itemize}

\subsection{Additive $2$-categories}

\skiptocparagraph{Zero objects}

Recall that a \emph{zero object} in a linear $1$-category is an object $0$ such that $\Hom(0,0)$ is the zero vector space.  Note that an object whose identity morphism is the zero vector is necessarily a zero object.  A zero object is unique up to equivalence, and is preserved by all functors.  A $1$-category with a zero object is called \emph{pointed}. A \emph{zero $1$-morphism} in a linear $2$-category is a $1$-morphism $0_{A,B}:A\to B$ that is a zero object in the linear $1$-category $\Hom(A,B)$.  A linear $2$-category is \emph{locally pointed} if all its $1$-morphism categories are pointed, that is have zero objects.
\begin{definition}[Zero object in a 2-category]\label{def:zeroobject} 
A \emph{zero object} in a locally pointed linear $2$-category $\tc{C}$ is an object $\zero$ such that $\Hom_{\tc{C}}(\zero,\zero)$ is the terminal $1$-category.  
\end{definition}
\nid%
Note that an object $\zero$ in a locally pointed linear $2$-category is a zero object if and only if its identity $1$-morphism $\Io_\zero: \zero \to \zero$ is a zero $1$-morphism.  Also, observe that a zero object in a locally pointed linear $2$-category is uniquely determined up to equivalence, and is preserved by all linear $2$-functors.  We say that a linear $2$-category is \emph{pointed} if it is locally pointed and has a zero object.

\skiptocparagraph{Direct sums}

Recall that the direct sum of two objects $A_1$ and $A_2$ in a linear $1$-category is an object $A_1\oplus A_2$ with inclusion morphisms $i_j: A_j \to A_1\oplus A_2$, and projection morphisms $p_j: A_1\oplus A_2 \to A_j$, fulfilling the following conditions:
\begin{itemize}
\item[-] $p_j \cdot i_j=\It_{A_j}$ for $j=1,2$;
\item[-] $p_1\cdot i_2 =0 $ and $p_2\circ i_1 =0 $;
\item[-] $i_1\cdot  p_1 + i_2 \cdot p_2 = \It_{A_1\oplus A_2}$.
\end{itemize}
A linear $1$-category is \emph{additive} if it has a zero object and has (pairwise) direct sums. A \emph{locally additive} $2$-category is a linear $2$-category whose Hom-categories $\Hom(A,B)$ are additive for all objects $A$ and $B$.

\begin{definition}[Direct sum in a 2-category]\label{def:directsum} A \emph{direct sum} of two objects $A_1$ and $A_2$ in a locally additive $2$-category $\tc{C}$ is an object $A_1\bplus A_2$ together with inclusion and projection $1$-morphisms $\iota_i: A_i \to A_1\bplus A_2$ and $\rho_i: A_1\bplus A_2 \to A_i$ for $i=1,2$, satisfying the following conditions:
\begin{itemize}
\item[-] $\rho_i \circ \iota_i$ is isomorphic to $\Io_{A_i}$ for $i=1,2$; 
\item[-] $\rho_2\circ \iota_1 \in \Hom_{\tc{C}}(A_1, A_2)$ and $\rho_1\circ \iota_2 \in \Hom_{\tc{C}}(A_2, A_1)$ are zero objects;
\item[-] $\Io_{A_1\bplus A_2} \in \Hom_{\tc{C}}(A_1\bplus A_2,A_1\bplus A_2)$ is a direct sum of $\iota_1\circ \rho_1$ and $\iota_2 \circ \rho_2$.
\end{itemize}
\end{definition}
\nid Observe that direct sums in a locally additive $2$-category are uniquely determined up to equivalence and are preserved by all linear $2$-functors.

\begin{prop}[Projection and inclusion are adjoint]\label{prop:coherencedirectsums} Let $A_1\boxplus A_2$ be a direct sum with inclusion and projection $1$-morphisms $\iota_i: A_i \to A_1\bplus A_2$ and $\rho_i:A_1\bplus A_2 \to A_i$ for $i=1,2$. Then $\rho_i$ is both a left and right adjoint of $\iota_i$.
\end{prop}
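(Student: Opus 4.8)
The plan is to write down, for each $i$, explicit candidate units and counits for the two asserted adjunctions directly out of the direct-sum data, and then to appeal to the standard fact that such a pair of $2$-morphisms constitutes an adjunction as soon as its two ``triangle composites'' are invertible. (Given invertibility one modifies, say, the counit so that the first cusp equation holds strictly; the second triangle composite is then automatically idempotent, and being invertible it equals the identity, so the second cusp equation holds as well.) Fix $i$, write $B := A_1 \boxplus A_2$, let $\phi_i \colon \rho_i \circ \iota_i \xrightarrow{\sim} \Io_{A_i}$ be the isomorphism supplied by Definition~\ref{def:directsum}, and let $s_j \colon \iota_j \circ \rho_j \To \Io_B$ and $\pi_j \colon \Io_B \To \iota_j \circ \rho_j$ ($j=1,2$) be the inclusion and projection $2$-morphisms exhibiting $\Io_B$ as a direct sum of $\iota_1 \circ \rho_1$ and $\iota_2 \circ \rho_2$, so that $\pi_j \cdot s_j = \It_{\iota_j \circ \rho_j}$, $\pi_j \cdot s_{j'} = 0$ for $j \neq j'$, and $s_1 \cdot \pi_1 + s_2 \cdot \pi_2 = \It_{\Io_B}$. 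For the adjunction $\iota_i \dashv \rho_i$ I take the unit $\eta := \phi_i^{-1}$ and the counit $\epsilon := s_i$; for $\rho_i \dashv \iota_i$ I take the unit $\pi_i$ and the counit $\phi_i$.

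The crucial point is invertibility of the relevant triangle composites, and it rests on a single observation: since $\rho_i \circ \iota_j$ is a zero object of $\Hom_{\tc{C}}(A_j, A_i)$ for $j \neq i$, bilinearity of horizontal composition forces $\iota_j \circ \rho_j \circ \iota_i$ to be a zero $1$-morphism in $\Hom_{\tc{C}}(A_i, B)$ and, symmetrically, $\rho_i \circ \iota_j \circ \rho_j$ to be a zero $1$-morphism in $\Hom_{\tc{C}}(B, A_i)$. Consequently, whiskering the identity $s_1 \cdot \pi_1 + s_2 \cdot \pi_2 = \It_{\Io_B}$ on the right by $\iota_i$ kills the $j \neq i$ summand, since the factor $\pi_j \circ \It_{\iota_i}$ is a $2$-morphism into a zero $1$-morphism and hence zero; this leaves $(s_i \circ \It_{\iota_i}) \cdot (\pi_i \circ \It_{\iota_i}) = \It_{\iota_i}$, which together with the whiskering $(\pi_i \circ \It_{\iota_i}) \cdot (s_i \circ \It_{\iota_i}) = \It_{\iota_i \circ \rho_i \circ \iota_i}$ of the relation $\pi_i \cdot s_i = \It_{\iota_i \circ \rho_i}$ shows that $s_i \circ \It_{\iota_i} \colon \iota_i \circ \rho_i \circ \iota_i \xrightarrow{\sim} \iota_i$ is an isomorphism with inverse $\pi_i \circ \It_{\iota_i}$. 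Whiskering the same two relations on the left by $\rho_i$ shows in identical fashion that $\rho_i \circ s_i \colon \rho_i \circ \iota_i \circ \rho_i \xrightarrow{\sim} \rho_i$ is an isomorphism with inverse $\rho_i \circ \pi_i$. Since $\phi_i^{\pm 1}$ is an isomorphism and whiskering preserves isomorphisms, each of the four triangle composites is now a vertical composite of isomorphisms: for $\iota_i \dashv \rho_i$ the two composites are $(s_i \circ \It_{\iota_i}) \cdot (\It_{\iota_i} \circ \phi_i^{-1})$ and $(\rho_i \circ s_i) \cdot (\phi_i^{-1} \circ \It_{\rho_i})$, and for $\rho_i \dashv \iota_i$ they are $(\It_{\iota_i} \circ \phi_i) \cdot (\pi_i \circ \It_{\iota_i})$ and $(\phi_i \circ \It_{\rho_i}) \cdot (\rho_i \circ \pi_i)$, each invertible by the above.

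With invertibility of the triangle composites in hand, I would carry out the improvement step for each of the two candidate $(\text{unit},\text{counit})$ pairs to produce genuine adjunctions, yielding $\iota_i \dashv \rho_i$ and $\rho_i \dashv \iota_i$, i.e.\ $\rho_i$ is simultaneously a left and a right adjoint of $\iota_i$. I expect the main obstacle to be essentially bookkeeping rather than conceptual: one must keep the whiskering conventions and the interchange law straight when checking that replacing the counit by its composite with the inverse of the first triangle composite restores the first cusp equation and that the second triangle composite is then idempotent. The reason this improvement step cannot be avoided, and the naive pair $(\phi_i^{-1}, s_i)$ need not satisfy the cusp equations on the nose, is that Definition~\ref{def:directsum} provides the isomorphism $\phi_i \colon \rho_i \circ \iota_i \cong \Io_{A_i}$ only abstractly, with no compatibility imposed relative to the chosen direct-sum data $(s_j, \pi_j)$; beyond the zero-object observation above, no genuinely new idea appears to be needed.
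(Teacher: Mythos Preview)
Your proposal is correct and follows essentially the same approach as the paper. Both arguments hinge on the observation that whiskering the direct-sum relation $s_1\cdot\pi_1+s_2\cdot\pi_2=\It_{\Io_B}$ by $\iota_i$ (or $\rho_i$) kills the $j\neq i$ summand via the zero-object condition, and both then invoke the standard adjoint-equivalence improvement trick; the paper simply writes out the modified counit explicitly for one of the two adjunctions rather than deferring to the general lemma.
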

\begin{proof} The proof is analogous to the proof that every equivalence in a $2$-category can be promoted to an adjoint equivalence.  We show that $\iota_i$ is right adjoint to $\rho_i$; the proof for left-adjointness is similar.  Let $\widetilde{\epsilon}_i: \rho_i \xo \iota_i \To \Io_{A_i}$, $\eta_i: \Io_{A_1\bplus A_2} \iso \iota_1\xo \rho_1 \oplus \iota_2 \xo \rho_2 \To \iota_i \xo \rho_i$, and $\overline{\eta}_i: \iota_i \xo \rho_i \To \iota_1\xo \rho_1\oplus \iota_2 \xo \rho_2 \iso \Io_{A_1\bplus A_2}$ be the 2-morphisms provided by the definition of the direct sum. Define 
\[ \epsilon_i := \widetilde{\epsilon}_i\xt\left( 1_{\rho_i} \xo \overline{\eta}_i \xo 1_{\iota_i}\right)\xt\left(1_{\rho_i\xo\iota_i} \xo \widetilde{\epsilon}_i^{-1}\right): \rho_i \xo \iota_i \To \Io_{A_i}
\]
Now observe that $\eta_i$ and $\epsilon_i$ are the unit and counit of an adjunction $\rho_i\dashv \iota_i$. Checking that $\left(\It_{\iota_i} \xo \epsilon_i\right)\xt\left(\eta_i \xo \It_{\iota_i}\right) = \It_{\iota_i}$ is straightforward. The equation $\left(\epsilon_i \xo \It_{\rho_i} \right) \xt \left( \It_{\rho_i}\xo \eta_i\right) = \It_{\rho_i}$ follows (by the same calculation that shows that after appropriately modifying the 2-morphisms of an equivalence, one obtains an adjoint equivalence) using the fact that $\sum_j \overline{\eta}_j \xt \eta_j  = \It_{\Io_{A_1\bplus A_2}}$, hence that $\It_{\rho_i} = \It_{\rho_i} \xo \left(\overline{\eta}_i\xt \eta_i\right)$ and in particular that $1_{\rho_i} \xo \eta_i$ and $1_{\rho_i} \xo \overline{\eta}_i$ are inverse. 
\end{proof}

\begin{remark}[Direct sums and zero objects in 2-categories are preserved by all 2-functors] \label{rem:absolute} 
In a linear 2-category, direct sums and zero objects are `equational' constructions, in that they may be defined in terms of the existence of certain morphisms satisfying certain equations.  It follows that they are preserved by all linear 2-functors.  Recall that an absolute (2-)colimit is a colimit preserved by all linear (2-)functors.  Both direct sums and zero objects may be expressed as universal constructions, in particular as colimits, and are therefore absolute colimits.
\end{remark}

\skiptocparagraph{Additivity and additive completion}

\begin{definition}[Additive 2-category] 
A linear $2$-category is \emph{additive} if it is locally additive, has a zero object, and has direct sums.
\end{definition}

\begin{construction}[Additive completion of a 2-category] \label{con:additivecompletion}
Any locally additive linear $2$-category $\tc{C}$ can be completed to an additive $2$-category $\tc{C}^{\boxplus}$; here $\tc{C}^{\boxplus}$ has as objects finite (possibly empty) lists of objects of $\tc{C}$, as $1$-morphisms matrices of $1$-morphisms in $\tc{C}$, and as $2$-morphisms matrices of $2$-morphisms in $\tc{C}$. Horizontal composition of $1$-morphisms in $\tc{C}^{\boxplus}$ is `matrix multiplication' with sum and product replaced by direct sum and composition in $\tc{C}$.  \looseness=-2
\end{construction}

\begin{remark}[Additive completion is idempotent] \label{rem:additivecompletion}
If $\tc{C}$ is already additive, then $\tc{C}^{\boxplus}$ is equivalent to $\tc{C}$; this is a consequence of the fact that direct sums and zero objects are absolute colimits and that $\tc{C}^{\boxplus}$ is the free cocompletion under these colimits.
\end{remark}

\subsection{Presemisimple $2$-categories}

We would like to restrict our attention to $2$-categories in which all objects are sums of a fixed finite list of `simple' objects.  We want not only that those component objects are indecomposable, but that they are in fact simple, in the technical sense of having no subobjects, and furthermore that their endomorphism categories are as simple as may reasonably be expected.

\subsubsection{Presemisimple and semisimple $1$-categories}

We recall the following $1$-categorical notions.
\begin{itemize}
\item[-] In a $1$-category, a $1$-morphism $f: A \to B$ is a \emph{monomorphism} if, for all objects $X$, the map $\Hom(X,A) \to \Hom(X,B)$ is an injection of sets.
\item[-] A \emph{subobject} of an object $B$ is an isomorphism class of monomorphisms $A \to B$.  
\item[-] A nonzero object $B$ of a linear $1$-category is \emph{simple} if its subobjects are all either zero objects or isomorphisms.  
\item[-] A linear $1$-category is \emph{presemisimple} if every object can be decomposed as a finite direct sum of simple objects, and the composition of any two nonzero morphisms between simple objects is again nonzero.\footnote{Recall that an algebra is called a domain if it has no zero divisors; the composition condition in the definition of presemisimple 1-category can be understood as insisting that the category be a many-object version of a domain.}
\item[-] A linear $1$-category is \emph{idempotent complete} if every idempotent splits, that is for any $1$-morphism $\gamma: B \to B$ such that $\gamma\gamma = \gamma$, there exists morphisms $\iota: A \to B$ and $\rho: B \to A$ such that $\rho \xt\iota = \It_A$ and $\iota \xt\rho = \gamma$.
\item[-] A linear $1$-category is \emph{semisimple} if it is presemisimple, additive, and idempotent complete.\footnote{One may alternatively define a linear 1-category to be semisimple if it is additive, idempotent complete, and the endomorphism algebra of every object is semisimple.  Note that every semisimple linear $1$-category is necessarily semisimple abelian~\cite[Lemma 2]{Jannsen}.\label{foot:ss}}
\item[-] A semisimple $1$-category is \emph{finite} if there are only finitely many isomorphism classes of simple objects.
\end{itemize}

\begin{remark}[1-categories in which objects split into simples] \label{rem:onepressdef}
Note that in a linear $1$-category, asking merely that every object can be decomposed into a finite direct sum of simple objects achieves very little. For example, the category with one object whose endomorphism algebra is the algebra of 2-by-2 matrices satisfies this condition, even though it has nontrivial idempotents and the object will decompose upon idempotent completion.  In fact, in any finite-dimensional algebra, a left-cancellative element is necessarily invertible; thus, in any category with one object and endomorphism algebra a finite-dimensional algebra, the object is simple, and so the category has the property that `every object decomposes into a finite direct sum of simples'.
\end{remark}

\begin{remark}[Endomorphism algebras in semisimple 1-categories are semisimple]
In a semisimple 1-category, the endomorphism algebra of any object is semisimple (and therefore a semisimple 1-category is semisimple in the alternative sense of Footnote~\ref{foot:ss}).  Indeed, a finite-dimensional algebra is a division algebra if and only if it is a domain.  The endomorphism algebra of a simple object in a semisimple 1-category is therefore a division algebra.  Furthermore, the fact that composites of nonzero morphisms are nonzero implies that there are no nonzero morphisms between nonisomorphic simple objects.  It follows that the endomorphism algebra of any object is semisimple.

Note that that the additive and idempotent completion of a presemisimple 1-category is indeed semisimple: in this case the additive and idempotent completion does not produce any new simple objects, and so the category remains presemisimple, as required.
\end{remark}

\begin{remark}[Semisimple endomorphism algebras implies semisimple 1-category] \label{rem:ssconditions}
A linear 1-category that is additive and idempotent complete, and in which the endomorphism algebra of every object is semisimple (cf Footnote~\ref{foot:ss}), is necessarily presemisimple (and therefore semisimple); that is, in such a category every object decomposes as a finite direct sum of simple objects, and the composite of nonzero morphisms between simple objects is nonzero.
\end{remark}

\begin{remark}[Direct sums and idempotent splittings are preserved by all 1-functors]
In a linear 1-category, both direct sums and idempotent splittings may be expressed as equational constructions.  It follows that they are not generic colimits but in fact absolute colimits, in that they are preserved by all linear 1-functors.  Thus, we do not need to restrict the sort of functors considered when we are restricting attention to semisimple 1-categories.
\end{remark}

\subsubsection{The definition of presemisimple $2$-categories}

We now discuss analogous $2$-categorical notions.  A $1$-morphism $f: A \to B$ in a $2$-category $\tc{C}$ is \emph{fully faithful} if, for all objects $X$, the functor $\Hom_{\tc{C}}(X,A) \to \Hom_{\tc{C}}(X,B)$ is fully faithful; a \emph{subobject} of an object $B$ of a $2$-category is then an equivalence class of fully faithful $1$-morphisms $A \to B$.
\begin{definition}[Simple object in a 2-category]
A nonzero object in a linear $2$-category is \emph{simple} if its subobjects are either zero objects or equivalences.
\end{definition}
\nid%
A linear $2$-category is called \emph{locally semisimple} if all of its Hom categories are semisimple, and \emph{locally finite semisimple} if all its Hom categories are finite semisimple.

\begin{definition}[Decomposable object in a 2-category]
An object in a linear $2$-category is \emph{decomposable} if it is equivalent to a direct sum of nonzero objects, and it is \emph{indecomposable} if it is nonzero and not decomposable.
\end{definition}

\nid
For conciseness we will use the following terminology:
\begin{itemize}
\item[-] A multifusion category is a finite semisimple linear monoidal $1$-category whose objects have left and right duals.
\item[-] A fusion category is a multifusion category whose tensor unit is simple.
\item[-] An infusion category is a semisimple linear monoidal $1$-category whose objects have left and right duals and whose tensor unit is simple.
\end{itemize}
Note that an infusion category may have infinitely many isomorphism classes of simple objects.  We will see later (in Corollary~\ref{cor:fusiondomain}) that infusion categories, though not quite `categorical division algebras', function as `categorical domains'.

We might expect to define a presemisimple 2-category to be a linear 2-category in which every object decomposes into simple objects and in which the composition of nonzero morphisms between simples is nonzero; in fact, we will find (see Proposition~\ref{prop:Schur}) that this composition property is implied merely asking the endomorphism categories of simples to be infusion categories, that is categorical domains.

\begin{definition}[Presemisimple 2-category] \label{def:presstwocat}
A \emph{presemisimple $2$-category} is a locally semi\-\linebreak simple $2$-category such that every $1$-morphism admits a right adjoint and a left adjoint, such that every object is decomposable as a finite direct sum of simple objects, and such that the endomorphism category of every simple object is an infusion category.
\end{definition}

\nid We will see, from Corollary~\ref{cor:pressdirectsum} below, that this definition is equivalent to the following somewhat more compact definition: \emph{A presemisimple $2$-category is a locally semisimple $2$-category such that every $1$-morphism admits a right adjoint and a left adjoint and such that every object is decomposable as a finite direct sum of objects with simple identity.}

\begin{definition}[Finite presemisimple 2-category]
A presemisimple $2$-category is \emph{finite} if its Hom-categories are finite semisimple and if it has a finite number of equivalence classes of simple objects.
\end{definition}

\begin{remark}[2-categories in which objects split into simples]
As in the 1-categorical case, cf Remark~\ref{rem:onepressdef}, it is not particularly useful to consider linear (locally semisimple) 2-categories (with adjoints) merely such that every object decomposes as a finite sum of simples.  In such a $2$-category, the endomorphism categories of simple objects can be arbitrarily complicated and supposedly simple objects may decompose after an idempotent completion operation on the 2-category.\end{remark}

\begin{remark}[Endomorphism categories are multifusion] \label{rem:endismultifusion}
Observe that in a finite presemisimple $2$-category, the endomorphism category of any object is a multifusion category.
\end{remark}

\begin{example}[The delooping of an infusion category]\label{eg:deloopfusion}
Associated to an infusion category $\oc{D}$, there is a presemisimple $2$-category $\B\oc{D}$ with a unique object $*$ and the endomorphism category $\Hom_{\B\oc{D}}(*,*) = \oc{D}$.
\end{example}

\begin{construction}[The unfolded finite presemisimple 2-category of a multifusion category]\label{con:unfold}
More generally, let $\oc{D}$ be a multifusion category, and let $I \iso \bigoplus_{i \in \mathcal{I}} I_i$ be the simple decomposition of the tensor unit of $\oc{D}$.  Let $\oc{D}_{i,j}$ be the full additive subcategory of $\oc{D}$ containing the simple objects $X$ that fulfill $I_j \otimes X \iso X \iso X \otimes I_i$.  Recall that $\oc{D}_{i,i}$ is a fusion category, $\oc{D}_{i,j}$ is a $\oc{D}_{i,i}$-$\oc{D}_{j,j}$-bimodule category, and as a linear $1$-category, $\oc{D} \iso \bigoplus_{i,j} \oc{D}_{i,j}$~\cite[Sec 2.4]{ENO}.  Associated to this multifusion category $\oc{D}$, there is a finite presemisimple $2$-category $\tc{D}$, the \emph{unfolded $2$-category} of $\oc{D}$, with objects $i \in \mathcal{I}$ and $1$-morphism categories $\Hom_{\tc{D}}(i,j) := \oc{D}_{i,j}$. (Regarding the process of `folding' and `unfolding' between tensor $1$-categories and linear $2$-categories, see Kuperberg~\cite{KuperbergFolded}.) 
\end{construction}

\begin{construction}[The folded multifusion category of a finite presemisimple 2-category]\label{con:fold}
Given a finite presemisimple $2$-category $\tc{D}$, one can conversely consider the associated \emph{folded multifusion category} $\oc{D}$; if $I$ is a set of representative simple objects of $\tc{D}$, then the folded category is defined as $\oc{D} := \oplus_{(i,j) \in I \times I} \Hom_{\tc{D}}(i,j)$.  Note well that folding and unfolding are not strictly inverse operations.  For instance, a presemisimple $2$-category $\tc{C}$ with only simple objects will have the same folding as its additive completion $\tc{C}^\boxplus$.  (Note also that the unfolding of a multifusion category is never additive.)  Nevertheless, we do expect that folding and unfolding produce inverse equivalences between the 3-category of multifusion categories and the 3-category of finite presemisimple $2$-categories, where the $1$-morphisms of multifusion categories are finite semisimple bimodule categories and the $1$-morphisms between finite presemisimple $2$-categories are finite semisimple 2-distributors (also called `$2$-profunctors', see Section~\ref{sec:completion}).  Thus we can consider giving a multifusion category as a method for providing the data of a finite presemisimple $2$-category.
\end{construction}

\subsubsection{Decomposition in presemisimple $2$-categories} \label{sec:decomppress}

Note that a presemisimple $2$-category is not assumed to be additive, that is, it need not have a zero object or direct sums of objects, and it has no $1$-morphism-level idempotent-completeness condition.  Nevertheless, presemisimple $2$-categories have a reasonably well behaved notion of decomposition of objects, as follows.

\skiptocparagraph{Simple objects and simple identities correspond}

\begin{proposition}[Decomposition with simple identities implies simple objects and simple identities correspond] \label{prop:simpleiffidentity}
Let $\tc{C}$ be a locally semisimple $2$-category such that every $1$-morphism admits a right adjoint and a left adjoint and such that every object is decomposable as a finite direct sum of objects with simple identity.  Then an object $X$ is simple if and only if the identity $1$-morphism $\Io_X \in \Hom_{\tc{C}}(X,X)$ is simple.
\end{proposition}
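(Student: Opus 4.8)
The plan is to prove both directions by relating simplicity of an object to simplicity of its identity $1$-morphism, using the decomposition hypothesis and the fact that adjoints exist. First I would handle the easier direction: suppose $\Io_X$ is simple. If $f: A \to X$ is a fully faithful $1$-morphism exhibiting a subobject of $X$, then $f \circ f^*$ (where $f^*$ is a right adjoint) is a subobject-type $1$-endomorphism of $X$; more precisely the counit $\epsilon: f \circ f^* \To \Io_X$ should be a monomorphism in $\Hom_{\tc{C}}(X,X)$ because $f$ is fully faithful, so $f \circ f^*$ is a subobject of the simple object $\Io_X$ and is thus either zero or all of $\Io_X$. In the first case $A$ is a zero object; in the second case one checks $f$ is an equivalence (the unit and counit become invertible). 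This shows $X$ is simple.

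For the converse — $X$ simple implies $\Io_X$ simple — I would argue contrapositively. Suppose $\Io_X$ is not simple, so it has a proper nonzero subobject, i.e.\ a monomorphism $e \hookrightarrow \Io_X$ in $\Hom_{\tc{C}}(X,X)$ that is neither zero nor an isomorphism. The key is to upgrade $e$ to a genuine decomposition of $X$ itself. Here is where the decomposition hypothesis enters: write $X \simeq \bigboxplus_k X_k$ with each $X_k$ having simple identity; then $\Io_X \simeq \bigoplus_{k,l} \iota_l \circ \rho_k$ as a $1$-morphism, and because the $\Io_{X_k}$ are simple, the summands $\iota_k \circ \rho_k$ are (candidates for) simple objects in $\Hom_{\tc{C}}(X,X)$, while the off-diagonal pieces live in Hom-categories between different components. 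Since $\Io_X$ has a proper nonzero subobject and $\Hom_{\tc{C}}(X,X)$ is semisimple, this subobject must pick out a proper nonempty subset $S$ of the components; summing the corresponding $\iota_k, \rho_k$ exhibits $\bigboxplus_{k \in S} X_k$ as a proper nonzero subobject of $X$ (the inclusion is fully faithful because its composite with its adjoint is the relevant sub-identity, and fully faithfulness is detected by that composite being isomorphic to an identity on the source). Hence $X$ is not simple.

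The main obstacle I expect is the bookkeeping in the converse: one must be careful that a subobject of $\Io_X$ in the semisimple $1$-category $\Hom_{\tc{C}}(X,X)$ really does split along the component decomposition of $X$, rather than being some ``partial'' sub-identity that does not correspond to a sub-direct-sum of $X$. This requires knowing that the only $1$-endomorphisms of $X$ that are idempotent-like identities-on-a-subobject are exactly the $\iota_T \circ \rho_T$ for $T \subseteq \{1,\dots,n\}$, which in turn uses that each $\Io_{X_k}$ is simple (so no proper sub-identities within a single component) together with semisimplicity of the Hom-category to rule out anything built from off-diagonal morphisms. I would isolate this as the crux: show that a $1$-morphism $g: X \to X$ equipped with a monomorphism $g \hookrightarrow \Io_X$ and with $g \circ g \iso g$ compatibly must be of the form $\iota_T \circ \rho_T$, and then the two directions both follow by translating between ``fully faithful $1$-morphism into $X$'' and ``sub-identity of $\Io_X$'' via adjunction. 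The remaining verifications — that the constructed inclusions are fully faithful, that equivalences are detected by invertibility of unit and counit — are routine $2$-categorical manipulations using the cusp equations.
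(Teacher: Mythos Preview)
You have the two directions reversed in difficulty, and the genuine gap lies in what you call the ``easier'' direction.

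For the direction $\Io_X$ simple $\Rightarrow$ $X$ simple, your claim that the counit $\epsilon: f\xo f^* \To \Io_X$ is a monomorphism ``because $f$ is fully faithful'' is not justified. Fully faithfulness of $f$ (with a \emph{right} adjoint $f^*$) is equivalent to the \emph{unit} $\eta:\Io_A \To f^*\xo f$ being an isomorphism, not to the counit being a monomorphism. Showing that $\epsilon$ is in fact an isomorphism is exactly the substance of the proof and requires both simplicity of $\Io_X$ and local semisimplicity. The paper does this with a \emph{left} adjoint $L$ of the fully faithful $R:A\to X$: fully faithfulness forces the counit $L\xo R\To\Io_A$ to be invertible, and then one must separately construct an inverse to the unit $\Io_X\To R\xo L$, using that $\Io_X$ is simple to split the unit and then a faithfulness argument to upgrade the section to a two-sided inverse. (Your right-adjoint setup can also be made to work: from $\eta$ invertible one gets $f^*\xo\epsilon$ invertible; splitting $\epsilon$ via simplicity of $\Io_X$ gives $f\xo f^*\cong \Io_X\oplus K$, and then $f^*\cong f^*\xo f\xo f^*\cong f^*\oplus f^*\xo K$ forces $f^*\xo K=0$, hence $K\oplus K\xo K\cong f\xo f^*\xo K=0$, so $K=0$. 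But this is the argument you need to give --- it does not come for free from fully faithfulness.)

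The direction $X$ simple $\Rightarrow$ $\Io_X$ simple is the trivial one: a simple object cannot decompose nontrivially, so in the assumed decomposition $X\simeq\bigboxplus_k X_k$ there is a single summand, and $X\simeq X_1$ has simple identity by hypothesis. Your contrapositive works but is unnecessarily elaborate. Note also a small error: the decomposition of the identity is $\Io_X\cong\bigoplus_k \iota_k\xo\rho_k$, indexed by $k$ alone, not by pairs $(k,l)$ --- there are no off-diagonal terms in $\Io_X$.
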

\begin{proof}
A simple object can only decompose as itself (otherwise it would have a nontrivial subobject), and so by the decomposition assumption, it must have simple identity.

Now suppose $X$ is an object with simple identity, and let $R: A \to X$ be a subobject, that is a fully faithful 1-morphism from a nonzero object $A$ to $X$.  Let $L: X \to A$ be a left adjoint of $R$, with unit $\eta: \Io_X \To R \xo L$ and counit $\epsilon:L \xo R \To \Io_A$.  We will show that $R$ is an equivalence (with inverse $L$), and thus $X$ is simple.  We do so by explicitly constructing inverses of the counit and unit of the adjunction.

Since by assumption $R: A \to X$ is fully faithful, the functor $R \xo -: \Hom_{\tc{C}}(A,A) \to \Hom_{\tc{C}}(A,X)$ is full and faithful.  In particular, by fullness, the composite $\eta \xo R: R \To R \xo L \xo R$ is equal to $R \xo \delta$ for some 2-morphism $\delta: \Io_A \To L \xo R$.  Postcomposing the equation $L \xo \eta \xo R = L \xo R \xo \delta$ with $\epsilon \xo L \xo R$ implies, using the cusp equation, that $\delta \xt \epsilon = \It_{L\xo R}$.  The functor $R \xo -$ sends both $\epsilon \xt \delta$ and $\It_{\Io_A}$ to $(R \xo \epsilon) \xt (R \xo \delta) = (R \xo \epsilon) \xt (\eta \xo R) = \It_R$, by the cusp equation.  Faithfulness of $R \xo -$ implies that $\epsilon \xt \delta = \It_{\Io_A}$.

Now the counit $\eta: \Io_X \To R \xo L$ is certainly nonzero, since otherwise by the cusp equation $\It_L$ would be zero, implying $L$ is zero and therefore $R$ is zero, contradicting the fact that $R$ is faithful and $A$ is nonzero.  By local semisimplicity and the simplicity of $\Io_X$, there is a 2-morphism $\alpha: R \xo L \To \Io_X$ such that $\alpha \xt \eta = \It_{\Io_X}$.  By the fullness of $R$, there is a 2-morphism $r: \Io_A \To \Io_A$ such that $R \xo r = (\alpha \xo R) \xt (R\xo \delta) \in \Hom_{\Hom_{\tc{C}}(A,X)}(R,R)$.  Since $\epsilon$ and $\delta$ are inverse, precomposing this equation with $R \xo \epsilon$ gives $\alpha \xo R = (R\xo r) \xt (R\xo \epsilon)$, and further precomposing with $\eta \xo R$ gives $R = (\alpha \xt \eta) \xo R = R \xo r$.  Faithfulness of $R$ implies $r = \It_{\Io_A}$, so $R = (\alpha \xo R) \xt (R\xo \delta)$.  That last equation is (by 1-morphism precomposing with L and then postcomposing with $\eta$, respectively by 1-morphism precomposing with R and then precomposing with $\epsilon$) equivalent to the equation $\eta \xt \alpha = \It_{R \xo L}$.
\end{proof}

\begin{corollary}[Decomposition with simple identities implies presemisimple] \label{cor:pressdirectsum}
A locally semisimple $2$-category is presemisimple if and only if every $1$-morphism admits a right adjoint and a left adjoint and every object decomposes as a finite direct sum of objects with simple identity.
\end{corollary}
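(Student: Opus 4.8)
The plan is to derive this corollary almost entirely from Proposition~\ref{prop:simpleiffidentity}, whose hypotheses coincide exactly with the conditions in the `if' direction, so that the substantive work---promoting a subobject adjunction to an equivalence---is already in hand; what remains is bookkeeping against Definition~\ref{def:presstwocat}.

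For the `only if' direction I would argue as follows. Suppose $\tc{C}$ is presemisimple. By Definition~\ref{def:presstwocat} it is locally semisimple, every $1$-morphism has a left and a right adjoint, and every object is a finite direct sum of simple objects; the only thing to verify is that this decomposition is simultaneously a decomposition into objects with simple identity, i.e.\ that each simple object $X$ has simple $\Io_X$. But Definition~\ref{def:presstwocat} also asserts that $\Hom_{\tc{C}}(X,X)$ is an infusion category, and an infusion category has by definition a simple tensor unit; since the tensor unit of $\Hom_{\tc{C}}(X,X)$ under composition is $\Io_X$, this is immediate.

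For the `if' direction, suppose $\tc{C}$ is locally semisimple, has all $1$-morphism adjoints, and every object decomposes as a finite direct sum of objects with simple identity. Then Proposition~\ref{prop:simpleiffidentity} applies and identifies `object with simple identity' with `simple object', so the hypothesised decomposition is in fact a decomposition into simple objects---the first clause of Definition~\ref{def:presstwocat}. For the last clause, fix a simple object $X$ and examine $\Hom_{\tc{C}}(X,X)$: it is semisimple by local semisimplicity, monoidal under composition of $1$-morphisms (with bilinear composition of $2$-morphisms), has simple tensor unit $\Io_X$ by Proposition~\ref{prop:simpleiffidentity}, and each of its objects---being a $1$-endomorphism of $X$---has a left and a right adjoint in $\tc{C}$, whose cusp $2$-morphisms exhibit left and right duals in $\Hom_{\tc{C}}(X,X)$. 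Hence $\Hom_{\tc{C}}(X,X)$ is an infusion category and $\tc{C}$ is presemisimple.

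I do not expect a genuine obstacle: the hard analytic step, constructing inverses to the unit and counit of a subobject adjunction, is exactly what Proposition~\ref{prop:simpleiffidentity} provides. The only points requiring a moment's attention are (i) checking that the hypotheses of that proposition match the `if' direction verbatim, which they do, and (ii) the routine translation between adjoints of $1$-endomorphisms in $\tc{C}$ and left/right duals in the monoidal endomorphism category $\Hom_{\tc{C}}(X,X)$.
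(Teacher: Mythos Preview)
Your proposal is correct and is precisely the argument the paper has in mind: the corollary is stated without proof immediately after Proposition~\ref{prop:simpleiffidentity}, and your two directions are exactly the intended unpacking---the `only if' direction reads off that an infusion endomorphism category has simple unit $\Io_X$, and the `if' direction invokes Proposition~\ref{prop:simpleiffidentity} to equate simple objects with objects of simple identity and then verifies the infusion axioms for $\Hom_{\tc{C}}(X,X)$ from local semisimplicity, the adjoint hypothesis, and simplicity of $\Io_X$.
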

\begin{corollary}[Projections and inclusions are simple] \label{cor:simpleprojection}
Let $\bigboxplus X_i$ be a finite direct sum of simple objects in a presemisimple $2$-category $\tc{C}$.  The projection and inclusion $1$-morphisms $\iota_i:X_i \leftrightarrows \bigboxplus X_i:\rho_i$ are necessarily simple $1$-morphisms.
\end{corollary}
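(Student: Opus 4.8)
The plan is to reduce the statement to a general observation about adjunctions in a locally semisimple $2$-category and apply it to the pair $(\iota_i,\rho_i)$. First I would assemble the inputs. By Proposition~\ref{prop:coherencedirectsums} the inclusion and projection fit into an adjunction $\iota_i \dashv \rho_i$, with unit $\eta\colon \Io_{X_i} \To \rho_i \xo \iota_i$ and counit $\epsilon\colon \iota_i \xo \rho_i \To \Io_S$ (writing $S := \bigboxplus X_i$); by Definition~\ref{def:directsum} we have $\rho_i \xo \iota_i \iso \Io_{X_i}$, and by Proposition~\ref{prop:simpleiffidentity} (whose hypotheses hold in a presemisimple $2$-category by Corollary~\ref{cor:pressdirectsum}) the simplicity of $X_i$ makes $\Io_{X_i}$, hence also $\rho_i \xo \iota_i$, a simple $1$-morphism. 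Note that $\iota_i$ and $\rho_i$ are nonzero, since otherwise $\rho_i \xo \iota_i$ would be a zero $1$-morphism rather than $\Io_{X_i}$. Since $\Hom_{\tc{C}}(X_i,S)$ and $\Hom_{\tc{C}}(S,X_i)$ are semisimple $1$-categories, it then suffices to show that $\End(\iota_i)$ and $\End(\rho_i)$ have no nontrivial idempotents: such an endomorphism algebra is then a division algebra, which forces the object to be indecomposable and hence, by semisimplicity, simple.

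For $\iota_i$ I would show that whiskering by $\rho_i$, namely $\alpha \mapsto \It_{\rho_i}\xo\alpha$, is an injective unital algebra homomorphism $\End(\iota_i) \to \End(\rho_i\xo\iota_i)$. Injectivity is immediate from the transpose bijection of the adjunction $\iota_i \dashv \rho_i$, which identifies $\End(\iota_i)$ with $\Hom_{\Hom_{\tc{C}}(X_i,X_i)}(\Io_{X_i},\,\rho_i\xo\iota_i)$ via $\alpha \mapsto (\It_{\rho_i}\xo\alpha)\xt\eta$, so $\It_{\rho_i}\xo\alpha = 0$ implies $\alpha = 0$. Since $\rho_i\xo\iota_i \iso \Io_{X_i}$ is simple, $\End(\rho_i\xo\iota_i) \iso \End(\Io_{X_i})$ is a division algebra, so its unital subalgebra $\End(\iota_i)$ has no nontrivial idempotents, and $\iota_i$ is simple.

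For $\rho_i$ I would run the analogous argument with the whiskering $\alpha \mapsto \alpha\xo\It_{\iota_i}\colon \End(\rho_i) \to \End(\rho_i\xo\iota_i)$, again landing in (an algebra isomorphic to) the division algebra $\End(\Io_{X_i})$. The one genuinely delicate point is injectivity of this map: here the relevant transpose bijection identifies $\End(\rho_i)$ with $\Hom_{\Hom_{\tc{C}}(S,S)}(\iota_i\xo\rho_i,\,\Io_S)$ rather than with something sitting inside $\End(\Io_{X_i})$, so instead I would argue by hand. If $\alpha\xo\It_{\iota_i} = 0$ then also $\alpha\xo\It_{\iota_i\xo\rho_i} = 0$; using the interchange law to rewrite $\alpha\xt(\It_{\rho_i}\xo\epsilon) = (\It_{\rho_i}\xo\epsilon)\xt(\alpha\xo\It_{\iota_i\xo\rho_i})$ and the cusp equation $(\It_{\rho_i}\xo\epsilon)\xt(\eta\xo\It_{\rho_i}) = \It_{\rho_i}$ for the adjunction $\iota_i\dashv\rho_i$, one obtains $\alpha = \alpha\xt\It_{\rho_i} = 0$. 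Hence $\End(\rho_i)$ embeds unitally in a division algebra, so $\rho_i$ is simple as well, completing the proof.
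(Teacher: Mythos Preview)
Your proof is correct and rests on the same core idea as the paper's---use the adjunction $\iota_i \dashv \rho_i$ from Proposition~\ref{prop:coherencedirectsums} together with $\rho_i\xo\iota_i \iso \Io_{X_i}$ to relate $\End(\iota_i)$ and $\End(\rho_i)$ to $\End(\Io_{X_i})$---but the paper's execution is noticeably more direct. Where you only exploit the adjunction transpose to get an \emph{injection} of $\End(\iota_i)$ into $\End(\rho_i\xo\iota_i)$ and then pass through the detour ``unital subalgebra of a division algebra $\Rightarrow$ no nontrivial idempotents $\Rightarrow$ indecomposable $\Rightarrow$ simple'', the paper simply observes that the transpose is a \emph{bijection} $\End(\iota_i)\iso\Hom(\Io_{X_i},\rho_i\xo\iota_i)\iso\End(\Io_{X_i})$, immediately giving $\End(\iota_i)\iso k$. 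For $\rho_i$, instead of your separate injectivity argument via the cusp equation, the paper uses that taking mates under the adjunction yields an isomorphism $\End(\iota_i)\iso\End(\rho_i)$ in one stroke. Your route works and the cusp-equation computation for $\rho_i$ is a nice explicit check, but you already had the bijection in hand for $\iota_i$ and could have finished in one line; likewise, invoking mates saves the second half of the argument entirely.
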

\begin{proof} By Proposition~\ref{prop:coherencedirectsums}, we have $\End_{\tc{C}}(\iota_i) \iso \Hom_{\tc{C}}(\Io_{X_i}, \rho_i\xo\iota_i)\iso \End_{\tc{C}}(\Io_{X_i})$; hence $\iota_i$ is simple if and only if $\Io_{X_i}$ is, and $\Io_{X_i}$ in turn is simple if and only if $X_i$ is.  Since $\iota_i$ and $\rho_i$ are adjoint, taking mates induces an isomorphism $\End_{\tc{C}}(\iota_i) \iso \End_{\tc{C}}(\rho_i)$, so $\rho_i$ is also simple if and only if $\iota_i$ is.
\end{proof}

\skiptocparagraph{Simple objects and indecomposable objects correspond}

It turns out that in a presemisimple $2$-category, the notion of simple object and of indecomposable object coincide.
\begin{proposition}[Simple if and only if indecomposable]
An object in a presemisimple 2-category is simple if and only if it is indecomposable.
\end{proposition}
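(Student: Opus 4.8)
The plan is to reduce to nonzero objects — both simplicity and indecomposability include nonzeroness by definition — and then treat the two implications separately, noting that \emph{indecomposable $\Rightarrow$ simple} follows essentially immediately from the defining decomposition property of presemisimple 2-categories, while \emph{simple $\Rightarrow$ indecomposable}, argued contrapositively and routed through the identity 1-morphism, is where the little bit of work lies.

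For \emph{indecomposable $\Rightarrow$ simple}, I would apply the definition of presemisimple 2-category to write $X \simeq \bigboxplus_{i=1}^{n} X_i$ with each $X_i$ simple. Since $X$ is nonzero it is not a zero object, so $n \geq 1$; and if $n \geq 2$, then regrouping as $X \simeq X_1 \boxplus \big(\bigboxplus_{i=2}^{n} X_i\big)$ exhibits $X$ as a direct sum of two nonzero objects, contradicting indecomposability. Here I would record the small auxiliary fact that a direct sum with a nonzero summand is itself nonzero: by Definition~\ref{def:directsum} the identity $1$-morphism of a direct sum splits as $\bigoplus_i \iota_i \xo \rho_i$, and each summand $\iota_i \xo \rho_i$ is nonzero because $\rho_i \xo \iota_i \iso \Io_{X_i}$ is nonzero. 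Thus $n = 1$, so $X \simeq X_1$ is simple, simplicity being invariant under equivalence.

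For \emph{simple $\Rightarrow$ indecomposable}, I would prove the contrapositive: if $X \simeq A_1 \boxplus A_2$ with $A_1$ and $A_2$ both nonzero, then $X$ is not simple. In the semisimple $1$-category $\Hom_{\tc{C}}(X,X)$, the object $\Io_X$ is, by Definition~\ref{def:directsum}, a direct sum of $\iota_1 \xo \rho_1$ and $\iota_2 \xo \rho_2$, each of which is nonzero by the auxiliary fact above. But a nonzero object of a semisimple $1$-category that splits as a direct sum of two nonzero objects is not simple: the inclusion of either summand is a split monomorphism, hence a monomorphism, that is neither a zero morphism (the summand is nonzero) nor an isomorphism (an isomorphism would force the complementary summand to vanish, by cancellation in a semisimple category). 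Hence $\Io_X$ is not a simple $1$-morphism. Finally, by Corollary~\ref{cor:pressdirectsum} every object of a presemisimple 2-category decomposes as a finite direct sum of objects with simple identity, so the hypotheses of Proposition~\ref{prop:simpleiffidentity} are met, and that proposition gives that $X$ is simple if and only if $\Io_X$ is; since $\Io_X$ is not, neither is $X$.

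The main point to get right is the asymmetry forced by the fact that a presemisimple $2$-category is neither additive nor idempotent complete: one cannot detect a decomposition of the \emph{object} $X$ from a decomposition of the $1$-morphism $\Io_X$, so the \emph{indecomposable $\Rightarrow$ simple} direction must be extracted from the built-in decomposition axiom rather than from a Schur-type argument, whereas \emph{simple $\Rightarrow$ indecomposable} becomes soft once Proposition~\ref{prop:simpleiffidentity} is in hand. An alternative route for the latter direction avoids the identity $1$-morphism: the inclusion $\iota_1 : A_1 \to A_1 \boxplus A_2$ is fully faithful — by Proposition~\ref{prop:coherencedirectsums} it is the right adjoint in the adjunction $\rho_1 \dashv \iota_1$, whose counit $\rho_1 \xo \iota_1 \To \Io_{A_1}$ is invertible — so $\iota_1$ presents a subobject of $X$ that is neither a zero object nor an equivalence, whence $X$ is not simple.
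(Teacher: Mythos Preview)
Your proof is correct. For indecomposable $\Rightarrow$ simple you and the paper argue identically from the decomposition axiom. For simple $\Rightarrow$ indecomposable, the paper takes exactly what you describe as the alternative route: it observes directly that the inclusion $\iota_1 : A_1 \to A_1 \boxplus A_2$ is fully faithful and hence exhibits a nontrivial subobject, so $X$ is not simple. Your primary route through Proposition~\ref{prop:simpleiffidentity} is valid but circuitous---it invokes a result whose own proof already contains the observation that a fully faithful inclusion witnesses a subobject---whereas the direct argument needs nothing beyond the definition of subobject and the fact (from Proposition~\ref{prop:coherencedirectsums}) that $\iota_1$ is a right adjoint with invertible counit.
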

\begin{proof}
Given a nontrivial decomposition of an object $X$ as $A_1 \boxplus A_2$, the inclusion 1-morphism $A_1 \ra X$ is fully faithful and therefore is a nontrivial subobject; thus $X$ is not simple.  Conversely, by the definition of presemisimplicity, any object is a sum of simple objects; for an indecomposable object, this sum can only have a single factor, and so the object itself is simple.
\end{proof}

\skiptocparagraph{Categorical domain Schur's lemma}

We now show that in a presemisimple $2$-category, the decomposition of an object into a sum of simple objects is unique.  To show this we need the following `categorical domain' version of Schur's lemma: though nonzero morphisms between simple objects in a presemisimple $2$-category need not be equivalences (and therefore the endomorphism category of a simple object need not be a `categorical division algebra'), nevertheless the composite of two nonzero morphisms between simple objects in a presemisimple $2$-category cannot be zero (and therefore the endomorphism category of a simple object is a kind of `categorical domain').

\begin{definition}[Categorical domain]
A \emph{categorical domain} is a semisimple monoidal $1$-category with duals such that the tensor product of two nonzero objects is nonzero.
\end{definition}
\begin{prop}[Categorical domain Schur's lemma] \label{prop:Schur} 
If $g:A\to B$ and $f:B\to C$ are nonzero $1$-morphisms between simple objects in a presemisimple $2$-category, then the composite $f\xo g$ is also nonzero.
\end{prop}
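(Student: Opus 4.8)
The plan is to prove the contrapositive-flavored statement directly: I will exhibit $\Io_B$ as a direct summand of $g \xo g^{\dagger}$ for a chosen right adjoint $g^{\dagger}\colon B\to A$ of $g$, deduce that $f$ is then a direct summand of $f\xo g\xo g^{\dagger} = (f\xo g)\xo g^{\dagger}$, and conclude that if $f\xo g$ were the zero $1$-morphism then $f$ would be forced to vanish too. The whole argument stays at the formal level of adjunctions and additive functors; it does not use the infusion/categorical-domain structure of the endomorphism categories (indeed, the special case $A=B=C$ recovers part of that structure as an \emph{output}).

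Concretely: first pick a right adjoint $g^{\dagger}\colon B\to A$ of $g$ (available since the $2$-category is presemisimple), with counit $\epsilon\colon g\xo g^{\dagger}\To\Io_B$ and unit $\eta\colon \Io_A\To g^{\dagger}\xo g$. Since $g\ne 0$ we have $\It_g\ne 0$, so the cusp equation $(\epsilon\xo\It_g)\xt(\It_g\xo\eta)=\It_g$ cannot have a vanishing left factor, whence $\epsilon\ne 0$. Now invoke Proposition~\ref{prop:simpleiffidentity}: because $B$ is simple, $\Io_B$ is a simple object of the semisimple category $\Hom_{\tc{C}}(B,B)$, so the nonzero $2$-morphism $\epsilon$ with simple target $\Io_B$ is a split epimorphism; this exhibits $\Io_B$ as a direct summand of $g\xo g^{\dagger}$. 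Next apply the functor $f\xo(-)\colon\Hom_{\tc{C}}(B,B)\to\Hom_{\tc{C}}(B,C)$, which is additive because horizontal composition of $2$-morphisms is $k$-bilinear, hence preserves direct sums; since $f\xo\Io_B\iso f$, this makes $f$ a direct summand of $f\xo g\xo g^{\dagger}=(f\xo g)\xo g^{\dagger}$. Finally, if $f\xo g$ were a zero $1$-morphism, then applying the (likewise additive) functor $(-)\xo g^{\dagger}$ would make $(f\xo g)\xo g^{\dagger}$ a zero object of $\Hom_{\tc{C}}(B,C)$, forcing its direct summand $f$ to be a zero $1$-morphism, contradicting the hypothesis. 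Hence $f\xo g\ne 0$.

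The only step with genuine content is the appeal to Proposition~\ref{prop:simpleiffidentity} to know that $\Io_B$ is a simple object of $\Hom_{\tc{C}}(B,B)$; once that and the semisimplicity of the Hom categories are in hand, the splitting of $\epsilon$ is immediate and the remaining manipulations are routine bookkeeping with adjunctions, direct summands, and additive functors. So I expect essentially no obstacle beyond correctly citing that proposition and handling the (standard) facts that additive functors between the semisimple Hom categories preserve zero objects and direct sums.
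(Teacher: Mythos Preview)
Your proof is correct and follows essentially the same approach as the paper: both take a right adjoint $g^*$ of $g$, use simplicity of $\Io_B$ (via Proposition~\ref{prop:simpleiffidentity}) to split the counit $\epsilon\colon g\xo g^*\To\Io_B$, and then observe that if $f\xo g=0$ then $f\xo g\xo g^*=0$, which combined with the section of $\epsilon$ forces $f=0$. Your phrasing in terms of direct summands is a cosmetic repackaging of the paper's ``precompose $\It_f\xo\epsilon$ with the section'' step.
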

\begin{proof}
Let $g^*: B \to A$ be a right adjoint of $g$ with counit $\epsilon: g \xo g^* \To \Io_B$.  By assumption $B$ is simple, and so by Corollary~\ref{cor:pressdirectsum} and Proposition~\ref{prop:simpleiffidentity}, the identity $\Io_B$ is simple. As a counit, $\epsilon$ must be nonzero, and as a nonzero morphism to a simple object in a semisimple category, it must have a section.  If $f \xo g$ were zero, then $f \xo g \xo g^*$ would be zero and so the morphism $\It_f \xo \epsilon$ would necessarily be zero.  Precomposing with the section would imply that $\It_f$ itself was zero, which in turn would force $f$ to be zero.
\end{proof}
\begin{corollary}[Infusion if and only if categorical domain] \label{cor:fusiondomain}
A semisimple monoidal $1$-category with duals is infusion if and only if it is a categorical domain.
\end{corollary}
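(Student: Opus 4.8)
The plan is to prove the two implications separately, each by reduction to results already established above. For the forward implication --- that an infusion category is a categorical domain --- I would deloop and apply the categorical-domain Schur's lemma; for the reverse --- that a categorical domain is an infusion category --- I would run a short direct computation with the unitors, showing that a nontrivial splitting of the monoidal unit forces a tensor product of nonzero objects to vanish.

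For the forward direction, suppose $\oc{D}$ is an infusion category. By Example~\ref{eg:deloopfusion}, its delooping $\B\oc{D}$ --- the $2$-category with a single object $*$ and $\Hom_{\B\oc{D}}(*,*) = \oc{D}$ --- is a presemisimple $2$-category, so in particular it satisfies the hypotheses of Proposition~\ref{prop:simpleiffidentity} (cf.\ Corollary~\ref{cor:pressdirectsum}). The identity $1$-morphism $\Io_*$ is the tensor unit $I$ of $\oc{D}$, which is simple since $\oc{D}$ is infusion; hence $*$ is a simple object of $\B\oc{D}$. Given nonzero objects $f$ and $g$ of $\oc{D}$, viewed as nonzero $1$-endomorphisms of the simple object $*$, Proposition~\ref{prop:Schur} gives that $f \circ g$, that is the tensor product of $f$ and $g$ in $\oc{D}$, is nonzero. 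Thus the tensor product of any two nonzero objects of $\oc{D}$ is nonzero, i.e.\ $\oc{D}$ is a categorical domain.

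For the reverse direction, suppose $\oc{D}$ is a categorical domain and, seeking a contradiction, that its unit $I$ is not simple. Semisimplicity then gives a splitting $I \cong I_1 \oplus I_2$ with $I_1$ and $I_2$ both nonzero; let $e_1, e_2 \colon I \to I$ be the associated orthogonal idempotents, so that $e_1 + e_2 = \id_I$, $e_1 \circ e_2 = 0$, and each $e_i$ splits through $I_i$. The crux is to show $e_1 \otimes e_2 = 0$ as an endomorphism of $I \otimes I$: writing $e_1 \otimes e_2 = (e_1 \otimes \id_I) \circ (\id_I \otimes e_2)$ and using naturality of the left and right unitors together with the coherence identity $\lambda_I = \rho_I$, I would compute $\lambda_I \circ (e_1 \otimes e_2) = e_1 \circ e_2 \circ \lambda_I = 0$, and then invoke invertibility of $\lambda_I$. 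Since $e_1 \otimes e_2$ is an idempotent on $I \otimes I$ that splits through $I_1 \otimes I_2$ (via the tensor products of the inclusions and projections defining $e_1$ and $e_2$), its vanishing forces $\id_{I_1 \otimes I_2} = 0$, hence $I_1 \otimes I_2 = 0$ --- contradicting the categorical domain property since $I_1, I_2$ are nonzero. Therefore $I$ is simple and $\oc{D}$ is an infusion category.

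I expect the forward direction to be the conceptual core but essentially immediate once Proposition~\ref{prop:Schur} and the delooping are in place; the only point needing a line of care is confirming that $\B\oc{D}$ really meets the hypotheses of Proposition~\ref{prop:simpleiffidentity}. The main (mild) obstacle is the unitor bookkeeping in the reverse direction --- tracking left versus right unitors and using $\lambda_I = \rho_I$ --- together with the routine check that the image of $e_1 \otimes e_2$ is indeed $I_1 \otimes I_2$; neither is a genuine difficulty.
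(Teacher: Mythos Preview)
Your proposal is correct and matches the paper's proof. The forward direction is identical (deloop and apply Propositions~\ref{prop:simpleiffidentity} and~\ref{prop:Schur}); for the reverse, the paper simply cites~\cite[Sec~2.4]{ENO} for the fact that distinct simple summands of the unit have zero tensor product, whereas you supply the short unitor computation directly --- a more self-contained but otherwise equivalent argument.
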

\begin{proof}
By Proposition~\ref{prop:Schur} and Proposition~\ref{prop:simpleiffidentity} applied to Example~\ref{eg:deloopfusion}, an infusion category is a categorical domain.  Conversely, if a semisimple monoidal 1-category with duals has two distinct simple subobjects of its tensor unit, then the product of those objects is zero~\cite[Sec 2.4]{ENO}, preventing the category from being a domain.
\end{proof}

\nid
Note that if $g: A \to B$ and $f: B \to C$ are $1$-morphisms in a linear $2$-category and either one is a zero $1$-morphism, then their composite $f \circ g$ is also a zero $1$-morphism.  Thus, morphisms between simple objects in a presemisimple $2$-category satisfy a `two out of three property', that if any two of $f$, $g$, and $f \circ g$ are nonzero, then so is the third; in this sense, though not necessarily equivalences, nonzero morphisms between simple objects in a presemisimple $2$-category are a sort of `very weak equivalences'.

\skiptocparagraph{Uniqueness of decomposition}

\begin{prop}[Decomposition into simples is unique in presemisimple 2-categories] \label{prop:uniquedecomp}
The decomposition of any object in a presemisimple $2$-category into a finite direct sum of simple objects is unique up to permutation and equivalence.
\end{prop}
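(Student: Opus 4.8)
The plan is to compare the two decompositions through the inclusion and projection $1$-morphisms they furnish, using two facts established above: that the identity $1$-morphism of a simple object is simple (Proposition~\ref{prop:simpleiffidentity}, via Corollary~\ref{cor:pressdirectsum}), and that a simple object of a semisimple $1$-category admits an essentially unique direct sum decomposition---it has exactly one nonzero summand, which is isomorphic to the object---since the endomorphism algebra of a simple object of a semisimple $1$-category is a division algebra and hence has no idempotents other than $0$ and $1$.

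Concretely, write $X \equiv \bigboxplus_{i \in I} A_i \equiv \bigboxplus_{j \in J} B_j$ with all $A_i$ and $B_j$ simple, and let $\iota^A_i\colon A_i \to X$, $\rho^A_i\colon X \to A_i$ and $\iota^B_j, \rho^B_j$ be the associated inclusion and projection $1$-morphisms. By Definition~\ref{def:directsum} (iterating the binary case), $\rho^A_i \xo \iota^A_i \iso \Io_{A_i}$ and $\Io_X \iso \bigoplus_i \iota^A_i \xo \rho^A_i$ in the semisimple $1$-category $\Hom_{\tc{C}}(X,X)$, and likewise for the $B$-decomposition. Composition with a fixed $1$-morphism on either side is a $k$-linear functor and so preserves direct sums, so substituting $\Io_X \iso \bigoplus_{j} \iota^B_j \xo \rho^B_j$ into $\Io_{A_i} \iso \rho^A_i \xo \Io_X \xo \iota^A_i$ gives, in the semisimple $1$-category $\Hom_{\tc{C}}(A_i, A_i)$, the direct sum decomposition
\[
\Io_{A_i} \iso \bigoplus_{j \in J}\, \big(\rho^A_i \xo \iota^B_j\big) \xo \big(\rho^B_j \xo \iota^A_i\big),
\]
and symmetrically $\Io_{B_j} \iso \bigoplus_{i \in I} \big(\rho^B_j \xo \iota^A_i\big) \xo \big(\rho^A_i \xo \iota^B_j\big)$ in $\Hom_{\tc{C}}(B_j, B_j)$.

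Since $\Io_{A_i}$ is simple, the first decomposition has exactly one nonzero summand, say at index $j(i) \in J$, and that summand is $\iso \Io_{A_i}$; likewise the second has a unique nonzero summand at an index $i(j) \in I$, $\iso \Io_{B_j}$. Put $p := \rho^B_{j(i)} \xo \iota^A_i\colon A_i \to B_{j(i)}$ and $q := \rho^A_i \xo \iota^B_{j(i)}\colon B_{j(i)} \to A_i$; then the $j(i)$-summand above is $q \xo p$, so $q \xo p \iso \Io_{A_i}$ and in particular $p$ and $q$ are both nonzero. By Proposition~\ref{prop:Schur} (the categorical domain Schur's lemma), $p \xo q \neq 0$; but $p \xo q$ is exactly the $i$-th summand in the decomposition of $\Io_{B_{j(i)}}$, so by uniqueness $i(j(i)) = i$ and $p \xo q \iso \Io_{B_{j(i)}}$. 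Running the identical argument with the roles of $A$ and $B$ interchanged gives $j(i(j)) = j$, so $i \mapsto j(i)$ is a bijection $I \to J$; and $q \xo p \iso \Io_{A_i}$ together with $p \xo q \iso \Io_{B_{j(i)}}$ exhibits $p$ as an equivalence $A_i \equiv B_{j(i)}$. Hence the two decompositions agree up to permutation and equivalence.

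I expect the real content to be concentrated in the one structural fact that a simple object of a semisimple $1$-category decomposes essentially uniquely (reducing to the absence of nontrivial idempotents in a division algebra), with the categorical domain Schur's lemma supplying exactly the leverage needed to promote the one-sided identity $q \xo p \iso \Io_{A_i}$ to a genuine two-sided equivalence; the remaining work---iterating binary direct sums, checking that composition preserves them, and keeping the index bookkeeping straight---is routine.
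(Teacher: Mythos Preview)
Your proof is correct and follows essentially the same approach as the paper's: decompose $\Io_{A_i}$ via the other decomposition, use simplicity of $\Io_{A_i}$ to isolate a unique nonzero summand, and invoke the categorical domain Schur's lemma to upgrade the one-sided isomorphism to a two-sided equivalence. The paper additionally verifies that the resulting equivalences $A_i \equiv B_{j(i)}$ can be chosen compatibly with the inclusion and projection $1$-morphisms, but this refinement is not required for the statement as phrased.
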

\begin{proof}
Let $\bigboxplus_{i \in I} X_i$ and $\bigboxplus_{j \in J} X'_j$ be direct sum decompositions of an object $X$ into simple objects, with inclusion and projection $1$-morphisms $\iota_i: X_i \leftrightarrows X: \rho_i$ and $\iota'_j:X_j' \leftrightarrows X:\rho'_j$. Note that $\oplus_{j \in J} \, \rho_i \circ \iota'_j \circ \rho_j' \circ \iota_i\iso \rho_i \circ \iota_i \iso \Io_{X_i}$. Since $\Io_{X_i}$ is simple, it follows that there exists a unique $f(i)\in J$ such that $\rho_i \circ \iota'_{f(i)} \circ \rho_{f(i)}' \circ \iota_i$ is nonzero (and in this case isomorphic to $\Io_{X_i}$). By Proposition~\ref{prop:Schur}, a $1$-morphism $F$ between simple objects is zero if and only if $F^* \xo F $ is zero. Using Proposition~\ref{prop:coherencedirectsums}, it follows that for every $i\in I$ there is a unique $f(i) \in J$ such that $\rho_{f(i)}'\circ \iota_i:X_i \to X_{f(i)}'$ is nonzero. The same argument applied to the decomposition $\oplus_{i \in I} \rho'_j \xo \iota_{i}\xo \rho_i \xo \iota'_j \iso \Io_{X'_j}$ shows that for every $j\in J$ there is a unique $g(j) \in I$ such that $\rho_j' \xo \iota_{g(j)}: X_{g(j)} \to X_j'$ is nonzero. Thus $f:I \to J$ is a bijection with inverse $g:J \to I$ and $\rho_{f(i)}' \xo \iota_i$ is an equivalence since
\begin{align*}
\Io_{X_i} &\iso \bigoplus_{j \in J} \rho_i \xo \iota'_{j} \xo \rho'_{j} \xo \iota_i \iso \rho_i \xo \iota'_{f(i)} \xo \rho'_{f(i)} \xo \iota_i 
\\
\Io_{X_{f(i)}'} &\iso \bigoplus_{i' \in I}   \rho'_{f(i)} \xo \iota_{i'} \xo \rho_{i'} \xo \iota'_{f(i)}\iso\rho'_{f(i)} \xo \iota_i\xo \rho_i \xo \iota'_{f(i)}.
\end{align*} 
Finally note that
\begin{align*}
\iota_{f(i)}' \xo \rho_{f(i)}' \xo \iota_i &\iso \bigoplus_{j \in J} \iota_j' \xo \rho_j' \xo \iota_i \iso \iota_i \\
\rho_{f(i)}' \xo \iota_i \xo \rho_i &\iso \bigoplus_{i' \in I} \rho_{f(i)}' \xo \iota_{i'} \xo \rho_{i'} \iso \rho_{f(i)}'.
\end{align*}
Hence there is a bijection $f:I \to J$ and equivalences $e_i: X_i \to X'_{f(i)}$ such that $\iota_{f(i)}' \xo e_i \iso \iota_i$ and $e_i \xo \rho_i \iso \rho_{f(i)}'$, as required.
\end{proof}

\skiptocparagraph{Components of presemisimple $2$-categories}

A presemisimple $2$-category may itself be `decomposable' in the sense that its set of simple objects splits into two pieces, such that there are no nonzero morphisms between the simple objects in one piece and the simple objects in the other piece.  Furthermore, each `indecomposable' collection of simples will be completely connected in the sense that there is a nonzero morphism between any two simples in the collection; we will refer to such a completely connected collection of simples as a component of the $2$-category.

\begin{definition}[Components of presemisimple 2-categories] \label{def:connectedcomponent}
Let $\tc{C}$ be a presemisimple $2$-category.  Two simple objects $A$ and $B$ in $\tc{C}$ are \emph{in the same component} if there is a nonzero $1$-morphism $A \to B$, that is if $\Hom_{\tc{C}}(A,B) \neq 0$.  The \emph{set of components} of $\tc{C}$, denoted $\pi_0 \tc{C}$, is the quotient of the set of simples by the equivalence relation of being in the same component.
\end{definition}

\nid Note that being in the same component is indeed an equivalence relation: reflexivity is clear; symmetry follows from the fact that the right adjoint of a nonzero morphism is nonzero; and transitivity is precisely the content of the categorical domain Schur's lemma.

Given an indecomposable multifusion category (that is one that is not the direct sum of two nontrivial multifusion categories), the associated unfolded $2$-category (see Construction~\ref{con:unfold}) is connected (that is has a single component).  More generally, the set of indecomposable factors of a multifusion category corresponds to the set of components of its unfolding.

\begin{remark}[Components as indecomposable summands] \label{rem:components}
Though we will not need it, there is a natural notion of direct sum of linear $2$-categories, and therefore of indecomposable linear $2$-category.  When a presemisimple $2$-category $\tc{C}$ is in fact additive, we may think of its components as the summands in the finest direct sum decomposition of $\tc{C}$ into indecomposable linear $2$-categories.
\end{remark}

\subsubsection{Dimensions of presemisimple $2$-categories}

In a finite semisimple $1$-category, there are finitely many isomorphism classes of simple objects, the endomorphism algebra of any simple object is the base field, and there are no morphisms between non-isomorphic simples.  There is therefore a natural invariant of such a category, namely the number of isomorphism classes of simple objects.  This `dimension' is of course a natural number.  We now describe the analogous notion of dimension for finite presemisimple $2$-categories.  This notion is complicated by the fact that, in a presemisimple $2$-category, the endomorphism fusion categories of simple objects are not necessarily trivial and there can be nontrivial morphisms between distinct simple objects.  In particular, as a result, the dimension of a presemisimple $2$-category will not necessarily be a natural number.


Recall the notion of the global dimension of a fusion category $\oc{C}$~\cite{Mueger}: any simple object $x \in \oc{C}$ is isomorphic to its double dual $x^{\ast\ast}$; given any isomorphism $a: x \to x^{\ast\ast}$, one uses the counit of the duality $(x \dashv x^\ast)$ and the unit of the duality $(x^\ast \dashv x^{\ast\ast})$ to form the quantum trace $\Tr(a) \in k$; the product of the quantum trace of $a$ and the quantum trace of ${}^\ast(a^{-1})$ is independent of the choice of morphism $a$ and is called the squared norm of the simple object $x$; the sum of the squared norms of a set of distinct simple objects is called the global dimension of the fusion category.

We describe the analogous notions for $1$-morphisms in an appropriate $2$-category.

\begin{prop}[Double adjunction is trivial] \label{prop:doubledual}
Any simple $1$-morphism $f$ in a finite presemisimple $2$-category is isomorphic to its double right adjoint $f^{\ast\ast}$.
\end{prop}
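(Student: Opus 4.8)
The plan is to transport up one categorical level the classical fact that a simple object of a (multi)fusion category is isomorphic to its double dual. Write $f\colon A\to B$ and let $f\dashv f^{*}\dashv f^{**}$ be the chain of right adjoints, which exists because every $1$-morphism in a presemisimple $2$-category has a right and a left adjoint (Definition~\ref{def:presstwocat}). Since the Hom-categories of $\tc{C}$ are semisimple, it suffices to produce a nonzero $2$-morphism $f^{**}\To f$ after first checking that $f^{**}$, like $f$, is a simple $1$-morphism: in a semisimple $1$-category there are no nonzero morphisms between non-isomorphic simple objects, so a nonzero $2$-morphism between the simple objects $f^{**}$ and $f$ of $\Hom_{\tc{C}}(A,B)$ is automatically an isomorphism.

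First I would check that right adjoints of simple $1$-morphisms are simple, which gives simplicity of $f^{*}$ and hence of $f^{**}$. The $1$-morphism $f^{*}$ is nonzero (if $f^{*}\iso 0$ then the unit $\Io_A\To f^{*}\xo f$ is zero, and a cusp equation forces $\It_f=0$, contradicting simplicity of $f$), so in the semisimple category $\Hom_{\tc{C}}(B,A)$ it is enough to see that $f^{*}$ is indecomposable. If $f^{*}\iso g_1\oplus g_2$ with both $g_i\colon B\to A$ nonzero, then, using that each $g_i$ has a left adjoint $g_i^{L}$ and that a direct sum of left adjoints is left adjoint to the direct sum, $g_1^{L}\oplus g_2^{L}$ would be a left adjoint of $f^{*}$; by uniqueness of adjoints $f\iso g_1^{L}\oplus g_2^{L}$. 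Neither $g_i^{L}$ can be zero — the right adjoint of the zero $1$-morphism is zero, and $g_i$ is the right adjoint of $g_i^{L}$, so $g_i^{L}\iso 0$ would force $g_i\iso 0$ — which contradicts indecomposability of the simple $1$-morphism $f$.

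Next I would extract the nonzero $2$-morphism $f^{**}\To f$ by formal adjunction nonsense. The adjunction $f^{*}\dashv f^{**}$ induces, by whiskering into $\Hom_{\tc{C}}(-,B)$, an adjoint pair of functors $(-\xo f^{**})\dashv(-\xo f^{*})$ between Hom-categories, and evaluating its hom-set bijection at the objects $\Io_B$ and $f$ yields a linear isomorphism
\[
\Hom_{\Hom_{\tc{C}}(A,B)}\!\big(f^{**},\,f\big)\;\iso\;\Hom_{\Hom_{\tc{C}}(B,B)}\!\big(\Io_B,\; f\xo f^{*}\big).
\]
The right-hand side is nonzero: the counit $\epsilon\colon f\xo f^{*}\To\Io_B$ of $f\dashv f^{*}$ is a nonzero element of $\Hom_{\Hom_{\tc{C}}(B,B)}(f\xo f^{*},\Io_B)$ (otherwise a cusp equation would force $\It_f=0$), and in a semisimple $1$-category $\dim\Hom(X,Y)=\dim\Hom(Y,X)$, so $\Hom_{\Hom_{\tc{C}}(B,B)}(\Io_B, f\xo f^{*})$ is nonzero as well. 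Hence $\Hom_{\Hom_{\tc{C}}(A,B)}(f^{**},f)\neq 0$, and by the first paragraph $f\iso f^{**}$. (Alternatively, one could deduce this from the folded multifusion category of $\tc{C}$, in which $f$ and $f^{**}$ become a simple object and its double dual; but the direct argument above avoids unpacking that correspondence.)

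The main obstacle I anticipate is the middle step: setting up the whiskering adjunction $(-\xo f^{**})\dashv(-\xo f^{*})$ with the correct handedness (it is induced by $f^{*}\dashv f^{**}$, not by $f\dashv f^{*}$) and identifying the two sides of its hom-set bijection with exactly $\Hom(f^{**},f)$ and $\Hom(\Io_B, f\xo f^{*})$ — getting the slots backwards would land one instead at $\Hom(\Io_B,\text{something})$ that need not be nonzero. The remaining pieces, namely the indecomposability argument for adjoints of simples and the concluding Schur-type step, are routine given the results already established.
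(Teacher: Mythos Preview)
Your proposal is correct and follows essentially the same approach as the paper: both arguments use an adjunction isomorphism together with the symmetry $\dim\Hom(X,Y)=\dim\Hom(Y,X)$ in a finite semisimple $1$-category to show that $\Hom(f^{**},f)$ (equivalently $\Hom(f,f^{**})$) is nonzero, and then conclude by simplicity of $f$ and $f^{**}$. The paper's chain is $\Hom(f,f^{**})\iso\Hom(\Io_B,f^{*}\xo f^{**})\iso\Hom(f^{*}\xo f^{**},\Io_B)\iso\Hom(f^{**},f^{**})\iso k$, which is a minor rearrangement of your chain; the paper also simply asserts that $f^{**}$ is simple if and only if $f$ is, whereas you supply the indecomposability-of-adjoints argument in detail.
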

\begin{proof}
This is similar to the analogous result for fusion categories~\cite[Prop 2.1]{ENO}.  The $1$-morphism $f: A \ra B$ is an object of the finite semisimple $1$-category $\Hom(A,B)$.  In any finite semisimple $1$-category $\oc{C}$, for any two objects $X,Y \in \oc{C}$, there is a noncanonical isomorphism $\Hom_{\oc{C}}(X,Y) \iso \Hom_{\oc{C}}(Y,X)$.  Note that $f^{**}$ is simple if and only if $f$ is simple.  We therefore have $\Hom(f,f^{**}) \iso \Hom(\Io_B, f^* \xo f^{**}) \iso \Hom(f^* \xo f^{**},\Io_B) \iso \Hom(f^{**},f^{**}) \iso k$.  Since both $f$ and $f^{**}$ are simple, the existence of a nontrivial morphism between them implies they are isomorphic.
\end{proof}

\nid In a locally semisimple $2$-category, every $2$-endomorphism $\mu:g\To g$ of a simple $1$-morphism $g$ is proportional to the identity $2$-morphism; we denote the proportionality factor by $\langle \mu \rangle \in k$, that is $\mu = \langle \mu \rangle \It_{g}$. 

\begin{definition}[Squared norm of 1-morphism] \label{def:squarednorm} 
The \emph{squared norm} of a simple $1$-morphism $f:A\to B$ between simple objects in a finite presemisimple $2$-category is the product
\[ 
\|f\|:= \left\langle\vphantom{\frac{a}{b}}\epsilon_{f^*}\xt(\It_{f^*} \xo a)\xt\eta_f\right\rangle~~\left\langle\vphantom{\frac{a}{b}}\epsilon_f\xt(a^{-1} \xo \It_{f^*})\xt\eta_{f^*} \right\rangle \in k
\]
where $a: f \To f^{**}$ is an arbitrary $2$-isomorphism, $\eta_f$ and $\epsilon_f$ are the unit and counit of the adjunction $f \dashv f^*$, and $\eta_{f^*}$ and $\epsilon_{f^*}$ are the unit and counit of the adjunction $f^* \dashv f^{**}$.
\end{definition}
\nid Here simplicity of the objects $A$ and $B$ is necessary to ensure that $\Io_A$ and $\Io_B$ are simple, so that we can extract scalars from the two `quantum trace' endomorphisms in the formula for the squared norm.  Simplicity of $f$ ensures that the squared norm is independent of the choice of $2$-morphism $a: f \To f^{**}$, and hence only depends on the isomorphism class of $f$.

\begin{definition}[Dimension of Hom category] \label{def:dimhom}
For simple objects $A$ and $B$ in a finite presemisimple $2$-category $\tc{C}$, the \emph{dimension} of the category $\Hom_{\tc{C}}(A,B)$ is
\[
\dim(\Hom_{\tc{C}}(A,B)):=~\sum_{f:A\to B} \|f\|
\]
where the sum is over isomorphism classes of simple $1$-morphisms $f \in \Hom_{\tc{C}}(A,B)$.
\end{definition}
\nid This definition is analogous to that of the dimension of the `off-diagonal subcategories' of a multifusion category~\cite[Sec 2.4]{ENO}.  Note that for a simple object $A$, the dimension $\dim(\Hom_{\tc{C}}(A,A))$ is the usual global dimension of the fusion category $\Hom_{\tc{C}}(A,A)$; when over an algebraically closed field of characteristic zero, that dimension is always nonzero~\cite[Thm 2.3]{ENO}.

\begin{proposition}[Dimension is uniform within a component] \label{prop:sameblock} 
Let $\{A_i\}_{i \in I}$ be the simple objects of a connected component of a finite presemisimple $2$-category $\tc{C}$.  Then the categories $\Hom_{\tc{C}}(A_i,A_j)$ all have the same dimension, for $i,j \in I$.
\end{proposition}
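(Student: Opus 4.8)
The plan is to show that for any two simple objects $A_i$ and $A_j$ in the same component, the dimension $\dim(\Hom_{\tc{C}}(A_i,A_j))$ equals $\dim(\Hom_{\tc{C}}(A_i,A_i))$; by symmetry and the fact that equality is transitive, this suffices. First I would fix a nonzero $1$-morphism $\phi: A_i \to A_j$, which exists because $A_i$ and $A_j$ are in the same component, and consider the induced functor $\phi \xo -: \Hom_{\tc{C}}(A_i,A_i) \to \Hom_{\tc{C}}(A_i,A_j)$ together with its adjoint $\phi^* \xo -$ going the other way, where $\phi^* \dashv \phi$. The key observation is that $\phi^* \xo \phi : A_i \to A_i$ is a $1$-endomorphism of a simple object, and by semisimplicity of $\Hom_{\tc{C}}(A_i,A_i)$ it decomposes as a finite direct sum of simple $1$-morphisms; the categorical domain Schur's lemma (Proposition~\ref{prop:Schur}) together with the fact that $\phi \neq 0$ implies $\phi^* \xo \phi \neq 0$. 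I would then want to argue that, after possibly replacing $\phi$ by one of its simple summands and rescaling, one may arrange $\phi^* \xo \phi$ to contain $\Io_{A_i}$ as a summand, or more robustly, work directly with the dimension-counting identity.

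The heart of the argument is a bookkeeping identity relating squared norms across the two Hom-categories. Concretely, for each simple $1$-morphism $g: A_i \to A_i$, the composite $\phi \xo g : A_i \to A_j$ decomposes into simples, and conversely for each simple $h: A_i \to A_j$ the composite $\phi^* \xo h$ decomposes into simples; these two operations set up a correspondence between simple objects of the two categories. The quantities $\dim(\End_{\tc{C}}(-))$ and the squared norms are multiplicative/additive under adjunction and composition in the appropriate way — this is precisely the kind of statement that Appendix~B is advertised to contain. I would invoke (or reprove in this special case) the formula expressing how squared norms behave under composition with a fixed $1$-morphism and under taking adjoints, namely that $\sum_{h} \|h\| \cdot \dim\Hom(h, \phi \xo g)$ relates to $\|g\| \cdot \|\phi\|$ up to the global dimension normalization, and then sum over all simple $g$. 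The upshot should be
\[
\dim(\Hom_{\tc{C}}(A_i,A_j)) \cdot (\text{normalization}) = \dim(\Hom_{\tc{C}}(A_i,A_i)) \cdot (\text{same normalization}),
\]
with the normalization factor being the squared norm of $\phi$ weighted by $\dim(\End_{\tc{C}}(\Io_{A_i}))$, which is nonzero since it is the global dimension of a fusion category over an algebraically closed field of characteristic zero (cf.\ the remark after Definition~\ref{def:dimhom}).

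The main obstacle I anticipate is establishing the precise multiplicativity of squared norms under $1$-morphism composition — this requires carefully tracking how the chosen $2$-isomorphisms $a: f \To f^{**}$ interact with the adjunction data when $f$ is built from $\phi$ and a simple $g$, and showing the scalars multiply correctly independent of all the choices involved. This is exactly the sort of delicate pivotal/adjunction computation that the paper has deferred to Appendix~B, so in a clean writeup I would state the needed composition formula as a lemma (citing Appendix~B) and then the proof of the present proposition becomes a short summation argument. A secondary subtlety is ensuring the correspondence $g \mapsto \{$simple summands of $\phi \xo g\}$ genuinely partitions the simples of $\Hom_{\tc{C}}(A_i,A_j)$ without overcounting; this follows from the two-out-of-three property for nonzero morphisms between simple objects noted after Corollary~\ref{cor:fusiondomain}, applied to $\phi$, $g$, and $\phi \xo g$, combined with faithfulness coming from adjunction and simplicity.
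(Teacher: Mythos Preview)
Your approach is genuinely different from the paper's, and considerably more laborious. The paper observes that the folded category $\bigoplus_{i,j\in I}\Hom_{\tc{C}}(A_i,A_j)$ is an indecomposable multifusion category (indecomposability following immediately from $\Hom_{\tc{C}}(A_i,A_j)\neq 0$ for all $i,j$ in the same component), and then cites \cite[Prop~2.17]{ENO}, which states precisely that the block categories of an indecomposable multifusion category all have the same dimension. That is the entire proof.

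Your plan, by contrast, is to reprove that ENO result from scratch inside the 2-category, and there is a real gap in how you propose to do so. The Appendix~B formulas you intend to cite are established only for \emph{spherical prefusion} 2-categories, whereas the present proposition is stated for bare finite presemisimple 2-categories with no spherical, pivotal, or even monoidal structure assumed. The squared norm of Definition~\ref{def:squarednorm} is not the same quantity as the dimension of Definition~\ref{def:quantumdimension} (the latter requires a pivotal structure even to be defined), so the multiplicativity you need is not supplied by Appendix~B. The correct multiplicativity statement lives in the multifusion category literature and is exactly what \cite[Prop~2.17]{ENO} provides; rather than rederiving it by hand, you should recognise the folded multifusion category (cf.\ Construction~\ref{con:fold}) and invoke that result directly.
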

\begin{proof}
The multifusion category $\bigoplus_{i,j \in I}\Hom_{\tc{C}}(A_i, A_j)$ is indecomposable, because \linebreak $\Hom_{\tc{C}}(A_i, A_j)\neq 0$ for simple objects in the same connected component.  By~\cite[Prop 2.17]{ENO}, it follows that the component categories $\Hom_{\tc{C}}(A_i,A_j)$ all have the same dimension.
\end{proof}

\nid%
We now have a notion of the dimension of each component of a presemisimple $2$-category (namely the dimension of any Hom category between simples in that component), and we are ready to assemble them into a notion of the dimension of the whole $2$-category.  Recall that for a $1$-groupoid, the natural notion of size (the `groupoid cardinality'~\cite{Groupoidification}) is the sum over components of the reciprocal of the size of the automorphism groups.  The dimension for a presemisimple $2$-category is analogous. 

\begin{definition}[Dimension of presemisimple 2-category] \label{def:dimension2cat}
The \emph{dimension} of a finite presemisimple $2$-category $\tc{C}$ is
\[
\dim(\tc{C}) := \sum_{[x]\in \pi_0\tc{C}} \frac{1}{\dim(\vphantom{\frac{a}{b}}\!\End_{\tc{C}}(x))} \in k.
\]
Here the sum is over components $[x]$ of $\tc{C}$, and $x$ is any simple object in the component $[x]$.
\end{definition}

\nid Of course, the dimension of a finite presemisimple 2-category is only defined when the dimensions of all its endomorphism fusion categories are nonzero; this is ensured by our standing assumption that the base field is algebraically closed of characteristic zero.  

As for fusion categories, when over an algebraically closed field of characteristic zero, the dimension of a presemisimple $2$-category cannot vanish.


\begin{prop}[Dimension is nonzero]
For $\tc{C}$ a finite presemisimple $2$-category over an algebraically closed field of characteristic zero, the dimension $\dim(\tc{C})$ is nonzero.
\end{prop}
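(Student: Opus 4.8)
The plan is to reduce the nonvanishing of $\dim(\tc{C})$ to the fact that the global dimension of an individual fusion category is not merely nonzero but \emph{totally positive}, so that no cancellation can occur in the defining sum.

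First I would unwind Definition~\ref{def:dimension2cat}. Since $\tc{C}$ is finite presemisimple it has finitely many equivalence classes of simple objects, hence finitely many components, so $\dim(\tc{C}) = \sum_{[x]\in\pi_0\tc{C}} \dim(\End_{\tc{C}}(x))^{-1}$ is a \emph{finite} sum indexed by the components of $\tc{C}$, with $x$ an arbitrary simple object of the component $[x]$. For such an $x$, Corollary~\ref{cor:pressdirectsum} together with Proposition~\ref{prop:simpleiffidentity} shows that $\Io_x$ is a simple object of $\Hom_{\tc{C}}(x,x)$; combined with Remark~\ref{rem:endismultifusion} this exhibits $\End_{\tc{C}}(x)$ as a fusion category, and $\dim(\End_{\tc{C}}(x))$ is exactly its global dimension in the sense of~\cite{Mueger} recalled before Proposition~\ref{prop:doubledual}.

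Next I would invoke the archimedean positivity of fusion-categorical global dimensions. Over an algebraically closed field of characteristic zero, the global dimension of a fusion category is a totally positive algebraic number --- an algebraic integer all of whose Galois conjugates are positive real numbers --- by~\cite{ENO}; this strengthens the nonvanishing~\cite[Thm 2.3]{ENO} already quoted after Definition~\ref{def:dimhom}. These dimensions therefore lie in the algebraic closure $\overline{\QQ}\subseteq k$ of the prime field, and the set of totally positive elements of $\overline{\QQ}$ is closed under taking reciprocals and under finite sums. Hence each summand $\dim(\End_{\tc{C}}(x))^{-1}$ is totally positive, and so is the finite sum $\dim(\tc{C})$; in particular $\dim(\tc{C}) \neq 0$.

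The only genuinely nontrivial input is the total positivity, rather than mere nonvanishing, of the fusion-category global dimension: a sum of nonzero elements of $k$ can of course vanish, so to rule out cancellation among the terms $\dim(\End_{\tc{C}}(x))^{-1}$ one must use the positivity available through every embedding $\overline{\QQ}\hookrightarrow\CC$. Everything else in the argument is bookkeeping with the definitions, and I expect no further obstacle.
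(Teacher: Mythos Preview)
Your argument is correct and rests on the same core idea as the paper's proof---positivity, not merely nonvanishing, of fusion-category global dimensions is what prevents cancellation in the sum---but the execution differs. You invoke the refined fact that the global dimension of a fusion category over any algebraically closed field of characteristic zero is a \emph{totally positive algebraic number}, hence lies in $\overline{\QQ}\subseteq k$, and then use that total positivity is closed under reciprocals and finite sums. The paper instead runs a Lefschetz-principle argument: the finite presemisimple $2$-category is defined over a subfield $k'\subseteq k$ finitely generated over $\QQ$, and any such $k'$ embeds in $\CC$; over $\CC$ each $\dim(\End_{\tc{C}}(x))$ is positive real by~\cite[Thm~2.3]{ENO}, so the sum is positive real, and nonvanishing transfers back along the embedding. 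The paper's route is slightly more self-contained, needing only the positivity over $\CC$ already cited in the text, whereas your route imports the stronger total-positivity statement (and implicitly the fact that the global dimension is algebraic at all, which is itself a consequence of rigidity-type arguments). Either way the proof goes through.
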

\begin{proof}
By~\cite[Thm 2.3]{ENO}, a fusion category over $\CC$ has positive real global dimension.  Thus the dimension of a presemisimple $2$-category over $\CC$ is positive real, in particular nonzero.  The result follows by noting that any finite presemisimple $2$-category over an algebraic closed field $k$ of characteristic zero can be defined over a subfield $k'$ that is finitely generated over $\QQ$ and which can therefore be embedded in $\CC$.
\end{proof}

\begin{remark}[Nonzero characteristic]\label{rem:nonzerochar1}
We could proceed without a characteristic zero assumption, at the expense of restricting attention to \emph{non-degenerate} finite presemisimple $2$-categories $\tc{C}$, that is those for which the dimensions $\dim(\End_{\tc{C}}(x))$ are nonzero for all simple objects $x$ and for which the overall dimension $\dim(\tc{C})$ is nonzero.
\end{remark}

\subsection{Idempotent complete $2$-categories} \label{sec:ic2cat}

A semisimple $1$-category is a presemisimple $1$-category that is also additive and idempotent complete.  We described the notion of presemisimple $2$-category and of additive $2$-category; we now discuss idempotent completeness for $2$-categories.  Further details about idempotent completeness and idempotent completion, and a number of proofs, are given in Appendix~\ref{app:ic}.

\subsubsection{Categorified idempotents}
\skiptocparagraph{Idempotents and split idempotents}

In a 1-category, a \emph{section-retraction pair} is a pair of 1-morphisms $i: A \to B$ and $r: B \to A$ such that $r \xo i=\Io_A$.  Associated to such a pair there is the 1-morphism $e := i \xo r : B \to B$, which is an \emph{idempotent} (or `projection'), meaning it is a 1-morphism $e: B \to B$ such that $e \xo e = e$.  An arbitrary idempotent $e: B \to B$ is \emph{splitable} (or more informally `split') when there exists a section-retraction pair $(i,r)$ such that $e = i \xo r$; a `splitting' is a choice of such a pair.  As before, a 1-category is idempotent complete if every idempotent splits.  A 2-category $\cC$ is \emph{locally idempotent complete} if for all objects $A, B \in \cC$, the 1-category $\Hom_\cC(A,B)$ is idempotent complete.

\skiptocparagraph{Idempotent monads and reflectively split idempotent monads}

A natural categorification of the notion of section-retraction pair is the following: a \emph{reflective subcategory} is a pair of functors $\iota: \cA \to \cB$ and $\rho: \cB \to \cA$ where $\iota$ is fully faithful and $\rho$ is equipped with the structure of a left adjoint of $\iota$.  More generally, in a 2-category, a \emph{reflective adjunction} is a fully faithful 1-morphism $\iota: A \to B$ together with a left adjoint $\rho: B \to A$.  An adjunction $\iota \vdash \rho$ is reflective exactly when its counit is an isomorphism $\epsilon: \rho \xo \iota \xra{\cong} \Io_A$; this condition on the counit is a strong categorification of the condition $r \xo i = \Io_A$ on a section-retraction pair. 
Associated to a reflective adjunction $\iota \vdash \rho$ in a 2-category, there is the 1-morphism $E := \iota \xo \rho : B \to B$, which is an \emph{idempotent monad} (a kind of `categorified projection'), meaning it is a 1-morphism $E: B \to B$ equipped with a 2-isomorphism $m: E \xo E \to E$ (determined by the counit of the adjunction) and a 2-morphism $u: \Io_B \to E$ (determined by the unit of the adjunction), such that $m \xt (m \xo \It_E) = m \xt (\It_E \xo m)$ and $m \xt (u \xo \It_E) = \It_E = m \xt (\It_E \xo u)$.
An idempotent monad $E: B \to B$ is \emph{reflectively splitable} (or more informally `reflectively split') when there exists a reflective adjunction $\iota \vdash \rho$ and an isomorphism of monads $E \cong \iota \xo \rho$; a `reflective splitting' is a choice of such an adjunction and isomorphism.

\skiptocparagraph{Monads and split monads}

If we drop the fully faithful (equivalently counit isomorphism) condition in a reflective adjunction, we are left simply with 1-morphisms $\iota: A \to B$ and $\rho: B \to A$ forming an \emph{adjunction} $\iota \vdash \rho$.  
The associated 1-morphism $E := \iota \xo \rho : B \to B$ is a \emph{monad}, meaning it is a 1-morphism $E: B \to B$ equipped with a 2-morphism (not necessarily a 2-isomorphism) $m: E \xo E \to E$ and a 2-morphism $u: \Io_B \to E$, satisfying the same equations as an idempotent monad. (Concisely, a monad $E$ in a 2-category $\cC$ is an algebra object in the endomorphism 1-category $\Hom_\cC(B,B)$ of an object $B \in \cC$.)
An arbitrary monad $E: B \to B$ is \emph{splitable} (or more informally `split') when there exists an adjunction $\iota \vdash \rho$ and an isomorphism of monads $E \cong \iota \xo \rho$.

\subsubsection{Categorified idempotent splitting}

\skiptocparagraph{Uniqueness of splitting an idempotent}

Given an idempotent $e: B \to B$ in a 1-category, if it admits a splitting, then there is a unique splitting.  (That is, any two splittings $(i: A \to B, r: B \to A)$ and $(i':A' \to B, r': B \to A')$ are isomorphic by a unique isomorphism, namely the intertwiner $r' \xo i$.)  Indeed, the splitting may be expressed either as a colimit or as a limit, as follows.  Given an idempotent $e: B \to B$, consider the diagram \smash{$B\raisebox{-0.02cm}{
 \begin{tz}[scale=0.3]
\def\l{1.3}
\draw[-{>[scale=0.65]}] (0,0.5) to node[above,yshift=-0.05cm]{$\scriptstyle e$} (\l,0.5);
\draw[-{>[scale=0.65]}] (0,0.1) to node[below,yshift=0.05cm]{$\scriptstyle \Io$} (\l,0.1);
\end{tz}} B$}.  If it exists, the coequalizer $B\smash{\raisebox{-0.02cm}{
 \begin{tz}[scale=0.3]
\def\l{1.3}
\draw[-{>[scale=0.65]}] (0,0.5) to node[above,yshift=-0.05cm]{$\scriptstyle e$} (\l,0.5);
\draw[-{>[scale=0.65]}] (0,0.1) to node[below,yshift=0.05cm]{$\scriptstyle \Io$} (\l,0.1);
\end{tz}}} B \xra{r} A$  provides a splitting of the idempotent (where the morphism $i: A \to B$ is determined by the universal property of the coequalizer).  Similarly, if it exists, the equalizer $A\xra{i} B\smash{\raisebox{-0.02cm}{
 \begin{tz}[scale=0.3]
\def\l{1.3}
\draw[-{>[scale=0.65]}] (0,0.5) to node[above,yshift=-0.05cm]{$\scriptstyle e$} (\l,0.5);
\draw[-{>[scale=0.65]}] (0,0.1) to node[below,yshift=0.05cm]{$\scriptstyle \Io$} (\l,0.1);
\end{tz}}} B$ provides a splitting of the idempotent (where the morphism $r: B \to A$ is then determined by the universal property of the equalizer).  In particular, if either the coequalizer or equalizer exists, then the other does, and the coequalizing object is isomorphic to the equalizing object.

\skiptocparagraph{Uniqueness of reflectively splitting an idempotent monad}

As idempotents in a 1-category have unique splittings (when they are split), so too idempotent monads in a locally idempotent complete 2-category, have unique reflective splittings (when they are reflectively split).  
However, an arbitrary monad in a 2-category, even if it admits a splitting, need not admit a unique splitting.  We would like to restrict attention to a class of monads $E : B \to B$ for which the multiplication 2-morphism $m: E \xo E \to E$ need not be an isomorphism (by contrast with idempotent monads) but which nevertheless have a unique splitting property (as do idempotent monads).

\skiptocparagraph{Separable monads and separably split separable monads}

The data of an idempotent monad in a 2-category $\cC$ can be expressed as follows: it is a triple $(E: B \to B, m: E \xo E \to E, u: \Io_B \to E)$ forming an algebra object in $\Hom_\cC(B,B)$, such that there exists an $E$-$E$-bimodule map $c: E \to E \xo E$ that is a two-sided inverse to the multiplication $m: E \xo E \to E$.  We can marginally weaken this notion of idempotent monad by only requiring there to exist a one-sided rather than two-sided inverse to the multiplication; this provides a version of categorified idempotent that is more lax than idempotent monad but stronger than arbitrary monad.
\begin{definition}[Separable monad]
A monad $(E: B \to B, m: E \xo E \to E, u: \Io_B \to E)$ in a 2-category is \emph{separable} if there exists an $E$-$E$-bimodule map $c: E \to E \xo E$ that is a right inverse for the multiplication $m: E \xo E \to E$, that is such that $m \xt c = \It_E$.\footnote{A separable monad in a 2-category with one object is a separable algebra object in the monoidal endomorphism category of that object.  A classical separable algebra is a separable algebra object in the symmetric monoidal category of vector spaces.}
\end{definition}
\nid Similarly, the data of a reflective splitting of an idempotent monad $E: B \to B$ can be expressed as follows: it is an adjunction $\iota \vdash \rho \origequiv (\iota: A \to B, \rho: B \to A, \eta: \Io_B \to \iota \xo \rho, \epsilon: \rho \xo \iota \to \Io_A)$ such that there exists a 2-morphism $\phi: \Io_A \to \rho \xo \iota$ that is a two-sided inverse to the counit $\epsilon: \rho \xo \iota \to \Io_A$, together with an isomorphism of monads $E \cong \iota \xo \rho$.  We can again marginally weaken the invertibility condition here by only requiring there to exist a one-sided inverse to the counit of the adjunction.
\begin{definition}[Separable adjunction]
An adjunction $\iota \vdash \rho \origequiv (\iota: A \to B, \rho: B \to A, \eta: \Io_B \to \iota \xo \rho, \epsilon: \rho \xo \iota \to \Io_A)$ in a 2-category is \emph{separable} if there exists a 2-morphism $\phi: \Io_A \to \rho \xo \iota$ that is a right inverse for the counit $\epsilon: \rho \xo \iota \to \Io_A$, that is such that $\epsilon \xt \phi = \It_A$.
\end{definition}
\nid The notion of separable adjunction is a categorification of section-retraction pair that is more lax than reflective adjunction but stronger than arbitrary adjunction; there is therefore a corresponding version of categorified idempotent splitting that is more lax than reflective splitting but stronger than arbitrary splitting.
\begin{definition}[Separably split monad]
A separable monad $E : B \to B$ in a 2-category is \emph{separably splitable} (or simply `separably split') when there exists a separable adjunction $\iota \vdash \rho$ and an isomorphism of monads $E \cong \iota \xo \rho$; a \emph{separable splitting} is a choice of such an adjunction and isomorphism. 
\end{definition}
\nid Note that if a monad admits a separable splitting then the monad itself is necessarily separable.

\skiptocparagraph{Uniqueness of separable splittings of separable monads}

Separable monads do indeed have unique separable splittings (when they are separably split), as desired:
\begin{proposition}[Separable splittings are unique]
A separable monad, in a locally idempotent complete 2-category, that admits a separable splitting, admits a unique up-to-equivalence separable splitting.
\end{proposition}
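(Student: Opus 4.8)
The plan is to realize the apex of a separable splitting by a universal property depending only on $E$, so that uniqueness of the splitting reduces to uniqueness of universal objects. Concretely, I will use the fact recalled above that a separably split separable monad admits an Eilenberg--Moore object (a universal left module), and show that \emph{every} separable splitting exhibits its apex as such a universal left module.

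First I would record the module structure that a separable splitting carries. Given a separable splitting $(\iota\colon A\to B,\ \rho\colon B\to A,\ \eta,\ \epsilon)$ with separability witness $\phi\colon\Io_A\To\rho\xo\iota$ and monad isomorphism $\theta\colon E\cong\iota\xo\rho$, the $1$-morphism $\iota$ acquires a left $E$-module structure $\lambda:=(\iota\xo\epsilon)\xt(\theta\xo\iota)\colon E\xo\iota\To\iota$, the module axioms following from the cusp equations and from $\theta$ being a morphism of monads. The crux is then the universal property: for every object $X$, the functor $\Hom_{\tc{C}}(X,A)\to\LMod_E(\Hom_{\tc{C}}(X,B))$ sending $h$ to $(\iota\xo h,\ \lambda\xo h)$ is an equivalence of categories. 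Essential surjectivity is the technical heart, and the only place both hypotheses enter: given an $E$-module $(n,\mu)$, separability of $E$ makes the action $\mu\colon E\xo n\To n$ a split epimorphism of $E$-modules, with a section built from the bimodule splitting $c\colon E\to E\xo E$ of $m$, so that $n$ is a retract of the free module $E\xo n\cong\iota\xo\rho\xo n$; local idempotent completeness then lets one split the resulting idempotent inside $\Hom_{\tc{C}}(X,A)$ and identify its splitting with a preimage of $(n,\mu)$. Full faithfulness is an analogous, easier retract computation. This universal property---equivalently, the identification of the apex with the Eilenberg--Moore object and with the Kleisli object---is treated in detail in Appendix~\ref{app:ic}, which I would cite here.

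Granting the universal property, uniqueness is formal. Let $(\iota,\rho,\dots)$ and $(\iota',\rho',\dots)$ be two separable splittings of $E$, with apexes $A$ and $A'$. Applying the universal property of $A'$ to the $E$-module $(\iota,\lambda)$ produces a comparison $1$-morphism $F\colon A\to A'$ together with an $E$-module isomorphism $\iota'\xo F\cong\iota$; symmetrically, the universal property of $A$ applied to $(\iota',\lambda')$ produces $G\colon A'\to A$ with $\iota\xo G\cong\iota'$. Then $\iota\xo G\xo F\cong\iota'\xo F\cong\iota=\iota\xo\Io_A$ as $E$-modules, and since the universal-property functor for $A$ is in particular fully faithful---hence reflects isomorphisms---we get $G\xo F\cong\Io_A$; symmetrically $F\xo G\cong\Io_{A'}$, so $F$ is an equivalence. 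Finally, passing to right adjoints in $\iota\cong\iota'\xo F$ yields $F\xo\rho\cong\rho'$ (using that $F$ is an equivalence), and the units, counits, and monad isomorphisms of the two splittings are matched by the uniqueness clauses for adjunctions and for the comparison isomorphisms; this last part is routine bookkeeping, and the separability witness $\phi$ requires no checking, being a property rather than part of the data of a splitting.

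The main obstacle is the universal-property statement of the second paragraph---in particular, essential surjectivity of $\iota\xo(-)$, which is the $2$-categorical shadow of the elementary fact that split idempotents are absolute colimits: a module over a separable monad is, after idempotent completion, a canonical retract of a free module, and this is exactly what makes a separable splitting an absolute construction determined by $E$ alone, hence unique up to canonical equivalence. Everything downstream of that point is formal.
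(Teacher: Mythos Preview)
Your proposal is correct and follows essentially the same approach as the paper: the paper states this proposition as an immediate corollary of Theorem~\ref{thm:EMKleisliSeparable}, which shows that in a locally idempotent complete $2$-category a splitting of a separable monad is separable if and only if it is an Eilenberg--Moore splitting, and uniqueness then follows from the standard uniqueness of universal objects. You spell out the formal comparison-map argument for uniqueness and sketch the essential surjectivity step, but you ultimately (and correctly) defer the universal-property claim to Appendix~\ref{app:ic}, exactly as the paper does. One small imprecision: in your essential-surjectivity sketch you invoke the bimodule section $c$ of $E$, which most naturally produces an idempotent on $E\xo n=\iota\xo\rho\xo n$ in $\Hom_{\tc{C}}(X,B)$; the paper's proof of Theorem~\ref{thm:EMKleisliSeparable} instead uses the section $\phi$ of the counit directly to build the idempotent on $\rho\xo n$ in $\Hom_{\tc{C}}(X,A)$, which is where you want it---but since you cite the appendix for the details this does not affect correctness.
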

\nid This is an immediate corollary of Theorem~\ref{thm:EMKleisliSeparable} in Appendix~\ref{app:ic}, but we briefly and informally sketch the argument here.  Recall how one sees the uniqueness of splittings for a 1-categorical idempotent $e: B \to B$: the colimit \smash{$\colim(B\smash{\raisebox{-0.02cm}{
 \begin{tz}[scale=0.3]
\def\l{1.3}
\draw[-{>[scale=0.65]}] (0,0.5) to node[above,yshift=-0.05cm]{$\scriptstyle e$} (\l,0.5);
\draw[-{>[scale=0.65]}] (0,0.1) to node[below,yshift=0.05cm]{$\scriptstyle \Io$} (\l,0.1);
\end{tz}}} B)$} (necessarily unique) provides a splitting and the limit$\vphantom{\frac{a}{b}}$ \smash{$\lim(B\smash{\raisebox{-0.02cm}{
 \begin{tz}[scale=0.3]
\def\l{1.3}
\draw[-{>[scale=0.65]}] (0,0.5) to node[above,yshift=-0.05cm]{$\scriptstyle e$} (\l,0.5);
\draw[-{>[scale=0.65]}] (0,0.1) to node[below,yshift=0.05cm]{$\scriptstyle \Io$} (\l,0.1);
\end{tz}}} B)$} (necessarily unique) provides a splitting, and any splitting provides both a colimit and a limit; it follows that the splitting is unique and that the colimit and limit objects are isomorphic.  

Observe that a 1-categorical idempotent $e: B \to B$ in a category $\oc{C}$ may be reexpressed (somewhat contortionistically) as a lax 2-semifunctor of 2-categories $\ast \to[\scriptscriptstyle (B,e)] \oc{C}$, where we have reinterpreted $\oc{C}$ as a discrete 2-category; the 2-colimit, respectively 2-limit, of that lax functor is exactly the ordinary colimit, respectively limit, splitting of the idempotent as above.  Now a monad $E: B \to B$ in a 2-category $\tc{C}$ is simply a lax 2-functor $\ast \to[\scriptscriptstyle (B,E)] \tc{C}$, and we may consider the lax 2-colimit $\colim(\ast \to[\scriptscriptstyle (B,E)] \tc{C})$ or lax 2-limit $\lim(\ast \to[\scriptscriptstyle (B,E)] \tc{C})$; that 2-colimit, when it exists, is usually called a \emph{Kleisli object} for the monad, and that 2-limit, when it exists, is usually called an \emph{Eilenberg--Moore object} for the monad.  Exactly as for an idempotent in the 1-categorical case, for a monad in the 2-categorical case, when the 2-colimit exists, it provides a splitting, and when the 2-limit exists, it provides a splitting.  

The trouble is that a splitting need not provide a 2-colimit or a 2-limit; in particular, the 2-colimit and 2-limit objects need not be the same.  However, provided we restrict attention to a separable monad in a locally idempotent complete $2$-category, then (see Appendix~\ref{app:ic}) either a 2-colimit or a 2-limit provides a separable splitting, and any separable splitting provides both a 2-colimit and a 2-limit; from this it follows that the separable splitting is unique and that the 2-colimit and 2-limit objects agree.

\begin{remark}[Separable monads and separable splittings are preserved by all $2$-functors]\label{rem:absolutemonad}
Recall from Remark~\ref{rem:absolute} that direct sums and zero objects are preserved by all 2-functors and are therefore absolute 2-colimits.  Similarly, both the separability of a monad and the existence of a separable splitting of a separable monad are `equational' conditions, in that they are defined in terms of the existence of certain morphisms satisfying certain equations.  Separable monads and their separable splittings are therefore preserved by all 2-functors. Since separable splittings of separable monads in a locally idempotent complete 2-category are 2-colimits, they are therefore absolute 2-colimits.
\end{remark}

\subsubsection{Categorified idempotent completeness}
We have arrived at our proper context, a notion of `lax' 2-categorical idempotent that admits unique splittings, and therefore a notion of 2-category in which all idempotents and categorical idempotents split.
\begin{definition}[Idempotent complete 2-category]
A 2-category $\cC$ is \emph{idempotent complete} if it is locally idempotent complete and if every separable monad in $\cC$ admits a separable splitting.
\end{definition}
\nid The above discussion shows that we can, as in this definition, sensibly ask that separable monads admit separable splittings.  In practice, we will be interested in locally finite semisimple 2-categories, and in that restricted context, we can furthermore see that separable monads is the largest class of monads we would want to insist have splittings, as follows.  As before, given a monad $E: B \to B$, if the lax 2-colimit $\colim(\ast \to[\scriptscriptstyle (B,E)] \tc{C})$ exists, it provides a splitting of the monad and if the lax 2-limit $\lim(\ast \to[\scriptscriptstyle (B,E)] \tc{C})$ exists, it also provides a splitting; indeed we may think of the 2-colimit as a `universal right splitting' and similarly of the 2-limit as a `universal left splitting'.  We take it for granted that we want to consider splittings of monads that are universal, indeed preferably ones where the universal left and right splittings both exist and agree.  Such a splitting must be a separable splitting of a separable monad.
\begin{proposition}[Universally split monads are separable]
Let $\cC$ be a locally finite semisimple 2-category.  If a monad in $\cC$ admits a universal left splitting (that is, an Eilenberg--Moore object) or a universal right splitting (that is, a Kleisli object), then the monad is separable and admits a separable splitting.
\end{proposition}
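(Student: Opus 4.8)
The plan is to show that the Eilenberg--Moore object (resp.\ Kleisli object), which always comes with an adjunction splitting $E$, in fact provides a \emph{separable} splitting once the $2$-category is locally finite semisimple; once the adjunction is separable, the data $(A,\iota\vdash\rho, E\cong\iota\xo\rho)$ is by definition a separable splitting of $E$, and hence $E$ is a separable monad.

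Suppose first that $E\colon B\to B$ admits a universal left splitting $A$. Then there is a monadic-type adjunction $\rho\dashv\iota$, with $\rho\colon B\to A$, $\iota\colon A\to B$, unit $\eta\colon\Io_B\To\iota\xo\rho$ and counit $\epsilon\colon\rho\xo\iota\To\Io_A$, together with an isomorphism of monads $E\cong\iota\xo\rho$ carrying the monad unit to $\eta$ and the monad multiplication to $\iota\xo\epsilon\xo\rho$. First I would observe that the cusp equation $(\iota\xo\epsilon)\xt(\eta\xo\iota)=\It_\iota$ exhibits $\iota\xo\epsilon\colon\iota\xo\rho\xo\iota\To\iota$ as a split epimorphism in the $1$-category $\Hom_{\tc{C}}(A,B)$. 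Next I would invoke the defining universal property of the Eilenberg--Moore object as a $2$-limit: it identifies the postcomposition functor $\iota\xo(-)\colon\Hom_{\tc{C}}(A,A)\to\Hom_{\tc{C}}(A,B)$ with the functor forgetting the $E$-module structure, which is faithful (a module map is by definition a $2$-morphism with a property, hence is determined by its underlying $2$-morphism). Since $\tc{C}$ is locally finite semisimple, $\Hom_{\tc{C}}(A,A)$ and $\Hom_{\tc{C}}(A,B)$ are finite semisimple, hence semisimple abelian; thus $\iota\xo(-)$ is an additive functor between abelian categories, automatically exact, and a faithful exact functor reflects epimorphisms. Therefore $\epsilon$ is an epimorphism in $\Hom_{\tc{C}}(A,A)$, and since that category is semisimple abelian every epimorphism splits, so there is a $\phi\colon\Io_A\To\rho\xo\iota$ with $\epsilon\xt\phi=\It_{\Io_A}$. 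Hence the adjunction $\rho\dashv\iota$ is separable, $(A,\iota\vdash\rho,E\cong\iota\xo\rho)$ is a separable splitting of $E$, and in particular $E$ is separable.

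For a universal right splitting, i.e.\ a Kleisli object $A$, I would run the dual argument: now the cusp equation $(\epsilon\xo\rho)\xt(\rho\xo\eta)=\It_\rho$ makes $\epsilon\xo\rho\colon\rho\xo\iota\xo\rho\To\rho$ a split epimorphism in $\Hom_{\tc{C}}(B,A)$, and the universal property of the Kleisli object as a $2$-colimit identifies the precomposition functor $(-)\xo\rho\colon\Hom_{\tc{C}}(A,A)\to\Hom_{\tc{C}}(B,A)$ with the (faithful) forgetful functor on $E$-modules; as before it reflects epimorphisms, so $\epsilon$ is epi in $\Hom_{\tc{C}}(A,A)$, hence split, and the adjunction is separable. (Alternatively, one can deduce this case from the previous one by passing to $\tc{C}^{\op}$, which is still locally finite semisimple and in which the Kleisli object of $E$ becomes the Eilenberg--Moore object of the opposite monad, separability being preserved.) In either case $E$ admits a separable splitting, and, by the discussion preceding the statement (see also Appendix~\ref{app:ic}), this separable splitting is the given Eilenberg--Moore (resp.\ Kleisli) object.

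The step I expect to require the most care is extracting, from the bare $2$-limit (resp.\ $2$-colimit) universal property, the identification of the whiskering functor with the module-forgetful functor, and hence its faithfulness; this is the $2$-categorical counterpart of the classical fact that monadic functors are faithful, and its precise formulation belongs with the treatment of Eilenberg--Moore and Kleisli objects in Appendix~\ref{app:ic}. The remaining ingredients --- the cusp equations, ``a faithful exact functor between abelian categories reflects epimorphisms'', and ``epimorphisms split in a semisimple abelian category'' --- are routine.
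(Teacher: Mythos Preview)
Your argument is correct and takes a genuinely different, more elementary route than the paper's proof.

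The paper argues indirectly: from the Eilenberg--Moore object $A$ it observes that $\Hom_{\tc{C}}(B,A)\simeq\LMod_E(B)=\Mod(E)$ as a module category over the finite semisimple tensor category $\Hom_{\tc{C}}(B,B)$; it then invokes the characteristic-zero fact that finite semisimple tensor categories are separable \cite[Cor~2.6.8]{DTC} together with the theorem that a semisimple module category over a separable tensor category is separable \cite[Prop~2.5.10]{DTC}, concluding that $E$ is a separable algebra; finally it appeals to Theorem~\ref{thm:EMKleisliSeparable} to deduce that the Eilenberg--Moore splitting is separable. Your approach bypasses all of this machinery: you go straight for the counit, using only that the monadic whiskering functor $\iota\xo(-)$ is faithful (which you correctly extract from the universal property, as in Appendix~\ref{app:ic}) and that epimorphisms split in a semisimple abelian category. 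Besides being more self-contained, your argument does not use the characteristic-zero hypothesis that enters the paper's proof via the separability of $\Hom_{\tc{C}}(B,B)$.

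Two minor remarks. First, exactness is a red herring: a faithful functor reflects epimorphisms with no further hypothesis, so you need not argue that $\iota\xo(-)$ is exact. Second, your last sentence (that the separable splitting ``is'' the given Eilenberg--Moore object) is already built in---you showed directly that the Eilenberg--Moore adjunction itself is separable---so no further appeal to Appendix~\ref{app:ic} is needed there.
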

\begin{proof}
Suppose the monad $E: B \to B$ admits a universal left splitting, that is the lax 2-limit $A: = \lim(\ast \to[\scriptscriptstyle (B,E)] \tc{C})$ exists.  By definition of a 2-limit and the notion of left module over a monad, the 2-limit $A$ corepresents left $E$-module structures (see Appendix~\ref{app:ic}).  In particular, the category $\Hom_{\tc{C}}(B,A)$ is equivalent to the category $\LMod_E(B)$ of left $E$-module structures on the object $B$.  Observe that this category $\LMod_E(B)$ of module structures is precisely the category $\Mod(E)$ of modules of $E$ considered as an algebra object in the monoidal category $\Hom_{\tc{C}}(B,B)$.  And this category $\Mod(E)$ of modules is of course a module category for $\Hom_{\tc{C}}(B,B)$.  By local finite semisimplicity, the monoidal category $\Hom_{\tc{C}}(B,B)$ is finite semisimple and, because the base field is characteristic zero, it is a separable tensor category~\cite[Cor 2.6.8]{DTC}.  The category $\Hom_{\tc{C}}(B,A)$ is also semisimple, therefore the module category $\Mod(E)$ is semisimple.  By~\cite[Prop 2.5.10]{DTC}, a semisimple module category over a separable tensor category is necessarily separable.  By definition this means that the monad $E$ is separable. By Theorem~\ref{thm:EMKleisliSeparable}, a separable monad with a univeral left splitting admits a separable splitting. The argument for right splittings is the same.
\end{proof}


\subsubsection{Categorified idempotent completion}

\begin{construction}[Idempotent completion of a 1-category]
A 1-category $\oc{C}$ can be completed to an idempotent complete 1-category $\oc{C}^\idm$, whose objects are idempotents in $\oc{C}$ and whose morphisms are bilodules.  Here a `left lodule' for an endomorphism $g: b \to b$ in a 1-category is a 1-morphism $f: a \to b$ such that $g \circ f = f$; similarly a `right lodule' is a 1-morphism $h: b \to c$ such that $h \circ g = h$.  A `bilodule' from an endomorphism $e: b \to b$ to an endomorphism $e' : b' \to b'$ is a morphism $j: b \to b'$ that is a right lodule for $e$ and a left lodule for $e'$.  Note that if $e$ and $e'$ are split idempotents, the data of a bilodule is the same as the data of a morphism from the splitting object of $e$ to the splitting object of $e'$.  
\end{construction}
\nid If the category $\oc{C}$ is already idempotent complete, then the completion $\oc{C}^\idm$ is equivalent to $\oc{C}$.\looseness=-2

\begin{construction}[Idempotent completion of a 2-category] \label{con:iccompletion}
A locally idempotent complete $2$-category $\tc{C}$ can be completed to an idempotent complete $2$-category $\tc{C}^\idm$, whose objects are separable monads in $\tc{C}$, whose $1$-morphisms are bimodules between those monads, and whose $2$-morphisms are bimodule maps.  See Appendix~\ref{sec:appicdef} for a discussion of various properties of this idempotent completion construction.
\end{construction}

\begin{remark}[Idempotent completion is idempotent] \label{rem:iccompletion}
If the locally idempotent complete 2-category $\tc{C}$ is already idempotent complete, then the completion $\tc{C}^\idm$ is equivalent to $\tc{C}$; this is shown in Appendix~\ref{app:ic} as Proposition~\ref{prop:icequiv}.  This is a consequence of the fact that separable splittings of separable monads (in locally idempotent complete $2$-categories) are absolute 2-colimits (see Remark~\ref{rem:absolutemonad}), and the completion $\tc{C}^\idm$ is the free cocompletion under those colimits.
\end{remark}

\begin{remark}[Cauchy completion of a 1-category]
Recall that a linear 1-category is called `Cauchy complete' if it has all absolute colimits.  A linear 1-category is Cauchy complete if and only if it is additive and idempotent complete~\cite[Prop 2.11]{111-IV}.  The Cauchy completion of a linear 1-category $\oc{C}$ is $(\oc{C}^\oplus)^\idm \equiv (\oc{C}^\idm)^\oplus$, where $\oc{C}^\oplus$ denotes the direct sum completion, and as above $\oc{C}^\idm$ denotes the idempotent completion. 
\end{remark}

\begin{remark}[Cauchy completion of a 2-category]
A linear $2$-category is `Cauchy complete' if it has all absolute $2$-colimits. We speculate that a linear locally Cauchy complete 2-category (or at least a locally finite semisimple 2-category) is Cauchy complete if and only if it is additive and idempotent complete, and we imagine that the Cauchy completion of $\tc{C}$ is given by $(\tc{C}^\boxplus)^\idm \equiv (\tc{C}^\idm)^\boxplus$. 
\end{remark}

A prototypical example of a locally idempotent complete 2-category that is not idempotent complete is the delooping $\B \oc{C}$ of a multifusion category $\oc{C}$; that is, $\B \oc{C}$ is the 2-category with one object whose endomorphism category is $\oc{C}$.  We now show that the idempotent completion $(\B \oc{C})^\idm$ of this 2-category is the 2-category $\Mod(\oc{C})$ of finite semisimple (right) module categories for the multifusion category.  Note that the objects of the idempotent completion $(\B \oc{C})^\idm$, separable monads in $\B \oc{C}$, are in this case just separable algebras in $\oc{C}$.

\begin{prop}[The idempotent completion of the delooping of a multifusion category is the 2-category of modules] \label{prop:completionismod}
Let $\oc{C}$ be a multifusion category.  The 2-functor $\mathrm{mod} : \B\oc{C}^\idm \ra \Mod(\oc{C})$, taking a separable algebra in $\oc{C}$ to its category of left modules, is an equivalence.
\end{prop}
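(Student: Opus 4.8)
The plan is to show that the $2$-functor $\mathrm{mod}$ is essentially surjective on objects and is a local equivalence --- that is, induces an equivalence on each Hom-category --- since these two properties together imply that $\mathrm{mod}$ is an equivalence of $2$-categories. First one records that $\mathrm{mod}$ is well defined. An object of $\B\oc{C}^\idm$ is a separable monad in $\B\oc{C}$, which in this one-object case is exactly a separable algebra $A$ in $\oc{C}$; its category $\LMod_A(\oc{C})$ of left modules is a finite $k$-linear module category over $\oc{C}$, and it is semisimple because $\oc{C}$ is a separable tensor category (being multifusion over a field of characteristic zero, cf.~\cite[Cor 2.6.8]{DTC}) and the module category of a separable algebra in a separable tensor category is semisimple. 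On Hom-categories $\mathrm{mod}$ sends a $B$-$A$-bimodule $M$ to the module functor $M \otimes_A - \colon \LMod_A(\oc{C}) \to \LMod_B(\oc{C})$ and a bimodule map to the evident natural transformation; compatibility with composition and units is the standard associativity of relative tensor products, all of which exist and are exact here because the algebras are separable.

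For essential surjectivity, let $\modcat$ be a finite semisimple module category over $\oc{C}$ and let $m \in \modcat$ be a direct sum of one object from each isomorphism class of simple objects, so that $m$ is a generator. Form the internal endomorphism algebra $A := \IHom_{\oc{C}}(m,m)$, an algebra object of $\oc{C}$; then $\IHom_{\oc{C}}(m,-)\colon \modcat \to \LMod_A(\oc{C})$ is an equivalence of $\oc{C}$-module categories (this is Ostrik's internal-hom reconstruction, valid for a semisimple module category with a generator over a rigid semisimple tensor category). Since $\modcat$ is semisimple and $\oc{C}$ is a separable tensor category, the algebra $A$ is separable by~\cite[Prop 2.5.10]{DTC}, hence is an object of $\B\oc{C}^\idm$, and $\mathrm{mod}(A) \simeq \modcat$.

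For the local equivalence, fix separable algebras $A, B$ in $\oc{C}$; one must show the functor ${}_B\mathrm{Bimod}_A(\oc{C}) \to \Fun_{\oc{C}}\!\big(\LMod_A(\oc{C}),\LMod_B(\oc{C})\big)$, $M \mapsto M \otimes_A -$, is an equivalence. The candidate inverse sends a module functor $F$ to $F(A)$, where $A$ denotes the regular left $A$-module: $F(A)$ carries a left $B$-action from the target, and a right $A$-action obtained by applying $F$ to the right-multiplication maps exhibiting $A$ as its own algebra of left-$A$-module endomorphisms, making it a $B$-$A$-bimodule. There is a natural comparison transformation $F(A) \otimes_A N \to F(N)$ of module functors which is an isomorphism at $N = A$; since $A$ is separable and $\LMod_A(\oc{C})$ is semisimple, every object of $\LMod_A(\oc{C})$ is a direct summand of a free module $A \otimes X$ with $X \in \oc{C}$, and both functors carry such free modules to $F(A) \otimes X$ compatibly, so the comparison is an isomorphism everywhere and $F \simeq F(A) \otimes_A -$. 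Full faithfulness on $2$-morphisms --- that module natural transformations between $M \otimes_A -$ and $M' \otimes_A -$ are exactly $B$-$A$-bimodule maps $M \to M'$ --- follows by evaluating at the regular module $A$ and again using that $A$ generates.

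The main obstacle is the separability bookkeeping: one must know that the internal endomorphism algebra $\IHom_{\oc{C}}(m,m)$ of a semisimple module category is genuinely separable, and dually that for separable $A, B$ the relative tensor product $\otimes_A$ is exact and preserves semisimplicity, so that composition in $\B\oc{C}^\idm$ genuinely matches composition of module functors and the categories ${}_B\mathrm{Bimod}_A(\oc{C})$ are themselves semisimple. This is precisely where the characteristic-zero hypothesis enters, via the separability of multifusion categories~\cite[Cor 2.6.8]{DTC} and the fact that semisimple module categories over separable tensor categories are separable~\cite[Prop 2.5.10]{DTC}; granting these inputs, the rest is the classical dictionary between algebras, bimodules, and module categories (Ostrik;~\cite{ENO}), assembled with care for the left/right conventions fixed earlier in the text.
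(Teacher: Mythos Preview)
Your proposal is correct and follows essentially the same approach as the paper: essential surjectivity via the fact that every finite semisimple module category arises from a separable algebra (Ostrik reconstruction plus separability from~\cite{DTC}), and local equivalence via the bimodule--module-functor correspondence. The paper's version is simply terser, dispatching both steps by citation (to~\cite[Cor~2.6.9]{DTC} and~\cite[Prop~7.11.1]{EGNO} respectively) rather than spelling out the regular-module argument you give.
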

\begin{proof}
Every finite semisimple module category of a multifusion category (in characteristic zero) is the category of modules of a separable algebra~\cite[Cor 2.6.9]{DTC}.  Thus the 2-functor $\mathrm{mod}$ is essentially surjective.  Furthermore the category of internal bimodules between algebras is equivalent to the category of functors of module categories~\cite[Prop 7.11.1]{EGNO}, and the 2-functor $\mathrm{mod}$ is therefore an equivalence on 1-morphism categories, as required.
\end{proof}

\nid
The delooping $\B \oc{C}$ of a multifusion category includes into the 2-category $\Mod(\oc{C})$ of modules, by sending the unique object to the module category $\oc{C}_{\oc{C}}$.

\begin{corollary}[Functors from the delooping of a multifusion category extend to modules] \label{cor:extensionmultifusion}
Let $\oc{C}$ be a multifusion category and let $\tc{D}$ be an idempotent complete $2$-category.  Every 2-functor $\B \oc{C} \ra \tc{D}$ extends uniquely (up to equivalence) to a 2-functor $\Mod(\oc{C}) \ra \tc{D}$.
\end{corollary}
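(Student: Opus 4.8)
The plan is to reduce the statement to the universal property of idempotent completion as a free cocompletion under absolute 2-colimits. First I would recall, from Proposition~\ref{prop:completionismod}, that the 2-functor $\mathrm{mod}$ exhibits an equivalence $(\B\oc{C})^\idm \equiv \Mod(\oc{C})$, and observe that under this equivalence the standard inclusion $\B\oc{C} \hookrightarrow \Mod(\oc{C})$ (sending the unique object to the regular module category $\oc{C}_{\oc{C}}$) corresponds to the canonical inclusion $\B\oc{C} \hookrightarrow (\B\oc{C})^\idm$, which sends the unique object to the trivial separable monad $\Io$. It therefore suffices to show: for $\tc{D}$ idempotent complete, every 2-functor $\B\oc{C} \to \tc{D}$ extends, uniquely up to equivalence, along this canonical inclusion to a 2-functor $(\B\oc{C})^\idm \to \tc{D}$.

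Next I would invoke the universal property of idempotent completion. As recorded in Remark~\ref{rem:iccompletion} and developed in Appendix~\ref{app:ic}, the idempotent completion $(\B\oc{C})^\idm$ is the free cocompletion of $\B\oc{C}$ under the 2-colimits given by separable splittings of separable monads, and by Remark~\ref{rem:absolutemonad} these 2-colimits are \emph{absolute}, hence preserved by every 2-functor. Consequently, for any 2-category $\tc{D}$ admitting all such 2-colimits --- equivalently, by definition, for any idempotent complete $\tc{D}$ --- restriction along $\B\oc{C} \hookrightarrow (\B\oc{C})^\idm$ is an equivalence between the 2-category of 2-functors $(\B\oc{C})^\idm \to \tc{D}$ and the 2-category of 2-functors $\B\oc{C} \to \tc{D}$; because the relevant colimits are absolute, there is no cocontinuity condition to impose on functors out of $(\B\oc{C})^\idm$. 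In particular every $F\colon \B\oc{C} \to \tc{D}$ has an extension to $(\B\oc{C})^\idm$, unique up to equivalence, and transporting along $\mathrm{mod}$ gives the desired extension $\Mod(\oc{C}) \to \tc{D}$.

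It is worth spelling out what this extension does concretely, since that is precisely the content the universal property packages. Given $F\colon \B\oc{C}\to\tc{D}$, write $b := F(\ast)$, so that $F$ carries the endomorphism tensor category $\oc{C} = \End_{\B\oc{C}}(\ast)$ into $\End_{\tc{D}}(b)$. A finite semisimple module category for $\oc{C}$ is $\Mod(A)$ for a separable algebra $A$ in $\oc{C}$ by~\cite[Cor 2.6.9]{DTC}; then $F(A)$ is a separable algebra in $\End_{\tc{D}}(b)$, i.e.\ a separable monad on $b$, which by idempotent completeness of $\tc{D}$ admits a separable splitting, and the extension sends $\Mod(A)$ to the splitting object (well-defined up to equivalence by uniqueness of separable splittings). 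On 1-morphisms one uses that $\Hom_{\Mod(\oc{C})}(\Mod(A),\Mod(B))$ is the category of $A$-$B$-bimodules in $\oc{C}$ by~\cite[Prop 7.11.1]{EGNO} and again the universal property of the separable splittings. The main obstacle is the 2-categorical bookkeeping behind the universal property --- verifying that $\tc{C}^\idm$ genuinely corepresents 2-functors out of it in the weak sense that makes ``unique up to equivalence'' precise --- but this is exactly the input established in Appendix~\ref{app:ic}; granting it, the argument above is essentially immediate.
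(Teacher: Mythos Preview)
Your proposal is correct and follows essentially the same approach as the paper: identify $\Mod(\oc{C})$ with $(\B\oc{C})^\idm$ via Proposition~\ref{prop:completionismod}, observe that the inclusion $\B\oc{C}\hookrightarrow\Mod(\oc{C})$ corresponds to the canonical $\B\oc{C}\hookrightarrow(\B\oc{C})^\idm$, and then invoke the universal property of idempotent completion established in Appendix~\ref{app:ic} (the paper cites Proposition~\ref{prop:extension} specifically). Your additional concrete unpacking of the extension is not needed for the argument but is consistent with the paper's constructions.
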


\begin{proof}
Observe that the composite $\B\oc{C} \ra (\B\oc{C})^\idm \xra{\mathrm{mod}} \Mod(\oc{C})$ is the inclusion $\B\oc{C} \ra \Mod(\oc{C})$.  In Appendix~\ref{app:ic}, see especially Proposition~\ref{prop:extension}, we show that the idempotent completion is initial among idempotent complete targets; thus the functor $\B\oc{C} \ra \tc{D}$ extends to a functor $(\B \oc{C})^\idm \ra \tc{D}$.  It follows that the composite $\Mod(\oc{C}) \xra{\mathrm{mod}^{-1}} (\B\oc{C})^\idm \ra \tc{D}$ is the desired extension.
\end{proof}

\begin{remark}[The idempotent completion of the delooping of a multifusion category is already additive]
Given the delooped 1-category $\B A$ of a finite-dimensional semisimple algebra $A$, to obtain the (additive) category of finite-dimensional modules $\Mod(A)$, one must both idempotent and additively complete $\B A$.  By contrast, the idempotent completion $\B\oc{C}$ of the deloop of a multifusion category $\oc{C}$ already has direct sums and need not be further additively completed.
\end{remark}

\subsubsection{Direct sum decomposition in idempotent complete 2-categories}

In a locally additive 2-category, a direct sum decomposition $X\equiv \bigboxplus_i X_i$ of an object $X$, with inclusion and projection $1$-morphisms $\iota_i\!:X_i\leftrightarrows X:\! \rho_i$, induces by definition a direct sum decomposition of the identity 1-morphism $\Io_{X}\iso \bigoplus_i \iota_i \xo \rho_i \in \Hom(X,X)$.  A crucial property of idempotent complete 2-categories is that, conversely, a direct sum decomposition of an identity 1-morphism induces a direct sum decomposition of the corresponding object.
\begin{prop}[Identity splitting implies object splitting] \label{prop:idempotentcompletesum}
Let $X$ be an object in an idempotent complete linear $2$-category $\tc{C}$.  If $\Io_X \iso \bigoplus_{i\in I} f_i$ is a finite decomposition of $\Io_X$ into nonzero $1$-morphisms, then there is a finite decomposition $X\equiv \bigboxplus_{i\in I} X_i$ of $X$ into nonzero objects with inclusions and projections $\iota_i\!:X_i \leftrightarrows X:\! \rho_i$ such that $f_i \iso \iota_i \xo \rho_i$.
\end{prop}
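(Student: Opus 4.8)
The plan is to realize each summand $X_i$ as a splitting of a separable monad on $X$ built from the given decomposition of $\Io_X$, and then to verify the three conditions of Definition~\ref{def:directsum} together with the extra identification $f_i\iso\iota_i\xo\rho_i$ demanded in the statement.

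First I would unpack the hypothesis $\Io_X\iso\bigoplus_{i\in I}f_i$ into $2$-morphisms $s_i\colon f_i\To\Io_X$ and $p_i\colon\Io_X\To f_i$ with $p_i\xt s_i=\It_{f_i}$, $p_j\xt s_i=0$ for $i\neq j$, and $\sum_i s_i\xt p_i=\It_{\Io_X}$; equivalently, the $e_i:=s_i\xt p_i\colon\Io_X\To\Io_X$ form a complete system of orthogonal idempotent $2$-endomorphisms of the identity. Using the coherence of the unitors (the equality of the left and right unitors of $\Io_X$, together with their naturality), a direct computation shows that $f_i$ carries the structure of a monad on $X$ with unit $p_i$ and multiplication $m_i:=p_i\xt(s_i\xo s_i)$ (unitor suppressed); that the $2$-morphism $c_i:=(p_i\xo p_i)\xt s_i$ is a two-sided inverse of $m_i$; and that $f_i\xo f_j$ is a zero $1$-morphism for $i\neq j$. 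In particular each $f_i$ is an \emph{idempotent} monad, hence --- taking $c_i$ as the bimodule section --- a separable monad.

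Since $\tc{C}$ is idempotent complete, each separable monad $f_i$ admits a separable splitting: $1$-morphisms $\iota_i\colon X_i\to X$ and $\rho_i\colon X\to X_i$ forming a separable adjunction $\rho_i\dashv\iota_i$, together with an isomorphism of monads $\iota_i\xo\rho_i\iso f_i$, whose underlying isomorphism of $1$-morphisms $f_i\iso\iota_i\xo\rho_i$ is the identification required in the statement. Moreover $X_i$ is then the Eilenberg--Moore object of $f_i$ (see Theorem~\ref{thm:EMKleisliSeparable}). Here is the crux of the argument, and the step I expect to require the most care: because $f_i$ is an \emph{idempotent} monad, a left $f_i$-module structure on a $1$-morphism is a property rather than extra data (invertibility of $m_i$ forces the action to be the canonical isomorphism), so the forgetful $1$-morphism $\iota_i$ out of the Eilenberg--Moore object is fully faithful; hence the adjunction $\rho_i\dashv\iota_i$ is reflective and its counit $\epsilon_i\colon\rho_i\xo\iota_i\To\Io_{X_i}$ is an isomorphism. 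Alternatively one can argue directly that the separability section $\phi_i$ of $\epsilon_i$ satisfies $\iota_i\xo\phi_i\xo\rho_i=m_i^{-1}$, so that $\phi_i$ is in fact a two-sided inverse of $\epsilon_i$ after a short diagram chase using the triangle identities.

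It remains to check the conditions of Definition~\ref{def:directsum}. The first, $\rho_i\xo\iota_i\iso\Io_{X_i}$, is the reflectivity just established. For the second, $\rho_i\xo\iota_j\iso 0$ for $i\neq j$: compose the zero $1$-morphism $f_i\xo f_j\iso\iota_i\xo\rho_i\xo\iota_j\xo\rho_j$ on the left with $\rho_i$ and on the right with $\iota_j$, and use $\rho_i\xo\iota_i\iso\Io_{X_i}$, $\rho_j\xo\iota_j\iso\Io_{X_j}$ to collapse the result to $\rho_i\xo\iota_j\iso 0$. For the third, that $\Io_X$ is a direct sum of the $\iota_i\xo\rho_i$: transport the given direct-sum $2$-morphisms $s_i,p_i$ along $\iota_i\xo\rho_i\iso f_i$. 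Finally, each $X_i$ is nonzero, since otherwise $\Io_{X_i}$, and hence $\iota_i\xo\rho_i\iso f_i$, would be a zero $1$-morphism. Thus $X\equiv\bigboxplus_{i\in I}X_i$ with the stated properties.
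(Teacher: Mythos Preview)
Your proof is correct and follows the paper's approach closely: construct idempotent (hence separable) monads from the summands of $\Io_X$, separably split them, and verify the direct-sum axioms by showing each counit $\epsilon_i$ is invertible. The paper establishes that invertibility by invoking Theorem~\ref{thm:EMKleisliSeparable} to produce a particular section $\delta_i$ with $\iota_i\xo\delta_i\xo\rho_i=\Delta_i=m_i^{-1}$ and then uses the Eilenberg--Moore/Kleisli faithfulness coming from that theorem to conclude $\delta_i$ is two-sided; your route (a), via full faithfulness of the forgetful functor from modules over an idempotent monad, is an equivalent (and arguably cleaner) justification---note however that your alternative (b) also tacitly needs that faithfulness, as the triangle identities alone do not let you cancel $\iota_i$ and $\rho_i$ from $\iota_i\xo(\phi_i\xt\epsilon_i)\xo\rho_i=\It$.
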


\begin{proof} Let $r_i: f_i \To \Io_X$ and $s_i: \Io_X \To f_i$ be the inclusion and projection $2$-morphisms exhibiting the direct sum decomposition $\Io_X \iso \bigoplus_{i \in I} f_i$. For each $i$, the following $2$-morphisms form the multiplication $m_i$ and unit $u_i$ of a separable monad:
\begin{align*}
m_i &:= s_i \xt\left(r_i \xo r_i\right) : f_i \xo f_i \To f_i 
\\
u_i &:= s_i : \Io_X \To f_i
\end{align*}
A separating section of $m_i$ is given by $\Delta_i := \left(s_i \xo s_i\right) \xt r_i$.  Observe that $\Delta_i \xt m_i = (s_i \xo s_i)\xt (r_i \xo r_i) = \It_{f_i\xo f_i}$, and so $\Delta_i$ (and hence $m_i$) is an isomorphism.

Separably splitting each monad gives an object $X_i$ and adjoint $1$-morphisms $\iota_i\!: X_i \leftrightarrows X: \!\rho_i$ such that $\iota_i \xo \rho_i \iso f_i$.  Because the monad is separably split, there is (by an argument given in the second part of the proof of Theorem~\ref{thm:EMKleisliSeparable}) a section $\delta_i: \Io_{X_i} \To \rho_i \xo \iota_i$ of the counit $\rho_i \xo \iota_i \To \Io_{X_i}$ such that $\Delta_i = \iota_i \xo \delta_i\xo \rho_i$.  (Here we have omitted the isomorphism $\iota_i \xo \rho_i\iso f_i$ from the notation). Since $\Delta_i$ is an isomorphism, it follows that $\delta_i$ is also an isomorphism, and so $\rho_i \circ \iota_i \iso \Io_{X_i}$.  

By assumption the composite $s_i \xt r_j = 0$ for $i \neq j$.  Note that $s_i \xt r_j = (\iota_i \xo \rho_i \xo r_j) \xt (s_i \xo \iota_j \xo \rho_j)$.  Precomposing with $(r_i \xo \iota_j \xo \rho_j)$ and postcomposing with $(\iota_i \xo \rho_i \xo s_j)$ gives $\It_{\iota_i \xo \rho_i \xo \iota_j \xo \rho_j} = 0$.  Left and right composing with $\rho_i$ and $\iota_j$ gives $\It_{\rho_i \xo \iota_j} = 0$, thus $\rho_i \xo \iota_j = 0$.  Altogether then, these 1-morphisms exhibit a direct sum decomposition $X\equiv \bigboxplus_{i \in I} X_i$, as desired.
\end{proof}

\subsection{Semisimple $2$-categories}

\subsubsection{The definition of semisimple $2$-categories}

Recall that a presemisimple 1-category is a linear 1-category in which every object decomposes as a finite direct sum of simple objects, and in which the composition of any two nonzero morphisms between simples is nonzero.  A semisimple 1-category is one that is moreover additive (all finite direct sums exist) and idempotent complete (all idempotents split).  A presemisimple 2-category is a locally semisimple linear 2-category in which every 1-morphism has a left and a right adjoint, in which every object decomposes as a finite direct sum of simple objects, and in which the endomorphism category of every simple object is an infusion category.  (As we have seen, this endomorphism condition implies that the composition of nonzero 1-morphisms between simples is nonzero.)  We might expect to define a semisimple 2-category to be a presemisimple 2-category that is moreover additive (all finite direct sums exist) and idempotent complete (all separable monads separably split).  But in fact we can sharpen that definition by dropping both the condition that objects decompose as finite direct sums and the condition that endomorphisms of simples are infusion---local semisimplicity and idempotent completeness will conspire to ensure the object-direct-sum-splitting and endomorphism infusion properties of the 2-category.

\begin{definition}[Semisimple 2-category] \label{def:ss2cat}
A \emph{semisimple 2-category} is a locally semisimple 2-category, admitting adjoints for 1-morphisms, that is additive and idempotent complete.
\end{definition}
\begin{definition}[Finite semisimple 2-category]
A semisimple 2-category is \emph{finite} if it is locally finite semisimple and it has finitely many equivalence classes of simple objects.
\end{definition}

\begin{remark}[All 2-functors preserve sums and idempotent splittings]
As in Remarks~\ref{rem:additivecompletion} and~\ref{rem:iccompletion}, direct sums and idempotent splittings are absolute constructions, that is they are preserved by all linear 2-functors.  In particular, we need not restrict attention to a subclass of functors, but can consider all linear $2$-functors as the natural morphisms of semisimple 2-categories.
\end{remark}

\begin{remark}[Completeness implies the splitting condition for 2-categories]
In the 1-categorical case, cf Remark~\ref{rem:ssconditions}, the conditions in the alternative Footnote~\ref{foot:ss} definition of semisimplicity imply that every object splits as a finite direct sum of simples.  Similarly, in the 2-categorical case, because we have already assumed a splitting condition at the 1-morphism level via local semisimplicity, and because (by Proposition~\ref{prop:idempotentcompletesum}) in an idempotent complete 2-category splittings of identity 1-morphisms provide splittings of objects, it is again not necessary to impose a further object-level splitting condition in the definition of semisimple 2-category.
\end{remark}

\begin{proposition}[Semisimple implies presemisimple]
A semisimple $2$-category is presemisimple.
\end{proposition}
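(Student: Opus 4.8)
The plan is to reduce to Corollary~\ref{cor:pressdirectsum}: a semisimple $2$-category $\tc{C}$ is by definition locally semisimple and has left and right adjoints for $1$-morphisms, so by that corollary it suffices to show that every object of $\tc{C}$ decomposes as a finite direct sum of objects with simple identity. So fix an object $X$. Since $\tc{C}$ is locally semisimple, $\Hom_{\tc{C}}(X,X)$ is a semisimple $1$-category, and hence its identity $\Io_X$ admits a finite decomposition $\Io_X \iso \bigoplus_{i \in I} f_i$ into \emph{simple} $1$-endomorphisms of $X$. Now I would invoke idempotent completeness of $\tc{C}$ through Proposition~\ref{prop:idempotentcompletesum}, which lifts this decomposition of $\Io_X$ to a decomposition $X \equiv \bigboxplus_{i \in I} X_i$ of $X$ into nonzero objects with inclusions and projections $\iota_i\colon X_i \leftrightarrows X \colon \rho_i$ satisfying $\iota_i \xo \rho_i \iso f_i$ and (as part of the notion of direct sum, Definition~\ref{def:directsum}) $\rho_i \xo \iota_i \iso \Io_{X_i}$.

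It then remains to show that each $\Io_{X_i}$ is simple, and here I would argue directly from the adjunction data. By Proposition~\ref{prop:coherencedirectsums}, $\rho_i$ is both a left and a right adjoint of $\iota_i$; juggling the resulting hom-set isomorphisms against the identities $\iota_i \xo \rho_i \iso f_i$ and $\rho_i \xo \iota_i \iso \Io_{X_i}$ yields isomorphisms $\End_{\tc{C}}(\Io_{X_i}) \iso \End_{\tc{C}}(\rho_i) \iso \End_{\tc{C}}(\iota_i \xo \rho_i) = \End_{\tc{C}}(f_i)$ (equivalently, the functor $\iota_i \xo (-) \xo \rho_i \colon \Hom_{\tc{C}}(X_i,X_i) \to \Hom_{\tc{C}}(X,X)$, retracted by $\rho_i \xo (-) \xo \iota_i$, is fully faithful and carries $\Io_{X_i}$ to $f_i$). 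Since $f_i$ is a simple object of a semisimple $1$-category over the algebraically closed field $k$, we have $\End_{\tc{C}}(f_i) \iso k$, so $\End_{\tc{C}}(\Io_{X_i}) \iso k$; as $X_i$ is nonzero and $\Hom_{\tc{C}}(X_i,X_i)$ is semisimple, this forces $\Io_{X_i}$ to be simple. Thus $X$ is a finite direct sum of objects with simple identity, and, $X$ being arbitrary, Corollary~\ref{cor:pressdirectsum} gives that $\tc{C}$ is presemisimple.

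The only nonroutine step is the verification that the summands $X_i$ produced by Proposition~\ref{prop:idempotentcompletesum} have simple identity. The earlier results that most directly relate simplicity of an object to simplicity of its identity---Proposition~\ref{prop:simpleiffidentity} and Corollary~\ref{cor:simpleprojection}---both presuppose precisely the object-level decomposition property that is being established here, so they cannot be invoked; one genuinely has to run the adjunction computation by hand. This step is the concrete content behind the earlier remark that, once local semisimplicity supplies $1$-morphism-level splittings and idempotent completeness promotes splittings of identity $1$-morphisms to splittings of objects, no separate object-decomposition hypothesis is needed in the definition of a semisimple $2$-category.
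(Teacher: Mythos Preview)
Your proof is correct and follows essentially the same route as the paper: reduce via Corollary~\ref{cor:pressdirectsum}, decompose $\Io_X$ into simples $f_i$ by local semisimplicity, lift to a direct sum $X \equiv \bigboxplus_i X_i$ via Proposition~\ref{prop:idempotentcompletesum}, and then verify $\Io_{X_i}$ is simple. The only difference is in that last verification: the paper argues directly that a decomposition of $\Io_{X_i}$ would induce one of $f_i \iso \iota_i \xo \Io_{X_i} \xo \rho_i$, whereas you run the adjunction hom-isomorphisms to get $\End(\Io_{X_i}) \iso \End(f_i) \iso k$; both are valid and amount to the same observation.
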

\begin{proof}
By Corollary~\ref{cor:pressdirectsum}, we need only show that any object is a finite direct sum of objects with simple identity.  For any object $X$, by local semisimplicity of the 2-category, there is a decomposition $\Io_X \iso \bigoplus_{i \in I} f_i$ into a finite direct sum of simple 1-morphisms $f_i$.  By Proposition~\ref{prop:idempotentcompletesum}, there is a decomposition $X \equiv \bigboxplus_{i \in I} X_i$ with inclusions and projections $\iota_i\!:X_i \leftrightarrows X:\! \rho_i$ such that $f_i \iso \iota_i \xo \rho_i$ and $\Io_{X_i} \iso \rho_i \xo \iota_i$.  If $\Io_{X_i}$ decomposed into a direct sum, then $f_i \iso \iota_i \xo \rho_i \iso \iota_i \xo \Io_{X_i} \xo \rho_i$ would also decompose, contradicting the simplicity of $f_i$.
\end{proof}
\nid By Proposition~\ref{prop:uniquedecomp}, it follows that objects in a semisimple 2-category decompose uniquely:
\begin{corollary}[Decomposition into simples is unique in semisimple 2-categories]
Every object in a semisimple $2$-category is a finite direct sum of simple objects, and this decomposition is unique up to permutation and equivalence.
\end{corollary}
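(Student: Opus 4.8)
The plan is to deduce this corollary directly from the two results immediately preceding it, so the argument is essentially a one-line synthesis. First I would invoke the Proposition just established, namely that every semisimple $2$-category is presemisimple. Given that, the definition of a presemisimple $2$-category (Definition~\ref{def:presstwocat}), or equivalently Corollary~\ref{cor:pressdirectsum}, already tells us that every object decomposes as a finite direct sum of simple objects; applied to a semisimple $2$-category, this yields the existence half of the statement with no further work.

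For the uniqueness half, I would appeal to Proposition~\ref{prop:uniquedecomp}, which asserts precisely that the decomposition of any object in a presemisimple $2$-category into a finite direct sum of simple objects is unique up to permutation and equivalence. Since a semisimple $2$-category is in particular presemisimple, this applies verbatim, giving uniqueness and hence the corollary. The notions of direct sum and of simple object are the ambient-structure-independent ones of Definition~\ref{def:directsum} and the definition of simple object, so there is no subtlety in transferring these statements from the presemisimple to the semisimple setting.

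I do not expect any genuine obstacle here: the substantive content — both the existence of the simple decomposition and its uniqueness — has already been carried out in the more general presemisimple context, and the only thing being used is that a semisimple $2$-category satisfies the hypotheses of those results, which is exactly the preceding Proposition. The corollary is therefore a bookkeeping consequence, and a proof amounting to ``By the preceding Proposition a semisimple $2$-category is presemisimple, so existence follows from Corollary~\ref{cor:pressdirectsum} and uniqueness from Proposition~\ref{prop:uniquedecomp}'' should suffice.
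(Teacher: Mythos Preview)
Your proposal is correct and matches the paper's approach exactly: the paper states this corollary immediately after proving that semisimple implies presemisimple, with the one-line justification ``By Proposition~\ref{prop:uniquedecomp}, it follows that objects in a semisimple 2-category decompose uniquely.'' Your slightly more explicit version, separating existence (from the definition of presemisimple or Corollary~\ref{cor:pressdirectsum}) from uniqueness (Proposition~\ref{prop:uniquedecomp}), is if anything clearer than the paper's terse pointer.
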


\subsubsection{Semisimple completion of presemisimple $2$-categories} \label{sec:completion}
Given a presemisimple $2$-category $\tc{C}$, combining the additive completion $(-)^\oplus$ from Construction~\ref{con:additivecompletion} and the idempotent completion $(-)^\idm$ from Construction~\ref{con:iccompletion}, we obtain a semisimple $2$-category $(\tc{C}^\oplus)^\idm$.  The natural inclusion of presemisimple $2$-categories $\tc{C} \to (\tc{C}^\oplus)^\idm$ is of course not an equivalence, but we expect, at least when $\tc{C}$ is finite, that it is a $2$-distributor equivalence in the following sense.  The natural notion of morphism between finite presemisimple $2$-categories is not a $2$-functor but a $2$-distributor (also called a $2$-profunctor): given finite presemisimple $2$-categories $\tc{C}$ and $\tc{D}$, a \emph{finite semisimple $2$-distributor} $M: \tc{C} \hto{} \tc{D}$ is a $k$-bilinear $2$-functor $\tc{C}^{\op} \times \tc{D} \to \tVect$ from the product to the $2$-category $\tVect$ of finite semisimple linear $1$-categories.  Finite presemisimple $2$-categories thus form a 3-category with 1-morphisms the $2$-distributors, 2-morphisms the distributor transformations, and 3-morphisms the distributor modifications.  A \emph{$2$-distributor equivalence} of finite presemisimple 2-categories is an equivalence in that 3-category. Though theoretically straightforward enough, in practice it is difficult to determine when finite presemisimple 2-categories are 2-distributor equivalent---one is better off working with their corresponding (completed) semisimple 2-categories.

By contrast with finite presemisimple 2-categories, the natural notion of morphism between finite semisimple 2-categories is simply a 2-functor.  Indeed, every finite semisimple 2-distributor between finite semisimple 2-categories is in fact a 2-functor---this is established in Section~\ref{sec:representability}.  Altogether, finite semisimple 2-categories, with 2-functors, natural transformations, and modifications, form a 3-category.  We expect the inclusion from finite semisimple to finite presemisimple 2-categories and the completion from finite presemisimple to finite semisimple 2-categories form an equivalence between the corresponding 3-categories.

\begin{remark}[Dimension is invariant under completion] \label{rem:diminvariance}
The dimension of a presemisimple $2$-category is invariant under semisimple completion.  The fact that simple objects in a presemisimple $2$-category can be detected by whether their identities are simple ensures that a simple object of a presemisimple $2$-category remains simple during idempotent completion.  It follows that the set of components (and the dimension of each component) is unchanged by idempotent completion of a presemisimple $2$-category, and therefore the overall dimension is similarly unaffected.  More generally, we expect the dimension of a presemisimple $2$-category is invariant under 2-distributor equivalence.
\end{remark}

\subsubsection{Semisimple $2$-categories are module $2$-categories of multifusion categories}\label{sec:semisimple2mod}

Over a field of characteristic zero, in the 2-category of algebras, bimodules, and intertwiners, an algebra is fully dualizable if and only if it is finite-dimensional semisimple.  The category of finite-dimensional modules of a finite-dimensional semisimple algebra is a finite semisimple 1-category.  And in fact every finite semisimple 1-category is such a category of modules~\cite{111-IV}.

Analogously, over a field of characteristic zero, in the 3-category of finite tensor categories, bimodule categories, bimodule functors, and bimodule intertwiners, a finite tensor category is fully dualizable if and only if it is multifusion~\cite{DTC}.  The modules over a multifusion category is the prototypical semisimple 2-category; moreover, in fact every finite semisimple 2-category has this form.  We now prove this correspondence between semisimple 2-categories and module 2-categories for multifusion categories, under our standing assumption that the base field is algebraically closed of characteristic zero.
\begin{theorem}[The module 2-category of a multifusion category is semisimple] \label{thm:multifusiontosemisimple}
The 2-category of finite semisimple module categories of a multifusion category is a finite semisimple 2-category.
\end{theorem}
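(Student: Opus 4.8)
The plan is to verify that $\Mod(\oc{C})$ --- the $2$-category whose objects are finite semisimple (right) module categories over the multifusion category $\oc{C}$, whose $1$-morphisms are module functors, and whose $2$-morphisms are natural transformations of module functors --- satisfies each of the conditions in Definition~\ref{def:ss2cat} together with the finiteness conditions. So I would check, in turn: (i) that each Hom category is finite semisimple; (ii) that every $1$-morphism admits a left and a right adjoint; (iii) that the $2$-category is additive; (iv) that it is idempotent complete; and (v) that it has finitely many equivalence classes of simple objects.

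Steps (i), (ii), (iii) rest on classical facts about module categories over multifusion categories. By \cite[Cor.~2.6.9]{DTC}, every finite semisimple module category over $\oc{C}$ is the category $\Mod_A$ of modules over a separable algebra $A \in \oc{C}$, and by \cite[Prop.~7.11.1]{EGNO} the Hom category $\Fun_{\oc{C}}(\Mod_A, \Mod_B)$ is equivalent to the category of $A$-$B$-bimodule objects in $\oc{C}$; since $A$ and $B$ are separable and $\oc{C}$ is finite semisimple, this bimodule category is the category of modules over a separable algebra in a multifusion category and hence is finite semisimple, which gives (i). Under this identification a $1$-morphism $\Mod_A \to \Mod_B$ is relative tensoring with an $A$-$B$-bimodule $M$, and a left and a right adjoint are given by relative tensoring with the dual bimodules of $M$, which exist because bimodules between separable algebras in a multifusion category have duals; this gives (ii). Clause (iii) is immediate, as the direct sum $\cM \boxplus \cN$ of finite semisimple module categories is again one and the zero module category is a zero object.

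Clause (iv), idempotent completeness, is the conceptually substantive condition, but it is already delivered by the machinery of Section~\ref{sec:ic2cat}: since $\oc{C}$ is semisimple it is idempotent complete, so the delooping $\B\oc{C}$ is a locally idempotent complete $2$-category, and by Proposition~\ref{prop:completionismod} the functor $\mathrm{mod}$ exhibits $\Mod(\oc{C})$ as the idempotent completion $(\B\oc{C})^\idm$, which is idempotent complete by Construction~\ref{con:iccompletion}. Concretely, a separable monad on an object $\cM$ of $\Mod(\oc{C})$ is a separable algebra $A$ internal to the multifusion category $\End_{\oc{C}}(\cM)$, and its separable splitting is the finite semisimple module category of $A$-modules in $\cM$, with the forgetful and free functors furnishing the separable adjunction; separability of $A$ is exactly what guarantees, by the general theory, that this splitting is separable. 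Finally, for (v): a finite semisimple module category is simple as an object of $\Mod(\oc{C})$ precisely when it is indecomposable (a fully faithful module functor into a finite semisimple module category splits off its essential image as a direct summand), and there are only finitely many indecomposable finite semisimple module categories over a multifusion category up to equivalence, by the rigidity and finiteness results for module categories over a field of characteristic zero \cite{ENO}.

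The main obstacle is not any single deep step but rather assembling the correct classical inputs: the finiteness in (v) genuinely requires an external rigidity argument (it is the module-category analogue of Ocneanu rigidity and of the finiteness of fusion categories with fixed fusion data), and step (ii) requires the small verification that the adjoint of a module functor carries a canonical module-functor structure. All the weight of the ``idempotent complete'' clause has, by design, already been absorbed into the idempotent-completion apparatus of Section~\ref{sec:ic2cat}, so the present theorem is essentially a bookkeeping exercise once Proposition~\ref{prop:completionismod} is in hand.
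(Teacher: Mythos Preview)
Your proof is correct and follows the same overall checklist as the paper, but you diverge from the paper at the key step (iv), idempotent completeness. The paper argues this directly: given a separable monad $p$ on a module category $\oc{N}$, it explicitly constructs the module category $p\text{-}\Mod$ over $\End_{\oc{C}}(\oc{N})$, exhibits the free/forgetful adjunction, and checks by hand that the bimodule section of the multiplication furnishes the required section of the counit. You instead invoke Proposition~\ref{prop:completionismod} to identify $\Mod(\oc{C})$ with $(\B\oc{C})^\idm$ and then appeal to the general fact (Proposition~\ref{prop:completioniscomplete} in Appendix~\ref{app:ic}, not merely Construction~\ref{con:iccompletion} as you cite) that any idempotent completion is idempotent complete. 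This is legitimate and arguably cleaner, since Proposition~\ref{prop:completionismod} is already established from external inputs and does not depend on the present theorem; it has the virtue of making transparent that idempotent completeness of $\Mod(\oc{C})$ is a formal consequence of the completion machinery. The paper's direct argument, on the other hand, is more self-contained and makes the splitting visible without passing through the abstract equivalence. For the remaining steps your citations differ in detail (the paper uses \cite[Cor~2.5.6]{DTC} for (i), \cite{RDTP,EO} for (ii), and \cite[Cor~9.1.6]{EGNO} for (v)) but the substance is the same.
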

\begin{proof} 
Let $\mfus$ be a multifusion category and let $\Mod(\mfus)$ denote the 2-category of finite semisimple right module categories of $\mfus$.
Given finite semisimple module categories $\oc{M}_{\oc{C}}$ and $\oc{N}_{\oc{C}}$ it is proven in~\cite[Cor 2.5.6]{DTC} that the category $\Hom_{\Mod(\oc{C})}(\oc{M}, \oc{N})$ is finite semisimple.  By~\cite{RDTP,EO} 
we know that $\Mod(\mfus)$ is a $2$-category in which every $1$-morphism has a right and left adjoint.   

The existence of a zero object and direct sums is immediate. The fact that separable monads split follows from the fact that a separable monad $p: \oc{N}_{\mfus} \to \oc{N}_{\mfus}$ in $\Mod(\mfus)$ is a separable algebra in\ignore{\footnote{The monoidal category $\Hom_{\Mod(\mfus)}(\oc{N}, \oc{N})$ is often denoted by $\mfus^*_{\oc{N}}$ and called the \emph{dual of $\mfus$ with respect to $\oc{N}$}.}} $\Hom_{\Mod(\mfus)}(\oc{N}, \oc{N})$ and as such gives rise to a finite semisimple right module category $p{-}\Mod$ over $\Hom_{\Mod(\mfus)}(\oc{N}, \oc{N})$.  The $\Hom_{\Mod(\mfus)}(\oc{N}, \oc{N})$-module functor $\Hom_{\Mod(\mfus)}(\oc{N}, \oc{N}) \to\Hom_{\Mod(\mfus)}(\oc{N}, \oc{N})$ corresponding
to the object $p\in \Hom_{\Mod(\mfus)}(\oc{N}, \oc{N})$ can then be split as the following composite of $\Hom_{\Mod(\mfus)}(\oc{N}, \oc{N})$-module functors:
\[\Hom_{\Mod(\mfus)}(\oc{N}, \oc{N})  \to [{}_{p}p \otimes -] {p{-}\Mod} \to[p\otimes_p -]\Hom_{\Mod(\mfus)}(\oc{N}, \oc{N})
\] 
This splitting is separable; under the monoidal equivalence
\[p{-}\Mod{-}p \to\Hom_{\Hom_{\Mod(\oc{C})}(\oc{N}, \oc{N})}(p-\Mod, p-\Mod)\] the counit of the adjunction $p\otimes_p - \vdash _pp \otimes - $ corresponds to the multiplication $m : {}_pp \otimes p_p \To p$ of $p$. Hence, the right inverse $\Delta: {}_pp_p \To {}_pp\otimes p_p$ of $m$ in $p{-}\Mod{-}p$ gives rise to a right inverse of the counit in $\Hom_{\Hom_{\Mod(\oc{C})}(\oc{N}, \oc{N})}(p-\Mod, p-\Mod)$.
Composing with $-\xz_{\Hom_{\Mod(\oc{C})}(N,N)} N$ and noting that $\Hom_{\Mod(\oc{C})}(N,N) \xz_{\Hom_{\Mod(\oc{C})}(N,N)} N \equiv N$,
induces the required splitting of the $\mfus$-module functor \[p: \oc{N} \to p{-}\Mod\xz_{\Hom_{\mfus}(\oc{N}, \oc{N})}\oc{N}\to \oc{N}.\]

Finally, it is proven in~\cite[Cor 9.1.6]{EGNO} that every multifusion category admits only a finite number of equivalence classes of indecomposable module categories, and thus $\Mod(\mfus)$ has only finitely many simple objects. 
\end{proof}

\begin{theorem}[A semisimple 2-category is modules for a multifusion category] \label{thm:semisimplefrommultifusion}
Every finite semisimple $2$-category is equivalent to the $2$-category of finite semisimple module categories of a multifusion category.
\end{theorem}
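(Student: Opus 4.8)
The plan is to locate inside $\tc{C}$ a single object $X$ whose endomorphism multifusion category $\mfus := \End_{\tc{C}}(X)$ already determines all of $\tc{C}$, using the identification $\Mod(\mfus) \equiv (\B\mfus)^\idm$ of Proposition~\ref{prop:completionismod}. Since a finite semisimple $2$-category is in particular finite presemisimple, I would begin by choosing representatives $x_1,\dots,x_n$ of the finitely many equivalence classes of simple objects, forming $X := x_1 \boxplus \cdots \boxplus x_n$ (available because $\tc{C}$ is additive), and setting $\mfus := \End_{\tc{C}}(X)$. By Remark~\ref{rem:endismultifusion} this is a finite multifusion category: it is finite semisimple as a linear $1$-category by local finite semisimplicity of $\tc{C}$, it is monoidal under horizontal composition, and it has left and right duals because $\tc{C}$ admits adjoints for $1$-morphisms.

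Next I would produce the comparison $2$-functor. The full sub-$2$-category of $\tc{C}$ on the object $X$ is the delooping $\B\mfus$, and the inclusion $j\colon \B\mfus \hookrightarrow \tc{C}$ is by construction fully faithful on Hom-categories, being the identity $\mfus \to \End_{\tc{C}}(X)$ there. Because $\tc{C}$ is idempotent complete (being semisimple), Corollary~\ref{cor:extensionmultifusion}, together with the equivalence $\Mod(\mfus) \equiv (\B\mfus)^\idm$, extends $j$ to a $2$-functor $\Psi\colon \Mod(\mfus) \to \tc{C}$ which is the canonical functor out of the idempotent completion; in particular $\Psi$ carries the regular module $\mfus_\mfus$ to $X$ and restricts to the identity $\mfus \to \End_{\tc{C}}(X)$ on endomorphisms of this object. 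The remaining task is to check that $\Psi$ is an equivalence.

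For full faithfulness I would invoke the structure of the idempotent completion: in $\Mod(\mfus) \equiv (\B\mfus)^\idm$ the Hom-category between two separable monads $A,A'$ on $\ast$ is the category of $A'$-$A$-bimodules in $\mfus$ (Construction~\ref{con:iccompletion} and Appendix~\ref{app:ic}), while in $\tc{C}$ the Hom-category between the separable splittings of $A$ and $A'$, viewed as separable monads on $X$, is computed by the very same bimodule category because $\tc{C}$ is idempotent complete and $j$ is fully faithful; these identifications are compatible with $\Psi$, so extending the fully faithful $j$ into the idempotent complete target $\tc{C}$ yields a fully faithful $\Psi$. For essential surjectivity, the essential image of $\Psi$ contains $X$ and is closed under direct sums and separable splittings, since these are absolute $2$-colimits preserved by every linear $2$-functor (Remarks~\ref{rem:absolute} and~\ref{rem:absolutemonad}) and $\Mod(\mfus)$ is additive and idempotent complete. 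Using full faithfulness, the endomorphism $\iota_i \xo \rho_i$ of $X \equiv \Psi(\mfus_\mfus)$ exhibiting $x_i$ as a summand transports to a separable monad on $\mfus_\mfus$, whose separable splitting $\widetilde{x}_i \in \Mod(\mfus)$ satisfies $\Psi(\widetilde{x}_i) \equiv x_i$; hence every simple object of $\tc{C}$ lies in the image, and therefore so does every object, since each is a finite direct sum of simples. Thus $\Psi$ is an equivalence and $\tc{C} \equiv \Mod(\mfus)$.

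The step I expect to be the main obstacle is the full faithfulness of $\Psi$: it requires carefully unwinding Construction~\ref{con:iccompletion} to see that Hom-categories in an idempotent complete $2$-category between separable splittings of separable monads on a fixed object $X$ are computed internally to the multifusion category $\End_{\tc{C}}(X)$, and matching this description against the one for $\Mod(\mfus)$ coming from Proposition~\ref{prop:completionismod}; the remaining points are formal consequences of the additive and idempotent completion machinery of Section~\ref{sec:ic2cat} and the structural results on presemisimple $2$-categories.
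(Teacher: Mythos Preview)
Your approach is correct and takes a genuinely different route from the paper. Both begin identically---choosing $X := \bigboxplus_i x_i$ and setting $\mfus := \End_{\tc{C}}(X)$---but then diverge in direction. The paper constructs the representable $2$-functor $\Hom_{\tc{C}}(X,-)\colon \tc{C} \to \Mod(\mfus)$ and checks the three levels of the equivalence by hand: essential surjectivity on objects (using that every finite semisimple $\mfus$-module is $\Mod$ of a separable algebra, then splitting the corresponding monad in $\tc{C}$), essential surjectivity on $1$-morphisms (by an explicit formula $h := \Psi(\rho_i)\xo\iota_i$), and full faithfulness on $2$-morphisms (by a direct computation with the direct-sum projections). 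You instead go the opposite direction, invoking the universal property of idempotent completion to extend the fully faithful inclusion $\B\mfus \hookrightarrow \tc{C}$ to $\Psi\colon \Mod(\mfus) \to \tc{C}$, and then argue full faithfulness and essential surjectivity abstractly.

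Your route is more conceptual and packages the $1$- and $2$-morphism checks into the universal properties, at the cost of making the full faithfulness step less transparent: the claim that Hom-categories between separable splittings are bimodule categories is exactly the combination of the Eilenberg--Moore and Kleisli universal properties from Theorem~\ref{thm:EMKleisliSeparable} (one gives $\Hom_{\tc{C}}(Y_A,Y_{A'}) \equiv \LMod_{A'}(Y_A)$, the other reduces left $A'$-modules on $Y_A$ to $A'$-$A$-bimodules on $X$), and you correctly flag this as the point needing care. Your essential surjectivity argument is cleaner than the paper's, avoiding the external input that every module category comes from a separable algebra: you only need that $\Mod(\mfus)$ is idempotent complete (Theorem~\ref{thm:multifusiontosemisimple}) so that the summand idempotents $\iota_i\xo\rho_i$ transported through full faithfulness can be split there. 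The paper's approach, conversely, gives explicit formulas and does not rely on having proven Theorem~\ref{thm:multifusiontosemisimple} first.
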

\begin{proof}
 Let $\{X_i~|~i \in I\}$ denote a set of representatives of the equivalence classes of simple objects of the finite semisimple 2-category $\tc{C}$. Define the object $X:= \boxplus_{i \in I} X_i$ and the multifusion category $\mfus:= \Hom_\tc{C}(X,X)$, and let $\Mod(\mfus)$ denote the $2$-category of finite semisimple module categories of $\mfus$. Observe that for any object $c\in \tc{C}$, the category $\Hom_\tc{C}(X,c)$ has a right $\mfus$-module structure, and any $1$-morphism $f:c\to d$ defines a module functor $\Hom_\tc{C}(X,f) :=f\xo -: \Hom_\tc{C}(X,c) \to \Hom_\tc{C}(X,d)$. We will show that the $2$-functor $\Hom_\tc{C}(X,-): \cC \to \Mod(\mfus)$ is an equivalence.
 
1. \emph{Essential surjectivity on objects.} Let $\oc{M}$ be a finite semisimple $\mfus$-module category. Following~\cite[Cor 2.6.9]{DTC}, there is a separable algebra $m$ in $\mfus= \Hom_\tc{C}(X,X)$ such that $\oc{M}\equiv m{-}\Mod$ as $\mfus$-module categories. In particular, $m:X\to X$ is a separable monad in $\tc{C}$ and therefore admits an Eilenberg--Moore object $X^m$ in $\tc{C}$ (see Appendix~\ref{sec:em}). In particular, there is a left $m$-module $R:X^m \to X$ in $\tc{C}$ such that the induced functor $R\xo -:\Hom_\tc{C}(X, X^m) \to \LMod_m(X) = m{-}\Mod\equiv \oc{M}$ is an equivalence. Since this functor is defined by left composition with a $1$-morphism and the action of $\mfus= \Hom_\tc{C}(X,X)$ is by right composition, it inherits the structure of an equivalence of module categories.

2. \emph{Essential surjectivity on $1$-morphisms.} Let $c$ and $d$ be objects of $\tc{C}$. We show that the functor $\Hom_\tc{C}(X,-): \Hom_\tc{C}(c,d) \to \Hom_{\Mod(\mfus)}\left(\Hom_\tc{C}(X,c), \Hom_\tc{C}(X,d)\right)$ is essentially surjective on objects. Since any linear $2$-functor preserves direct sums, it suffices to prove this under the assumption that $c$ and $d$ are simple objects $X_i$ and $X_j$. Let $\{\iota_i:X_i \leftrightarrows X: \rho_i\}_{i \in I}$ be a direct sum decomposition of $X\equiv \boxplus_{i \in I} X_i$ and fix isomorphisms $\lambda_i: \Io_{X_i} \To \rho_i \xo \iota_i$. Given a module functor $\Psi:\Hom_\tc{C}(X,X_i) \to \Hom_\tc{C}(X,X_j)$ with coherence isomorphism $\psi_{a,b}: \Psi(a\xo b) \To \Psi(a)\xo b$ for $a\in \Hom_\tc{C}(X,X_i)$, $b\in \Hom_\tc{C}(X,X)$, we define $h:=\Psi(\rho_i)\xo \iota_i \in \Hom_\tc{C}(X_i,X_j)$ and claim that $\Hom_\tc{C}(X,h)$ is naturally isomorphic to $\Psi$ as a $\Hom_\tc{C}(X,X)$-module functor. Indeed, the following natural transformation provides such an isomorphism: \[\left\{ \eta_s: \Psi(s) \To[\Psi(\lambda_i\xo s)]\Psi(\rho_i \xo \iota_i \xo s)\To[\psi_{\rho_i,\iota_i s}] \Psi(\rho_i)\xo \iota_i \xo s = h\xo s  \right\}_{s\in \Hom_{\tc{C}}(X,X_i)}
\]

3. \emph{Fully faithful on $2$-morphisms.} Given $1$-morphisms $f,g:X_i \to X_j$ we will now show that the map $\Hom_{\tc{C}}(X,-): \Hom_{\tc{C}}(f,g) \to \Hom_{\Mod(\mfus)}(f\xo-,g \xo -)$ is an isomorphism. Indeed, a transformation between the module functors $f\xo-$ and $g\xo -$ is given by a natural transformation $\{ \eta_s: f\xo s \To g\xo s\}_{s\in \Hom_{\tc{C}}(X,X_i)}$ fulfilling $\eta_{s\xo r} = \eta_s \xo r$ for all $s\in \Hom_{\tc{C}}(X,X_i)$ and $r\in \Hom_{\tc{C}}(X,X)$. The map $\Hom_{\tc{C}}(X,-)$ is injective: if $\alpha, \beta \in \Hom_{\tc{C}}(f,g)$ fulfill that $\alpha\xo s= \beta \xo s$ for all $s\in \Hom_\tc{C}(X,X_i)$, then $\alpha\xo\rho_i = \beta \xo \rho_i$ and thus $\alpha = \beta$ by right invertibility of the $1$-morphism $\rho_i$. It is also surjective: given a natural transformation of module functors $\eta$ one can define the following $2$-morphism $\alpha:f\To g$ which fulfills $\alpha \xo s = \eta_s$: \[\alpha: f\To[f\xo \lambda_i]f\xo \rho_i \xo \iota_i \To[\eta_{\rho_i} \xo \iota_i] g\xo \rho_i \xo \iota_i \To[g\xo \lambda_i^{-1}] g \qedhere\]
\end{proof}

\begin{remark}[Characterizing semisimple 2-categories over non-algebraically-closed fields]
We expect that Theorems~\ref{thm:multifusiontosemisimple} and~\ref{thm:semisimplefrommultifusion} hold as stated over a field of characteristic zero (not necessarily algebraically closed) provided there are only finitely many isomorphism classes of finite-dimensional division algebras over the field.  


\end{remark}

We expect taking the module 2-category gives an equivalence of 3-categories from multifusion categories (and their finite semisimple bimodules, bimodule functors, bimodule transformations) to semisimple 2-categories (and their 2-functors, transformations, modifications).
Though semisimple 2-categories can be faithfully modeled by multifusion categories, semisimple 2-categories have a crucial theoretical advantage over multifusion categories: \emph{while additional structure on multifusion categories must be encoded in a system of bimodule categories and relations between their relative tensor products, additional structure on semisimple 2-categories can be encoded functorially}.  (See the next Section~\ref{sec:representability}, which establishes that any finite semisimple 2-distributor between semisimple categories is a 2-functor.)  In particular, an additional monoidal operation on a multifusion category $\oc{C}$ would take the form of a $(\oc{C} \boxtimes \oc{C})$-$\oc{C}$-bimodule category, whereas a monoidal structure on a semisimple $2$-category $\tc{C}$ is simply a bilinear 2-functor $\tc{C} \times \tc{C} \to \tc{C}$.

\subsubsection{$2$-distributors between semisimple $2$-categories are $2$-functors} \label{sec:representability}

Recall that a (finite semisimple) 2-distributor $\cC \hto{} \cD$ between finite semisimple 2-categories $\tc{C}$ and $\tc{D}$ is a $k$-bilinear 2-functor $\cD^{\op}\times \cC \to \tVect$, where $\tVect$ is the 2-category of finite semisimple 1-categories.  Because every finite semisimple $2$-category is locally finite semisimple, we may consider the `absolute Yoneda embedding' as a 2-functor $y_{\tc{C}}: \tc{C} \to[] \mathrm{Func}\left(\tc{C}^{\mathrm{op}}, \tVect\right)$, where $\mathrm{Func}$ denotes the $2$-category of linear $2$-functors.

\begin{prop}[The absolute Yoneda embedding of finite semisimple $2$-categories is an equivalence] \label{prop:yonedaequivalence}
For $\tc{C}$ a finite semisimple $2$-category, the absolute Yoneda embedding $y_{\tc{C}}: \tc{C} \to[] \mathrm{Func}\left(\tc{C}^{\op}, \tVect\right)$ is an equivalence.
\end{prop}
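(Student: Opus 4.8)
The plan is to verify the two conditions for $y_{\tc C}$ to be an equivalence of $2$-categories separately: that it is \emph{fully faithful}, i.e.\ induces equivalences on Hom-categories, and that it is \emph{essentially surjective} on objects. (Note at the outset that $y_{\tc C}(c) = \Hom_{\tc C}(-,c)$ genuinely lands in $\tVect$, since $\tc C$ is locally finite semisimple.) Full faithfulness is formal: it is the $2$-categorical Yoneda lemma. For $c,d\in\tc C$ one has an equivalence $\Hom_{\mathrm{Func}(\tc C^{\op},\tVect)}\big(\Hom_{\tc C}(-,c),F\big)\simeq F(c)$, $2$-natural in the linear $2$-functor $F$; specializing $F:=\Hom_{\tc C}(-,d)$ gives $\Hom_{\mathrm{Func}}\big(y_{\tc C}(c),y_{\tc C}(d)\big)\simeq\Hom_{\tc C}(c,d)$, and one checks in the standard way that this equivalence is the one induced by $y_{\tc C}$. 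The one thing to confirm is that $\mathrm{Func}(\tc C^{\op},\tVect)$ has the expected Hom-categories of linear $2$-natural transformations and modifications, which it does by construction.

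The substance is essential surjectivity. I would fix representatives $\{X_i\}_{i\in I}$ of the simple objects, put $X:=\bigboxplus_{i\in I}X_i$, and set $\mfus:=\End_{\tc C}(X)$, a multifusion category by Remark~\ref{rem:endismultifusion}. By Theorem~\ref{thm:semisimplefrommultifusion} the $2$-functor $\Hom_{\tc C}(X,-)\colon\tc C\to\Mod(\mfus)$, sending $c$ to $\Hom_{\tc C}(X,c)$ with its canonical right $\mfus$-module structure, is an equivalence. The $2$-category $\tc C^{\op}$ is again finite semisimple --- reversing $1$-morphisms preserves all the defining conditions --- so, applying the same theorem together with Proposition~\ref{prop:completionismod}, one obtains an equivalence $\tc C^{\op}\simeq(\B\mfus')^{\idm}$, where $\mfus':=\End_{\tc C^{\op}}(X)$ is the monoidally reversed multifusion category, under which the canonical inclusion $\B\mfus'\hookrightarrow\tc C^{\op}$ is identified with the full sub-$2$-category of $\tc C^{\op}$ on the single object $X$. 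Finally, $\tVect\simeq\Mod(\Vect)$ is itself a finite semisimple $2$-category by Theorem~\ref{thm:multifusiontosemisimple}, and in particular is idempotent complete; so Corollary~\ref{cor:extensionmultifusion} shows that every linear $2$-functor $\tc C^{\op}\to\tVect$ is the essentially unique extension of its restriction along $\B\mfus'\hookrightarrow\tc C^{\op}$.

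With this in place, essential surjectivity follows. Given a linear $2$-functor $F\colon\tc C^{\op}\to\tVect$, its restriction to the sub-$2$-category $\B\mfus'$ is precisely the datum of the finite semisimple $1$-category $F(X)$ equipped with a monoidal functor $\mfus'\to\End_{\tVect}(F(X))$, that is --- after unwinding the variance of $F$ on $1$-morphisms and identifying module categories over $\mfus'$ with module categories over its reverse $\mfus$ --- a right $\mfus$-module category structure on $F(X)$. Thus $F(X)$ defines an object of $\Mod(\mfus)$, and by the equivalence $\Hom_{\tc C}(X,-)\colon\tc C\simeq\Mod(\mfus)$ there is an object $c\in\tc C$, unique up to equivalence, with $\Hom_{\tc C}(X,c)\simeq F(X)$ as $\mfus$-module categories. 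Then $y_{\tc C}(c)=\Hom_{\tc C}(-,c)$ and $F$ have equivalent restrictions along $\B\mfus'\hookrightarrow\tc C^{\op}$ --- both are $F(X)$ with its module structure --- so by the uniqueness of extensions $F\simeq y_{\tc C}(c)$. Hence every linear $2$-functor $\tc C^{\op}\to\tVect$ is representable, which completes the proof that $y_{\tc C}$ is an equivalence.

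The hard part will not be conceptual but a matter of careful bookkeeping in the second and third steps: assembling the chain $\tc C^{\op}\simeq\Mod(\mfus')\simeq(\B\mfus')^{\idm}$ so that the one-object inclusion is correctly identified with the sub-$2$-category on $X$; tracking the left/right module conventions and the several occurrences of $(-)^{\op}$ and of monoidal reversal --- in particular verifying that the $\mfus'$-module structure $F$ induces on $F(X)$ corresponds, under $\Hom_{\tc C}(X,-)\colon\tc C\simeq\Mod(\mfus)$, to the representable at $c$; and checking that idempotent completion interacts well with $(-)^{\op}$, which one can either prove directly or, as here, sidestep by applying Theorem~\ref{thm:semisimplefrommultifusion} to $\tc C^{\op}$ itself. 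The conceptual content is carried entirely by Theorems~\ref{thm:multifusiontosemisimple} and~\ref{thm:semisimplefrommultifusion}, Proposition~\ref{prop:completionismod}, Corollary~\ref{cor:extensionmultifusion}, and the $2$-Yoneda lemma.
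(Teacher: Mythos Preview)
Your proof is correct and follows essentially the same route as the paper: full faithfulness from the $2$-categorical Yoneda lemma, and essential surjectivity by identifying $\tc{C}$ with $\Mod(\mfus)$ via Theorem~\ref{thm:semisimplefrommultifusion}, reading off the module category $F(X)$ as the representing object, and invoking the extension uniqueness of Corollary~\ref{cor:extensionmultifusion} to conclude. The only cosmetic difference is that the paper works directly in $\Mod(\mfus)$ and names the representing object as $P(\mfus_\mfus)$, whereas you transport back along the equivalence to name an object $c\in\tc{C}$; your extra care with $\tc{C}^{\op}$ and $\mfus'=\mfus^{\mpt}$ is exactly the bookkeeping the paper leaves implicit when it applies Corollary~\ref{cor:extensionmultifusion} to $\Mod(\mfus)^{\op}$.
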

\begin{proof}
By the Yoneda lemma for 2-categories, the embedding $y_{\tc{C}}$ is an equivalence on 1-morphism categories; it therefore suffices to show $y_{\tc{C}}$ is essentially surjective, i.e.\ that any $\tVect$-valued presheaf on $\tc{C}$ is representable. By Theorem~\ref{thm:semisimplefrommultifusion}, there is a multifusion category $\mfus$ such that $\tc{C} \equiv \Mod(\mfus)$. Given a presheaf $P \in \mathrm{Func}(\Mod(\mfus)^\op,\tVect)$, note that the finite semisimple 1-category $P(\mfus_\mfus)$ is a right $\mfus$-module by precomposition with the left action of $\mfus$.  

We claim that $P$ is represented by this $\mfus$-module $P(\mfus_\mfus)$, that is there is an equivalence $P \equiv \Hom_{\Mod(\mfus)}\left(-, P(\mfus_\mfus)\right)$.  By Corollary~\ref{cor:extensionmultifusion}, two 2-functors $\Mod(\mfus)^\op \to \tVect$ are equivalent if their restrictions to the delooping $(\mathrm{B}\mfus)^{\op}$ are equivalent.  Thus it suffices to show that $P(\mfus_\mfus)$ is equivalent to $\Hom_{\Mod(\mfus)}\left(
\mfus_\mfus, P(\mfus_\mfus)\right)$ as right $\mfus$-module categories.  For any right $\mfus$-module $M_\mfus$, evaluation at the tensor unit $\Iz \in \mfus$ induces an equivalence $\Hom_{\Mod(\mfus)}\left(\mfus_\mfus, M_\mfus\right) \ra M_\mfus$, and so in particular does so for the module $P(\mfus_\mfus)$, as required.
\end{proof}

\begin{corollary}[2-distributors between semisimple 2-categories are 2-functors] \label{cor:2dist}
Every finite semisimple $2$-distributor $P: \cC\hto{} \cD$ between finite semisimple $2$-categories is a $2$-functor; that is, there is a $2$-functor $F:\cC\to \cD$ such that $P(-,-) \equiv \Hom_{\cD}(-,F-)$.
\end{corollary}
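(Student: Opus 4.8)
The plan is to deduce the corollary from Proposition~\ref{prop:yonedaequivalence} by a currying (exponential-adjunction) argument. First I would observe that the $k$-bilinearity of a finite semisimple $2$-distributor $P : \cD^{\op} \times \cC \to \tVect$ in its $\cD$-variable makes each restriction $P(-,c) : \cD^{\op} \to \tVect$ a linear $2$-functor, that is an object of $\mathrm{Func}(\cD^{\op}, \tVect)$, while $k$-bilinearity in the $\cC$-variable makes $c \mapsto P(-,c)$ into a locally linear $2$-functor $\widehat{P} : \cC \to \mathrm{Func}(\cD^{\op}, \tVect)$. The only substantive point here is this local linearity of $\widehat P$, which is immediate from bilinearity of $P$; its $2$-functoriality (compatibility with composition, up to the appropriate coherent isomorphisms) is formal from that of $P$.

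Next I would apply Proposition~\ref{prop:yonedaequivalence} with $\tc{C}$ there taken to be $\cD$: the absolute Yoneda embedding $y_{\cD} : \cD \to \mathrm{Func}(\cD^{\op}, \tVect)$ is an equivalence of $2$-categories. Fixing a linear $2$-functor $G : \mathrm{Func}(\cD^{\op}, \tVect) \to \cD$ quasi-inverse to $y_{\cD}$, together with the exhibiting natural equivalences $G \circ y_{\cD} \equiv \id_\cD$ and $y_{\cD} \circ G \equiv \id$, I would set $F := G \circ \widehat{P} : \cC \to \cD$. Being a composite of linear $2$-functors, $F$ is a linear $2$-functor.

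It then remains to verify the representing equivalence. By definition of the Yoneda embedding, $y_{\cD}(Fc) = \Hom_{\cD}(-, Fc)$ for every object $c$, while $y_{\cD} \circ F = y_{\cD} \circ G \circ \widehat{P} \equiv \widehat{P}$ since $y_{\cD} \circ G \equiv \id$. Hence $\Hom_{\cD}(-, F-) = y_{\cD}(F-) \equiv \widehat{P}(-) = P(-,-)$, which is exactly the claimed equivalence of $2$-distributors $\cC \hto{} \cD$.

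Essentially all of the content is carried by Proposition~\ref{prop:yonedaequivalence}, so the only real obstacle is the bookkeeping of the currying step: one must set up coherently the passage between $k$-bilinear $2$-functors out of a product $\cD^{\op}\times\cC$ and ordinary linear $2$-functors into the functor $2$-category $\mathrm{Func}(\cD^{\op},\tVect)$, and confirm that the resulting chain of equivalences is an honest equivalence of $2$-distributors, compatible with horizontal and vertical composition of the distributor transformations. Since we work throughout with weak $2$-functors, no strictification is required and these verifications are routine.
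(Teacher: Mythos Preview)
Your proposal is correct and follows essentially the same approach as the paper: curry the 2-distributor to a 2-functor $\cC \to \mathrm{Func}(\cD^{\op},\tVect)$, then postcompose with the inverse of the Yoneda equivalence from Proposition~\ref{prop:yonedaequivalence}. The paper's proof is a two-sentence version of exactly this argument, so your more detailed write-up is a faithful expansion of it.
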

\begin{proof}
The 2-distributor $P: \cC \hto{} \cD$ corresponds to a 2-functor $\cC \to[] \mathrm{Func}\left(\cD^{\op}, \tVect\right)$.  Postcomposing with the equivalence $\mathrm{Func}\left(\cD^{\op}, \tVect\right)\equiv \cD$ provided by Proposition~\ref{prop:yonedaequivalence} yields a 2-functor $\cC\to \cD$ representing $P$.
\end{proof}

\subsubsection{Examples of semisimple $2$-categories} \label{sec:egss2cat}

The canonical example of a semisimple 1-category is the 1-category $\Vect$ of finite-dimensional vector spaces, with linear functions as morphisms.  Analogously, the canonical example of a semisimple 2-category is the 2-category of finite semisimple linear 1-categories, with linear functors and natural transformations as 1-morphisms and 2-morphisms; we denote this semisimple 2-category by $\tVect$. Note that $\tVect$ is the 2-category of finite semisimple module categories over the fusion category $\Vect$~\cite{kv,111-IV}.

\skiptocparagraph{Module $2$-categories of multifusion categories}

As discussed in Section~\ref{sec:semisimple2mod}, the finite semisimple module categories of a multifusion 1-category form a finite semisimple 2-category, and any finite semisimple 2-category is of this form.

\begin{example}[Fusion category with a unique indecomposable module category]
Let $\oc{F}$ be a fusion category with a unique indecomposable module category, namely $\oc{F}$ itself, for instance the Fibonacci fusion category or the Ising fusion category.  Then the semisimple 2-category $\Mod(\oc{F})$ of modules has a unique simple object $\oc{F}$, which has ($\oc{F}$-module) endomorphism fusion category again equivalent to $\oc{F}$.  The 2-category may therefore be schematically depicted as:
\[
\begin{tz}
\node[dot] at (0,0){};
\draw[->] (-0.1,-0.1) to [out=-135, in=135, looseness=15] node[left] {$\oc{F}$} (-0.1,0.1);
\end{tz}
\] 
Note that in this special situation, the 2-category $\Mod(\oc{F})$ is equivalent to the additive completion of the delooped 2-category $\B\oc{F}$.
\end{example}

\begin{example}[Module categories for $\Vect(\ZZ_2)$] \label{eg:VectZ2}
Over an algebraically closed field of characteristic zero, the fusion category $\Vect(\ZZ_2)$ of $\ZZ_2$-graded vector spaces has, up to equivalence, two indecomposable module categories, namely $\Vect$ and $\Vect(\ZZ_2)$; the 2-category $\Mod(\Vect(\ZZ_2))$ thus has two corresponding simple objects.  Both these simple objects have endomorphism categories $\Vect(\ZZ_2)$ (for the module $\Vect$, though every endomorphism is trivial as a functor, there is a sign choice in the module functor structure), and $\Hom_{\Vect(\ZZ_2)}(\Vect,\Vect(\ZZ_2))$ and $\Hom_{\Vect(\ZZ_2)}(\Vect(\ZZ_2),\Vect)$ are both simply $\Vect$.  This 2-category may therefore be depicted as:
\[\begin{tz}
\node[dot,scale=1.25] at (0,0){};
\node[dot,scale=1.25] at (2,0){};
\draw[->,shorten <=0.2cm, shorten >=0.2cm] (0,0) to [out=35, in=145]node[above, sloped] {$\scriptstyle \Vect$}  (2,0);
\draw[<-,shorten <=0.2cm, shorten >=0.2cm] (0,0) to [out=-35, in=-145]node[below, sloped] {$\scriptstyle \Vect$}  (2,0);
\draw[->] (-0.1,-0.1) to [out=-135, in=135, looseness=15] node[left,xshift=.05cm] {$\scriptstyle \Vect(\ZZ_2)$} (-0.1,0.1);
\draw[->] (2.1,-0.1) to [out=-45, in=45, looseness=15] node[right,xshift=-.05cm] {$\scriptstyle \Vect(\ZZ_2)$} (2.1,0.1);
\end{tz}
\] 
Note well, as visible here, how different the structure of semisimple 2-categories is from that of semisimple 1-categories: in a semisimple 2-category there can be nontrivial 1-morphisms between inequivalent simple objects.
\end{example}

\begin{example}[Module categories for quantum $\mathfrak{sl}_2$ at level 4]
Let $\oc{C}_4$ denote the fusion category of representations of quantum $\mathfrak{sl}_2$ at level 4; this category has five simple objects, denoted $V_0, \ldots, V_4$.  The indecomposable module categories of this fusion category (and more generally of quantum $\mathfrak{sl}_2$ at other levels) have been classified by Ocneanu~\cite{Ocneanu} and Ostrik~\cite{Ostrik}: there is an indecomposable module category called $\oc{A}_5$, coming from the action of $\oc{C}_4$ on itself, and an indecomposable module category called $\oc{D}_4$, whose corresponding algebra object is $V_0 \oplus V_4$.  The module endomorphisms of $\oc{A}_5$ is of course just $\oc{C}_4$, and $\Hom_{\oc{C}_4}(\oc{A}_5,\oc{D}_4)$ and $\Hom_{\oc{C}_4}(\oc{D}_4,\oc{A}_5)$ are both simply $\oc{D}_4$.  The endomorphism fusion category $\End_{\oc{C}_4}(\oc{D}_4)$ of $\oc{D}_4$ is described in Goto~\cite{goto} and has eight simple objects and fusion graph that may be depicted (in the picture of the 2-category $\Mod(\oc{C}_4)$) as follows:
\newsavebox\mybox%
\begin{lrbox}{\mybox}%
\begin{tz}[scale=0.25]
\node[dot] (A1) at (0, 2.5){};
\node[dot] (A2) at (0,1.5){};
\node[dot] (A3) at (0,0.5){};
\node[dot] (A4) at (0,-0.5){};
\node[dot] (A5) at (0,-1.5){};
\node[dot] (A6) at (0,-2.5){};
\node[dot] (L) at (-2,0) {};
\node[dot] (R) at (2,0){};
\draw[-] (L) to (A1);
\draw[-] (L) to (A3);
\draw (L) to (A4);
\draw (R) to (A2);
\draw (R) to (A5);
\draw (R) to (A6);
\draw[dashed] (L) to (A2);
\draw[dashed] (L) to (A5);
\draw[dashed] (L) to (A6);
\draw[dashed] (R) to (A1);
\draw[dashed] (R) to (A3);
\draw[dashed] (R) to (A4);
\end{tz}%
\end{lrbox}%
\[\begin{tz}
\node[dot] at (0,0){};
\node[dot] at (2,0){};
\draw[->,shorten <=0.3cm, shorten >=0.3cm] (0,0) to [out=35, in=145]node[above, sloped] {$\oc{D}_4$}  (2,0);
\draw[<-,shorten <=0.3cm, shorten >=0.3cm] (0,0) to [out=-35, in=-145]node[below, sloped] {$\oc{D}_4$}  (2,0);
\draw[->] (-0.1,-0.1) to [out=-135, in=135, looseness=15] node[left] {$\oc{C}_4$} (-0.1,0.1);
\draw[->] (2.1,-0.1) to [out=-45, in=45, looseness=15] node[right] {$~~~~~\usebox\mybox$} (2.1,0.1);
\end{tz}
\]
Here in $\End_{\oc{C}_4}(\oc{D}_4)$, the top node is the tensor unit, the left and right nodes are generators, and the solid and dashed lines give fusion with these generators, respectively.
\end{example}

\begin{example}[Decomposing the module 2-category of a multi-component multifusion category]
The 2-category of module categories of a multifusion category $\oc{M} = \oc{M}^1\oplus \ldots \oplus \oc{M}^n$ with indecomposable components $\oc{M}^i$ is equivalent to the direct sum of 2-categories $\boxplus_i \Mod(\oc{M}^i)$.  (See Remark~\ref{rem:components}.)  Let $\oc{M}^i = \oplus_{j,k} \oc{M}^i_{jk}$ be the decomposition of $\oc{M}^i$ into fusion categories $\oc{M}^i_{jj}$ and their bimodule categories.  By~\cite[Prop 7.17.5]{EGNO}, for any $j$, there is an invertible bimodule (namely $\oplus_j \oc{M}^i_{jk}$) between the multifusion category $\oc{M}^i$ and the fusion category $\oc{M}^i_{jj}$, and so, picking some index $j_i$ for each $i$, we have $\Mod(\oc{M})$ is equivalent to $\boxplus_i \Mod(\oc{M}^i_{j_i j_i})$.  (This presentation is convenient later when we consider monoidal structures on these semisimple 2-categories.)
\end{example}

\skiptocparagraph{Representations of groupoids}

Many examples of semisimple 1-categories arise by taking a category of functors into $\Vect$, and a natural family of domains for such functors are 1-groupoids. 

\begin{notation}[$n$-groupoids via homotopy groups] \label{notation:groupoids}
Recall that an $n$-groupoid is a homotopy $n$-type, that is a space whose only nontrivial homotopy groups occur in dimensions $0, 1, \ldots, n$.  We will suppress k-invariant information from the notation and denote an $n$-groupoid simply by the tuple $(\pi_0,\pi_1,\ldots,\pi_n)$, where $\pi_0$ is a discrete set (namely the component set of the space), and $\pi_i$ for $i>0$ is a family of discrete groups indexed by $\pi_0$ (namely the family of the $i$-th homotopy groups of the various components of the space).  

Alternatively, an $n$-groupoid may be viewed as an $n$-category all of whose morphisms are invertible.  From that perspective, the k-invariant information is encoded in the weak structure data, that is the higher units, associators, and interchangers of the $n$-category.  In the categorical view, the set $\pi_0$ is the set of equivalence classes of objects of the $n$-category, and the family $\pi_i$ records, for each component of the $n$-category (i.e.\ element of $\pi_0$), the set of equivalence classes of automorphisms of the $(i-1)$-fold identity on an object in that component.

A 0-groupoid is therefore denoted simply $\pi_0$; a 1-groupoid with no nontrivial automorphisms is denoted $(\pi_0,\ast)$---here $\ast$ refers to the $\pi_0$-indexed family of groups that is trivial in every component; a connected 1-groupoid is denoted $(\ast,\pi_1)$; a 2-groupoid with no nontrivial 2-automorphisms is denoted $(\pi_0,\pi_1,\ast)$; a connected 2-groupoid is denoted $(\ast,\pi_1,\pi_2)$.

The notation permits the addition of monoidal structures.  Recall that an $n$-group is by definition a connected $n$-groupoid.  Thought of directly as an $n$-groupoid, this would be denoted $(\ast, \pi_1, \ldots, \pi_n)$, but we may instead consider its loop space as an $(n-1)$-groupoid with a (grouplike) monoidal structure; that grouplike monoidal $(n-1)$-groupoid will be denoted $(\pi_1, \ldots, \pi_n)$.  Similarly, a (grouplike) $k$-fold monoidal $n$-group is, by definition, an $(n+k-1)$-groupoid $(\pi_0, \ldots, \pi_{n+k-1})$ such that $\pi_i$ is trivial for $0 \leq i \leq k-1$.  The $k$-th loop space of that $(n+k-1)$-groupoid is a $k$-fold monoidal $(n-k)$-groupoid denoted $(\pi_k, \ldots, \pi_n)$.  For instance, a grouplike monoidal 0-groupoid (that is, a group) is denoted $\pi_1$; a grouplike 2-fold monoidal 0-groupoid (that is, an abelian group) is denoted $\pi_2$; a grouplike monoidal 1-groupoid (that is, the loop space of a 2-group) is denoted $(\pi_1, \pi_2)$; a grouplike 2-fold monoidal 1-groupoid (that is, the double loop space of a 3-group) is denoted $(\pi_2, \pi_3)$; a grouplike monoidal 2-groupoid (that is, the loop space of a 3-group) is denoted $(\pi_1,\pi_2,\pi_3)$.

An $n$-groupoid is called finite when it has finitely many components and all the homotopy groups of every component are finite.
\end{notation}

\begin{recollection}[1-category of 1-representations of a 1-groupoid]
A representation of a finite 1-groupoid $(\pi_0,\pi_1)$ is a 1-functor $(\pi_0,\pi_1) \to \Vect$.   The linear 1-category $\Rep(\pi_0,\pi_1) := [(\pi_0,\pi_1),\Vect]$ of representations of a finite 1-groupoid $(\pi_0,\pi_1)$ is a finite semisimple 1-category. 
\end{recollection}

\nid%
As a special case, we may think of a group $\pi_1$ as a connected 1-groupoid $(\ast,\pi_1)$, and consider the category of representations 
$\Rep(\ast,\pi_1) \origequiv [(\ast,\pi_1),\Vect]$; the objects of this category are called simply ``$\pi_1$-representations".  As another special case, we may think of a set $\pi_0$ as a discrete 1-groupoid $(\pi_0,\ast)$, and consider the category of representations 
$\Rep(\pi_0,\ast) \origequiv [(\pi_0,\ast),\Vect]$; the objects of this category are called ``$\pi_0$-graded vector spaces".  
We summarize this situation in the following table:

\begin{center}
\begin{tabular}{ r r c l l }
\multicolumn{1}{c}{Input} & \multicolumn{1}{c}{Notation} & & \multicolumn{1}{c}{Definition} & \multicolumn{1}{c}{Name} \\ \hline
groupoid $(\pi_0,\pi_1)$ & $\Rep(\pi_0,\pi_1)$ & $:=$ & $[(\pi_0,\pi_1),\Vect]$ & $(\pi_0,\pi_1)$-representation \\
group $\pi_1$ & $\Rep(\ast,\pi_1)$ & $:=$ & $[(\ast,\pi_1),\Vect]$ & $\pi_1$-representation \\
set $\pi_0$ & $\Rep(\pi_0,\ast)$ & $:=$ & $[(\pi_0,\ast),\Vect]$ & $\pi_0$-graded vector space
\end{tabular}
\end{center}

Given a group $\pi_1$, one may of course think of it as a discrete (grouplike monoidal) 1-groupoid $(\pi_1,\ast)$, and therefore consider the category $\Vect(\pi_1) := \Rep(\pi_1,\ast)$ of $\pi_1$-graded vector spaces.  We defer attention to that case until later, when we are concerned with monoidal structures on these semisimple 1-categories.  We then consider also more generally the category $\Rep(\pi_1,\pi_2)$ of representations of a 2-group $(\pi_1,\pi_2)$.

\skiptocparagraph{2-representations of 2-groupoids}

Many examples of semisimple 2-categories arise by taking a 2-category of 2-functors into $\tVect$.  

\begin{example}[2-category of 2-representations of a 2-groupoid] \label{eg:rep2groupoid}
For a finite 2-groupoid $(\pi_0,\pi_1,\pi_2)$, a representation of $(\pi_0,\pi_1,\pi_2)$ is a (weak) 2-functor $(\pi_0,\pi_1,\pi_2) \to \tVect$.  The linear 2-category $\tRep(\pi_0,\pi_1,\pi_2) := [(\pi_0,\pi_1,\pi_2),\tVect]$ of 2-representations of a finite 2-groupoid $(\pi_0,\pi_1,\pi_2)$ is a finite semsimple 2-category; this can be seen as follows.

Given a finite 2-groupoid $(\pi_0, \pi_1,\pi_2)$, let $k(\pi_0, \pi_1,\pi_2)$ denote its linearization, that is the linear 2-category with the same objects and 1-morphisms as the 2-groupoid, and with 2-morphism vector spaces freely generated by the 2-morphism sets of the 2-groupoid.  Let $\widehat{k}(\pi_0,\pi_1,\pi_2)$ denote the finite presemisimple 2-category obtained by local additive and local idempotent completion of $k(\pi_0, \pi_1,\pi_2)$. Then let $\overline{k}(\pi_0,\pi_1,\pi_2)$ denote the multifusion category obtained by folding $\widehat{k}(\pi_0,\pi_1,\pi_2)$.  Observe that the 2-category of 2-functors $[(\pi_0, \pi_1, \pi_2), \tVect]$ is equivalent to the 2-category of modules $\Mod(\overline{k}(\pi_0,\pi_1,\pi_2))$, and therefore that $\tRep(\pi_0,\pi_1,\pi_2)$ is a finite semisimple 2-category, as desired.
\end{example}

There are a number of important special cases of this example of 2-representations of a 2-groupoid.  We summarize them in the following table:
\[
\hspace*{-0.95cm}
\shrinkalign{.92}{
\begin{tabular}{ r r c l l }
\multicolumn{1}{c}{Input} & \multicolumn{1}{c}{Notation} & & \multicolumn{1}{c}{Definition} & \multicolumn{1}{c}{Name} \\ \hline
2-grpoid $(\pi_0,\pi_1,\pi_2)$ & $\tRep(\pi_0,\pi_1,\pi_2)$ & $:=$ & $[(\pi_0,\pi_1,\pi_2),\tVect]$ & $(\pi_0,\pi_1,\pi_2)$-2-representation \\
2-group $(\pi_1,\pi_2)$ & $\tRep(\ast, \pi_1,\pi_2)$ & $:=$ & $[(\ast,\pi_1,\pi_2),\tVect]$ & $(\pi_1,\pi_2)$-2-representation \\
1-grpoid $(\pi_0,\pi_1)$ & $\tRep(\pi_0,\pi_1,\ast)$ & $:=$ & $[(\pi_0,\pi_1,\ast),\tVect]$ & $(\pi_0,\pi_1)$-graded 2-vector space \\
ab group $\pi_2$ & $\tRep(\ast,\ast,\pi_2)$ & $:=$ & $[(\ast,\ast,\pi_2),\tVect]$ & $\pi_2$-2-representation \\
group $\pi_1$ & $\tRep(\ast,\pi_1,\ast)$ & $:=$ & $[(\ast,\pi_1,\ast),\tVect]$ & $\pi_1$-2-vepresentation \\
set $\pi_0$ & $\tRep(\pi_0,\ast,\ast)$ & $:=$ & $[(\pi_0,\ast,\ast),\tVect]$ & $\pi_0$-graded 2-vector space
\end{tabular}
}
\]
Of course, given a group $\pi_1$, one may treat it as a discrete (grouplike monoidal) 2-groupoid $(\pi_1,\ast,\ast)$, and consider the 2-category $\tVect(\pi_1) := \tRep(\pi_1,\ast,\ast)$ of $\pi_1$-graded 2-vector spaces.  We defer discussion of that case to later attention to monoidal structures, where we also discuss more generally the 2-category $\tRep(\pi_1,\pi_2,\pi_3)$ of 2-representations of a 3-group $(\pi_1,\pi_2,\pi_3)$.

\begin{example}[$\pi_0$-graded 2-vector spaces]
For a finite set $\pi_0$, the 2-category $\tRep(\pi_0,\ast,\ast)$ is the 2-category of $\pi_0$-graded finite semisimple 1-categories $\oc{C} = \bigoplus_{x \in \pi_0} \oc{C}_x$, with 1-morphisms the grading-preserving functors and 2-morphisms the natural transformations.  

The simple objects of this 2-category are of the form $[x] := \bigoplus_{y \in \pi_0} \oc{C}_y$ with $\oc{C}_x = \Vect$ and $\oc{C}_y = 0$ for $y \neq x$.  In this case, the 1-morphism category $\Hom_{\tRep(\pi_0, \ast,\ast)}([x], [y])$ is $\Vect$ if $x = y$ and zero otherwise.
\end{example}

\begin{example}[$\pi_1$-2-vepresentations are modules for graded vector spaces] \label{rem:VepisVect}
For a finite group $\pi_1$, the 2-category $\tRep(\ast,\pi_1,\ast)$ is the 2-category of finite semisimple 1-categories with a (weak) $\pi_1$-action, with 1-morphisms the intertwining functors and 2-morphisms the natural transformations.  The structure of a $\pi_1$-action on a semisimple 1-category is equivalent to the structure of an action of the fusion 1-category $\Vect(\pi_1)$ of $\pi_1$-graded vector spaces (with the monoidal structure induced by the group structure on $\pi_1$).  Thus the 2-category $\tRep(\ast,\pi_1,\ast)$ is equivalent to the 2-category $\Mod(\Vect(\pi_1))$.

For instance, the 2-category $\tRep(\ast,\ZZ_2,\ast)$ is equivalent to the 2-category \linebreak$\Mod(\Vect(\ZZ_2))$ described in Example~\ref{eg:VectZ2}, with two simple objects having nonzero non-equivalence morphisms between them.  More generally, the indecomposable module categories of $\Vect(\pi_1)$, hence the simple objects of $\tRep(\ast,\pi_1,\ast)$, have been classified by Ostrik~\cite{Ostrik}.
\end{example}

\begin{example}[$\pi_2$-2-representations]
For a finite abelian group $\pi_2$, the 2-category \linebreak$\tRep(\ast,\ast,\pi_2)$ is the 2-category of finite semisimple 1-categories $\oc{C}$ with a group homomorphism $\phi: \pi_2 \to \Aut(\Iz_{\oc{C}})$, with 1-morphisms the functors $F:\oc{C} \to \oc{C}'$ such that $F \xo \phi(g) = \phi'(g) \xo F$ for all $g \in \pi_2$ and 2-morphisms the natural transformations.  

An object $(\oc{C}, \phi) \in \tRep(\ast,\ast,\pi_2)$ is simple if and only if $\oc{C} \equiv \Vect$.  Hence the simple objects of $\tRep(\ast,\ast,\pi_2)$ correspond to group homomorphisms $\phi: \pi_2 \to k^*$, and the 1-morphism category $\Hom_{\tRep(\ast, \ast, \pi_2)} (\phi, \phi')$ is $\Vect$ if $\phi = \phi'$ and zero otherwise.  Thus it happens that $\tRep(\ast,\ast,\pi_2)$ is equivalent to $\tRep(\Hom(\pi_2, k^*), \ast,\ast)$.

The simple objects and Hom categories of the common generalization of this example and the previous example, namely $\tRep(\ast,\pi_1,\pi_2)$, have been investigated by Elgueta~\cite{Elgueta}.
\end{example}

\begin{remark}[Completion produces non-invertible simple 1-morphisms]
Even though, by the discussion in Example~\ref{eg:rep2groupoid}, the 2-category $\tRep(\pi_0,\pi_1,\pi_2)$ is the semisimple completion of the linearization $k(\pi_0,\pi_1,\pi_2)$ of the 2-groupoid $(\pi_0,\pi_1,\pi_2)$, it can certainly happen that $\tRep(\pi_0,\pi_1,\pi_2)$ has non-invertible simple 1-morphisms.  Such morphisms are seen, for instance, in $\tRep(\ast,\ZZ_2,\ast) \simeq \Mod(\Vect(\ZZ_2))$ from Remark~\ref{rem:VepisVect} and Example~\ref{eg:VectZ2}.
\end{remark}

\begin{remark}[The dimension of groupoid-graded 2-vector spaces is the groupoid cardinality] \label{rem:groupoidcardinality}
For a finite set $\pi_0$, the semisimple $2$-category $\tRep(\pi_0,\ast,\ast)$ evidently has dimension the order of $\pi_0$.  For a finite group $\pi_1$, by the preceding remark and Remark~\ref{rem:diminvariance}, we see that the dimension of the semisimple $2$-category $\tRep(\ast,\pi_1,\ast)$ is the dimension of the presemisimple $2$-category $\B\Vect(\pi_1)$.  As the global dimension of the fusion category $\Vect(\pi_1)$ is $|\pi_1|$, the dimension of $\B\Vect(\pi_1)$ is $1/|\pi_1|$.  More generally, for a finite $1$-groupoid $(\pi_0,\pi_1)$, the dimension of the $2$-category of $(\pi_0,\pi_1)$-graded $2$-vector spaces $\tRep(\pi_0,\pi_1,\ast)$ is the groupoid cardinality~\cite{Groupoidification} of $(\pi_0,\pi_1)$, that is the sum over components of the reciprocal of the size of the automorphism groups.
\end{remark}

\newpage

\addtocontents{toc}{\protect\vspace{8pt}}

\section{Monoidal $2$-categories}

\subsection{Fusion $2$-categories}

\subsubsection{The definition of fusion $2$-categories}

\skiptocparagraph{Monoidal structures on $2$-categories}

We will work with semistrict monoidal $2$-categories, meaning that the underlying $2$-category is strict and the monoidal structure is strictly unital and associative, though there may be a nontrivial interchange isomorphism between the two distinct ways of taking the monoidal product of two $1$-morphisms.  Such semistrict monoidal $2$-categories first appeared in Gordon--Powers--Street~\cite{GPS} and are often called ``Gray monoids".  A recent presentation of the notion occurs in Barrett--Meusburger--Schaumann (BMS)~\cite{BMS}.  Though structured somewhat differently, our notion of monoidal $2$-category is equivalent to the BMS definition.

\begin{definition}[Monoidal 2-category] \label{def:monoidal2cat}
A \emph{monoidal $2$-category} consists of the following data:
\begin{enumerate}
\item[D1.] a strict $2$-category $\tc{C}$;
\item[D2.] an ``identity" object $\Iz \in \tc{C}$;
\item[D3.] strict ``left and right tensor product" $2$-functors 
\begin{align*}
L_A&\origequiv A\xz -: \tc{C} \to \tc{C}\\
R_A&\origequiv -\xz A: \tc{C} \to \tc{C}, 
\end{align*}
for each object $A\in \tc{C}$;
\item[D4.] an ``interchange" $2$-isomorphism
\[
\phi_{f,g}: \left( f\xz B'\right)\xo \left(A\xz g\right) \To \left(A' \xz g\right)\xo \left(f \xz B \right)
\]
for each pair of $1$-morphisms $f:A\to A'$ and $g:B\to B'$;
\end{enumerate}
subject to the following conditions:
\begin{enumerate}
\item[C1.] left and right multiplication agree: $L_A B = R_B A$ for objects $A,B \in \tc{C}$;
\item[C2.] the tensor product is strictly unital and associative:
\begin{align*} 
&L_\Iz = \mathrm{id}_{\tc{C}} = R_\Iz \\
&L_A L_B = L_{A\xz B} \\
&R_BR_A = R_{A\xz B} \\
&L_A R_B = R_B L_A;
\end{align*}
\item[C3.] the interchanger respects identities: 
\begin{align*}
\phi_{f, \Io_A} &= \It_{f\xz A} \\
\phi_{\Io_A, f} &= \It_{A\xz f}
\end{align*}
for object $A \in \tc{C}$ and 1-morphism $f: C \to D$;
\item[C4.] the interchanger respects composition:
\begin{align*}
\phi_{f'\xo f, g}&= \left(\phi_{f',g} \xo (f\xz B ) \right)\xt \left( (f' \xz B')\xo \phi_{f,g}\right)\\
\phi_{f,g'\xo g} &= \left((A' \xz g')  \xo \phi_{f,g}\right) \xt \left(\phi_{f,g'}\xo ( A \xz g )  \right)
\end{align*}
for $f:A\to A'$, $f':A'\to A''$, $g:B\to B'$ and $g':B'\to B''$;
\item[C5.] the interchanger is natural:
\begin{align*}
\phi_{f',g} \xt \left((\alpha \xz B') \xo (A\xz g ) \right) &= \left((A'\xz g) \xo (\alpha \xz B) \right)\xt \phi_{f,g}\\
\phi_{f,g'}\xt \left( \left(  f\xz B'\right) \xo \left(A \xz  \beta \right) \right) &=  \left( \left( A' \xz \beta \right) \xo \left( f \xz B \right) \right)\xt\phi_{f,g}
\end{align*}
for $1$-morphisms $f,f':A\to A', g,g':B\to B'$ and $2$-morphisms $\alpha: f\To f'$, $\beta:g\To g'$;
\item[C6.] the interchanger respects tensor product:
\begin{align*}
\phi_{A\xz g, h} &= A \xz \phi_{g,h} \\
\phi_{f\xz B, h} &= \phi_{f, B\xz h} \\
\phi_{f,g\xz C} &= \phi_{f,g} \xz C
\end{align*}
for $f:A \to A'$, $g:B\to B'$ and $h:C\to C'$.

\end{enumerate}
\end{definition}

\begin{notation}[Horizontal composition of 1-morphisms] \label{notation:nudging}
Though the tensor product of a monoidal $2$-category does not provide a unique tensor of two $1$-morphisms $f:A\to B$ and $g:C\to D$, as a matter of convenient notation, we use the symbol $f\xz g$ to mean $\left(f\xz D\right)\xo \left(A\xz g\right)$.  This convention is known as ``nudging".  The `tensor' $\alpha \xz \beta$ of two $2$-morphisms $\alpha$ and $\beta$ is similarly used to mean the corresponding nudged composite.
\end{notation}

\begin{definition}[Linear monoidal 2-category] \label{def:linmon2cat}
A \emph{linear monoidal $2$-category} is a linear $2$-category equipped with a monoidal structure such that, for all objects $A$, the functors $A \xz -$ and $- \xz A$ are linear.
\end{definition}

\begin{remark}[Strictification for monoidal 2-categories]
Any weakly monoidal weak 2-category can be strictified to a monoidal 2-category of the flavor given in Definition~\ref{def:monoidal2cat}; similarly any linear weakly monoidal weak 2-category can be strictified to a linear monoidal 2-category a la Definition~\ref{def:linmon2cat}.  The feasibility of those strictifications follow from the usual stricitfication for tricategories~\cite{GPS} and a corresponding $\Vect$-enriched version.  Because of this, we permit ourselves to work with semistrict monoidal 2-categories as described, even though most examples will arise in the first instance in a weaker form.
\end{remark}

\skiptocparagraph{Duality in monoidal $2$-categories}

\begin{definition}[Duals in monoidal 2-categories]  
In a monoidal $2$-category, an object $A^\#$ is a \emph{right dual} of an object $A$, equivalently $A$ is a \emph{left dual} of $A^\#$, if there exist counit and unit $1$-morphisms $e: A \xz A^\# \to \Iz$ and $i: \Iz \to A^\# \xz A$ such that $(e\xz A) \xo  (A\xz i) \iso \Io_{A}$ and $\Io_{A^\#} \iso (A^\# \xz e) \xo (i \xz A^\# )$.
\end{definition}

\begin{definition}[Prefusion and fusion 2-categories] \label{def:fusion2cat} 
A \emph{prefusion $2$-category} is a finite presemisimple monoidal $2$-category that has left and right duals for objects and a simple monoidal unit.  A \emph{fusion $2$-category} is a finite semisimple monoidal $2$-category that has left and right duals for objects and a simple monoidal unit.
\end{definition}

\nid Being a (pre)fusion $2$-category is a property of a linear monoidal $2$-category, and this property is preserved under any linear $2$-equivalence.  Hence, every (pre)fusion linear \emph{weakly} monoidal \emph{weak} $2$-category can be strictified to a (pre)fusion $2$-category in the sense of Definition~\ref{def:fusion2cat}.

\begin{remark}[State sum invariance under completion] \label{rem:invariancecompletion}
The monoidal product in a prefusion or fusion 2-category is given by a 2-functor (as opposed to a 2-distributor).  It is not therefore the case that one can transport a prefusion structure on a presemisimple 2-category across an arbitrary 2-distributor equivalence of presemisimple 2-categories (see Section~\ref{sec:completion}).  However, because every 2-distributor between finite semisimple 2-categories is a 2-functor, it is the case that a prefusion structure on a presemisimple 2-category induces a fusion structure on the completed semisimple 2-category.  (We might say that a prefusion 2-category is `monoidally 2-distributor equivalent' to its completion.)  We expect the state sum will be invariant under this completion, that is the state sum invariant for a prefusion 2-category will be the same as the invariant for the associated (completed) fusion 2-category.
\end{remark}

\begin{remark}[State sum invariance under bimodule equivalence]
A natural notion of equivalence between fusion 2-categories is asking for a monoidal 2-functor that is an equivalence of the underlying 2-categories; this is called a `monoidal 2-functor equivalence'.  (This is a categorification of the notion of monoidal functor equivalence between monoidal categories.)  A coarser notion of equivalence between two prefusion or fusion 2-categories is asking for an invertible bimodule 2-category between them; this is called a `bimodule equivalence'.  (This notion is a categorification of the notion of bimodule or `Morita' equivalence between monoidal categories.)  The state sum of a fusion 2-category is of course invariant under monoidal 2-functor equivalence, but we speculate it is actually invariant under bimodule equivalence as well.
\end{remark}

\subsubsection{The graphical calculus of fusion $2$-categories}\label{sec:graphicalcalculus}

\skiptocparagraph{Calculus of $2$-categories}

A 2-category admits a graphical calculus of labeled 1-manifolds in the plane, so called `string diagrams':
\begin{calign}\nonumber
\begin{tz}[scale=0.5]
\draw[slice] (0,0) rectangle (3,3);
\node[obj, above left] at (3,0){$A$};
\end{tz}
&
\begin{tz}[scale=0.5]
\draw[slice, on layer=front] (0,0) rectangle (3,3);
\draw[wire] (1.5,0) to (1.5,3);
\node[obj, above left] at (3,0){$A$};
\node[obj, above right] at (0,0){$B$};
\node[omor, above right] at (1.5,0) {$f$};
\end{tz}
&
\begin{tz}[scale=0.5]
\draw[slice,on layer=front] (0,0) rectangle (3,3);
\draw[wire] (1.5,0) to (1.5,3);
\node[obj, above left] at (3,0){$A$};
\node[obj, above right] at (0,0){$B$};
\node[omor, above right] at (1.5,0) {$f$};
\node[omor, below right] at (1.5,3) {$g$};
\node[dot] at (1.5,1.5) {};
\node[tmor, right] at (1.5, 1.5) {$\eta$};
\end{tz}
\\\nonumber
\text{An object $A$}
&
\text{A $1$-morphism $f:A\to B$}
&
\text{A $2$-morphism $\eta:f \To g$}
\end{calign}
Here objects are depicted as regions in the plane, a 1-morphism $A \to B$ is depicted as a wire separating the region labeled $A$ from the region labeled $B$, and a 2-morphism $f \To g$ is depicted as a node separating the wire labeled $f$ from the wire labeled $g$.  (The gray bounding box simply indicates the extent of the picture.)  We draw 1-morphism composition from right to left: that is, in a diagram for $g \xo f$, the wire labeled $g$ appears to the left of the wire labeled $f$.  Similarly, we draw 2-morphism composition from bottom to top: that is, in a diagram for $\eta \xt \mu$, the node labeled $\eta$ appears above the node labeled $\mu$.

\skiptocparagraph{Calculus of monoidal structures}

Monoidal 2-categories (and more generally semistrict 3-categories) admit a similar graphical calculus of `surface diagrams' in 3-space~\cite{BMS}:
\begin{calign}\nonumber
 \begin{tz}[td]
 \begin{scope}[yzplane=0]
 \draw[slice] (0,0) rectangle (3,3);
 \node[obj, above left] at (-0.1,-0.1){$A$};
 \end{scope}
  \begin{scope}[yzplane=1.]
 \draw[slice] (0,0) rectangle (3,3);
  \node[obj, above left] at (-0.1,-0.1){$B$};
 \end{scope}
 \end{tz}
 &
  \begin{tz}[td]
\begin{scope}[yzplane=0, on layer=front]
 \draw[slice, on layer=superfront] (0,0) rectangle (3,3);
 \draw[wire] (1.5,0) to (1.5,3);
 \node[obj, above right] at (3.1, -0.1) {$B$};
 \node[omor, above right] at (1.5,-0.1) {$f$};
 \node[obj, above left] at (-0.1,-0.1){$A$};
 \end{scope}
  \begin{scope}[yzplane=1.,on layer=back]
 \draw[slice] (0,0) rectangle (3,3);
 \draw[wire, on layer=superback] (1.5,0) to (1.5,3);
 \node[omor, above right] at (1.5,-0.) {$g$};
  \node[obj, above left] at (-0.1,-0.1){$C$};
  \node[obj, above right] at (3.1,-0.1) {$D$};
 \end{scope}
 \end{tz}
 &
 \begin{tz}[td]
 \begin{scope}[yzplane=0, on layer=superfront]
 \draw[slice] (0,0) rectangle (3,3);
 \node[omor, above right] at (2.25,-0.1) {$f$};
 \node[obj, above right] at (3.1, -0.1) {$B$};
 \node[obj, above left] at (-0.1,-0.1){$A$};
 \end{scope}
  \begin{scope}[yzplane=1,on layer=fronta]
 \draw[slice] (0,0) rectangle (3,3);
 \node[omor, above left] at (0.75,-0.1) {$g$};
 \node[obj, above left] at (-0.1,-0.1){$C$};
  \node[obj, above right] at (3.1,-0.1) {$D$};
 \end{scope}
 \draw[wire, on layer=front] (2.25,0,0) to [out=up, in=down] node[mask point, pos=0.6](A){} (0.75,0,3);
 \cliparoundone{A}{\draw[wire] (0.75,1,0) to [out=up, in=down, in looseness=2,] (2.25,1,3);}
 \end{tz}
 \\\nonumber
\text{The object $A\xz B$}&
\text{The $1$-morphism $f\xz g$}&
\text{The interchanger $\phi_{f,g}$}
\end{calign}
The monoidal structure is depicted by layering surfaces behind one another, with the convention that tensor product occurs from back to front: that is, in a diagram for $A \xz B$, the surface labeled $A$ appears in front of the surface labeled $B$.  Note that the tensor of 1-morphisms $f \xz g$ is defined by Notation~\ref{notation:nudging} and indeed in the diagram the morphism $f$ appears slightly to the left of the morphism $g$.  As drawn, the interchanger is depicted as a crossing of wires living in parallel planes. The following is a more complicated example of a surface diagram representing a 2-morphism in a monoidal 2-category:
\[
\raisebox{-0.08cm}{%
\begin{tz}[td,scale=1]
\begin{scope}[xyplane=0]
\draw[slice] (0,0) to [out=up, in=\dl] (0.5,1) to [out=up, in=\dl] (1,2) to (1,3);
\draw[slice] (1,0) to [out=up, in=\dr] (0.5,1);
\draw[slice] (2,0) to [out=up, in=\dr] (1,2);
\end{scope}
\begin{scope}[xyplane=\h, on layer=superfront]
\draw[slice] (0,0) to [out=up, in=\dl] (1,2) to (1,3);
\draw[slice] (1,0) to [out=up, in=\dl] (1.5,1) to [out=up, in=\dr] (1,2);
\draw[slice] (2,0) to [out=up, in=\dr] (1.5,1);
\end{scope}
\begin{scope}[xyplane=0.5*\h]
\draw[tinydash] (0,0) to [out=up, in=\dl] (1, 1.5);
\draw[tinydash] (1,0) to (1,3);
\draw[tinydash] (2,0) to [out=up, in=\dr] (1,1.5);
\end{scope}
\begin{scope}[xzplane=0]
\draw[slice,short] (0,0) to (0, \h);
\draw[slice,short] (1,0) to (1,\h);
\draw[slice,short] (2,0) to (2,\h);
\end{scope}
\begin{scope}[xzplane=3]
\draw[slice,short] (1,0) to (1,\h);
\end{scope}
\coordinate (A) at (1.5,1,0.5*\h);
\draw[wire] (1,0.5,0) to [out=up, in=\dr] (A) to [out=\ur, in=down] (1,1.5,\h);
\draw[wire] (2,1,0) to [out=up, in=\dl] (A) to [out=\ul, in=down] (2,1,\h);
\node[dot] at (A){};
\node[obj, above left] at (-0.12,0,-0.08) {$A$};
\node[obj, above left] at (-0.12,1,-0.08) {$B$};
\node[obj, above left] at (-0.12,2,-0.08) {$C$};
\node[obj, above right] at (3.15,1,-0.08){$F$};
\node[obj, above] at (1.4,0.6,-0.08){$D$};
\node[obj, below] at (1.5,1.5,\h){$E$};
\node[omor, above left] at (2,1,0){$f$};
\node[omor, above right] at (1,0.5,0){$g$};
\node[omor, below left] at (2,1,\h){$h$};
\node[omor, below right] at (1,1.5,\h-0.1){$k$};
\node[tmor, left] at ([xshift=-0.1cm]A){$\eta$};
\end{tz}
}
\hspace{0.2cm}:\hspace{0.2cm}
\begin{tz}[scale=0.5]
\draw[slice] (0,0) to [out=up, in=\dl] (0.5,1) to [out=up, in=\dl] (1,2) to (1,3);
\draw[slice] (1,0) to [out=up, in=\dr] (0.5,1);
\draw[slice] (2,0) to [out=up, in=\dr] (1,2);
\node[dot] at (0.5,1){};
\node[dot] at (1,2){};
\node[obj,left] at (0,0) {$A$};
\node[obj,  left] at (1,0) {$B$};
\node[obj, left] at (2,0) {$C$};
\node[obj,  left] at (1, 3) {$F$};
\node[obj,  left] at (0.7, 1.6) {$D$};
\node[omor, left] at (0.5,1) {$g$};
\node[omor, left] at (1,2) {$f$};
\end{tz}
\hspace{0.25cm}\To[~\eta~]\hspace{0cm}
\begin{tz}[scale=0.5,xscale=-1]
\draw[slice] (0,0) to [out=up, in=\dl] (0.5,1) to [out=up, in=\dl] (1,2) to (1,3);
\draw[slice] (1,0) to [out=up, in=\dr] (0.5,1);
\draw[slice] (2,0) to [out=up, in=\dr] (1,2);
\node[dot] at (0.5,1){};
\node[dot] at (1,2){};
\node[obj,left] at (0,0) {$C$};
\node[obj,  left] at (1,0) {$B$};
\node[obj, left] at (2,0) {$A$};
\node[obj,  left] at (1, 3) {$F$};
\node[obj,  right] at (0.7, 1.6) {$E$};
\node[omor, right] at (0.5,1) {$k$};
\node[omor, right] at (1,2) {$h$};
\end{tz}
\]
For clarity, here we have also explicitly depicted the source and target of the 2-morphism; that source and target appear in the surface diagram as the bottom and top horizontal slices, respectively.  Note that we will often omit labels on regions, wires, and nodes, when it is clear from context what those labels should be.

\skiptocparagraph{Calculus of duality}

In a monoidal 2-category in which every object has left and right duals, we can extend the graphical calculus of monoidal 2-categories by introducing the following diagrammatic notation for particular choices of counit and unit 1-morphisms of the object dualities:
\begin{calign}\nonumber
\begin{tz}[scale=0.7]
\draw[slice] (0,0) to (0,0.5) to [out=up, in=up, looseness=2] (1,0.5) to (1,0);
\node[obj, left]  at (0,0) {$A$};
\node[obj, right]  at (1,0) {$A^\#$};
\end{tz}
&
\begin{tz}[yscale=-1,scale=0.7]
\draw[slice] (0,0) to (0,0.5) to [out=up, in=up, looseness=2] (1,0.5) to (1,0);
\node[obj, right]  at (1,0) {$A$};
\node[obj, left]  at (0,0) {$A^\#$};
\end{tz}
\\\nonumber
e_A:A\xz A^\# \to \Iz
&
i_A: \Iz \to A^\# \xz A
\end{calign}
We may furthermore depict particular choices of the 2-isomorphisms $(e\xz A) \xo  (A\xz i) \iso \Io_{A}$ and $\Io_{A^\#} \iso (A^\# \xz e) \xo (i \xz A^\# )$ by cusps:
\begin{calign}\nonumber
\begin{tz}[td]
\begin{scope}[xyplane=0]
\draw[slice] (0,0) to[out=up, in=down] (-1,2) to [out=up, in=up, looseness=2] (0,2) to [out=down, in=down, looseness=2] (1,2) to [out=up, in=down](0,4);
\end{scope}
\begin{scope}[xyplane=\h]
\draw[slice] (0,0)to (0,4);
\end{scope}
\begin{scope}[xzplane=0]
\draw[slice,short] (0,0) to (0,\h);
\end{scope}
\begin{scope}[xzplane=4]
\draw[slice,short] (0,0) to (0,\h);
\end{scope}
\coordinate (cusp) at (2,0, 0.6*\h);
\draw[slice] (2.54, -0.7,0) to [out=up, in=\dlcusp] (cusp);
\draw[slice] (1.46, 0.7,0) to [out=up, in=\drcusp](cusp);
\node[obj,above left] at (0,0,0) {$A$};
\end{tz}
&
\begin{tz}[td,yscale=-1]
\begin{scope}[xyplane=0]
\draw[slice] (0,0) to[out=up, in=down] (-1,2) to [out=up, in=up, looseness=2] (0,2) to [out=down, in=down, looseness=2] (1,2) to [out=up, in=down](0,4);
\end{scope}
\begin{scope}[xyplane=\h]
\draw[slice] (0,0)to (0,4);
\end{scope}
\begin{scope}[xzplane=0]
\draw[slice,short] (0,0) to (0,\h);
\end{scope}
\begin{scope}[xzplane=4]
\draw[slice,short] (0,0) to (0,\h);
\end{scope}
\coordinate (cusp) at (2,0, 0.6*\h);
\draw[slice] (2.54, -0.7,0) to [out=up, in=\dlcusp] (cusp);
\draw[slice] (1.46, 0.7,0) to [out=up, in=\drcusp](cusp);
\node[obj,above left] at (0,0,\h) {$A^\#$};
\end{tz}
\\\nonumber
\text{the cusp on $A$}
&
\text{the cusp on $A^\#$}
\end{calign}
These cusp 2-isomorphisms may be chosen to satisfy the swallowtail equations (though that fact will not be of any particular relevance, as later in our definition of pivotal 2-category we explicitly assume choices of cusps satisfying the swallowtail equations):
\begin{calign}\nonumber
\begin{tz}[td,scale=0.65]
\coordinate (cuspt) at (2.3, 2, 2.6*\h);
\coordinate (cuspb) at (2.1, 2, 0.6*\h);
\begin{scope}[xyplane=0]
\draw[slice] (1,0) to (1,3.5) to [out=up, in=up, looseness=2] (4, 3.5)to (4,0);
\end{scope}
\begin{scope}[xyplane=\h]
\draw[slice] (1,0) to (1,4.5) to [out=up, in=up, looseness=2] (2,4.5) to (2,2) to [out=down, in=down, looseness=2] (3,2) to (3,3) to [out=up, in=up, looseness=2] (4,3) to (4,0);
\end{scope}
\begin{scope}[xyplane=2*\h]
\draw[slice] (1,0) to (1,2.7) to [out=up, in=up, looseness=2] (2,2.7) to (2,2) to [out=down, in=down, looseness=2] (3,2) to (3,4.5) to [out=up, in=up, looseness=2] (4,4.5) to (4,0);
\end{scope}
\begin{scope}[xyplane=3*\h]
\draw[slice] (1,0) to (1,3.5) to [out=up, in=up, looseness=2] (4, 3.5)to (4,0);
\end{scope}
\begin{scope}[xzplane=0]
\draw[slice,short] (1,0) to (1,3*\h);
\draw[slice,short] (4,0) to (4,3*\h);
\end{scope}
\draw[slice] (5.198,2.1,0) to [out=up, in=down] (5.08,1.4,\h) to [out=up, in=down] node[mask point , pos=0.85](A){} (3.28,1.4,2*\h) to [out=up, in=\dlcusp] (cuspt);
\cliparoundone{A}{\draw[slice] (5.198, 2.1, 3*\h) to [out=down, in=up] (5.08, 3.4, 2*\h) to [out=down, in=up] (3.582, 3.4, \h) to [out=down, in=\ulcusp] (cuspb);}
\draw[slice] (cuspb) to [out=\urcusp, in=down]  (1.415,2.6,\h) to (1.415,2.6, 2*\h) to [out=up, in=\drcusp] (cuspt);
\node[obj, above left] at (0,1,0) {$A$};
\end{tz}
\hspace{0.25cm}=
\begin{tz}[td,scale=0.65]
\begin{scope}[xyplane=0]
\draw[slice] (1,0) to (1,3.5) to [out=up, in=up, looseness=2] (4, 3.5)to (4,0);
\end{scope}
\begin{scope}[xyplane=3*\h]
\draw[slice] (1,0) to (1,3.5) to [out=up, in=up, looseness=2] (4, 3.5)to (4,0);
\end{scope}
\begin{scope}[xzplane=0]
\draw[slice, short] (1,0) to (1,3*\h);
\draw[slice, short] (4,0) to (4,3*\h);
\end{scope}
\draw[slice](5.198,2.1,0) to  (5.198,2.1, 3*\h);
\node[obj, above left] at (0,1,0) {$A$};
\end{tz}
&
\begin{tz}[td,scale=0.65]
\coordinate (cuspt) at (-2.6, 2, -2.5*\h);
\coordinate (cuspb) at (-2., 3.5, -0.55*\h);
\begin{scope}[xyplane=0]
\draw[slice] (1,0) to (1,-3.5) to [out=down, in=down, looseness=2] node[pos=\stdl] (T){} (4, -3.5)to (4,0);
\end{scope}
\begin{scope}[xyplane=-\h]
\draw[slice] (1,0) to (1,-4.5) to [out=down, in=down, looseness=2] node[pos=\stdl] (TR){}(2,-4.5) to (2,-2) to [out=up, in=up, looseness=2] node[pos=\stdr] (TL){} (3,-2) to (3,-2.7) to [out=down, in=down, looseness=2] node[pos=\stdl] (TM){} (4,-2.7) to (4,0);
\end{scope}
\begin{scope}[xyplane=-2*\h]
\draw[slice] (1,0) to (1,-3) to [out=down, in=down, looseness=2] node[pos=\stdl] (BM){} (2,-3) to (2,-2) to [out=up, in=up, looseness=2] node[pos=\stdr] (BL){} (3,-2) to (3,-4.5) to [out=down, in=down, looseness=2] node[pos=\stdl] (BR){} (4,-4.5) to (4,0);
\end{scope}
\begin{scope}[xyplane=-3*\h]
\draw[slice] (1,0) to (1,-3.5) to [out=down, in=down, looseness=2] node[pos=\stdl] (B){}(4,-3.5)to (4,0);
\end{scope}
\begin{scope}[xzplane=0]
\draw[slice,short] (1,0) to (1,-3*\h);
\draw[slice,short] (4,0) to (4,-3*\h);
\end{scope}
\draw[slice] (T.center) to [out=down, in=up] (TR.center)  to [out=down, in=up] node[mask point , pos=0.2](A){}  (BM.center) to [out=down, in=\urcusp] (cuspt);
\cliparoundone{A}{\draw[slice] (B.center) to [out=up, in=down] (BR.center) to [out=up, in=down, out looseness=0.5, in looseness=2] (TM.center) to [out=up, in=\drcusp] (cuspb);}
\draw[slice] (cuspb) to [out=\dlcusp, in=up]  (TL.center) to (BL.center) to [out=down, in=\ulcusp] (cuspt);
\node[obj, above right] at (0,1,-3*\h) {$A^\#$};
\end{tz}
=\hspace{0.25cm}
\begin{tz}[td,scale=0.65]
\begin{scope}[xyplane=0]
\draw[slice] (1,0) to (1,-3.5) to [out=down, in=down, looseness=2] node[pos=\stdl] (B){} (4, -3.5)to (4,0);
\end{scope}
\begin{scope}[xyplane=3*\h]
\draw[slice] (1,0) to (1,-3.5) to [out=down, in=down, looseness=2]  node[pos=\stdl] (T){}(4, -3.5)to (4,0);
\end{scope}
\begin{scope}[xzplane=0]
\draw[slice, short] (1,0) to (1,3*\h);
\draw[slice, short] (4,0) to (4,3*\h);
\end{scope}
\draw[slice](B.center)  to  (T.center);
\node[obj, above right] at (0,1,0) {$A^\#$};
\end{tz}
\end{calign}
In these pictures, and henceforth, we use a tiny gap in a line specifically to indicate the presence of a categorical interchanger; crossings without a gap are merely coincidences of the depicted projection and do not signify a categorical operation.

\subsubsection{Examples of fusion $2$-categories} \label{sec:egfus}

Most naturally occurring monoidal 2-categories are not semistrict, that is are not literally of the form described in Definition~\ref{def:monoidal2cat}.  In the following examples, whenever we say that something is a monoidal, respectively fusion, 2-category, what we mean is that it is a (fully weak) 3-category with one object which after (always possible~\cite{GPS,Gurski}) strictification becomes a fusion or monoidal 2-category in the sense of Definition~\ref{def:monoidal2cat}, respectively Definition~\ref{def:fusion2cat}.

\skiptocparagraph{Representations of groups, and group-graded vector spaces}

We warm up by discussing examples of fusion 1-categories.  Of course, the semisimple 1-category $\Vect$ itself is naturally a fusion 1-category using the tensor product of vector spaces.  Recall the discussion of (monoidal) $n$-groupoids from Notation~\ref{notation:groupoids}.

\begin{recollection}[Group representations]\label{rec:rec1}
For a finite group $\pi_1$, the semisimple 1-category $\Rep(\ast,\pi_1) := [(\ast,\pi_1),\Vect]$ of $\pi_1$-representations has a (symmetric) monoidal structure inherited from the monoidal structure on $\Vect$, and with this structure, $\Rep(\ast,\pi_1)$ is a fusion 1-category.  We will denote this fusion category by $\Rep(\pi_1)$.

More generally, for a finite 1-groupoid $(\pi_0,\pi_1)$, the semisimple 1-category $\Rep(\pi_0,\pi_1) := [(\pi_0,\pi_1),\Vect]$ has a monoidal structure inherited from $\Vect$, but note that this is a multifusion rather than fusion structure, that is the unit need not be simple.  The notation $\Rep(\pi_0,\pi_1)$ will always refer either to the bare semisimple 1-category $[(\pi_0,\pi_1),\Vect]$ or to that category equipped with its symmetric multifusion structure.
\end{recollection}

\begin{recollection}[Group-graded vector spaces]\label{rec:rec2}
Given a finite group $\pi_1$ we may instead form the semisimple 1-category $\Rep(\pi_1,\ast) := [(\pi_1,\ast),\Vect]$ of $\pi_1$-graded vector spaces.  This category has a monoidal structure induced by the group multiplication: the product is the (Day) convolution of functors $(\pi_1,\ast) \to \Vect$, or more concretely, the product of the functor $F(x) = \delta_{x,f} k$ and the functor $G(x) = \delta_{x,g} k$ is the functor $(F \ast G)(x) = \delta_{x,fg} k$.  We will denote this fusion category, as before, by $\Vect(\pi_1)$.

More generally, for a finite 2-group $(\pi_1,\pi_2)$, we could consider the semisimple 1-category $\Rep(\pi_1,\pi_2) := [(\pi_1,\pi_2),\Vect]$ with its convolution product; this is a monoidal semisimple 1-category, which will be denoted $\Vect(\pi_1,\pi_2)$, but in general it is multifusion rather than fusion.
\end{recollection}

Recollections~\ref{rec:rec1} and~\ref{rec:rec2} can be summarized in the following table:
\[\hspace{-0.4cm}
\shrinkalign{.95}{
\begin{tabular}{rrcll}
\multicolumn{1}{c}{{Input}} & \multicolumn{1}{c}{{Notation}} & & \multicolumn{1}{c}{{1-category}} & \multicolumn{1}{c}{{Monoidal structure}} 
\\ \hline
{1-groupoid} $(\pi_0, \pi_1)$ & $\Rep(\pi_0,\pi_1)$ & $:=$ & $[(\pi_0,\pi_1), \Vect]$ & {symmetric from $\Vect$ (multifusion)}
\\
{1-group} $\pi_1$ & $\Rep(\pi_1)$ & $:=$ & $[(\ast,\pi_1), \Vect]$ & {symmetric from $\Vect$ (fusion)}
\\ 
{2-group} $(\pi_1,\pi_2)$ & $\Vect(\pi_1,\pi_2)$ & $:=$ & $[(\pi_1, \pi_2), \Vect]$ & {convolution product (multifusion)}
\\
{1-group} $\pi_1$ & $\Vect(\pi_1)$ & $:=$ & $[(\pi_1, \ast), \Vect]$ & {convolution product (fusion)}
\end{tabular}
}
\]
Since every $2$-group is in particular a $1$-groupoid, the category $[(\pi_1, \pi_2), \Vect]$ has two distinct monoidal structures.  (These structures are compatible in the sense that the symmetric monoidal structure together with a comonoidal structure associated to the convolution product give $[(\pi_1,\pi_2),\Vect]$ the structure of a Hopf $1$-category.)

\begin{recollection}[Twisted group-graded vector spaces]
Given again a finite group $\pi_1$ and a 3-cocycle $w \in Z^3(\pi_1,k^*)$, we may twist the associator of the fusion category $\Vect(\pi_1)$ to obtain a new fusion category denoted $\Vect^w(\pi_1)$.  Note here we may think of the cocycle either as a `group-cohomology-style' cocycle for the ordinary group $\pi_1$ or as a topological cocycle on the space corresponding to the group, namely $\B\pi_1 = (\ast,\pi_1)$.
\end{recollection}

\skiptocparagraph{2-representations of 2-groups, and 2-group-graded 2-vector spaces}

The semisimple 2-category $\tVect$ is of course the canonical fusion 2-category, where the tensor of two finite semisimple linear 1-categories is the Deligne tensor product.  This fusion 2-category has a unique equivalence class of simple objects, represented by the 1-category $\Vect$.

\begin{construction}[2-group 2-representations]\label{con:2group2rep}
Given a finite abelian group $\pi_2$, the semisimple 2-category $\tRep(\ast,\ast,\pi_2) := [(\ast,\ast,\pi_2),\tVect]$ of $\pi_2$-2-representations inherits a (symmetric) monoidal structure from $\tVect$ and with that structure is a fusion 2-category.  We will denote this fusion 2-category by $\tRep(\pi_2)$.

Similarly, given a finite 2-group $(\pi_1,\pi_2)$, the semisimple 2-category $\tRep(\ast,\pi_1,\pi_2) := [(\ast,\pi_1,\pi_2),\tVect]$ of $(\pi_1,\pi_2)$-2-representations inherits a monoidal structure from $\tVect$, and this structure is again fusion.  We will denote this fusion 2-category by $\tRep(\pi_1,\pi_2)$.

More generally, given a finite 2-groupoid $(\pi_0,\pi_1,\pi_2)$, the semisimple 2-category \linebreak $\tRep(\pi_0,\pi_1,\pi_2) := [(\pi_0,\pi_1,\pi_2),\tVect]$ has a monoidal structure, but it is in general multifusion rather than fusion, that is the unit need not be simple.  The notation $\tRep(\pi_0,\pi_1,\pi_2)$ will always refer either to the bare semisimple 2-category $[(\pi_0,\pi_1,\pi_2),\tVect]$ or to that 2-category equipped with its symmetric multifusion structure.
\end{construction}

\begin{construction}[2-group-graded 2-vector spaces]\label{con:2groupgraded2vect}
Given a finite group $\pi_1$, the semisimple 2-category $\tRep(\pi_1,\ast,\ast) :=  [(\pi_1,\ast,\ast),\tVect]$ of $\pi_1$-graded 2-vector spaces has a monoidal structure induced by the group multiplication, namely the convolution product of 2-functors $(\pi_1,\ast,\ast) \to \tVect$, and is thereby a fusion 2-category.  We will denote this fusion 2-category by $\tVect(\pi_1)$.

Similarly, for a finite 2-group $(\pi_1,\pi_2)$, the semisimple 2-category $\tRep(\pi_1,\pi_2,\ast) :=  [(\pi_1,\pi_2,\ast),\tVect]$ of $(\pi_1,\pi_2)$-graded 2-vector spaces has a convolution product, and again is a fusion 2-category.  We will of course denote this fusion 2-category by $\tVect(\pi_1,\pi_2)$.

More generally, given a finite 3-group $(\pi_1,\pi_2,\pi_3)$, the semisimple 2-category \linebreak $\tRep(\pi_1,\pi_2,\pi_3) :=  [(\pi_1,\pi_2,\pi_3),\tVect]$ of $(\pi_1,\pi_2,\pi_3)$-2-representations has a convolution monoidal structure; this monoidal semisimple 2-category will be denoted $\tVect(\pi_1,\pi_2,\pi_3)$, but note it is in general multifusion rather than fusion.
\end{construction}

\begin{remark}[Bimodule equivalence of 2-representations and graded 2-vector spaces]
The fusion 1-category $\Rep(\pi_1)$ of $\pi_1$-representations and the fusion 1-category $\Vect(\pi_1)$ of $\pi_1$-graded vector spaces are bimodule equivalent, and therefore lead to the same 3-manifold invariant.  We suspect the 2-categorical situation is analogous, in that the fusion 2-category $\Rep(\pi_1,\pi_2)$ of $(\pi_1,\pi_2)$-2-representations is bimodule equivalent to the fusion 2-category $\tVect(\pi_1,\pi_2)$ of $(\pi_1,\pi_2)$-graded 2-vector spaces, and that these fusion 2-categories therefore produce the same 4-manifold invariant.
\end{remark}

Constructions~\ref{con:2group2rep} and~\ref{con:2groupgraded2vect} can be summarized in the following table:
\[
\hspace*{-.85cm}
\shrinkalign{.92}{
\begin{tabular}{rrcll}
\multicolumn{1}{c}{{Input}} & \multicolumn{1}{c}{{Notation}} & & \multicolumn{1}{c}{{2-category}} & \multicolumn{1}{c}{{Monoidal structure}} 
\\ \hline
{2-grpoid} $(\pi_0, \pi_1,\pi_2)$ & $\tRep(\pi_0,\pi_1,\pi_2)$ & $:=$ & $[(\pi_0,\pi_1,\pi_2), \tVect]$ & {symm from $\tVect$ (multifus)}
\\
{2-group} $(\pi_1,\pi_2)$ & $\tRep(\pi_1,\pi_2)$ & $:=$ & $[(\ast,\pi_1,\pi_2), \tVect]$ & {symm from $\tVect$ (fusion)}
\\
{ab 1-group} $(\pi_2)$ & $\tRep(\pi_2)$ & $:=$ & $[(\ast,\ast,\pi_2), \tVect]$ & {symmfrom $\tVect$ (fusion)}
\\ 
{3-group} $(\pi_1, \pi_2,\pi_3)$ & $\tVect(\pi_1,\pi_2,\pi_3)$ & $:=$ & $[(\pi_1,\pi_2,\pi_3), \tVect]$ & {conv product (mulitfus)}
\\
{2-group} $(\pi_1, \pi_2)$ & $\tVect(\pi_1,\pi_2)$ & $:=$ & $[(\pi_1,\pi_2,\ast), \tVect]$ & {conv product (fusion)}
\\
{1-group} $(\pi_1)$ & $\tVect(\pi_1)$ & $:=$ & $[(\pi_1,\ast,\ast), \tVect]$ & {conv product (fusion)}
\end{tabular}
}
\]
Since every $3$-group is in particular a $2$-groupoid, the 2-category $[(\pi_1, \pi_2,\pi_3), \tVect]$ has two distinct monoidal structures, one (symmetric) structure coming from the product on $\tVect$ and one not-necessarily symmetric convolution structure coming from the $3$-group itself. (We expect these structure to be compatible, in that the symmetric fusion structure together with a comonoidal structure associated to the convolution will form a Hopf $2$-category.)

\begin{remark}[The convolution product is the completion of group multiplication] Recall from Example~\ref{eg:rep2groupoid} that the finite semisimple $2$-category $[(\pi_0, \pi_1,\pi_2), \tVect]$ is the semisimple completion of the linearization $k(\pi_0, \pi_1,\pi_2)$ of the $2$-groupoid $(\pi_0, \pi_1,\pi_2)$.  If the $2$-groupoid is a $3$-group $(\pi_1,\pi_2,\pi_3)$, then $k(\pi_1,\pi_2,\pi_3)$, and hence its semisimple completion $[(\pi_1,\pi_2,\pi_3),\tVect]$, inherits a monoidal structure from the monoidal structure of $(\pi_1,\pi_2,\pi_3)$ --- the resulting monoidal structure is the convolution product on $\tVect(\pi_1,\pi_2,\pi_3)$.
\end{remark}

\begin{construction}[Twisted 2-group-graded 2-vector spaces] \label{con:2groupgraded2vecttwisted}
Given a finite 2-group $(\pi_1, \pi_2)$, and a 4-cocycle $\omega \in Z^4((\pi_1,\pi_2);k^*)$, one can form a fusion 2-category $\tVect^\omega(\pi_1,\pi_2)$ of `$\omega$-twisted $(\pi_1,\pi_2)$-graded 2-vector spaces', as follows.  (Here $Z^4((\pi_1,\pi_2);k^*)$ is the topological 4-cocycles, with $k^*$ coefficients, of the space $(\ast,\pi_1,\pi_2)$ with only first and second homotopy groups.)

The 4-cocycle $\omega \in Z^4((\pi_1,\pi_2);k^*)$ provides the $k$-invariant for an extension of the 2-group $(\pi_1,\pi_2)$ to a 3-group $(\pi_1,\pi_2,k^*)$, with trivial action of $\pi_1$ on $\pi_3 = k^*$.  Because the $\pi_1$-action on $\pi_3$ is trivial, we may think of this 3-group as a monoidal 2-category enriched in $k^*$-sets.  (Here the enriching category of $k^*$-sets has tensor product $X \times_{k^*} Y := \textrm{coeq}(X \times k^* \times Y \rightrightarrows X \times Y)$.)  Base changing from $k^*$ to $k$ produces a $k$-linear monoidal 2-category denoted $(\pi_1,\pi_2,k^*)_k$.  Precisely, this operation is base change along the functor from $k^*$-sets to (possibly infinite-dimensional) $k$-vector spaces, taking a $k^*$-set $X$ to the $k$-vector space $k \otimes_{k(k^*)} k(X)$, where $k(X)$ is the free $k$-vector space on the set $X$.  Note that the $k$-linear monoidal 2-category $(\pi_1,\pi_2,k^*)_k$ has finitely many equivalence classes of objects, all invertible, and finitely many isomorphism classes of 1-morphisms, all invertible, and that $\Hom_{(\pi_1,\pi_2,k^*)_k}(f,g)$ is $k$ when $f$ and $g$ are isomorphic and $0$ otherwise. In particular, $(\pi_1,\pi_2,k^*)_k$ is a locally presemisimple linear monoidal 2-category.

The fusion 2-category $\tVect^\omega(\pi_1,\pi_2)$ is defined to be the semisimple completion of the local Cauchy completion of the monoidal 2-category $(\pi_1,\pi_2,k^*)_k$.  That is, $\tVect^\omega(\pi_1,\pi_2)$ is a completion of a twisted linearization of $(\pi_1,\pi_2)$.  When the twisting is trivial, the construction recovers $\tVect(\pi_1,\pi_2)$ by the discussion in Example~\ref{eg:rep2groupoid}.
\end{construction}

\begin{remark}[Characterizing twisted 2-group-graded 2-vector spaces]
A fusion 1-category $\oc{C}$ in which every simple object is invertible is necessarily equivalent to the fusion 1-category $\Vect^\omega(\pi_1)$ for some finite group $\pi_1$ and 3-cocycle $\omega \in Z^3(\pi_1; k^*)$; the group $\pi_1$ is determined as the group of isomorphism classes of simple objects of $\oc{C}$.  And of course every simple object of $\Vect^\omega(\pi_1)$ is invertible.

The situation for fusion 2-categories is more complicated.  The fusion 2-category \linebreak $\tVect^\omega(\pi_1,\pi_2)$, for a finite 2-group $(\pi_1,\pi_2)$ and 4-cocycle $\omega \in Z^4((\pi_1,\pi_2);k^*)$, can have non-invertible simple objects and non-invertible simple 1-morphisms.  For instance, the fusion 2-category $\tVect(\ast,\ZZ_2)$ has both non-invertible simple objects and 1-morphisms, as described below in Examples~\ref{eg:fusionVectZ2} and~\ref{eg:2vep} (also see Example~\ref{eg:VectZ2}).

Even if a fusion 2-category has only invertible simple objects (so there is an obvious `group of simple objects' $\pi_1$) and every simple 1-endomorphism of the unit object $\Iz$ is invertible (so there is an obvious `group of simple 1-endomorphisms' $\pi_2$), it is still not necessarily the case that the fusion 2-category is equivalent to $\tVect^\omega(\pi_1,\pi_2)$ for some $(\pi_1,\pi_2)$ and $\omega$.  An example of such a fusion 2-category (that is not twisted 2-group graded 2-vector spaces) is provided by the completion of a $\ZZ_4$-crossed braided structure on the Ising category, see below Example~\ref{eg:Z4example}.

Nevertheless, it is still possible to characterize twisted 2-group-graded 2-vector spaces as follows: a fusion 2-category $\tc{C}$ is monoidally equivalent to $\tVect^\omega(\pi_1,\pi_2)$ for some finite 2-group $(\pi_1,\pi_2)$ and 4-cocycle $\omega \in Z^4((\pi_1,\pi_2);k^*)$ if and only if
\begin{enumerate}
\item every component of $\tc{C}$ contains an invertible object;
\item every simple 1-morphism in $\End_{\tc{C}}(\Iz)$ is invertible;
\item the group homomorphism $\tc{C}^\times \to \pi_0 \tc{C}$, from the group of equivalence classes of invertible objects to the group of components, admits a section.
\end{enumerate}
(Note that the existence of a group structure on the set of components depends on the existence of an invertible object in each component.)  Here the groups $\pi_1$ and $\pi_2$ can be taken to be the group of components $\pi_0 \tc{C}$ and the group of isomorphism classes of simple 1-morphisms in $\End_{\tc{C}}(\Iz)$, respectively.  The aforementioned $\ZZ_4$-crossed braided Ising category fails the third condition as in that case the group of invertible objects $\ZZ_4$ projects onto the group of components $\ZZ_2$.
\end{remark}

\begin{construction}[Abelian-group 2-vepresentations]
Given an abelian group $\pi_2$, the semisimple 2-category of `$\pi_2$-2-vepresentations' $\tRep(\ast,\pi_2,\ast) := [(\ast,\pi_2,\ast),\tVect]$ has both a symmetric and a convolution structure, and both products are fusion, not merely multifusion.  We therefore expect this construction provides in an appropriate sense `fusion Hopf 2-categories'.  We reserve the notation $\tVep(\pi_2)$ for the semisimple 2-category $[(\ast,\pi_2,\ast),\tVect]$ equipped simultaneously with both monoidal structures; by contrast we would use $\tRep(\pi_2,\ast)$ when thinking only of the symmetric fusion structure and $\tVect(\ast,\pi_2)$ when thinking only of the convolution fusion structure.
\end{construction}

\skiptocparagraph{Braided fusion categories and their modules}

\begin{construction}[Fusion 2-categories from braided fusion categories] \label{con:braidedisftc}
The delooping of a braided fusion category $\oc{C}$ is a prefusion 2-category $\B\oc{C}$: the delooping of the underlying fusion category is a presemisimple 2-category, as in Example~\ref{eg:deloopfusion}, and the braiding provides precisely the data of a monoidal structure on the delooping.  By Proposition~\ref{prop:completionismod} the idempotent completion of the delooping $\B\oc{C}$ is the semisimple 2-category of modules $\Mod(\oc{C})$.  The monoidal structure on the delooping $\B\oc{C}$ induces a monoidal structure on the completion, making $\Mod(\oc{C})$ a fusion 2-category.  Module categories for $\oc{C}$ correspond to separable algebras in $\oc{C}$, and the monoidal structure is the tensor product of algebras (which, note well, depends on the braiding of $\oc{C}$).  Directly as $\oc{C}$-module categories, the monoidal product is given by the relative Deligne tensor over $\oc{C}$, cf~\cite{DBTC}.
\end{construction}

\begin{example}[Fusion structures on $\Mod(\Vect(\ZZ_2))$]\label{eg:fusionVectZ2}
Recall from Example~\ref{eg:VectZ2} the structure of the semisimple 2-category $\Mod(\Vect(\ZZ_2))$: there is the simple object $\Vect(\ZZ_2)$---which we will now abbreviate as $\Iz$---and the simple object $\Vect$---which we will now abbreviate as $X$---with morphism categories $\Hom(\Iz,\Iz) \equiv \Hom(X,X) \equiv \Vect(\ZZ_2)$ and $\Hom(X,\Iz) \equiv \Hom(\Iz,X) \equiv \Vect$.

There are two braidings on the fusion category $\Vect(\ZZ_2)$, namely the symmetric braiding and the super braiding, and thus two corresponding fusion structures on the 2-category $\Mod(\Vect(\ZZ_2))$.  We may calculate the fusion rules for each as follows.  An object of $\Mod(\Vect(\ZZ_2))$ corresponds to a separable algebra in $\Vect(\ZZ_2)$.  (This correspondence may either be seen as the classical Ostrik translation or as the fact that $\Mod(\Vect(\ZZ_2))$ is the idempotent---i.e.\ separable algebra---completion of the deloop $\B\Vect(\ZZ_2)$.)  The object $\Iz$ corresponds to the trivial algebra $k \in \Vect(\ZZ_2)$ and the object $X$ corresponds to the `graded group algebra' $k(\ZZ_2) \in \Vect(\ZZ_2)$.  The monoidal product of objects here is, as mentioned in Construction~\ref{con:braidedisftc}, the tensor product of algebras inside the braided fusion category.  Thus $\Iz$ is evidently the identity in the fusion 2-category (for either braiding).  

Now suppose the base field is $\CC$ and the braiding is super.  In this case the tensor product $\CC(\ZZ_2) \otimes \CC(\ZZ_2)$ is, by complex `Bott periodicity', Morita equivalent to the trivial algebra $\CC$.  The corresponding fusion rule in the fusion 2-category is therefore $X \xz X \equiv \Iz$.  We may depict the fusion rules of this 2-category as follows, where the directed edge indicates multiplication by $X$:
\[\begin{tz}
\node[dot,scale=1.25] at (0,0){};
\node[left] at (0,0) {$I$};
\node[dot,scale=1.25] at (1.5,0){};
\node[right] at (1.5,0) {$X$};
\draw[arrow data={0.5}{>}] (0,0) to [out=35, in=145] (1.5,0);
\draw[arrow data={0.5}{<}] (0,0) to [out=-35, in=-145]  (1.5,0);
\end{tz}
\]

For the symmetric braiding, the tensor product $\CC(\ZZ_2) \otimes \CC(\ZZ_2)$ is, by idempotent decomposing that algebra in the symmetric tensor category $\Vect(\ZZ_2)$, isomorphic to the algebra $\CC(\ZZ_2) \oplus \CC(\ZZ_2)$.  The corresponding fusion rule is therefore $X \xz X \equiv X \boxplus X$.  We may depict the fusion rules of this 2-category as follows, where again the directed edge is multiplication by $X$ and the label indicates the multiplicity:
\[\begin{tz}
\node[dot,scale=1.25] at (0,0){};
\node[left] at (0,0) {$I$};
\node[dot,scale=1.25] at (1.5,0){};
\node[below] at (1.3,0) {$X$};
\draw[arrow data={0.5}{>}] (0,0) to (1.5,0);
\draw[arrow data ={0.5}{>}] (1.5,0) to  (1.6,-0.1) to [out=-45, in=45, looseness=15] node[right] {$2$} (1.6,0.1) to (1.5,0);
\end{tz}
\] 
Note well that this sort of fusion graph, where an object has no fusion products containing identity factors, is completely new to fusion 2-categories: in a fusion 1-category, the product of an object and its dual has the identity as a summand, but in a fusion 2-category, because of the existence of nontrivial nonequivalence morphisms between simple objects, this need not be the case.
\end{example}

\begin{example}[Fusion structures on $\Mod(\Vect(\ZZ_2))$ as twisted $2$-vepresentations] \label{eg:2vep}
Recall from Remark~\ref{rem:VepisVect} that the semisimple 2-category $\Mod(\Vect(\ZZ_2))$ of module categories for $\Vect(\ZZ_2)$ is equivalent to the 2-category $\tRep(\ast,\ZZ_2,\ast) := [(\ast,\ZZ_2,\ast),\tVect]$ of 2-vepresentations of $\ZZ_2$.  The monoidal structure on $\Mod(\Vect(\ZZ_2))$ induced by the standard braiding (see Example~\ref{eg:fusionVectZ2}) corresponds to the convolution product on $\tRep(\ast,\ZZ_2,\ast)$, therefore gives the fusion 2-category $\tVect(\ast,\ZZ_2)$ of $(\ast,\ZZ_2)$-graded 2-vector spaces (see Construction~\ref{con:2groupgraded2vect}).  The monoidal structure on $\Mod(\Vect(\ZZ_2))$ induced, by contrast, by the super braiding corresponds to the convolution product on $\tRep(\ast,\ZZ_2,\ast)$ twisted by a nontrivial 4-cocycle $\omega \in Z^4((\ast, \ZZ_2); k^*)$, thus to the twisted 2-group-graded 2-category $\tVect^\omega(\ast,\ZZ_2)$.  Specifically, the super braiding fusion structure is obtained by twisting by the cocycle representing the order 2-element in $H^4((\ast, \ZZ_2); k^*) \cong \ZZ_4$.  (Note more generally that for an abelian group $\pi_2$, the group $Z^4((\ast,\pi_2); k^*)$ twisting the fusion structure on $\tVect(\ast,\pi_2) \simeq \Mod(\Vect(\pi_2))$ is the same as the group of `abelian 3-cocycles' $Z^3_{\mathrm{ab}}(\pi_2, k^*)$~\cite[Sec 8.4]{EGNO} that simultaneously twists the associator and the braiding of the fusion 1-category $\Vect(\pi_2)$.)
\end{example}

\begin{remark}[One-component fusion $2$-categories] \label{rem:onecomponent}
Any fusion 2-category $\tc{C}$ with only one component is (monoidally 2-functor) equivalent to the 2-category $\Mod(\oc{C})$ of modules of a braided fusion category $\oc{C}$; more specifically it is equivalent to the 2-category of modules of the braided fusion category $\Hom_{\tc{C}}(\Iz,\Iz)$ of endomorphisms of the identity of $\tc{C}$.  Indeed, any one-component finite semisimple 2-category is the 2-category of modules of any one of its fusion endocategories, so as a semisimple 2-category, the fusion 2-category $\tc{C}$ is equivalent to $\Mod(\Hom_{\tc{C}}(\Iz,\Iz))$. But using Corollary~\ref{cor:extensionmultifusion} (applied to functors $\Mod(\oc{C} \boxtimes \oc{C}) \to \Mod(\oc{C})$, with $\oc{C} = \Hom_{\tc{C}}(\Iz,\Iz)$), the monoidal structure on $\Mod(\Hom_{\tc{C}}(\Iz,\Iz))$ is completely determined by the monoidal structure on $\B\Hom_{\tc{C}}(\Iz, \Iz)$, that is by the braiding on $\Hom_{\tc{C}}(\Iz, \Iz)$. 
\end{remark}

\skiptocparagraph{Crossed-braided fusion categories}

\begin{construction}[From crossed-braided fusion categories to fusion $2$-categories] \label{con:gradedbraidedfusion}
Recall that a $G$-crossed-braided fusion category is a fusion category $\oc{C}$, together with a $G$-grading $\oc{C} = \bigoplus_{g \in G} \oc{C}_g$, a $G$-action on $\oc{C}$ where $g \in G$ maps $\oc{C}_h$ to $\oc{C}_{ghg^-1}$, and a compatible crossed-braiding isomorphism $X \otimes Y \to g(Y) \otimes X$ whenever $X \in \oc{C}_g$.  To any $G$-crossed-braided fusion category $\oc{C}$, there is an associated monoidal 2-category $\tc{C}$, as follows~\cite[Sec 6]{Cui}.  (To obtain a semistrict monoidal 2-category, for convenience we will start with a strict crossed-braided fusion category~\cite[Def 4.41]{DGNO}.)

The objects of $\tc{C}$ are the elements $g \in G$, and these objects are all simple.  The hom category $\Hom_{\tc{C}}(g,h)$ is the category $C_{h g^{-1}}$.  The composition of 1-morphisms in $\tc{C}$ is given by the tensor product in $\oc{C}$:
\begin{align*}
\Hom_{\tc{C}}(g,h) \times \Hom_{\tc{C}}(f,g) &\xrightarrow{\xo} \Hom_{\tc{C}}(f,h) \\
\oc{C}_{hg^{-1}} \times \oc{C}_{gf^{-1}} &\xrightarrow{\otimes} \oc{C}_{hf^{-1}}
\end{align*}
The monoidal product $g \xz h$ of objects $g,h \in \tc{C}$ is the object $gh$, and the monoidal unit is the identity element $e \in G$.  The monoidal product of an object $g$ on the left on 1- and 2-morphisms in $\tc{C}$ is given by the action of $g$ in $\oc{C}$:
\begin{align*}
\Hom_{\tc{C}}(h,h') &\xrightarrow{g\xz - } \Hom_{\tc{C}}(g\xz h, g \xz h') \\
\oc{C}_{h' h^{-1}} &\xrightarrow{g(-)} \oc{C}_{gh'h^{-1} g^{-1}} = \oc{C}_{gh'(gh)^{-1}}
\end{align*}
By contrast, the monoidal product of an object $g$ on the right in $\tc{C}$ is given by the identity operation in $\oc{C}$:
\begin{align*}
\Hom_{\tc{C}}(h,h') &\xrightarrow{-\xz g} \Hom_{\tc{C}}(h\xz g,  h' \xz g) \\
\oc{C}_{h' h^{-1}} &\xrightarrow{\mathrm{id}} \oc{C}_{h'h^{-1}} = \oc{C}_{h'g(hg)^{-1}}
\end{align*}
Finally, the interchanger 2-isomorphism of $\tc{C}$ is given by the crossed-braiding of $\oc{C}$; for 1-morphisms $X: g_1 \to g_2$, $Y: h_1 \to h_2$:
\begin{align*}
(X\xz h_2) \xo ( g_1 \xz Y) &\xRightarrow{\phi_{X,Y}} (g_2 \xz Y) \xo (X \xz h_1) \\
X\otimes g_1(Y) &\xrightarrow{c_{X, g_1(Y)}} g_2(Y) \otimes X
\end{align*}

This construction defines a prefusion 2-category; the completion is therefore a fusion 2-category, as desired.  If the grading is faithful, that is $\oc{C}_g \neq 0$ for all $g \in G$, then the resulting fusion 2-category has only one component, and is therefore equivalent (see Remark~\ref{rem:onecomponent}) to the completion of the deloop $\B\oc{C}_e$ of the braided fusion category $\oc{C}_e$.
\end{construction}

\begin{remark}[Inequivalent crossed-braided fusion categories giving equivalent fusion 2-categories]
Let $\oc{C}$ be an Ising fusion category, that is one with simple objects $I,f,\sigma$ and fusion rules $f^2 \cong I, f\sigma \cong \sigma f \cong \sigma$, and $\sigma^2 \cong I \oplus f$.  Equip this category with the $\ZZ_2$-grading $\oc{C}_0 = \Vect(\ZZ_2) = \langle I, f \rangle $ and $C_1 = \Vect = \langle \sigma \rangle$, and the trivial $\ZZ_2$-action.  An Ising category admits four inequivalent braidings~\cite[App B]{DGNO}, all of which restrict to the super braiding on $\Vect(\ZZ_2)$.  Any of these braidings makes the given Ising category $\oc{C}$ into a $\ZZ_2$-crossed-braided fusion category, and there is thus an associated fusion 2-category.  However, because the $\ZZ_2$-grading is faithful, each of these fusion 2-categories is equivalent to the completion of the deloop $\B\oc{C}_0 = \B\Vect(\ZZ_2)$ of the super braided category $\Vect(\ZZ_2)$, and thus to the fusion 2-category of modules of the super braided category $\Vect(\ZZ_2)$.
\end{remark}

\begin{remark}[Invertible-object fusion 2-categories are not necessarily crossed-braided]
It is not the case that a fusion 2-category all of whose simple objects are invertible is equivalent to a fusion 2-category associated to a $G$-crossed-braided category.  Given a fusion 2-category $\tc{C}$ with invertible simple objects, the equivalence classes of simple objects form a finite group $G$, and (picking representative simple objects $g$ in each equivalence class $[g] \in G$) the semisimple 1-category $\oc{C}:= \bigoplus_{g\in G} \Hom_{\tc{C}}(e, g)$ appears to want to be a $G$-crossed-braided category---but in general it is not possible to put an appropriate monoidal structure on that category.  Specifically, choose equivalences $\psi(g,h): g\xz h \to gh$ and 2-isomorphisms $\alpha(g_1,g_2,g_3) :\psi(g_1 g_2,g_3)\xo (\psi(g_1,g_2) \xz g_3) \To \psi(g_1,g_2g_3) \xo (g_1\xz \psi(g_2,g_3))$; these isomorphisms $\alpha$ will satisfy the pentagon equations only up to some scalars $\omega(g_1,g_2,g_3,g_4)\in k^*$, and those scalars define a 4-cocycle $\omega \in Z^4(G;k^*)$.  Only if that 4-cocycle is cohomologically trivial, is it possible to (adjust the choices of representing objects, equivalences, and 2-isomorphisms and then) define a multiplication giving $\oc{C}$ the structure of a $G$-crossed-braided fusion category.  (Note that this process does not produce a unique crossed-braided category, but a collection of such categories all lifting the same fusion 2-category.)  If that 4-cocycle is cohomologically nontrivial, then the fusion 2-category encodes a kind of twisting not presentable in the framework of crossed-braided categories.
\end{remark}

\begin{remark}[Endotrivial fusion 2-categories] \label{rem:endotrivialexamples}
A fusion 2-category is called \emph{endotrivial} if the endomorphism fusion category of every indecomposable object is the trivial fusion category $\Vect$.  Endotrivial fusion 2-categories were called simply `fusion 2-categories' in Mackaay~\cite{Mackaay}.  Note that of all the examples of fusion 2-categories described above, including those coming from 2-representations of 2-groups (Construction~\ref{con:2group2rep}), 2-group-graded 2-vector spaces (Construction~\ref{con:2groupgraded2vect}), braided fusion categories (Construction~\ref{con:braidedisftc}), and crossed-braided fusion categories (Construction~\ref{con:gradedbraidedfusion}), the only case that is endotrivial is the special case of 2-group-graded 2-vector spaces where the grading is in fact by a 1-group.
\end{remark}

\begin{example}[$\ZZ_4$-crossed braided Ising categories] \label{eg:Z4example}
For any Ising fusion category $\oc{C}$, the $\ZZ_4$-grading $\oc{C}_0 = \Vect(\ZZ_2) = \langle I, f\rangle$, $\oc{C}_2 = \Vect = \langle \sigma \rangle$, and $\oc{C}_1=\oc{C}_3= 0$, together with the trivial $\ZZ_4$-action and any choice of braiding, gives $\oc{C}$ the structure of a $\ZZ_4$-crossed-braided fusion category.  The associated fusion 2-category may be depicted as follows:
\[
\begin{tz}
\node[dot,scale=1.25] (A) at (.5,.5){};
\node[dot,scale=1.25] (B) at (-.5,.5){};
\node[dot,scale=1.25] (C) at (-.5,-.5){};
\node[dot,scale=1.25] (D) at (.5,-.5){};
\node at (.6,.6) {$\scriptscriptstyle 0$};
\node at (-.6,.6) {$\scriptscriptstyle 1$};
\node at (-.6,-.6) {$\scriptscriptstyle 2$};
\node at (.6,-.6) {$\scriptscriptstyle 3$};
\draw[arrow data={0.5}{>}] (A) to [out=135, in=45] (B);
\draw[arrow data={0.5}{>}] (B) to [out=-135, in=135] (C);
\draw[arrow data={0.5}{>}] (C) to [out=-45, in=-135] (D);
\draw[arrow data={0.5}{>}] (D) to [out=45, in=-45] (A);
\draw[->,shorten <=0.1cm, shorten >=0.1cm, morgray, xshift=1.25, yshift=-1.25] (.5,.5) to node[below] {} (-.5,-.5);
\draw[->,shorten <=0.1cm, shorten >=0.1cm, morgray, xshift=-1.25, yshift=1.25] (-.5,-.5) to node[below] {} (.5,.5);
\draw[->,shorten <=0.1cm, shorten >=0.1cm, morgray, xshift=1.25, yshift=1.25] (-.5,.5) to node[below] {} (.5,-.5);
\draw[->,shorten <=0.1cm, shorten >=0.1cm, morgray, xshift=-1.25, yshift=-1.25] (.5,-.5) to node[below] {} (-.5,.5);
\draw[->,morgray] (.7,.6) to [out=0,in=90,looseness=15] node[right] {$\scriptscriptstyle \Vect(\ZZ_2)$} (.6,.7);
\draw[<-,morgray] (-.6,.7) to [out=90,in=180,looseness=15] node[left] {$\scriptscriptstyle \Vect(\ZZ_2)$} (-.7,.6);
\draw[->,morgray] (-.7,-.6) to [out=180,in=-90,looseness=15] node[left] {$\scriptscriptstyle \Vect(\ZZ_2)$} (-.6,-.7);
\draw[<-,morgray] (.6,-.7) to [out=-90,in=0,looseness=15] node[right] {$\scriptscriptstyle \Vect(\ZZ_2)$} (.7,-.6);
\end{tz}
\]
Here the nodes indicate the four simple objects.  The gray lines show the structure of the underlying semisimple 2-category, where the unlabelled arrows denote the morphism space $\Vect$.  The directed black lines indicate the fusion structure of multiplication by the generating simple object $1$.
\end{example}

\begin{remark}[Fusion 2-categories must allow morphisms between inequivalent simple objects] 
By Theorem~\ref{thm:semisimplefrommultifusion} and Proposition~\ref{prop:completionismod}, every semisimple 2-category is the completion of the delooping of a multifusion category, and is therefore (see Constructions~\ref{con:unfold} and~\ref{con:fold}) 2-distributor equivalent to the presemisimple unfolding of that multifusion category.  That presemisimple 2-category has the attractive feature that the $\Hom$ categories between inequivalent simple objects are all trivial, and so it looks `semisimple' in a more classical sense.  For instance, the underlying semisimple 2-category in Example~\ref{eg:Z4example} is 2-distributor equivalent to the presemisimple 2-category with two simple objects, each with endomorphism fusion category $\Vect(\ZZ_2)$, and no further morphisms.  This is, however, a specious presentation: even if we are prepared to work up to 2-distributor equivalence and even if we are prepared to work with presemisimple 2-categories, it is still \emph{not possible} in general to give a monoidal fusion structure without allowing nontrivial morphisms between inequivalent simple objects.  Example~\ref{eg:Z4example} illustrates this necessity: that fusion 2-category is not monoidally 2-distributor equivalent to any prefusion 2-category with trivial $\Hom$ categories between inequivalent simple objects.  Indeed the order four cyclic fusion group of the simple objects forces there to be at least two distinct simple objects in each connected component, in order to describe the monoidal structure.
\end{remark}

\subsection{Pivotal $2$-categories} \label{sec:piv}

\subsubsection{The definition and graphical calculus of planar pivotal $2$-categories}

\skiptocparagraph{Planar pivotal structures}

A monoidal $1$-category is called pivotal when it is equipped with a monoidal trivialization of the double dual functor; such a category has chosen isomorphisms from the double dual of each object to the object itself, in a way compatible with the tensor product of objects.  This notion generalizes straightforwardly to $2$-categories, by asking for a trivialization of the double adjoint of 1-morphisms, in a way compatible with composition.  Such $2$-categories are usually called `pivotal', but because we will also be concerned with trivializing the double dual of objects in a monoidal 2-category and therefore will have a different need for the modifier `pivotal', we will refer to 2-categories with a 1-morphism-level pivotal structure as `planar pivotal'.  For convenience, we will adopt a somewhat strictified definition as follows.

\begin{definition}[Planar pivotal 2-category] \label{def:planarpivotal}
Let $\tc{C}$ be a strict $2$-category in which every $1$-morphism has a left and a right adjoint.  A \emph{planar pivotal structure} on $\tc{C}$ consists of the following data:
\begin{enumerate}
\item[D1.] a choice of right adjoint $f^*:B\to A$ for every $1$-morphism $f:A\to B$,
\item[D2.] a choice of unit $\eta_f: \Io_A \To f^*\xo f$ and counit $\epsilon_f:f\xo f^* \To \Io_B$, 
\end{enumerate}
subject to the following conditions:
\begin{enumerate}
\item[C1.] the unit and counit satisfy the cusp equations:
\begin{align*}
&\left(\epsilon_f \xo \Io_f \right) \xt\left(\Io_f \xo \eta_f\right)  = \Io_{f} 
\\
&\Io_{f^*} =  \left( \Io_{f^*} \xo \epsilon_f \right)\xt\left(\eta_f \xo  \Io_{f^*} \right)
\end{align*}
\item[C2.] the choice of adjoint is functorial:
\begin{align*}
\left(\Io_A\right)^* &= \Io_A
\\
(f\xo g)^* &= g^* \xo f^*
\end{align*}
\item[C3.] the choice of unit and counit is functorial:
\begin{align*}
\epsilon_{\Io_A} &= \It_{\Io_A}
\\
\eta_{\Io_A} &= \It_{\Io_A}
\\
\epsilon_{f\xo g} &= \epsilon_f \xt \left( f \xo \epsilon_g \xo f^*\right)
\\ 
\eta_{f\xo g} &= \left(g^* \xo \eta_f \xo g \right)\xt  \eta_g
\end{align*}
\item[C4.] the adjoint is involutive: $f^{**} = f$;
\item[C5.] right and left mates agree: for any $2$-morphism $\alpha: f\To g$ we have
\begin{equation*}
\alpha^\ast := \left( \It_{f^*} \xo \epsilon_g\right) \xt\left( \It_{f^*} \xo \alpha \xo \It_{g^*} \right) \xt\left(\eta_f \xo \It_{g^*}\right) 
=
\left( \epsilon_{g^*} \xo \It_{f^*}\right) \xt\left( \It_{g^*} \xo \alpha \xo \It_{f^*} \right) \xt\left(\It_{g^*}\xo \eta_{f^*}\right) =: {}^\ast\alpha 
\end{equation*}
\end{enumerate}
\end{definition}

\nid We refer to a 2-category with a planar pivotal structure simply as a \emph{planar pivotal 2-category}.

\skiptocparagraph{Calculus of planar pivotal structures}

Planar pivotal 2-categories admit a graphical calculus of oriented strings in the plane~\cite{Selinger}.  The unit and counit of an adjunction $f\dashv f^*$ are depicted respectively as follows:
\[
\begin{tz}[scale=0.5,yscale=-1]
\draw[slice, on layer =front] (0,0) rectangle (3,3);
\draw[wire, arrow data={0.15}{>}, arrow data = {0.9}{>}] (1,0) to (1,0.75)  to [out=up, in=up, looseness=2] (2,0.75) to (2,0);
\node[omor,below right] at (2.1,0) {$f$};
\node[omor,below left] at (1,0) {$f^*$};
\node[obj, above left] at (3,3) {$A$};
\node[obj, below] at (1.5,0) {$B$};
\end{tz}
\qquad \qquad \qquad \qquad \qquad
\begin{tz}[scale=0.5]
\draw[slice, on layer =front] (0,0) rectangle (3,3);
\draw[wire, arrow data={0.15}{>}, arrow data = {0.9}{>}] (1,0) to (1,0.75)  to [out=up, in=up, looseness=2] (2,0.75) to (2,0);
\node[omor,above left] at (0.9,0) {$f$};
\node[omor,above right] at (2,0) {$f^*$};
\node[obj, above left] at (3.1,0) {$B$};
\node[obj, above] at (1.5,0) {$A$};
\end{tz}
\]

\nid Here a wire labeled $f$ with an upwards pointing orientation arrow refers to the morphism $f$, and the same wire with a downwards pointing orientation arrow refers to the morphism $f^*$; note that in order to avoid confusion later and contrary to typical convention, we will label this downwards pointing segment by $f^*$, the actual object associated to that wire segment, not by $f$, the object associated to the segment with reversed orientation.

The cusp equations are similarly represented by the following pictures:
\[
\begin{tz}[scale=0.5]
\draw[slice, on layer=front] (0,0) rectangle (3,3);
\draw[wire, arrow data ={0.1}{>},arrow data={0.5}{>},arrow data ={0.9}{>}] (0.75,0) to (0.75, 1.5) to [out=up, in=up, looseness=2] (1.5, 1.5) to [out=down, in=down, looseness=2] (2.25, 1.5) to (2.25,3);
\node[obj,above left] at (3,0) {$A$};
\node[obj, above right] at (0,0) {$B$};
\node[omor, above right] at (0.75, 0) {$f$};
\end{tz}
\planareqgap
=\planareqgap
\begin{tz}[scale=0.5]
\draw[slice, on layer=front] (0,0) rectangle (3,3);
\draw[wire, arrow data ={0.5}{>}] (1.5, 0) to (1.5, 3);
\node[obj,above left] at (3,0) {$A$};
\node[obj, above right] at (0,0) {$B$};
\node[omor, above right] at (1.5, 0) {$f$};
\end{tz}
\qquad \qquad \qquad
\begin{tz}[scale=0.5,xscale=-1]
\draw[slice, on layer=front] (0,0) rectangle (3,3);
\draw[wire, arrow data ={0.1}{<},arrow data={0.5}{<},arrow data ={0.9}{<}] (0.75,0) to (0.75, 1.5) to [out=up, in=up, looseness=2] (1.5, 1.5) to [out=down, in=down, looseness=2] (2.25, 1.5) to (2.25,3);
\node[obj,above right] at (3,0) {$A$};
\node[obj, above left] at (0,0) {$B$};
\node[omor, above left] at (0.8, 0) {$f^*$};
\end{tz}
\planareqgap
=\planareqgap
\begin{tz}[scale=0.5]
\draw[slice, on layer=front] (0,0) rectangle (3,3);
\draw[wire, arrow data ={0.5}{<}] (1.5, 0) to (1.5, 3);
\node[obj,above left] at (3,0) {$B$};
\node[obj, above right] at (0,0) {$A$};
\node[omor, above right] at (1.5, 0) {$f^*$};
\end{tz}
\]
\nid A 2-morphism $\alpha: f \To g$ is depicted by a dot on a wire, with the lower half of the wire labeled $f$ and the upper half labeled $g$.  In this notation the mate 2-morphism $\alpha^*: g^* \To f^*$ appears as follows:
\begin{equation*}
\begin{tz}[scale=0.5]
\draw[slice, on layer =front] (0,0) rectangle (3,3);
\draw[wire, arrow data={0.2}{<}, arrow data={0.8}{<}] (1.5,0) to (1.5, 3);
\node[dot] at (1.5, 1.5) {};
\node[tmor, right] at (1.5, 1.5) {$\alpha^*$};
\node[obj,above left] at (3,0) {$A$};
\node[obj, above right] at (0,0) {$B$};
\node[omor, above right] at (1.5, 0) {$g^*$};
\node[omor, below right] at (1.5, 3) {$f^*$};
\end{tz}
\planareqgap:=\planareqgap
\begin{tz}[scale=0.5]
\draw[slice, on layer=front] (0,0) rectangle (3,3);
\draw[wire, arrow data ={0.1}{<},arrow data ={0.9}{<}] (0.75,0) to (0.75, 1.5) to [out=up, in=up, looseness=2] (1.5, 1.5) to [out=down, in=down, looseness=2] (2.25, 1.5) to (2.25,3);
\node[dot] at (1.5, 1.5) {};
\node[tmor, right] at (1.5, 1.5) {$\alpha$};
\node[obj,above left] at (3,0) {$A$};
\node[obj, above right] at (0,0) {$B$};
\node[omor, above right] at (0.75, 0) {$g^*$};
\node[omor, below left] at (2.25, 3) {$f^*$};
\end{tz}
\planareqgap=\planareqgap
\begin{tz}[scale=0.5,xscale=-1]
\draw[slice, on layer=front] (0,0) rectangle (3,3);
\draw[wire, arrow data ={0.1}{<},arrow data ={0.9}{<}] (0.75,0) to (0.75, 1.5) to [out=up, in=up, looseness=2] (1.5, 1.5) to [out=down, in=down, looseness=2] (2.25, 1.5) to (2.25,3);
\node[dot] at (1.5, 1.5) {};
\node[tmor, right] at (1.5, 1.5) {$\alpha$};
\node[obj,above right] at (3,0) {$B$};
\node[obj, above left] at (0,0) {$A$};
\node[omor, above left] at (0.75, 0) {$g^*$};
\node[omor, below right] at (2.25, 3) {$f^*$};
\end{tz}
\end{equation*}
\nid
Because taking the dual is involutive in a planar pivotal 2-category, we can form well-typed circular strings, known as `traces'.  
\begin{definition}[Planar trace]
In a planar pivotal $2$-category, given a 1-morphism $f: A \to B$ and a 2-endomorphism $\alpha: f \To f$, the \emph{left planar trace} $\tr_L(\alpha): \Io_{A} \To \Io_{A}$ and \emph{right planar trace} $\tr_R(\alpha): \Io_{B} \To \Io_{B}$ are
\begin{calign}\nonumber
\tr_L(\alpha) := \epsilon_{f^*}\xt(\It_{f^*} \xo \alpha)\xt\eta_f = ~~\begin{tz}[scale=0.5] \draw[slice, on layer=front] (-1.5,0) rectangle (1.5,3);
\draw[wire, arrow data = {0.1}{>}, arrow data = {0.5}{>}, arrow data={0.93}{>}] (0.5,1.5) to (0.5, 1.75) to [out=up, in=up, looseness=2] (-0.5, 1.75) to (-0.5, 1.25) to [out=down, in=down, looseness=2] (0.5, 1.25) to (0.5, 1.5);
\node[dot] at (0.5,1.5){};
\node[tmor, right] at (0.5, 1.5) {$\alpha$};
\node[obj, above left] at (1.5,0){$A$};
\end{tz}
&
\tr_R(\alpha) := \epsilon_{f}\xt(\alpha \xo \It_{f^*})\xt\eta_{f^*} = ~~\begin{tz}[scale=0.5,xscale=-1] \draw[slice, on layer=front] (-1.5,0) rectangle (1.5,3);
\draw[wire, arrow data = {0.1}{>}, arrow data = {0.5}{>}, arrow data={0.93}{>}] (0.5,1.5) to (0.5, 1.75) to [out=up, in=up, looseness=2] (-0.5, 1.75) to (-0.5, 1.25) to [out=down, in=down, looseness=2] (0.5, 1.25) to (0.5, 1.5);
\node[dot] at (0.5,1.5){};
\node[tmor, left] at (0.5, 1.5) {$\alpha$};
\node[obj, above left] at (-1.5,0){$B$};
\end{tz}
\end{calign}
\end{definition}

\skiptocparagraph{Monoidal planar pivotal structures}

If we add a monoidal structure to a planar pivotal 2-category, we insist that the monoidal structure respects the planar pivotal structure as follows.

\begin{definition}[Monoidal planar pivotal 2-category] \label{def:monoidalplanarpivotal}
A \emph{monoidal planar pivotal $2$-category} is a monoidal $2$-category equipped with a planar pivotal structure such that
\begin{enumerate}
\item the adjoint of a tensor is the tensor of the adjoints:
\begin{align*}
\left(A \xz f\right)^* &= A\xz f^* 
\\
\left(f \xz A\right)^* &= f^* \xz A
\end{align*}
\item the unit and counit for a tensor are the tensors of the units and counits:
\begin{align*}
\epsilon_{A\xz f} &= A\xz \epsilon_f 
\\
\eta_{A\xz f} &= A \xz \eta_f 
\\
\epsilon_{f\xz A} &= \epsilon_f \xz A 
\\
\eta_{f\xz A} &= \eta_f \xz A
\end{align*}
Here $A$ is an object, $f: A \to B$ is a 1-morphism, and $\epsilon_f$ and $\eta_f$ are the counit and unit of the adjunction between $f$ and $f^*$.
\end{enumerate}
\end{definition}

\nid
The graphical calculus for planar pivotal 2-categories extends to one for monoidal planar pivotal 2-categories: 1-morphisms $f: A \to B$ are again represented by oriented strings, which are constrained to move within a single sheet of the surface diagram.  If the monoidal category has duals, and therefore a graphical calculus where multiple sheets may connect via unit and counit morphisms depicted as `cups' and `caps', the 1-morphism strings may not intersect these cups and caps---such interaction requires further conditions, as described in the next section.

\subsubsection{The definition and graphical calculus of pivotal $2$-categories}

\skiptocparagraph{Pivotal structures}

In a 2-category with adjoints for 1-morphisms (or more simply a monoidal 1-category with duals for objects), a planar pivotal structure (respectively pivotal structure) is a choice of adjoint structure that is coherent (Definition~\ref{def:planarpivotal}-C1), functorial (C2,C3), and involutive (C4), such that left and right mates agree (C5).

Given a monoidal planar pivotal 2-category with duals for objects, we can insist that object duality itself is `pivotal' and that the object-level duality interacts well with the 1-morphism-level duality.  More specifically, below we will define a pivotal structure on a monoidal planar pivotal 2-category to be a choice of duality structure that is coherent (Definition~\ref{def:pivotal}-C1), compatible with tensor (C2,C3,C4), compatible with the existing adjoint structure (C5,C6), and involutive (C7), such that the right-over and left-under twist 2-morphisms between the left and right mates agree (C8).

Given a monoidal 2-category with chosen right dual $A^\#$ for every object $A$ such that $A^{\#\#}= A$, along with chosen unit and counit 1-morphisms $i_A: \Iz \to A^\# \xz A$ and $e_A: A\xz A^\# \to \Iz$, and any 1-morphism $f: A \to B$, the right and left mate of $f$ are defined by
\[
\begin{split}
f^\#&:=(A^\# \xz e_B)\xo (A^\#\xz f \xz B^\#)\xo (i_A\xz B^\#) : B^\# \to A^\# \\
\lix{\#}{f}& :=(e_{B^\#} \xz A^\#)\xo(B^\# \xz f \xz A^\#) \xo (B^\#\xz i_{A^\#}): B^\# \to A^\#\text{.}
\end{split}
\] 
Similarly for any 2-morphism $\alpha:g\To h$ between $1$-morphisms $g,h:A\to B$, the right and left (object-duality) mates of $\alpha$ are defined by
\[ 
\begin{split}
\alpha^\#&:=(A^\# \xz e_B)\xo (A^\#\xz \alpha \xz B^\#)\xo (i_A\xz B^\#) : g^\# \To h^\#
\\
\lix{\#}{\alpha}&:=(e_{B^\#} \xz A^\#)\xo(B^\# \xz \alpha \xz A^\#) \xo (B^\#\xz i_{A^\#}) : \lix{\#}{g} \To \lix{\#}{h}.
\end{split}
\]

\begin{definition}[Pivotal 2-category] \label{def:pivotal} 
Let $\tc{C}$ be a monoidal planar pivotal 2-category in which every object has a left and a right dual.  A \emph{pivotal structure} on $\tc{C}$ consists of the following data:
\begin{enumerate}
\item[D1.] a choice of right dual $A^\#$ for every object $A$ of $\tc{C}$;
\item[D2.] a choice of $1$-morphisms (called \emph{folds}) $i_A: \Iz \to A^\# \xz A$ and $e_A: A\xz A^\# \to \Iz$;
\item[D3.] a choice of invertible $2$-morphisms (called \emph{cusps})
\begin{align*}
&C_A:  \left(e_A \xz A \right) \xo\left(A \xz i_A\right)  \To \Io_{A} 
\\
&D_A:\Io_{A^\#} \To  \left( A^\# \xz e_A \right)\xo\left(i_A \xz  A^\# \right)
\end{align*}
\end{enumerate}
subject to the following conditions:
\begin{enumerate}
\item[C1.] the cusps satisfy the swallowtail equations:
\begin{align*}
\left[e_A\xo\left(C_A\xz A^\#\right)\right]\xt\left[\phi_{e_A,e_A}\xo \left(A\xz i_A \xz A^\#\right)\right]\xt \left[e_A\xo\left(A\xz D_A\right)\right]&=\It_{e_A}
\\
\left[\left(A^\# \xo C_A\right)\xo i_A\right] \xt\left[\left(A^\#\xz e_A \xz A \right)\xo\phi_{i_A,i_A}\right]\xt \left[\left(D_A\xz A\right)\xo i_A\right]&=1_{i_A}
\end{align*} 
\item[C2.] the choice of dual is compatible with tensor:
\begin{align*}
\Iz^\# &= \Iz 
\\
(A\xz B)^\# &= B^\# \xz A^\#
\end{align*}
\item[C3.] the choice of folds is compatible with tensor:
\begin{align*}
i_{I} &=  e_{I} = \Io_I 
\\
 i_{A\xz B} &= \left( B^\# \xz i_A \xz B \right) \xo i_B 
 \\
 e_{A\xz B} &= e_A\xo \left(A\xz e_B \xz A^\# \right)
\end{align*}
\item[C4.] the choice of cusps is compatible with tensor:\\
\hspace*{-.75cm}\shrinkalign{.9}{
\begin{align*}
C_I &=  D_I = \It_{\Io_I}
\\
 C_{A\xz B} &=   \left[\left(C_A\xz B\right)\xo\left( A\xz C_B\right)\right] \xt\left[\left(e_A \xz A \xz B\right) \xo \left(A\xz \phi_{e_B, i_A} \xz B \right)\xo \left(A\xz B \xz i_B \right) \right]
\\
 D_{A\xz B} &= \left[\left(B^\# \xz A^\# \xz e_A\right)\xo\left(B^\# \xz \phi_{i_A, e_B} \xz A^\# \right)\xo\left(i_B\xz B^\#\xz A^\# \right)\right]\xt\left[\left(B^\# \xz D_A\right) \xo \left(D_B\xz A^\#\right)\right]
 \end{align*}
 }
\item[C5.] the folds intertwine duality of objects and adjunction of 1-morphisms: $(e_A)^* = i_{A^\#}$;
\item[C6.] the cusps intertwine duality of objects and mates of 2-morphisms: $(D_A)^* = C_{A^\#}$;
\item[C7.] the dual is involutive: $A^{\#\#} = A$;
\item[C8.] the right-over and left-under twists between the left and right mates agree: for any $1$-morphism $f:A\to B$ we have
\[
 \theta_f = \left(\theta_{f^*}\right)^* : \lix{\#}{f} \To f^\#
 \]
where\pagebreak
\[\begin{split} 
\theta_f := &
\left[\left( A^\# \xz e_B\right) \xo\left(A^\# \xz f \xz  B^\#\right)\xo\left(\epsilon_{\lix{\#}{f}}\xz A \xz B^\#\right) \xo \left(i_A \xz B^\#\right)\right] \\ & \xt
\left[\phi_{\lix{\#}{f}, e_B\xo (f\xz B^\#)} \xo \left((f^*)^\#\xz A \xz B^\#\right) \xo \left(i_A \xz B^\#\right)
\right] \\ & \xt
\left[ \lix{\#}{f}\xo \left( B^\# \xz e_B\right) \xo \left( B^\# \xz f \xz B^\# \right) \xo \left( B^\# \xz e_A \xz A \xz B^\# \right) \xo \left( \phi^{-1}_{(B^\# \xz f^*) \xo i_B, i_A} \xz B^\# \right)  \right] \\& \xt
\left[\lix{\#}{f}  \xo \left(f \xo C_A^{-1} \xo f^*\right)^\# \right] \xt
\left[\lix{\#}{f} \xo \eta_{f^*}^\#  \right] \xt
\left[  \lix{\#}{f}\xo D_B\right]
\end{split}
\]  
\end{enumerate}

\end{definition}
\nid For brevity, we refer to a monoidal planar pivotal 2-category with a pivotal structure as a \emph{pivotal 2-category}.  Though structured somewhat differently, we expect this definition is equivalent to the notion BMS call a `spatial Gray monoid'~\cite{BMS}.

\skiptocparagraph{Calculus of pivotal structures}

The graphical calculus for pivotal 2-categories is a calculus of surfaces with string defects, compare~\cite{BMS}.  By planar pivotality, the fold 1-morphisms $e_A$ and $i_A$ have adjoints $i_A^* = e_{A^\#}$ and $e_A^* = i_{A^\#}$.  The units and counits of those adjunctions are referred to as a \emph{crotch}, \emph{saddle}, \emph{birth of a circle}, and \emph{death of a circle} and (extending the calculus of monoidal 2-categories with duals from Section~\ref{sec:graphicalcalculus}) are depicted as follows:
\begin{calign}\nonumber
\begin{tz}[td,master]
\begin{scope}[xyplane=0]
\draw[slice] (0,-0.5) to (0,0)  to [out=up, in=up, looseness=2] node[pos=0.36](R){} (1,0) to (1,-0.5);
\draw[slice] (0,3.5) to (0,3) to [out=down, in=down, looseness=2] node[pos=0.59](L){} (1,3) to (1,3.5);
\end{scope}
\begin{scope}[xyplane=\h]
\draw[slice] (0,-0.5) to (0,3.5);
\draw[slice] (1,-0.5) to (1,3.5);
\end{scope}
\begin{scope}[xzplane=-0.5]
\draw[slice, short] (0,0) to (0,\h);
\draw[slice, short] (1,0) to (1,\h);
\end{scope}
\begin{scope}[xzplane=3.5]
\draw[slice, short] (0,0) to (0,\h);
\draw[slice, short] (1,0) to (1,\h);
\end{scope}
\coordinate (Rs) at (R.center|-L.center);
\draw[slice] (R.center) to ([yshift=0.2*\h cm]Rs.center) to  [out=up, in=up, looseness=2] ([yshift=0.2*\h cm] L.center) to (L.center);
\node[obj, above left] at (-0.5,0,0){$A$};
\end{tz}
&
\begin{tz}[td,slave]
\begin{scope}[xyplane=\h]
\draw[slice] (0,-0.5) to (0,0)  to [out=up, in=up, looseness=2] node[pos=\stdr](R){} (1,0) to (1,-0.5);
\draw[slice] (0,3.5) to (0,3) to [out=down, in=down, looseness=2] node[pos=\stdl](L){} (1,3) to (1,3.5);
\end{scope}
\begin{scope}[xyplane=0]
\draw[slice] (0,-0.5) to (0,3.5);
\draw[slice] (1,-0.5) to (1,3.5);
\end{scope}
\begin{scope}[xzplane=-0.5]
\draw[slice, short] (0,0) to (0,\h);
\draw[slice, short] (1,0) to (1,\h);
\end{scope}
\begin{scope}[xzplane=3.5]
\draw[slice, short] (0,0) to (0,\h);
\draw[slice, short] (1,0) to (1,\h);
\end{scope}
\coordinate (Rs) at (R.center|-L.center);
\draw[slice] (R.center) to ([yshift=-0.2*\h cm]Rs.center) to  [out=down, in=down, looseness=2] ([yshift=-0.2*\h cm] L.center) to (L.center);
\node[obj, above left] at (-0.5,0,0){$A$};
\end{tz}
&
\begin{tz}[td, slave]
\begin{scope}[xyplane=\h]
\draw[slice] (0,1.5) to [out=up, in=up, looseness=2] node[pos=\stdr] (L){} (1,1.5) to [out=down, in=down, looseness=2]node[pos=1-\stdl](R){} (0,1.5);
\end{scope}
\coordinate (Rd) at (R.center|- L.center);
\draw[slice] (L.center) to  [out=down, in=down, looseness=2] (Rd.center) to (R.center);
\node[obj, below left] at (0.8,0.2,\h){$A$};
\end{tz}
&
\begin{tz}[td,slave]
\begin{scope}[xyplane=0]
\draw[slice] (0,1.5) to [out=up, in=up, looseness=2] node[pos=\stdr] (L){} (1,1.5) to [out=down, in=down, looseness=2]node[pos=1-\stdl](R){} (0,1.5);
\end{scope}
\coordinate (Lu) at (L.center|- R.center);
\draw[slice] (L.center) to  (Lu.center) to [out=up, in=up, looseness=2]  (R.center);
\node[obj, above left] at (0.75,0,0){$A$};
\end{tz}
\\\nonumber 
\shrinker{.9}{\epsilon_{i_{A^\#}}: i_{A^\#} \xo e_{A} \To \Io_{A\xz A^\#}}
&
\shrinker{.9}{\eta_{e_A}: \Io_{A\xz A^\#} \To i_{A^\#}\xo e_A}
& 
\shrinker{.9}{\eta_{i_{A^\#}}: \Io_{\Iz} \To e_A \xo i_{A^\#}}
&
\shrinker{.9}{\epsilon_{e_A}: e_A\xo i_{A^\#} \To \Io_{\Iz}}
\end{calign}
\nid With this pictorial notation, one may draw a surface embedded in 3-dimensional space, with sheets labeled by objects, together with string defects on the surface labeled by 1-morphisms, and point defects on the strings labeled by 2-morphisms.  It is reasonable to conjecture that the resulting 2-morphism of the pivotal 2-category is invariant under isotopy of the picture; BMS go some way towards establishing that result~\cite{BMS}.  Note that we will not need this full graphical calculus and none of our results depend on it---all equations we need will be explicitly established algebraically.

\begin{warning}[Failure of invariance for the pivotal 2-category graphical calculus] \label{warn:pivotal}
Depending on one's perspective, either Definition~\ref{def:pivotal} or its corresponding graphical calculus has a fairly serious and perhaps not evident drawback: the scalar value of a closed surface labeled by an object is not invariant under equivalence of objects.  We do not know how to satisfactorily alter either the definition or the graphical calculus to eliminate this problem.
\end{warning}

\begin{remark}[Cusp flip condition]
The condition C6, in Definition~\ref{def:pivotal}, that cusps intertwine duality of objects and mates of 2-morphisms is equivalent to the `cusp flip equation':
\[
\shrinker{.92}{
\left[\left(A^\# \xz e_{A} \right)\xo\left( \epsilon_{i_{A}} \xz A^\#\right)\right]\xt\left[D_A \xo\left(e_{A^\#} \xz A^\#\right)\right]
= \left[C_{A^\#}\xo\left( A^\#\xz e_A\right) \right]\xt\left[\left( e_{A^\#} \xz A^\#\right)\xo \left( A^\# \xz\eta_{e_A}\right)\right]
}
\]
This equation is depicted graphically as follows:
\begin{calign}\nonumber
\begin{tz}[td,scale=0.85]
\coordinate (cusp) at (2.5,0.5, 0.5*\h);
\begin{scope}[xyplane=0]
\draw[slice](0,-1) to (0,0) to [out=up, in=up, looseness=2] node[pos=\stdr](BR){} (1,0) to (1,-1);
\draw[slice] (2,-1) to (2,1) to [out=up, in=down] (0,4);
\end{scope}
\begin{scope}[xyplane=\h]
\draw[slice](0,-1)  to (0,0)  to [out=up, in=up, looseness=2] node[pos=\stdr](MR){} (1,0) to  (1,-1);
\draw[slice] (2,-1) to (2,3) to [out=up, in=up, looseness=2] node[pos=\stdl] (LL){}(1,3)  to (1,2.5) to  [out=down, in=down, looseness=2] node[pos=\stdr] (ML){} (0,2.5) to (0,4);
\end{scope}
\begin{scope}[xyplane=2*\h]
\draw[slice](0,-1)to (0,4);
\draw[slice] (1,-1)  to (1,3)  to [out=up, in=up, looseness=2] node[pos=\stdr] (TL){} (2,3) to (2,-1);
\end{scope}
\begin{scope}[xzplane=-1]
\draw[slice, short] (0,0) to (0,2*\h);
\draw[slice, short] (1,0) to (1,2*\h);
\draw[slice, short] (2,0) to (2,2*\h);
\end{scope}
\begin{scope}[xzplane=4]
\draw[slice, short] (0,0) to (0,2*\h);
\end{scope}
\coordinate (MRu) at (MR.center|-ML.center);
\draw[slice] (BR.center) to [out=up, in=down] ([yshift=0.2cm]MRu.center) to [out=up, in=up, looseness=3] ([yshift=0.2cm]ML.center) to (ML.center)to [out=down, in=\urcusp] (cusp);
\draw[slice] (cusp) to [out=\ulcusp, in=down] (LL.center) to[out=up, in=down] (TL.center);
\node[obj, above left] at (-1.2,-0.1,0) {$A^\#$};
\end{tz}
\tdeqgap= \tdeqgap
\begin{tz}[td,scale=0.85]
\coordinate (cusp) at (2.75,1.5, 1.6*\h);
\begin{scope}[xyplane=0]
\draw[slice](0,-1) to (0,3) to [out=up, in=up, looseness=2] node[pos=\stdr] (BL){}(1,3) to (1,-1);
\draw[slice] (2,-1) to (2,5);
\end{scope}
\begin{scope}[xyplane=\h]
\draw[slice](1,-1)  to (1,0)  to [out=up, in=up, looseness=2] node[pos=\stdr](MR){} (2,0) to  (2,-1);
\draw[slice] (0,-1) to (0,3) to [out=up, in=up, looseness=2] node[pos=\stdr] (ML){}(1,3)  to (1,2.5) to  [out=down, in=down, looseness=2] node[pos=\stdl] (MM){} (2,2.5) to (2,5);
\end{scope}
\begin{scope}[xyplane=2*\h]
\draw[slice] (0,-1) to [out=up, in=down] (2,5);
\draw[slice] (1,-1) to (1,0) to [out=up, in=up, looseness=2] node[pos=\stdr] (TR){} (2,0) to (2,-1);
\end{scope}
\begin{scope}[xzplane=-1]
\draw[slice, short] (0,0) to (0,2*\h);
\draw[slice, short] (1,0) to (1,2*\h);
\draw[slice, short] (2,0) to (2,2*\h);
\end{scope}
\begin{scope}[xzplane=5]
\draw[slice, short] (2,0) to (2,2*\h);
\end{scope}
\coordinate (MRd) at (MR.center|-MM.center);
\draw[slice] (BL.center) to (ML.center) to [out=up, in=\dlcusp] (cusp);
\draw[slice] (cusp) to [out=\drcusp, in=up] (MM.center) to ([yshift=-0.1cm]MM.center) to [out=down, in=down, looseness=3] ([yshift=-0.1cm]MRd.center) to (TR.center);
\node[obj, above left] at (-1.2,-0.1,0) {$A^\#$};
\end{tz}
\end{calign}
\end{remark}

\begin{remark}[Surface ribbon condition] \label{rem:surfaceribbon}
The final condition C8 of Definition~\ref{def:pivotal}, that the right-over and left-under twists between mates agree, may be depicted graphically as follows:
\def\scl{0.6}
\[
\def\hn{3.2}
\theta_f \seqgap = \tdeqgap
\begin{tz}[td,scale=\scl]
\begin{scope}[xyplane=0]
\draw[slice] (2,-3) to [out=up, in=down] (0,4) to  (0,5) to [out=up, in=up, looseness=2] node[pos=\stdr] (L0){} (1,5) to (1,4) to  [out=down, in=down, looseness=2] node[pos=\stdl] (R0){} (2,4) to (2,7);
\end{scope}
\begin{scope}[xyplane=\hn]
\draw[slice] (2,-3) to (2,1) to [out=up, in=up, looseness=2] node[pos=\stdl] (ML1){} (1,1) to (1,-0.75);
\draw[slice, on layer=front] (1,-0.75) to [out=down, in=down,looseness=2] node[pos=\stdr] (MR1){} (0,-0.75) to (0,5) to [out=up, in=up, looseness=2] node[pos=\stdr] (L1){} (1,5);
\draw[slice] (1,5) to (1,4) to  [out=down, in=down, looseness=2] node[pos=\stdl] (R1){} (2,4) to (2,7);
\end{scope}
\begin{scope}[xyplane=2*\hn]
\draw[slice] (2,-3) to [out=up, in=down] (4,1) to (4,5.25) to [out=up, in=up, looseness=2] node[pos=\stdl] (LL2){} (3,5.25) to (3,0.5)to [out=down, in=down, looseness=2]node[pos=\stdr] (MR2){} (2,0.5) to (2,1) to [out=up, in=up, looseness=2] node[pos=\stdl] (ML2){} (1,1) to (1,-0.75);
\draw[slice, on layer=front] (1,-0.75) to [out=down, in=down,looseness=2] node[pos=\stdr] (RR2){} (0,-0.75) to (0,5) to [out=up, in=up, looseness=2] node[pos=\stdr] (L2){} (1,5);
\draw[slice] (1,5) to (1,4);
\draw[slice, on layer=front] (1,4) to  [out=down, in=down, looseness=2] node[pos=\stdl] (R2){} (2,4) to (2,7);
\end{scope}
\begin{scope}
[xyplane=3*\hn]
\draw[slice] (2,-3)  to [out=up, in=down] (4,1) to (4,5.25) to [out=up, in=up, looseness=2] node[pos=\stdl] (LL3){} (3,5.25);
\draw[slice, on layer=front] (3,5.25) to (3,0.5)to [out=down, in=down, looseness=2]node[pos=\stdr] (R3){} (2,0.5) to (2,7);
\end{scope}
\begin{scope}
[xzplane=-3]
\draw[slice,short] (2,0) to (2,3*\hn);
\end{scope}
\begin{scope}
[xzplane=7]
\draw[slice,short] (2,0) to (2,3*\hn);
\end{scope}
\coordinate (cusp0) at (0,1,0.4*\hn);
\coordinate (cusp1) at (0.5,1 ,1.5*\hn);
\draw[slice] (R0.center) to node[pos=0.82, mask point,scale=0.6] (MP8){} node[pos=0.9, mask point] (MP1){} (R2.center) to [out=up, in=up, looseness=1.2] (ML2.center|-R2.center) to (ML2.center) ;
\draw[wire, on layer=front] (4.5,1,2*\hn) to [out=up, in=up, looseness=1.25] node[pos=0.05, mask point] (MP7){} node[pos=0.17, mask point] (MP2){} (-0.25, 1, 2*\hn);
\draw[slice] (MR1.center) to (RR2.center) to [out=up, in=up, looseness=1.5] node[pos=0.15, mask point,scale=0.6] (MP5){} node[pos=0.69, mask point] (MP3){} node[pos=0.77, mask point] (MP4){}  (L2.center|-RR2.center) to  (L2.center);
\cliparoundone{MP4}{
\draw[slice] (LL3.center) to (LL2.center);
}
\cliparoundtwo{MP7}{MP8}{
\draw[slice] (LL2.center) to [out=down, in=up] (ML1.center) to [out=down, in=\ulcusp] (cusp0) to [out=\urcusp, in=down] (MR1.center);
}
\cliparoundtwo{MP1}{MP2}{
\draw[wire] (-0.25,1,\hn) to [out=down, in=down, looseness=2.75] (0.75,1,\hn) to [out=up, in=down, out looseness=1.5] (5.,3, 2*\hn); 
}
\cliparoundone{MP3}{
\draw[wire] (5,3,2*\hn) to (5,3, 3*\hn);
}
\draw[wire] (4.5,1, 0) to (4.5,1, 2*\hn);
\draw[wire] (-0.25, 1, 2*\hn) to node[pos=0.08, mask point, scale=0.8] (MP6){} (-0.25, 1, \hn);
\cliparoundtwo{MP5}{MP6}{
\draw[slice] (ML2.center) to [out=down, in= \ulcusp] (cusp1) to [out=\urcusp, in=down] (MR2.center) to (R3.center);
}
\draw[slice] (L0.center) to (L2.center);
\end{tz}
\def\l{3.5}
\def\r{2.75}
\tdeqgap = \tdeqgap
\begin{tz}[td,scale=\scl]
\begin{scope}[xyplane=0]
\draw[slice] (1,-3) to [out=up, in=down]   (0,4) to (0,5) to [out=up, in=up, looseness=2] node[pos=\stdr] (L0){} (1,5) to  [out=down, in=down, looseness=2] node[pos=\stdl] (R0){} (2,5) to (2,7);
\end{scope}
\begin{scope}[xyplane=\hn]
\draw[slice] (1,-3)  to (1,-1) to [out=up, in=up, looseness=2] node[pos=\stdl] (RL1){} (0,-1) to (0,-1.5);
\draw[slice, on layer=front] (0,-1.5) to [out=down, in=down, looseness=2] node[pos=\stdr] (RR1){} (-1,-1.5) to (-1,1.6) to [out=up, in=up, looseness=2] node[pos=\stdr] (MR1){} (0,1.6);
\draw[slice] (0,1.6)  to (0,1.5) to [out=down, in=down, looseness=2] node[pos=\stdl] (ML1){} (1,1.5)to [out=up, in=down](0,4);
\draw[slice, on layer=front] (0,4)  to  (0,5);
\draw[slice] (0,5) to [out=up, in=up, looseness=2] node[pos=\stdr] (LL1){} (1,5) to [out=down, in=down, looseness=2] node[pos=\stdl] (R1){} (2,5) to (2,7) ;
\end{scope}
\begin{scope}[xyplane=2*\hn]
\draw[slice] (1,-3)  to (1,-1) to [out=up, in=up, looseness=2] node[pos=\stdr] (RL2){} (0,-1) to (0,-1.5);
\draw[slice, on layer=front] (0,-1.5) to [out=down, in=down, looseness=2] node[pos=\stdl] (RR2){} (-1,-1.5) to (-1, 4.25);
\draw[slice] (-1,4.25) to [out=up, in=up, looseness=2] node[pos=\stdr] (LL2){} (0,4.25) to (0,\l) to  [out=down, in=down, looseness=2] node[pos=\stdl] (ML2){} (1,\l) to [out=up, in=up, looseness=2] node[pos=\stdr] (L2){} (2,\l) to (2,\r) to  [out=down, in=down, looseness=2] node[pos=\stdl] (R2){} (3,\r) to (3,4) to [out=up, in=down] (2,7) ;
\end{scope}
\begin{scope}[xyplane=3*\hn]
\draw[slice] (1,-3)  to (1,-1) to [out=up, in=up, looseness=2] node[pos=\stdl] (L3){}  (0,-1) to (0,-1.5) to [out=down, in=down, looseness=2] node[pos=\stdr] (RR3){} (-1,-1.5) to [out=up, in=down] (2,7);
\end{scope}
\begin{scope}[xzplane=-3]
\draw[slice,short] (1,0) to (1,3*\hn);
\end{scope}
\begin{scope}[xzplane=7]
\draw[slice,short] (2,0) to (2,3*\hn);
\end{scope}
\coordinate (cusp0) at (3.5,1,2.33*\hn);
\coordinate (cusp1) at (4.1,2,2.7*\hn);
\draw[slice] (MR1.center) to [out=down, in=down, looseness=1.5] (RR1.center|-MR1.center) to  (RR1.center);
\draw[wire] (-1.25,0, 3*\hn) to  (-1.25,0,\hn) to [out=down, in=down, looseness=2] (1.5,0, \hn) to [out=up, in=down, in looseness=1] node[mask point, pos=0.7] (MP4){}node[mask point, pos=0.55] (MP5){} node[mask point, pos=0.92] (MP6){}(4.25,0, 2*\hn);
\draw[slice] (RL1.center) to [out=down, in=down, looseness=2] (ML1.center|-RL1.center) to  (ML1.center);
\draw[slice] (MR1.center) to [out=up, in=down, out looseness=1.2] node[mask point, pos=0.5] (MP1){} node[mask point, pos=0.65] (MP2){}node[mask point, pos=0.85] (MP3){} (LL2.center);
\cliparoundtwo{MP3}{MP6}{
\draw[slice] (L0.center) to (LL1.center) to [out=up, in=down, in looseness=0.6, out looseness=1.5] (L2.center) to [out=up, in=\dlcusp] (cusp0) to [out=\drcusp, in=up] (ML2.center) to [out=down, in=up] node[mask point,pos=0.32] (MP7){}node[mask point,pos=0.16] (MP8){} (ML1.center);
}
\cliparoundthree{MP1}{MP5}{MP7}{
\draw[slice] (R0.center) to (R1.center) to [out=up, in=down, in looseness=1.5, out looseness=0.5] (R2.center) to [out=up, in=\drcusp] (cusp1) to [out=\dlcusp, in=up] (LL2.center);
}
\draw[slice] (RR1.center) to (RR3.center);
\draw[slice] (RL1.center) to (L3.center);
\cliparoundthree{MP2}{MP4}{MP8}{
\draw[wire] (5,1,0) to (5,1, \hn) to [out=up, in=down, out looseness=1] (2.6,1.8,2*\hn) to [out=up, in=up, looseness=1.3, in looseness=3] (4.25,0, 2*\hn) to [out=down, in=up] (4.25,0, 2*\hn);
}
\end{tz}
\tdeqgap=\seqgap
\left(\theta_{f^*}\right)^*
\]
The two surfaces may be thought of as the two simplest ways to take a surface with a line on it and deform the surface so that the normal vector to the line in the surface undergoes a full counterclockwise rotation.
\end{remark}

\begin{remark}[The loop category of a pivotal 2-category is ribbon]\label{rem:ribbon}
The surface ribbon condition drawn in Remark~\ref{rem:surfaceribbon} is of course reminiscent of the ribbon condition that may be imposed on a braided pivotal category.  Indeed, when the morphism $f$ is an endomorphism of the identity, and the surface is therefore irrelevant, the surface ribbon condition reduces precisely to the ribbon condition:  
\[\theta_f \seqgap=\seqgap \begin{tz}[scale=0.5]
\clip (-0.4,-0.2) rectangle (1.1,2.2);
\draw[wire] (0,0) to [out=up, in=up, looseness=2] node[mask point, pos=0.35](MP){} (1,1);
\cliparoundone{MP}{\draw[wire] (1,1) to [out=down, in=down, looseness=2] (0,2);}
\node[omor, left] at (0,0) {$f$};
\end{tz}
\tdeqgap = \tdeqgap
\begin{tz}[scale=0.5,scale=-1]
\clip (-0.4,-0.2) rectangle (1.1,2.2);
\draw[wire] (0,0) to [out=up, in=up, looseness=2] node[mask point, pos=0.35](MP){} (1,1);
\cliparoundone{MP}{\draw[wire] (1,1) to [out=down, in=down, looseness=2] (0,2);}
\node[omor, right] at (0,2) {$f$};
\end{tz}
\seqgap=\seqgap \left(\theta_{f^*}\right)^*
\]
Thus, for a pivotal $2$-category $\tc{C}$, the braided (planar) pivotal $1$-category $\End_{\tc{C}}(\Iz)$ is a ribbon category; cf~\cite[Cor~4.9]{BMS}.  By the same comparison, the delooping of a ribbon category is a pivotal 2-category.
\end{remark}

\subsubsection{Spherical traces in pivotal $2$-categories}
Recall that a pivotal monoidal 1-category is called `spherical' when the left and right `traces' of any 1-endomorphism agree.  In our terminology, that situation is a planar pivotal 2-category $\tc{C}$ having just one object $\ast$, in which for any 1-morphism $f: \ast \to \ast$ and 2-morphism $\alpha: f \To f$, the left planar trace $\tr_L(\alpha): \Io_{\ast} \To \Io_{\ast}$ is equal to the right planar trace $\tr_R(\alpha): \Io_{\ast} \To \Io_{\ast}$.  

In a general planar pivotal 2-category, it does not make sense to ask that the left planar trace and right planar trace are equal, as when the 1-morphism $f: A \to B$ is not an endomorphism, the left planar trace $\tr_L(\alpha): \Io_{A} \To \Io_{A}$ and right planar trace $\tr_R(\alpha): \Io_{B} \To \Io_{B}$ are 2-endomorphisms of different objects.  However, when the planar pivotal 2-category has a monoidal structure and is in fact pivotal, then we can compare the left and right planar traces by putting each one on a 2-sphere and then comparing the resulting 2-endomorphisms of the identity object.

Let $\cC$ be a pivotal 2-category, and let $f: A \to B$ be a 1-morphism and $\alpha: f \To f$ a 2-morphism in $\cC$.  As before $e_B: B \xz B^\# \to \Iz$ denotes the counit of the object duality; the composite $e_B \xo (\alpha \xz B^\#)$ is an endomorphism of the 1-morphism $e_b \xo (f \xz B^\#): A \xz B^\# \to \Iz$.  The right planar trace of this composite is therefore a 2-endomorphism of the identity object,
\[
\tr_R(e_B \xo (\alpha \xz B^\#)): \Io_\Iz \To \Io_\Iz
\]
and has the graphical representation
\[
\begin{tz}[scale=0.5]
  \draw[slice] (0,0) circle (2cm);
  \draw[tinydash] (-2,0) arc (180:360:2 and 0.3);
  \draw[tinydash] (2,0) arc (0:180:2 and 0.3);
  \begin{scope}[yshift=-0.27cm]
  \draw[wire, arrow data={0.515}{>}] (-0.8,0) to (-0.8, 0.1) to [out=up, in=up, looseness=2] (0.8,0.1) to (0.8,-0.1) to [out=down, in=down, looseness=2] (-0.8,-0.1) to (-0.8, 0);
  \end{scope}
  \node[dot] at (-0.8,-0.27){};
  \node[ right,tmor] at (-0.8,-0.27){$\alpha$};
     \node[obj,above left] at (\sqt, -\sqt) {$B$};
\end{tz}
\]
Similarly, using the unit $i_{A^\#} : \Iz \to A \xz A^\#$, we can form the composite $(\alpha \xz A^\#) \xo i_{A^\#}$, which is an endomorphism of the 1-morphism $(f \xz A^\#) \xo i_{A^\#}: \Iz \to B \xz A^\#$.  The left planar trace of this composite is therefore a 2-endomorphism of the identity object,
\[
\tr_L((\alpha \xz A^\#) \xo i_{A^\#}): \Io_\Iz \To \Io_\Iz
\]
and has the graphical representation
\[
\begin{tz}[scale=0.5,xscale=-1]
  \draw[slice] (0,0) circle (2cm);
  \draw[tinydash] (-2,0) arc (180:360:2 and 0.3);
  \draw[tinydash] (2,0) arc (0:180:2 and 0.3);
  \begin{scope}[yshift=-0.27cm]
  \draw[wire, arrow data={0.515}{>}] (-0.8,0) to (-0.8, 0.1) to [out=up, in=up, looseness=2] (0.8,0.1) to (0.8,-0.1) to [out=down, in=down, looseness=2] (-0.8,-0.1) to (-0.8, 0);
  \end{scope}
  \node[dot] at (-0.8,-0.27){};
  \node[ right,tmor] at (-0.8,-0.27){$\alpha$};
     \node[obj,above left] at (-\sqt, -\sqt) {$A$};
\end{tz}
\]
As is evident from the graphical calculus (by pulling the wire around the back of the sphere), these left and right traces always agree in a pivotal 2-category.
\begin{proposition}[Equality of left and right traces {\cite[Lem 7.6]{BMS}}]\label{prop:leftrighttrace}
For an endomorphism $\alpha: f \To f$ of a 1-morphism $f: A \to B$ in a pivotal 2-category, the following left and right traces agree:
\[
\tr_R(e_B \xo (\alpha \xz B^\#)) = \tr_L((\alpha \xz A^\#) \xo i_{A^\#})
\]
\end{proposition}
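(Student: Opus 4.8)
The plan is to prove Proposition~\ref{prop:leftrighttrace} by pure string/surface diagram manipulation in the pivotal $2$-category, essentially rendering the slogan ``pull the wire around the back of the sphere'' as an explicit composite of pivotal-axiom rewrites. The target equation
\[
\tr_R\big(e_B \xo (\alpha \xz B^\#)\big) = \tr_L\big((\alpha \xz A^\#) \xo i_{A^\#}\big)
\]
is an equation of $2$-endomorphisms of $\Io_\Iz$. Unwinding the definitions of $\tr_R$ and $\tr_L$ from the statement (recall that $\tr_L(\beta) = \epsilon_{g^*}\xt(\It_{g^*}\xo\beta)\xt\eta_g$ and $\tr_R(\beta) = \epsilon_{g}\xt(\beta\xo\It_{g^*})\xt\eta_{g^*}$ for $\beta: g \To g$), and using that the relevant $1$-morphisms are $g = e_B\xo(f\xz B^\#): A\xz B^\# \to \Iz$ on the right side and $g' = (f\xz A^\#)\xo i_{A^\#}: \Iz \to B\xz A^\#$ on the left side, both sides become explicit composites built from: the unit/counit $\eta_f,\epsilon_f$ of $f\dashv f^*$; the folds $e_B,i_A$ and their planar-pivotal adjuncts $e_B^* = i_{B^\#}$, $i_A^* = e_{A^\#}$ (condition C5 of Definition~\ref{def:pivotal}); the cusps $C,D$; and the interchanger $\phi$.

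First I would establish the key structural fact that links the two sides: the $1$-morphism $g^* = (f\xz B^\#)^* \xo e_B^* = (f^*\xz B^\#)\xo i_{B^\#}$ (using monoidal planar pivotality, Definition~\ref{def:monoidalplanarpivotal}(1), and C5), and similarly unwind $(g')^*$. The natural bridge is the twist/ribbon data: condition C8 of Definition~\ref{def:pivotal} (the surface ribbon condition, equivalently the identification $\theta_f = (\theta_{f^*})^*: \lix{\#}{f}\To f^\#$), which is precisely the coherence needed to slide an oriented string past a fold while rotating its normal framing. So the second step would be to rewrite the circular string in $\tr_R(e_B\xo(\alpha\xz B^\#))$ using the crotch/saddle $2$-morphisms $\epsilon_{i_{B^\#}},\eta_{e_B}$ (the adjunction data for $e_B \dashv i_{B^\#}$ shown in the graphical calculus) to detach the closed $f$-loop from the $e_B$-sheet, pass it over the $B^\#$-sheet via the cusp $D_B$ and its interaction with $\phi$, and reattach it on the $i_{A^\#}$-side. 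Concretely, one inserts a $\It = (\text{counit})\xt(\text{unit})$ for the adjunction $e_B\dashv i_{B^\#}$, applies the cusp/swallowtail equations (C1) and the compatibility of cusps with tensor (C4) to reorganize, and finally uses condition C5 and C6 to recognize the result as $\tr_L((\alpha\xz A^\#)\xo i_{A^\#})$.

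An alternative, possibly cleaner route I would keep in mind: since $\alpha: f\To f$, form the object-duality mate $\alpha^\#: f^\#\To f^\#$ and $\lix{\#}{\alpha}$, and observe that both traces in the statement are instances of a single invariant — the ``$2$-spherical trace'' of $\alpha$ against a fully-dualized configuration — once one uses C8 to identify $\lix{\#}{f}$ with $f^\#$. In that framing the proposition says that two a priori different ways of closing up the surface diagram (capping on the $A$-side versus the $B$-side) give the same scalar-valued $2$-morphism, which follows from the fact that the fold data $(i,e,C,D)$ exhibits a coherent (involutive, C7) duality, so the two cappings are related by an isotopy of the sphere realized algebraically through $\phi$-naturality (C5 of Definition~\ref{def:monoidal2cat}), the interchanger coherence (C4, C6 of that definition), and the cusp-flip equation recorded in the Remark after Definition~\ref{def:pivotal}.

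The main obstacle I anticipate is bookkeeping: there is no genuine conceptual difficulty — this is exactly the kind of statement the graphical calculus is designed to make obvious, and BMS prove it in \cite[Lem 7.6]{BMS} — but turning ``pull the wire around the back'' into a finite sequence of applications of the explicitly-listed axioms C1--C8 (plus the monoidal-$2$-category interchanger axioms and the planar pivotal axioms) requires carefully tracking which sheet each string segment lives on and which instance of $\phi$ mediates each crossing. In particular the step where the loop crosses from ``in front of'' to ``behind'' the $B^\#$ (resp.\ $A^\#$) sheet is where both the cusps $C,D$ and an interchanger $\phi$ enter simultaneously, and verifying that the accumulated interchanger and cusp factors cancel is the crunch of the argument; the surface ribbon condition C8 is precisely the input that guarantees this cancellation. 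So my plan is: (i) fully unwind both sides algebraically; (ii) reduce the difference to a statement purely about the fold/cusp data on $B^\#$ and $A^\#$ with $\alpha$ treated as a black box that commutes past everything by $\phi$-naturality; (iii) discharge that statement using C1, C4, C5, C6, the cusp-flip equation, and C8; and, since the paper emphasizes that none of its results depend on full isotopy-invariance of the calculus, I would in fact present the final verification as a direct algebraic chain of equalities, citing \cite[Lem 7.6]{BMS} for the detailed diagram chase.
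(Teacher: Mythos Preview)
Your outline is in the right neighborhood and correctly identifies C8 (the surface ribbon condition) as the crucial input, but the paper's proof is organized quite differently and isolates an intermediate structure you do not mention. Rather than a single long diagram chase moving the $f$-loop across sheets via crotch/saddle insertions, the paper first \emph{reduces to scalar 2-endomorphisms of identity 1-morphisms}. It introduces the \emph{wrinkle dual} $\beta^+ := C_{A^\#}\xo {}^\#\beta \xo C_{A^\#}^{-1}$ of a 2-endomorphism $\beta:\Io_A\To\Io_A$, and records two short lemmas: the \emph{circular wrinkle equation} $\tr_R(\alpha^\#)=\tr_L(\alpha)^+$ (this is where C8 enters, by imitating the standard ribbon-implies-spherical argument at the 1-categorical level with a surface underneath), and the swallowtail consequence $(A\xz\beta^+)\xo i_{A^\#}=(\beta\xz A^\#)\xo i_{A^\#}$. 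The proof is then a six-line chain: pass from the $B$-side to the $A$-side using planar pivotality and a cusp to get $\tr_R(e_A\xo(A\xz\alpha^\#))$; extract the scalar $\tr_R(\alpha^\#)\in\End(\Io_{A^\#})$; switch $\tr_R$ to $\tr_L$ using that $*$-mates of identity-endomorphisms are trivial; apply the circular wrinkle equation; apply the swallowtail consequence; and reabsorb the scalar.

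What this buys over your plan: by collapsing the $f$-loop to a scalar first, the remaining manipulation involves \emph{only} the object-level fold/cusp data on $A$ and $A^\#$, with no string defects present, so the bookkeeping you worry about (tracking which sheet each string segment lives on through multiple $\phi$'s and cusps simultaneously) largely evaporates. Your ``alternative route'' via $\alpha^\#$ is in fact the paper's opening move, but you do not take the next step of reducing to $\tr_R(\alpha^\#)$ as a scalar, which is the organizing idea. Your direct approach could in principle be pushed through, but the paper's factorization through the wrinkle dual and the circular wrinkle equation is what makes the argument short and checkable.
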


\begin{proof}
The cusp is a $2$-isomorphism $C_A: {}^\# (\Io_{A^\#}) \To \Io_{A}$.  Given a 2-endomorphism $\beta: \Io_{A} \To \Io_{A}$, define its \emph{wrinkle dual} $\beta^+ : \Io_{A^\#} \To \Io_{A^\#}$ as follows:
\[
\beta^+ :=C_{A^\#} \xo {}^\#\beta \xo C_{A^\#}^{-1}.
\]
(Graphically the wrinkle dual of a 2-endomorphism is obtained by taking a surface, introducing a `wrinkle', that is a cusp and its inverse, and then drawing the 2-endomorphism on `the back side' of the wrinkle.)  By imitating the usual graphical proof that ribbon categories are spherical, while keeping track of an underlying surface, one can show that the surface ribbon condition implies the following crucial `circular wrinkle equation':
\[
\tr_R(\alpha^\#)= \tr_L(\alpha)^+.
\]
A graphical rendition of that argument (utilizing somewhat different conventions) appears in~\cite[Fig 63]{BMS}.  The swallowtail equation directly implies the following relation between a 2-endomorphism $\beta: \Io_{A} \To \Io_{A}$ and its wrinkle dual:
\[
(A\xz \beta^+) \xo i_{A^\#} = (\beta \xz A^\#) \xo i_{A^\#}
\]

The result now follows from a chain of equalities:
\begin{align*}
\tr_R(e_B \xo (\alpha \xz B^\#)) &= \tr_R(e_A \xo (A \xz \alpha^\#)) \\
&= \tr_R(e_A \xo (A \xz \tr_R(\alpha^\#)) \\
&= \tr_L((A\xz \tr_R(\alpha^\#)) \xo i_{A^\#}) \\
&= \tr_L( (A\xz \tr_L(\alpha)^+)\xo i_{A^\#}) \\
&= \tr_L((\tr_L(\alpha) \xz A^\#) \xo i_{A^\#}) \\
&= \tr_L((\alpha\xz A^\#) \xo i_{A^\#})
\end{align*}
\nid Here the first equality follows from planar pivotality and the cusp-induced equation \linebreak$\tr_R({}^\#(\Io_{A^\#})) = \tr_R(\Io_{A})$.  The third equality holds because $\tr_R(\beta) = \tr_L(\beta^*)$ for any 2-morphism $\beta$, and the $\ast$-mate of a 2-endomorphism of an identity 1-morphism is exactly the original 2-endomorphism.  The fourth equality applies the circular wrinkle equation.  The fifth is the aforementioned consequence of the swallowtail equation.
\end{proof}

\begin{definition}[Back 2-spherical trace]\label{def:spheretrace}
In a pivotal 2-category, the \emph{back 2-spherical trace} of an endomorphism $\alpha: f \To f$ of a 1-morphism $f: A \to B$ is
\[
\Tr_B(\alpha) := \tr_R\left[e_B \xo (\alpha \xz B^\#)\right] = \tr_L\left[(\alpha \xz A^\#) \xo i_{A^\#}\right]: \Io_\Iz \To \Io_\Iz.
\]
\end{definition}

\begin{remark}[Circular trace equality is not sphericality]
One might be tempted to think of the equivalence of the two traces in Definition~\ref{def:spheretrace} as a `sphericality' condition on the pivotal 2-category, but that is not the situation.  Indeed, the equivalence of those traces is a consequence purely of the pivotal structure on the 2-category, that is of the natural object-level involutive duality structure.  There is, as described in the next section, a \emph{further} sphericality condition that can be imposed on a pivotal 2-category, analogous to the sphericality condition that can be imposed on a pivotal 1-category.
\end{remark}

\begin{remark}[Circular trace equality implies surface ribbon condition]\label{rem:traceimpliestwist}
That left and right traces agree, from Proposition~\ref{prop:leftrighttrace}, is one of the main properties of pivotal 2-categories we will use later in analyzing the state sum.  We will not, in particular, directly use the surface ribbon condition C8 from the Definition~\ref{def:pivotal} of pivotal structures.  It may therefore seem like we have not used all the structure available in pivotal 2-categories, but that is not the case: following~\cite[Lem 7.6 \& Lem 7.15]{BMS}, it can be shown, for a presemisimple monoidal planar pivotal 2-category with a pivotal structure possibly not satisfying the surface ribbon condition, that the surface ribbon condition is actually equivalent to the condition that left and right traces agree.  (This result is a generalization of the fact that a semisimple spherical braided pivotal 1-category is always ribbon~\cite[Prop A.4]{CategorifiedTrace}.)
\end{remark}

\subsection{Spherical $2$-categories}

\subsubsection{The definition of sphericality}

Recall that a pivotal 1-category is called `spherical' when the left and right `circular' traces of a 1-endomorphism $f: A \to A$ agree:
\[
\begin{tz}[scale=0.5] 
\draw[wire, arrow data = {0.1}{>}, arrow data = {0.5}{>}, arrow data={0.93}{>}] (0.5,1.5) to (0.5, 1.75) to [out=up, in=up, looseness=2] (-0.5, 1.75) to (-0.5, 1.25) to [out=down, in=down, looseness=2] (0.5, 1.25) to (0.5, 1.5);
\node[dot] at (0.5,1.5){};
\node[tmor, right] at (0.5, 1.5) {$f$};
\node[obj, black, below right] at (0.45,1.15){$A$};
\end{tz}
~~ = ~~
\begin{tz}[scale=0.5,xscale=-1] 
\draw[wire, arrow data = {0.1}{>}, arrow data = {0.5}{>}, arrow data={0.93}{>}] (0.5,1.5) to (0.5, 1.75) to [out=up, in=up, looseness=2] (-0.5, 1.75) to (-0.5, 1.25) to [out=down, in=down, looseness=2] (0.5, 1.25) to (0.5, 1.5);
\node[dot] at (0.5,1.5){};
\node[tmor, left] at (0.5, 1.5) {$f$};
\node[obj, black, below left] at (0.5,1.15){$A$};
\end{tz}
\]
The terminology is motivated by the idea that those traces would agree if the corresponding string diagrams could move freely on a \emph{2-sphere}.  For emphasis then, we might say that the `sphericality' condition on a pivotal 1-category is `2-sphericality'.

The analogous condition on a pivotal 2-category $\cC$ arises as follows.  Given an object $A$ of $\cC$ and a 2-endomorphism $\alpha: \Io_A \To \Io_A$, we may form the back 2-spherical trace $\Tr_B(\alpha): \Io_\Iz \To \Io_\Iz$ from Definition~\ref{def:spheretrace}.  The corresponding graphical picture is
\[
\begin{tz}[scale=0.5]
  \draw[slice] (0,0) circle (2cm);
  \draw[tinydash] (-2,0) arc (180:360:2 and 0.3);
  \draw[tinydash] (2,0) arc (0:180:2 and 0.3);
  \node[dot] at (0,-0.3){};
  \node[above right,tmor] at (0,-0.3){$\alpha$};
   \node[obj,above left] at (\sqt, -\sqt) {$A$};
\end{tz}
\]
As the terminology suggests, there is another possible construction, namely the front 2-spherical trace:
\[
\begin{tz}[scale=0.5]
  \draw[slice] (0,0) circle (2cm);
  \draw[tinydash] (-2,0) arc (180:360:2 and 0.3);
  \draw[tinydash] (2,0) arc (0:180:2 and 0.3);
  \node[dot] at (0,0.3){};
  \node[below right,tmor] at (0,0.3){$\alpha$};
     \node[obj,above left] at (\sqt, -\sqt) {$A^\#$};
\end{tz}
\]
\begin{definition}[Front 2-spherical trace]
In a pivotal 2-category, the \emph{front 2-spherical trace} of an endomorphism $\alpha: f \To f$ of a 1-morphism $f: A \to B$ is
\begin{equation}\nonumber
\Tr_F(\alpha) := \tr_R\left[e_{B^\#} \xo (B^\# \xz \alpha)\right] = \tr_L\left[(A^\# \xz \alpha) \xo i_{A}\right]: \Io_\Iz \To \Io_\Iz.
\end{equation}
\end{definition}
\nid In a pivotal 2-category, it is not necessarily the case that the front and back traces coincide.  For instance, in the corresponding graphical surface calculus, the front trace $\Tr_F(\It_{\Io_A})$ and back trace $\Tr_B(\It_{\Io_A})$ correspond to $A$ labeled spheres of opposite coorientation in $\RR^3$.

Thus, the natural sphericality condition on a pivotal 2-category is that the front and back traces agree; this would be graphically sensible if, hypothetically, the corresponding surface diagrams could move freely in a \emph{3-sphere}.\footnote{Note that this idea of `isotopy of surfaces on a 3-sphere' is merely a heuristic for understanding the definition of `3-sphericality'; even in the classical case of `2-spherical' 1-categories, it is not the case that isotopy on the sphere faithfully represents the categorical structure in question~\cite[Sec 4.3]{Selinger}.}  We might therefore say that the `sphericality' condition on a pivotal 2-category is `3-sphericality'.

\begin{definition}[Spherical 2-category] \label{def:spherical}
A pivotal 2-category $\cC$ is \emph{spherical} if for every object $A$ of $\cC$ and every 2-endomorphism $\alpha: \Io_A \To \Io_A$, the front and back traces of $\alpha$ agree:
\begin{equation}\nonumber
\Tr_F(\alpha) = \Tr_B(\alpha)
\end{equation}
\end{definition}

\nid Note that it is a consequence of the equality of front and back traces for 2-endomorphisms of the form $\alpha: \Io_A \To \Io_A$ that the front and back traces in fact agree for any 2-morphism of the form $\alpha: f \To f$.  We will refer to a pivotal 2-category that is spherical simply as a `spherical 2-category'.  In a spherical 2-category, we will write $\Tr(\alpha)$ for the front or equivalently back 2-spherical trace of a 2-morphism $\alpha: f \To f$ and will refer to it simply as the `trace'.

\begin{example}[2-group 2-representations]\label{eg:tworepspherical}
For a finite 2-group $(\pi_1,\pi_2)$, we expect the (symmetric) fusion 2-category $\tRep(\pi_1,\pi_2) := [(\ast,\pi_1,\pi_2),\tVect]$ (see Construction~\ref{con:2group2rep}) inherits a spherical structure from the spherical structure of $\tVect$.
\end{example}

\begin{example}[2-group-graded 2-vector spaces]\label{eg:twovectspherical}
For a finite 2-group $(\pi_1,\pi_2)$ and a 4-cocycle $\omega \in Z^4((\pi_1,\pi_2); k^*)$, the fusion 2-category $\tVect^\omega(\pi_1,\pi_2)$ of $\omega$-twisted $(\pi_1,\pi_2)$-graded 2-vector spaces (see Construction~\ref{con:2groupgraded2vecttwisted}) should admit a canonical spherical structure as follows.  We expect that every 3-group, seen as a monoidal 2-category, admits a canonical (up to an appropriate notion of equivalence) spherical structure.  In particular, the 3-group $(\pi_1,\pi_2, k^*)$ (determined by the twisting $\omega$) has a spherical structure, which induces a spherical structure on the linearization $(\pi_1,\pi_2, k^*)_k$ and then in turn on its semisimple completion $\tVect^\omega(\pi_1,\pi_2)$.  More concretely, this spherical structure is obtained by taking a semi-strictification of the linear monoidal 2-category $(\pi_1,\pi_2, k^*)_k$ and using the group inverses of $\pi_1$ and $\pi_2$ to define the duals of objects and adjoints of 1-morphisms.
\end{example}

\begin{example}[Ribbon categories] \label{eg:ribbonspherical}
By Remark~\ref{rem:ribbon}, the delooping of a ribbon category is a pivotal 2-category.  As there is only one object, the (self-dual) identity of the monoidal 2-category, this pivotal structure is evidently spherical.
\end{example}

\begin{example}[Crossed-braided spherical categories]\label{eg:gradedbraidedspherical}
Construction~\ref{con:gradedbraidedfusion} produces a fusion 2-category $\tc{C}$ from a crossed-braided fusion 1-category $\oc{C}$.  A spherical structure (in the ordinary 1-categorical sense) on the $G$-crossed-braided category $\oc{C}$ induces a pivotal structure (in the 2-categorical sense of Definition~\ref{def:pivotal}) on the corresponding fusion 2-category $\tc{C}$.  (See Cui~\cite[Sec 6]{Cui} for a discussion of the duals of objects and adjoints of 1-morphisms in this case.)  Note well that the 1-categorical sphericality condition on the braided 1-category $\oc{C}$ provides an equality of left and right traces, which corresponds to the equality of left and right traces (see Proposition~\ref{prop:leftrighttrace}) in a pivotal 2-category; that spherical condition a priori has nothing to do with 2-categorical sphericality.  Nevertheless, in this particular case, because the $G$-action fixes the tensor unit of the 1-category $\oc{C}$, it follows that the pivotal 2-category $\tc{C}$ associated to a $G$-crossed-braided fusion category is in fact always a spherical fusion 2-category.
\end{example}

\begin{remark}[Mackaay's notion of sphericality] \label{rem:mackaayspherical}
Mackaay~\cite[Def 2.8]{Mackaay} defined a notion of `spherical monoidal 2-category'; a presemisimple monoidal 2-category that is Mackaay-spherical is spherical in our sense (Definition~\ref{def:spherical}): conditions C1--C7 of Definition~\ref{def:pivotal} follow from~\cite[Def 2.3]{Mackaay}, condition C8 of Definition~\ref{def:pivotal} follows using Remark~\ref{rem:traceimpliestwist} from~\cite[Def 2.7 \& Lem 2.12]{Mackaay}, and the sphericality condition of Definition~\ref{def:spherical} follows from~\cite[Lem 2.13]{Mackaay}.

However, note that Mackaay's notion of `spherical monoidal 2-category' is much more restrictive than our (Definition~\ref{def:spherical}) notion of sphericality (i.e.\ 3-sphericality) and Mackaay's notion does not correspond to a graphical calculus of surface diagrams moving on a 3-sphere.  Mackaay's definition insists on the existence of a 2-isomorphism between the two `categorical traces' $e_{A^\#}\xo (A^\#\xz f) \xo i_A$ and $e_A\xo (f\xz A^\#)\xo i_{A^\#}$; that 2-isomorphism would be sensible if the corresponding surface diagrams lived in $S^2 \times [0,1]$ rather than in $S^3$, but that is not always the case in relevant examples. For instance, as in Example~\ref{eg:gradedbraidedspherical}, the monoidal 2-category associated to a crossed-braided category is spherical (in our 3-spherical sense) but does not satisfy Mackaay's $S^2 \times [0,1]$-sphericality condition unless the $G$-action on the identity-graded component $\oc{C}_1$ is trivial; see~\cite[Sec 6]{Cui}.
\end{remark}

\subsubsection{Dimension in spherical prefusion $2$-categories}

In a prefusion 2-category $\tc{C}$, the monoidal unit $\Iz$ is simple, and hence the 1-morphism $\Io_\Iz$ is simple.  We therefore may and will identify $\End_{\tc{C}}(\Io_\Iz)$ with the base field $k$.  In a spherical prefusion 2-category $\cC$, then, the trace $\Tr(\alpha)$ of any 2-morphism $\alpha: f \To f$ may be canonically considered as an element of $k$.

\begin{definition}[Dimensions of 1-morphisms and objects] \label{def:quantumdimension}
In a spherical prefusion $2$-category, the \emph{dimension} of a 1-morphism $f: A \to B$ is
\begin{equation} \nonumber
\dim(f) := \Tr(\It_f) \in k.
\end{equation}
The \emph{dimension} of an object $A$ is
\begin{equation} \nonumber
\dim(A) := \dim(\Io_A) = \Tr(\It_{\Io_A}) \in k.
\end{equation}
\end{definition}
\nid%
Graphically, the dimension of an object $A$ is represented by an $A$-labeled sphere; the dimension of a $1$-morphism $f$ is represented by an $A$-labeled sphere with an $f$-labeled loop on it.  By pivotality, the $f$-label may be placed either on the right or left of the loop, and by sphericality, the $A$-label and the loop may be placed on either the front or back of the sphere.  Note that the conditions of planar pivotality ensure that the dimensions of isomorphic 1-morphisms are the same.

Note well that while the dimension of an object or 1-morphism in a prefusion 2-category $\tc{C}$ depends on the monoidal structure, the `dimension of $\tc{C}$' itself refers to the dimension of the underlying presemisimple 2-category, see Definition~\ref{def:dimension2cat}, and therefore does not depend on the monoidal structure; this is rather in contrast with what one might expect from the fact that the dimension of a fusion 1-category certainly does depend on its monoidal structure.

\begin{remark}[Pivotal adjoint equivalence preserves dimension]
As mentioned in Warning~\ref{warn:pivotal}, it is not in general the case that equivalent objects have the same dimension.  Indeed, two objects will have the same dimension only when they are equivalent in a way compatible with the pivotal structure---we might call such an equivalence a `pivotal adjoint equivalence'.  In the constructions that follow, including in the formula for the state sum, we will use dimensions of objects, and so these constructions depend on choices of representative objects in each equivalence class of objects---however, we will show that the overall resulting state sum is independent of those choices.
\end{remark}
\nid%
We now show that in spherical prefusion 2-categories, the dimensions of simple objects and 1-morphisms do not vanish.

\begin{lemma}[Planar trace is nonzero] \label{lem:planartracenonzero}
For $f: A \to B$ a simple 1-morphism into a simple object $B$, in a planar pivotal presemisimple 2-category $\tc{C}$, the right planar trace $\tr_R(\It_f) \in \End_{\tc{C}}(\Io_B) = k$ is nonzero.
\end{lemma}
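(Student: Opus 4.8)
The statement asserts that the right planar trace of the identity 2-morphism on a simple 1-morphism $f\colon A\to B$, with $B$ simple, is a nonzero element of $\End_{\tc{C}}(\Io_B)=k$. (Simplicity of $B$, hence of $\Io_B$, is what lets us extract the scalar; this uses Proposition~\ref{prop:simpleiffidentity}.) The plan is to argue by contradiction: suppose $\tr_R(\It_f)=0$, and derive that $\It_{f}$ itself is zero, contradicting the fact that $f$ is a nonzero (indeed simple) 1-morphism.

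\textbf{Key steps.} First I would recall that $\tr_R(\It_f)=\epsilon_f\xt(\It_f\xo\It_{f^*})\xt\eta_{f^*}$, which is the right planar trace $\Io_B\To\Io_B$ formed from the adjunction $f^*\dashv f^{**}=f$ via the counit $\epsilon_f$ and unit $\eta_{f^*}$. Since $\Io_B$ is simple, this 2-endomorphism equals $\tr_R(\It_f)\cdot \It_{\Io_B}$ for a scalar $\tr_R(\It_f)\in k$. The idea is that $\eta_{f^*}$ (the unit $\Io_A\To f\xo f^*$ of the adjunction $f^*\dashv f$) is a nonzero 2-morphism — because if $\eta_{f^*}=0$ then by the cusp equation $\It_{f^*}$ would be zero, forcing $f^*$ and hence $f$ to be zero. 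Then, by local semisimplicity of $\Hom_{\tc{C}}(A,A)$ and since $\Io_A$ is simple, there is a 2-morphism $\alpha\colon f\xo f^*\To \Io_A$ splitting $\eta_{f^*}$, i.e.\ $\alpha\xt\eta_{f^*}=\It_{\Io_A}$. (This is the standard `nonzero morphism into a simple object has a section' move, exactly as used in the proof of Proposition~\ref{prop:Schur}.) Now if $\tr_R(\It_f)=0$, then $\epsilon_f\xt(\It_{f\xo f^*})\xt\eta_{f^*}=0$ as a 2-morphism $\Io_B\To\Io_B$; precomposing and postcomposing appropriately with $\alpha$ (on $f^*$ side), or more directly composing the vanishing relation with $\It_{f^*}\xo\alpha$ and then using the section property, one arrives at $\It_{f^*}=0$, hence $f^*=0$, hence $f=0$, the desired contradiction.

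\textbf{Main obstacle.} The delicate point is getting the `composing with the section' bookkeeping right: $\tr_R(\It_f)$ lives in $\End(\Io_B)$, while the section $\alpha$ is a morphism in $\Hom(A,A)$, so one needs to whisker things by $\It_{f^*}$ and $\It_f$ and reassemble using the cusp (zig-zag) equations for the adjunction $f^*\dashv f$ so that the chain $\epsilon_f,\eta_{f^*}$ collapses to $\It_{f^*}$ or $\It_f$. Concretely: starting from $\epsilon_f\xt\eta_{f^*}=0$ (abbreviating the identity whiskers), tensor on the left by $\It_{f^*}$ to get a 2-morphism $f^*\To f^*\xo f\xo f^*$ composed down; the cusp equation $(\It_{f^*}\xo\epsilon_f)\xt(\eta_{f^*}\xo\It_{f^*})=\It_{f^*}$ is the tool that turns the inserted $\eta_{f^*},\epsilon_f$ pair into $\It_{f^*}$, provided one has arranged for $\tr_R(\It_f)\It_{\Io_B}$ to be substituted into the right slot. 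Once the algebra is lined up, the conclusion $\It_{f^*}=\tr_R(\It_f)\cdot\It_{f^*}$ would follow if $\tr_R$ were a `two-sided' trace, but here I expect instead the cleaner route is the contradiction argument above: the section $\alpha$ exists regardless, and feeding $\tr_R(\It_f)=0$ through $\alpha$ and the zig-zag identities forces the identity 2-cell of $f^*$ to vanish. I anticipate this whiskering-and-collapsing step is the only real content; everything else (existence of $\alpha$, simplicity giving scalars, nonvanishing of units/counits) is already packaged in the earlier results on presemisimple 2-categories.
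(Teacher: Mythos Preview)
Your setup is right—$\eta_{f^*}$ and $\epsilon_f$ are both nonzero, $\Io_B$ is simple, and semisimplicity gives you a retraction $\alpha$ of $\eta_{f^*}$ (note: $\eta_{f^*}\colon \Io_B\To f\xo f^*$ goes \emph{from} the simple object, so you get a retraction, not a section; and these live in $\End_{\tc{C}}(B)$, not $\End_{\tc{C}}(A)$). But the step where you ``whisker and collapse via the cusp equation'' does not work. The identity you write, $(\It_{f^*}\xo\epsilon_f)\xt(\eta_{f^*}\xo\It_{f^*})=\It_{f^*}$, is not a cusp equation: $\epsilon_f$ is the counit of $f\dashv f^*$, while $\eta_{f^*}$ is the unit of $f^*\dashv f$, and there is no zig-zag identity mixing the two. (Indeed, as written the two whiskered maps do not even compose; and if you instead whisker $\tr_R(\It_f)$ on one side by $f^*$, you just get $\langle\tr_R(\It_f)\rangle\,\It_{f^*}$, which vanishes tautologically under your hypothesis and yields no contradiction.) So the contradiction never closes.

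The missing idea is to use the simplicity of $f$, not just its nonvanishing: by adjunction $\Hom_{\tc{C}}(f\xo f^*,\Io_B)\cong\Hom_{\tc{C}}(f,f)\cong k$, so this Hom space is one-dimensional. Hence your retraction $\alpha$ and the counit $\epsilon_f$ are nonzero elements of the same line, forcing $\epsilon_f=c\,\alpha$ with $c\neq 0$, whence $\tr_R(\It_f)=\epsilon_f\xt\eta_{f^*}=c\,(\alpha\xt\eta_{f^*})=c\,\It_{\Io_B}\neq 0$. This is exactly how the paper argues (phrased slightly differently: it observes both $\Hom(\Io_B,f\xo f^*)$ and $\Hom(f\xo f^*,\Io_B)$ are one-dimensional and picks generators $x,y$ with $y\xt x=\It$).
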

\begin{proof}
By definition, the trace in question is the composite of the unit $\eta_{f^*}: \Io_B \To f\xo f^*$ and the counit $\epsilon_f: f\xo f^* \To \Io_B$.  Note that by adjunction $\Hom_{\tc{C}}(f \xo f^*,\Io_B) \cong \Hom_{\tc{C}}(f,f) \iso k$ and similarly (or by semisimplicity) $\Hom_{\tc{C}}(\Io_B,f \xo f^*) \iso k$.  Now both $\eta_{f^*}$ and $\epsilon_f$ must be nonzero, as they are a unit and a counit.  By Proposition~\ref{prop:simpleiffidentity}, the identity $\Io_B$ is simple; there must therefore be nonzero maps $x: \Io_B \To f \xo f^*$ and $y: f \xo f^* \To \Io_B$ whose composite is the identity.  Thus $\eta_{f^*}$ and $\epsilon_f$ must be nonzero-proportional to $x$ and $y$ respectively, and by bilinearity of composition, it follows that the trace is nonzero-proportional to the identity, and therefore itself nonzero.
\end{proof}

\begin{prop}[Dimension of simple 1-morphism is nonzero] \label{prop:simplenonzero}
In a spherical prefusion $2$-category $\tc{C}$, the dimension of a simple $1$-morphism $f: A \to B$ is nonzero.
\end{prop}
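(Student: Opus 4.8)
The plan is to leverage Lemma~\ref{lem:planartracenonzero} together with the pivotal structure and the simplicity hypotheses. Recall that the dimension $\dim(f)$ is the trace $\Tr(\It_f)$, which by Definition~\ref{def:spheretrace} equals the right planar trace $\tr_R[e_B \xo (\It_f \xz B^\#)] = \tr_R[\It_{e_B \xo (f \xz B^\#)}]$, the planar trace of the identity 2-morphism on the $1$-morphism $e_B \xo (f \xz B^\#): A \xz B^\# \to \Iz$. By Lemma~\ref{lem:planartracenonzero}, this trace is nonzero provided that $e_B \xo (f \xz B^\#)$ is a simple $1$-morphism and that $\Iz$ is a simple object. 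The latter is immediate since $\tc{C}$ is prefusion. So the crux is to show that $e_B \xo (f \xz B^\#)$ is simple whenever $f$ is a simple $1$-morphism between simple objects $A$ and $B$.

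First I would note that $B^\#$ is simple: it is a dual of the simple object $B$, and duals of simple objects are simple (the dualizing 2-functor, or rather the mate operations, give an equivalence-like correspondence preserving simplicity; concretely $\Io_{B^\#}$ is a mate of $\Io_B$ and the relevant Hom categories are identified). Then $f \xz B^\# = (f \xz B^\#)$ viewed as a $1$-morphism $A \xz B^\# \to B \xz B^\#$; since left and right tensoring by fixed objects are (by the prefusion/fusion axioms, using duals) equivalences of $2$-categories, tensoring a simple $1$-morphism by an object on either side yields a simple $1$-morphism. Thus $f \xz B^\#$ is simple. Next, composing with the fold $e_B: B \xz B^\# \to \Iz$: here I would invoke the categorical domain Schur's lemma, Proposition~\ref{prop:Schur}, which says the composite of two nonzero $1$-morphisms between simple objects is nonzero --- but to conclude simplicity of the composite I need more, namely that $e_B$ restricted appropriately behaves like a very weak equivalence. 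The cleaner route: $e_B$ and its left adjoint $e_B^* = i_{B^\#}$ (by planar pivotality, Definition~\ref{def:pivotal}-C5 gives $(e_B)^* = i_{B^\#}$) exhibit an adjunction, and the fold $e_B$ together with the cusp isomorphisms $C_B, D_B$ witnesses that $e_B \xo (B \xz i_B \xz \ldots)$ type composites recover identities; more precisely the object duality data makes $- \xo e_B$ a fully faithful functor in the relevant direction, so it preserves simplicity of $f \xz B^\#$.

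Let me restructure the key step more carefully. The $1$-morphism $e_B \xo (f \xz B^\#)$ can be analyzed via the adjunction $e_B \dashv i_{B^\#}$ (using $e_B^* = i_{B^\#}$) with counit $\epsilon_{e_B}$ and unit $\eta_{e_B}$. Its endomorphism category is $\End_{\tc{C}}(e_B \xo (f \xz B^\#))$. Using the adjunction and the fact (from the swallowtail and cusp equations) that the counit $\epsilon_{e_B}: e_B \xo i_{B^\#} \To \Io_\Iz$ is such that $e_B$ is "co-fully-faithful" in the sense needed, one gets $\End_{\tc{C}}(e_B \xo (f \xz B^\#)) \iso \Hom_{\tc{C}}(f \xz B^\#, i_{B^\#} \xo e_B \xo (f \xz B^\#))$. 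The composite $i_{B^\#} \xo e_B$ decomposes with $\Io_{B \xz B^\#}$ as a summand (via $\eta_{e_B}$, $\epsilon_{i_{B^\#}}$ from the crotch/saddle data); the simplicity of $f \xz B^\#$ then forces this Hom space to be $1$-dimensional, giving simplicity of $e_B \xo (f \xz B^\#)$. Alternatively, and I suspect this is the argument the authors intend given the surrounding machinery, one shows directly using Lemma~\ref{lem:planartracenonzero} applied to the simple $1$-morphism $e_B \xo (f \xz B^\#)$ after first establishing its simplicity via the chain: $f$ simple $\Rightarrow$ $f \xz B^\#$ simple (tensoring equivalences) $\Rightarrow$ $e_B \xo (f \xz B^\#)$ simple (composing with a fold into the simple unit, which is an iso-on-Hom operation by pivotality and the prefusion duality axioms).

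The main obstacle I anticipate is the simplicity claim for $e_B \xo (f \xz B^\#)$ --- specifically, verifying that postcomposition with the fold $e_B$ preserves simplicity. One must use that $\Iz$ is simple (so $\Io_\Iz$ is simple), that $e_B: B \xz B^\# \to \Iz$ is a counit $1$-morphism of the object duality $B \dashv B^\#$, and that under planar pivotality it has a partner $i_{B^\#}$ so that the composite $i_{B^\#} \xo e_B$ "contains the identity of $B \xz B^\#$" --- concretely $\Hom_{\tc{C}}(g, i_{B^\#} \xo e_B \xo g) \ni$ a section of a retraction onto $\End(g)$ for any $g: X \to B \xz B^\#$. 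This is the categorical analogue of the $1$-categorical fact that in the presence of duals, $\Hom(\Iz, B \otimes B^\#) \iso \Hom(B,B)$ and the unit/counit exhibit $\Iz$-valued pairings faithfully. Once that structural fact is in hand, simplicity of $e_B \xo (f \xz B^\#)$ follows from simplicity of $f \xz B^\#$, and then Lemma~\ref{lem:planartracenonzero} immediately yields $\dim(f) = \tr_R(\It_{e_B \xo (f \xz B^\#)}) \neq 0$, completing the proof. The reasoning also shows $\Tr_F$ gives the same nonzero value by sphericality (Definition~\ref{def:spherical}), so the choice of front versus back trace is immaterial.
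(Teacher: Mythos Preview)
Your overall strategy is exactly the paper's: write $\dim(f) = \tr_R(\It_{e_B \xo (f \xz B^\#)})$, show that $e_B \xo (f \xz B^\#): A \xz B^\# \to \Iz$ is a simple $1$-morphism into the simple object $\Iz$, and invoke Lemma~\ref{lem:planartracenonzero}. Where you diverge from the paper, and where your argument becomes shaky, is in the execution of the simplicity claim.

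You assume at several points that $A$ and $B$ are simple, but the proposition does not assume this (and it is false in general for simple $1$-morphisms---cf.\ Corollary~\ref{cor:simpleprojection}). Your attempt to factor the argument as ``$f$ simple $\Rightarrow f \xz B^\#$ simple $\Rightarrow e_B \xo (f \xz B^\#)$ simple'' is therefore not justified: tensoring by $B^\#$ is \emph{not} an equivalence of $2$-categories when $B^\#$ is not invertible, so $f \xz B^\#$ need not be simple; and postcomposition with $e_B$ alone is likewise not fully faithful. Your alternative, using the $1$-morphism adjunction $e_B \dashv i_{B^\#}$ to write $\End(e_B \xo (f \xz B^\#)) \cong \Hom(f \xz B^\#, i_{B^\#} \xo e_B \xo (f \xz B^\#))$, leads into an analysis of $i_{B^\#} \xo e_B$ that is more work than needed and does not reach the conclusion as stated.

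The paper's argument sidesteps all of this in one line: the object-level duality $B \dashv B^\#$ (with unit $i_B$ and counit $e_B$, and cusp isomorphisms) makes the functor
\[
\Hom_{\tc{C}}(A, B) \longrightarrow \Hom_{\tc{C}}(A \xz B^\#, \Iz), \qquad f \longmapsto e_B \xo (f \xz B^\#)
\]
an \emph{equivalence of categories} (the inverse sends $g$ to $(g \xz B) \xo (A \xz i_B)$). Equivalences preserve simples, so $\End(e_B \xo (f \xz B^\#)) \cong \End(f) = k$, and you are done. This is the ``mate'' correspondence at the object level---you gesture at it in your last paragraph, but phrased as a property of $e_B \xo -$ alone rather than of the composite $e_B \xo (- \xz B^\#)$, which is why your version does not close.
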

\begin{proof}
By the duality between $B$ and $B^\#$, we have $\Hom_{\tc{C}}(e_B \xo (f \xz B^\#), e_B \xo (f \xz B^\#)) \cong \Hom_{\tc{C}}(f,f) = k$.  Thus $e_B \xo (f \xz B^\#) : A \xz B^\# \to \Iz$ is a simple 1-morphism to the (simple) identity.  By Lemma~\ref{lem:planartracenonzero}, the right trace $\tr_R(\It_{e_B \xo (f \xz B^\#)})$ is nonzero; but the dimension of $f$ is $\Tr(\It_f) = \tr_R(e_B \xo (\It_f \xz B^\#)) = \tr_R(\It_{e_B \xo (f \xz B^\#)})$.
\end{proof}

\begin{corollary}[Dimension of simple object is nonzero]
In a spherical prefusion $2$-category $\tc{C}$, the dimension of a simple object $A$ is nonzero.
\end{corollary}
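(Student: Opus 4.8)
The plan is to deduce this immediately from the preceding Proposition~\ref{prop:simplenonzero} (dimension of a simple 1-morphism is nonzero), once we observe that the identity 1-morphism of a simple object is itself a simple 1-morphism. First I would recall that, by definition, $\dim(A) = \dim(\Io_A) = \Tr(\It_{\Io_A}) \in k$, so it suffices to show that $\Io_A$ is a simple 1-morphism into a simple object.

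Next I would invoke the earlier structural results: a spherical prefusion 2-category is in particular presemisimple, so by Corollary~\ref{cor:pressdirectsum} and Proposition~\ref{prop:simpleiffidentity}, an object $A$ is simple if and only if its identity 1-morphism $\Io_A \in \Hom_{\tc{C}}(A,A)$ is simple. Since $A$ is assumed simple, $\Io_A: A \to A$ is therefore a simple 1-morphism, and its source and target $A$ are simple objects.

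Finally I would apply Proposition~\ref{prop:simplenonzero} to the simple 1-morphism $\Io_A: A \to A$: its dimension $\dim(\Io_A) = \Tr(\It_{\Io_A})$ is nonzero. Since $\dim(A) = \dim(\Io_A)$, this gives $\dim(A) \neq 0$, as desired. There is essentially no obstacle here — the content has already been carried by Lemma~\ref{lem:planartracenonzero} and Proposition~\ref{prop:simplenonzero}; the only thing to be careful about is that ``simple object'' and ``simple identity 1-morphism'' really do coincide in this setting, which is exactly what Proposition~\ref{prop:simpleiffidentity} provides, so the corollary is a one-line specialization.
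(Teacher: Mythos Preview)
Your proposal is correct and follows exactly the paper's approach: observe that $\dim(A) = \dim(\Io_A)$, that $\Io_A$ is a simple 1-morphism by Proposition~\ref{prop:simpleiffidentity}, and then apply Proposition~\ref{prop:simplenonzero}. The paper's proof is the one-line version of what you wrote.
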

\begin{proof}
The dimension of the object is by definition the dimension of its identity, which is a simple 1-morphism.
\end{proof}
\nid%
In addition to nonzero dimensions, the trace also provides a nondegenerate pairing on 2-morphism spaces.
\begin{definition}[Pairing on Hom sets] \label{def:pairing}
For  1-morphisms $f,g: A \to B$ in a spherical prefusion $2$-category, the pairing $\langle\cdot, \cdot\rangle :\Hom_{\tc{C}}(f,g)\otimes \Hom_{\tc{C}}(g,f) \to k$ is given by
\[
\langle \alpha, \beta\rangle := \Tr(\alpha\cdot \beta) = \Tr(\beta\cdot\alpha).
\]
\end{definition}
\begin{proposition}[Pairing on Hom is nondegenerate] \label{prop:pairing}
In a spherical prefusion $2$-category, the pairing $\langle\cdot, \cdot\rangle :\Hom_{\tc{C}}(f,g)\otimes \Hom_{\tc{C}}(g,f) \to k$ is nondegenerate.
\end{proposition}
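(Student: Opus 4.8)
The plan is to reduce to the case of simple 1-morphisms, where the pairing becomes an orthogonal direct sum of pairings between one-dimensional Hom spaces, each of which is nondegenerate because the dimension of a simple 1-morphism is nonzero (Proposition~\ref{prop:simplenonzero}).

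First I would use local finite semisimplicity of $\Hom_{\tc{C}}(A,B)$ to fix decompositions $f \iso \bigoplus_k f_k$ and $g \iso \bigoplus_l g_l$ into simple 1-morphisms, with accompanying inclusion $1$-to-$1$ and projection $1$-to-$1$ \emph{2}-morphisms $s_k\colon f_k \To f$, $r_k\colon f \To f_k$ (and likewise for $g$); any 2-morphism $\alpha\colon f \To g$ is then recorded by its matrix of components $\alpha_{lk}\colon f_k \To g_l$, and similarly for $\beta\colon g \To f$. Since $\Tr$ is $k$-linear and cyclic (as recorded in Definition~\ref{def:pairing}, $\Tr(\alpha\cdot\beta)=\Tr(\beta\cdot\alpha)$), inserting $\It_f = \sum_k s_k\cdot r_k$ and using cyclicity to move each $s_k$ to the other side of the trace yields the additivity formula $\Tr(\gamma) = \sum_k \Tr(\gamma_{kk})$ for any $\gamma \in \End_{\tc{C}}(f)$, where $\gamma_{kk} := r_k\cdot\gamma\cdot s_k$. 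Applying this to $\gamma = \beta\cdot\alpha$ and expanding the matrix product gives
\[
\langle \alpha,\beta\rangle \;=\; \Tr(\beta\cdot\alpha) \;=\; \sum_{k,l}\Tr\!\left(\beta_{kl}\cdot\alpha_{lk}\right)\;=\;\sum_{k,l}\langle \alpha_{lk},\beta_{kl}\rangle,
\]
so that, with respect to $\Hom_{\tc{C}}(f,g)=\bigoplus_{k,l}\Hom_{\tc{C}}(f_k,g_l)$ and $\Hom_{\tc{C}}(g,f)=\bigoplus_{k,l}\Hom_{\tc{C}}(g_l,f_k)$, the pairing is the orthogonal direct sum of the pairings $\Hom_{\tc{C}}(f_k,g_l)\otimes\Hom_{\tc{C}}(g_l,f_k)\to k$.

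It therefore suffices to show each summand pairing is nondegenerate. If $f_k$ and $g_l$ are not isomorphic then, being simple, both Hom spaces vanish and there is nothing to check. If $f_k \iso g_l$, then since the base field is algebraically closed both Hom spaces are one-dimensional; choosing an isomorphism $\phi\colon f_k \To g_l$ we compute $\langle \phi,\phi^{-1}\rangle = \Tr(\phi\cdot\phi^{-1}) = \Tr(\It_{g_l}) = \dim(g_l)$, which is nonzero by Proposition~\ref{prop:simplenonzero} because $g_l$ is a simple 1-morphism in a spherical prefusion 2-category. Hence each summand pairing is nondegenerate, and a direct sum of nondegenerate pairings is nondegenerate, which completes the proof.

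The substantive input is entirely the nonvanishing of the dimension of a simple 1-morphism, which rests on sphericality and was already established in Proposition~\ref{prop:simplenonzero}; the only point requiring a little care is the claim that the pairing splits orthogonally along the simple decompositions, for which cyclicity of the trace is essential (without it the off-diagonal cross terms $\Tr(\beta_{k'l}\cdot\alpha_{lk})$ with $k\neq k'$ would in principle survive). I do not anticipate any genuine obstacle beyond this bookkeeping.
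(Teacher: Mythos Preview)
Your proof is correct and follows essentially the same approach as the paper: decompose $f$ and $g$ into simples, use cyclicity of the trace (which the paper invokes as ``planar pivotality'') to reduce to the simple case, and then appeal to Proposition~\ref{prop:simplenonzero} for the nonvanishing of $\dim(s)$. The paper phrases it as a direct kernel argument---assume $\langle\alpha,\beta\rangle=0$ for all $\beta$, test against $\beta=\iota_i\xt p_j'$, and conclude each component of $\alpha$ vanishes---rather than as an orthogonal direct sum decomposition of the pairing, but this is purely a packaging difference.
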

\begin{proof}
Pick a collection $\{s_i\}$ of simple 1-morphisms $A \to B$, one in each isomorphism class.  Without loss of generality we may assume that $f = \oplus_{i \in I} s_{k_i}$ and $g = \oplus_{j \in J} s_{l_j}$.  Let $p_i: f\leftrightarrows s_{k_i}: \iota_i$ and $p_j' : g \leftrightarrows s_{l_j}: \iota_j'$ be the inclusion and projection $2$-morphisms.

Suppose $\alpha: f \To g$ is a 2-morphism such that $\langle \alpha, \beta \rangle = 0$ for all $\beta: g \To f$.  Every 2-morphism $g \To f$ is a linear combination of the 2-morphisms $\iota_i \xt p_j'$ for which $k_i = l_j$.  Given $i$ and $j$ with $k_i = l_j$, by assumption and planar pivotality $0 = \langle \alpha, \iota_i \xt p_j' \rangle = \Tr(\alpha \xt \iota_i \xt p_j') = \Tr(p_j' \xt \alpha \xt \iota_i)$.  Now $p_j' \xt \alpha \xt \iota_i: s_{k_i} \To s_{l_j}$ is $\lambda_{i,j} \It_{s_k}$ for some scalar $\lambda_{i,j}$, where $k=k_i=l_j$.  By bilinearity of composition, $\Tr(p_j' \xt \alpha \xt \iota_i) = \lambda_{i,j} \Tr(\It_{s_k}) = \lambda_{i,j} \dim(s_k)$.  As $\dim(s_k)$ is nonzero, the scalar $\lambda_{i,j}$ is forced to be zero for all $i$ and $j$.  By local semisimplicity, $p_j' \xt \alpha \xt \iota_i = 0$ for all $i$ and $j$ implies that $\alpha$ itself is zero.
\end{proof}

\newpage

\addtocontents{toc}{\protect\vspace{8pt}}

\section{A state-sum invariant of singular piecewise-linear 4-manifolds} \label{sec:statesum}

Given a spherical prefusion 2-category $\tc{C}$ over an algebraically closed field $k$ of characteristic zero, we define a $k$-valued invariant of closed oriented singular piecewise-linear 4-manifolds.  We expect that a prefusion 2-category is a 4-dualizable object of an appropriate 4-category of linear monoidal 2-categories, and a spherical prefusion 2-category moreover has a canonical $\mathrm{SO}(4)$-fixed point structure.  There would therefore be a corresponding oriented local topological field theory, and we would expect our invariant restricts to the closed 4-manifold invariant of that field theory.

\subsection{Combinatorial, piecewise-linear, and smooth manifolds}
\skiptocparagraph{Simplicial complexes and piecewise-linear maps}
Recall that a finite simplicial complex $K$ is a finite set $K_0$ (of `vertices') together with a collection of subsets of $K_0$ (the `simplices' of $K$), such that a sub-subset of an element of the collection is again in the collection, and such that all singleton sets are in the collection.  Each simplex $\sigma$ of $K$ determines a geometric simplex $|\sigma|$ in $\RR^{K_0}$, namely the convex hull of the basis vectors corresponding to the vertices of that simplex; the standard geometric realization $|K|$ of a simplicial complex $K$ is the union in $\RR^{K_0}$ of the geometric simplices corresponding to the simplices of $K$.  More generally, a geometric realization of the complex $K$ in $\RR^n$ is any subspace constructed as follows: choose an embedding of the vertices $K_0$ in $\RR^n$ such that for any simplex $\sigma$ of $K$, the embedded vertices of $\sigma$ are linearly independent (and thus determine a corresponding geometric simplex as their convex hull), and such that for any two simplices $\sigma$ and $\tau$ of $K$, the intersection of the corresponding geometric simplices in $\RR^n$ is a face of each; the union of all the geometric simplices in $\RR^n$ corresponding to simplices of $K$ is the realization determined by the given vertex embedding.  A subdivision of a simplicial complex $K$ is a simplicial complex $K'$ together with a geometric realization of $K'$ that is, as a subspace of $\RR^{K_0}$, the standard geometric realization $|K|$.  A piecewise-linear map from a simplicial complex $K$ to a simplicial complex $L$ is a map $f: |K| \ra |L|$ such that there exists a subdivision $K'$ of $K$ such that the map $f$ is linear when restricted to each simplex of $K'$.

\skiptocparagraph{Combinatorial manifolds}

A combinatorial $n$-ball is a simplicial complex piecewise-linearly homeomorphic to the standard $n$-simplex, and a combinatorial $n$-sphere is a simplicial complex piecewise-linearly homeomorphic to the boundary of the standard $(n+1)$-simplex.  
\begin{definition}[Combinatorial manifold]
A \emph{combinatorial $n$-manifold} is a finite simplicial complex such that the link of every vertex is a combinatorial $(n-1)$-sphere.
\end{definition}
\nid A `combinatorial $n$-manifold with boundary' is allowed to have vertices with links that are combinatorial $(n-1)$-balls; the boundary is the subcomplex of simplices whose vertices have links that are balls. Note that in a combinatorial $n$-manifold with boundary, the link of every $k$-simplex is necessarily a combinatorial $(n-k-1)$-sphere or a combinatorial $(n-k-1)$-ball.
\begin{definition}[Singular combinatorial manifold]
A \emph{singular combinatorial $n$-manifold} is a finite simplicial complex such that the link of every vertex is a combinatorial $(n-1)$-manifold.
\end{definition}
\nid A `singular combinatorial $n$-manifold with boundary' is allowed to have vertices with links that are combinatorial $(n-1)$-manifolds with boundary; the boundary is the subcomplex of simplices whose vertices have links that have nonempty boundary. Note that in a singular combinatorial $n$-manifold with boundary, the link of every $k$-simplex, for $k \geq 1$, is necessarily a combinatorial $(n-k-1)$-sphere or a combinatorial $(n-k-1)$-ball.

An orientation on a singular combinatorial $n$-manifold with boundary is a choice of orientation of each $n$-simplex such that for every $(n-1)$-simplex not in the boundary, the orientations induced from the two adjacent $n$-simplices are opposite.

\skiptocparagraph{Piecewise-linear manifolds}
A piecewise-linear (PL) manifold is a triple $(M,K,\phi)$ consisting of a topological manifold $M$, a finite simplicial complex $K$, and a homeomorphism $\phi: |K| \ra M$ from the geometric realization of the complex to the manifold; a piecewise-linear map $(M,K,\phi) \ra (M',K',\phi')$ is a map $M \to M'$ such that the induced map $|K| \to |K'|$ is piecewise-linear.  Evidently, the category of PL manifolds and PL maps is equivalent to the category of combinatorial manifolds and PL maps.  For convenience, then, we will work directly with combinatorial manifolds, and more generally with singular combinatorial manifolds.  By a `singular piecewise-linear manifold', or by merely `singular manifold', we will mean a singular combinatorial manifold.

\skiptocparagraph{Piecewise-linear versus smooth 4-manifolds}
In dimension 4 there is no functional difference between smooth and piecewise-linear structures on manifolds; thus our invariant of piecewise-linear 4-manifolds immediately provides a corresponding invariant of smooth 4-manifolds.  More specifically, Whitehead~\cite{whitehead-c1complexes40} proved that in any dimension, given a compact closed smooth manifold $M$, there is a combinatorial manifold $K$ for which there exists a piecewise-differentiable homeomorphism from $|K|$ to $M$, and moreover such a combinatorial manifold is unique up to piecewise-linear homeomorphism.  (A homeomorphism from a combinatorial manifold $|K|$ to a smooth manifold $M$ is piecewise-differentiable if it is a smooth immersion when restricted to each simplex.)  This association provides a canonical map from the diffeomorphism classes of smooth manifolds to the piecewise-linear homeomorphism classes of piecewise-linear manifolds.  By work of Cerf~\cite{cerf-surlesdiffeo68}, Smale~\cite{smale-diffeo2sphere59}, Munkres~\cite{munkres-obstructions60}, and Hirsch--Mazur~\cite{hirsch-obstruction63,hirschmazur-smooths74}, this canonical map from smooth to piecewise-linear manifolds is a bijection in dimension 4.

\subsection{Stellar and bistellar equivalence of singular combinatorial manifolds} \label{sec:stellarbistellar}

\skiptocparagraph{Stellar subdivision and stellar equivalence of simplicial complexes}

The stellar subdivision $S\Delta^k$ of the standard $k$-simplex $\Delta^k$ is obtained from the standard simplex by adding a new vertex in the interior and coning the boundary simplices to it; the resulting simplicial complex has $(k+1)$-many $k$-simplices and is concisely expressed as the join $\partial \Delta^k \star \{\ast\}$.  Given a simplicial complex $X$ and a $k$-simplex $A$ of $X$, recall that the star of $A$ can be expressed as the join $A \star \lk(A)$ of the simplex with its link $\lk(A)$.  The stellar subdivision $S_A X$ of the complex $X$ at $A$ is obtained by replacing the star of $A$ with the complex $SA \star \lk(A)$.  

Two finite simplicial complexes are called stellar equivalent if they are related by a finite zigzag of stellar subdivisions.  One of the first fundamental results of piecewise-linear topology is Alexander's theorem that stellar subdivision generates piecewise-linear equivalence.
\begin{theorem}[Stellar equivalence of simplicial complexes~{\cite{alexander-combinatorialtheory,newman-foundations,lickorish-simplicial}}] \label{thm:alexander}
Two finite simplicial complexes are piecewise-linear homeomorphic if and only if they are stellar equivalent.
\end{theorem}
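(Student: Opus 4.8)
The plan is to follow the classical argument of Alexander, streamlined by Newman and Lickorish. One direction is immediate: a stellar subdivision $S_A X$ of a simplicial complex $X$ at a simplex $A$ satisfies $|S_A X| = |X|$ with the identity map piecewise-linear, so a finite zigzag of stellar subdivisions relating $K$ and $L$ exhibits $|K|$ and $|L|$ as piecewise-linearly homeomorphic. All the content is in the converse, which I would reduce to a single lemma about subdivisions.

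\emph{Reduction to simplicial maps.} Suppose $f : |K| \to |L|$ is a piecewise-linear homeomorphism. I would invoke the standard fact of piecewise-linear topology that, after passing to subdivisions, any piecewise-linear map becomes simplicial: there exist a subdivision $K'$ of $K$ and a subdivision $L'$ of $L$ for which $f$ restricts to a simplicial isomorphism between $K'$ and $L'$. Granting the Main Lemma below --- that \emph{any subdivision of a simplicial complex is stellar equivalent to it} --- the converse follows at once, since then $K$ is stellar equivalent to $K'$, $L$ is stellar equivalent to $L'$, and $K' \cong L'$, while stellar equivalence is manifestly preserved by simplicial isomorphism.

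\emph{The Main Lemma.} Let $L$ be a subdivision of $K$; I would prove $L$ and $K$ are stellar equivalent by induction on $\dim K$, and, for fixed dimension, by a secondary induction on a complexity measure counting how much $L$ refines $K$ (e.g.\ the number of vertices of $L$ not in the $0$-skeleton of $K$). Two preliminary observations are needed. First, \emph{starring at an interior point}: for a simplex $A$ of $X$ and a point $p$ interior to $A$, the complex obtained by deleting the open star of $A$ and inserting $(p \star \partial A)\star \lk(A,X)$ is combinatorially isomorphic to the ordinary stellar subdivision $S_A X$ (one slides $p$ linearly to the barycenter without changing the abstract complex), hence is stellar equivalent to $X$. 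Second, \emph{locality and compatibility with joins}: $S_A X$ alters only the star of $A$, and $S_A(X\star Y) = (S_A X)\star Y$; this lets stellar moves be extended from a subcomplex and performed one star at a time. Now take a top-dimensional simplex $A$ of $K$. The subdivision $L$ cuts $\partial A$ into a subdivision of that lower-dimensional sphere, which by the dimension induction is stellar equivalent to $\partial A$; using the two observations, each move of that equivalence extends to a stellar move on $K$ supported in $A\star\lk(A,K)$, since starring a simplex lying in $\partial A$ automatically refines the simplices of $A$ above it. Performing these for all top simplices, we may assume $L$ agrees with $K$ on the codimension-$1$ skeleton, so $L$ refines $K$ only in the interiors of top simplices. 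If some top simplex $A$ then still contains an interior vertex $v$ of $L$, I would stellar-subdivide $K$ at $v$ (a point interior to $A$); afterwards $L$ is a subdivision of $K_v$ of strictly smaller complexity, and iterating removes all interior vertices, at which point $L = K$.

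\emph{Main obstacle.} The individual stellar-move manipulations are local and elementary; the genuinely delicate part is the termination bookkeeping in the Main Lemma --- arranging that the chosen complexity measure strictly decreases at every step, that extending boundary moves inward never increases it, and that the dimension-induction hypothesis genuinely applies to the subdivided links (combinatorial spheres and balls of lower dimension, but whose induced simplicial structure must be tracked). Making this termination argument airtight is where essentially all the difficulty lies, and I would organize it following Lickorish's treatment.
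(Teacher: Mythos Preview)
The paper does not give its own proof of this theorem: it is stated as a classical result with citations to Alexander, Newman, and Lickorish, and is invoked only as input to the subsequent argument (Theorem~\ref{thm:singular}). Your outline is a reasonable sketch of the standard argument found in those references --- in particular it follows the structure of Lickorish's exposition --- so there is no paper-specific proof to compare against.
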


\nid To produce a piecewise-linear invariant, whether of manifolds or otherwise, it therefore suffices to show that the invariant is unaffected by stellar subdivision.  Unfortunately, even in a fixed dimension and in the context of manifolds, there are infinitely many distinct types of stellar subdivision, depending on the combinatorial structure of the link of the simplex being subdivided.  Thus it is typically impractical to check invariance via stellar subdivision moves.  Much more convenient is the finite collection of bistellar moves.

\skiptocparagraph{Bistellar moves and bistellar equivalence of combinatorial manifolds}

Given a combinatorial $n$-manifold, one may obtain a piecewise-linearly homeomorphic combinatorial manifold by cone-subdividing an $n$-simplex: remove an $n$-simplex $\sigma$ and replace it with the $(n+1)$ $n$-simplices produced by coning the boundary of $\sigma$.  This top-dimensional stellar subdivision is also called a `$(1,n+1)$-bistellar move' and may be thought of as replacing a single simplex by the complement of a simplex in the standard combinatorial $n$-sphere $\partial \Delta^{n+1}$.  More generally, for $(p,q)$ with $p+q = n+2$, let $P$ denote the codimension zero combinatorial submanifold of $\partial \Delta^{n+1}$ with $p$ $n$-simplices, and let $Q$ denote the complementary codimension zero combinatorial submanifold with $q$ $n$-simplices.  Two combinatorial $n$-manifolds $K$ and $K'$ are related by a $(p,q)$-bistellar (or `$(p,q)$-Pachner') move if $K'$ is obtained from $K$ by removing a codimension zero combinatorial submanifold isomorphic to $P$ and replacing it by one isomorphic to $Q$.  

Two combinatorial manifolds are called bistellar equivalent if they are related by a finite series of bistellar moves.  The fundamental result of combinatorial manifold theory is Pachner's theorem that bistellar moves generate piecewise-linear equivalence: 
\begin{theorem}[Bistellar equivalence of combinatorial manifolds~{\cite{pachner,lickorish-simplicial}}]
Two combinatorial $n$-manifolds are piecewise-linearly homeomorphic if and only if they are bistellar equivalent.
\end{theorem}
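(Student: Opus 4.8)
The plan is to deduce this from Alexander's stellar theorem (Theorem~\ref{thm:alexander}) together with a direct analysis of how an individual stellar subdivision can be effected by bistellar moves. One direction is elementary: a $(p,q)$-bistellar move replaces a combinatorial $n$-ball $P\subset\partial\Delta^{n+1}$ by the complementary ball $Q$, and since $|P|$ and $|Q|$ are PL-homeomorphic rel their common boundary sphere, replacing $P$ by $Q$ inside a combinatorial manifold $K$ yields a PL-homeomorphic manifold; thus bistellar equivalence implies PL homeomorphism. For the converse, Alexander's theorem tells us two PL-homeomorphic combinatorial manifolds are connected by a zigzag of stellar subdivisions, so it suffices to show that every stellar subdivision of a combinatorial manifold is bistellar equivalent to the original — symmetry of bistellar equivalence (each $(p,q)$-move is inverted by a $(q,p)$-move) then handles inverse stellar subdivisions, and hence any such zigzag.

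The crux is therefore the following: for a combinatorial $n$-manifold $K$ and a simplex $A\in K$ of dimension $r$, the stellar subdivision $S_AK$ — which replaces $\mathrm{star}(A,K)=A\star\mathrm{lk}(A)$ by $(SA)\star\mathrm{lk}(A)$ and leaves $K$ unchanged elsewhere — is bistellar equivalent to $K$. I would prove this by induction on the codimension $c:=n-r$. Since $S_A$ alters only the interior of the ball $\mathrm{star}(A,K)$, it is enough to connect $A\star\mathrm{lk}(A)$ to $(SA)\star\mathrm{lk}(A)$ by bistellar moves supported inside that ball and fixing its boundary; the same moves then apply inside any $K$. (Here one naturally proves, by the same induction, a relative form of the theorem for combinatorial manifolds with boundary with moves required to avoid the boundary, which is precisely what legitimizes this localization.) When $c=0$, i.e.\ $\dim A=n$, the subdivision $S_AK$ is exactly the $(1,n+1)$-move on $A$, so there is nothing to prove; and when $r=0$ one has $SA=A$ and $S_A$ is trivial.

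The inductive step, for $1\le r<n$, is the technical heart — this is Pachner's argument, with a streamlined exposition by Lickorish. Writing $L:=\mathrm{lk}(A,K)$, a combinatorial $(c-1)$-sphere, the idea is to re-express $S_A$ in terms of stellar subdivisions at the simplices $A\star\tau$ for $\emptyset\ne\tau\in L$ — each of codimension $c-1-\dim\tau\le c-1$, hence already realizable by bistellar moves by the inductive hypothesis — interleaved with further bistellar moves, organized by the commutation and iterated-subdivision identities among starrings that already underlie the proof of Alexander's theorem. The main obstacle is making this reorganization precise while keeping every move supported in $\mathrm{star}(A,K)$ and rel its boundary: one is effectively proving that two triangulations of the ball $|A\star L|$ that agree on the boundary are joined by bistellar moves, and the tempting shortcut of shelling the relevant balls is unavailable because combinatorial balls and spheres need not be shellable once $n\ge 3$. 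Pachner's resolution replaces the global shelling by an inductive bookkeeping over the simplices of $L$, and it is carrying out that bookkeeping — rather than any single slick identity — where essentially all the work of the theorem resides.
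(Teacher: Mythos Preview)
The paper does not give its own proof of this theorem --- it is stated with citations to Pachner and Lickorish and used as a black box. Your outline is a fair sketch of the standard argument in those references: reduce via Alexander's theorem (Theorem~\ref{thm:alexander}) to showing that a single stellar subdivision is a composite of bistellar moves, and establish this by induction on the codimension of the starred simplex. You correctly flag that the inductive step is the substantive part and that one cannot simply shell the links in arbitrary dimension.

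It is worth contrasting this with what the paper \emph{does} prove: in Theorem~\ref{thm:singular} (the singular 4-manifold case), the links of positive-dimensional simplices are combinatorial spheres of dimension at most~2 and hence shellable, so Lickorish's shellability-based argument \cite[Cor~5.8]{lickorish-simplicial} applies directly --- the paper thereby sidesteps exactly the obstacle you identify as the heart of the general theorem. Your proposal is thus consistent with the cited literature, but you have only outlined rather than executed the inductive bookkeeping over the simplices of the link, which (as you yourself note) is where essentially all the content lies; as it stands this is a proof plan rather than a proof.
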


\nid To produce a piecewise-linear invariant of combinatorial manifolds, it therefore suffices to check that the invariant is unaffected by each of the finitely many bistellar moves.

\skiptocparagraph{Bistellar equivalence of singular combinatorial 4-manifolds}

In a combinatorial 3-manifold, the link of a vertex is a 2-sphere; in a singular combinatorial 3-manifold, the link of a vertex is allowed to be any surface.  Barrett and Westbury proved that Pachner's theorem extends to singular combinatorial 3-manifolds.
\begin{theorem}[Bistellar equivalence of singular 3-manifolds~{\cite{BW}}]
Two singular combinatorial 3-manifolds are piecewise-linearly homeomorphic if and only if they are bistellar equivalent.
\end{theorem}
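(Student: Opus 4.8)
The plan is to prove both implications, putting essentially all the content in the forward direction (``PL homeomorphic $\Rightarrow$ bistellar equivalent''); the reverse is a routine locality check.

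\textbf{Easy direction.} Suppose $K'$ is obtained from $K$ by a single $(p,q)$-bistellar move, so that a subcomplex isomorphic to the sub-ball $P \subset \partial \Delta^4$ is excised and replaced by the complementary sub-ball $Q$, with $p+q=5$. Since $P \cup Q = \partial \Delta^4 \cong S^3$ and $\partial P = \partial Q$, both $P$ and $Q$ are combinatorial $3$-balls sharing a common boundary $2$-sphere, so the evident PL homeomorphism $|P| \to |Q|$ fixing the boundary extends by the identity to a PL homeomorphism $|K| \to |K'|$. I would also verify that a bistellar move preserves the class of singular combinatorial $3$-manifolds: the link of an unaffected vertex is unchanged, the link of a newly created vertex is the relevant combinatorial $2$-sphere, and the link of any vertex on the boundary of the modified region is altered only by a lower-dimensional bistellar move, which preserves the property of being a closed combinatorial surface.

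\textbf{Hard direction.} I would reduce to simulating stellar subdivisions by bistellar moves, exactly as in Pachner's theorem for ordinary combinatorial manifolds~\cite{pachner,lickorish-simplicial}. By Alexander's theorem (Theorem~\ref{thm:alexander}), PL homeomorphic $K$ and $K'$ are joined by a finite zigzag of stellar subdivisions, inverse stellar subdivisions, and simplicial isomorphisms. The first observation is that being a singular combinatorial $3$-manifold is a PL-homeomorphism-invariant property (it is a condition on the PL types of vertex links), so every complex occurring in this zigzag is again a singular combinatorial $3$-manifold, and we never leave the class. The second observation is that starring at a $0$-simplex is trivial, so each nontrivial step is a stellar subdivision $S_A$ at a simplex $A$ with $\dim A \in \{1,2,3\}$, whose link $\lk(A)$ is a genuine combinatorial sphere of dimension $3 - \dim A - 1 \in \{-1,0,1\}$.

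It then suffices to establish the simulation lemma: a stellar subdivision $S_A K$ of a singular combinatorial $3$-manifold at such a simplex $A$ is bistellar equivalent to $K$. The case $\dim A = 3$ is literally the $(1,4)$-move, while the remaining cases are handled by an induction that pushes the subdivision across a chosen shelling of $\lk(A)$, expressing $S_A$ as a composite of bistellar moves supported in the star $\mathrm{st}(A)$. I expect this lemma to be the main obstacle, and it is precisely where dimension $3$ is essential: the links $\lk(A)$ that must be shelled are spheres of dimension at most $1$, hence unconditionally shellable, so the shelling-to-bistellar translation proceeds without the nonshellability phenomena that first obstruct such arguments once links of dimension $\geq 3$ appear. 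Finally, assembling the simulation lemma along the Alexander zigzag---and discarding the simplicial isomorphisms, which contribute no moves---converts the entire chain into a finite sequence of bistellar moves, establishing that $K$ and $K'$ are bistellar equivalent.
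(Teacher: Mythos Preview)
The paper does not supply its own proof of this theorem; it is simply cited from Barrett--Westbury. Your proposal is correct and follows exactly the strategy the paper uses for its own 4-dimensional analogue (Theorem~\ref{thm:singular}): reduce via Alexander's theorem to stellar subdivisions, note that starring at a 0-simplex is trivial, and for $k$-simplices with $k\geq 1$ invoke shellability of their low-dimensional sphere links together with the Lickorish argument translating shellings into bistellar sequences.
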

\nid As the invariance of the Turaev--Viro--Barrett--Westbury state sum is established via invariance under bistellar moves, this result extended the state sum invariant to singular 3-manifolds.

In a singular combinatorial 4-manifold, the link of a vertex is allowed to be any combinatorial 3-manifold.  We prove that Pachner's theorem extends to this case.
\begin{theorem}[Bistellar equivalence of singular 4-manifolds]\label{thm:singular}
Two singular combinatorial 4-manifolds are piecewise-linearly homeomorphic if and only if they are bistellar equvialent.
\end{theorem}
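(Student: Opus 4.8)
The plan is to follow the mechanism of Barrett and Westbury's proof of the three-dimensional case, the only genuinely new ingredient being a dimension count ensuring that all the links that intervene are shellable. First I would dispose of the easy direction: a $(p,q)$-bistellar move replaces one triangulation of a codimension-zero PL ball in the underlying space by another triangulation of a ball with the same boundary, so it preserves the underlying space up to PL homeomorphism; a short inspection of vertex links --- any newly introduced vertex acquires a combinatorial $3$-sphere link, and the links of old vertices are modified only within their PL type --- shows it also preserves the class of singular combinatorial $4$-manifolds. Hence bistellar equivalent singular combinatorial $4$-manifolds are PL homeomorphic.

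For the converse, suppose $K$ and $K'$ are PL homeomorphic singular combinatorial $4$-manifolds. By Alexander's theorem (Theorem~\ref{thm:alexander}) they are joined by a finite zigzag of stellar subdivisions, and I would first verify that every complex along this zigzag is again a singular combinatorial $4$-manifold: a stellar subdivision at a simplex $\tau$ introduces a single new vertex whose link is the combinatorial $3$-sphere $\partial \tau \star \lk(\tau)$, and it alters the link of each old vertex by an interior stellar subdivision, which preserves the property of being a combinatorial $3$-manifold. It then suffices to prove that if $K'$ is the stellar subdivision of a singular combinatorial $4$-manifold $K$ at a simplex $\sigma$, then $K$ and $K'$ are bistellar equivalent; since subdivision at a vertex is trivial, I may assume $\dim \sigma = k$ with $1 \le k \le 4$.

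The heart of the argument is the following. Because $\sigma$ is positive-dimensional, the definition of a singular combinatorial $4$-manifold forces $\lk_K(\sigma)$ to be a combinatorial $(3-k)$-sphere --- iterated vertex links taken inside combinatorial manifolds are always spheres --- and since $3-k \le 2$ while the first nonshellable combinatorial spheres occur only in dimension $3$, the link $\lk_K(\sigma)$ is shellable. Therefore the closed star $\mathrm{star}_K(\sigma) = \sigma \star \lk_K(\sigma)$ is a shellable combinatorial $4$-ball, and stellar subdivision at $\sigma$ replaces this ball, relative to its boundary $\partial \sigma \star \lk_K(\sigma)$, by the cone on that boundary (using that $\sigma \star \{w\} \star \lk_K(\sigma) = S\sigma \star \lk_K(\sigma)$). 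I would then invoke the fact at the core of the proof of Pachner's theorem, as developed by Pachner~\cite{pachner} and presented by Lickorish~\cite{lickorish-simplicial}: a shellable combinatorial $n$-ball can be carried, by bistellar moves supported in it, to the cone on its boundary. Performed inside the codimension-zero sub-ball $\mathrm{star}_K(\sigma) \subseteq K$, these are honest bistellar moves of $K$, and they carry $K$ to $K'$. This is exactly the mechanism of the Barrett--Westbury argument~\cite{BW} in dimension $3$, and it survives the passage to dimension $4$ because the only simplices of a singular combinatorial $4$-manifold that can carry non-shellable links --- the $3$-dimensional vertex links --- are never the sites of stellar subdivisions.

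The main obstacle is conceptual rather than computational: one must isolate precisely which shellability is needed --- that of the link of each simplex being stellar-subdivided --- and verify that in a singular combinatorial $4$-manifold every such link has dimension at most $2$ and is therefore shellable; this is the exact content of the slogan that ``the first nonshellable spheres do not appear until dimension $3$'', and it is exactly what breaks for singular combinatorial $5$-manifolds, where an edge link can be a nonshellable combinatorial $3$-sphere. The remaining points --- that the Alexander zigzag stays within singular combinatorial $4$-manifolds, and that the bistellar moves supported in $\mathrm{star}_K(\sigma)$ are legitimate moves of the ambient complex --- are routine once set up; the case of nonempty boundary, if one wishes to include it, is handled by the same scheme together with the version of Pachner's theorem for combinatorial manifolds with boundary.
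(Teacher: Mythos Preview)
Your proposal is correct and follows essentially the same approach as the paper: reduce via Alexander's theorem to showing that each stellar subdivision is a bistellar equivalence, observe that the link of any positive-dimensional simplex in a singular combinatorial $4$-manifold is a combinatorial sphere of dimension at most $2$ and hence shellable, and then invoke Lickorish's argument that stellar subdivision at a simplex with shellable link is bistellar equivalent. Your write-up is somewhat more careful than the paper's about the easy direction and about intermediate complexes remaining singular combinatorial $4$-manifolds, but the core mechanism is identical.
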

\begin{proof}
Of course bistellar equivalent singular combinatorial 4-manifolds are piecewise-linearly homeomorphic.  Given two piecewise-linearly homeomorphic singular combinatorial 4-mani\-\linebreak folds, they are stellar equivalent by Theorem~\ref{thm:alexander}.  It suffices therefore to show that any stellar subdivision of a singular combinatorial 4-manifold $M$ is a bistellar equivalence.  Stellar subdivision at a 0-simplex is trivial, and stellar subdivision at a 4-simplex is itself a bistellar move.  Let $A$ be a $k$-simplex, with $1 \leq k \leq 3$; note that the link $\lk(A)$ is a combinatorial $i$-sphere, for $0 \leq i \leq 2$, and is therefore shellable.  (The first nonshellable spheres arise in dimension 3~\cite{vince}.)  Because the link $\lk(A)$ is shellable, the stellar subdivision $S_A M$ is bistellar equivalent to $M$ by the proof of~\cite[Cor 5.8]{lickorish-simplicial}.  (Note that Lickorish's proof of bistellar equivalence given shellable links applies in our context of singular manifolds.)
\end{proof}
\nid Our state-sum will be invariant under bistellar moves, and though its invariance depends crucially on the assumption that the links of 1-, 2-, and 3-simplices are spheres, it will not require sphericality of vertex links.  Thus, the state sum will give an invariant not only of piecewise-linear 4-manifolds but also of singular piecewise-linear 4-manifolds.

\begin{convention*}[Manifolds may be singular]
In the remainder of the paper, every time we refer to a `combinatorial 4-manifold' we implicitly mean `singular combinatorial 4-manifold'.  The allowance of singularities will only be made explicit in certain key statements.
\end{convention*}

\subsection{States, associated states, the 10j action, and the partition function}
\label{sec:10j}

Given a 4-dimensional topological field theory $Z$, the numerical invariant $Z(M)$ of a closed 4-manifold $M$ may be thought of as the `partition function' of the theory on that `spacetime'.  In the spirit of the path integral, we might imagine this invariant would be computed as an integral of an appropriate action, integrated over all possible physical histories in that spacetime and weighted by a normalization factor for each history.  In the case of a topological field theory $Z_{\tc{C}}$ associated to a spherical prefusion 2-category $\tc{C}$, a possible `physical history' of a combinatorial 4-manifold $M$ is given by an assignment of an object of $\tc{C}$ to each 1-simplex of $M$, a compatible 1-morphism of $\tc{C}$ to each 2-simplex of $M$, and a compatible 2-morphism to each 3-simplex of $M$.

In practice, instead of considering all such assignments, we may reduce the path integral to a `path sum', also called a `state sum', by insisting that the labels be respectively by simple objects, simple 1-morphisms, and elements of chosen bases of the 2-morphism vector spaces.  The `action' term in the state sum will be given as a product of the 10j symbols that give the pentagonator structure data of the fusion operation of the fusion 2-category.  The normalization factor will be an appropriate ratio of quantum dimensions of the simple objects and simple 1-morphisms in the given labeling.  The state sum, finally, is the average, over all possible labelings of the manifold, weighted by these quantum dimension factors, of a product of the 10j symbols of the fusion 2-category.  (Compare the Barrett--Westbury--Turaev--Viro state sum for combinatorial 3-manifolds based on a fusion 1-category, which is an average, weighted by quantum dimensions of objects, of products of the 6j symbols defining the associator data of the fusion category.)

\skiptocparagraph{States of a 4-manifold}

Let $\tc{C}$ be a spherical prefusion 2-category and let $K$ be an oriented (singular) combinatorial 4-manifold.  As mentioned, roughly speaking a `state' of the manifold would be a labeling of 1-simplices by simple objects, of 2-simplices by simple 1-morphisms, and of 3-simplices by 2-morphism basis elements.  In fact, to streamline later proofs, we will proceed by only labeling 1-simples and 2-simplices, and implicitly sum over the 2-morphism bases as part of an associated state construction.  (One may think of this `partial state', with only 1-simplices and 2-simplices labeled, as corresponding to a state of the neighborhood of the 2-skeleton of the 4-manifold; the associated state construction will account for the effect of filling in the 3-simplices, while the action term will encode the process of filling in the 4-simplices.)  

As the morphisms and 2-morphisms of the fusion 2-category $\tc{C}$ are of course directed, it is convenient to also have a consistent choice of direction on the simplices of the combinatorial 4-manifold $K$; this is achieved by choosing an ordering on the vertices of $K$.
\begin{definition}[Ordered combinatorial manifold]
An \emph{ordered oriented combinatorial 4-manifold} $K^o$ is an oriented combinatorial 4-manifold $K$ with a choice of total order $o$ on its set of 0-simplices $K_0$.
\end{definition}
\nid Note that the `global' order $o$ on the vertices of $K$ induces a `local' order of the vertices of any $n$-simplex $\tau \in K_n$.  For an order-preserving inclusion $[k] := \{0,1,\ldots,k\} \hookrightarrow \{0,1,\ldots,n\} =: [n]$, with image $\{j_0,\ldots,j_k\} \subset [n]$, there is a well-defined $k$-face of $\tau$ determined by the vertices $\{j_0,\ldots,j_k\}$ of $\tau$; that face will be denoted $\partial^o_{[j_0,\ldots,j_k]} \tau \in K_k$.  These face maps give the ordered oriented combinatorial 4-manifold $K^o$ the structure of a semisimplicial set $K^o: \Delta^\op_+ \ra \mathrm{Set}$ with $K^o([n]) = K_n$.  (Here $\Delta_+$ is the category of non-empty ordered finite sets with injective order-preserving maps.)  

The total order $o$ in an ordered oriented combinatorial manifold $K^o$ is not required to respect the orientation in any particular way, and so there are a collection of signs governing how the order and the orientation interact: there is a function $\epsilon_o : K_4 \to \{+1,-1\}$ with $\epsilon_o(\sIV) = +1$ if and only if the orientation of the 4-simplex $\sIV$ agrees with the orientation induced by the vertex order $o$, and for every 4-simplex $\sIV$ there is a function $\epsilon_o^\sIV: \{\sIII \in K_3~|~\sIII \subseteq \sIV\} \to \{+1,-1\}$ with $\epsilon_o^{\sIV}(\sIII) = +1$ if and only if the orientation of the face $\sIII \subseteq \sIV$ induced from the orientation of $\sIV$ agrees with the one induced from the total vertex order. (Recall that an orientation of an $n$-simplex is an even-permutation equivalence class of orderings of its vertices; we will denote the orientation associated to the vertex order $v_0, \ldots, v_n$ by $\lan v_0, \ldots, v_n \ran$ and the opposite orientation by $-\lan v_0, \ldots, v_n \ran$.  The induced orientation of the face of an $n$-simplex opposite to the vertex $v_i$ is $(-1)^i \lan v_0, \ldots, \widehat{v_i}, \ldots v_n\ran$.)

As we will only be labeling the 1-simplices and 2-simplices of $K$, we will only be concerned with the associated 2-truncated semisimplicial set $K^o_{(2)}: \Delta^\op_{+,2} \ra \mathrm{Set}$, which is the restriction of $K^o$ to the full subcategory $\Delta^\op_{+,2}$ on the objects $\{[0],[1],[2]\}$.  A spherical prefusion 2-category $\tc{C}$ also determines a 2-truncated semisimplicial set $\Delta \tc{C}: \Delta^\op_{+,2} \ra \mathrm{Set}$ with $\Delta \tc{C}_0 = \{*\}$, $\Delta \tc{C}_1 = \{\text{simple objects of }\tc{C}\}$, and
\[
\Delta \tc{C}_{2} = \left\{ \left(A,B,C,f\right) ~\middle |~ A,B,C \in \Delta\tc{C}_1, f \text{ a simple $1$-morphism in }\Hom(A\xz B, C)\right\}.
\]
Here the $[01]$, $[12]$, and $[02]$ faces of $(A,B,C,f)$ are respectively $A$, $B$, and $C$.  (Note that this truncated semisimplicial set is not finite---we address that issue later by picking a skeleton of the prefusion 2-category---and that equivalent prefusion 2-categories may produce nonisomorphic associated semisimplicial sets.)
\begin{definition}[State of a combinatorial manifold]
Given a spherical prefusion 2-category $\tc{C}$ and an ordered oriented combinatorial 4-manifold $K^o$, a \emph{$\tc{C}$-state} of $K^o$ is a natural transformation $\Gamma: K^o_{(2)} \To \Delta\tc{C}$.
\end{definition}
\nid Concretely, a $\tc{C}$-state is a function from the 1-simplices of $K$ to simple objects of $\tc{C}$ and a function from the 2-simplices of $K$ to simple 1-morphisms of $\tc{C}$ compatible with the face maps from 2- to 1-simplices.  We will denote the set of $\tc{C}$-states of $K^o$ by $[K^o,\Delta\tc{C}]$.

Given a $\tc{C}$-state $\Gamma: K^o_{(2)} \To \Delta\tc{C}$, the value of the state on a 1-simplex $\sI$ or 2-simplex $\sII$ is written simply as $\Gamma(\sI)$, respectively $\Gamma(\sII)$; it is convenient to have a succinct notation for the value of the state on the boundary simplices of 3- and 4-simplices.
\begin{notation}[State labels of 1- and 2-simplices in 3- and 4-simplices] \label{not:labels}
For a $\tc{C}$-state $\Gamma$, and $\s \in K_p$ a $p$-simplex for $p$ either 3 or 4:
\begin{align*}
[ij]_{\Gamma}^\s &:= \Gamma\left( \partial^o_{[ij]}\s\right)~\text{ for }0\leq i < j \leq p
\\
[ijk]_\Gamma^\s &:= \Gamma\left( \partial^o_{[ijk]} \s\right) \text{ for }0\leq i<j<k\leq p
\end{align*}
\end{notation}
\nid Furthermore, in a 3-simplex the four 2-simplices naturally divide into two composable pairs; following Mackaay~\cite{Mackaay}, we use a compact notation for the composites of those pairs of 2-simplices.
\begin{notation}[State labels of associated 3-simplices in 3- and 4-simplices] \label{not:parenthesisnotation}
For a $\tc{C}$-state $\Gamma$, and $\s \in K_p$ a $p$-simplex for $p$ either 3 or 4:
\begin{align*}
[(ijk)l]_\Gamma^\s &:= [ikl]_\Gamma^{\s} \xo \left(  [ijk]_\Gamma^\s \xz \Io_{[kl]_\Gamma^\s} \right)~\text{ for }0\leq i<j<k<l\leq p
\\
[i(jkl)]_{\Gamma}^\s &:=[ijl]_{\Gamma}^\s \xo \left( \Io_{[ij]_{\Gamma}^\s} \xz [jkl]_\Gamma^\s\right)~\text{ for }0\leq i<j<k<l\leq p
\end{align*}
\end{notation}
\nid Finally we have a shorthand for duals and adjoints of state labels:
\begin{notation}[Duals and adjoints of state labels] \label{not:duals}
For a $\tc{C}$-state $\Gamma$, and $\s \in K_p$ a $p$-simplex for $p$ either 3 or 4:
\begin{align*}
\conj{[ij]}_\Gamma^\s &:= \left( \vp [ij]_\Gamma^\s\right)^\# \text{  for }0\leq i< j \leq p 
\\
\conj{[ijk]}_\Gamma^\s &:= \left( \vp [ijk]_\Gamma^\s\right)^*\text{  for }0\leq i<j<k\leq p
\end{align*}
\end{notation}
\nid In all of these notations, we will often omit the super- and sub-scripts if the intended simplex and $\tc{C}$-labeling are clear.

\skiptocparagraph{The canonical associated state}

As defined, a $\tc{C}$-state $\Gamma: \K_{(2)}^o \To \Delta \tc{C}$ provides labels of the 2-simplices of $K$ by 1-morphisms.  Given a 3-simplex, there are vector spaces of associator-like 2-morphisms compatible with the given labels.
\begin{definition}[Associator state spaces] \label{def:defvectorspaceV}
For a $\tc{C}$-state $\Gamma$ on an ordered oriented combinatorial 4-manifold, the (positive and negative) \emph{associator state spaces} of a 3-simplex $\sIII \in K_3$ are the vector spaces
\begin{align*}
V^+(\Gamma, \sIII) &:= \Hom_{\tc{C}}\left(\vp [(012)3]_\Gamma^\sIII, [0(123)]_\Gamma^\sIII\right)
\\
V^-(\Gamma, \sIII) &:= \Hom_{\tc{C}}\left(\vp [0(123)]_\Gamma^\sIII, [(012)3]_\Gamma^\sIII \right)
\end{align*}
\end{definition}
\nid Omitting the $\sIII$ superscript and $\Gamma$ subscript, vectors in these spaces are denoted graphically as follows:
\begin{calign}
\nonumber
\begin{tz}[td,scale=1.7]
\begin{scope}[xyplane=0]
\draw[slice] (0,0) to [out=up, in=\dl] (0.5,1) to [out=up, in=\dl] (1,2) to (1,3);
\draw[slice] (1,0) to [out=up, in=\dr] (0.5,1);
\draw[slice] (2,0) to [out=up, in=\dr] (1,2);
\end{scope}
\begin{scope}[xyplane=\h, on layer=superfront]
\draw[slice] (0,0) to [out=up, in=\dl] (1,2) to (1,3);
\draw[slice] (1,0) to [out=up, in=\dl] (1.5,1) to [out=up, in=\dr] (1,2);
\draw[slice] (2,0) to [out=up, in=\dr] (1.5,1);
\end{scope}
\begin{scope}[xyplane=0.5*\h]
\end{scope}
\begin{scope}[xzplane=0]
\draw[slice,short] (0,0) to (0, \h);
\draw[slice,short] (1,0) to (1,\h);
\draw[slice,short] (2,0) to (2,\h);
\end{scope}
\begin{scope}[xzplane=3]
\draw[slice,short] (1,0) to (1,\h);
\end{scope}
\coordinate (A) at (1.5,1,0.5*\h);
\draw[wire] (1,0.5,0) to [out=up, in=\dr] (A) to [out=\ur, in=down] (1,1.5,\h);
\draw[wire] (2,1,0) to [out=up, in=\dl] (A) to [out=\ul, in=down] (2,1,\h);
\node[dot] at (A){};
\node[obj, above left] at (-0.08,0,-0.04) {$[01]$};
\node[obj, above left] at (-0.08,1,-0.04) {$[12]$};
\node[obj, above left] at (-0.08,2,-0.04) {$[23]$};
\node[obj, above right] at (3.05,1,-0.04){$[03]$};
\node[obj, above] at (1.4,0.6,-0.04){$[02]$};
\node[obj, below] at (1.5,1.5,\h-0.04){$[13]$};
\node[omor, above left] at (1.95,1,0){$[023]$};
\node[omor, above right] at (1,0.5,0.16){$[012]$};
\node[omor, below left] at (2,1,\h-0.04){$[013]$};
\node[omor, below right] at (1.05,1.5,\h-0.3){$[123]$};
\node[tmor, left] at ([xshift=-0.1cm]A){$\eta$};
\end{tz}
&
\begin{tz}[td,scale=1.7]
\begin{scope}[xyplane=\h]
\draw[slice] (0,0) to [out=up, in=\dl] (0.5,1) to [out=up, in=\dl] (1,2) to (1,3);
\draw[slice] (1,0) to [out=up, in=\dr] (0.5,1);
\draw[slice] (2,0) to [out=up, in=\dr] (1,2);
\end{scope}
\begin{scope}[xyplane=0, on layer=superfront]
\draw[slice] (0,0) to [out=up, in=\dl] (1,2) to (1,3);
\draw[slice] (1,0) to [out=up, in=\dl] (1.5,1) to [out=up, in=\dr] (1,2);
\draw[slice] (2,0) to [out=up, in=\dr] (1.5,1);
\end{scope}
\begin{scope}[xyplane=0.5*\h]
\end{scope}
\begin{scope}[xzplane=0]
\draw[slice,short] (0,0) to (0, \h);
\draw[slice,short] (1,0) to (1,\h);
\draw[slice,short] (2,0) to (2,\h);
\end{scope}
\begin{scope}[xzplane=3]
\draw[slice,short] (1,0) to (1,\h);
\end{scope}
\coordinate (A) at (1.5,1,0.5*\h);
\draw[wire] (1,0.5,\h) to [out=down, in=\ur] (A) to [out=\dr, in=up] (1,1.5,0);
\draw[wire] (2,1,\h) to [out=down, in=\ul] (A) to [out=\dl, in=up] (2,1,0);
\node[dot] at (A){};
\node[obj, above left] at (-0.08,0,-0.04) {$[01]$};
\node[obj, above left] at (-0.08,1,-0.04) {$[12]$};
\node[obj, above left] at (-0.08,2,-0.04) {$[23]$};
\node[obj, above right] at (3.05,1,-0.04){$[03]$};
\node[obj, below] at (1.4,0.6,\h-0.04){$[02]$};
\node[obj, above left] at (1.5,1.3,-0.04){$[13]$};
\node[omor, above left] at (1.95,1,0){$[013]$};
\node[omor, above left] at (1,1.5,0.){$[123]$};
\node[omor, below left] at (1.95,1,\h-0.04){$[023]$};
\node[omor, below right] at (1.05,0.5,\h-0.24){$[012]$};
\node[tmor, left] at ([xshift=-0.1cm]A){$\eta$};
\end{tz}
\\[3pt]\nonumber 
\eta \in V^+(\Gamma,\sIII)\hspace{0.1cm} & \eta \in V^-(\Gamma,\sIII)
\end{calign}

If we were considering a state of the 4-manifold to include labelings of 3-simplices, those labels would be by elements of chosen bases for these associator state spaces.  Instead we implicitly rather than explicitly sum over such bases by using a canonical copairing between the associator spaces, as follows.  The pairing between $\Hom(f,g)$ and $\Hom(g,f)$ in a spherical prefusion 2-category, from Definition~\ref{def:pairing}, gives, for any 3-simplex $\sIII$ and labeling $\Gamma$, a pairing
\[
\langle \cdot , \cdot \rangle_{\Gamma,\sIII} :  V^-(\Gamma,\sIII) \otimes V^+(\Gamma,\sIII) \to k
\]
By Proposition~\ref{prop:pairing}, this pairing is nondegenerate; there is therefore a canonically determined copairing
\[
\cup_{\Gamma,\sIII}: k\to V^+(\Gamma,\sIII) \otimes V^-(\Gamma,\sIII)
\]
providing, with the pairing, a duality between $V^+(\Gamma,\sIII)$ and $V^-(\Gamma,\sIII)$.  We can tensor over all the 3-simplices of the manifold to obtain a `global copairing':
\[
\cup_{\Gamma}:=\bigotimes_{\sIII\in \K_3} \cup_{\Gamma,\sIII}: k\to \bigotimes_{\sIII\in \K_3} \left( V^+(\Gamma,\sIII) \otimes V^-(\Gamma,\sIII) \right)
\]
\begin{definition}[Canonical associated state] \label{def:assocstate}
For a $\tc{C}$-state $\Gamma$ on an ordered oriented combinatorial 4-manifold, the \emph{canonical associated state} is
\[
\cup_{\Gamma}(1) \in \bigotimes_{\sIII\in \K_3} \left( V^+(\Gamma,\sIII) \otimes V^-(\Gamma,\sIII) \right)
\]
\end{definition}
\nid Notice that this associated state may be thought of as a sum of `complete states' of the manifold, where a complete state is a $\tc{C}$-state $\Gamma$ labeling 1- and 2-simplices as before, and also a labeling of each 3-simplex $\sIII$ by an element of a chosen basis of $V^+(\Gamma,\sIII)$.

\skiptocparagraph{The 10j symbol and the 10j action}

Given a $\tc{C}$-state $\Gamma$ on a 4-manifold $K$, a chosen 4-simplex $\sIV \in K_4$, and for each 3-simplex $\sIII \subset \sIV$ an `associator vector' $v_\sIII \in V^{\epsilon_o^{\sIV}(\sIII)} (\Gamma, \sIII)$, we can compose these five vectors (which are 2-morphisms in the prefusion 2-category) to obtain a `pentagonator endomorphism'.  The (numerical) 2-spherical trace of this pentagonator endomorphism is the `10j symbol' of that collection of associator vectors---these 10j symbols are the core linear-algebraic structure data of the prefusion 2-category.
\begin{definition}[The 10j symbol]
For a $\tc{C}$-state $\Gamma$ on an ordered oriented combinatorial 4-manifold $K$, and a chosen 4-simplex $\sIV \in K_4$, the \emph{10j symbol} is a map
\[
z(\Gamma,\sIV): \bigotimes_{\sIII\in \K_3, \sIII \subset \sIV} V^{\epsilon_o^{\sIV}(\sIII)} (\Gamma, \sIII)\to k
\]
determined by the 2-spherical trace (Defintion~\ref{def:spheretrace}) of the pentagonator endomorphism depicted in Figure~\ref{fig:definitionz}.  
\end{definition}
\nid (In the figure, the labels $[ijkl]$ denote elements of $V^+(\Gamma, \partial^o_{[ijkl]} \sIV)$ and the labels $\overline{[ijkl]}$ denote elements of $V^-(\Gamma, \partial^o_{[ijkl]}\sIV)$.  As the monoidal unit $\Iz$ of the prefusion 2-category $\tc{C}$ is simple, and therefore the 1-morphism $\Io_\Iz$ is also simple, we may and will identify the target $\End_{\tc{C}}(\Io_\Iz)$ of the trace with the base field $k$.) 

Explicitly, if the orientation of the 4-simplex agrees with the global order, i.e.\ $\epsilon_o(\sIV) =+1$, then the map $z(\Gamma,\sIV)$ has the form
\[
\shrinker{.9}{
z(\Gamma, \sIV):V^+(\Gamma,\partial^o_{[0123]}\sIV)\otimes V^+(\Gamma, \partial^o_{[0134]}\sIV)\otimes V^+(\Gamma,\partial^o_{[1234]}\sIV) \otimes V^-(\Gamma,\partial^o_{[0124]} \sIV )\otimes V^-(\Gamma, \partial^o_{[0234]} \sIV) \to k
}
\]
and is defined by the trace on the left side of the figure; if by contrast the orientation of the 4-simplex does not agree with the global order, i.e.\ $\epsilon_o(\sIV) =-1$, then the map $z(\Gamma,\sIV)$ has the form
\[
\shrinker{.9}{
z(\Gamma, \sIV): V^+(\Gamma,\partial^o_{[0234]}\sIV) \otimes V^+(\Gamma, \partial^o_{[0124]}\sIV) \otimes V^-(\Gamma,\partial^o_{[1234]}\sIV) \otimes V^-(\Gamma, \partial^o_{[0134]}\sIV) \otimes V^-(\Gamma, \partial^o_{[0123]}\sIV)\to k
}
\]
and is defined by the trace on the right side of the figure.

\begin{figure}[h]
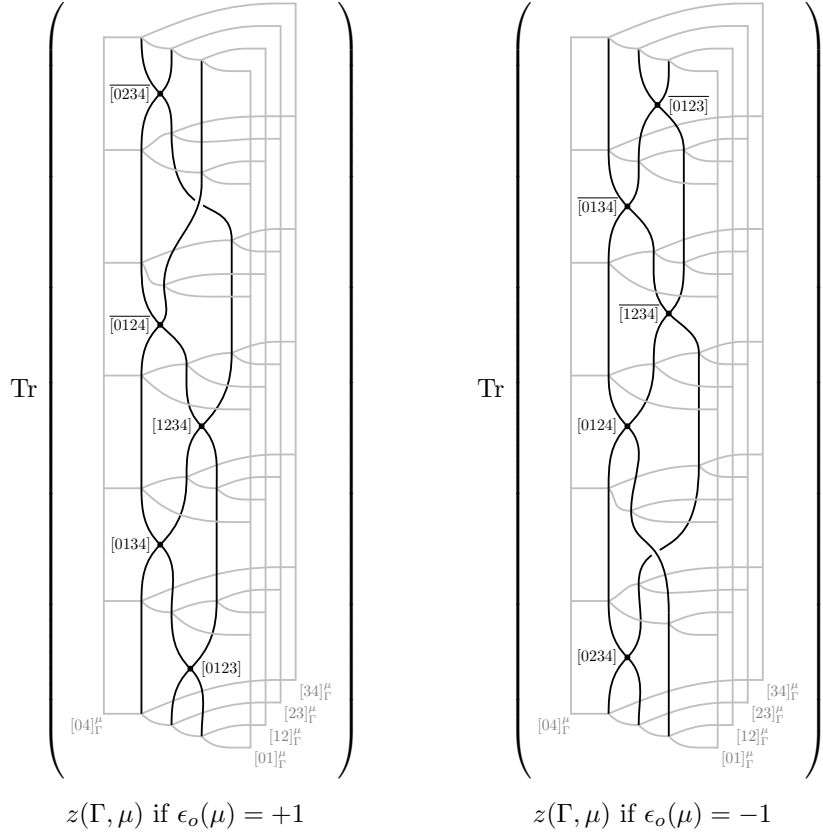

\begin{calign}\nonumber
\def\d{-0.5}
\Tr\left(%
 \begin{tz}[td,scale=1]
 \begin{scope}[xyplane=0]
 		\draw[slice](0,\d) to (0,0) to [out=up, in=\dl] (0.5,1) to [out=up, in=\dl] (1,2) to 					[out=up, in=\dl] (1.5,3) to (1.5,4);
		\draw[slice] (1,\d) to (1,0) to [out=up, in=\dr] (0.5,1);
 		\draw[slice] (2,\d) to (2,0) to [out=up, in=\dr] (1,2);
		\draw[slice] (3,\d) to (3,0) to [out=up, in=\dr] (1.5,3);
 \end{scope}
 \begin{scope}[xyplane=\h]
 	\draw[slice, on layer=front](0,\d) to (0,0) to [out=up, in=\dl]  (1,2);
 	\draw[slice] (1,2) to [out=up, in=\dl] (1.5,3) to (1.5,4);
 	\draw[slice] (1,\d) to (1,0) to [out=up, in=\dl] (1.5,1);
 	\draw[slice] (2,\d) to (2,0) to [out=up, in=\dr] (1.5,1) to [out=up, in=\dr] (1,2);
 	\draw[slice] (3,\d) to (3,0) to [out=up, in=\dr] (1.5,3);
 \end{scope}
 \begin{scope}[xyplane=2*\h]
 	\draw[slice, on layer=front] (0,\d) to (0,0) to [out=up, in=\dl]  (1.5,3) to (1.5,4);
 	\draw[slice] (1,\d) to (1,0) to [out=up, in=\dl] (1.5,1);
 	\draw[slice] (2,\d) to (2,0) to [out=up, in=\dr] (1.5,1) to [out=up, in=\dl] (2,2);
 	\draw[slice] (3,\d) to (3,0) to [out=up, in=\dr] (2,2) to [out=up, in=\dr] (1.5,3);
 \end{scope}
 \begin{scope}[xyplane=3*\h]
 	\draw[slice, on layer=front] (0,\d) to(0,0) to [out=up, in=\dl]  (1.5,3) to (1.5,4);
 	\draw[slice, on layer =frontb] (1,\d) to (1,0) to [out=up, in=\dl] (2,2) to [out=up, in=\dr] (1.5,3);
 	\draw[slice] (2,\d) to (2,0) to [out=up, in=\dl] (2.5,1) to [out=up, in=\dr] (2,2);
 	\draw[slice] (3,\d) to (3,0) to [out=up, in=\dr] (2.5,1) ;
 \end{scope}
 \begin{scope}[xyplane=4*\h]
 	\draw[slice, on layer=fronta] (0,\d) to (0,0) to [out=up, in=\dl]  (0.5,2) to [out=\dr, in=up] (1,0) to (1,\d);
 	\draw[slice]  (0.5,2) to [out=up, in=\dl] (1.5,3) to (1.5,4);
 	\draw[slice] (2,\d) to (2,0) to [out=up, in=\dl] (2.5,1) to [out=up, in=\dr] (1.5,3);
 	\draw[slice] (3,\d) to (3,0) to [out=up, in=\dr] (2.5,1) ;
 \end{scope}
 \begin{scope}[xyplane=5*\h]
 	\draw[slice, on layer=front] (0,\d) to (0,0) to [out=up, in=\dl]  (0.5,1);
 	\draw[slice, on layer=front] (0.5,1) to [out=up, in=\dl] (1.5,3);
 	\draw[slice] (1.5,3) to (1.5,4);
 	\draw[slice] (1,\d) to (1,0) to [out=up, in=\dr] (0.5,1) ;
 	\draw[slice] (2,\d) to (2,0) to [out=up, in=\dl] (2.25,2.5) to [out=up, in=\dr] (1.5,3);
 	\draw[slice] (3,\d) to (3,0) to [out=up, in=\dr] (2.25,2.5) ;
 \end{scope}
 \begin{scope}[xyplane = 6*\h, on layer=superfront]
  	\draw[slice](0,\d) to (0,0) to [out=up, in=\dl] (0.5,1) to [out=up, in=\dl] (1,2) to [out=up, in=\dl] (1.5,3) to (1.5,4);
 	\draw[slice] (1,\d) to (1,0) to [out=up, in=\dr] (0.5,1);
 	\draw[slice]  (2,\d) to (2,0) to [out=up, in=\dr] (1,2);
 	\draw[slice] (3,\d) to (3,0) to [out=up, in=\dr] (1.5,3);
 \end{scope}
 \begin{scope}[xzplane=\d]
 	\draw[slice,short] (0,0) to (0,6*\h);
 	\draw[slice,short] (1,0) to (1,6*\h);
 	\draw[slice,short] (2,0) to (2,6*\h);
 	\draw[slice,short] (3,0) to (3,6*\h);
 \end{scope}
 \begin{scope}[xzplane=4]
 	\draw[slice,short] (1.5,0) to (1.5, 6*\h);
 \end{scope}
 \coordinate (L1) at (2.5, 1.5, 1.5*\h);
 \coordinate (L2) at (2.5,1.5, 3.45*\h);
 \coordinate (L3) at (2.5, 1.5, 5.5*\h);
 \coordinate (R1) at (1.5, 1, 0.5*\h);
 \coordinate (R2) at (1.5, 1.75, 2.5*\h);
 \draw[wire, on layer=front] (3,1.5,0) to (3,1.5, \h) to [out=up, in=\dl] (L1) to [out=\ul, in=down] (3, 1.5, 2*\h) to (3,1.5, 3*\h) to  [out=up, in=\dl] (L2) to [out=\ul, in=down] (3, 1.5, 4*\h);
 \draw[wire,on layer=front] (3,1.5,4*\h) to (3,1.5, 5*\h);
 \draw[wire, on layer=front] (3, 1.5, 5*\h) to [out=up, in=\dl] (L3);
 \draw[wire] (L3) to [out=\ul, in=down] (3, 1.5, 6*\h) ;
 \draw[wire,on layer=front] (2, 1,0) to [out=up, in=\dl] (R1)  to [out=\ul, in=down] (2,1,\h) to [out=up, in=\dr] (L1);
 \draw[wire, on layer=frontb](L1) to [out=\ur, in=down] (2, 2, 2*\h) to [out=up, in=\dl] (R2) to [out=\ul, in=down] (2,2,3*\h) to [out=up, in=\dr] (L2) to [out=\ur, in=down]  (2, 0.5, 4*\h) to [out=up, in=down] node[mask point, pos=0.8](MP){} (1,0.5, 4.9*\h); 
 \draw[wire, on layer=front] (1,0.5,4.9*\h) to (1, 0.5, 6*\h);
\cliparoundone{MP}{ \draw[wire] (1, 0.5,0) to [out=up, in=\dr] (R1) to [out=\ur, in=down] (1, 1.5, \h) to (1,1.5,2*\h) to [out=up, in=\dr] (R2) to [out=\ur, in=down] (1, 2.5, 3*\h) to (1,2.5, 4*\h) to [out=up, in=down, in looseness=2] (2.5, 2.25, 5*\h) to [out=up, in=\dr] (L3) to [out=\ur, in=down] (2, 1, 6*\h);}
\node[dot] at (L1){};
\node[dot] at (L2){};
\node[dot] at (L3){};
\node[dot] at (R1){};
\node[dot] at (R2){};
\node[tmor, right] at (R1) {~$[0123]$};
\node[tmor, left] at ([xshift=-0.1cm]L1) {$[0134]$};
\node[tmor, left] at ([xshift=-0.1cm]R2) {~$[1234]$};
\node[tmor, left] at ([xshift=-0.12cm]L2) {~$\overline{[0124]}$};
\node[tmor, left] at ([xshift=-0.12cm]L3) {~$\overline{[0234]}$};
\node[obj, below right] at (\d+0.1,0.1,0) {$[01]_\Gamma^\sIV$};
\node[obj, below right] at (\d+0.1,1.1,0) {$[12]_\Gamma^\sIV$};
\node[obj, below right] at (\d+0.1,2.1,0) {$[23]_\Gamma^\sIV$};
\node[obj, below right] at (\d+0.1,3.1,0) {$[34]_\Gamma^\sIV$};
\node[obj, below left] at (3.9,1.6,0) {$[04]_\Gamma^\sIV$};
 \end{tz}%
\right)
 &
 \Tr\left(%
 \def\d{-0.5}
 \begin{tz}[td,scale=1]
 \begin{scope}[xyplane=6*\h, on layer=superfront]
 		\draw[slice](0,\d) to (0,0) to [out=up, in=\dl] (0.5,1) to [out=up, in=\dl] (1,2) to 					[out=up, in=\dl] (1.5,3) to (1.5,4);
		\draw[slice] (1,\d) to (1,0) to [out=up, in=\dr] (0.5,1);
 		\draw[slice] (2,\d) to (2,0) to [out=up, in=\dr] (1,2);
		\draw[slice] (3,\d) to (3,0) to [out=up, in=\dr] (1.5,3);
 \end{scope}
 \begin{scope}[xyplane=5*\h]
 	\draw[slice, on layer=front](0,\d) to (0,0) to [out=up, in=\dl]  (1,2);
 	\draw[slice] (1,2) to [out=up, in=\dl] (1.5,3) to (1.5,4);
 	\draw[slice] (1,\d) to (1,0) to [out=up, in=\dl] (1.5,1);
 	\draw[slice] (2,\d) to (2,0) to [out=up, in=\dr] (1.5,1) to [out=up, in=\dr] (1,2);
 	\draw[slice] (3,\d) to (3,0) to [out=up, in=\dr] (1.5,3);
 \end{scope}
 \begin{scope}[xyplane=4*\h]
 	\draw[slice, on layer=front] (0,\d) to (0,0) to [out=up, in=\dl]  (1.5,3) to (1.5,4);
 	\draw[slice] (1,\d) to (1,0) to [out=up, in=\dl] (1.5,1);
 	\draw[slice] (2,\d) to (2,0) to [out=up, in=\dr] (1.5,1) to [out=up, in=\dl] (2,2);
 	\draw[slice] (3,\d) to (3,0) to [out=up, in=\dr] (2,2) to [out=up, in=\dr] (1.5,3);
 \end{scope}
 \begin{scope}[xyplane=3*\h]
 	\draw[slice, on layer=front] (0,\d) to(0,0) to [out=up, in=\dl]  (1.5,3) to (1.5,4);
 	\draw[slice, on layer =frontb] (1,\d) to (1,0) to [out=up, in=\dl] (2,2) to [out=up, in=\dr] (1.5,3);
 	\draw[slice] (2,\d) to (2,0) to [out=up, in=\dl] (2.5,1) to [out=up, in=\dr] (2,2);
 	\draw[slice] (3,\d) to (3,0) to [out=up, in=\dr] (2.5,1) ;
 \end{scope}
 \begin{scope}[xyplane=2*\h]
 	\draw[slice, on layer=fronta] (0,\d) to (0,0) to [out=up, in=\dl]  (0.5,2) to [out=\dr, in=up] (1,0) to (1,\d);
 	\draw[slice]  (0.5,2) to [out=up, in=\dl] (1.5,3) to (1.5,4);
 	\draw[slice] (2,\d) to (2,0) to [out=up, in=\dl] (2.5,1) to [out=up, in=\dr] (1.5,3);
 	\draw[slice] (3,\d) to (3,0) to [out=up, in=\dr] (2.5,1) ;
 \end{scope}
 \begin{scope}[xyplane=\h]
 	\draw[slice, on layer=front] (0,\d) to (0,0) to [out=up, in=\dl]  (0.5,1);
 	\draw[slice, on layer=front] (0.5,1) to [out=up, in=\dl] (1.5,3);
 	\draw[slice] (1.5,3) to (1.5,4);
 	\draw[slice] (1,\d) to (1,0) to [out=up, in=\dr] (0.5,1) ;
 	\draw[slice] (2,\d) to (2,0) to [out=up, in=\dl] (2.25,2.5) to [out=up, in=\dr] (1.5,3);
 	\draw[slice] (3,\d) to (3,0) to [out=up, in=\dr] (2.25,2.5) ;
 \end{scope}
 \begin{scope}[xyplane = 0]
  	\draw[slice](0,\d) to (0,0) to [out=up, in=\dl] (0.5,1) to [out=up, in=\dl] (1,2) to [out=up, in=\dl] (1.5,3) to (1.5,4);
 	\draw[slice] (1,\d) to (1,0) to [out=up, in=\dr] (0.5,1);
 	\draw[slice]  (2,\d) to (2,0) to [out=up, in=\dr] (1,2);
 	\draw[slice] (3,\d) to (3,0) to [out=up, in=\dr] (1.5,3);
 \end{scope}
 \begin{scope}[xzplane=\d]
 	\draw[slice,short] (0,0) to (0,6*\h);
 	\draw[slice,short] (1,0) to (1,6*\h);
 	\draw[slice,short] (2,0) to (2,6*\h);
 	\draw[slice,short] (3,0) to (3,6*\h);
 \end{scope}
 \begin{scope}[xzplane=4]
 	\draw[slice,short] (1.5,0) to (1.5, 6*\h);
 \end{scope}
 \coordinate (L1) at (2.5, 1.5, 4.5*\h);
 \coordinate (L2) at (2.5,1.5, 2.55*\h);
 \coordinate (L3) at (2.5, 1.5, 0.5*\h);
 \coordinate (R1) at (1.5, 1, 5.5*\h);
 \coordinate (R2) at (1.5, 1.75, 3.5*\h);
 \draw[wire, on layer=front] (3,1.5,6*\h) to (3,1.5, 5*\h) to [out=down, in=\ul] (L1) to [out=\dl, in=up] (3, 1.5, 4*\h) to (3,1.5, 3*\h) to  [out=down, in=\ul] (L2) to [out=\dl, in=up] (3, 1.5, 2*\h);
 \draw[wire,on layer=front] (3,1.5,2*\h) to (3,1.5, \h);
 \draw[wire, on layer=front] (3, 1.5, \h) to [out=down, in=\ul] (L3);
 \draw[wire] (L3) to [out=\dl, in=up] (3, 1.5, 0) ;
 \draw[wire,on layer=front] (2, 1,6*\h) to [out=down, in=\ul] (R1)  to [out=\dl, in=up] (2,1,5*\h) to [out=down, in=\ur] (L1);
 \draw[wire, on layer=frontb](L1) to [out=\dr, in=up] (2, 2, 4*\h) to [out=down, in=\ul] (R2) to [out=\dl, in=up] (2,2,3*\h) to [out=down, in=\ur] (L2) to [out=\dr, in=up]  (2, 0.5, 2*\h) to [out=down, in=up, in looseness=2] node[mask point, pos=0.6](MP){} (1,0.5, 1.1*\h); 
 \draw[wire, on layer=front] (1,0.5,1.1*\h) to (1, 0.5, 0);
\cliparoundone{MP}{ \draw[wire] (1, 0.5,6*\h) to [out=down, in=\ur] (R1) to [out=\dr, in=up] (1, 1.5, 5*\h) to (1,1.5,4*\h) to [out=down, in=\ur] (R2) to [out=\dr, in=up] (1, 2.5, 3*\h) to (1,2.5, 2*\h) to [out=down, in=up, out looseness=2] (2.5, 2.25, \h) to [out=down, in=\ur] (L3) to [out=\dr, in=up] (2, 1, 0);}
\node[dot] at (L1){};
\node[dot] at (L2){};
\node[dot] at (L3){};
\node[dot] at (R1){};
\node[dot] at (R2){};
\node[tmor, right] at (R1) {~$\conj{[0123]}$};
\node[tmor, left] at ([xshift=-0.1cm]L1) {$\conj{[0134]}$};
\node[tmor, left] at ([xshift=-0.1cm]R2) {~$\conj{[1234]}$};
\node[tmor, left] at ([xshift=-0.12cm]L2) {~$[0124]$};
\node[tmor, left] at ([xshift=-0.12cm]L3) {~$[0234]$};
\node[obj, below right] at (\d+0.1,0.1,0) {$[01]_\Gamma^\sIV$};
\node[obj, below right] at (\d+0.1,1.1,0) {$[12]_\Gamma^\sIV$};
\node[obj, below right] at (\d+0.1,2.1,0) {$[23]_\Gamma^\sIV$};
\node[obj, below right] at (\d+0.1,3.1,0) {$[34]_\Gamma^\sIV$};
\node[obj, below left] at (3.9,1.6,0) {$[04]_\Gamma^\sIV$};
 \end{tz}
 \right)
 \\[5pt]\nonumber
 z(\Gamma,\sIV)\text{ if } \epsilon_o(\sIV) =+1 
 &
z(\Gamma,\sIV)\text{ if } \epsilon_o(\sIV) =-1 
 \end{calign}
 \caption{The definition of the 10j symbol $z(\Gamma,\sIV)$.}
 \label{fig:definitionz}
\end{figure}

Tensoring together the 10j symbols of all the 4-simplices of $K$, we obtain a global 10j symbol map, which we think of as playing the role of an action, applied to the state specified by $\Gamma$ and a given collection of associator vectors.
\begin{definition}[The 10j action] \label{def:10jaction}
For a $\tc{C}$-state $\Gamma$ on an ordered oriented combinatorial 4-manifold $K$, the \emph{10j action} is the map
\[
z(\Gamma):= \Big(\bigotimes_{\sIV \in \K_4} z(\Gamma,\sIV)\Big) : \bigotimes_{\sIII\in \K_3} \left(V^+(\Gamma, \sIII) \otimes V^-(\Gamma,\sIII)\right) \to k
\]
\end{definition}
\nid In writing the domain of this linear map, we have used the fact that every 3-simplex $\sIII \in K_3$ appears as a face of precisely two 4-simplices $\sIV_1, \sIV_2 \in K_4$, and that $\epsilon_o^{\sIV_1}(\sIII)=-\epsilon_o^{\sIV_2}(\sIII)$.  (We have also suppressed some swap maps reordering the factors in the tensor product in the domain.)  In particular, we can and will consider the following numerical invariant of the state:
\begin{definition}[The 10j action of the canonical associated state] \label{def:10jactioncanonstate}
For a $\tc{C}$-state $\Gamma$ on an ordered oriented combinatorial 4-manifold $K$, the \emph{10j action of the canonical associated state} is the number
\[
Z(\Gamma) := z(\Gamma)\xo \cup_{\Gamma}(1) \in k,
\]
where $z(\Gamma)$ is the 10j action and $\cup_\Gamma(1)$ is the canonical associated state of the state $\Gamma$.
\end{definition}

\skiptocparagraph{The partition function}

We think of a combinatorial 4-manifold $K$ being built up progressively as the sequence of its $i$-skeleta $K_{(i)}$: 
\[
\emptyset = K_{(-1)} \rightsquigarrow K_{(0)} \rightsquigarrow K_{(1)} \rightsquigarrow K_{(2)} \rightsquigarrow K_{(3)} \rightsquigarrow K_{(4)} = K
\]
In keeping with the path-integral inspiration, the state sum invariant can be seen as a result of a corresponding sequence of progressive transformations:
\begin{itemize}
\item[-1.] The initial empty state is represented by the unit of the base field: \[K_{(-1)} \mapsto 1.\]
\item[0.] There are no possible labels of vertices of $K$, and so a state of the 0-skeleton is simply a scalar multiple of the collection of unlabeled vertices.  Really, though, the vertices are labeled by the unique object in the 3-category that deloops the fusion 2-category, and in effect this amounts to labeling these vertices by the prefusion 2-category $\tc{C}$ itself.  For some to-be-determined normalization factor $\phi(\tc{C}) \in k$, the canonical state of the 0-skeleton can therefore be seen as
\[K_{(0)} \mapsto 
\Big(\prod_{K_0} \phi(\tc{C}) \Big) 
[K_0^\tc{C}].\]
Here $[K_0^\tc{C}]$ denotes the collection of vertices, each labeled by the prefusion 2-category $\tc{C}$.
\item[1.] A state of the 1-skeleton is a weighted sum of all possible labelings $\gamma: K^o_{(1)} \To (\Delta \tc{C})_{(1)}$ of the 1-simplices $\sI \in K_1$ by simple objects $\gamma(\sI)$ of $\tc{C}$.  Given a second to-be-determined scalar factor $\phi(\gamma(\sI)) \in k$ associated to each labeling object $\gamma(\sI)$, we may therefore think of the canonical state of the 1-skeleton as
\[K_{(1)} \mapsto 
\sum_{\gamma: K^o_{(1)} \To (\Delta \tc{C})_{(1)}} 
\Big(\prod_{K_0} \phi(\tc{C}) \Big) 
\Big(\prod_{\sI \in K_1} \phi(\gamma(\sI)) \Big) 
[K_1^\gamma].\]
Here $[K_1^\gamma]$ denotes the 1-skeleton with the 1-simplices labeled according to the assignment $\gamma$.
\item[2.] Similarly a state of the 2-skeleton is a weighted sum of all possible labelings $\Gamma: K^o_{(2)} \To \Delta \tc{C}$ of the 1-simplices $\sI \in K_1$ by simple objects $\Gamma(\sI)$ and of the 2-simplices $\sII \in K_2$ by simple 1-morphisms $\Gamma(\sII)$.  Given a third to-be-determined scalar factor $\phi(\Gamma(\sII)) \in k$ associated to each labeling 1-morphism $\Gamma(\sII)$, we imagine the canonical state of the 2-skeleton being
\[K_{(2)} \mapsto 
\sum_{\Gamma: K^o_{(2)} \To \Delta \tc{C}} 
\Big(\prod_{K_0} \phi(\tc{C}) \Big) 
\Big(\prod_{\sI \in K_1} \phi(\Gamma(\sI)) \Big) 
\Big(\prod_{\sII \in K_2} \phi(\Gamma(\sII)) \Big) 
[K_2^\Gamma].\]
Here $[K_2^\Gamma]$ denotes the 2-skeleton itself labeled according to $\Gamma$.
\end{itemize}
We have already discussed, in Definition~\ref{def:assocstate}, the 3-skeleton `canonical associated state' coming from a state of the 2-skeleton, and in turn, in Definition~\ref{def:10jaction}, the 4-skeleton `10j action' coming from a state of the 3-skeleton.  

The three scalar factors for 0-, 1-, and 2-simplex labels are forced by asking the resulting state sum to be piecewise-linear homeomorphism invariant---more specifically by asking for the invariance of the state sum of a ball under the bistellar moves.  The (3,3)-bistellar move (replacing a triangulation of a 4-ball with three 4-simplices by a different triangulation with three 4-simplices) involves summing over the 1-morphism labels $\Gamma(\sII)$ of 2-simplices $\sII$ interior to the triangulations; this relation is satisfied if $\phi(\Gamma(\sII)) = \dim(\Gamma(\sII))$.  The (2,4)-bistellar move (replacing a triangulation of a 4-ball with two 4-simplices by one with four 4-simplices) involves summing over the object labels $\Gamma(\sI)$ of a 1-simplex $\sI$ interior to one of the triangulations; this relations is satisfied if $\phi(\Gamma(\sI)) = \left( \dim(\Gamma(\sI)) \dim(\End_{\tc{C}}(\Gamma(\sI))) n(\Gamma(\sI)) \right)^{-1}$.  Here $\dim(\End_{\tc{C}}(\Gamma(\sI)))$ denotes the global dimension of the fusion category $\End_{\tc{C}}(\Gamma(\sI))$, and $n(\Gamma(\sI))$ denotes the number of equivalence classes of simple objects in the connected component of $\Gamma(\sI)$.  Finally, the (1,5)-bistellar move (replacing a triangulation of a 4-ball with one 4-simplex by one with five 4-simplices) involves `summing over' the unique label $\tc{C}$ of an interior vertex of one of the triangulations; the resulting relation is satisfied if $\phi(\tc{C}) = \dim(\tc{C})^{-1}$.  See Section~\ref{sec:4dbistellar} for a more detailed discussion of the 4-dimensional bistellar moves, and Lemmas~\ref{lem:Pachner33}, \ref{lem:Pachner24}, and \ref{lem:Pachner15} for the statements of respectively the (3,3)-bistellar, (2,4)-bistellar, and (1,5)-bistellar relations that determine the scalar factors in the state sum.

These ingredients would piece together into a state sum expression for the whole 4-manifold, except that there are potentially infinitely many possible labelings $\Gamma: K^o_{(2)} \To \Delta \tc{C}$; we ensure that the sum is finite by restricting the labels to live in a skeletal subsemisimplicial set of $\Delta \tc{C}$, as follows.

\begin{definition}[Simplicial skeleton for a prefusion 2-category]
A \emph{simplicial skeleton} for a prefusion $2$-category $\tc{C}$ is a subsemisimplicial set $\Delta\tc{C}^{\sk}\subseteq \Delta \tc{C}$ such that $(\Delta\tc{C}^{\sk})_1$ contains precisely one object from each equivalence class of simple objects of $\tc{C}$, and the set of elements of $(\Delta\tc{C}^{\sk})_2$ with faces $A,B,C \in (\Delta\tc{C}^{\sk})_1$ contains precisely one 1-morphism from each isomorphism class of simple 1-morphisms of $\tc{C}$ from $A \xz B$ to $C$.
\end{definition}

Altogether, the 10j action of the associated state of the canonical skeletally-labeled state of the 2-skeleton gives our desired 4-manifold invariant.

\begin{definition}[The state sum] \label{def:ss}
Given an oriented singular combinatorial 4-manifold $K$, with a chosen total order $o$ on its vertices, and a spherical prefusion 2-category $\tc{C}$, with a chosen simplicial skeleton $\Delta\tc{C}^{\sk}$, the \emph{state sum} is the number
\[
Z_{\tc{C}}(K)_{o,\sk} := 
\text{\scalebox{.8}{$
\sum_{\Gamma: K^o_{(2)} \To \Delta \tc{C}^\sk} 
\Big(\prod_{K_0} \dim(\tc{C})^{-1}\Big) 
\Big(\prod_{\sI \in K_1} \big(\dim(\Gamma(\sI)) \dim(\End_{\tc{C}}(\Gamma(\sI))) n(\Gamma(\sI))\big)^{-1}\Big) 
\Big(\prod_{\sII \in K_2} \dim(\Gamma(\sII))\Big) 
Z(\Gamma).
$}}
\]
Here the sum is over natural transformations $\Gamma: K^o_{(2)} \To \Delta \tc{C}^\sk$ that label the 1-simplices, respectively 2-simplices, of $K$ by simple objects, respectively 1-morphisms, of the simplicial skeleton for $\tc{C}$.  The dimension $\dim(\tc{C})$ is the dimension of the underlying presemisimple 2-category of $\tc{C}$, from Definition~\ref{def:dimension2cat}, the dimension $\dim(\Gamma(\sI))$ is the dimension of the object of the prefusion 2-category $\tc{C}$, from Definition~\ref{def:quantumdimension}, the dimension $\dim(\Gamma(\sII))$ is the dimension of the 1-morphism of the prefusion 2-category $\tc{C}$, also from Definition~\ref{def:quantumdimension}, the dimension $\dim(\End_{\tc{C}}(\Gamma(\sI)))$ is the dimension of the fusion 1-category $\End_{\tc{C}}(\Gamma(\sI))$, the number $n(\Gamma(\sI)$ is the number of equivalence classes of simple objects in the connected component of the object $\Gamma(\sI)$, and the number $Z(\Gamma)$ is the 10j action of the canonical associated state of $\Gamma$, from Definition~\ref{def:10jactioncanonstate}.
\end{definition}

\begin{theorem}[The state sum is an invariant of the manifold]\label{thm:maintheorem}
Let $M$ be an oriented singular piecewise-linear $4$-manifold and let $\tc{C}$ be a spherical prefusion $2$-category over an algebraically closed field of characteristic zero.  The number $Z_{\tc{C}}(K)_{o,\sk}$ is independent of the choice of triangulation $K$ of $M$, of the choice of order $o$ on $K$, and of the choice of simplicial skeleton $\sk$ for $\tc{C}$, and therefore defines an oriented piecewise-linear invariant $Z_{\tc{C}}(M)$ of the singular $4$-manifold $M$.
\end{theorem}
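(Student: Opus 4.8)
The proof has three logically independent pieces, corresponding to the three claimed independences: invariance under the choice of simplicial skeleton $\sk$, invariance under the choice of total order $o$, and invariance under the choice of triangulation $K$ (i.e.\ under bistellar moves). The plan is to establish these in that order, so that the later arguments may be carried out with respect to a convenient choice of skeleton and order. Throughout, the key structural inputs from the earlier sections are: the equality of front and back $2$-spherical traces (Definition~\ref{def:spherical}), which makes the $10j$ symbol well-defined and gives freedom in how the pentagonator endomorphism is drawn on the $3$-sphere; the nonvanishing of $\dim(\tc{C})$, $\dim(\Gamma(\sI))$, $\dim(\End_{\tc{C}}(\Gamma(\sI)))$, and $\dim(\Gamma(\sII))$ in the characteristic-zero setting (Proposition~\ref{prop:simplenonzero} and its corollary, and the nonvanishing of the dimension of a presemisimple $2$-category); and the nondegeneracy of the pairing $\langle\cdot,\cdot\rangle$ on $\Hom$-spaces (Proposition~\ref{prop:pairing}), which is what makes the canonical copairing $\cup_{\Gamma,\sIII}$ exist and be independent of a choice of basis.

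\textbf{Step 1: independence of the simplicial skeleton.} Two simplicial skeletons differ by, for each equivalence class of simple objects, a choice of representative object, and, for each isomorphism class of simple $1$-morphisms between fixed representatives, a choice of representative $1$-morphism; moreover one must account for the fact that equivalent objects need not have equal dimension (Warning~\ref{warn:pivotal}). I would first handle the $1$-morphism reindexing: replacing $\Gamma(\sII)$ by an isomorphic $\Gamma'(\sII)$ changes $Z(\Gamma)$ and each $\dim(\Gamma(\sII))$ by inserting the isomorphism and its inverse, which cancel because the $2$-simplex weights $\dim(\Gamma(\sII))$ and the associator state spaces $V^\pm(\Gamma,\sIII)$ both transform naturally under these isomorphisms, and the pairing (hence the copairing) is natural. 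The object reindexing is subtler: a pivotal adjoint equivalence between representatives must be used to transport folds and cusps, and the resulting discrepancy in $\dim(\Gamma(\sI))$ is compensated by a matching discrepancy coming from the change in the duals/adjoints appearing in the $10j$ symbol. This is exactly the content flagged in the paper as Section~4.1, and the cleanest way to see it is to show that the full summand of $Z_{\tc{C}}(K)_{o,\sk}$ attached to a labeling $\Gamma$ — weights times $Z(\Gamma)$ — depends on $\Gamma$ only through the underlying assignment of equivalence/isomorphism classes, using the local combinatorial manifold structure (each $1$-simplex lies in a $2$-sphere of $2$-simplices, etc.) to ensure the transporting $2$-morphisms cancel in adjacent simplices.

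\textbf{Step 2: independence of the vertex order.} Any two total orders are connected by transpositions of adjacent vertices (in the order), so it suffices to analyze a single such transposition. Under a transposition, the orientation-induced local orders on simplices change in a controlled way: the signs $\epsilon_o$ and $\epsilon_o^{\sIV}$ flip on the affected simplices, the state labels of $1$- and $2$-simplices touching the swapped vertices get replaced by duals $\conj{[ij]}$ and adjoints $\conj{[ijk]}$, and the associator state spaces $V^+$ and $V^-$ are interchanged. The strategy is to exhibit, for each affected simplex, a canonical isomorphism implementing this replacement — using planar pivotality to turn adjoints into their mates and the pivotal (object-duality) structure to turn duals into theirs — and then to check that assembling these local isomorphisms over the whole manifold leaves $Z(\Gamma)$ invariant, with the weight factors $\dim(\Gamma(\sI))$, $\dim(\End_{\tc{C}}(\Gamma(\sI)))$, $n(\Gamma(\sI))$, $\dim(\Gamma(\sII))$ unchanged because dimension of a $1$-morphism equals dimension of its adjoint (by planar pivotality) and $n$ and $\dim(\End)$ depend only on the component. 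The nontrivial point, as the paper warns in Section~4.2, is that many terms reorganize simultaneously; the surface graphical calculus and, in particular, Proposition~\ref{prop:leftrighttrace} (equality of left and right traces) and the swallowtail/circular-wrinkle relations do the bookkeeping.

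\textbf{Step 3: independence of the triangulation, i.e.\ bistellar invariance.} By Theorem~\ref{thm:singular}, two triangulations of $M$ as singular combinatorial $4$-manifolds are related by a finite sequence of bistellar moves, of which there are three types: $(1,5)$, $(2,4)$, and $(3,3)$ (together with their inverses). For each, the plan is to pass to a state sum for manifolds-with-boundary so as to isolate the local patch, choose the vertex order so that the interior simplex(es) come last (legitimate by Step~2), and then reduce the required equality to a purely algebraic identity among $10j$ symbols and dimensions. The $(3,3)$-move requires summing over the simple $1$-morphism label of an interior $2$-simplex, weighted by $\dim$; this is a ``completeness'' relation for the $10j$ symbols that follows from the pentagonator being an isomorphism together with the nondegenerate pairing, and it forces the $2$-simplex weight to be exactly $\dim(\Gamma(\sII))$. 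The $(2,4)$-move requires summing over the simple object label of an interior $1$-simplex, and the relevant orthogonality relation — coming from decomposing identities into simples in the Hom-categories and from the fact that the endomorphism categories of simples are fusion categories with nonvanishing dimension — forces the $1$-simplex weight $\big(\dim(\Gamma(\sI))\dim(\End_{\tc{C}}(\Gamma(\sI)))\,n(\Gamma(\sI))\big)^{-1}$, where the factor $n(\Gamma(\sI))$ accounts for the other simples in the same component (which carry the same dimension by Proposition~\ref{prop:sameblock}). The $(1,5)$-move ``sums over'' the unique label of an interior vertex, producing the factor $\dim(\tc{C})^{-1}$ via Definition~\ref{def:dimension2cat}. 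These three identities are established in the paper as Lemmas~\ref{lem:Pachner33}, \ref{lem:Pachner24}, and \ref{lem:Pachner15}, and their proofs use the extensive collection of dimension formulas of Appendix~B. Combining Steps 1--3 gives a well-defined number $Z_{\tc{C}}(M)$, as claimed.

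\textbf{Main obstacle.} The genuinely hard part is Step~3, and within it the $(3,3)$- and $(2,4)$-moves: unlike in the Barrett--Westbury $3$-dimensional case, the $10j$ symbols involve not only associator data but also the duals/adjoints and the object- and $1$-morphism-level dualities, so the local recombination of surface diagrams on the boundary $3$-sphere is intricate, and proving the completeness/orthogonality identities requires the full strength of sphericality (equality of front and back traces, Definition~\ref{def:spherical}), the nondegeneracy of the Hom-pairing, and a careful inventory of dimension factors — including the subtle component-counting factor $n(\Gamma(\sI))$ which is a new feature of the $2$-categorical setting, reflecting that a connected component can contain several inequivalent simple objects all of the same dimension. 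Step~1 is also more delicate than it looks, precisely because equivalent objects can have different dimensions; but there the local sphere structure of the manifold saves the day, whereas in Step~3 one must genuinely compute.
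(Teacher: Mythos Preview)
Your proposal is correct and follows essentially the same three-part architecture as the paper's proof in Section~\ref{sec:statesumproof}: skeleton independence via equivalence of states and Euler-characteristic cancellation of the scalar defects (Section~4.1), order independence via adjacent transpositions and the pivotal/spherical calculus (Section~4.2), and bistellar invariance via a boundary state sum and the three Lemmas~\ref{lem:Pachner33}, \ref{lem:Pachner24}, \ref{lem:Pachner15} (Section~4.3). Your identification of the structural inputs and of which bistellar move pins down which weight factor matches the paper exactly.
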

\nid The proof occupies all of Section~\ref{sec:statesumproof}.\pagebreak

\begin{remark}[The state sum over other base fields] 
We expect Theorem~\ref{thm:maintheorem} could be extended to arbitrary perfect fields; for fields that are not algebraically closed, one must insist that every endomorphism algebra of a simple 1-morphism in the spherical prefusion 2-category is the base field, and for fields of nonzero characteristic, one must insist the spherical prefusion 2-category be nondegenerate in the sense of Remark~\ref{rem:nonzerochar1}.
\end{remark}


\subsection{Special cases of the state sum invariant}

The state sum invariant of Theorem~\ref{thm:maintheorem} applies of course to any spherical prefusion 2-category.  We now observe that this invariant simultaneously generalizes all previously known state-sum invariants of piecewise linear 4-manifolds, except possibly the dichromatic invariant for a pivotal functor with modularizable target.

\subsubsection{Ribbon categories (the Crane--Yetter--Kauffman invariant)}
The first state-sum invariant for 4-manifolds was constructed by Crane--Yetter~\cite{CY} using the data of the modular category of representations of quantum $\mathfrak{sl}_2$; subsequently Crane--Yetter--Kauffman~\cite{CYK} generalized this state sum to use the data of any semisimple ribbon category.  This Crane--Yetter--Kauffman 4-manifold invariant for the semisimple ribbon category $\oc{C}$ agrees with our invariant for the unfolded spherical prefusion 2-category $\tc{C}$ associated to $\oc{C}$, cf Constructions~\ref{con:unfold} and~\ref{con:braidedisftc} and Example~\ref{eg:ribbonspherical}.  This agreement can be seen directly by comparing the state sum constructions in that case, or as a corollary of the fact, discussed below, that our invariant generalizes the Cui invariant, which in turn generalizes the Crane--Yetter--Kauffman invariant.

\subsubsection{Finite 2-groups (the Yetter--Dijkgraaf--Witten invariant)}
Given the data of a finite 2-group, Yetter~\cite{Yetter} defined a `finite 2-gauge theory' state-sum invariant of $n$-manifolds, generalizing the Dijkgraaf--Witten invariant associated to a finite group.  Later, Faria Martins--Porter~\cite{fariamartinsporter} extended Yetter's construction to accommodate a twisting $n$-cocycle (thereby generalizing the twisted Dijkgraaf--Witten invariant).  In the case of 4-manifolds, the (twisted) Yetter--Dijkgraaf--Witten invariant for the finite 2-group $(\pi_1,\pi_2)$ with 4-cocycle $\omega$ agrees with our invariant for the spherical prefusion 2-category $\tVect^\omega(\pi_1,\pi_2)$ of (twisted) 2-group-graded 2-vector spaces, cf Constructions~\ref{con:2groupgraded2vect} and~\ref{con:2groupgraded2vecttwisted} and Example~\ref{eg:twovectspherical}.  This agreement can be seen directly, or, again, by observing below that our invariant generalizes the Cui invariant which in turn generalizes the Yetter--Dijkgraaf--Witten invariant.

\subsubsection{Endotrivial fusion 2-categories (the Mackaay invariant)}

Recall that an endotrivial fusion 2-category is one where the endomorphism fusion category of every indecomposable object is the trivial fusion category $\Vect$.  Given the data of an endotrivial spherical fusion 2-category (cf Remark~\ref{rem:mackaayspherical}), Mackaay~\cite{Mackaay} defined a state-sum invariant of 4-manifolds; in the endotrivial case, our state sum directly simplifies to Mackaay's formula~\cite[Def 3.2]{Mackaay}.  (Note though, as in Remark~\ref{rem:endotrivialexamples}, that we are not aware of any examples of endotrivial fusion 2-categories besides the 2-catgory $\tVect^\omega(\pi_1)$ of twisted 1-group-graded 2-vector spaces, that is the twisted Dijkgraaf-Witten case.  In particular, the examples coming from braided fusion categories, graded braided fusion categories, module tensor categories, 2-representations of 2-groups, and 2-group-graded 2-vector spaces, all have nontrivial endomorphism fusion categories.)

\subsubsection{Crossed-braided fusion categories (the Cui invariant)}

Given the data of a crossed-braided spherical fusion category, Cui~\cite{Cui} defines a state-sum invariant of 4-manifolds, simultaneously generalizing the Crane--Yetter--Kauffman invariant and the Yetter--Dijkgraaf--Witten invariant.  Recall from Construction~\ref{con:gradedbraidedfusion} and Example~\ref{eg:gradedbraidedspherical} that a $G$-crossed-braided spherical fusion category can be interpreted as a spherical prefusion 2-category whose set of objects is the finite group $G$.

Cui's state-sum for a crossed-braided spherical fusion category $\oc{C}$ agrees with our state-sum for the corresponding spherical prefusion $2$-category $\tc{C}$, as follows.  (We will use the notation $\dim_{\oc{C}}$ to refer to dimensions of morphisms thought of in the spherical fusion category $\oc{C}$ and by contrast $\dim_{\tc{C}}$ for the dimensions of objects and morphisms in the spherical fusion 2-category $\tc{C}$.)  The 10j action term $Z(\Gamma)$ in our state sum agrees with the expression associated to the 4-simplices in Cui~\cite[Eq 22]{Cui}.  Observe that in the special case in question, the dimension $\dim_{\tc{C}}(g)$ of every simple object $g \in \tc{C}$ is $1$, and therefore the dimension of any 1-morphism $f: g \to h$ is $\dim_{\tc{C}}(f) = \langle \tr_R(f) \rangle = \dim_{\oc{C}}(f)$.  Hence, the 2-simplex normalization factor $\dim_{\tc{C}}(\Gamma(\sII))$ in our state sum agrees with the corresponding factor in Cui's formula.

Next, recall that the endomorphism category of any object $g \in \tc{C}$ is $\End_{\tc{C}}(g) = \oc{C}_e$, that is the identity-graded piece of the crossed-braided fusion 1-category $\oc{C}$.  More generally, the morphism category between objects $g,h \in \tc{C}$ is $\Hom_{\tc{C}}(g,h) = \oc{C}_{h g^{-1}}$, and so the number of objects in the component of $g$ is $n(g) = |\{h \in G \,|\, \oc{C}_{h g^{-1}} \neq 0\}| = |\{h \in G \,|\, \oc{C}_h \neq 0\}|$.  In particular, in this special case, neither the dimension $\dim_{\tc{C}}(g)$, nor the dimension $\dim(\End_{\tc{C}}(g))$, nor the number $n(g)$, depends on the object $g$.  Hence, our 1-simplex normalization factor $\big(\prod_{\sI \in K_1} (\dim_{\tc{C}}(\Gamma(\sI)) \dim(\End_{\tc{C}}(\Gamma(\sI))) n(\Gamma(\sI)))^{-1}\big)$ drastically simplifies to $\big(\dim(\oc{C}_e) \,\,|\{h \in G \,|\, \oc{C}_h \neq 0\}|\big)^{-|K_1|}$.  M\"uger~\cite{MuegerGcrossed} proves that in a crossed-braided fusion category $\oc{C}$, the dimension of every nonzero graded piece is the same.  Thus $\dim(\oc{C}) = |\{h \in G \,|\, \oc{C}_h \neq 0\}| \dim(\oc{C}_1)$ and the normalization factor in question is simply $(\dim(\oc{C}))^{-|K_1|}$, which indeed agrees with the 1-simplex factor in Cui's state sum.

Finally, observe that because there are $|\{h \in G \,|\, \oc{C}_h \neq 0\}|$ objects in each component of $\tc{C}$, the number of components of $\tc{C}$ is $|G|/|\{h \in G \,|\, \oc{C}_h \neq 0\}|$.  The dimension of the whole prefusion 2-category $\tc{C}$ is therefore \[\dim(\tc{C}) := \sum_{[x] \in \pi_0 \tc{C}} \dim(\End_{\tc{C}}(x))^{-1} = |G| |\{h \in G \,|\, \oc{C}_h \neq 0\}|^{-1} \dim(\oc{C}_e)^{-1} = |G| \dim(\oc{C})^{-1}.\]
In this case, our 0-simplex normalization factor is therefore $\big(|G| \dim(\oc{C})^{-1}\big)^{-|K_0|}$, which agrees with the corresponding factor in Cui's formula.

\newpage

\addtocontents{toc}{\protect\vspace{8pt}}
 
\section{The state sum is a piecewise-linear homeomorphism invariant}\label{sec:statesumproof}

As defined, the state sum expression
$Z_{\tc{C}}(K)_{o,\sk}$
depends on the chosen labeling skeleton $\Delta \tc{C}^\sk$ of the semisimplicial labeling category $\Delta \tc{C}$, on the total order $o$ on the vertices of the combinatorial 4-manifold $K$, and of course on the given combinatorial structure of $K$.  We prove in turn that the numerical value of the state sum does not depend on each of these choices, and so, altogether, the state sum is an invariant of a piecewise-linear 4-manifold.  (Recall from Section~\ref{sec:stellarbistellar} that whenever we refer to a `4-manifold' we implicitly allow vertex singularities.)

\subsection{The state sum is independent of the labeling skeleton}

The state sum expression may be written as the sum over states
\[Z_{\tc{C}}(K)_{o,\sk} := 
\sum_{\Gamma: K^o_{(2)} \To \Delta \tc{C}^\sk} 
N(\Gamma)\]
where
\[
\shrink{
N(\Gamma) :=
\Big(\prod_{K_0} \dim(\tc{C})^{-1}\Big) 
\Big(\prod_{\sI \in K_1} (\dim(\Gamma(\sI)) \dim(\End_{\tc{C}}(\Gamma(\sI))) n(\Gamma(\sI)))^{-1}\Big) 
\Big(\prod_{\sII \in K_2} \dim(\Gamma(\sII))\Big) 
Z(\Gamma)
}
\]
is the normalized 10j action.  

\begin{definition}[Equivalent states] \label{def:equivalentstates}
Given a spherical prefusion 2-category $\tc{C}$ and an ordered oriented combinatorial 4-manifold $K^o$, two $\tc{C}$-states $\Gamma, \Gamma': K^o_{(2)} \To \Delta \tc{C}$ are \emph{equivalent} if for every edge $\sI \in K_1$, there are inverse equivalences 
\[
h_\sI : \Gamma(\sI) \rightleftarrows \Gamma'(\sI) : k_\sI
\] 
and for every 2-simplex $\sII \in K_2$, there are 2-isomorphisms
\[
\Gamma(\sII) \cong k_{\partial^o_{02}\sII}\xo \Gamma'(\sII)\xo \left(h_{\partial^o_{01}\sII} \xz h_{\partial^o_{12}\sII}\right).
\]
\end{definition}

\begin{lemma}[Equivalent states have the same normalized 10j action] \label{lem:10jinvariance}
If the $\tc{C}$-states $\Gamma$ and $\Gamma'$ are equivalent, then their normalized 10j actions are the same: $N(\Gamma) = N(\Gamma')$.
\end{lemma}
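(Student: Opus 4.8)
The goal is to show that the normalized 10j action $N(\Gamma)$ is unchanged when the labeling state $\Gamma$ is replaced by an equivalent state $\Gamma'$. The plan is to track separately the effect of the equivalence data $(h_\sI, k_\sI)$ on each factor in the formula for $N(\Gamma)$, and observe that all the scalar changes cancel. First I would deal with the purely 2-categorical prefactors. The factor $\prod_{K_0} \dim(\tc{C})^{-1}$ is manifestly independent of $\Gamma$, so it contributes nothing. For each edge $\sI \in K_1$, the equivalence $h_\sI: \Gamma(\sI) \to \Gamma'(\sI)$ is in particular an equivalence of simple objects, so $\Gamma(\sI)$ and $\Gamma'(\sI)$ lie in the same component; hence $n(\Gamma(\sI)) = n(\Gamma'(\sI))$, and moreover by Proposition~\ref{prop:sameblock} (dimension is uniform within a component, applied to the endomorphism fusion categories) $\dim(\End_{\tc{C}}(\Gamma(\sI))) = \dim(\End_{\tc{C}}(\Gamma'(\sI)))$. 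So of the 1-simplex factor, only $\dim(\Gamma(\sI))$ versus $\dim(\Gamma'(\sI))$ can differ, and these genuinely can differ (Warning~\ref{warn:pivotal}), producing a discrepancy of $\prod_{\sI} \dim(\Gamma(\sI))/\dim(\Gamma'(\sI))$ from the 1-simplex normalization.

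Next I would analyze how the 2-simplex factor and the 10j action $Z(\Gamma)$ change. For each 2-simplex $\sII$, the 2-isomorphism $\Gamma(\sII) \cong k_{\partial_{02}\sII} \xo \Gamma'(\sII) \xo (h_{\partial_{01}\sII} \xz h_{\partial_{12}\sII})$ means $\dim(\Gamma(\sII))$ and $\dim(\Gamma'(\sII))$ differ by the `dimension defect' introduced by composing with the edge equivalences; concretely, using the graphical calculus and sphericality, conjugating a 1-morphism by $h$'s on its legs multiplies its dimension by a ratio of edge-object dimension factors — this is where the formulas of Appendix~B are used. The key bookkeeping claim is that when one sums the exponents of each edge-dimension factor $\dim(\Gamma(\sI))/\dim(\Gamma'(\sI))$ coming from (a) the 1-simplex normalization, (b) all 2-simplices containing $\sI$ via $\dim(\Gamma(\sII))$, and (c) all 4-simplices via the 10j traces $Z(\Gamma)$, the total exponent of each such ratio is zero. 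For $Z(\Gamma)$ one transports the associator state spaces $V^\pm(\Gamma,\sIII)$ to $V^\pm(\Gamma',\sIII)$ via conjugation by the equivalences, checks that the canonical copairing $\cup_{\Gamma,\sIII}$ is carried to $\cup_{\Gamma',\sIII}$ up to exactly the dimension-defect scalars (using the definition of the pairing via the spherical trace, Definition~\ref{def:pairing}, and its naturality), and that the 10j symbol of Figure~\ref{fig:definitionz} transforms by sliding the inserted $h_\sI, k_\sI$ around the closed surface, which by pivotality and sphericality collapses each $h_\sI \xo k_\sI$ or $k_\sI \xo h_\sI$ to an identity at the cost of the same dimension ratios.

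I would organize the cancellation combinatorially: for a fixed edge $\sI$, let $r_\sI := \dim(\Gamma(\sI))/\dim(\Gamma'(\sI))$. The 1-simplex normalization contributes $r_\sI^{-1} \cdot$ (something canceling against the $\End$/$n$ factors, already handled). Each 2-simplex $\sII \ni \sI$ and each higher simplex through $\sI$ contributes a known power of $r_\sI$ determined purely by how many times $\sI$ appears as a `leg' in the relevant composites; the point is that this combinatorial count matches what is forced by the local structure of a combinatorial 4-manifold — precisely, that the link of $\sI$ is a combinatorial 2-sphere, so the 2-simplices and 3- and 4-simplices around $\sI$ assemble into a closed configuration with balanced incidences. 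This is the same mechanism (and the same use of sphere links) that will reappear in Section~4.2; indeed I expect the cleanest route is to reduce to the statement that on each closed sphere-link the product of transition scalars is $1$. The main obstacle is this last point: assembling the individual dimension-defect scalars from 2-, 3-, and 4-simplices and verifying they multiply to $1$ around each edge and each triangle, which requires the explicit dimension formulas of Appendix~B together with a careful sign- and orientation-compatible accounting of how the copairings $\cup_{\Gamma,\sIII}$ rescale — this is exactly the technical heart flagged in the introduction as relying "crucially on the local combinatorial manifold structure." Everything else is routine diagram manipulation with the pivotal/spherical graphical calculus.
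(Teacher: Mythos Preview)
Your overall architecture---track scalar defects factor-by-factor and cancel them using the spherical-link combinatorics---matches the paper's strategy for the second half of the argument, but there is a genuine gap in the scalar you propose to track, and you are missing a reduction step that the paper relies on to get clean scalars at all.

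The quantity $r_\sI := \dim(\Gamma(\sI))/\dim(\Gamma'(\sI))$ is not the right defect. The paper shows that conjugating by the edge equivalences changes $\dim(\Gamma(\sII))$ by a factor $\lambda_{\partial_{01}\sII}\,\lambda_{\partial_{12}\sII}\,\lambda_{\partial_{02}\sII}$, where $\lambda_\sI := \langle \tr_R(h_\sI)\rangle$ depends on the \emph{chosen} equivalence $h_\sI$ and satisfies $\lambda_\sI^{\,2} = r_\sI$; it is not determined by the two objects alone. Likewise the copairing $\cup_{\Gamma,\sIII}$ rescales by $(\lambda_{01}\lambda_{12}\lambda_{23}\lambda_{03})^{-1}$ (only four of the six edges of $\sIII$) and the 10j symbol $z(\Gamma,\sIV)$ by $\lambda_{01}\lambda_{12}\lambda_{23}\lambda_{34}\lambda_{04}$ (only five of the ten edges of $\sIV$). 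These first-power $\lambda$'s, with this specific edge pattern, are what feed into the Euler-characteristic cancellation; your $r_\sI$'s would give the wrong exponents and your claim that the 2-simplex dimension changes by ``a ratio of edge-object dimension factors'' is not correct. (Appendix~B, which you invoke, is used for the bistellar computations in Section~4.3, not here.)

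Relatedly, the paper does \emph{not} attack a general equivalence directly. It first factors any equivalence into (i) an equivalence with identity edge-maps, where only the 1-morphism labels change by 2-isomorphisms, and (ii) an equivalence where the edge inverse is the \emph{pivotal adjoint} $h_\sI^*$ and the 2-simplex comparison is an equality. Case~(i) is handled by cyclicity of the planar trace: the inserted $\alpha_\sII$'s cancel in pairs inside both the pairings and the 10j traces, so $Z(\Gamma)=Z(\Gamma')$ exactly, with no scalars at all. Only in case~(ii) do the $\lambda_\sI$'s appear, and the special form $k_\sI=h_\sI^*$ is what makes the defect computations tractable. Without this factorization your sketch has to control the interaction of an arbitrary $k_\sI$ with the pivotal structure, which is where it becomes vague.
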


\nid This lemma will be established below.  Given two distinct labeling skeleta $\Delta \tc{C}^{\sk_1}$ and $\Delta \tc{C}^{\sk_2}$, for each object $A \in (\Delta \tc{C}^{\sk_1})_1$, choose inverse equivalences $h_A : A \rightleftarrows A' : k_A$ between $A$ and the unique equivalent object $A' \in (\Delta \tc{C}^{\sk_2})_1$; similarly for each 1-morphism $(g: A \xz B \to C) \in (\Delta \tc{C}^{\sk_1})_2$, choose an isomorphism $h_C \xo g \xo (k_A \xz k_B) \cong g'$, where $g'$ is the unique isomorphic 1-morphism $(g': A' \xz B' \to C') \in (\Delta \tc{C}^{\sk_2})_2$.  Composing with these equivalences and isomorphisms provides a bijection $[K^o_{(2)},\Delta \tc{C}^{\sk_1}] \cong [K^o_{(2)},\Delta \tc{C}^{\sk_2}]$ between the set of $\tc{C}$-states with labels in the first skeleton and the set of $\tc{C}$-states with labels in the second skeleton; and of course this bijection takes each state to an equivalent state.

\begin{corollary}[The state sum is invariant under change of labeling skeleton] \label{lem:skeletalinvariance}
Given a spherical prefusion $2$-category $\tc{C}$, an oriented (singular) combinatorial $4$-manifold $K$, a chosen total order $o$ on its vertices, and any two simplicial skeleta $\Delta \tc{C}^{\sk_1}$ and $\Delta \tc{C}^{\sk_2}$ for $\tc{C}$, the corresponding state sums agree:
\[Z_{\tc{C}}(K)_{o,\sk_1} = Z_{\tc{C}}(K)_{o,\sk_2}.\]
\end{corollary}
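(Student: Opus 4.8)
The plan is to reduce the corollary to Lemma~\ref{lem:10jinvariance} by exhibiting, for any two simplicial skeleta $\Delta\tc{C}^{\sk_1}$ and $\Delta\tc{C}^{\sk_2}$, a bijection between the sets of states $[K^o_{(2)},\Delta\tc{C}^{\sk_1}]$ and $[K^o_{(2)},\Delta\tc{C}^{\sk_2}]$ that carries each state to an equivalent state, so that the normalized $10$j actions agree termwise and the two state sums are equal. Since both state sums are, by Definition~\ref{def:ss}, finite sums over states of the common summand $N(\Gamma)$, a bijection preserving $N$ suffices.

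First I would construct the comparison data. For each object $A\in(\Delta\tc{C}^{\sk_1})_1$, let $A'$ denote the unique object of $(\Delta\tc{C}^{\sk_2})_1$ equivalent to $A$ (existence and uniqueness follow from the defining property of a simplicial skeleton, together with the fact that equivalence of simple objects is detected skeleta-independently), and choose inverse equivalences $h_A\colon A\rightleftarrows A'\colon k_A$. Next, for each $1$-morphism $(g\colon A\xz B\to C)\in(\Delta\tc{C}^{\sk_1})_2$, the composite $h_C\xo g\xo(k_A\xz k_B)$ is a simple $1$-morphism $A'\xz B'\to C'$, so there is a unique $g'\in(\Delta\tc{C}^{\sk_2})_2$ with $g'\cong h_C\xo g\xo(k_A\xz k_B)$; fix such an isomorphism. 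Postcomposition/precomposition with these chosen equivalences and isomorphisms, applied edgewise and $2$-simplexwise, then defines a map $[K^o_{(2)},\Delta\tc{C}^{\sk_1}]\to[K^o_{(2)},\Delta\tc{C}^{\sk_2}]$ sending $\Gamma$ to the state $\Gamma'$ with $\Gamma'(\sI)=\Gamma(\sI)'$ and $\Gamma'(\sII)$ the representative isomorphic to $k_{\partial^o_{02}\sII}^{-1}\xo\Gamma(\sII)'\xo\ldots$; one checks this respects the face maps so it is genuinely a natural transformation $K^o_{(2)}\To\Delta\tc{C}^{\sk_2}$. The same construction with the roles of $\sk_1$ and $\sk_2$ reversed (using the inverse equivalences $k_A$ and the mates of the fixed isomorphisms) produces an inverse map, so this is a bijection; and by construction the data $\{h_\sI,k_\sI\}$ and the $2$-isomorphisms on $2$-simplices are exactly what Definition~\ref{def:equivalentstates} requires, so $\Gamma$ and $\Gamma'$ are equivalent states.

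Applying Lemma~\ref{lem:10jinvariance} gives $N(\Gamma)=N(\Gamma')$ for each $\Gamma$, whence
\[
Z_{\tc{C}}(K)_{o,\sk_1}=\sum_{\Gamma\colon K^o_{(2)}\To\Delta\tc{C}^{\sk_1}}N(\Gamma)=\sum_{\Gamma'\colon K^o_{(2)}\To\Delta\tc{C}^{\sk_2}}N(\Gamma')=Z_{\tc{C}}(K)_{o,\sk_2},
\]
as desired. The one nontrivial input is Lemma~\ref{lem:10jinvariance} itself — the assertion that the normalized $10$j action is unchanged under replacing object and $1$-morphism labels by equivalent, respectively isomorphic, ones — which is genuinely delicate because the dimension factors $\dim(\Gamma(\sI))$ appearing in $N(\Gamma)$ are \emph{not} invariant under arbitrary equivalence of objects (cf.\ Warning~\ref{warn:pivotal}), so the invariance of $N$ must come from a cancellation between the change in the $10$j action term $Z(\Gamma)$ and the change in the normalization factors; but that lemma is established separately and I would treat it as a black box here. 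The remaining steps above are essentially bookkeeping: verifying that the edgewise/simplexwise substitution is well-defined on semisimplicial sets and that the two substitution maps are mutually inverse.
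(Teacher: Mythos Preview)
Your proposal is correct and follows essentially the same approach as the paper: choose inverse equivalences $h_A:A\rightleftarrows A':k_A$ between corresponding skeletal objects and isomorphisms $h_C\xo g\xo(k_A\xz k_B)\cong g'$ between corresponding skeletal $1$-morphisms, use these to build a bijection $[K^o_{(2)},\Delta\tc{C}^{\sk_1}]\cong[K^o_{(2)},\Delta\tc{C}^{\sk_2}]$ taking each state to an equivalent state, and then invoke Lemma~\ref{lem:10jinvariance}. Your remarks about the delicacy of Lemma~\ref{lem:10jinvariance} (in particular that $\dim(\Gamma(\sI))$ is not equivalence-invariant, so cancellation is required) are apt and match the paper's treatment of that lemma in the subsequent subsections.
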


\nid In light of this independence of the choice of simplicial skeleton, we will henceforth denote the state sum, associated to a spherical prefusion 2-category $\tc{C}$, an oriented combinatorial 4-manifold $K$, and a chosen total order $o$ on vertices, by $Z_{\tc{C}}(K)_o$.

\subsubsection{The 10j action is invariant under 1-morphism state changes}

We begin by considering the situation where $\Gamma$ and $\Gamma'$ are equivalent $\tc{C}$-states, and in fact the object labels of the two states are equal, that is $\Gamma(\sI) = \Gamma'(\sI)$ for all 1-simplices $\sI \in K_1$.

\begin{lemma}[Object-equal equivalent states have the same 10j action] \label{lem:1morphchange}
If the $\tc{C}$-states $\Gamma$ and $\Gamma'$ are equivalent and moreover the equivalences $h_\sI : \Gamma(\sI) \rightleftarrows \Gamma'(\sI) : k_\sI$ are identities, then the corresponding 10j actions are equal: $Z(\Gamma) = Z(\Gamma')$.
\end{lemma}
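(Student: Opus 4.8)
The goal is to show that replacing the $1$-morphism labels of a $\tc{C}$-state $\Gamma$ by isomorphic $1$-morphisms (keeping object labels fixed) leaves the $10$j action $Z(\Gamma)$ unchanged. The plan is to track how each isomorphism $\Gamma(\sII) \cong \Gamma'(\sII)$ propagates through the formula for $Z(\Gamma) = z(\Gamma)\xo\cup_\Gamma(1)$. For each $2$-simplex $\sII \in K_2$, fix a $2$-isomorphism $\beta_\sII : \Gamma(\sII) \To \Gamma'(\sII)$ (this exists by the hypothesis, with the $h_\sI$, $k_\sI$ taken to be identities). First I would observe that each such $\beta_\sII$ induces, for every $3$-simplex $\sIII$ containing $\sII$, compatible linear isomorphisms $V^\pm(\Gamma, \sIII) \to V^\pm(\Gamma', \sIII)$ by pre- and post-composition with appropriate whiskerings of the $\beta$'s (and of the $2$-isomorphisms $\beta^\#$, $\beta^*$ for duals and adjoints, using the functoriality conditions C2--C3 of the planar pivotal structure and C2--C4 of the pivotal structure). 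The core of the argument is then two compatibility checks.

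\textbf{Step 1: the copairing is carried to the copairing.} The canonical copairing $\cup_{\Gamma,\sIII} \colon k \to V^+(\Gamma,\sIII)\otimes V^-(\Gamma,\sIII)$ is defined as the mate of the nondegenerate pairing $\langle\cdot,\cdot\rangle$ from Definition~\ref{def:pairing}, which is built from the $2$-spherical trace $\Tr$. Since $\Tr$ is defined via the planar traces $\tr_L,\tr_R$, and the planar traces are invariant under conjugating a $2$-endomorphism by an isomorphism (a standard consequence of cyclicity of the planar trace together with the cusp equations), the pairing is preserved by the induced maps $V^\pm(\Gamma,\sIII)\to V^\pm(\Gamma',\sIII)$: that is, $\langle\cdot,\cdot\rangle_{\Gamma,\sIII} = \langle (\beta\text{-conjugate of }\cdot), (\beta^{-1}\text{-conjugate of }\cdot)\rangle_{\Gamma',\sIII}$. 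It follows formally that the induced isomorphism $V^+(\Gamma,\sIII)\otimes V^-(\Gamma,\sIII) \to V^+(\Gamma',\sIII)\otimes V^-(\Gamma',\sIII)$ sends $\cup_{\Gamma,\sIII}(1)$ to $\cup_{\Gamma',\sIII}(1)$; tensoring over all $3$-simplices gives $\cup_\Gamma(1) \mapsto \cup_{\Gamma'}(1)$.

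\textbf{Step 2: the $10$j symbol is unchanged after transport.} For each $4$-simplex $\sIV$, I would show $z(\Gamma,\sIV) = z(\Gamma',\sIV) \circ (\text{induced iso on } \bigotimes V^{\epsilon^\sIV_o(\sIII)})$. The $10$j symbol is the $2$-spherical trace of the pentagonator endomorphism drawn in Figure~\ref{fig:definitionz}, built by composing the five associator vectors with structural $1$- and $2$-morphisms (folds, cusps, units, counits, interchangers) all of which depend only on the object labels, which are unchanged. Transporting each associator vector $v_\sIII \in V^{\epsilon}(\Gamma,\sIII)$ to its image in $V^{\epsilon}(\Gamma',\sIII)$ inserts, along each edge of the pentagonator diagram labeled by some $\Gamma(\sII)$ (or its dual/adjoint), a cancelling pair $\beta_\sII^{\pm 1}$; using the interchange law and the functoriality of adjoints and duals (C2--C5 of Definition~\ref{def:planarpivotal}, C2--C6 of Definition~\ref{def:pivotal}), these cancelling pairs slide along the surface and annihilate, leaving the pentagonator endomorphism for $\Gamma$ equal to that for $\Gamma'$ after the transport of inputs. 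Hence their traces agree. Combining Steps 1 and 2, $Z(\Gamma) = z(\Gamma)\xo\cup_\Gamma(1) = z(\Gamma')\xo\cup_{\Gamma'}(1) = Z(\Gamma')$.

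\textbf{Main obstacle.} The bookkeeping in Step 2 is where the real work lies: one must verify carefully that the inserted $2$-isomorphisms $\beta_\sII$ (and the derived $\beta_\sII^\#$, $\beta_\sII^*$ on the dual and adjoint edges, and their whiskered composites appearing inside the bracket expressions $[(ijk)l]$ and $[i(jkl)]$ of Notation~\ref{not:parenthesisnotation}) really do cancel in pairs across the whole pentagonator diagram, rather than leaving a residual discrepancy — for instance on the closed loops where the $2$-spherical trace is taken. This amounts to checking that the transport maps on the $V^\pm$ spaces are genuinely natural with respect to the composition operations $[(012)3]$, $[0(123)]$ etc., which in turn reduces to the functoriality axioms for adjoints (Definition~\ref{def:planarpivotal} C2, C3) and duals (Definition~\ref{def:pivotal} C2--C4) together with the interchanger naturality C5 of Definition~\ref{def:monoidal2cat}. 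None of these steps is conceptually hard, but they require a diagram-by-diagram accounting, and it is essential that no dimension factors enter — indeed $Z(\Gamma)$, unlike $N(\Gamma)$, carries no normalization, which is exactly why this lemma (the ``easy'' case where object labels agree) goes through with only the structural axioms and no use of sphericality of the trace beyond what is needed to make $\Tr$ land in $k$.
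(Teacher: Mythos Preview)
Your proposal is correct and follows essentially the same approach as the paper: induce isomorphisms $l^\pm_\sIII$ on the associator state spaces by pre/post-composition with the $\beta_\sII$'s, then check these intertwine both the copairing (via cyclicity of the trace) and the $10$j symbols (again via cyclicity), and conclude. One small over-elaboration: at this stage the $1$-morphism labels $\Gamma(\sII)$ appear in the $V^\pm$ spaces and in the pentagonator diagram only directly, never through their adjoints or object-duals, so there is no need to track $\beta_\sII^*$ or $\beta_\sII^\#$ here---the transport maps are built purely from whiskerings of the $\beta_\sII$'s themselves, and the cancellation in Step~2 is just that each $2$-simplex $\sII$ of a $4$-simplex $\sIV$ lies in exactly two of its $3$-faces, so each $\beta_\sII$ meets a $\beta_\sII^{-1}$ under the trace.
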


\begin{proof}
The two states differ only by a collection of 2-isomorphisms $\alpha_\sII: \Gamma(\sII) \To \Gamma'(\sII)$, for $\sII \in K_2$.  These chosen isomorphisms $\alpha_\sII$ induce, by pre- and post-composition, isomorphisms of the corresponding associator state spaces of every 3-simplex $\sIII \in K_3$:
\begin{align*}
l^+_{\sIII}: & V^+(\Gamma,\sIII)\to V^+(\Gamma',\sIII) \\
l^-_{\sIII} : & V^-(\Gamma,\sIII) \to V^-(\Gamma',\sIII)
\end{align*}
The first of these isomorphisms, for instance, may be depicted as follows, where each white dot denotes either an $\alpha_s$ isomorphism or its inverse:
\tikzset{whitedot/.style={circle, scale=0.3, fill=white, draw}}
\begin{align*}
\begin{tz}[td,scale=1.2]
\begin{scope}[xyplane=0]
\draw[slice] (0,0) to [out=up, in=\dl] (0.5,1) to [out=up, in=\dl] (1,2) to (1,3);
\draw[slice] (1,0) to [out=up, in=\dr] (0.5,1);
\draw[slice] (2,0) to [out=up, in=\dr] (1,2);
\end{scope}
\begin{scope}[xyplane=\h, on layer=superfront]
\draw[slice] (0,0) to [out=up, in=\dl] (1,2) to (1,3);
\draw[slice] (1,0) to [out=up, in=\dl] (1.5,1) to [out=up, in=\dr] (1,2);
\draw[slice] (2,0) to [out=up, in=\dr] (1.5,1);
\end{scope}
\begin{scope}[xzplane=0]
\draw[slice,short] (0,0) to (0, \h);
\draw[slice,short] (1,0) to (1,\h);
\draw[slice,short] (2,0) to (2,\h);
\end{scope}
\begin{scope}[xzplane=3]
\draw[slice,short] (1,0) to (1,\h);
\end{scope}
\coordinate (A) at (1.5,1,0.5*\h);
\draw[wire] (1,0.5,0) to [out=up, in=\dr] (A) to [out=\ur, in=down] (1,1.5,\h);
\draw[wire] (2,1,0) to [out=up, in=\dl] (A) to [out=\ul, in=down] (2,1,\h);
\node[dot] at (A){};
\end{tz}
&\hspace{0.25cm}\mapsto\hspace{0.25cm}
\begin{tz}[td,scale=1.2]
\begin{scope}[xyplane=0]
\draw[slice] (0,0) to [out=up, in=\dl] (0.5,1) to [out=up, in=\dl] (1,2) to (1,3);
\draw[slice] (1,0) to [out=up, in=\dr] (0.5,1);
\draw[slice] (2,0) to [out=up, in=\dr] (1,2);
\end{scope}
\begin{scope}[xyplane=\h, on layer=superfront]
\draw[slice] (0,0) to [out=up, in=\dl] (1,2) to (1,3);
\draw[slice] (1,0) to [out=up, in=\dl] (1.5,1) to [out=up, in=\dr] (1,2);
\draw[slice] (2,0) to [out=up, in=\dr] (1.5,1);
\end{scope}
\begin{scope}[xzplane=0]
\draw[slice,short] (0,0) to (0, \h);
\draw[slice,short] (1,0) to (1,\h);
\draw[slice,short] (2,0) to (2,\h);
\end{scope}
\begin{scope}[xzplane=3]
\draw[slice,short] (1,0) to (1,\h);
\end{scope}
\coordinate (A) at (1.5,1,0.5*\h);
\draw[wire] (1,0.5,0) to  [out=up, in=\dr]node[whitedot, pos=0.54] {} (A) to [out=\ur, in=down]node[whitedot,pos=0.4]{} (1,1.5,\h);
\draw[wire] (2,1,0) to [out=up, in=\dl] node[pos=0.5,whitedot]{}(A) to [out=\ul, in=down]node[pos=0.5,whitedot]{} (2,1,\h);
\node[dot] at (A){};
\end{tz}
\end{align*}

Recall that the pairing $\langle \cdot, \cdot, \rangle_{\Gamma,\sIII} : V^-(\Gamma,\sIII) \otimes V^+(\Gamma,\sIII) \to k$ between the negative and positive associator state spaces is defined, see Definitions~\ref{def:pairing} and~\ref{def:spheretrace}, as the 2-spherical trace of the composition.  By the cyclicity of the planar trace (used in the definition of the 2-spherical trace), the various comparison isomorphisms $\alpha_\sII$ cancel out in the trace construction, and the associator state space isomorphisms $l^+$ and $l^-$ therefore intertwine the pairings:
\[
 \langle\cdot, \cdot\rangle_{\Gamma,\sIII} = \langle \cdot,\cdot \rangle_{\Gamma',\sIII}\xo \left( l_\sIII^-\otimes l_\sIII^+\right) : V^-(\Gamma,\sIII) \otimes V^+(\Gamma,\sIII) \to k
\]
It follows that the inverse isomorphisms intertwine the corresponding copairing:
\[
\cup_{\Gamma,\sIII} = \left(l^+_{\sIII} \otimes l^-_{\sIII}\right)^{-1} \xo  \cup_{\Gamma',\sIII} : k \to V^+(\Gamma,\sIII) \otimes V^-(\Gamma,\sIII)
\]
Of course, these isomorphisms then intertwine the global copairings $\cup_\Gamma := \bigotimes_{\sIII \in K_3} \cup_{\Gamma,\sIII}$ and $\cup_{\Gamma'} := \bigotimes_{\sIII \in K_3} \cup_{\Gamma',\sIII}$.

Next, recall that the 10j action is defined, see Definition~\ref{def:10jaction} and Figure~\ref{fig:definitionz}, as a product of 2-spherical traces of pentagonator composites.  Again by the cyclicity of the planar trace, the comparison isomorphisms $\alpha_\sII$ will cancel pairwise in this trace construction, and so the associator state space isomorphisms also intertwine the 10j action:
\[
z(\Gamma) = z(\Gamma') \xo \big(\bigotimes_{\sIII \in \K_3} l^+_{\sIII} \otimes l^-_{\sIII}\big) 
\]
Altogether then we find that the 10j actions for the states $\Gamma$ and $\Gamma'$ agree:
\[
Z(\Gamma) := z(\Gamma) \xo \cup_{\Gamma}(1) = z(\Gamma') \xo \cup_{\Gamma}(1) =: Z(\Gamma')\qedhere
\]
\end{proof}

\nid
Of course, none of the normalization factors in the normalized 10j action are affected by changing 1-morphism labels by isomorphisms, so the normalized 10j action is similarly unaffected by changes of 1-morphism labels.

\subsubsection{The normalized 10j action is invariant under object state changes}

Next we consider the situation where $\Gamma$ and $\Gamma'$ are equivalent $\tc{C}$-states, for which the $2$-isomorphisms may be taken to be equalities and moreover for which the object inverse equivalences are given by a morphism and its chosen (planar pivotal) adjoint; that is, for every edge $\sI \in K_1$, there are inverse equivalences $h_\sI : \Gamma(\sI) \rightleftarrows \Gamma'(\sI) : h_\sI^\ast$ such that $\Gamma(\sII) = h_{\partial^o_{02}\sII}^\ast \xo \Gamma'(\sII) \xo \left(h_{\partial^o_{01}\sII} \xz h_{\partial^o_{12}\sII}\right)$.

\begin{lemma}[Morphism-conjugate equivalent states have the same normalized 10j action] \label{lem:objectchange}
If the $\tc{C}$-states $\Gamma$ and $\Gamma'$ are equivalent, with inverse equivalences $h_\sI : \Gamma(\sI) \rightleftarrows \Gamma'(\sI) : h_\sI^\ast$ between object labels, and equalities $\Gamma(\sII) = h_{\partial^o_{02}\sII}^\ast \xo \Gamma'(\sII) \xo \left(h_{\partial^o_{01}\sII} \xz h_{\partial^o_{12}\sII}\right)$ relate the 1-morphism labels, then the corresponding normalized 10j actions are equal: $N(\Gamma) = N(\Gamma')$.
\end{lemma}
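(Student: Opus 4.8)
The plan is to reduce the claim to Lemma~\ref{lem:1morphchange} together with a careful bookkeeping of how the various normalization factors transform when object labels are replaced by equivalent ones via a chosen morphism and its planar pivotal adjoint. First I would note that, since we are in the case where the $2$-isomorphisms relating $1$-morphism labels are equalities (built from the $h_\sI$ and $h_\sI^\ast$), the only thing that can change in the normalized $10j$ action $N(\Gamma)$ versus $N(\Gamma')$ is: (i) the $10j$ action $Z(\Gamma)$ versus $Z(\Gamma')$, and (ii) the $1$-simplex normalization factors $\big(\dim(\Gamma(\sI))\dim(\End_{\tc{C}}(\Gamma(\sI)))n(\Gamma(\sI))\big)^{-1}$, since the object label on each edge has changed from $\Gamma(\sI)$ to $\Gamma'(\sI)$. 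The $0$-simplex factors $\dim(\tc{C})^{-1}$ are unaffected, and the $2$-simplex factors $\dim(\Gamma(\sII))$ are unchanged because the dimension of a $1$-morphism is invariant under planar pivotal conjugation by equivalences (this follows from the conditions of planar pivotality, as noted after Definition~\ref{def:quantumdimension}, since $h_{\partial^o_{02}\sII}^\ast \xo \Gamma'(\sII) \xo (h_{\partial^o_{01}\sII} \xz h_{\partial^o_{12}\sII})$ has the same dimension as $\Gamma'(\sII)$).

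Next I would analyze the $10j$ action. The idea is that replacing $\Gamma'(\sII)$ by $h_{\partial^o_{02}\sII}^\ast \xo \Gamma'(\sII)\xo(h_{\partial^o_{01}\sII}\xz h_{\partial^o_{12}\sII})$ everywhere inserts, along each edge of each tetrahedron and $4$-simplex, a compensating pair consisting of $h_\sI$ and $h_\sI^\ast$. In the pentagonator surface diagram of Figure~\ref{fig:definitionz}, and in the pairings defining the copairings $\cup_{\Gamma,\sIII}$, these edge-equivalences appear as loops running along the object-labeled sheets. Because each edge $\sI$ bounds the relevant simplices consistently and the $10j$ symbol is computed as a $2$-spherical trace, I expect that the $h_\sI$/$h_\sI^\ast$ pairs on each sheet can be slid together around the sphere and cancelled using the cusp equations (or equivalently the zigzag identities for the adjunction $h_\sI \dashv h_\sI^\ast$), exactly as in the proof of Lemma~\ref{lem:1morphchange} but now with object-level rather than $2$-morphism-level data. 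The new subtlety is that an object equivalence $h_\sI$ threaded along a closed sheet, when collapsed, may leave behind a scalar --- namely the dimension ratio $\dim(\Gamma'(\sI))/\dim(\Gamma(\sI))$ --- because closing up an $h_\sI$-labeled defect loop around a sphere is precisely the operation that can fail to be trivial (cf.\ Warning~\ref{warn:pivotal}). Tracking these scalars carefully, edge by edge, I expect $Z(\Gamma)$ and $Z(\Gamma')$ to differ by a product over the edges $\sI \in K_1$ of some power of $\dim(\Gamma'(\sI))/\dim(\Gamma(\sI))$, with the exponent determined by how many times each edge's sheet is closed up in the global trace --- and the point will be that this exponent is $+1$ for each edge, matching exactly the change in the $1$-simplex normalization factor.

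Concretely, then, the key steps in order are: (1) observe that the $0$- and $2$-simplex normalization factors are unchanged; (2) set up the comparison of $Z(\Gamma)$ and $Z(\Gamma')$ by substituting the conjugated $1$-morphism labels into Figure~\ref{fig:definitionz} and into the definition of the global copairing $\cup_\Gamma$; (3) use the cusp/zigzag equations for the adjunctions $h_\sI \dashv h_\sI^\ast$ to cancel the inserted equivalence pairs along each sheet, keeping track of the residual scalar contributed by each closed loop; (4) identify that residual as $\prod_{\sI\in K_1}\big(\dim(\Gamma'(\sI))/\dim(\Gamma(\sI))\big)$, so that $Z(\Gamma) = \big(\prod_{\sI\in K_1}\dim(\Gamma'(\sI))/\dim(\Gamma(\sI))\big) Z(\Gamma')$; (5) combine with the transformation of the $1$-simplex normalization factor and conclude $N(\Gamma) = N(\Gamma')$. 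The main obstacle will be step (3)--(4): correctly counting the residual dimension scalars. This requires understanding the global combinatorial structure --- how the sheets labeled by a given edge object are connected across all the $4$-simplices containing that edge, and verifying that when one slides all the $h_\sI$'s around, they close up into exactly one loop per edge rather than some other number. This is precisely where the local manifold structure (each edge has a combinatorial $2$-sphere link, each triangle a circle link) is essential, and it is the analogue for object labels of the pairwise cancellation argument that was routine for $2$-morphism labels in Lemma~\ref{lem:1morphchange}.
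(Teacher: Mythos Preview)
Your proposal has a genuine gap at step (1): the $2$-simplex factors $\dim(\Gamma(\sII))$ are \emph{not} unchanged. The morphisms $\Gamma(\sII)$ and $\Gamma'(\sII)$ are not isomorphic --- they have different sources and targets --- so the remark after Definition~\ref{def:quantumdimension} does not apply. In fact the paper computes explicitly that
\[
\dim(\Gamma(\sII)) = \lambda_{\partial^o_{[01]}\sII}\,\lambda_{\partial^o_{[12]}\sII}\,\lambda_{\partial^o_{[02]}\sII}\,\dim(\Gamma'(\sII)),
\]
where $\lambda_\sI := \langle \tr_R(h_\sI)\rangle$. Similarly, the $1$-simplex factor transforms by $\lambda_\sI^{-2}$ (not $\lambda_\sI^{-1}$), the copairing on each $3$-simplex picks up a factor $(\lambda_{[01]}\lambda_{[12]}\lambda_{[23]}\lambda_{[03]})^{-1}$, and each $10j$ symbol picks up $\lambda_{[01]}\lambda_{[12]}\lambda_{[23]}\lambda_{[34]}\lambda_{[04]}$. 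The relevant scalar is $\lambda_\sI$, not the dimension ratio $\dim(\Gamma'(\sI))/\dim(\Gamma(\sI)) = \lambda_\sI^{-2}$, and it appears with different multiplicities coming from all four simplex levels.

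Consequently the cancellation in step (4) cannot be ``one closed loop per edge''. What actually happens is that the total transformation factor $\gamma$ is a product $\prod_\sI \lambda_\sI^{\phi_\sI} \prod_\sII \lambda_\sII^{\phi_\sII} \prod_\sIII \lambda_\sIII^{\phi_\sIII}$ (with $\lambda_\sII$, $\lambda_\sIII$ suitable abbreviations), and the exponents $\phi_\sI$, $\phi_\sII$, $\phi_\sIII$ are expressible in terms of the Euler characteristics of the links $\lk(\sI)$, $\lk(\sII)$, $\lk(\sIII)$. These vanish precisely because those links are combinatorial spheres of the appropriate dimension --- this is where the (singular) manifold hypothesis enters, and it is more delicate than the single-loop picture you sketch. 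Your intuition that the local combinatorial structure is essential is correct, but the mechanism is an alternating Euler-characteristic count across simplex dimensions, not a direct ``slide and cancel'' along a single sheet per edge.
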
 

The normalized 10j action may be considered to have five factors, the 0-simplex factor $\Big(\prod_{K_0} \dim(\tc{C})^{-1}\Big)$, the 1-simplex factor $\Big(\prod_{\sI \in K_1} (\dim(\Gamma(\sI)) \dim(\End_{\tc{C}}(\Gamma(\sI))) n(\Gamma(\sI)))^{-1}\Big)$, the 2-simplex factor $\Big(\prod_{\sII \in K_2} \dim(\Gamma(\sII))\Big)$, the 3-simplex factor $\cup_\Gamma := \bigotimes_{\sIII\in \K_3} \cup_{\Gamma,\sIII}$, and the 4-simplex factor $z(\Gamma):= \Big(\bigotimes_{\sIV \in \K_4} z(\Gamma,\sIV)\Big)$.  The 0-simplex factor is certainly unaffected by changing state labels.  We address the other factors in turn.

\skiptocparagraph{The 1-simplex factor}

The 1-simplex factor has three terms for each 1-simplex $\sI \in K_1$, the number $n(\Gamma(\sI))$ of equivalence classes of simple objects in the component, the dimension $\dim(\End_{\tc{C}}(\Gamma(\sI)))$ of the endomorphism fusion category, and the dimension $\dim(\Gamma(\sI))$ of the object label itself.  Given an equivalence of simple objects $h_\sI: \Gamma'(\sI) \simeq \Gamma(\sI)$, the two objects $\Gamma(\sI)$ and $\Gamma'(\sI)$ are in the same connected component and so the number of equivalence classes of simple objects in that component is evidently unchanged: $n(\Gamma(\sI))=n(\Gamma'(\sI))$.  Similarly, the endomorphism categories of $\Gamma'(\sI)$ and $\Gamma(\sI)$ are equivalent fusion 1-categories and therefore have the same dimension: $\dim(\End_{\tc{C}}(\Gamma'(\sI))) = \dim(\End_{\tc{C}}(\Gamma(\sI)))$.  The dimension of the simple object $\Gamma(\sI)$ itself is not, however, invariant under equivalence, rather it transforms according to the square of the planar trace of the chosen equivalence:
\begin{align*}
\dim(\Gamma(\sI)) &:= \dim ( \Io_{\Gamma(\sI)} ) && \\
&= \dim( h_\sI^*\xo h_\sI ) && [\Io_{\Gamma(\sI)} \cong h_\sI^*\xo h_\sI] \\
&= \langle \tr_R (h_\sI)\rangle \dim(h_\sI^*) && [\Gamma'(\sI)\text{ simple}] \\
&= \langle \tr_R(h_\sI)\rangle \dim(h_\sI) && [\text{Prop.~\ref{prop:leftrighttrace}}] \\
&= \langle \tr_R(h_\sI)\rangle^2 \dim(\Io_{\Gamma'(\sI)}) && [\Gamma'(\sI)\text{ simple}] \\
&= \langle \tr_R(h_\sI)\rangle^2 \dim(\Gamma'(\sI)) &&
\end{align*}
\nid As this planar trace recurs as a scalar transformation factor, it is worth having a compact notation for it:
\[\lambda_\sI := \langle \tr_R(h_\sI)\rangle\]
Recall that this trace of a simple 1-morphism between simple objects is nonzero by Lemma \ref{lem:planartracenonzero}.  Altogether, the 1-simplex factor transforms by the square inverse of that trace factor:
\[
\left( \dim(\Gamma(\sI)) \dim(\End_{\tc{C}}(\Gamma(\sI))) n(\Gamma(\sI))\right)^{-1}
= \lambda_\sI^{-2} \left( \dim(\Gamma'(\sI)) \dim(\End_{\tc{C}}(\Gamma'(\sI))) n(\Gamma'(\sI))\right)^{-1}.
\]

\skiptocparagraph{The 2-simplex factor}

The 2-simplex factor has just one term for each 2-simplex $\sII \in K_2$, namely the dimension $\dim(\Gamma(\sII))$ of the simple 1-morphism label.  Given labels $\Gamma(\sII)$ and $\Gamma'(\sII)$ related by the equality $\Gamma(\sII) = h_{\partial^o_{02}\sII}^\ast \xo \Gamma'(\sII) \xo \left(h_{\partial^o_{01}\sII} \xz h_{\partial^o_{12}\sII}\right)$, the corresponding dimensions are related by a trace factor for each edge of the 2-simplex:
\begin{align*}
\dim(\Gamma(\sII)) &= \dim\left(h_{\partial^o_{[02]}\sII}^* \xo \Gamma'(\sII) \xo ( h_{\partial^o_{[01]}\sII} \xz h_{\partial^o_{[12]}\sII})\right) && \\
&= \dim\left(h^*_{\partial^o_{[02]}\sII} \xo \Gamma'(\sII)\right)~ \lambda_{\partial^o_{[01]}\sII}~ \lambda_{\partial^o_{[12]}\sII}
&& [\Gamma'(\partial^o_{[01]}\sII), \Gamma'(\partial^o_{[12]}\sII) \text{ simple}] \\
&= \lambda_{\partial^o_{[01]}\sII}~\lambda_{\partial^o_{[12]}\sII}~\lambda_{\partial^o_{[02]}\sII}~\dim(\Gamma'(\sII))
&& [\text{Prop~\ref{prop:leftrighttrace}}]
\end{align*}

\skiptocparagraph{The 3-simplex factor}

The 3-simplex factor is a tensor over the 3-simplices $\sIII \in K_3$ of the copairing $\cup_{\Gamma,\sIII}: k \to V^+(\Gamma,\sIII) \otimes V^-(\Gamma,\sIII)$.  To relate the copairing $\cup_{\Gamma,\sIII}$ and the copairing $\cup_{\Gamma',\sIII}$, we need to relate the corresponding associator state spaces $V^+(\Gamma,\sIII)$ and $V^+(\Gamma',\sIII)$, respectively $V^-(\Gamma,\sIII)$ and $V^-(\Gamma',\sIII)$.  To that end, for each 1-simplex $\sI \in K_1$, choose a 2-isomorphism $\beta_\sI : h_\sI^* \xo h_\sI \To \Io_{\Gamma(\sI)}$, and define, for each 3-simplex $\sIII \in K_3$, an isomorphism of associator state spaces as follows:
\begin{align*}
r^+_\sIII:V^+(\Gamma,\sIII) &\hspace{0.25cm}\to\hspace{0.25cm} V^+(\Gamma',\sIII) \\
\begin{tz}[td,scale=1.2]
\begin{scope}[xyplane=0]
\draw[slice] (0,0) to [out=up, in=\dl] (0.5,1) to [out=up, in=\dl] (1,2) to (1,3);
\draw[slice] (1,0) to [out=up, in=\dr] (0.5,1);
\draw[slice] (2,0) to [out=up, in=\dr] (1,2);
\end{scope}
\begin{scope}[xyplane=\h, on layer=superfront]
\draw[slice] (0,0) to [out=up, in=\dl] (1,2) to (1,3);
\draw[slice] (1,0) to [out=up, in=\dl] (1.5,1) to [out=up, in=\dr] (1,2);
\draw[slice] (2,0) to [out=up, in=\dr] (1.5,1);
\end{scope}
\begin{scope}[xzplane=0]
\draw[slice,short] (0,0) to (0, \h);
\draw[slice,short] (1,0) to (1,\h);
\draw[slice,short] (2,0) to (2,\h);
\end{scope}
\begin{scope}[xzplane=3]
\draw[slice,short] (1,0) to (1,\h);
\end{scope}
\coordinate (A) at (1.5,1,0.5*\h);
\draw[wire] (1,0.5,0) to [out=up, in=\dr] (A) to [out=\ur, in=down] (1,1.5,\h);
\draw[wire] (2,1,0) to [out=up, in=\dl] (A) to [out=\ul, in=down] (2,1,\h);
\node[dot] at (A){};
\end{tz}
&\hspace{0.25cm}\mapsto\hspace{0.25cm}
\def\d{-1.2}
\def\w{-0}
\def\l{3.5}
\begin{tz}[td,scale=1.3]
\begin{scope}[xyplane=0]
\draw[slice] (0,\d) to (0,0) to [out=up, in=\dl] (0.5,1) to [out=up, in=\dl] node[pos=0.2](R){} node[pos=0.6](L){} (1,2) to (1,\l);
\draw[slice] (1,\d) to (1,0) to [out=up, in=\dr] (0.5,1);
\draw[slice] (2,\d)to  (2,0) to [out=up, in=\dr] (1,2);
\end{scope}
\begin{scope}[xyplane=\h, on layer=superfront]
\draw[slice](0,\d) to  (0,0) to [out=up, in=\dl] (1,2) to (1,\l);
\draw[slice](1,\d) to  (1,0) to [out=up, in=\dl] (1.5,1) to [out=up, in=\dr] node[pos=0.2] (TR){} node[pos=0.6](TL){}(1,2);
\draw[slice] (2,\d) to (2,0) to [out=up, in=\dr] (1.5,1);
\end{scope}
\begin{scope}[xzplane=\d]
\draw[slice,short] (0,0) to (0, \h);
\draw[slice,short] (1,0) to (1,\h);
\draw[slice,short] (2,0) to (2,\h);
\end{scope}
\begin{scope}[xzplane=\l]
\draw[slice,short] (1,0) to (1,\h);
\end{scope}
\coordinate (A) at (1.5,1,0.5*\h);
\coordinate (B) at (1.5,0.8, 0.25*\h);
\coordinate (C) at (1.5,1.2, 0.7*\h);
\draw[wire] (1,0.5,0) to [out=up, in=\dr] (A) to [out=\ur, in=down] (1,1.5,\h);
\draw[wire] (2,1,0) to [out=up, in=\dl] (A) to [out=\ul, in=down] (2,1,\h);
\node[dot] at (A){};
\draw[wire,dashed, dashed ,dash pattern=on 5pt off 2.3pt, arrow data ={0.59}{<}] (\w,0,0) to (\w,0,\h);
\draw[wire,dashed, dashed ,dash pattern=on 5pt off 2.3pt, arrow data ={0.59}{<}] (\w,1,0) to (\w,1,\h);
\draw[wire,dashed, dashed ,dash pattern=on 5pt off 2.3pt, arrow data ={0.59}{<}] (\w,2,0) to (\w,2,\h);
\draw[wire,dashed, dashed ,dash pattern=on 5pt off 2.3pt, arrow data ={0.59}{>}] (2.7,1,0) to (2.7,1,\h);
\draw[wire, dashed, dash pattern=on 5pt off 1.5pt,arrow data={0.6}{<}] (L.center) to [out=up, in=\dl] (B.center);
\draw[wire, dashed, dash pattern=on 5pt off 2pt,arrow data={0.6}{>}] (R.center) to [out=up, in=\dr] (B.center);
\draw[wire, dashed, dash pattern = on 5pt off 2pt,arrow data={0.3}{>}] (TL.center) to [out=down, in=\ul] (C.center);
\draw[wire, dashed, dash pattern = on 5pt off 2.3pt,arrow data={0.45}{<}] (TR.center) to [out=down, in=\ur] (C.center);
\node[circle, scale=0.3, fill=black, draw] at (B){};
\node[circle, scale=0.3, fill=black, draw] at (C){};
\end{tz}
\end{align*}
\nid Here, the dashed lines denote the equivalence $h_\sI: \Gamma(\sI) \to \Gamma'(\sI)$ or its adjoint, and the two black dots denote the 2-isomorphism $\beta_\sI: h_{\sI}^* \xo h_{\sI}\iso \Io_{\Gamma(\sI)}$ and its inverse.  The isomorphism $r_\sIII^-: V^-(\Gamma,\sIII) \to V^-(\Gamma',\sIII)$ is defined analogously.

Recall that the pairing $\langle \cdot , \cdot \rangle_{\Gamma,\sIII} :  V^-(\Gamma,\sIII) \otimes V^+(\Gamma,\sIII) \to k$ is given by composing and taking a spherical trace.  Precomposing this pairing with the isomorphisms of associator state spaces, we see that the pairings for the labelings $\Gamma$ and $\Gamma'$ are related by a scalar factor for each 1-simplex in the 3-simplex in question:
\[
\langle \cdot ,\cdot  \rangle_{\Gamma,\sIII}  = \left( \lambda_{\partial^o_{[01]}\sIII}\lambda_{\partial^o_{[12]}\sIII}\lambda_{\partial^o_{[23]}\sIII}\lambda_{\partial^o_{[03]}\sIII}\right) \langle \cdot, \cdot \rangle_{\Gamma', \sIII} \xo \left( r_\sIII^- \otimes r_\sIII^+\right).
\]
It follows immediately that the corresponding copairings are related by the inverse scalar factors:
\[
\cup_{\Gamma,\sIII} =\left( \lambda_{\partial^o_{[01]}\sIII}\lambda_{\partial^o_{[12]}\sIII}\lambda_{\partial^o_{[23]}\sIII}\lambda_{\partial^o_{[03]}\sIII}\right)^{-1} \left(r_\sIII^+ \otimes r_{\sIII}^- \right)^{-1}\xo\cup_{\Gamma', \sIII}.
\]

\skiptocparagraph{The 4-simplex factor}

The 4-simplex factor is a tensor over the 4-simplices $\sIV \in K_4$ of 10j symbols such as
\[
\shrinker{.9}{
z(\Gamma,\sIV) : V^+(\Gamma,\partial^o_{[0123]}\sIV)\otimes V^+(\Gamma, \partial^o_{[0134]}\sIV)\otimes V^+(\Gamma,\partial^o_{[1234]}\sIV) \otimes V^-(\Gamma,\partial^o_{[0124]} \sIV )\otimes V^-(\Gamma, \partial^o_{[0234]} \sIV) \to k
}
\]
or
\[
\shrinker{.9}{
z(\Gamma, \sIV): V^+(\Gamma,\partial^o_{[0234]}\sIV) \otimes V^+(\Gamma, \partial^o_{[0124]}\sIV) \otimes V^-(\Gamma,\partial^o_{[1234]}\sIV) \otimes V^-(\Gamma, \partial^o_{[0134]}\sIV) \otimes V^-(\Gamma, \partial^o_{[0123]}\sIV)\to k
}
\]
depending on orientation.  Recall that this 10j symbol $z(\Gamma,\sIV)$ is defined as the spherical trace of one of the pentagonator composites depicted in Figure~\ref{fig:definitionz}.  To compare the 10j symbol $z(\Gamma,\sIV)$ with the 10j symbol $z(\Gamma',\sIV)$ for the alternative labeling $\Gamma'$, we precompose with an appropriate tensor product of the isomorphisms $r^+_\sIII$ and $r^-_\sIII$ of associator state spaces.  In the resulting spherical trace expression, after cancelling various $\beta_\sI$ isomorphisms, exactly five planar trace scalar factors remain, one for each edge of the 4-simplex:
\[
z(\Gamma,\sIV) = \left( \lambda_{\partial^o_{[01]}\sIV}\lambda_{\partial^o_{[12]}\sIV}\lambda_{\partial^o_{[23]}\sIV}\lambda_{\partial^o_{[34]}}\lambda_{\partial^o_{[04]}\sIV}\right)z(\Gamma',\sIV)\xo \bigg(\bigotimes_{\sIII \in K_3,\sIII \subseteq \sIV} r_{\sIII}^{\epsilon^{\sIV}_o(\sIII)} \bigg).
\]

\skiptocparagraph{Spherical links cause factor cancellation}

Combining the scalar factors calculated above, we see that the normalized 10j actions for the states $\Gamma$ and $\Gamma'$ are related by $N(\Gamma) = \gamma N(\Gamma')$ where
\begin{multline*}
\gamma = 
\bigg(\prod_{\sI\in \K_1} \lambda_\sI^{-2}\bigg) \cdot
\bigg( \prod_{\sII\in \K_2} \lambda_{\partial^o_{[01]}\sII}\lambda_{\partial^o_{[12]}\sII} \lambda_{\partial^o_{[02]}\sII}\bigg) \cdot \\
\bigg(\prod_{\sIII \in \K_3} \lambda_{\partial^o_{[01]}\sIII}\lambda_{\partial^o_{[12]}\sIII}\lambda_{\partial^o_{[23]}\sIII}\lambda_{\partial^o_{[03]}\sIII}\bigg)^{-1} \cdot
\bigg(\prod_{\sIV \in \K_4} \lambda_{\partial^o_{[01]}\sIV}\lambda_{\partial^o_{[12]}\sIV}\lambda_{\partial^o_{[23]}\sIV}\lambda_{\partial^o_{[34]}\sIV}\lambda_{\partial^o_{[04]}\sIV}\bigg)
\end{multline*}
We can now observe that the terms of this scalar factor cancel precisely when the link of every $k$-simplex, for $k \geq 1$, is a combinatorial sphere.
\begin{proof}[Proof of Lemma~\ref{lem:objectchange}]
As a convenient compact notation, when $\s \in K_p$ is a $p$-simplex, for $p$ equal to $2$ or $3$, define $\lambda_\s := \lambda_{\partial^o_{[0,p]}\s}$, that is $\lambda_\s$ is the trace factor associated to the maximal edge of the simplex $\s$.  By direct combinatorial rearrangement, we may rewrite the products appearing in the factor $\gamma$ as follows:
\[
\shrink{
\begin{array}{lrl}
\text{For $\sII \in \K_2$:}
&
\lambda_{\partial^o_{[01]}\sII}\lambda_{\partial^o_{[12]}\sII} \lambda_{\partial^o_{[02]}\sII} 
\hspace*{-.7ex}&\hspace*{-.5ex}= \bigg( \prod_{\sI \in \K_1,\sI \subseteq \sII}~ \lambda_\sI \bigg)
\\
\text{For $\sIII \in \K_3$:}
&
\lambda_{\partial^o_{[01]}\sIII}\lambda_{\partial^o_{[12]}\sIII}\lambda_{\partial^o_{[23]}\sIII}\lambda_{\partial^o_{[03]}\sIII}
\hspace*{-.7ex}&\hspace*{-.5ex}=
\bigg(\prod_{\sI\in \K_1, \sI \subseteq \sIII}\lambda_\sI\bigg) \bigg( \prod_{\sII\in \K_2, \sII \subseteq \sIII}\lambda_\sII \bigg)^{-1} (\lambda_{\sIII}^2)
\\
\text{For $\sIV \in \K_4$:}
& \hspace*{-1ex}
\lambda_{\partial^o_{[01]}\sIV}\lambda_{\partial^o_{[12]}\sIV}\lambda_{\partial^o_{[23]}\sIV}\lambda_{\partial^o_{[34]}\sIV}\lambda_{\partial^o_{[04]}\sIV}
\hspace*{-.7ex}&\hspace*{-.5ex}=
\bigg(\prod_{\sI\in \K_1, \sI \subseteq \sIV} \lambda_\sI \bigg)\bigg( \prod_{\sII\in \K_2, \sI \subseteq \sIV} \lambda_\sII \bigg)^{-1} \bigg( \prod_{\sIII \in \K_3, \sIII \subseteq \sIV} \lambda_{\sIII} \bigg)
\end{array}
}
\]
Collecting terms we have
\[
\gamma = \bigg(\prod_{\sI\in \K_1} \lambda_\sI^{\phi_\sI} \bigg)\bigg( \prod_{\sII\in \K_2} \lambda_\sII^{\phi_\sII} \bigg) \bigg( \prod_{\sIII \in \K_3} \lambda_\sIII^{\phi_{\sIII}}\bigg)
\]
where
\[\begin{array}{ll}
\phi_\sI= -2 + \left|\vphantom{\partial^2}\{ \sII\in \K_2~\middle|~ \sI\subseteq \sII\}\right| - \left|\{ \sIII \in \K_3~\middle|~ \sI\subseteq \sIII\}\right| + \left|\{ \sIV \in \K_4~\middle|~ \sI\subseteq \sIV\}\right| 
\hspace*{-.7ex}&\hspace*{-.5ex}= -2 +\chi(\lk(\sI))
\\
\phi_\sII = \left|\vphantom{\partial^2}\{\sIII \in \K_3~\middle|~ \sII\subseteq \sIII\}\right|- \left|\{ \sIV \in \K_4~\middle|~ \sII\subseteq \sIV\}\right|
\hspace*{-.7ex}&\hspace*{-.5ex}= \chi(\lk(\sII))
\\
\phi_{\sIII} = -2+ \left|\vphantom{\partial^2}\{ \sIV \in \K_4~\middle|~ \sIII \subseteq \sIV\}\right| 
\hspace*{-.7ex}&\hspace*{-.5ex}=-2 + \chi(\lk(\sIII))
\end{array}
\]
Here $\lk(\s)$ denotes the link of the simplex $\s$ and $\chi(\lk(\s))$ denotes the Euler characteristic of that link.  Because by assumption $K$ is a closed \emph{singular} combinatorial 4-manifold, the link of every 1-, 2-, and 3-simplex is piecewise-linearly homeomorphic to a combinatorial sphere, and therefore has Euler characteristic 2 or 0 depending on parity; the exponents in the transformation factor $\gamma$ vanish accordingly.
\end{proof}

\nid Note crucially that the preceding proof did not use the Euler characteristic of the link of a vertex of the triangulation, and therefore applies to singular (that is vertex-singular) combinatorial 4-manifolds.

\subsubsection{\for{toc}{Equivalences factor into 1-morphism-only and object-only equivalences}\except{toc}{Equivalences of states factor into 1-morphism-only and object-only equivalences}}

We can wrap up the proof of independence of the choice of simplicial skeleton by factoring any equivalence of states into one that changes only the 1-morphism labels and one that appropriately changes only the object labels.

\begin{proof}[Proof of Lemma~\ref{lem:10jinvariance}]
Observe that if two $\tc{C}$-states $\Gamma$ and $\Gamma'$ are equivalent, then (by composing with the 2-isomorphism between the chosen inverse equivalence $k_\sI$ and the adjoint inverse $h_\sI^\ast$) they are equivalent by a collection of adjoint inverse equivalences $h_\sI : \Gamma(\sI) \rightleftarrows \Gamma'(\sI) : h_\sI^\ast$ and isomorphisms $\Gamma(\sII) \cong h_{\partial^o_{02}\sII}^\ast \xo \Gamma'(\sII) \xo \left(h_{\partial^o_{01}\sII} \xz h_{\partial^o_{12}\sII}\right)$.  There is then a $\tc{C}$-state $\Gamma''$ with $\Gamma''(\sI) = \Gamma(\sI)$ and $\Gamma''(\sII) = h_{\partial^o_{02}\sII}^\ast \xo \Gamma'(\sII) \xo \left(h_{\partial^o_{01}\sII} \xz h_{\partial^o_{12}\sII}\right)$.  As $\Gamma$ and $\Gamma''$ are equivalent with identity $1$-equivalences, and $\Gamma''$ and $\Gamma'$ are equivalent with adjoint inverse equivalences and an equality of appropriate 2-simplex labels, the result follows from Lemma~\ref{lem:1morphchange} and Lemma~\ref{lem:objectchange}.
\end{proof}


\subsection{The state sum is independent of the vertex ordering}

Recall that in the state sum for an ordered oriented combinatorial 4-manifold $K^o$, the sum is over states $\Gamma: K^o_{(2)} \To \Delta \tc{C}^\sk$, that is labelings of the 1-simplices and 2-simplices of the semisimplicial 2-skeleton $K^o_{(2)}$ by elements of the skeletal labeling semisimplicial set $\Delta \tc{C}^\sk$.  We will show that for any two choices of global vertex ordering, there is a bijection of the two sets of (skeletal) states and that corresponding states have the same normalized 10j action.
\begin{lemma}[Reordering yields a 10j-preserving bijection of skeletal states] \label{lem:orderingbijection}
If $o$ and $o'$ are global vertex orderings of the combinatorial 4-manifold $K$, then there is a bijection $\tau: [K^o_{(2)}, \Delta \tc{C}^\sk] \cong [K^{o'}_{(2)}, \Delta \tc{C}^\sk]$ of skeletal states for the two orderings, such that the bijection preserves the normalized 10j action: $N(\tau(\Gamma)) = N(\Gamma)$.
\end{lemma}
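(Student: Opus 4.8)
The plan is to reduce the independence from the vertex ordering to the case of a single transposition of two consecutive vertices in the ordering, since any two total orders are related by a sequence of such transpositions, and then to analyze carefully how the state sum transforms under one such transposition. First I would observe that for two orderings $o$ and $o'$, a $\tc{C}$-state $\Gamma: K^o_{(2)} \To \Delta\tc{C}^\sk$ and a $\tc{C}$-state $\Gamma': K^{o'}_{(2)} \To \Delta\tc{C}^\sk$ should be declared `corresponding' when, on each simplex, the labels are related by the appropriate duals and adjoints dictated by the reordering of that simplex's vertices. Concretely, reordering the vertices of a $1$-simplex from $[v_0 v_1]$ to $[v_1 v_0]$ replaces an object label $A$ by its dual $A^\#$ (using the pivotal duality of $\tc{C}$), and reordering the vertices of a $2$-simplex replaces a $1$-morphism label by an appropriate composite of adjoints and folds---the notation established in Notations~\ref{not:labels}, \ref{not:parenthesisnotation}, and \ref{not:duals} is designed precisely for this bookkeeping. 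Since the target $\Delta\tc{C}^\sk$ is skeletal, the dual $A^\#$ is only isomorphic, not equal, to the chosen skeletal representative; but by Lemma~\ref{lem:10jinvariance}, replacing a state by an equivalent state does not change the normalized $10j$ action, so I may freely work up to equivalence and then transport back into the skeleton at the end. This gives the bijection $\tau$.

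The heart of the argument is then to show that this bijection preserves the normalized $10j$ action $N$. For this I would split $N$ into its five factors as in the proof of Lemma~\ref{lem:objectchange} (the $0$-, $1$-, $2$-, $3$-, and $4$-simplex factors) and track each under a single adjacent transposition $(v_a\, v_{a+1})$. A $1$-simplex, $2$-simplex, or $3$-simplex is affected by the transposition only if it contains both $v_a$ and $v_{a+1}$, and for such simplices the change in ordering of its vertices is exactly a transposition of two consecutive local vertices. The $1$-simplex factor is unchanged because $\dim(A) = \dim(A^\#)$ (Proposition~\ref{prop:leftrighttrace} shows left and right $2$-spherical traces agree, so in particular $\dim(\Io_A) = \dim(\Io_{A^\#})$), and because $n$ and $\dim(\End_{\tc{C}}(-))$ depend only on the component and the equivalence class. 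For the $2$-simplex factor I would use that the dimension of a $1$-morphism is invariant under taking adjoints and under composing with folds in the manner forced by reordering---this again follows from planar pivotality together with Proposition~\ref{prop:leftrighttrace} and the compatibility conditions of the pivotal structure (Definition~\ref{def:pivotal}). The $3$-simplex (copairing) factor and the $4$-simplex ($10j$ symbol) factor are the substantive ones: here I would show that reordering induces a canonical isomorphism between the relevant associator state spaces $V^\pm(\Gamma,\sIII)$ for the two orderings---built from the cusps $C$, $D$, the folds, and the interchangers---and that under these isomorphisms the pairings (hence the copairings) and the $10j$ symbols are intertwined, up to scalar factors coming from planar traces of folds and cusps on the edges of the simplex.

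The main obstacle will be verifying that these scalar correction factors, once collected over all simplices, cancel. This is structurally parallel to the cancellation at the end of the proof of Lemma~\ref{lem:objectchange}, where the exponents turned out to be $-2 + \chi(\lk(\s))$ or $\chi(\lk(\s))$ for the various simplices, vanishing because links of $1$-, $2$-, and $3$-simplices in a (singular) combinatorial $4$-manifold are spheres of Euler characteristic $0$ or $2$. Here the bookkeeping is more delicate because the correction factors for a transposition $(v_a\, v_{a+1})$ live only on the star of the edge $v_a v_{a+1}$, so I would need to re-derive the local tallying of how many higher simplices of each dimension contain a given face and again land on combinatorial Euler-characteristic identities for the links---crucially, the link of the edge $\{v_a, v_{a+1}\}$ itself (a combinatorial $2$-sphere) plays the role that the link of a $1$-simplex played before. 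A second, more diagrammatic obstacle is establishing the intertwining of the $10j$ symbols with the reordering isomorphisms: this amounts to showing that the pentagonator composite of Figure~\ref{fig:definitionz} transforms correctly when the surface diagram is redrawn with two sheets interchanged, which I expect to follow from sphericality (Definition~\ref{def:spherical}) together with the equality of left and right traces (Proposition~\ref{prop:leftrighttrace})---precisely the point being that moving the relevant configuration ``around the $3$-sphere'' is allowed. Once the single-transposition case is complete, a telescoping argument over a sequence of transpositions connecting $o$ to $o'$, invoking Lemma~\ref{lem:10jinvariance} to re-skeletalize after each step, finishes the proof.
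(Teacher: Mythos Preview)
Your overall strategy matches the paper's: reduce to a single adjacent transposition, define an explicit state transposition $\Gamma \mapsto \Gamma^\sigma$ via duals and adjoints, construct isomorphisms $\Phi^\pm_{\sIII,\sigma_\sIII}$ of the associator state spaces from folds, cusps, and interchangers, show these intertwine the copairings and the 10j symbols, and finally re-skeletalize via Lemma~\ref{lem:10jinvariance}.

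However, your anticipated ``main obstacle'' does not in fact arise. In the paper's argument there are \emph{no} residual scalar correction factors and hence no Euler-characteristic cancellation. The paper shows (Lemmas~\ref{lem:transposecopairing} and~\ref{lem:transpose10jsymbols}) that the $\Phi$ isomorphisms intertwine the pairings and the 10j symbols \emph{exactly}, and that each normalization factor is individually preserved: $\dim(A^\#)=\dim(A)$ by sphericality, $\dim(\Gamma^\sigma(\sII))=\dim(\Gamma(\sII))$ by Proposition~\ref{prop:leftrighttrace} together with sphericality, and $\dim(\End_{\tc{C}}(-))$ and $n(-)$ are manifestly invariant under dualizing. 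You are conflating this with the situation of Lemma~\ref{lem:objectchange}, where replacing object labels by \emph{equivalent} ones genuinely produces the $\lambda_\sI$ factors that must be cancelled via links; here, replacing labels by their pivotal duals and adjoints is governed by the spherical structure and gives equalities on the nose. The real work---which you relegate to your ``second, more diagrammatic obstacle''---is the case-by-case verification (three transpositions for the pairing, four for the 10j symbol) that the $\Phi$ maps intertwine exactly, using sphericality, Proposition~\ref{prop:leftrighttrace}, planar pivotality, and the cusp equations; see Figures~\ref{fig:pairingtrans}, \ref{fig:10jtranspose}, and~\ref{fig:10jtranspose2}.
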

\begin{corollary}[The state sum is invariant under vertex reordering] \label{cor:orderingindependence}
Given a spherical prefusion $2$-category, an oriented combinatorial $4$-manifold $K$, and any two orderings $o$ and $o'$ on the vertices of $K$, the corresponding state sums agree:
\[
Z_{\tc{C}}(K)_o = Z_{\tc{C}}(K)_{o'}.
\]
\end{corollary}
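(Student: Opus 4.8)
\textbf{Proof proposal for Corollary~\ref{cor:orderingindependence}.}

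The plan is to reduce the general case to the case of a single transposition of adjacent vertices (in the total order), since any two total orders on a finite vertex set are related by a finite sequence of such adjacent transpositions. So it suffices to prove the following: if $o'$ is obtained from $o$ by swapping two vertices $v < w$ that are adjacent in $o$ (no vertex lies strictly between them), then $Z_{\tc{C}}(K)_o = Z_{\tc{C}}(K)_{o'}$. First I would set up the combinatorial bookkeeping: swapping $v$ and $w$ changes the local orientation-induced ordering only on those simplices that contain both $v$ and $w$, and on each such simplex the effect on each face is a controlled operation---$1$-simplices get their $\xz$-order swapped (hence replaced by a dual $(-)^\#$), $2$-simplices get their labels replaced by a composite of the original label with duals $(-)^\#$ and adjoints $(-)^*$ and possibly interchangers $\phi$, and $3$-simplices similarly. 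Using Notations~\ref{not:labels}, \ref{not:parenthesisnotation}, \ref{not:duals}, and the pivotal structure (Definitions~\ref{def:planarpivotal}, \ref{def:monoidalplanarpivotal}, \ref{def:pivotal}), these replacements are all realized by genuine equivalences/isomorphisms: for a $1$-simplex the object $A$ and $A^\#$ are related by a \emph{pivotal adjoint equivalence} (via the folds $i_A,e_A$ and cusps $C_A,D_A$), and for a $2$-simplex the new label is isomorphic to a conjugate of the old one. Thus I would define the bijection $\tau\colon[K^o_{(2)},\Delta\tc{C}^\sk]\to[K^{o'}_{(2)},\Delta\tc{C}^\sk]$ on labels by applying these duality operations and then replacing the result by the unique skeletal representative in its isomorphism/equivalence class; this is manifestly a bijection.

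Next I would show $N(\tau(\Gamma)) = N(\Gamma)$ by analyzing each of the five factors of the normalized $10j$ action exactly as in the proof of Lemma~\ref{lem:objectchange}, but now tracking two separate sources of scalar discrepancy. The first source is the skeletalization step (passing from a conjugate/dual label to its skeletal representative), which by Lemma~\ref{lem:10jinvariance} contributes nothing to $N$---so that step can be discharged immediately by invoking the labeling-skeleton independence already established. The second, genuinely new source is the duality operations themselves: replacing $A$ by $A^\#$ rescales $\dim$ by $\dim(A^\#)/\dim(A)$, which by sphericality (Definition~\ref{def:spherical}) and the equality of front and back $2$-spherical traces is actually $1$---here the sphericality hypothesis is crucial and is precisely why a pivotal-but-not-spherical $2$-category would fail. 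More carefully: the object dimension $\dim(A) = \Tr(\It_{\Io_A})$ equals $\dim(A^\#)$ because the $A$-sphere and the $A^\#$-sphere are the back and front traces of $\It_{\Io_A}$, which agree by sphericality; the $1$-morphism dimensions $\dim(f)$ versus $\dim(f^\#)$ or $\dim({}^\#f)$ likewise agree by pivotality (Proposition~\ref{prop:leftrighttrace}) together with sphericality. For the $3$-simplex and $4$-simplex factors I would run the same cyclicity-of-the-planar-trace argument used in Lemmas~\ref{lem:1morphchange} and~\ref{lem:objectchange}: the interchangers $\phi$ and the cusp/fold $2$-isomorphisms introduced by the reordering appear in the pentagonator trace and in the pairing-defining traces, and they cancel pairwise by cyclicity, leaving only object- and $1$-morphism-dimension ratios, all of which are $1$ by the above. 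The combinatorial cancellation of the remaining Euler-characteristic exponents (if any survive) goes through verbatim as in the proof of Lemma~\ref{lem:objectchange}, using that links of $1$-, $2$-, and $3$-simplices are spheres and hence applies even in the singular case.

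Finally I would assemble: adjacent transpositions generate the symmetric group on the vertex set, $\tau$ is a $10j$-preserving bijection for each such transposition, and $Z_{\tc{C}}(K)_o = \sum_\Gamma N(\Gamma)$ is therefore unchanged; composing the bijections gives the statement for arbitrary $o, o'$.

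The main obstacle I anticipate is \emph{not} the scalar bookkeeping (which is a routine if lengthy extension of Lemma~\ref{lem:objectchange}) but rather verifying that the $10j$ symbol $z(\Gamma,\sIV)$ genuinely transforms correctly under the reordering: when $v$ and $w$ are swapped inside a $4$-simplex, the pentagonator composite in Figure~\ref{fig:definitionz} is reshuffled---the roles of the $[ijkl]$ associator vectors permute, dualization $(-)^*$ intervenes, and one must check that the $2$-spherical trace of the reshuffled composite equals (up to the already-cancelled scalars) the trace of the original. This is where one needs the full strength of the pivotal structure---condition C8 (the surface ribbon condition), the cusp-flip condition C6, and the swallowtail equations C1---to slide the defect strings past the cups and caps on the sphere. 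I expect this to require a careful diagrammatic argument in the style of~\cite{BMS}, comparing the two pentagonator traces by an explicit isotopy of the surface-with-defects on $S^3$, and it is the step most likely to need genuine new computation rather than citation of earlier lemmas. A cleaner alternative, worth attempting first, is to observe that the reordering-induced change of $z(\Gamma,\sIV)$ is forced by naturality of the associator state space identifications together with the definition of $z$ as a trace, so that the transformation law for $z$ under reordering is \emph{determined} by the transformation laws for the $V^\pm(\Gamma,\sIII)$ already worked out---reducing the diagrammatic burden to the single local model of the adjacent transposition.
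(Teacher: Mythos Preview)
Your overall strategy---reduce to an adjacent transposition, build a bijection of skeletal states via duality operations followed by skeletalization, and then verify preservation of the normalized 10j action factor by factor---is exactly the paper's approach, and your use of Lemma~\ref{lem:10jinvariance} to discharge the skeletalization step is precisely how the paper handles the passage from $\Gamma^\sigma$ back into $\Delta\tc{C}^\sk$ (Section~4.2.3). Your observation that sphericality and Proposition~\ref{prop:leftrighttrace} force $\dim(A^\#)=\dim(A)$ and $\dim(f^*)=\dim(f)$, and hence that the $0$-, $1$-, and $2$-simplex normalization factors are preserved on the nose, also matches the paper (end of the proof of Lemma~\ref{lem:transpositionnormalized}).

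Where your proposal diverges from the paper is in the handling of the $3$- and $4$-simplex factors. You anticipate that the reordering will introduce scalar discrepancies governed by planar-trace factors $\lambda_e$ which then cancel via the Euler-characteristic argument of Lemma~\ref{lem:objectchange}. That is not what happens here. The paper instead constructs, for each of the three nontrivial restrictions $\sigma_\sIII\in\{(01),(12),(23)\}$ of the transposition to a $3$-simplex, an \emph{explicit} isomorphism $\Phi^\pm_{\sIII,\sigma_\sIII}\colon V^\pm(\Gamma,\sIII)\to V^{\mp}(\Gamma^\sigma,\sIII)$ (built from folds, cusps, and adjunction units/counits; see Figure~\ref{fig:definitionPhi}), and then proves by direct diagrammatic computation that these isomorphisms intertwine the copairing \emph{exactly} (Lemma~\ref{lem:transposecopairing}, Figure~\ref{fig:pairingtrans}) and the 10j symbol \emph{exactly} (Lemma~\ref{lem:transpose10jsymbols}, Figures~\ref{fig:10jtranspose}--\ref{fig:10jtranspose2})---no residual scalar factors arise, so no link-combinatorics cancellation is needed. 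The ``cleaner alternative'' you sketch in your last paragraph is closer to the truth, but be warned: the paper does not get away with a pure naturality argument; it carries out the case-by-case surface-diagram manipulation for each of the four transposition types $\sigma_\sIV\in\{(01),(12),(23),(34)\}$ on a $4$-simplex, using sphericality, Proposition~\ref{prop:leftrighttrace}, planar pivotality, and cusp invertibility at specific steps. So the obstacle you correctly flagged is real and is resolved by explicit computation, not by a general principle.
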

\nid Accordingly, we will henceforth denote the state sum simply by $Z_{\tc{C}}(K)$.

\subsubsection{Transposition of states and associator states}\label{sec:transposition}

Of course it suffices to prove Lemma~\ref{lem:orderingbijection} when the two orderings are related by a single transposition of the order of two order-adjacent vertices.  Let $\sigma$ denote the permutation corresponding to such a transposition, and let $\sigma_\s$ denote the permutation of $0, \ldots, p$ induced by the restriction of $\sigma$ to the $p$-simplex $\s$.  The only relevant difference between the orderings $o$ and $o'$ are the face maps of simplices, which control the structure of the corresponding states; these face maps are related by $\partial^{o'}_{[i_1\cdots i_r]} \s = \partial^{o}_{[\sigma_{\s}(i_1)\cdots \sigma_{\s}(i_r)]} \s$, where $0 \leq i_1 < \cdots < i_r \leq p$.  Given a state for the order $o$, we define a corresponding state for a transposed order $o'$, and then define an isomorphism of the corresponding associator state spaces.

\skiptocparagraph{State transposition}

As before, let $o$ and $o'$ be vertex orderings related by a single adjacent transposition $\sigma$.  Define a state transposition map
\begin{align*}
[K^o_{(2)},\Delta \tc{C}] &\ra [K^{o'}_{(2)},\Delta \tc{C}] \\
\Gamma &\mapsto \Gamma^\sigma
\end{align*}
where for $\sI \in K_1$, the transposed state label is
\[
\Gamma^\sigma(\sI) = \left\{\begin{array}{ll} \Gamma(\sI) & \text{if } \sigma_\sI = \id \\ \Gamma(\sI)^\# & \text{if } \sigma_\sI = (01) \end{array}\right.
\]
and for $\sII \in K_2$, the transposed state label $\Gamma^\sigma(\sII)$ is shown in Figure~\ref{fig:translabels}, for each relevant restriction of the transposition $\sigma$ to the 2-simplex $\sII$.  In that figure, as in Notation~\ref{not:labels} and Notation~\ref{not:duals}, for instance $\conj{[01]}_\Gamma$ is shorthand for $\Gamma(\partial^o_{[01]} \sII)^\#$ and $[12]_\sig$ is shorthand for $\sig(\partial^{o'}_{[12]} \sII)$.  Henceforth, when we write $[ij]$ without a subscript, it refers implicitly to $[ij]_\Gamma$.  (Note that the dual of a simple object in a prefusion 2-category is again simple: an object is simple if and only if its identity is a simple 1-morphism, and taking mates induces an isomorphism between the endomorphisms of the identity of an object and the endomorphisms of the identity of its dual.)
\begin{figure}[h]
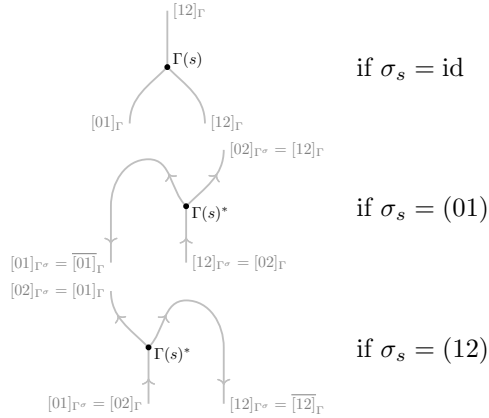

\[
\begin{array}{cl}
\begin{tz}[scale=0.5]
\draw[slice] (0,-0.5) to [out=up, in=\dl] (1,1) to (1,2.5);
\draw[slice] (2,-0.5) to [out=up, in=\dr] (1,1);
\node[dot] at (1,1) {};
\node[omor, right] at (1,1.2) {$\Gamma(\sII)$};
\node[obj, right] at (2,-0.5) {$[12]_\Gamma$};
\node[obj,left] at (0,-0.5) {$[01]_\Gamma$};
\node[obj,right] at (1,2.5) {$ [12]_\Gamma$};
\end{tz}
& \text{if }\sigma_s = \mathrm{id}
\\  
\begin{tz}[scale=0.5]
\draw[slice, arrow data ={0.2}{>},arrow data={0.8}{>}] (0,-0.5) to (0,1) to [out=\ur, in=down] (1,2.5);
\draw[slice, arrow data ={0.2}{>},arrow data ={0.9}{>}] (0,1) to [out=\ul, in=right] (-1, 2.25) to [out=left, in=up]   (-2, 1)  to (-2,-0.5);
\node[dot] at (0,1) {};
\node[omor, right] at (0,0.8) {$\Gamma(\sII)^*$};
\node[obj, right] at (0,-0.5) {$[12]_{\sig} = [02]_\Gamma$};
\node[obj, right] at (1,2.5) {$[02]_{\sig} = [12]_{\Gamma}$};
\node[obj, left] at (-2,-0.5) {$[01]_\sig = \conj{[01]}_\Gamma$};
\end{tz}
& \text{if }\sigma_s = (01)
\\ 
\begin{tz}[scale=0.5,xscale=-1]
\draw[slice, arrow data ={0.2}{>},arrow data={0.8}{>}] (0,-0.5) to (0,1) to [out=\ur, in=down] (1,2.5);
\draw[slice, arrow data ={0.2}{>},arrow data ={0.9}{>}] (0,1) to [out=\ul, in=right] (-1, 2.25) to [out=left, in=up]   (-2, 1)  to (-2,-0.5);
\node[dot] at (0,1) {};
\node[omor, right] at (0,0.8) {$\Gamma(\sII)^*$};
\node[obj, left] at (0,-0.5) {$[01]_\sig = [02]_\Gamma$};
\node[obj, left] at (1,2.5) {$[02]_\sig = [01]_\Gamma$};
\node[obj, right] at (-2,-0.5) {$[12]_\sig = \conj{[12]}_{\Gamma}$};
\end{tz}
& \text{if }\sigma_s=(12)
\end{array}
\]
\caption{The transposed state label $\Gamma^\sigma(\sII)$, for each transposition $\sigma_\sII$ of the 2-simplex $\sII$.
\label{fig:translabels} }
\end{figure}

\skiptocparagraph{Associator state transposition}

Recall from Definition~\ref{def:defvectorspaceV} that for a state $\Gamma$ and a 3-simplex $\sIII \in K_3$, the positive associator state space is $V^+(\Gamma, \sIII) := \Hom_{\tc{C}}\left( [(012)3]_\Gamma^\sIII, [0(123)]_\Gamma^\sIII\right)$, where the 1-morphisms $[(012)3]_\Gamma^\sIII$ and $[0(123)]_\Gamma^\sIII$ are each composites of two 2-simplex labels, as in Notation~\ref{not:parenthesisnotation}.  The negative associator state space is similar, taking $\Hom$ in the opposite direction.  Representative elements of these associator state spaces are depicted just after Definition~\ref{def:defvectorspaceV}.

For a vertex transposition $\sigma$ and the corresponding transposed state $\Gamma^\sigma$, the associator state spaces are Hom-spaces between pairwise composites of the transposed state labels $\Gamma^\sigma(\sII)$ defined above.  For instance, the negative associator state space $V^-(\sig,\sIII)$ is shown in Figure~\ref{fig:negassoc}, for the various restrictions of the transposition $\sigma$ to the particular 3-simplex $\sIII$.
\begin{figure}[h]
\def\scl{0.35}
\def\sh{\!\!}
\def\vpc{\vphantom{\conj{[01]}}}
\vspace{-0.3cm}
\[\arraycolsep=5pt\def\arraystretch{5}
\begin{array}{cl}
\Hom_{\tc{C}}\sh\left(\!
\begin{tz}[scale=\scl,xscale=1.5]
\draw[slice,arrow data={0.21}{<}] (0,0) to (0,3) to [out=up, in=up, looseness=1.5] (1,3) to [out=down, in=\ul] (1.5,2);
\draw[slice] (1,0) to [out=up, in=\dl] (1.5,1) to (1.5,2) to [out=\ur, in=down] (2,3) to (2,4);
\draw[slice] (2,0) to [out=up, in=\dr] (1.5,1);
\node[dot] (B) at (1.5,1){};
\node[dot] (T) at (1.5,2){};
\node[obj, left] at (0,0) {$\conj{[01]}$};
\node[obj, left] at (1,0) {$[02]\vpc$};
\node[obj, right] at (2,0) {$[23]\vpc$};
\node[obj, right] at (2,4) {$[13]$};
\node[omor, right] at ([yshift=4pt]B) {$[023]$};
\node[omor, right] at ([yshift=-6pt]T) {$\conj{[013]}$};
\end{tz}
,\!
\begin{tz}[scale=\scl]
\draw[slice,arrow data ={0.2}{<}] (0,0) to (0,2) to [out=up, in=up, looseness=1.5] (1,2) to [out=down, in=\ul] (1.5,1) to (1.5,0);
\draw[slice] (1.5,1) to [out=\ur, in=down] (2,2) to [out=up, in=\dl] (2.5,3) to (2.5,4);
\draw[slice] (2.5,3) to [out=\dr, in=up] (3,2) to (3,0);
\node[dot](L) at (1.5,1) {};
\node[dot] (R) at (2.5,3){};
\node[obj, left] at (0,0) {$\conj{[01]}$};
\node[obj, left] at (1.5,0) {$[02]\vpc$};
\node[obj, right] at (3,0) {$[23]\vpc$};
\node[obj, right] at (2.5,4) {$[13]$};
\node[omor, right] at ([yshift=-4pt]L) {$\conj{[012]}$};
\node[omor, right] at ([yshift=3pt]R) {$[123]$};
\end{tz}
\!
\right)
& \text{if } \sigma_{\sIII} = (01)
\\
\Hom_{\tc{C}}\sh\left(\!
\begin{tz}[scale=\scl,xscale=-1]
\draw[slice] (0,0) to (0,0.5) to [out=\ul, in=down] (-0.5,1.5) to [out=up, in=\dl] (1.5,3) to (1.5,4);
\draw[slice,arrow data ={0.78}{>}] (0,0.5) to [out=\ur, in=down] (0.5,1.5) to [out=up, in=up, looseness=1.5] (1.5,1.5) to (1.5,0);
\draw[slice] (1.5,3) to [out=\dr, in=up] (3, 1.5) to (3,0);
\node[dot] (B) at (0,0.5){};
\node[dot] (T) at (1.5,3){};
\node[obj, right] at (0,0) {$[13]\vpc$};
\node[obj, left] at (1.5,0) {$\conj{[12]}$};
\node[obj, left] at (3,0) {$[02]\vpc$};
\node[obj, right] at (1.5,4) {$[03]$};
\node[omor, left] at ([yshift=-5pt]B) {$\conj{[123]}$};
\node[omor, left] at ([yshift=5pt]T) {$[023]$};
\end{tz}
,
\begin{tz}[scale=\scl]
\draw[slice] (0,0) to (0,0.5) to [out=\ul, in=down] (-0.5,1.5) to [out=up, in=\dl] (1.5,3) to (1.5,4);
\draw[slice,arrow data ={0.78}{>}] (0,0.5) to [out=\ur, in=down] (0.5,1.5) to [out=up, in=up, looseness=1.5] (1.5,1.5) to (1.5,0);
\draw[slice] (1.5,3) to [out=\dr, in=up] (3, 1.5) to (3,0);
\node[dot] (B) at (0,0.5){};
\node[dot] (T) at (1.5,3){};
\node[obj, left] at (0,0) {$[02]\vpc$};
\node[obj, right] at (1.5,0) {$\conj{[12]}$};
\node[obj, right] at (3,0) {$[13]\vpc$};
\node[obj, right] at (1.5,4) {$[03]$};
\node[omor, right] at ([yshift=-5pt]B) {$\conj{[012]}$};
\node[omor, right] at ([yshift=5pt]T) {$[013]$};
\end{tz}
\!
\right)
& \text{if } \sigma_{\sIII} = (12)
\\
\Hom_{\tc{C}}\sh\left(\!
\begin{tz}[scale=\scl,xscale=-1]
\draw[slice,arrow data ={0.2}{<}] (0,0) to (0,2) to [out=up, in=up, looseness=1.5] (1,2) to [out=down, in=\ul] (1.5,1) to (1.5,0);
\draw[slice] (1.5,1) to [out=\ur, in=down] (2,2) to [out=up, in=\dl] (2.5,3) to (2.5,4);
\draw[slice] (2.5,3) to [out=\dr, in=up] (3,2) to (3,0);
\node[dot](L) at (1.5,1) {};
\node[dot] (R) at (2.5,3){};
\node[obj, left] at (0,0) {$\conj{[01]}$};
\node[obj, left] at (1.5,0) {$[02]\vpc$};
\node[obj, left] at (3,0) {$[23]\vpc$};
\node[obj, right] at (2.5,4) {$[13]$};
\node[omor, right] at ([yshift=-4pt]L) {$\conj{[012]}$};
\node[omor, right] at ([yshift=3pt]R) {$[123]$};
\end{tz}
\,\,\,\,,\!
\begin{tz}[scale=\scl,xscale=-1.5]
\draw[slice,arrow data={0.21}{<}] (0,0) to (0,3) to [out=up, in=up, looseness=1.5] (1,3) to [out=down, in=\ul] (1.5,2);
\draw[slice] (1,0) to [out=up, in=\dl] (1.5,1) to (1.5,2) to [out=\ur, in=down] (2,3) to (2,4);
\draw[slice] (2,0) to [out=up, in=\dr] (1.5,1);
\node[dot] (B) at (1.5,1){};
\node[dot] (T) at (1.5,2){};
\node[obj, left] at (0,0) {$\conj{[01]}$};
\node[obj, left] at (1,0) {$[02]\vpc$};
\node[obj, left] at (2,0) {$[23]\vpc$};
\node[obj, left] at (2,4) {$[13]$};
\node[omor, right] at ([yshift=2pt]B) {$[023]$};
\node[omor, right] at ([yshift=-2pt]T) {$\conj{[013]}$};
\end{tz}
\,
\right)
& \text{if } \sigma_{\sIII} = (23)
\end{array}
\]
\caption{The negative associator state space $V^-(\sig,\sIII)$, for the transpositions $\sigma_\sIII$ of the 3-simplex $\sIII$.
\label{fig:negassoc} }
\end{figure}

We now define isomorphisms of associator state spaces
\[
\Phi_{\sIII, \sigma_{\sIII}}^\pm: V^\pm(\Gamma, \sIII) \to V^{\pm\sgn(\sigma_{\sIII})}\left( \sig, \sIII\right)
\]
from the spaces for labeling $\Gamma$ to the spaces for transposed labeling $\Gamma^\sigma$.  Here $\sgn(\sigma_{\sIII})$ denotes the sign of the permutation $\sigma_\sIII$.  When $\sigma_\sIII = \id$, we set $\Phi^{\pm}_{\sIII, \id} := \id_{V^{\pm}(\Gamma, \sIII)}$.  When $\sigma$ restricts nontrivially to the 3-simplex $\sIII$, the isomorphism $\Phi_{\sIII, \sigma_\sIII}^+$ takes an associator state $\alpha \in V^+(\Gamma, \sIII)$ to one of the three composites $\Phi_{\sIII,\sigma_\sIII}^+(\alpha) \in V^-(\sig,\sIII)$ depicted in Figure~\ref{fig:definitionPhi}, according to the particular transposition $\sigma_\sIII$.  The negative isomorphism $\Phi_{\sIII, \sigma_\sIII}^-$ is defined analogously by the vertical reflections of the diagrams in that figure.

\begin{figure}[h]
\begin{calign}\nonumber 
\hspace{-.6cm}
\begin{tikzpicture}[td,scale=1]
 \begin{scope}[xyplane=0]
 	\draw[slice] (0,0) to (0,1) to [out=up, in=\dl] (0.5,3) to (0.5,4) to[out=\ur, in=down] 		(1.5,6.5);
	\draw[slice] (1.5,0) to (1.5,1.5) to [out=up, in=\dr] (0.5,3);
	\draw[slice] (-1.5,0) to (-1.5,4) to [out=up, in=left] (-0.5,5) to [out=right, in=\ul] 			(0.5,4);
 \end{scope}
 \begin{scope}[xyplane=\h]
 	\draw[slice] (0,0) to (0,1) to [out= \ul, in=\dl, looseness=2] (0,2) to [out=up, in=\dl] 		(0.5,3) to (0.5,4) to[out=\ur, in=down] (1.5,6.5);
	\draw[slice] (0,1) to [out=\ur, in=\dr, looseness=2] (0,2);
	\draw[slice] (1.5,0) to (1.5,1) to [out=up, in=\dr] (0.5,3);
	\draw[slice, on layer=front] (-1.5,0) to (-1.5,4) to [out=up, in=left] (-0.5,5);
	\draw[slice] (-0.5,5) to [out=right, in=\ul] 			(0.5,4);
 \end{scope}
 \begin{scope}[xyplane=2*\h]
 	\draw[slice] (0,0) to (0,1);
 	\draw[slice, on layer =front] (0,1) to [out= \ul, in=\dl, looseness=1.5] (0.5,3);
	\draw[slice] (0.5,3) to (0.5,4) to[out=\ur, in=down] (1.5,6.5);
	\draw[slice] (0,1) to [out=\ur, in=\dl] (1,2);
	\draw[slice] (1.5,0) to [out=up, in=\dr](1,2) to [out=up, in=\dr] (0.5,3);
	\draw[slice, on layer=front] (-1.5,0) to (-1.5,4) to [out=up, in=left] (-0.5,5);
	\draw[slice] (-0.5,5) to [out=right, in=\ul] (0.5,4);
 \end{scope}
 \begin{scope}[xyplane=3*\h]
 	\draw[slice] (0,0) to (0,1);
 	\draw[slice, on layer=front] (0,1) to [out= \ul, in=down] (-0.5,2) to (-0.5, 4) to [out=up, in=right] (-1,5) to [out=left, in=up] (-1.5, 4) to (-1.5, 0);
	\draw[slice] (0,1) to [out=\ur, in=\dl] (1,2);
	\draw[slice] (1.5,0) to [out=up, in=\dr](1,2) to[out=up, in=down] (1.5, 6.5);
 \end{scope}
 \begin{scope}[xyplane=4*\h, on layer=front]
 	\draw[slice] (0,0) to (0,1) to [out= \ul, in=down] (-0.5,2) to [out=up, in=right] 					(-1,2.5);
 	\draw[slice,on layer=front] (-1,2.5) to [out=left, in=up] (-1.5, 2) to (-1.5, 0);
	\draw[slice] (0,1) to [out=\ur, in=\dl] (1,4);
	\draw[slice] (1.5,0) to [out=up, in=\dr](1,4) to[out=up, in=down] (1.5, 6.5);
 \end{scope}
 \begin{scope}[xzplane=0]
 \draw[slice,short] 	(-1.5,0) to (-1.5, 4*\h);
\draw[slice,short] (0,0) to (0,4*\h);
 \draw[slice,short]			(1.5,0) to (1.5,4*\h);
 \end{scope}
  \begin{scope}[xzplane=6.5]
 \draw[slice,short] (1.5,0) to (1.5,4*\h);
 \end{scope}
 \draw[slice, on layer=front] (4.95,-0.81, 0) to (4.95,-0.81 , 2*\h) to [out=up, in=down] (5.01,-1.05, 3*\h) to [out=up, in=down] node[mask point, pos=0.87](MP){} (2.49, -1.1, 4*\h);
  \coordinate (A) at (2.5, 0.5, 1.5*\h);
 \begin{scope}
 \draw[wire,arrow data ={0.11}{<}] (4,0.5,0) tonode[pos=0.15](label1){} (4,0.5, 2.1*\h) to [out=up, in=up, looseness=4] (3, 0.5, 2.1*\h);
  \draw[wire, on layer =front] (3,0.5, 1.9*\h) to (3,0.5, 2.1*\h);
 \draw[wire,arrow data={0.17}{>}] (3,0.5,0) to node[pos=0.3](label2){} (3,0.5, \h) to [out=up, in=\dl] (A) to [out=\ul, in=down] (3,0.5, 1.9*\h);
 \draw[wire] (2,0, \h) to [out=down, in=down, looseness=4] (1,0, \h) to  (1,0,1.9*\h);
 \draw[wire, on layer=front] (1,0,1.9*\h) to (1,0,2.1*\h);
 \draw[wire] (1,0,2.1*\h) to (1,0, 2.8*\h);
 \draw[wire, on layer= front] (1,0,2.8*\h) to (1,0,3.2*\h);
 \draw[wire, arrow data ={0.8}{<}] (1,0,3.2*\h) to node[pos=0.81](label3){} (1,0, 4*\h);
 \cliparoundone{MP}{\draw[wire, arrow data ={0.94}{>}] (2,0,\h) to [out=up, in=\dr] (A) to [out=\ur, in=down] (2,1, 2*\h) to (2,1,3*\h) to [out=up, in=down, in looseness=2] node[pos=0.9](label4){}(4,1, 4*\h);}
 \end{scope}
 \node[dot] at (A) {};
  \node[obj, below right] at (0.1,-1.4,0) {$\overline{[01]}$};
 \node[obj, below right] at (0.1,0.1,0) {$[02]$};
 \node[obj, below right] at (0.1,1.6,0) {$[23]$};
 \node[obj, below left] at (6.4, 1.6, 0){$[13]$};
 \node[tmor, right] at ([xshift=0.1cm]A){$\alpha$};
 \node[omor, left] at(label1) {$\overline{[013]}$}; 
 \node[omor, right] at(label2) {$[023]$}; 
  \node[omor, right] at(label3) {$\overline{[012]}$}; 
  \node[omor, left] at(label4) {$[123]$}; 
 \end{tikzpicture}
&
\def\d{-1.5}
\def\hl{-0.25}
\def\sl{0.5}
\def\fixh{1.75}
\begin{tikzpicture}[td,scale=1]
\begin{scope}[xyplane=0, on layer=superback]
\draw[slice] (0,\d) to (0,0) to [out=up, in=\dl] (1.5,7) to (1.5,8.5);
\draw[slice] (1.5,\d) to  (1.5,1.75) to
[out=up, in=\ul,  out looseness=1, in looseness=2]node[pos=\stdr] (R1){} (3,1.5);
\draw[slice] (3,\d) to  (3,1.5) to [out=\ur, in=\dr] (1.5,7);
\end{scope}
\begin{scope}[xyplane=\h]
\draw[slice] (0,\d) to (0,2.5) to [out=\ul, in=\dl, looseness=1.5] (0,5.25) to [out=up, in=\dl] (1.5, 7) to (1.5, 8.5);
\draw[slice] (0,2.5) to [out=\ur, in=\dr, looseness=1.5] (0,5.25);
\draw[slice, ] (1.5,\d) to (1.5,\fixh);
\draw[slice, on layer=back] (1.5, \fixh) to 
[out =up, in=\ul, in looseness=2] node[pos=\stdr] (R2){} (3,1.5);
\draw[slice, on layer=back] (3,\d) to (3,1.5) to [out=\ur, in=\dr] (1.5,7);
\end{scope}
\begin{scope}[xyplane=2*\h]
\draw[slice, on layer=front] (0,\d) to  (0,\hl) to [out=\ul, in=\dl, looseness=0.5] (0,5.25) to [out=up, in=\dl] (1.5, 7) to (1.5, 8.5);
\draw[slice, on layer=front] (0,\hl) to [out=\ur, in=\dr, looseness=0.5] (0,5.25);
\draw[slice, on layer=front] (1.5,\d) to (1.5,0) to (1.5,2.75) to  [out=up, in=up, looseness=2] node[pos=\stdr] (R3){} (2.25, 2.75);
\draw[slice, on layer=back] (2.25, 2.75) to [out=down, in=\ul] (3,2.5);
\draw[slice, on layer=back] (3,\d) to (3,0) to (3,2.5) to [out=\ur, in=\dr, looseness=0.8] (1.5,7);
\end{scope}
\begin{scope}[xyplane=3*\h]
\draw[slice, on layer=front] (0,\d) to (0,\hl) to [out=\ul, in=\dl, looseness=\sl] (1.5,7)  to (1.5, 8.5);
\draw[slice] (0,\hl) to [out=\ur, in=\dl, out  looseness=0.4, in looseness=2] (3,6);
\draw[slice] (1.5,\d) to (1.5,2.75) to  [out=up, in=up, looseness=2] (2.25, 2.75);
\draw[slice, on layer=back] (2.25, 2.75) to [out=down, in=\ul] (3,2.5);
\draw[slice, on layer=back] (3,\d) to (3,2.5) to [out=\ur, in=\dr, looseness=0.7] (3,6) to [out=up, in=\dr] (1.5,7);
\end{scope}
\begin{scope}[xyplane=4*\h]
\draw[slice, on layer=front] (0,\d) to (0,\hl) to [out=\ul, in=\dl, looseness=\sl] (1.5,7)  to (1.5, 8.5);
\draw[slice] (0,\hl) to [out=\ur, in=down] (0.75, 4.25) to [out=up, in=up, looseness=2] node[pos=\stdr] (L4){} (1.5, 4.25) to [out=down, in=down, looseness=2]node[pos=\stdl](M4){} (2.25, 4.25) to [out=up, in=\dl] (3,6);
\draw[slice] (1.5,\d) to (1.5,2.75) to  [out=up, in=up, looseness=2] node[pos=\stdr] (R4){}(2.25, 2.75);
\draw[slice, on layer=back] (2.25,2.75) to [out=down, in=\ul] (3,2.5);
\draw[slice, on layer=back] (3,\d) to (3,2.5) to [out=\ur, in=\dr, looseness=0.7] (3,6) to [out=up, in=\dr] (1.5,7);
\end{scope}
\begin{scope}[xyplane=5*\h]
\draw[slice, on layer=front] (0,\d) to (0,\hl) to [out=\ul, in=\dl, looseness=\sl] (1.5,7)  to (1.5, 8.5);
\draw[slice] (0,\hl) to [out=\ur, in=down] (0.75, 4.25) to [out=up, in=up, looseness=2]  node[pos=\stdr] (L5){}(1.5, 4.25) to (1.5,0) to (1.5,\d);
\draw[slice, on layer=back] (3,2.5) to [out=\ul, in=\dl, looseness=0.6] (3,6);
\draw[slice, on layer=back] (3,\d) to (3,2.5) to [out=\ur, in=\dr, looseness=0.7] (3,6) to [out=up, in=\dr] (1.5,7);
\end{scope}
\begin{scope}[xyplane=6*\h]
\draw[slice, on layer=front] (0,\d) to (0,\hl) to [out=\ul, in=\dl, looseness=\sl] (1.5,7)  to (1.5, 8.5);
\draw[slice, on layer=front] (0,\hl) to [out=\ur, in=down] (0.75, 4.75) to [out=up, in=up, looseness=2] node[pos=\stdr] (L6){}(1.5, 4.75) to (1.5,\d);
\draw[slice, on layer=superback] (3,2.5) to [out=\ul, in=\dl, looseness=1.5] (3,4);
\draw[slice, on layer=superback] (3,\d) to (3,2.5) to [out=\ur, in=\dr, looseness=1.5] (3,4) to [out=up, in=\dr] (1.5,7);
\end{scope}
\begin{scope}[xyplane=7*\h, on layer=superfront]
\draw[slice] (0,\d) to (0,\hl) to [out=\ul, in=\dl, looseness=\sl] (1.5,7)  to (1.5, 8.5);
\draw[slice] (0,\hl) to [out=\ur, in=down] (0.75, 4.75) to [out=up, in=up, looseness=2] node[pos=\stdr] (L7){}(1.5, 4.75) to (1.5,\d);
\draw[slice] (3,\d) to (3,2.5) to  (3,4.5) to [out=up, in=\dr] (1.5,7);
\end{scope}
\begin{scope}[xzplane=\d] 
\draw[slice, short] (0,0) to (0,7*\h);
\draw[slice, short] (1.5,0) to (1.5,7*\h);
\draw[slice, short] (3,0) to (3,7*\h);
\end{scope}
\begin{scope}[xzplane=8.5] 
\draw[slice, short] (1.5,0) to (1.5,7*\h);
\end{scope}
\coordinate(A) at (6.5,1.5,2.5*\h);
\coordinate (cusp) at (4.2,1.5,3.5*\h);
\draw[slice] (cusp.center) to [out=\ulcusp, in=down] (L4.center) to (L5.center) to [out=up, in=down, out looseness=2] node[mask point, pos=0.51](MPT){} (L6.center) to (L7.center);
\draw[wire, on layer=front,arrow data={0.93}{<}] (5.25,0, \h) to [out=down, in=down, looseness=1.8] (2.5,0,\h) to [out=up, in=down, in looseness=0.5]node[mask point, pos=0.39] (MPB){} node[mask point, pos=0.68](MPB2){}(\hl,0,2*\h) to node[pos=0.89](label3){} (\hl,0, 7*\h);
\draw[wire, on layer=back, arrow data={0.23}{<}](1.5,3,0)to  node[pos=0.2](label4){}(1.5,3,\h);
\cliparoundone{MPB2}{\draw[wire] (1.5, 3, \h) to [out=up, in=down, in looseness=2, out looseness=1] (2.5,3,2*\h);}
\draw[wire, on layer=back] (2.5, 3, 2*\h) to (2.5, 3, 6*\h);
\draw[wire,on layer=superfront] (5.25,0, \h) to (5.25, 0, 2*\h) to [out=up, in=\dr] (A.center);
\draw[wire,on layer=front, arrow data={0.074}{>}, arrow data={0.93}{>}] (7,1.5,0) to  node[pos=0.25](label1){} (7,1.5, 2*\h) to [out=up, in=\dl] (A.center) to [out=\ul, in=down] (7, 1.5, 3*\h) to node[pos=0.87](label2){} (7,1.5,7*\h);
\cliparoundone{MPT}{\draw[wire] (A.center) to [out=\ur, in=down] (6,3,3*\h) to (6,3,5*\h) to [out=up, in=down] (4., 3, 6*\h) to [out=up, in=up, looseness=1.8]  (2.5,3, 6*\h);}
\coordinate (Ru) at (R4.center|-M4.center);
\cliparoundone{MPB}{\draw[slice] (R1.center) to (R2.center)to [out=up, in=down] (R3.center) to (R4.center)to (Ru.center) to[out=up, in=up, looseness=4.5]  (M4.center) to [out=down, in=\urcusp] (cusp.center);}
\node[dot] at (A){};
 \node[tmor, right] at ([xshift=0.1cm]A){$\alpha$};
  \node[obj, below right] at (\d+0.1,0.1,0) {$[02]$};
 \node[obj, below right] at (\d+0.1,1.6,0) {$\overline{[12]}$};
 \node[obj, below right] at (\d+0.1,3.1,0) {$[13]$};
 \node[obj, below left] at (8.4, 1.6, 0){$[03]$};
 \node[omor, left] at (label1.center) {$[023]$};
 \node[omor, left] at (label2.center) {$[013]$};
 \node[omor, right] at (label3.center) {$\overline{[012]}$};
  \node[omor, right] at (label4.center) {$\overline{[123]}$};
\end{tikzpicture}
&
 \begin{tikzpicture}[td]
 \begin{scope}[xyplane=0,xscale=-1]
 	\draw[slice] (0,0) to (0,1) to [out= \ul, in=down] (-0.5,2) to [out=up, in=right] 					(-1,2.5);
 	\draw[slice] (-1,2.5) to [out=left, in=up] (-1.5, 2) to (-1.5, 0);
	\draw[slice] (0,1) to [out=\ur, in=\dl] (1,4);
	\draw[slice] (1.5,0) to [out=up, in=\dr](1,4) to[out=up, in=down] (1.5, 5.5);
 \end{scope}
  \begin{scope}[xyplane=\h,xscale=-1]
 	\draw[slice] (0,0) to (0,1);
 	\draw[slice] (0,1) to [out= \ul, in=down] (-0.5,2) to (-0.5, 4) to [out=up, in=right] (-1,5) to [out=left, in=up] (-1.5, 4) to (-1.5, 0);
	\draw[slice] (0,1) to [out=\ur, in=\dl] (1,2);
	\draw[slice, on layer=front] (1.5,0) to [out=up, in=\dr](1,2) to[out=up, in=down] (1.5, 5.5);
 \end{scope}
  \begin{scope}[xyplane=2*\h,xscale=-1]
 	\draw[slice] (0,0) to (0,1);
 	\draw[slice, on layer =front] (0,1) to [out= \ul, in=\dl, looseness=1.5] (0.5,3);
	\draw[slice] (0.5,3) to (0.5,4) to[out=\ur, in=down] (1.5,5.5);
	\draw[slice] (0,1) to [out=\ur, in=\dl] (1,2);
	\draw[slice, on layer=front] (1.5,0) to [out=up, in=\dr](1,2) to [out=up, in=\dr] (0.5,3);
	\draw[slice, on layer=back] (-1.5,0) to (-1.5,4) to [out=up, in=left] (-1,5.5);
	\draw[slice] (-1,5.5) to [out=right, in=\ul] 			(0.5,4);
 \end{scope}
  \begin{scope}[xyplane=3*\h,xscale=-1]
 	\draw[slice] (0,0) to (0,1) to [out= \ul, in=\dl, looseness=2] (0,2) to [out=up, in=\dl] 		(0.5,3) to (0.5,4) to[out=\ur, in=down] (1.5,5.5);
	\draw[slice] (0,1) to [out=\ur, in=\dr, looseness=2] (0,2);
	\draw[slice, on layer=front] (1.5,0) to (1.5,1) to [out=up, in=\dr] (0.5,3);
	\draw[slice, on layer=back] (-1.5,0) to (-1.5,4) to [out=up, in=left] (-1,5.5);
	\draw[slice] (-1,5.5) to [out=right, in=\ul] 			(0.5,4);
 \end{scope}
\begin{scope}[xyplane=4*\h,xscale=-1, on layer=superfront]
 	\draw[slice] (0,0) to (0,1) to [out=up, in=\dl] (0.5,3) to (0.5,4) to[out=\ur, in=down] 		(1.5,5.5);
	\draw[slice] (1.5,0) to (1.5,1) to [out=up, in=\dr] (0.5,3);
	\draw[slice, on layer=back] (-1.5,0) to (-1.5,4) to  [out=up, in=left]  (-1,5.5) to [out=right, in=\ul] (0.5,4);
 \end{scope}
 \begin{scope}[xzplane=0]
 \draw[slice,short] 	(-1.5,0) to (-1.5, 4*\h);
\draw[slice,short] (0,0) to (0,4*\h);
 \draw[slice,short]			(1.5,0) to (1.5,4*\h);
 \end{scope}
  \begin{scope}[xzplane=5.5,xscale=-1]
 \draw[slice,short] (1.5,0) to (1.5,4*\h);
 \end{scope}
   \coordinate (A) at (2.5, -0.5, 2.5*\h);
 \begin{scope}
 \draw[wire, arrow data= {0.1}{>}] (4,-0.5,4*\h) to node[pos=0.15](label2){} (4,-0.5, 1.9*\h) to [out=down, in=down, looseness=4] (3, -0.5, 1.9*\h);
  \draw[wire, on layer =front] (3,-0.5, 2.1*\h) to (3,-0.5, 1.9*\h);
 \draw[wire, on layer=front,arrow data={0.15}{<}] (3,-0.5,4*\h) to node[pos=0.35](label3){} (3,-0.5, 3*\h) to [out=down, in=\ul] (A) to [out=\dl, in=up] (3,-0.5, 2.1*\h);
 \draw[wire] (2,0, 3*\h) to [out=up, in=up, looseness=4] (1,0, 3*\h) to  (1,0,2.1*\h);
 \draw[wire, on layer=front] (1,0,2.1*\h) to (1,0,1.9*\h);
 \draw[wire,] (1,0,1.9*\h) to (1,0, 1.2*\h);
 \draw[wire, on layer= front] (1,0,1.2*\h) to (1,0,0.8*\h);
 \draw[wire,arrow data ={0.78}{>}] (1,0,0.8*\h) to node[pos=0.4](label4){} (1,0, 0);
\draw[wire,] (2,0,3*\h) to [out=down, in=\ur] (A);
\draw[wire,on layer=superfront, arrow data ={0.93}{<}] (A) to [out=\dr, in=up] (2,-1, 2*\h) to (2,-1,\h) to [out=down, in=up, in looseness=2] node[mask point, pos=0.3](MP){} node[pos=0.93](label1){} (4,-1, 0);
 \cliparoundone{MP}{\draw[slice] (5.452,0.7, 4*\h) to (5.452,0.7 , 2*\h) to [out=down, in=up] (4.973,0.85, \h) to [out=down, in=up]  (2.48, 0.86, 0);}
 \end{scope}
 \node[dot] at (A){};
 \node[obj, below right] at (0.1,-1.4,0) {$[01]$};
 \node[obj, below right] at (0.1,0.1,0) {$[13]$};
 \node[obj, below right] at (0.1,1.6,0) {$\overline{[23]}$};
 \node[obj, below left] at (5.4, -1.4, 0){$[02]$};
 \node[tmor, right] at ([xshift=0.1cm]A){$\alpha$};
 \node[omor, left] at(label1) {$[012]$}; 
 \node[omor, left] at(label2) {$\overline{[023]}$}; 
  \node[omor, right] at(label3) {$[013]$}; 
  \node[omor, left] at(label4) {$\overline{[123]}$}; 
\end{tikzpicture}
\\ \nonumber
\Phi^+_{\sIII,(01)}(\alpha)
&
\Phi^+_{\sIII,(12)}(\alpha)
&
\Phi^+_{\sIII, (23)}(\alpha)
\end{calign}

\caption{The definition of the transposed associator state $\Phi^+_{\sIII, \sigma_{\sIII}}(\alpha) \in V^-(\sig, \sIII)$ for $\alpha \in V^+(\Gamma, \sIII)$.\label{fig:definitionPhi} }
\end{figure}

\subsubsection{State transposition preserves the normalized 10j action}

We now show that the transposition of a state has the same normalized 10j action as the original state.
\begin{lemma}[State transposition preserves the normalized 10j action] \label{lem:transpositionnormalized}
Let $o$ and $o'$ be global vertex orderings of the oriented combinatorial 4-manifold $K$, related by an adjacent vertex transposition $\sigma$.  For any state $\Gamma$ of $K$ with ordering $o$ and transposed state $\Gamma^\sigma$ of $K$ with ordering $o'$, the normalized 10j actions agree: $N(\Gamma^\sigma) = N(\Gamma)$.
\end{lemma}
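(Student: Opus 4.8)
The plan is to split the normalized $10j$ action $N(\cdot)$ into its five factors, as displayed in Definition~\ref{def:ss} --- the $0$-, $1$-, $2$-, $3$-, and $4$-simplex factors --- and to show that under the passage $\Gamma \rightsquigarrow \sig$ each factor is carried to the corresponding factor for $\sig$. The $0$-simplex factor $\prod_{K_0}\dim(\tc{C})^{-1}$ is independent of the state. Only edges $\sI\in K_1$ joining the two transposed vertices are affected, and for these $\sig(\sI)=\Gamma(\sI)^\#$; taking mates gives an anti-equivalence $\End_{\tc{C}}(\Gamma(\sI))\to\End_{\tc{C}}(\Gamma(\sI)^\#)$ and identifies the component of $\Gamma(\sI)$ with that of $\Gamma(\sI)^\#$, so $\dim(\End_{\tc{C}}(\sig(\sI)))=\dim(\End_{\tc{C}}(\Gamma(\sI)))$ (a fusion category and its reverse have equal global dimension) and $n(\sig(\sI))=n(\Gamma(\sI))$. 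Crucially, $\dim(\Gamma(\sI)^\#)=\dim(\Gamma(\sI))$: inspecting Definition~\ref{def:spheretrace}, the back $2$-spherical trace of $\It_{\Io_{\Gamma(\sI)^\#}}$ is literally the front $2$-spherical trace of $\It_{\Io_{\Gamma(\sI)}}$, which by the sphericality hypothesis (Definition~\ref{def:spherical}) equals the back trace, namely $\dim(\Gamma(\sI))$. For the $2$-simplex factor, Figure~\ref{fig:translabels} exhibits $\sig(\sII)$ as $\Gamma(\sII)$ with strands bent across fold $1$-morphisms, and the $2$-spherical trace of an identity $2$-morphism is unchanged under such rotations --- concretely $\dim(f)=\dim(f^*)=\dim(f^\#)$, using planar pivotality, Proposition~\ref{prop:leftrighttrace}, and cyclicity of the planar trace, with no ``bubble'' scalars appearing --- so $\prod_{\sII}\dim(\sig(\sII))=\prod_{\sII}\dim(\Gamma(\sII))$.

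It then remains to treat the $3$-simplex factor $\cup_\Gamma$ and the $4$-simplex factor $z(\Gamma)$, and for this I would use the associator-state transposition isomorphisms $\Phi^\pm_{\sIII,\sigma_{\sIII}}\colon V^\pm(\Gamma,\sIII)\to V^{\pm\sgn(\sigma_{\sIII})}(\sig,\sIII)$ of Figure~\ref{fig:definitionPhi} (the identity when $\sigma_{\sIII}=\id$). The first claim is that they intertwine the spherical-trace pairing of Definition~\ref{def:pairing}: for $\alpha\in V^+(\Gamma,\sIII)$, $\beta\in V^-(\Gamma,\sIII)$ one has $\langle\Phi^+_{\sIII,\sigma_{\sIII}}(\alpha),\Phi^-_{\sIII,\sigma_{\sIII}}(\beta)\rangle_{\sig,\sIII}=\langle\alpha,\beta\rangle_{\Gamma,\sIII}$, because the composite $\Phi^+(\alpha)\xt\Phi^-(\beta)$ fed into the defining $2$-spherical trace differs from $\alpha\xt\beta$ only by fold/cup/cusp data that cancels in pairs once one invokes the cusp and swallowtail equations together with cyclicity of the planar trace. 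Since the pairing is nondegenerate (Proposition~\ref{prop:pairing}), the copairings are correspondingly intertwined, $\cup_{\sig,\sIII}=(\Phi^+_{\sIII,\sigma_{\sIII}}\otimes\Phi^-_{\sIII,\sigma_{\sIII}})\circ\cup_{\Gamma,\sIII}$ (with the two factors interchanged when $\sgn(\sigma_{\sIII})=-1$), and hence $\cup_{\sig}=\big(\bigotimes_{\sIII\in K_3}\Phi_{\sIII}\big)\circ\cup_\Gamma$.

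The crux is a symmetry property of the $10j$ symbol itself: for each $4$-simplex $\sIV\in K_4$,
\[
z(\sig,\sIV)\circ\Big(\bigotimes_{\sIII\in K_3,\ \sIII\subset\sIV}\Phi^{\epsilon_o^{\sIV}(\sIII)}_{\sIII,\sigma_{\sIII}}\Big)=z(\Gamma,\sIV).
\]
First one checks that the domains match: an adjacent transposition is odd, so $\epsilon_{o'}(\sIV)=-\epsilon_o(\sIV)$ whenever $\sIV$ contains both transposed vertices and $\epsilon^{\sIV}_{o'}(\sIII)=-\epsilon^{\sIV}_o(\sIII)$ whenever $\sIII$ does; this is exactly the pattern in which $\Phi$ swaps $V^+$ with $V^-$, and also exactly the pattern in which the pentagonator picture of Figure~\ref{fig:definitionz} switches between its two forms, so the transposed labels of Figures~\ref{fig:translabels} and~\ref{fig:definitionPhi} slot consistently into $z(\sig,\sIV)$. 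The identity itself is then a surface-diagram computation: once the extra folds and cusps contributed by the $\Phi$ maps are absorbed, the pentagonator surface underlying the left-hand side is a relabeling --- by the transposition --- of the one underlying $z(\Gamma,\sIV)$, and the two $2$-spherical traces coincide. Tensoring over all $\sIV\in K_4$ gives $z(\sig)\circ\big(\bigotimes_{\sIII}\Phi_{\sIII}\big)=z(\Gamma)$, whence
\[
Z(\sig)=z(\sig)\circ\cup_{\sig}(1)=z(\sig)\circ\Big(\bigotimes_{\sIII}\Phi_{\sIII}\Big)\circ\cup_\Gamma(1)=z(\Gamma)\circ\cup_\Gamma(1)=Z(\Gamma),
\]
and combined with the unchanged normalization factors this yields $N(\sig)=N(\Gamma)$.

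The hard part will be the $10j$-symbol symmetry identity. Because the pivotal $2$-category surface calculus is not known to be fully isotopy-invariant (Warning~\ref{warn:pivotal}), it cannot be asserted by picture manipulation alone; it must be reduced to an explicit algebraic computation using the coherence data --- the cusp equations, the swallowtail equations, the cusp-flip equation, the surface ribbon/circular-wrinkle equations, and the sphericality axiom --- organized along the lines of the tetrahedral symmetry of the $6j$-symbol in Barrett--Westbury~\cite{BW} and Mackaay~\cite{Mackaay}. Managing the bookkeeping of the five $\Phi$ insertions and their interaction with the two orientation-dependent forms of the pentagonator, together with verifying the pairwise cancellation of the auxiliary fold/cusp data, is where essentially all the effort goes; the factor-matching of the first two paragraphs is routine by comparison.
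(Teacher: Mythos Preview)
Your proposal is correct and follows essentially the same approach as the paper: both split $N(\Gamma)$ into its five simplex-degree factors, dispose of the $0$-, $1$-, and $2$-simplex normalization factors using sphericality and the mate/dual equivalence $\End_{\tc{C}}(A)\simeq\End_{\tc{C}}(A^\#)^{\mathrm{mp}}$, and then prove that the $\Phi^\pm_{\sIII,\sigma_\sIII}$ intertwine both the pairings (hence copairings) and the $10j$ symbols, the latter via explicit case-by-case graphical computation for each adjacent transposition $\sigma_\sIV\in\{(01),(12),(23),(34)\}$. The paper records these two intertwining statements as separate lemmas (Lemmas~\ref{lem:transposecopairing} and~\ref{lem:transpose10jsymbols}) and carries out the graphical verifications in Figures~\ref{fig:pairingtrans}, \ref{fig:10jtranspose}, and~\ref{fig:10jtranspose2}, which is precisely the ``hard part'' you flag at the end.
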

\nid As the normalized 10j action is a normalization factor times the composite of the 10j symbol with the copairing of associator state spaces, it suffices to check that each of these three elements is appropriately preserved by state transposition.

\skiptocparagraph{The copairing intertwines the associator state transposition}

\begin{lemma}[The copairing intertwines the associator state transposition] \label{lem:transposecopairing}
For $\Gamma$ a state of the 4-manifold $K$ with ordering $o$, a 3-simplex $\sIII \in K_3$, and an adjacent vertex transposition $\sigma$, the isomorphisms of associator state spaces $\Phi_{\sIII, \sigma_{\sIII}}^\pm: V^\pm(\Gamma, \sIII) \to V^{\pm\sgn(\sigma_{\sIII})}\left( \sig, \sIII\right)$ intertwine the copairings $\cup_{\Gamma,\sIII}: k \to V^+(\Gamma, \sIII) \otimes V^-(\Gamma, \sIII)$ and $\cup_{\Gamma^\sigma,\sIII}: k \to V^+(\sig, \sIII) \otimes V^-(\sig, \sIII)$ in the sense that:
\[
\textrm{sw}^{\sigma_\sIII} \xo (\Phi^+_{\sIII, \sigma_{\sIII}} \otimes \Phi^-_{\sIII, \sigma_{\sIII}}) \xo \cup_{\Gamma, \sIII}  = \cup_{\sig, \sIII}.
\]
Here $\textrm{sw}^{\sigma_\sIII}$ is trivial when $\sigma_\sIII$ is trivial and is the swap of the two tensor factors when $\sigma_\sIII$ is nontrivial.
\end{lemma}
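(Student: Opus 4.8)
The plan is to reduce the statement about copairings to a statement about the pairings, and then verify the latter by a graphical computation. Recall from Definitions~\ref{def:pairing} and~\ref{def:spheretrace} that $\cup_{\Gamma,\sIII}$ is by construction the copairing dual to the pairing $\langle\cdot,\cdot\rangle_{\Gamma,\sIII}\colon V^-(\Gamma,\sIII)\otimes V^+(\Gamma,\sIII)\to k$, $\langle\beta,\alpha\rangle=\Tr(\beta\xt\alpha)=\Tr(\alpha\xt\beta)$, which is nondegenerate by Proposition~\ref{prop:pairing}. Since the copairing is determined by the pairing through the snake equations, an isomorphism of associator state spaces intertwines the copairings precisely when it intertwines the pairings. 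Unwinding the swap and sign bookkeeping (with $\sgn(\sigma_\sIII)=-1$ for a nontrivial transposition, so that $\Phi^+_{\sIII,\sigma_\sIII}$ lands in $V^-(\sig,\sIII)$ and $\Phi^-_{\sIII,\sigma_\sIII}$ in $V^+(\sig,\sIII)$), Lemma~\ref{lem:transposecopairing} becomes equivalent to the identity
\[
\Tr\!\left(\Phi^+_{\sIII,\sigma_\sIII}(\alpha)\xt\Phi^-_{\sIII,\sigma_\sIII}(\beta)\right)=\Tr(\alpha\xt\beta)
\]
for all $\alpha\in V^+(\Gamma,\sIII)$ and $\beta\in V^-(\Gamma,\sIII)$. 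When $\sigma_\sIII=\id$ this is immediate: $\sigma$ then restricts trivially to every face of $\sIII$, so $\Gamma^\sigma$ and $\Gamma$ agree on all edges and $2$-simplices of $\sIII$, the maps $\Phi^\pm_{\sIII,\id}$ are identities, $\textrm{sw}^{\id}$ is trivial, and the two copairings literally coincide.

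For a nontrivial transposition $\sigma_\sIII\in\{(01),(12),(23)\}$ the plan is to compute the left-hand trace directly from Figure~\ref{fig:definitionPhi} (for $\Phi^+$) and its vertical reflection (for $\Phi^-$). The $2$-spherical trace is, by Definition~\ref{def:spheretrace}, a planar trace of the composite with a fold $1$-morphism $e_{(\cdot)}$ (equivalently a front trace with $i_{(\cdot)}$), and the transposed associator states differ from $\alpha$ and $\beta$ only by pre- and post-composition with the duality $2$-morphisms of the pivotal structure: the cusps $C_A,D_A$ and their inverses, the fold $1$-morphisms $e_A,i_A$, and the units and counits of the adjunctions $e_A\dashv i_{A^\#}$ and $i_A\dashv e_{A^\#}$. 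Stacking $\Phi^+(\alpha)$ over $\Phi^-(\beta)$ and closing up the trace, every such auxiliary $2$-morphism appears together with a partner of opposite handedness; they cancel using the cusp (and swallowtail) equations of Definitions~\ref{def:planarpivotal}-C1 and~\ref{def:pivotal}-C1, the snake equations for the fold adjunctions, planar pivotality (left and right mates agree, Definition~\ref{def:planarpivotal}-C5), the circular-wrinkle identity from the proof of Proposition~\ref{prop:leftrighttrace}, and the cyclicity of the planar trace built into $\Tr$. What survives is exactly $\Tr(\alpha\xt\beta)$. This verification must be carried out once for each of the three transpositions; the $(01)$ and $(23)$ cases are mirror images of one another, and the $(12)$ case is the substantial one.

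The main obstacle will be the bookkeeping in this surface-diagram computation for the $(12)$ transposition, whose transposed associator state (the middle panel of Figure~\ref{fig:definitionPhi}) routes the $[013]$ and $[023]$ wires across several sheets and introduces an extra cusp; confirming that every fold and cusp cancels after closing up the $2$-spherical trace requires a careful sequence of swallowtail applications together with the sliding moves for wires through folds, all while tracking the coorientation of the underlying surface so as to certify that exactly the swap $\textrm{sw}^{\sigma_\sIII}$, and no further sign, is produced. Once the trace identity above is established in every case, Lemma~\ref{lem:transposecopairing} follows, as explained, from the nondegeneracy of the pairing (Proposition~\ref{prop:pairing}).
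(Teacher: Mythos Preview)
Your approach is essentially the paper's: reduce the copairing identity to the corresponding identity for the pairings, $\langle\Phi^-_{\sIII,\sigma_\sIII}(\beta),\Phi^+_{\sIII,\sigma_\sIII}(\alpha)\rangle_{\sig,\sIII}=\langle\alpha,\beta\rangle_{\Gamma,\sIII}$, then verify case by case via the graphical surface calculus. Two small corrections to your expectations. First, the $(01)$ case, not the $(12)$ case, is the most delicate, and it requires an explicit appeal to \emph{sphericality} (Definition~\ref{def:spherical}), which you did not list among your tools: after an isotopy that interchanges which of two nested fold circles is outermost, one must invoke $\Tr_F=\Tr_B$ to flip a coorientation before Proposition~\ref{prop:leftrighttrace} applies. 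Second, the $(01)$ and $(23)$ cases are not literal mirror images---$(23)$ uses planar pivotality and Proposition~\ref{prop:leftrighttrace} but not sphericality, whereas $(01)$ needs both; the $(12)$ case is actually the shortest, settled by cusp cancellation and planar pivotality alone.
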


To establish this relation we will use a more compact graphical notation where we omit explicitly drawing the horizontal slices (that show the objects), instead recording only the nontrivial 1-morphisms as black wires (when they are elements of the state labeling) and gray wires (when they are fold 1-morphisms).  For an object $[ij]$ (that is $\Gamma(\partial^o_{[ij]} \sIII)$), we graphically distinguish the four fold types $e_{[ij]}$, $i_{\conj{[ij]}} = e_{[ij]}^*$, $e_{\conj{[ij]}} = i_{[ij]}^*$, and $i_{[ij]}$ by an orientation arrow and a thin corona indicating the side of the fold with two sheets, as follows:
\def\hspl{\hspace{0.4cm}}
\def\hspr{\hspace{0.1cm}}
\begin{calign}\nonumber
\begin{tz}
\coordinate (b) at (0,0);
\coordinate (t) at (0,1.7);
\rightpathup{arrow data={0.5}{>}}{(0,0) to (0,1.7)}{b}{t}
\end{tz}
\hspl
\leftrightsquigarrow
\hspr
\begin{tz}[td]
\begin{scope}[xyplane=0]
\draw[slice] (0,-0.5) to (0,0.5) to [out=up, in=up, looseness=2]  node[pos=\stdr] (b){}(1,0.5) to (1,-0.5);
\end{scope}
\begin{scope}[xyplane=\h]
\draw[slice] (0,-0.5) to (0,0.5) to [out=up, in=up, looseness=2] node[pos=\stdr] (t){} (1,0.5) to (1,-0.5);
\end{scope}
\begin{scope}[xzplane=-0.5]
\draw[slice, short] (0,0) to (0,\h);
\draw[slice, short] (1,0) to (1,\h);
\end{scope}
\draw[slice] (b.center) to (t.center);
\node[obj, right] at (-0.5,0,0) {$[ij]$};
\node[obj, right] at (-0.5,1,0) {$\conj{[ij]}$};
\end{tz}
&
\begin{tz}
\coordinate (b) at (0,0);
\coordinate (t) at (0,1.7);
\rightpathup{arrow data={0.5}{<}}{(0,0) to (0,1.7)}{b}{t}
\end{tz}
\hspl
\leftrightsquigarrow
\hspr
\begin{tz}[td]
\begin{scope}[xyplane=0]
\draw[slice] (0,-0.5) to (0,0.5) to [out=up, in=up, looseness=2]  node[pos=\stdr] (b){}(1,0.5) to (1,-0.5);
\end{scope}
\begin{scope}[xyplane=\h]
\draw[slice] (0,-0.5) to (0,0.5) to [out=up, in=up, looseness=2] node[pos=\stdr] (t){} (1,0.5) to (1,-0.5);
\end{scope}
\begin{scope}[xzplane=-0.5]
\draw[slice, short] (0,0) to (0,\h);
\draw[slice, short] (1,0) to (1,\h);
\end{scope}
\draw[slice] (b.center) to (t.center);
\node[obj, right] at (-0.5,0,0) {$\conj{[ij]}$};
\node[obj, right] at (-0.5,1,0) {$[ij]$};
\end{tz}
\\ \nonumber
\begin{tz}
\coordinate (b) at (0,0);
\coordinate (t) at (0,1.7);
\leftpathup{arrow data={0.5}{<}}{(0,0) to (0,1.7)}{b}{t}
\end{tz}
\hspl
\leftrightsquigarrow
\hspr
\begin{tz}[td]
\begin{scope}[xyplane=0]
\draw[slice] (0,0.5) to (0,-0.5) to [out=down, in=down, looseness=2]  node[pos=\stdl] (b){}(1,-0.5) to (1,0.5);
\end{scope}
\begin{scope}[xyplane=\h]
\draw[slice] (0,0.5) to (0,-0.5) to [out=down, in=down, looseness=2] node[pos=\stdl] (t){} (1,-0.5) to (1,0.5);
\end{scope}
\begin{scope}[xzplane=0.5]
\draw[slice, short] (0,0) to (0,\h);
\draw[slice, short] (1,0) to (1,\h);
\end{scope}
\draw[slice] (b.center) to (t.center);
\node[obj, left] at (0.5,0,0) {$[ij]$};
\node[obj, left] at (0.5,1,0) {$\conj{[ij]}$};
\end{tz}
&
\begin{tz}
\coordinate (b) at (0,0);
\coordinate (t) at (0,1.7);
\leftpathup{arrow data={0.5}{>}}{(0,0) to (0,1.7)}{b}{t}
\end{tz}
\hspl
\leftrightsquigarrow
\hspr
\begin{tz}[td]
\begin{scope}[xyplane=0]
\draw[slice] (0,0.5) to (0,-0.5) to [out=down, in=down, looseness=2]  node[pos=\stdl] (b){}(1,-0.5) to (1,0.5);
\end{scope}
\begin{scope}[xyplane=\h]
\draw[slice] (0,0.5) to (0,-0.5) to [out=down, in=down, looseness=2] node[pos=\stdl] (t){} (1,-0.5) to (1,0.5);
\end{scope}
\begin{scope}[xzplane=0.5]
\draw[slice, short] (0,0) to (0,\h);
\draw[slice, short] (1,0) to (1,\h);
\end{scope}
\draw[slice] (b.center) to (t.center);
\node[obj, left] at (0.5,0,0) {$\conj{[ij]}$};
\node[obj, left] at (0.5,1,0) {$[ij]$};
\end{tz}
\end{calign}

\begin{proof}[Proof of Lemma~\ref{lem:transposecopairing}]
This relation of the copairings of course follows from the corresponding relation for the pairing, namely
\[ 
\langle \cdot, \cdot \rangle_{\Gamma, \sIII}\xo \left(\Phi^-_{\sIII, \sigma_{\sIII}}\otimes \Phi^+_{\sIII, \sigma_{\sIII}} \right)^{-1} \xo \textrm{sw}^{\sigma_\sIII} = \langle\cdot, \cdot \rangle_{\sig, \sIII} 
\]
Explicitly, we must show that for each of the three nontrivial transpositions $\sigma_\sIII = (01)$, $(12)$, or $(23)$, and for $\alpha \in V^-(\Gamma, \sIII), \beta \in V^+(\Gamma, \sIII)$, we have $\left\langle \Phi^-_{\sIII, \sigma_{\sIII}}(\beta), \Phi^+_{\sIII, \sigma_{\sIII}}(\alpha) \right\rangle_{\sig, \sIII} = \langle \alpha, \beta \rangle_{\Gamma, \sIII}$.  

For the transposition $\sigma_\sIII = (01)$, the proof appears as the first equation in Figure~\ref{fig:pairingtrans}.  There, `isotopy' refers to moves allowed by Definition~\ref{def:monoidal2cat}.  Note that in this isotopy step the two grey circles interchange which one is on the outside.  In the step using Proposition~\ref{prop:leftrighttrace}, note that the proposition is applied to the endomorphism obtained from the composite of $\alpha$ and $\beta$ with just the right-hand wire traced out; in particular, this is an endomorphism of a 1-morphism from a tensor of two objects to a single object, so the left trace is enclosed by two grey fold circles while the right trace is enclosed by only one grey fold circle.  For the transpositions $\sigma_\sIII = (12)$ and $\sigma_\sIII = (23)$, the proof appears as the second and third equations, respectively, in Figure~\ref{fig:pairingtrans}.
\end{proof}

\begin{figure}[p]
\def\scl{0.3}
\begin{multline*}
\left\langle \Phi_{\sIII, (01)}^-(\beta), \Phi^+_{\sIII, (01)}(\alpha)\right\rangle_{\Gamma^\sigma, \sIII}\!\! =~~
\Tr\left(\,\,
\begin{tz}[yscale=0.8,scale=\scl,scale=0.9]
\coordinate (top) at (2,4.5);
\coordinate (botm) at (-1,-1);
\rightpathup[fronta]{arrow data={0}{<}, on layer=front}{(-1,-1) to (-1,2) to [out=up, in=down] node[mask point, pos=0.7] (MPT){} (2,4.5)}{botm}{top}
\draw[wire] (0,-1) to (0,2) to [out=up, in=up, looseness=1.5] (1,2) to [out=down, in=\ul] (1.5, 1);
\cliparoundone{MPT}{\draw[wire] (1, -1) to (1,0) to [out=up, in=\dl] (1.5,1) to [out=\ur, in=down] (2,2) to [out=up, in=down] (1,4.5);}
\draw[wire] (1.5,1) to [out=\dr, in=up] (2,0) to [out=down, in=down, looseness=1.5] (3,0) to (3,4.5);
\coordinate (topm) at (-1,-1);
\coordinate (bot) at (2,-6.5);
\rightpathup[frontb]{on layer=front}{
(2,-6.5) to[out=up, in=down] node[mask point, pos=0.3](MPB){} (-1,-4) to (-1,-1)}{bot}{topm}
\draw[wire] (0,-1) to (0,-4) to [out=down, in=down, looseness=1.5] (1,-4) to [out=up, in=\dl] (1.5, -3);
\cliparoundone{MPB}{\draw[wire] (1,-1) to (1,-2) to [out=down, in=\ul] (1.5,-3) to [out=\dr, in=up] (2,-4) to [out=down, in=up] (1,-6.5);}
\draw[wire] (1.5,-3) to [out=\ur, in=down] (2,-2) to [out=up, in=up, looseness=1.5] (3,-2) to (3, -6.5);
\node[dot](B) at (1.5,-3){};
\node[dot](T) at (1.5,1){};
\node[tmor, right] at ([xshift=5pt]T) {$\alpha$};
\node[tmor, right] at ([xshift=5pt]B) {$\beta$};
\end{tz}
\,\,
\right)
\labeleqgap \superequals{\raisebox{3pt}{\begin{minipage}{1cm}\centering def+ \\cusp\end{minipage}}} \labeleqgap
\begin{tz}[yscale=0.8,scale=\scl]
\insidepath[frontb]{arrow data={0}{<}, on layer=front}{(-2,0) to (-2,1.5) to [out=up, in=up, looseness=1.5] node[mask point, pos=0.77](MPT){} (1.5,1.5) to (1.5,-1.5) to [out=down, in=down, looseness=1.5]node[mask point, pos=0.23](MPB){}  (-2,-1.5) to (-2,0)}
\draw[wire] (0,0) to [out=up, in=\dl] (0.5,1) to [out=\ul, in=down] (0,2) to [out=up, in=up, looseness=1.5] (-1,2) to (-1,-2) to [out=down, in=down, looseness=1.5] (0,-2) to [out=up ,in=\dl] (0.5,-1) to [out=\ul, in=down] (0,0);
\draw[wire] (0.5,-1) to [out=\ur, in=down] (1,0) to [out=up, in=\dr] (0.5,1);
\cliparoundtwo{MPB}{MPT}{\draw[wire] (0.5,1) to [out=\ur, in=down] (1,2) to (1,3) to [out=up, in=up, looseness=1.5] (2,3) to (2,-3) to [out=down, in=down, looseness=1.5] (1,-3) to (1,-2) to [out=up, in=\dr] (0.5,-1);}
\node[dot](T) at (0.5,1) {};
\node[dot] (B) at (0.5,-1){};
\node[tmor, right] at ([xshift=3pt]T){$\alpha$};
\node[tmor, right] at ([xshift=3pt]B) {$\beta$};
\insidepath{arrow data={0}{>}}{\ignore{\draw[slice,arrow data ={0}{>}]} (-3,0) to (-3,1.5) to [out=up, in=up, looseness=2] (3,1.5) to (3,-1.5) to [out=down, in=down, looseness=2] (-3,-1.5) to (-3,0)}
\end{tz}
~~\labeleqgap\superequals{isotopy} \labeleqgap
\begin{tz}[yscale=0.8,scale=\scl]
\insidepath{arrow data ={0}{>}}{
 (-2,0) to (-2,1.5) to [out=up, in=up, looseness=1.5] node[mask point, pos=0.77](MPT){} (3,1.5) to (3,-1.5) to [out=down, in=down, looseness=1.5]node[mask point, pos=0.23](MPB){}  (-2,-1.5) to (-2,0)}
\draw[wire] (0,0) to [out=up, in=\dl] (0.5,1) to [out=\ul, in=down] (0,2) to [out=up, in=up, looseness=1.5] (-1,2) to (-1,-2) to [out=down, in=down, looseness=1.5] (0,-2) to [out=up ,in=\dl] (0.5,-1) to [out=\ul, in=down] (0,0);
\draw[wire] (0.5,-1) to [out=\ur, in=down] (1,0) to [out=up, in=\dr] (0.5,1);
\draw[wire] (0.5,1) to [out=\ur, in=down] (1,2)  to [out=up, in=up, looseness=1.5] (2,2) to (2,-2) to [out=down, in=down, looseness=1.5] (1,-2)  to [out=up, in=\dr] (0.5,-1);
\node[dot](T) at (0.5,1) {};
\node[dot] (B) at (0.5,-1){};
\node[tmor, right] at ([xshift=3pt]T){$\alpha$};
\node[tmor, right] at ([xshift=3pt]B) {$\beta$};
\insidepath{arrow data={0}{<}}{
 (-3,0) to (-3,1.5) to [out=up, in=up, looseness=1.7] (4,1.5) to (4,-1.5) to [out=down, in=down, looseness=1.7] (-3,-1.5) to (-3,0) 
 }
\end{tz}
\\
\superequals{sphericality} \labeleqgap\eqgap
\begin{tz}[yscale=0.8,scale=\scl]
\insidepath{arrow data={0}{>}}{ 
(-2,0) to (-2,1.5) to [out=up, in=up, looseness=1.5] node[mask point, pos=0.77](MPT){} (3,1.5) to (3,-1.5) to [out=down, in=down, looseness=1.5]node[mask point, pos=0.23](MPB){}  (-2,-1.5) to (-2,0)
}
\draw[wire] (0,0) to [out=up, in=\dl] (0.5,1) to [out=\ul, in=down] (0,2) to [out=up, in=up, looseness=1.5] (-1,2) to (-1,-2) to [out=down, in=down, looseness=1.5] (0,-2) to [out=up ,in=\dl] (0.5,-1) to [out=\ul, in=down] (0,0);
\draw[wire] (0.5,-1) to [out=\ur, in=down] (1,0) to [out=up, in=\dr] (0.5,1);
\draw[wire] (0.5,1) to [out=\ur, in=down] (1,2)  to [out=up, in=up, looseness=1.5] (2,2) to (2,-2) to [out=down, in=down, looseness=1.5] (1,-2)  to [out=up, in=\dr] (0.5,-1);
\node[dot](T) at (0.5,1) {};
\node[dot] (B) at (0.5,-1){};
\node[tmor, right] at ([xshift=3pt]T){$\alpha$};
\node[tmor, right] at ([xshift=3pt]B) {$\beta$};
\insidepath{arrow data={0}{>}}{
 (-3,0) to (-3,1.5) to [out=up, in=up, looseness=1.7] (4,1.5) to (4,-1.5) to [out=down, in=down, looseness=1.7] (-3,-1.5) to (-3,0)
 }
\end{tz}
\labeleqgap \eqgap\hspace{0.05cm}\superequals{Prop~\ref{prop:leftrighttrace}} \labeleqgap\hspace{0.09cm}\hspace{0.05cm}
\begin{tz}[yscale=0.8,scale=\scl]
\draw[wire] (0,0) to [out=up, in=\dl] (0.5,1) to [out=\ul, in=down] (0,2)  to [out=up, in=up, looseness=1.5] (3,2) to (3,-2) to [out=down, in=down, looseness=1.5] (0,-2) to [out=up ,in=\dl] (0.5,-1) to [out=\ul, in=down] (0,0);
\draw[wire] (0.5,-1) to [out=\ur, in=down] (1,0) to [out=up, in=\dr] (0.5,1);
\draw[wire] (0.5,1) to [out=\ur, in=down] (1,2)  to [out=up, in=up, looseness=1.5] (2,2) to (2,-2) to [out=down, in=down, looseness=1.5] (1,-2)  to [out=up, in=\dr] (0.5,-1);
\node[dot](T) at (0.5,1) {};
\node[dot] (B) at (0.5,-1){};
\node[tmor, right] at ([xshift=3pt]T){$\alpha$};
\node[tmor, right] at ([xshift=3pt]B) {$\beta$};
\insidepath{arrow data={0}{>}}{ 
(-1,0) to (-1,1.5) to [out=up, in=up, looseness=1.9] (4,1.5) to (4,-1.5) to [out=down, in=down, looseness=1.9] (-1,-1.5) to (-1,0)
}
\end{tz}
\eqgap \superequals{def} \eqgap 
\Tr\left(\,\,\,
\begin{tz}[yscale=0.8,scale=\scl]
\draw[wire] (0,0) to [out=up, in=\dl] (0.5,1) to [out=\ul, in=down] (0,2);  
\draw[wire] (0,-2) to [out=up ,in=\dl] (0.5,-1) to [out=\ul, in=down] (0,0);
\draw[wire] (0.5,-1) to [out=\ur, in=down] (1,0) to [out=up, in=\dr] (0.5,1);
\draw[wire] (0.5,1) to [out=\ur, in=down] (1,2);
\draw[wire] (1,-2)  to [out=up, in=\dr] (0.5,-1);
\node[dot](T) at (0.5,1) {};
\node[dot] (B) at (0.5,-1){};
\node[tmor, right] at ([xshift=3pt]T){$\alpha$};
\node[tmor, right] at ([xshift=3pt]B) {$\beta$};
\end{tz}\,\,\,
\right)
~~=~~
\left\langle \alpha, \beta \right\rangle_{\Gamma, \sIII}
\end{multline*}
\def\scl{0.3}
\begin{multline*}
\left\langle \Phi^-_{\sIII, (12)} (\beta), \Phi^+_{\sIII, (12)}(\alpha)\right\rangle_{\sig, \sIII}\!\!  =~~
\Tr\left(\,\,%
\begin{tz}[yscale=0.8,scale=\scl,scale=1]
\draw[wire] (0,0) to (0,2) to [out=up, in=\dl] (0.5,3) to[out=\ul, in=down] (0,4) to (0,6);
\draw[wire, on layer=front] (0.5,3) to [out=\dr, in=up] (1,2) to [out=down, in=down, looseness=2] node[mask point, pos=0.71] (TR1){} (3,2) to [out=up, in=down]  node[mask point, pos=0.24] (TR2){}(4,6);
\coordinate(b) at (2.5,0);
\coordinate (m) at (2.5,4);
\coordinate (t) at (0.5,6);
\cliparoundone{TR1}{\rightpathup{}{(2.5,0) to (2.5,4)}{b}{m}}
\rightpathup[frontb]{on layer=front}{ (2.5,4) to [out=up, in=up, looseness=1.5] (2, 4) to [out=down, in=\urcusp] (1.75,3.5) to [out=\ulcusp, in=down] (1.5,4) to [out=up, in=down] node[mask point, pos=0.37] (MPT){} (0.5,6)}{m}{t}
\cliparoundtwo{TR2}{MPT}{\draw[wire] (0.5,3) to [out=\ur, in=down] (1,4) to [out=up, in=up, looseness=2] (3,4) to [out=down, in=up] (4,0);}
\draw[wire] (0,0) to (0,-2) to [out=down, in=\ul] (0.5,-3) to [out=\dl, in=up] (0,-4) to (0,-6);
\draw[wire, on layer=front] (0.5,-3) to [out=\ur, in=down] (1,-2) to [out=up, in=up, looseness=2] node[mask point, pos=0.71] (BR1){} (3,-2) to [out=down, in=up]  node[mask point, pos=0.24] (BR2){} (4,-6) ;
\coordinate(t2) at (0.5,-6);
\coordinate (m2) at (2.5,-4);
\cliparoundone{BR1}{\rightpathdown{arrow data ={0}{>}}{(2.5,0) to (2.5,-4)}{m2}{b}}
\rightpathdown[front]{on layer=superfront}{ (2.5,-4) to [out=down, in=down, looseness=1.5] (2,-4) to [out=up, in=\drcusp] (1.75, -3.5) to [out=\dlcusp, in=up] (1.5,-4) to [out=down, in=up]node[mask point, pos=0.37] (MPB){} (0.5,-6)}{t2}{m2}
\cliparoundtwo{BR2}{MPB}{\draw[wire] (0.5,-3) to [out=\dr, in=up] (1,-4) to [out=down, in=down,looseness=2] (3,-4) to [out=up, in=down] (4,0);}
\node[dot] (T) at (0.5,3){};
\node[dot] (B) at (0.5,-3){};
\node[tmor,right] at ([xshift=5pt]T) {$\alpha$};
\node[tmor,right] at ([xshift=5pt]B) {$\beta$};
\end{tz}
\,\,\right)
\labeleqgap\superequals{\raisebox{3pt}{\begin{minipage}{1cm}\centering def+\\cusp+\\isotopy\end{minipage}}} \labeleqgap
\begin{tz}[scale=\scl]
\draw[wire] (0,0) to [out=up, in=\dl] (0.5,1) to [out=\dr, in=up] (1,0) to [out=down, in=\ur] (0.5,-1) to [out=\ul, in=down] (0,0);
\draw[wire] (0.5,1) to [out=\ur, in=down] (1,2) to [out=up, in=up, looseness=1.5] (3.5,2) to (3.5,-2) to [out=down, in=down, looseness=1.5] (1,-2) to [out=up, in=\dr] (0.5,-1);
\draw[wire] (0.5,1) to [out=\ul, in=down] (0,2) to [out=up, in=up, looseness=1.5] (4.5,2) to (4.5,-2) to [out=down, in=down, looseness=1.5] (0,-2) to [out=up, in=\dl] (0.5,-1);
\insidepath{arrow data={0}{<}}{
 (2.5,0) to (2.5,1) to [out=up, in=up, looseness=1.5] (2, 1) to [out=down, in=\urcusp] (1.75,0.5) to [out=\ulcusp, in=down] (1.5,1) to [out=up, in=up, looseness=2] (3, 1) to (3,-1) to [out=down, in=down, looseness=2] (1.5,-1) to [out=up, in=\dlcusp] (1.75,-0.5) to [out=\drcusp, in=up] (2,-1) to [out=down, in=down, looseness=1.5] (2.5,-1) to (2.5,0)
 }
 \insidepath{arrow data={0}{>}}{
 (-1,0) to (-1,2) to [out=up, in=up, looseness=1.5] (5.5,2) to (5.5,-2) to [out=down, in=down, looseness=1.5] (-1,-2) to cycle
 }
\node[dot] (A) at (0.5,1){};
\node[dot](B) at (0.5,-1){};
\node[tmor,right] at ([xshift=5pt]A) {$\alpha$};
\node[tmor,right] at ([xshift=5pt]B) {$\beta$};
\end{tz}
\\
\labeleqgap\eqgap \superequals{\raisebox{3pt}{\begin{minipage}{1cm}\centering planar\\pivotality\\+cuspinv\end{minipage}}} \labeleqgap\eqgap
\begin{tz}[yscale=0.8,scale=\scl]
\draw[wire] (0,0) to [out=up, in=\dl] (0.5,1) to [out=\ul, in=down] (0,2)  to [out=up, in=up, looseness=1.5] (3,2) to (3,-2) to [out=down, in=down, looseness=1.5] (0,-2) to [out=up ,in=\dl] (0.5,-1) to [out=\ul, in=down] (0,0);
\draw[wire] (0.5,-1) to [out=\ur, in=down] (1,0) to [out=up, in=\dr] (0.5,1);
\draw[wire] (0.5,1) to [out=\ur, in=down] (1,2)  to [out=up, in=up, looseness=1.5] (2,2) to (2,-2) to [out=down, in=down, looseness=1.5] (1,-2)  to [out=up, in=\dr] (0.5,-1);
\node[dot](T) at (0.5,1) {};
\node[dot] (B) at (0.5,-1){};
\node[tmor, right] at ([xshift=3pt]T){$\alpha$};
\node[tmor, right] at ([xshift=3pt]B) {$\beta$};
\insidepath{arrow data={0}{>}}{ 
(-1,0) to (-1,1.5) to [out=up, in=up, looseness=1.9] (4,1.5) to (4,-1.5) to [out=down, in=down, looseness=1.9] (-1,-1.5) to (-1,0)
}
\end{tz}
~~~=~~~
\langle\alpha, \beta \rangle_{\Gamma, \sIII}
\end{multline*}
\begin{multline*}
\def\scl{0.3}
\left\langle \Phi_{\sIII, (23)}^-(\beta), \Phi^+_{\sIII, (23)}(\alpha)\right\rangle_{\Gamma^\sigma, \sIII} \!\! =~~
\Tr\left(\,\,\,\,\,\,\,
\begin{tz}[yscale=0.8,scale=\scl,scale=0.9]
\coordinate (top) at (2,-1);
\coordinate (botm) at (-1,-6.5);
\draw[wire] (1, -6.5) to (1,-5.5) to [out=up, in=\dl] (1.5,-4.5) to [out=\ur, in=down] (2,-3.5) to [out=up, in=down] node[mask point, pos=0.582](MPB){} (1,-1);
\cliparoundone{MPB}{
\rightpathup{}{(-1,-6.5) to (-1,-3.5) to [out=up, in=down] (2,-1)}{botm}{top}
}
\draw[wire] (0,-6.5) to (0,-3.5) to [out=up, in=up, looseness=1.5] (1,-3.5) to [out=down, in=\ul] (1.5, -4.5);
\draw[wire] (1.5,-4.5) to [out=\dr, in=up] (2,-5.5) to [out=down, in=down, looseness=1.5] (3,-5.5) to (3,-1);
\coordinate (topm) at (-1,4.5);
\coordinate (bot) at (2,-1);
\draw[wire] (1,4.5) to (1,3.5) to [out=down, in=\ul] (1.5,2.5) to [out=\dr, in=up] (2,1.5) to [out=down, in=up] node[mask point, pos=0.582] (MPT){} (1,-1);
\cliparoundone{MPT}{
\rightpathup{arrow data={0}{>}}{
(2,-1) to[out=up, in=down] node[mask point, pos=0.3](MPB){} (-1,1.4) to (-1,4.5)}{bot}{topm}
}
\draw[wire] (0,4.5) to (0,1.5) to [out=down, in=down, looseness=1.5] (1,1.5) to [out=up, in=\dl] (1.5, 2.5);
\draw[wire] (1.5,2.5) to [out=\ur, in=down] (2,3.5) to [out=up, in=up, looseness=1.5] (3,3.5) to (3, -1);
\node[dot](B) at (1.5,2.5){};
\node[dot](T) at (1.5,-4.5){};
\node[tmor, right] at ([xshift=5pt]B) {$\alpha$};
\node[tmor, right] at ([xshift=5pt]T) {$\beta$};
\end{tz}
\,\,
\right)
\labeleqgap \superequals{\raisebox{3pt}{\begin{minipage}{1cm} \centering def+\\ isotopy \end{minipage}}}\labeleqgap
\begin{tz}[ yscale=0.8,scale=\scl,scale=0.8]
\draw[wire] (1,1)  to [out=up, in=\dr] (0.5,2) to [out=\ur, in=down] (1,3) to [out=up, in=up, looseness=1.5] (2,3) to (2,-3) to [out=down, in=down, looseness=1.5] (1,-3) to [out=up, in=\dr] (0.5,-2)to [out=\ur, in=down] (1,-1) to(1,1);
\draw[wire] (0.5,2) to [out=\ul, in=down] (0,3) to [out=up, in=up, looseness=1.5] (3,3) to (3,-3) to [out=down, in=down, looseness=1.5] (0,-3) to [out=up, in=\dl] (0.5,-2);
\draw[wire] (0.5,2) to [out=\dl, in=up] (0,1) to [out=down, in=down, looseness=1.5] (-1,1) to (-1,3) to [out=up, in=up, looseness=2] (4,3) to (4,-3) to [out=down, in=down, looseness=2] (-1,-3) to (-1,-1) to [out=up, in=up, looseness=1.5] (0,-1) to [out=down, in=\ul] (0.5,-2);
\insidepath{arrow data={0}{>}}{
(-2,0) to (-2,3) to [out=up, in=up, looseness=2] (5,3) to (5,-3) to [out=down, in=down, looseness=2] (-2,-3) to cycle
}
\insidepath{arrow data={0}{>}}{
(-3,0) to (-3,3) to [out=up, in=up, looseness=2] (6,3) to (6,-3) to [out=down, in=down, looseness=2] (-3,-3) to cycle
}
\node[dot](A) at (0.5,2) {};
\node[dot] (B) at (0.5,-2){};
\node[tmor, right] at ([xshift=5pt]A) {$\alpha$};
\node[tmor, right] at ([xshift=5pt]B) {$\beta$};
\end{tz}
\\
\labeleqgap\eqgap \superequals{\raisebox{3pt}{\begin{minipage}{1,5cm} \centering planar\\ pivotality\end{minipage}}}\labeleqgap\hspace{0.1cm}
\begin{tz}[ yscale=0.8,scale=\scl,scale=0.8]
\draw[wire] (0,0) to [out=up, in=\dl] (0.5,1) to [out=\ul, in=down] (0,2) to [out=up, in=up, looseness=1.5] (-1,2) to (-1,-2) to [out=down, in=down, looseness=1.5] (0,-2) to [out=up, in=\dl] (0.5,-1) to [out=\ul, in=down] (0,0);
\draw[wire] (1,0) to [out=up, in=\dr] (0.5,1) to [out=\ur, in=down] (1,2) to [out=up, in=up, looseness=1.5] (2,2) to (2,-2) to [out=down, in=down, looseness=1.5] (1,-2) to [out=up, in=\dr] (0.5,-1) to [out=\ur, in=down] (1,0);
\insidepath{arrow data={0}{>}}{
(-2,0) to (-2,2) to [out=up, in=up, looseness=1.5] (3,2) to (3,-2) to [out=down, in=down, looseness=1.5] (-2,-2) to cycle
}
\insidepath{arrow data={0}{>}}{
(-3,0) to (-3,2) to [out=up, in=up, looseness=1.5] (4,2) to (4,-2) to [out=down, in=down, looseness=1.5] (-3,-2) to cycle
}
\node[dot] (A) at (0.5,1) {};
\node[dot] (B) at (0.5,-1){};
\node[tmor, right] at ([xshift=5pt]A) {$\beta$};
\node[tmor, right] at([xshift=5pt]B) {$\alpha$};
\end{tz}
\hspace{.19cm}\labeleqgap\superequals{Prop.~\ref{prop:leftrighttrace}} \labeleqgap\hspace{0.19cm}
\begin{tz}[ yscale=0.8,scale=\scl,scale=0.8]
\draw[wire] (0,0) to [out=up, in=\dl] (0.5,1) to [out=\ul, in=down] (0,2) to [out=up, in=up, looseness=1.5] (3,2) to (3,-2) to [out=down, in=down, looseness=1.5] (0,-2) to [out=up, in=\dl] (0.5,-1) to [out=\ul, in=down] (0,0);
\draw[wire] (1,0) to [out=up, in=\dr] (0.5,1) to [out=\ur, in=down] (1,2) to [out=up, in=up, looseness=1.5] (2,2) to (2,-2) to [out=down, in=down, looseness=1.5] (1,-2) to [out=up, in=\dr] (0.5,-1) to [out=\ur, in=down] (1,0);
\insidepath{arrow data={0}{>}}{
(-1,0) to (-1,2) to [out=up, in=up, looseness=1.5] (4,2) to (4,-2) to [out=down, in=down, looseness=1.5] (-1,-2) to cycle
}
\node[dot] (A) at (0.5,1) {};
\node[dot] (B) at (0.5,-1){};
\node[tmor, right] at ([xshift=5pt]A) {$\beta$};
\node[tmor, right] at([xshift=5pt]B) {$\alpha$};
\end{tz}
\eqgap = \eqgap \langle \alpha, \beta \rangle_{\Gamma, \sIII} 
\qedhere
\end{multline*}
\caption{Relating the pairing of associator state spaces and the pairings of their transpositions.}
\label{fig:pairingtrans}
\end{figure}

\skiptocparagraph{The 10j symbol intertwines the associator state transposition}

\begin{lemma}[The 10j symbol intertwines the associator state transposition] \label{lem:transpose10jsymbols}
For $\Gamma$ a state of the 4-manifold $K$ with ordering $o$, a 4-simplex $\sIV \in K_4$, and an adjacent vertex transposition $\sigma$, the isomorphisms $\Phi^\pm_{\sIII,\sigma_\sIII}$ of associator state spaces intertwine the 10j symbols in the sense that
\[
z(\sig, \sIV) \xo \left(\bigotimes_{\sIII \in \K_3, \sIII \subseteq \sIV}  \Phi^{\epsilon^{\sIV}_{o'}(\sIII)~ \sgn(\sigma_{\sIII})}_{\sIII, \sigma_{\sIII}} \right)
=
z(\Gamma, \sIV).
\]
\end{lemma}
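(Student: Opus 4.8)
The plan is to proceed exactly as in the proof of Lemma~\ref{lem:transposecopairing} (see Figure~\ref{fig:pairingtrans}), but applied to the pentagonator composite of Figure~\ref{fig:definitionz} defining the 10j symbol rather than to the single composite defining the pairing. Both sides of the claimed identity are 2-spherical traces of composites in $\tc{C}$, and the 2-spherical trace is computed (Definition~\ref{def:spheretrace}) via a planar trace; so throughout the calculation we may freely invoke cyclicity of the planar trace, planar pivotality and the cusp equations, the swallowtail equations, Proposition~\ref{prop:leftrighttrace} (equality of left and right planar traces), and sphericality of $\tc{C}$. No ingredient is needed beyond the moves already used in Figure~\ref{fig:pairingtrans}; the content is in organizing the (long) computation.

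First I would write out $z(\sig,\sIV)$ explicitly: it is the 2-spherical trace of the Figure~\ref{fig:definitionz} pentagonator endomorphism built from the transposed 1-morphism labels $\Gamma^\sigma(\sII)$ of Figure~\ref{fig:translabels} and the transposed associator states $\Phi^{\pm}_{\sIII,\sigma_\sIII}(\cdot)$ of Figure~\ref{fig:definitionPhi}. It suffices to treat the four cases $\sigma_{\sIV} = (01)$, $(12)$, $(23)$, $(34)$; under the evident left--right reflection $i \mapsto 4-i$ of the $4$-simplex these pair up, so two cases need be checked in detail, and for a $3$-face $\sIII \subseteq \sIV$ with $\sigma_{\sIII} = \id$ the isomorphism $\Phi^{\pm}_{\sIII,\id}$ is the identity and that factor is inert.

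Next, substituting the formulas from Figure~\ref{fig:definitionPhi} into the composite defining $z(\sig,\sIV)$ and unfolding the transposed labels, I would cancel the cusp $2$-isomorphisms $C_{[ij]}$, $D_{[ij]}$ (and their inverses) that appear paired along each edge, using the cusp and swallowtail equations together with planar pivotality (the ``def+cusp'' and ``planar pivotality+cuspinv'' steps of Figure~\ref{fig:pairingtrans}), and cancel the fold units and counits introduced at the transposed vertex using the cusp equations. After an isotopy step --- the reorganization of sheets corresponding to swapping the order of the two transposed vertices, during which one grey fold circle passes from the inside to the outside --- and an application of sphericality to pull the affected wires around the back of the enclosing sphere, the composite becomes literally the pentagonator endomorphism computing $z(\Gamma,\sIV)$. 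The orientation bookkeeping is what produces the matching between $\epsilon^{\sIV}_{o'}(\sIII)$ and $\epsilon^{\sIV}_{o}(\sIII)$ and the exponent $\sgn(\sigma_{\sIII})$ on $\Phi$: a single adjacent transposition flips the induced orientation of exactly those $3$-faces containing both transposed vertices, and this must be propagated consistently through the trace.

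The main obstacle is the size and bookkeeping of this diagrammatic computation: the pentagonator composite involves ten $1$-morphism labels and five associator states, each of which becomes a composite of folds and cusps once the transposition and the $\Phi$ isomorphisms are unfolded, and one must check that every introduced fold and cusp either cancels in a pair or is absorbed by a swallowtail identity while keeping all orientation signs consistent. As in the copairing case there is no genuinely new conceptual difficulty; accordingly I would present the proof, as there, as a sequence of labeled surface-diagram equalities, one sequence for each of the two transposition types.
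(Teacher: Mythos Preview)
Your approach is essentially the same as the paper's: a case-by-case diagrammatic computation that unfolds the $\Phi$ maps into the pentagonator trace and simplifies using cusp cancellation, isotopy, planar pivotality, and the trace identities. The paper treats $\sigma_\sIV=(01)$ and $(12)$ explicitly (Figures~\ref{fig:10jtranspose} and~\ref{fig:10jtranspose2}) and declares $(23)$, $(34)$ analogous rather than invoking your reflection $i\mapsto 4-i$, and its key move is Proposition~\ref{prop:leftrighttrace} (left/right trace equality from pivotality) rather than sphericality proper---but these are organizational rather than substantive differences.
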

\begin{proof}
Note that the two sides of the equation do have the same domain, because \linebreak $\epsilon^{\sIV}_{o'}(\sIII) ~\sgn(\sigma_{\sIII}) = \epsilon^{\sIV}_o(\sIII)$.
Assume $\sigma_\sIV$ is nontrivial and $\epsilon_{o'}(\sIV) = +1$ (thus $\epsilon_{o}(\sIV) = -1$); the case $\epsilon_{o'}(\sIV) = -1$ is analogous.  Abbreviate $\sigma_{ijkl}:= \sigma_{\partial^{o'}_{[ijkl]}\sIV} \in S_4$ for $0\leq i<j<k<l\leq 4$.  We need to show
\begin{multline*}
z(\sig, \sIV)\xo \left(\Phi^{\sgn(\sigma_{0123})}_{\partial^{o'}_{[0123]} \sIV, \sigma_{0123}} \otimes \Phi^{\sgn(\sigma_{0134})}_{\partial^{o'}_{[0134]} \sIV, \sigma_{0134} } \otimes \Phi^{\sgn(\sigma_{1234})}_{ \partial^{o'}_{[1234]}\sIV, \sigma_{1234}} \otimes \Phi^{-\sgn(\sigma_{0124})}_{\partial^{o'}_{[0124]} \sIV,\sigma_{0124}} \otimes \Phi^{-\sgn(\sigma_{0234})}_{\partial^{o'}_{[0234]} \sIV, \sigma_{0234}} \right)\\
\swapeq 
z(\Gamma, \sIV)
\end{multline*}
where by ``$\swapeq$" we mean they are equal up to some unspecified swap of the domain factors, depending on the particular permutation $\sigma_\sIV$ in question.  

Suppose $\sigma_\sIV = (01) \in S_5$.  The permutations $\sigma_{ijkl} \in S_4$ of the boundary faces are
\begin{calign}\nonumber
\sigma_{0123}= (01)& \sigma_{0134}= (01) & \sigma_{1234} = \id & \sigma_{0124} = (01) & \sigma_{0234} = \id
\end{calign}
The desired equation, with the appropriate swap accounted for, is
\begin{multline*}
z(\sig, \sIV) \left( \Phi^-_{\partial^o_{[0123]}\sIV, (01)}\left( \alpha \right),  \Phi^-_{\partial^o_{[0134]}\sIV, (01)}\left(\beta\right), \Phi^+_{\partial^o_{[0234]}\sIV, \id} \left(\gamma\right),  \Phi^+_{\partial^o_{[0124]}\sIV, (01)}\left(\delta\right), \Phi^-_{\partial^o_{[1234]}\sIV, \id}\left(\epsilon\right)\right) \\
= z(\Gamma, \sIV) \left( \gamma, \delta, \epsilon,\beta,\alpha\right)
\end{multline*}
Here $\alpha \in V^-(\Gamma, \partial^o_{[0123]}\sIV)$, $\beta \in V^-(\Gamma, \partial^o_{[0134]}\sIV)$, $\gamma \in V^+(\Gamma,\partial^o_{[0234]}\sIV)$, $\delta \in V^+(\Gamma, \partial^o_{[0124]}\sIV)$, and $\epsilon \in V^-(\Gamma,\partial^o_{[1234]}\sIV)$.  Note that since $\epsilon_{o'}(\sIV) = +1$ and $\epsilon_o(\sIV) = -1$, the expressions $z(\sig,\sIV)$ and $z(\Gamma,\sIV)$ are defined by respectively the left and right traces in Figure~\ref{fig:definitionz}.  The equation is checked as in Figure~\ref{fig:10jtranspose}.
\begin{figure}[h]
\def\scl{0.3}
\def\ls{1.5}
\def\d{-1}
\def\disttmor{4pt}
\begin{align*}
\Tr\left(
\begin{tz}[scale=\scl]
\coordinate (A1) at (2.5,3);
\coordinate(A2) at (-0.5,5);
\coordinate(A3) at (0.5,9);
\coordinate(A4) at (0.5,13);
\coordinate (A5) at (1.5, 17);
\coordinate (b) at (3.5,\d);
\coordinate(t) at (3.5,18);
\rightpathup[frontb]{arrow data={0.5}{<}, on layer=front}{(3.5,\d) to [out=up, in=down] node[mask point, pos=0.175] (MP1){} node[mask point, pos=0.53] (MP2){}(-3,4) to (-3,13) to [out=up, in=down] node[mask point, pos=0.58] (MP3){} node[mask point, pos=0.69] (MP4){} (3.5, 18)}{b}{t}
\cliparoundone{MP1}{
\draw[wire] (3,\d) to (3,2) to [out=up, in=\dr] (A1) to [out=\ur, in=down] (3,4) to [out=up, in=up, looseness=\ls] (4,4) to (4,\d);
}
\cliparoundone{MP2}{
\draw[wire] (A1) to [out=\dl, in=up] (2,2) to [out=down, in=down, looseness=\ls] (1,2) to (1,6) to [out=up, in=up, looseness=\ls] (0,6) to [out=down, in=\ur] (-0.5, 5) to [out=\dr, in=up] (0,4) to (0,\d);
}
\draw[wire] (A2) to [out=\dl, in=up] (-1,4) to [out=down, in=down, looseness=\ls] (-2,4) to (-2,6) to [out=up, in=down] (-1,14) to [out=up, in=up, looseness=\ls] (0,14) to[out=down, in=\ul]  (A4);
\draw[wire] (A1) to [out=\ul, in=down] (2,4) to (2,6) to [out=up, in=down] (1,8) to [out=up, in=\dr]  (A3);
\draw[wire] (A2) to [out=\ul, in=down] (-1, 6) to [out=up, in=down] (0,8) to [out=up, in=\dl] (A3);
\draw[wire] (A3) to [out=\dr, in=down] (0,10) to (0,12) to [out=up, in=\dl] (A4);
\draw[wire] (A4) to [out=\dr, in=up] (1,12) to [out=down, in=down, looseness=\ls] (2,12) to (2,14) to [out=up, in=down] node[mask point, pos=0.28] (MP5){} (4,18);
\cliparoundtwo{MP4}{MP5}{
\draw[wire] (A3) to [out=\ur, in=down] (1,10) to [out=up, in=down] (3,12) to (3,14) to [out=up, in=down] (2,16) to [out=up, in=\dr] (A5);
}
\cliparoundone{MP3}{\draw[wire] (A4) to [out=\ur, in=down] (1,14) to  (1,16) to [out=up, in=\dl] (A5);}
\draw[wire] (A5) to [out=\ul, in=down] (1,18);
\draw[wire] (A5) to [out=\ur, in=down] (2,18);
\node[dot] at (A1){};
\node[dot] at (A2){};
\node[dot] at (A3){};
\node[dot] at (A4){};
\node[dot] at (A5){};
\node[tmor, right] at ([xshift=\disttmor]A1) {$\alpha$};
\node[tmor, right] at ([xshift=\disttmor]A2) {$\beta$};
\node[tmor, right] at ([xshift=\disttmor]A3) {$\gamma$};
\node[tmor, right] at ([xshift=\disttmor]A4) {$\delta$};
\node[tmor, right] at ([xshift=\disttmor]A5) {$\epsilon$};
\end{tz}\,\,\,\,\,
\right)
\labeleqgap & \superequals{\raisebox{3pt}{\begin{minipage}{1cm}\centering def.+\\cusp+\\isotopy\end{minipage}}} \labeleqgap
\begin{tz}[scale=\scl]
\coordinate (A1) at (-0.5,1);
\coordinate (A2) at (0.5,3);
\coordinate (A3) at (-0.5,5);
\coordinate (A4) at (-0.5,8);
\coordinate (A5) at (0.5,10);
\draw[wire] (A1) to [out=\ur, in=down] (0,2) to [out=up, in=\dl] (A2);
\draw[wire] (A1) to [out=\ul, in=down] (-1,2) to (-1,4) to [out=up, in=\dl] (A3) to [out=\dr, in=up] (0,4) to [out=down, in=\ul] (A2);
\draw[wire] (A2) to [out=\ur, in=down] (1,4) to (1,6) to[out=up, in=down]node[mask point, pos=0.5] (MP){} (0,7) to [out=up, in=\dr] (A4);
\draw[wire] (A3) to [out=\ul, in=down] (-1,6)  to (-1,7) to [out=up, in=\dl] (A4);
\cliparoundone{MP}{
\draw[wire] (A3) to [out=\ur, in=down] (0,6) to [out=up, in=down] (1,7) to (1,9) to [out=up, in=\dr] (A5);
}
\draw[wire] (A4) to [out=\ul, in=down] (-1,9) to [out=up, in=up, looseness=\ls] (-2,9) to (-2,0) to [out=down, in=down, looseness=\ls] (-1,0) to [out=up, in=\dl] (A1);
\draw[wire] (A4) to [out=\ur, in=down] (0,9) to [out=up, in=\dl] (A5);
\draw[wire] (A5) to [out=\ur, in=down] (1,11) to [out=up, in=up, looseness=\ls] (2,11) to (2,-1) to [out=down, in=down, looseness=\ls] (1,-1) to (1,2) to [out=up, in=\dr] (A2);
\draw[wire] (A5) to [out=\ul, in=down] (0,11) to [out=up, in=up, looseness=\ls] (3,11) to (3,-1) to [out=down, in=down, looseness=\ls] (0,-1) to (0,0) to [out=up, in=\dr] (A1);
\insidepath{arrow data={0}{<}}{ (-3,5) to (-3,11) to [out=up, in=up, looseness=1.25] (4,11) to (4,-1) to [out=down, in=down, looseness=1.25] (-3,-1) to cycle
}
\insidepath{arrow data={0}{>}}{ (-4,5) to (-4,11) to [out=up, in=up, looseness=1.3] (5,11) to (5,-1) to [out=down, in=down, looseness=1.3] (-4,-1) to cycle
}
\node[dot] at (A1){};
\node[dot] at (A2){};
\node[dot] at (A3){};
\node[dot] at (A4){};
\node[dot] at (A5){};
\node[tmor, right] at ([xshift=\disttmor]A1) {$\beta$};
\node[tmor, right] at ([xshift=\disttmor]A2) {$\alpha$};
\node[tmor, right] at ([xshift=\disttmor]A3) {$\gamma$};
\node[tmor, right] at ([xshift=\disttmor]A4) {$\delta$};
\node[tmor, right] at ([xshift=\disttmor]A5) {$\epsilon$};
\end{tz}
\hspace{0.2cm}\labeleqgap\superequals{\raisebox{3pt}{\begin{minipage}{1.5cm}\centering isotopy+\\Prop.~\ref{prop:leftrighttrace}\end{minipage}}} \labeleqgap\hspace{0.2cm}
\begin{tz}[scale=\scl]
\coordinate (A1) at (-0.5,1);
\coordinate (A2) at (0.5,3);
\coordinate (A3) at (-0.5,5);
\coordinate (A4) at (-0.5,8);
\coordinate (A5) at (0.5,10);
\draw[wire] (A1) to [out=\ur, in=down] (0,2) to [out=up, in=\dl] (A2);
\draw[wire] (A1) to [out=\ul, in=down] (-1,2) to (-1,4) to [out=up, in=\dl] (A3) to [out=\dr, in=up] (0,4) to [out=down, in=\ul] (A2);
\draw[wire] (A2) to [out=\ur, in=down] (1,4) to (1,6) to[out=up, in=down]node[mask point, pos=0.5] (MP){} (0,7) to [out=up, in=\dr] (A4);
\draw[wire] (A3) to [out=\ul, in=down] (-1,6)  to (-1,7) to [out=up, in=\dl] (A4);
\cliparoundone{MP}{
\draw[wire] (A3) to [out=\ur, in=down] (0,6) to [out=up, in=down] (1,7) to (1,9) to [out=up, in=\dr] (A5);
}
\draw[wire] (A4) to [out=\ul, in=down] (-1,9) to [out=up, in=up, looseness=\ls] (-2,9) to (-2,0) to [out=down, in=down, looseness=\ls] (-1,0) to [out=up, in=\dl] (A1);
\draw[wire] (A4) to [out=\ur, in=down] (0,9) to [out=up, in=\dl] (A5);
\draw[wire] (A5) to [out=\ur, in=down] (1,11) to [out=up, in=up, looseness=\ls] (2,11) to (2,-1) to [out=down, in=down, looseness=\ls] (1,-1) to (1,2) to [out=up, in=\dr] (A2);
\draw[wire] (A5) to [out=\ul, in=down] (0,11) to [out=up, in=up, looseness=\ls] (3,11) to (3,-1) to [out=down, in=down, looseness=\ls] (0,-1) to (0,0) to [out=up, in=\dr] (A1);
\insidepath{arrow data={0}{>}}{ (-3,5) to (-3,11) to [out=up, in=up, looseness=1.25] (4,11) to (4,-1) to [out=down, in=down, looseness=1.25] (-3,-1) to cycle
}
\insidepath{arrow data={0}{>}}{ (-4,5) to (-4,11) to [out=up, in=up, looseness=1.3] (5,11) to (5,-1) to [out=down, in=down, looseness=1.3] (-4,-1) to cycle
}
\node[dot] at (A1){};
\node[dot] at (A2){};
\node[dot] at (A3){};
\node[dot] at (A4){};
\node[dot] at (A5){};
\node[tmor, right] at ([xshift=\disttmor]A1) {$\beta$};
\node[tmor, right] at ([xshift=\disttmor]A2) {$\alpha$};
\node[tmor, right] at ([xshift=\disttmor]A3) {$\gamma$};
\node[tmor, right] at ([xshift=\disttmor]A4) {$\delta$};
\node[tmor, right] at ([xshift=\disttmor]A5) {$\epsilon$};
\end{tz}
\\
& \superequals{Prop.~\ref{prop:leftrighttrace}} \labeleqgap\hspace{0.2cm}
\begin{tz}[scale=\scl]
\coordinate (A1) at (-0.5,1);
\coordinate (A2) at (0.5,3);
\coordinate (A3) at (-0.5,5);
\coordinate (A4) at (-0.5,8);
\coordinate (A5) at (0.5,10);
\draw[wire] (A1) to [out=\ur, in=down] (0,2) to [out=up, in=\dl] (A2);
\draw[wire] (A1) to [out=\ul, in=down] (-1,2) to (-1,4) to [out=up, in=\dl] (A3) to [out=\dr, in=up] (0,4) to [out=down, in=\ul] (A2);
\draw[wire] (A2) to [out=\ur, in=down] (1,4) to (1,6) to[out=up, in=down]node[mask point, pos=0.5] (MP){} (0,7) to [out=up, in=\dr] (A4);
\draw[wire] (A3) to [out=\ul, in=down] (-1,6)  to (-1,7) to [out=up, in=\dl] (A4);
\cliparoundone{MP}{
\draw[wire] (A3) to [out=\ur, in=down] (0,6) to [out=up, in=down] (1,7) to (1,9) to [out=up, in=\dr] (A5);
}
\draw[wire] (A4) to [out=\ul, in=down] (-1,9) to (-1,11) to[out=up, in=up, looseness=\ls] (4,11) to (4,0) to [out=down, in=down, looseness=\ls] (-1,0) to [out=up, in=\dl] (A1);
\draw[wire] (A4) to [out=\ur, in=down] (0,9) to [out=up, in=\dl] (A5);
\draw[wire] (A5) to [out=\ur, in=down] (1,11) to [out=up, in=up, looseness=\ls] (2,11) to (2,0) to [out=down, in=down, looseness=\ls] (1,0) to (1,2) to [out=up, in=\dr] (A2);
\draw[wire] (A5) to [out=\ul, in=down] (0,11) to [out=up, in=up, looseness=\ls] (3,11) to (3,0) to [out=down, in=down, looseness=\ls] (0,0) to (0,0) to [out=up, in=\dr] (A1);
\insidepath{arrow data={0}{>}}{ (-2,5.5) to (-2,11) to [out=up, in=up, looseness=\ls] (5,11) to (5,0) to [out=down, in=down, looseness=\ls] (-2,0) to cycle
}
\node[dot] at (A1){};
\node[dot] at (A2){};
\node[dot] at (A3){};
\node[dot] at (A4){};
\node[dot] at (A5){};
\node[tmor, right] at ([xshift=\disttmor]A1) {$\beta$};
\node[tmor, right] at ([xshift=\disttmor]A2) {$\alpha$};
\node[tmor, right] at ([xshift=\disttmor]A3) {$\gamma$};
\node[tmor, right] at ([xshift=\disttmor]A4) {$\delta$};
\node[tmor, right] at ([xshift=\disttmor]A5) {$\epsilon$};
\end{tz}
\hspace{0.2cm}\labeleqgap\superequals{\raisebox{3pt}{\begin{minipage}{1.5cm}\centering planar\\pivotality\end{minipage}}} \labeleqgap\hspace{0.2cm}
\begin{tz}[scale=\scl]
\coordinate (A1) at (0.5,1);
\coordinate (A2) at (0.5,4);
\coordinate (A3) at (1.5,6);
\coordinate (A4) at (0.5,8);
\coordinate (A5) at (1.5,10);
\draw[wire] (A1) to [out=\ul, in=down] (0, 2) to (0,3) to [out=up, in=\dl] (A2);
\draw[wire] (2,0) to (2,2) to [out=up, in=down] node[mask point, pos=0.5] (MP){}(1,3) to [out=up, in=\dr] (A2);
\cliparoundone{MP}{
\draw[wire] (A1) to [out=\ur, in=down] (1,2) to [out=up, in=down] (2,3) to(2,5) to [out=up, in=\dr] (A3) to [out=\ul, in=down] (1,7) to [out=up, in=\dr] (A4);
}
\draw[wire] (A2) to [out=\ur, in=down] (1,5) to [out=up, in=\dl] (A3) to [out=\ur, in=down] (2,7) to (2,9) to [out=up, in=\dr] (A5);
\draw[wire] (A2) to [out=\ul, in=down] (0,5) to (0,7) to [out=up, in=\dl] (A4);
\draw[wire] (A4) to [out=\ur, in=down] (1,9) to [out=up, in=\dl] (A5);
\draw[wire] (A5) to [out=\ur, in=down] (2,11) to [out=up, in=up, looseness=\ls] (3,11) to (3,0) to [out=down, in=down, looseness=\ls] (2,0);
\draw[wire] (A5) to [out=\ul, in=down] (1,11) to [out=up, in=up, looseness=\ls] (4,11) to (4,0) to[out=down, in=down, looseness=\ls] (1,0) to [out=up, in=\dr] (A1);
\draw[wire] (A4) to [out=\ul, in=down] (0,9) to (0,11) to[out=up, in=up, looseness=\ls] (5,11) to (5,0) to [out=down, in=down, looseness=\ls] (0,0) to [out=up, in=\dl] (A1);
\insidepath{arrow data={0}{>}}{ (-1,5.5) to (-1,11) to [out=up, in=up, looseness=\ls] (6,11) to (6,0) to [out=down, in=down, looseness=\ls] (-1,0) to cycle
}
\node[dot] at (A1){};
\node[dot] at (A2){};
\node[dot] at (A3){};
\node[dot] at (A4){};
\node[dot] at (A5){};
\node[tmor, right] at ([xshift=\disttmor]A4) {$\beta$};
\node[tmor, right] at ([xshift=\disttmor]A5) {$\alpha$};
\node[tmor, right] at ([xshift=\disttmor]A1) {$\gamma$};
\node[tmor, right] at ([xshift=\disttmor]A2) {$\delta$};
\node[tmor, right] at ([xshift=\disttmor]A3) {$\epsilon$};
\end{tz}
\end{align*}
\caption{Relating the 10j symbol of a state and its $(01)$ transposition.}
\label{fig:10jtranspose}
\end{figure}

Next, when the permutation is $\sigma_\sIV = (12) \in S_5$, the permutations of the faces are
\begin{calign}\nonumber 
\sigma_{0123}= (12)&\sigma_{0134} = \id & \sigma_{1234} = (01) & \sigma_{0124} = (12) & \sigma_{0234}= \id
\end{calign}
The desired equation is
\[
\begin{split}
z(\sig, \sIV)&\left(\Phi^{-}_{\partial^{o}_{[0123]} \sIV, (12)}(\alpha), \Phi^{+}_{\partial^{o}_{[0234]} \sIV, \id} (\beta), \Phi^{-}_{ \partial^{o}_{[1234]}\sIV, (01)}(\gamma),\Phi^{+}_{\partial^{o}_{[0124]} \sIV,(12)} (\delta), \Phi^{-}_{\partial^{o}_{[0134]} \sIV, \id} (\epsilon)\right) \\
&=z(\Gamma, \sIV)\left(\beta,\delta,\gamma,\epsilon,\alpha\right)
\end{split}
\]
where $\alpha \in V^-(\Gamma, \partial^o_{[0123]}\sIV)$, $\beta \in V^+(\Gamma,\partial^o_{[0234]}\sIV)$, $\gamma \in V^-(\Gamma,\partial^o_{[1234]}\sIV)$, $\delta \in V^+(\Gamma, \partial^o_{[0124]}\sIV)$, and $\epsilon \in V^-(\Gamma, \partial^o_{[0134]}\sIV)$.
This is established as in Figure~\ref{fig:10jtranspose2}.

The cases $\sigma_\sIV = (23)$ and $\sigma_\sIV = (34)$ are analogous.
\end{proof}

\begin{figure}[h]
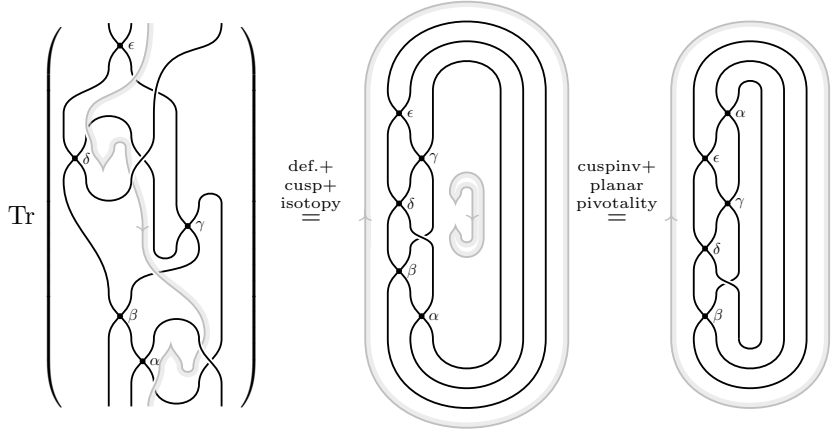

\def\scl{0.3}
\def\ls{1.5}
\def\d{-1}
\def\disttmor{3pt}
\def\upper{1}
\[
\Tr\left(
\begin{tz}[scale=\scl]
\coordinate(A1) at (0.5,2);
\coordinate(A2) at (-0.5,4);
\coordinate (A3) at (1.5+\upper, 8);
\coordinate (A4) at (-3.5+\upper, 11);
\coordinate (A5) at (-1.5+\upper, 16);
\coordinate (b) at (0.75,0);
\coordinate (t) at (0.75, 17);
\draw[wire] (A1) to [out=\ur, in=down] (1,3) to [out=up, in=up, looseness=\ls] (3,3) to [out=down, in=up] node[mask point, pos=0.19] (BR1){} node[mask point, pos=0.55] (BR2){}(4,1)  to (4,0);
\draw[wire] (A4) to [out=\dr, in=up] (-3+\upper,10) to [out=down, in=down, looseness=\ls] (-1+\upper,10) to [out=up, in=down]node[mask point, pos=0.11,minimum width=30pt ] (MPM1){} node[mask point, pos=0.5] (MPM2){} (0+\upper,12)  to (0+\upper, 13) to [out=up, in=down, out looseness=2] node[mask point, pos=0.09] (MPT){} (4,17);
\coordinate(b1) at (1.25, 1.75);
\rightpathup[frontc]{on layer=front}{
(0.75,0) to [out=up, in=down] node[mask point, pos=0.52] (MPG1){} (1.25,1.75)
}{b}{b1}
\coordinate (b2) at (3.25, 3.25);
\cliparoundone{BR1}{
\rightpathup{}{
(1.25,1.75) to [out=up, in=\dlcusp] (1.75, 2.75) to[out=\drcusp, in=up] (2.25, 1.75) to [out=down, in=down, looseness=\ls] (2.75, 1.75) to [out=up, in=down] (3.25, 3.25)
}{b1}{b2}
}
\coordinate (b3) at (-0.5+\upper, 8);
\rightpathup[frontb]{on layer =front, arrow data={1}{<}}{
 (3.25, 3.25) to [out=up, in=down] node[mask point, pos=0.7] (MPG2){} (-0.5+\upper,7.25)to (-0.5+\upper,8)
 }{b2}{b3}
 \coordinate (b4) at (-1.25+\upper, 11.5);
 \cliparoundone{MPM1}{
 \rightpathup{}{
 (-0.5+\upper, 8) to [out=up, in=down] (-1.25+\upper,11.5)
 }{b3}{b4}
 }
 \rightpathup[fronta]{on layer=front}{
 (-1.25+\upper, 11.5) to [out=up, in=up, looseness=\ls] (-1.75+\upper,11.5) to [out=down, in=\urcusp] (-2.25+\upper, 10.5) to [out=\ulcusp, in=down] (-2.75+\upper, 11.5) to [out=up, in=down] node[mask point, pos=0.69] (MPG3){} (-3+\upper, 12.5)  to [out=up, in=down, in looseness=2] node[mask point, pos=0.7] (MPG4){} (0.75, 17)%
}{b4}{t}
\draw[wire] (A2) to [out=\dl, in=up] (-1,3) to (-1,0);
\cliparoundtwo{MPG1}{BR2}{
\draw[wire] (A1) to [out=\dr, in=up] (1,1) to [out=down, in=down, looseness=\ls] (3,1) to [out=up, in=down] (4,3) to[out=up, in=down] (3+\upper,9) to [out=up, in=up, looseness=\ls] (2+\upper,9) to [out=down, in=\ur] (A3);
}
\draw[wire] (0,0) to (0,1) to [out=up, in=\dl] (A1);
\draw[wire] (A1) to [out=\ul, in=down] (0,3) to [out=up, in=\dr] (A2);
\cliparoundone{MPG2}{
\draw[wire] (A2) to [out=\ur, in=down] (0,5) to [out=up, in=down]  (2+\upper,7) to [out=up, in=\dr] (A3);
}
\cliparoundtwo{MPG3}{MPM2}{
\draw[wire] (A3) to [out=\dl, in=up] (1+\upper,7) to [out=down, in=down, looseness=\ls] (\upper,7) to (\upper,10) to [out=up, in=down] (-1+\upper,12) to [out=up, in=up, looseness=\ls] (-3+\upper,12) to [out=down, in=\ur] (A4);
}
\draw[wire] (A2) to [out=\ul, in=down] (-1,5) to [out=up, in=down] (-4+\upper,10)to [out=up, in=\dl] (A4);
\draw[wire] (A4) to [out=\ul, in=down] (-4+\upper,12)to  (-4+\upper, 13) to  [out=up, in=down] (-2+\upper, 15) to [out=up, in=\dl] (A5);
\cliparoundtwo{MPG4}{MPT}{
\draw[wire] (A3) to [out=\ul, in=down] (1+\upper,9) to (1+\upper,13) to  [out=up, in=down] (-1+\upper, 15) to [out=up, in=\dr] (A5);
}
\draw[wire] (A5) to [out=\ul, in=down] (-2+\upper, 17);
\draw[wire] (A5) to[out=\ur, in=down] (-1+\upper, 17);
\node[dot] at (A1){};
\node[dot] at (A2){};
\node[dot] at (A3){};
\node[dot] at (A4){};
\node[dot] at (A5){};
\node[tmor, right] at ([xshift=2pt]A1) {$\alpha$};
\node[tmor, right] at ([xshift=\disttmor]A2) {$\beta$};
\node[tmor, right] at ([xshift=\disttmor]A3) {$\gamma$};
\node[tmor, right] at ([xshift=\disttmor]A4) {$\delta$};
\node[tmor, right] at ([xshift=\disttmor]A5) {$\epsilon$};
\end{tz}
\,\,\,\,
\right)
\hspace{0.2cm}\labeleqgap\superequals{\raisebox{3pt}{\begin{minipage}{1.5cm}\centering def.+\\cusp+\\isotopy\end{minipage}}} \labeleqgap\hspace{0.2cm}
\begin{tz}[scale=\scl]
\coordinate (A1) at (0.5,1);
\coordinate (A2) at (-0.5,3);
\coordinate (A3) at (-0.5,6);
\coordinate (A4) at (0.5,8);
\coordinate (A5) at (-0.5,10);
\draw[wire]  (A1) to [out=\ul, in=down] (0,2) to [out=up, in=\dr] (A2);
\draw[wire] (A1) to [out=\ur, in=down] (1,2) to (1,4) to [out=up, in=down]node[mask point, pos=0.5] (MP){} (0,5) to [out=up, in=\dr] (A3);
\cliparoundone{MP}{
\draw[wire] (A2) to[out=\ur, in=down] (0,4) to [out=up, in=down] (1,5) to (1,7) to [out=up, in=\dr] (A4);
}
\draw[wire] (A2) to [out=\ul, in=down] (-1,4) to (-1,5) to [out=up, in=\dl] (A3);
\draw[wire] (A3) to [out=\ur, in=down] (0,7) to [out=up, in=\dl] (A4);
\draw[wire] (A4) to [out=\ul, in=down] (0,9) to [out=up, in=\dr] (A5);
\draw[wire] (A3) to [out=\ul, in=down] (-1,7) to (-1,9) to [out=up, in=\dl] (A5);
\draw[wire] (A4) to [out=\ur, in=down] (1,9) to (1,11) to [out=up, in=up, looseness=\ls] (4,11) to (4,0) to [out=down, in=down, looseness=\ls] (1,0) to [out=up, in=\dr] (A1);
\draw[wire] (A5) to [out=\ur, in=down] (0,11) to [out=up, in=up, looseness=\ls] (5,11) to (5,0) to [out=down, in=down, looseness=\ls] (0,0) to [out=up, in=\dl] (A1);
\draw[wire] (A5) to [out=\ul, in=down] (-1,11) to [out=up, in=up, looseness=\ls] (6,11) to (6,0) to [out=down, in=down, looseness=\ls] (-1,0) to (-1,2) to [out=up, in=\dl] (A2);
\insidepath{arrow data={0}{>}}{
(-2, 5.5) to (-2,11) to [out=up, in=up, looseness=\ls] (7,11) to (7,0) to [out=down, in=down, looseness=\ls] (-2,0) to cycle
}
\def\hc{5.5}
\def\vc{0.25}
\insidepath{arrow data={0}{<}}{
 (2.5+\vc,\hc) to (2.5+\vc,1+\hc) to [out=up, in=up, looseness=1.5] (2+\vc, 1+\hc) to [out=down, in=\urcusp] (1.75+\vc,0.5+\hc) to [out=\ulcusp, in=down] (1.5+\vc,1+\hc) to [out=up, in=up, looseness=2] (3+\vc, 1+\hc) to (3+\vc,-1+\hc) to [out=down, in=down, looseness=2] (1.5+\vc,-1+\hc) to [out=up, in=\dlcusp] (1.75+\vc,-0.5+\hc) to [out=\drcusp, in=up] (2+\vc,-1+\hc) to [out=down, in=down, looseness=1.5] (2.5+\vc,-1+\hc) to (2.5+\vc,\hc)
 }
\node[dot] at (A1){};
\node[dot] at (A2){};
\node[dot] at (A3){};
\node[dot] at (A4){};
\node[dot] at (A5){};
\node[tmor, right] at ([xshift=2pt]A1) {$\alpha$};
\node[tmor, right] at ([xshift=\disttmor]A2) {$\beta$};
\node[tmor, right] at ([xshift=\disttmor]A3) {$\delta$};
\node[tmor, right] at ([xshift=\disttmor]A4) {$\gamma$};
\node[tmor, right] at ([xshift=\disttmor]A5) {$\epsilon$};
\end{tz}
\hspace{0.2cm}\labeleqgap\superequals{\raisebox{3pt}{\begin{minipage}{1.5cm}\centering cuspinv+\\ planar\\pivotality\end{minipage}}} \labeleqgap\hspace{0.2cm}
\begin{tz}[scale=\scl]
\coordinate (A1) at (0.5,1);
\coordinate (A2) at (0.5,4);
\coordinate (A3) at (1.5,6);
\coordinate (A4) at (0.5,8);
\coordinate (A5) at (1.5,10);
\draw[wire] (A1) to [out=\ul, in=down] (0, 2) to (0,3) to [out=up, in=\dl] (A2);
\draw[wire] (2,0) to (2,2) to [out=up, in=down] node[mask point, pos=0.5] (MP){}(1,3) to [out=up, in=\dr] (A2);
\cliparoundone{MP}{
\draw[wire] (A1) to [out=\ur, in=down] (1,2) to [out=up, in=down] (2,3) to(2,5) to [out=up, in=\dr] (A3) to [out=\ul, in=down] (1,7) to [out=up, in=\dr] (A4);
}
\draw[wire] (A2) to [out=\ur, in=down] (1,5) to [out=up, in=\dl] (A3) to [out=\ur, in=down] (2,7) to (2,9) to [out=up, in=\dr] (A5);
\draw[wire] (A2) to [out=\ul, in=down] (0,5) to (0,7) to [out=up, in=\dl] (A4);
\draw[wire] (A4) to [out=\ur, in=down] (1,9) to [out=up, in=\dl] (A5);
\draw[wire] (A5) to [out=\ur, in=down] (2,11) to [out=up, in=up, looseness=\ls] (3,11) to (3,0) to [out=down, in=down, looseness=\ls] (2,0);
\draw[wire] (A5) to [out=\ul, in=down] (1,11) to [out=up, in=up, looseness=\ls] (4,11) to (4,0) to[out=down, in=down, looseness=\ls] (1,0) to [out=up, in=\dr] (A1);
\draw[wire] (A4) to [out=\ul, in=down] (0,9) to (0,11) to[out=up, in=up, looseness=\ls] (5,11) to (5,0) to [out=down, in=down, looseness=\ls] (0,0) to [out=up, in=\dl] (A1);
\insidepath{arrow data={0}{>}}{ (-1,5.5) to (-1,11) to [out=up, in=up, looseness=\ls] (6,11) to (6,0) to [out=down, in=down, looseness=\ls] (-1,0) to cycle
}
\node[dot] at (A1){};
\node[dot] at (A2){};
\node[dot] at (A3){};
\node[dot] at (A4){};
\node[dot] at (A5){};
\node[tmor, right] at ([xshift=\disttmor]A4) {$\epsilon$};
\node[tmor, right] at ([xshift=\disttmor]A5) {$\alpha$};
\node[tmor, right] at ([xshift=\disttmor]A1) {$\beta$};
\node[tmor, right] at ([xshift=\disttmor]A2) {$\delta$};
\node[tmor, right] at ([xshift=\disttmor]A3) {$\gamma$};
\end{tz}
\]
\caption{Relating the 10j symbol of a state and its $(12)$ transposition.}
\label{fig:10jtranspose2}
\end{figure}

\skiptocparagraph{The 10j normalization factors are preserved by state transposition}

\begin{proof}[Proof of Lemma~\ref{lem:transpositionnormalized}]
Composing Lemma~\ref{lem:transposecopairing} and Lemma~\ref{lem:transpose10jsymbols}, we see that the unnormalized 10j action is invariant under transposition: $Z(\Gamma^\sigma) = Z(\Gamma)$.  To check that the normalized 10j action is similarly invariant, it remains only to see that the normalization factors are unaffected by transposition.

For a 1-simplex $\sI \in K_1$, the transposed labeling $\Gamma^\sigma(\sI)$ is either $\Gamma(\sI)$ or $\Gamma(\sI)^\#$.  By sphericality, therefore, we have $\dim(\Gamma^\sigma(\sI)) = \dim(\Gamma(\sI))$.  Next, observe that for any object $A \in \tc{C}$, dualizing $(-)^\#$ defines an equivalence from the multifusion category $\End_{\tc{C}}(A)$ to $\End_{\tc{C}}(A^\#)^\mpt$, where $(-)^\mpt$ denotes the opposite monoidal product.  As the global dimension of a multifusion category is the same as the global dimension of its monoidal opposite, it follows that $\dim(\End_{\tc{C}}(\Gamma^\sigma(\sI))) = \dim(\End_{\tc{C}}(\Gamma(\sI)))$ for any 1-simplex $\sI$.  Evidently two simple objects are in the same component if and only if their duals are in the same component, so also the number of simple objects is unchanged by transposition: $n(\Gamma^\sigma(\sI)) = n(\Gamma(\sI))$.

Finally, note that for any 2-simplex $\sII \in K_2$, from Proposition~\ref{prop:leftrighttrace} and sphericality, it follows that $\dim(\Gamma^\sigma(\sII)) = \dim(\Gamma(\sII))$.
\end{proof}

\subsubsection{Transposition is a bijection of skeletal states}

We have seen that give a state $\Gamma$ of a given vertex-ordered 4-manifold, there is a corresponding state $\Gamma^\sigma$ for the manifold with a permuted vertex order, and that $\Gamma$ and $\Gamma^\sigma$ have the same normalized 10j action.  Equipped with that fact, we can now establish Lemma~\ref{lem:orderingbijection}, that there is a bijection between skeletal states of a manifold and of the reordered manifold, such that the normalized 10j action is preserved, and therefore complete the proof of Corollary~\ref{cor:orderingindependence}, that the state sum is invariant under vertex reorderings.

\begin{proof}[Proof of Lemma~\ref{lem:orderingbijection}]
Recall that $o$ and $o'$ are global vertex orders related by a permutation $\sigma$, and $\Delta \tc{C}^\sk$ is a simplicial skeleton for $\tc{C}$.  For every simple object $A$ of $\tc{C}$, let $A_0$ denote the unique simple object of $\Delta \tc{C}^\sk$ equivalent to $A$; choose inverse equivalences $h_A : A \leftrightarrows A_0 : k_A$ such that when $A = A_0$, the equivalences $h_A$ and $k_A$ are identities.  There is a natural transformation $X: \Delta \tc{C} \To \Delta \tc{C}^\sk$ taking an object $A$ to $A_0$ and taking a simple 1-morphism $f: A \xz B \ra C$ to the unique 1-morphism isomorphic to $h_C \xo f \xo (k_A \xz k_B)$.  This transformation induces a map $X_*: [K^o,\Delta\tc{C}] \ra [K^o,\Delta\tc{C}^\sk]$, and note that by Lemma~\ref{lem:10jinvariance}, this map preserves the normalized 10j action.  The composite $X_*((-)^\sigma): [K^o,\Delta\tc{C}^\sk] \ra [K^{o'},\Delta\tc{C}^\sk]$ taking a state $\Gamma$ to $X_*(\Gamma^\sigma)$ therefore also preserves the normalized 10j action.  It suffices to see that this composite is a bijection.

Define a map $Y_\sigma: [K^{o'},\Delta\tc{C}^\sk] \ra [K^{o'},\Delta\tc{C}]$ on 1-simplices $\sI \in K_1$ by
\[
Y_\sigma(\Gamma)(\sI) = \left\{\begin{array}{ll} 
\Gamma(\sI) & \text{if } \sigma_\sI = \id \\
((\Gamma(\sI)^\#)_0)^\# & \text{if } \sigma_\sI = (01)
\end{array}
\right.
\]
and on 2-simplices $\sII \in K_2$ by
\[
Y_\sigma(\Gamma)(\sII) = \left\{\begin{array}{ll}
\Gamma(\sII) & \text{if } \sigma_\sII = \id \\
\Gamma(s)\xo ( h_{((\partial_{01}\Gamma(s)^\#)_0 )^\#} \xz \Io_{\partial_{12}\Gamma(s)}) & \text{if } \sigma_\sII = (01) \\
\Gamma(s)\xo ( \Io_{\partial_{01}\Gamma(s)} \xz h_{((\partial_{12}\Gamma(s)^\#)_0 )^\#} ) & \text{if } \sigma_\sII = (12)
\end{array}
\right.
\]
This map is well defined on 2-simplices because $(((A^\#)_0)^\#)_0 = A$ for any object $A$ in the skeleton $\Delta \tc{C}^\sk$.  Finally observe that the composite $X_*((Y_\sigma(-))^\sigma): [K^{o'},\Delta\tc{C}^\sk] \ra [K^o,\Delta\tc{C}^\sk]$ is an inverse to $X_*((-)^\sigma): [K^o,\Delta\tc{C}^\sk] \ra [K^{o'},\Delta\tc{C}^\sk]$, as required.
\end{proof}

\subsection{The state sum is independent of the combinatorial structure}

Lastly, we show that the state sum $Z_{\tc{C}}(K)$ is independent of the combinatorial structure of $K$.

Recall that, by Theorem~\ref{thm:singular}, two singular combinatorial 4-manifolds are piecewise-linearly homeomorphic if and only if they are bistellar equivalent, that is if and only if there is a finite series of bistellar moves transforming one into the other. Hence, Theorem~\ref{thm:maintheorem} is a direct consequence of the following lemma, which we prove in this section.

\begin{lemma}[The state sum is invariant under bistellar equivalence] \label{lem:invariancebistellar} 
Let $\K$ and $\K'$ be bistellar equivalent oriented (singular) combinatorial $4$-manifolds. Then, the corresponding state sums agree: \[Z_{\tc{C}}(\K) = Z_{\tc{C}}(\K')\] 
\end{lemma}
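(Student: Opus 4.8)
The plan is to reduce the claim to the three generating bistellar moves --- the $(1,5)$, $(2,4)$, and $(3,3)$ moves --- and verify that the state sum is unchanged under each. By Theorem~\ref{thm:singular}, bistellar equivalent singular combinatorial $4$-manifolds are exactly the piecewise-linearly homeomorphic ones, and any bistellar equivalence factors as a finite composite of these three moves (and their inverses); since invariance under a move clearly implies invariance under its inverse, it suffices to treat each move once. To localize the effect of a move, I would first set up a notion of state sum for a combinatorial $4$-manifold with boundary, where the labelings of boundary simplices are fixed and the sum ranges only over labelings of interior simplices; a bistellar move on a closed manifold $K$ is then supported inside a $4$-ball $B \subset K$ whose boundary $\partial B \cong \partial\Delta^5$ (or the relevant $(p,q)$-decomposition of $\partial\Delta^5$), and invariance of $Z_{\tc C}(K)$ follows once we know that the two boundary-compatible state sums of the ball $B$, one for each side of the move, agree for every fixed labeling of $\partial B$. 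Since all the normalization and $10j$ data are local, the only thing that varies across the move is the contribution of the simplices inside $B$, so this reduction is clean.

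Next, I would handle the three local identities in turn. For each move, one fixes the object labels on the edges and $1$-morphism labels on the triangles of $\partial B$, and compares the sum over interior labels of the appropriate product of $10j$ symbols (pentagonator traces from Figure~\ref{fig:definitionz}), copairings $\cup_{\Gamma,\sIII}$, and the dimension normalization factors $\dim(\tc C)^{-1}$, $(\dim(\Gamma(\sI))\dim(\End_{\tc C}(\Gamma(\sI)))n(\Gamma(\sI)))^{-1}$, $\dim(\Gamma(\sII))$. The $(3,3)$-move involves one interior triangle and summing over its $1$-morphism label; this is the categorified pentagon/associativity coherence, and it works out provided $\phi(\Gamma(\sII)) = \dim(\Gamma(\sII))$ --- the needed summed identity is stated as Lemma~\ref{lem:Pachner33}. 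The $(2,4)$-move involves one interior edge and summing over its simple-object label; here one uses the nondegeneracy of the pairing (Proposition~\ref{prop:pairing}), the nonvanishing of $\dim$ of simple $1$-morphisms and objects (Proposition~\ref{prop:simplenonzero} and its corollary), and the combinatorial structure of the link, leading to the factor $\phi(\Gamma(\sI)) = (\dim(\Gamma(\sI))\dim(\End_{\tc C}(\Gamma(\sI)))n(\Gamma(\sI)))^{-1}$; the relevant summed identity is Lemma~\ref{lem:Pachner24}. The $(1,5)$-move involves an interior vertex, whose only label is the $2$-category $\tc C$ itself, and summing over the global object and $1$-morphism labels adjacent to that vertex; balancing this requires $\phi(\tc C) = \dim(\tc C)^{-1}$, with the identity recorded as Lemma~\ref{lem:Pachner15}. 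In each case the verification is a surface-diagram computation together with bookkeeping of dimension factors against the Euler-characteristic-type counts of simplices in the ball $B$; crucially, because the links of $1$-, $2$-, and $3$-simplices are genuine combinatorial spheres, these counts come out to the right parities even when vertex links are arbitrary $3$-manifolds, so the argument applies to singular manifolds.

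The main obstacle is the $(2,4)$- and $(3,3)$-move computations themselves: these are the substantive coherence identities of a spherical prefusion $2$-category, and verifying them requires a long list of formulas relating dimensions of objects and $1$-morphisms (traces of various composites of folds, cusps, units, and counits), which is exactly the content of Appendix~B. In particular the $(2,4)$-move requires inserting a resolution of identity on the interior edge using a basis of $1$-morphisms $A\xz B \to C$ weighted by inverse dimensions, then recognizing the resulting configuration of pentagonator traces as a single pentagonator trace for the other triangulation --- this uses planar pivotality, sphericality (Definition~\ref{def:spherical}), the equality of left and right traces (Proposition~\ref{prop:leftrighttrace}), and the pairing/copairing duality on associator state spaces. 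I would organize the proof so that the purely algebraic dimension identities are quoted from Appendix~B and the topological content (which simplices are interior, how the $\epsilon_o$ signs interact across the move) is handled by explicit enumeration of the $(p,q)$-decompositions of $\partial\Delta^5$, following the pattern established in Mackaay~\cite{Mackaay} and Cui~\cite{Cui} for the analogous endotrivial and crossed-braided cases. Finally, since we have already shown (Corollary~\ref{lem:skeletalinvariance} and Corollary~\ref{cor:orderingindependence}) that $Z_{\tc C}(K)$ is independent of the chosen labeling skeleton and of the vertex ordering, we are free to choose, for each local move, whatever convenient ordering makes the sign bookkeeping simplest, which removes an annoying source of case-splitting.
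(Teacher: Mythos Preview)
Your proposal is correct and follows essentially the same architecture as the paper: reduce via Theorem~\ref{thm:singular} to the three bistellar moves, localize each move by introducing a state sum for manifolds with boundary together with a gluing pairing (Lemma~\ref{prop:gluinginvariance}), and then verify the three local identities Lemmas~\ref{lem:Pachner33},~\ref{lem:Pachner24},~\ref{lem:Pachner15} using the dimension formulas of Appendix~B. The one refinement worth noting is that the paper does not treat the three moves independently as you sketch, but instead factors the 10j symbol through the pentagon via auxiliary maps $\Scomp_\pm$ and $\Lcomp_\pm$ (Corollary~\ref{cor:Z+decomp}, Corollary~\ref{cor:Z+Z-}), proves the $(3,3)$-identity as a genuine coherence statement, and then derives the $(2,4)$- and $(1,5)$-identities by cascading from the previous move together with the section--retraction relation $\ZH_+\ZH_-=\id$; your ``resolution of identity'' heuristic for the $(2,4)$-move is morally right but is implemented in the paper through this more structured route.
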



\nid
To simplify notation, we will use the following abbreviations for simple objects $X$ and $1$-morphisms $f$ in the spherical prefusion $2$-category $\tc{C}$:
\begin{align*}
\dm(X)&:=\dim(X)~ \dim\left(\End_{\tc{C}}(X)\right)~ n(X)\\
\dm(f)&:= \dim(f) 
\end{align*}

\subsubsection{The state sum for combinatorial manifolds with boundary}\label{sec:boundary}

Invariance under bistellar moves is most easily established if we extend the definition of $Z_{\tc{C}}$ to combinatorial manifolds with boundary. Let $T$ be a closed oriented combinatorial $3$-manifold, and let $o$ be a total order on the vertices of $T$. Let $\epsilon_o:T_3 \to \{+1,-1\}$ be such that $\epsilon_o(\sIII)=+1$ if and only if the orientation of the $3$-simplex $\sIII\in T_3$ coincides with the one induced from the order $o_{\sIII}$. To such a combinatorial $3$-manifold we assign the following vector space:
\begin{equation}\nonumber W_{\tc{C}}(T, o, \Delta \tc{C}^{\sk}) := \bigoplus_{\Gamma: T_{(2)}^o \To \Delta \tc{C}^{\sk}} \bigotimes_{\sIII \in T_3} V^{\epsilon_o(\sIII)}(\Gamma, \sIII)
\end{equation}
\begin{remark}[The 3-manifold vector space is not an invariant] 
The vector space \linebreak $W_{\tc{C}}(T,o, \Delta\tc{C}^{\sk})$ depends on the combinatorial structure of $T$ and is not invariant under piecewise linear homeomorphisms. In particular, the hypothesized topological field theory extending the state sum $Z_{\tc{C}}$ will assign a certain subspace of $W_{\tc{C}}(T,o, \Delta\tc{C}^{\sk})$ to a 3-manifold with triangulation $T$.
\end{remark}

Denote by $\overline{T}$ the combinatorial manifold $T$ with the opposite orientation. We define a nondegenerate pairing $\langle \cdot, \cdot \rangle_T: W_{\tc{C}}\left(\overline{T}, o, \Delta \tc{C}^{\sk} \right) \otimes W_{\tc{C}}\left(T, o, \Delta \tc{C}^{\sk}\right) \to k$  as follows:
\ignore{ $\langle \cdot, \cdot \rangle_T\!: \!\bigoplus_{\Gamma:T_{(2)}^o \To \Delta \tc{C}^{\sk}} \bigotimes_{\sIII\in T_3} \left(  V^+(\Gamma, \sIII) \otimes V^-(\Gamma, \sIII)\right) \to k$:}%
\begin{equation} \nonumber
\langle \cdot, \cdot \rangle_{T} := \dim(\tc{C})^{-|T_0|} \bigoplus_{\Gamma:T_{(2)}^o \To \Delta \tc{C}^{\sk}}\left(\prod_{\sI \in T_1} \dm(\Gamma(\sI))  \right)^{-1}\left( \prod_{\sII\in T_2} \dm(\Gamma(\sII))\right) \bigotimes_{\sIII\in T_3} \langle \cdot, \cdot \rangle_{\Gamma, \sIII} 
\end{equation}
For an oriented combinatorial $4$-manifold $K$ with boundary, and with a total order $o$ on its vertices $o$, we define a linear map $Z_{\tc{C}}\left(\K, o, \Delta\tc{C}^{\sk}\right): k \to W_\tc{C}\left(\overline{\partial K}, o|_{\partial K}, \Delta \tc{C}^{\sk}\right)$ as follows:
\begin{equation}\nonumber \begin{split}
 \dim\left(\tc{C}\right)^{-|\interior{\K}_0|}\!\!\!\bigoplus_{\Sigma:\partial \K_{(2)}^o \To \Delta \tc{C}^{\sk}}& ~~\sum_{\substack{\Gamma: \K_{(2)}^o \To \Delta \tc{C}^{\sk}\\ \Gamma|_{\partial \K} = \Sigma}} \left(\prod_{\sI \in \interior{\K}_1} \dm(\Gamma(\sI)) \right)^{-1}
 \\
 &\left( \prod_{\sII \in \interior{\K}_2} \dm(\Gamma(\sII))\right)\left( \bigotimes_{\sIV \in K_4} z(\Gamma,\sIV) \right) \xo \left( \bigotimes_{\sIII \in K_3} \cup_{\Gamma, \sIII} \right)
 \end{split}
\end{equation}
Here, $z(\Gamma, \sIV)$ and $\cup_{\Gamma, \sIII}$ are defined as before, and $i(\K)_{r}$ denotes the finite set of $r$-simplices in the interior of $\K$, that is $\interior{\K}_r:=\{\s \in \K_r~|~\s \not \in \partial\K_r\}$. The composition $\xo$ is over all vector spaces appearing both in the domain of $\left( \bigotimes_{\sIV \in K_4} z(\Gamma,\sIV) \right)$ and in the codomain of  $\left( \bigotimes_{\sIII \in K_3} \cup_{\Gamma, \sIII} \right)$. 
Therefore, the codomain of $Z_{\tc{C}}(K, o, \Delta \tc{C}^{\sk})$ is the vector space 
\[\bigoplus_{\Sigma:\partial\K^o_{(2)} \To \Delta \tc{C}^{\sk}}~\bigotimes_{\sIII \in \K_3\text{ s.t.} \exists ! \sIV \in \K_4\text{ with }\sIII \subseteq \sIV} V^{-\epsilon_o^{\sIV}(\sIII)}(\Sigma, \sIII)\]
which indeed agrees with $W_{\tc{C}}(\overline{\partial \K}, o|_{\partial\K}, \Delta \tc{C}^{\sk})$.

\begin{lemma}[The state sum is the pairing of with-boundary state sums] \label{prop:gluinginvariance} 
Let $\K$ and $\K'$ be oriented combinatorial $4$-manifolds with boundary and let $f:\partial K \to \partial K'$ be an orientation reversing simplicial isomorphism. Let $o$ and $o'$ be total orders on $\K_0$ and $\K_0'$ such that the simplicial isomorphism $f:\partial \K\to \partial \K'$ preserves the induced orders. Then, 
\begin{equation}\nonumber Z_{\tc{C}}(\K\cup_{f} \K') =\left\langle Z_{\tc{C}}\left(\K, o, \Delta \tc{C}^{\sk}\right), Z_{\tc{C}}\left(\K', o', \Delta \tc{C}^{\sk}\right)\right\rangle_{\partial K},
\end{equation}
where we used $f$ to identify the vector spaces $W_{\tc{C}}(\overline{\partial \K'}, o'|_{\partial \K'}, \Delta \tc{C}^{\sk})$ and $W_{\tc{C}}(\partial \K, o|_{\partial \K}, \Delta \tc{C}^{\sk})$.
\end{lemma}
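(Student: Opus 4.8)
The plan is to unfold both sides of the claimed equality directly from the definitions and observe that they are manifestly the same sum of scalars. First I would recall the defining formulae: $Z_{\tc{C}}(\K,o,\Delta\tc{C}^\sk)$ is a direct sum over boundary states $\Sigma$ of a sum over extensions $\Gamma$ of $\Sigma$ to all of $\K$, weighted by the interior normalization factors $\big(\prod_{\sI\in\interior{\K}_1}\dm(\Gamma(\sI))\big)^{-1}\big(\prod_{\sII\in\interior{\K}_2}\dm(\Gamma(\sII))\big)\dim(\tc{C})^{-|\interior{\K}_0|}$ times $\big(\bigotimes_{\sIV\in\K_4}z(\Gamma,\sIV)\big)\xo\big(\bigotimes_{\sIII\in\K_3}\cup_{\Gamma,\sIII}\big)$, landing in $\bigotimes_{\sIII}V^{-\epsilon_o^{\sIV}(\sIII)}(\Sigma,\sIII)$ over the boundary-facing $3$-simplices, and similarly for $\K'$. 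Likewise the pairing $\langle\cdot,\cdot\rangle_T$ on $W_{\tc{C}}(\overline T,o,\Delta\tc{C}^\sk)\otimes W_{\tc{C}}(T,o,\Delta\tc{C}^\sk)$ is the direct sum over $\Gamma:T^o_{(2)}\To\Delta\tc{C}^\sk$ of $\dim(\tc{C})^{-|T_0|}\big(\prod_{\sI\in T_1}\dm(\Gamma(\sI))\big)^{-1}\big(\prod_{\sII\in T_2}\dm(\Gamma(\sII))\big)\bigotimes_{\sIII\in T_3}\langle\cdot,\cdot\rangle_{\Gamma,\sIII}$.

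The key bookkeeping step is to match the combinatorial data of $\K\cup_f\K'$ against that of the two pieces. Since $f$ is a simplicial isomorphism of boundaries preserving the induced vertex orders, the glued complex $\K\cup_f\K'$ inherits a well-defined total vertex order (glue $o$ and $o'$ along $f$), and its simplices partition as: interior simplices of $\K$, interior simplices of $\K'$, and the identified boundary simplices $\partial\K\cong\partial\K'\cong T$. Thus $|\K\cup_f\K'|_0=|\interior{\K}_0|+|\interior{\K}'_0|+|T_0|$, and similarly every product $\prod\dm(\cdot)$ over edges/triangles of $\K\cup_f\K'$ factors as the interior-$\K$ product, times the interior-$\K'$ product, times the boundary product over $T$. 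A state $\Gamma$ of $\K\cup_f\K'$ restricts to a state $\Gamma_{\K}$ of $\K$ and $\Gamma_{\K'}$ of $\K'$ agreeing on $T$ (call the common restriction $\Sigma$), and conversely any compatible triple $(\Gamma_{\K},\Gamma_{\K'},\Sigma)$ with $\Gamma_{\K}|_{\partial\K}=\Sigma=f^*(\Gamma_{\K'}|_{\partial\K'})$ glues to a unique $\Gamma$; so the outer sum over $\Gamma$ matches the sum over $\Sigma$ of (sum over $\Gamma_\K$ extending $\Sigma$) $\times$ (sum over $\Gamma_{\K'}$ extending $\Sigma$), exactly as in the pairing. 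Next, the $4$-simplex tensor factor $\bigotimes_{\sIV\in(\K\cup_f\K')_4}z(\Gamma,\sIV)$ splits as $\big(\bigotimes_{\sIV\in\K_4}z(\Gamma_\K,\sIV)\big)\otimes\big(\bigotimes_{\sIV\in\K'_4}z(\Gamma_{\K'},\sIV)\big)$, and the $3$-simplex copairings split into those over interior $3$-simplices of $\K$, interior $3$-simplices of $\K'$, and the boundary $3$-simplices $T_3$. For an interior $3$-simplex of $\K$ (bordered by two $4$-simplices of $\K$), the composition $\xo$ in $Z_{\tc{C}}(\K\cup_f\K')$ contracts $\cup_{\Gamma,\sIII}$ against the two $z$-factors exactly as it does inside $Z_{\tc{C}}(\K,o,\Delta\tc{C}^\sk)$; similarly for $\K'$. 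For a boundary $3$-simplex $\sIII\in T_3$ — bordered by one $4$-simplex of $\K$ and one of $\K'$, carrying opposite induced orientation signs $\epsilon_o^{\sIV}$ and $\epsilon_{o'}^{\sIV'}$ because $f$ reverses orientation — the single copairing $\cup_{\Gamma,\sIII}$ in $Z_{\tc{C}}(\K\cup_f\K')$ is precisely the copairing that, when contracted against the $z$-factors from both sides, is recorded in the pairing $\langle\cdot,\cdot\rangle_{\Sigma,\sIII}$: the $V^{\epsilon_o^{\sIV}(\sIII)}(\Sigma,\sIII)$-component of $Z_{\tc{C}}(\K,\ldots)$ is contracted against the $V^{-\epsilon_o^{\sIV}(\sIII)}(\Sigma,\sIII)=V^{\epsilon_{o'}^{\sIV'}(\sIII)}(\Sigma,\sIII)$-component of $Z_{\tc{C}}(\K',\ldots)$, which is exactly what the pairing's $\bigotimes_{\sIII\in T_3}\langle\cdot,\cdot\rangle_{\Sigma,\sIII}$ does. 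Assembling these three splittings — of the normalization products, of the $z$-action tensor, and of the copairing contractions — term by term over each $\Sigma$ yields the asserted identity.

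The main obstacle I expect is purely organizational rather than conceptual: carefully tracking the composition $\xo$ on the interior boundary $3$-simplices and checking that the pattern of "which $V^+$ pairs with which $V^-$" induced by the gluing $f$ is exactly the copairing used in the pairing $\langle\cdot,\cdot\rangle_{\partial K}$, in particular that the orientation-reversal of $f$ makes $\epsilon_o^{\sIV}(\sIII)$ and $\epsilon_{o'}^{\sIV'}(\sIII)$ opposite so that the two one-sided copairings on $\sIII$ from the two pieces assemble into the two-sided contraction appearing in $Z_{\tc{C}}(\K\cup_f\K')$. Once the correspondence of simplices and signs is set up carefully, the equality is a bilinearity-and-reindexing argument with no categorical input beyond the definitions. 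I would present it by first fixing notation for the partition of simplices of $\K\cup_f\K'$, then stating the three factorization claims as displayed equations, then concluding by substituting them into the definition of $Z_{\tc{C}}(\K\cup_f\K')$ and recognizing the result as $\langle Z_{\tc{C}}(\K,o,\Delta\tc{C}^\sk),Z_{\tc{C}}(\K',o',\Delta\tc{C}^\sk)\rangle_{\partial K}$.
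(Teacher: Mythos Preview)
Your proposal is correct and is precisely the approach the paper takes: the paper's entire proof reads ``This is a direct consequence of the definition of $Z_{\tc{C}}(\K,o,\Delta\tc{C}^{\sk})$ for combinatorial manifolds with boundary,'' and what you have written is an explicit unpacking of that sentence. Your bookkeeping of the simplex partition, the factorization of normalization products, the splitting of the $4$-simplex $z$-tensors, and the matching of boundary copairings against the pairing $\langle\cdot,\cdot\rangle_{\Sigma,\sIII}$ via the snake relation is exactly what the paper leaves implicit.
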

\begin{proof}
This is a direct consequence of the definition of $Z_{\tc{C}}\left(\K, o, \Delta \tc{C}^{\sk}\right)$ for combinatorial manifolds with boundary.
\ignore{The right hand side evaluates to 
\[D_{\tc{C}}^{-|K_0| - |K_0'| + |\partial K_0|} \sum_{\Sigma: \partial K_{(2)}^o \To \Delta \tc{C}^{\mathrm{sk}}} 
\]
The right thing evaluates to sum over $\Gamma$ on boundary, sum over $\Gamma$ restricted to the correct value at the boundary. Then, also product on the left and on the right. Remains prod over boundary which was left. 
}
\end{proof}

\subsubsection{The 4-dimensional bistellar moves} \label{sec:4dbistellar}

Recall that a bistellar move replaces a combinatorial $n$-manifold $K$ by a combinatorial manifold obtained from replacing a codimension zero submanifold of $K$ simplicially isomorphic to a subcomplex $I\subseteq \partial \Delta^{n+1}$ with the complementary subcomplex $J\subseteq \partial \Delta^{n+1}$. To show invariance of $Z_{\tc{C}}$ under such a move, it suffices by Lemma~\ref{prop:gluinginvariance} to prove that the following linear maps $k\to W_{\tc{C}}\left(\partial I, o|_{\partial I}, \Delta \tc{C}^{\sk}\right)$ are equal: 
\begin{equation}\nonumber
Z_{\tc{C}}\left(I,o|_I, \Delta \tc{C}^{\sk}\right)  = Z_{\tc{C}}\left(J, o|_J, \Delta \tc{C}^{\sk}\right)
\end{equation}
Here, $o$ is some fixed order on the vertices of $\Delta^{n+1}$, which induces an orientation on $\Delta^{n+1}$, and thus an orientation on the boundary $\partial \Delta^{n+1}$. The subcomplex $I\subseteq \partial \Delta^{n+1}$ carries the orientation induced by this orientation on $\partial \Delta^{n+1}$ and $J\subseteq \partial \Delta^{n+1}$ carries the opposite of the orientation induced by $\partial\Delta^{n+1}$.

For the three relevant $4$-dimensional bistellar moves, we pick an order $o$ on $\Delta^5$ such that $I_4$ and $J_4$ contain the following $4$-simplices, where we have labeled the vertices of $\Delta^5$ according to the order $o$ by $0\ldots 5$ and indicated the orientation of each $4$-simplex relative to the one induced from the order by a sign (these signs agree with the signs $\epsilon_{o|_I}(\sIV)$ or $\epsilon_{o|_J}(\sIV)$, respectively, introduced in Section~\ref{sec:10j}):
\[\begin{array}{c|c|c}
& I_4& J_4 \\ \hline (3,3)\text{-move}&
\lan 01235\ran~\lan 01345\ran ~ \lan 12345\ran
&
\lan 01234\ran ~ \lan 01245 \ran ~ \lan 02345 \ran
\\
\hline
(2,4)\text{-move}
&
\lan 01235 \ran ~ \lan 01345 \ran
&
\lan 02345 \ran ~ \lan 01245 \ran ~ \lan 02345 \ran ~-\!\! \lan 12345 \ran
\\
\hline
(1,5)\text{-move}&
\lan 01235 \ran &
 \lan 02345\ran ~ \lan 01245\ran~\lan 02345\ran~-\!\!\lan 12345\ran~-\!\!\lan 01345\ran
\end{array}
\]
In this notation, the relative sign $\epsilon^{\sIV}(\sIII)$ for a $4$-simplex $\sIV = \lambda \lan v_0\cdots v_4 \ran$ with $\lambda = \pm 1$, and a $3$-simplex $\sIII=\lan v_0\cdots \widehat{v}_i \cdots v_4 \ran$ is $\epsilon^{\sIV}(\sIII)=\lambda (-1)^i$, for both ordered oriented complexes $I$ and $J$.\looseness=-2

The simplices in the interior of $I$ and $J$ are listed in Figures~\ref{fig:interior33},~\ref{fig:interior24} and~\ref{fig:interior15}.

\begin{figure}[h] 
\[\begin{array}{c|c|c}
&\interior{I}_k & \interior{J}_k\\\hline
k=4
&
\lan 01235\ran~\lan 01345\ran~\lan 12345\ran
&
\lan 01234\ran ~\lan 01245\ran ~\lan 02345\ran
\\\hline
k=3
&
\lan 0135 \ran ~\lan 1235\ran ~\lan 1345\ran &
\lan 0124\ran ~\lan 0234\ran ~\lan 0245\ran \\\hline
k=2&\lan 135\ran &\lan 024\ran
\end{array}
\]
\caption{$k$-simplices in the interior of $I$ and $J$ for the $(3,3)$-bistellar move.}
\label{fig:interior33}
\end{figure}
\begin{figure}[h] 
\[\begin{array}{c|c|c}
&\interior{I}_k & \interior{J}_k\\\hline
k=4
&
\lan 01235\ran ~\lan 01345\ran
&
\lan 01234\ran ~\lan 01245\ran ~ \lan 02345\ran ~\lan 12345\ran
\\\hline
k=3
&
\lan 0135\ran&
\lan 0124\ran ~\lan 0234\ran ~\lan 0245\ran ~\lan 2345\ran ~\lan 1245\ran ~\lan 1234\ran \\\hline
k=2&&\lan 024\ran ~\lan 245\ran ~\lan 234\ran ~\lan 124\ran\\ \hline
k=1 & & \lan 24\ran
\end{array}
\]
\caption{$k$-simplices in the interior of $I$ and $J$ for the $(2,4)$-bistellar move.}
\label{fig:interior24}
\end{figure}
\begin{figure}[h] 
\[\begin{array}{c|c|c}
&\interior{I}_k & \interior{J}_k\\\hline
k=4
&
\lan 01235\ran
&
\lan 01234\ran ~\lan 01245\ran ~\lan 02345\ran ~\lan 01345\ran ~\lan 12345\ran
\\\hline
k=3
&&
\lan 0124 \ran ~\lan 0234\ran ~\lan 0245\ran~\lan 2345\ran~\lan 1245\ran~\lan 1234\ran~\lan 0134\ran~\lan 0145\ran~\lan 1345\ran~\lan 0345\ran\\\hline
k=2&&\lan 024\ran~\lan 245\ran~\lan 234\ran~\lan 124\ran~\lan 034\ran~\lan 014\ran~\lan 134\ran~\lan 045\ran~\lan 145\ran~\lan 345\ran\\ \hline
k=1 & & \lan 24\ran~\lan 04\ran~\lan 14\ran~\lan 34\ran~\lan 45\ran\\ \hline
k=0&& \lan 4\ran
\end{array}
\]
\caption{$k$-simplices in the interior of $I$ and $J$ for the $(1,5)$-bistellar move.}
\label{fig:interior15}
\end{figure}

From now on, we omit the choice of order from the notation and explicitly work with the simplices in Figures~~\ref{fig:interior33},~\ref{fig:interior24} and~\ref{fig:interior15} and the sign conventions outlined above. Given a $\tc{C}$-state $\Delta^5_{(2)} \To \Delta\tc{C}$, we denote the simple object assigned to a $1$-simplex $\lan i j \ran $ by $[ij]$ and the simple $1$-morphism assigned to a $2$-simplex $\lan ijk \ran$ by $[ijk]$. Contrary to our previous notation, we henceforth almost always let the state be implicit and omit it from the notation.  For a $3$-simplex $\lan ijkl\ran \in \Delta^5_3$, we recall Notation~\ref{not:parenthesisnotation} and reintroduce the vector spaces from Definition~\ref{def:defvectorspaceV}:
\begin{calign} \nonumber 
V^+(ijkl) := \Hom_{\tc{C}}\left( \fs{(ijk)l}, \fs{i(jkl)}\right) 
&
V^-(ijkl) := \Hom_{\tc{C}}\left( \fs{i(jkl)}, \fs{(ijk)l}\right)
\end{calign}
For a $4$-simplex $\lan ijklm\ran \in \Delta^5_4$, we recall the linear maps defined in Figures~\ref{fig:definitionz}a and~\ref{fig:definitionz}b (with again the state left implicit):
\begin{align}\nonumber z_+(ijklm):&~ V^+(ijkl) \otimes V^+(ijlm) \otimes V^+(jklm) \otimes V^-(ijkm) \otimes V^-(iklm) \to k\\\nonumber
z_-(ijklm):& ~V^+(iklm) \otimes V^+(ijkm) \otimes V^-(jklm) \otimes V^-(ijlm) \otimes V^-(ijkl) \to k
\end{align}
Precomposing $z_{\pm}(ijklm)$ with the appropriate maps $\cup_{abcd}: k \to V^+(abcd) \otimes V^-(abcd)$ (determined by the nondegenerate pairings $\langle \cdot, \cdot \rangle_{abcd}: V^-(abcd) \otimes V^+(abcd)\to k$), leads to linear maps of the following type for every $4$-simplex $\lan ijklm\ran \in \Delta^5_4$:
\begin{align}\nonumber Z_+(ijklm):&~ V^+(ijkl) \otimes V^+(ijlm) \otimes V^+(jklm) \to  V^+(iklm) \otimes V^+(ijkm)\\\nonumber
Z_-(ijklm):&~V^+(iklm) \otimes V^+(ijkm) \to V^+(ijkl) \otimes V^+(ijlm)\otimes V^+(jklm) 
\end{align}
Using these maps, invariance under the various bistellar moves can then explicitly be reexpressed as the following lemmas. 

In the following, all expressions are already postcomposed with appropriate pairings $\langle \cdot , \cdot \rangle$ to undo superfluous $\cup$-maps appearing on either side of the equations. To unclutter notation, the concatenation $A_1\cdots A_n$ of linear maps $A_1, \ldots, A_n$ denotes composition over all vector spaces appearing both in the domain of some $A_i$ and the codomain of some $A_j$ with $j>i$. \emph{Here, and in the following proofs, we will use this concatenation notation, and we will therefore omit all swap maps, all tensor product symbols, and all tensor products with identities.}

\begin{lemma}[Invariance under the $(3,3)$-bistellar move] \label{lem:Pachner33}Let $I$ and $J$ be as in Figure~\ref{fig:interior33}. Then, the following holds for every $\tc{C}$-state $\partial I_{(2)} \To \Delta \tc{C}^{\sk}$ of $\partial I$:%
\begin{equation}\nonumber
\sum_{\fs{135}} \tdim{135} Z_+(01235)Z_+(01345) Z_+(12345) = \sum_{\fs{024}} \tdim{024} Z_+(02345)Z_+(01245)Z_+(01234)
\end{equation}
The sums are over simple $1$-morphisms $\Delta\tc{C}_2^{\sk}\ni \fs{ijk}:\Xs{ij} \xz \Xs{jk} \to \Xs{ik}$.
\end{lemma}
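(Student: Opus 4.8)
\textbf{Proof plan for Lemma~\ref{lem:Pachner33}.} The plan is to prove the $(3,3)$-bistellar identity by interpreting both sides as the $2$-spherical trace of a single pentagonator-type endomorphism and showing that the two expressions represent the same $2$-morphism in $\tc{C}$. Concretely, each $Z_+(ijklm)$ is the partial contraction of a $10j$-symbol $z_+(ijklm)$ against the copairings $\cup_{abcd}$; stringing three of them together as on the left side of the claimed equation, and contracting the remaining copairings supplied by the $\langle\cdot,\cdot\rangle$ prefactors, produces a scalar that is a weighted sum, over the interior $2$-simplex label $[135]$, of traces of composites of five associator $2$-morphisms. The first step is therefore bookkeeping: write out explicitly, for the configuration of Figure~\ref{fig:interior33}, which boundary $3$-simplices of $\partial I = \partial J$ carry which associator spaces $V^{\pm}(ijkl)$, confirm that the domains and codomains of both sides of the equation are the same subspace of $W_{\tc{C}}(\partial I, o|_{\partial I}, \Delta\tc{C}^{\sk})$, and record the signs $\epsilon^{\sIV}(\sIII)$ dictated by the orientation conventions fixed just before the lemma.

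Next I would reduce the identity to a statement purely about $2$-morphisms, with the $[135]$-sum (resp.\ $[024]$-sum) playing the role of a resolution of the identity. The key algebraic input is the nondegeneracy of the trace pairing (Proposition~\ref{prop:pairing}): for a simple object decomposition, $\sum_{[135]} \tdim{135}\,(\text{insert } \cup_{135}\text{ and }\langle\cdot,\cdot\rangle_{135})$ implements the completeness relation $\sum_i \dim(s_i)^{-1}\cdot(\text{nothing})$ corrected by the convention that $\cup$ is the copairing dual to the trace pairing — so the weighting $\tdim{135}$ is exactly what is needed for the sum over simple $1$-morphisms $[135]$ through the $2$-simplex $\langle 135\rangle$ to collapse to an identity on the associated composite $1$-morphism. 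After performing this collapse on both sides, the left side becomes the $2$-spherical trace of a composite of the five associators $[01235],[01345],[12345]$ (positively oriented) glued along their shared $3$-faces, and the right side the analogous trace for $[01234],[01245],[02345]$. By the definition of the $10j$-symbol (Figure~\ref{fig:definitionz}) and the graphical calculus for pivotal $2$-categories, both of these traces compute the $2$-sphere evaluation of the surface diagram obtained from the boundary of the $5$-simplex $\Delta^5$ with vertices $0\ldots5$; the two sides correspond to the two ways of cutting $\partial\Delta^5$ into the pieces $I$ and $J$.

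The heart of the argument — and the main obstacle — is then to show that these two surface-diagram evaluations agree. I expect to do this purely algebraically rather than by appeal to a not-fully-established graphical calculus (cf.\ Warning~\ref{warn:pivotal}): the equality of the two pentagonator composites is precisely the \emph{pentagonator coherence axiom} (a.k.a.\ the $K_5$ / Stasheff-pentagon identity for the monoidal structure of $\tc{C}$), read off the two triangulations of $\partial\Delta^5$. Concretely I would (i) use the swallowtail and cusp-flip relations from Definition~\ref{def:pivotal} to normalize the $e,i$ folds appearing in the traces, (ii) use Proposition~\ref{prop:leftrighttrace} (equality of left and right circular traces) together with sphericality (Definition~\ref{def:spherical}) to move the relevant $2$-morphisms freely around the sphere so that the five associators are arranged cyclically in the order prescribed by one side, and (iii) apply the pentagonator axiom of the monoidal $2$-category to rebracket into the order prescribed by the other side. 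Step (iii) is where all the weight lies: one must match the five $\partial^o_{[ijkl]}\Delta^5$-faces of the pentagon to the five $3$-simplices $\langle 0123?\rangle$ etc.\ with correct orientations, and check that the interchanger $2$-isomorphisms $\phi$ introduced by the ``nudging'' convention (Notation~\ref{notation:nudging}) cancel in pairs — this cancellation is analogous to, but more intricate than, the pairwise cancellation of comparison isomorphisms used in the proof of Lemma~\ref{lem:1morphchange} and in Figures~\ref{fig:10jtranspose} and~\ref{fig:10jtranspose2}. Once steps (i)–(iii) are in place, both the left and right sides of the lemma equal the common scalar attached to $\partial\Delta^5$, which proves the claim.
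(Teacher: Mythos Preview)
Your high-level strategy is sound and in fact matches the spirit of the paper's remark following the proof: the $(3,3)$-move does encode a higher associativity relation, and both sides of the equation are indeed the same ``composite around a $5$-simplex'' viewed from two sides. Your completeness-relation idea for the $[135]$ and $[024]$ sums is also correct; in the paper this is precisely Lemma~\ref{lem:scompinv}, which shows that $\tdim{ikm}\,\Scomp_-^\vee$ is inverse to $\Scomp_+$, so that the normalized direct-sum map $\ZH_+(ijklm)$ factors as $\Scomp_+(ijklm)^{-1}\circ\Lcomp_+(ijklm)$ (Corollary~\ref{cor:Z+decomp}).

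However, there is a genuine gap at your step (iii). You propose to ``apply the pentagonator axiom of the monoidal $2$-category'' (the $K_5$ coherence), but in the semistrict framework of Definition~\ref{def:monoidal2cat} there is no such axiom: the tensor product is strictly associative, there is no pentagonator $2$-morphism among the data, and the only nontrivial structure is the interchanger $\phi_{f,g}$. The equality you need is therefore not a coherence axiom you can cite but something that must be \emph{derived} from the interchanger conditions C3--C6. This derivation is the real work, and your proposal does not indicate how to organize it.

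The paper's mechanism for this is the $\Scomp_+/\Lcomp_+$ factorization. Using Corollary~\ref{cor:Z+decomp}, each side of the equation is written as a composite of three $\ZH_+$'s, each of which is $\Scomp_+^{-1}\circ\Lcomp_+$. The paper then builds two commutative ladders (Figures~\ref{fig:bigcommutativediagram1} and~\ref{fig:bigcommutativediagram2}) in which the alternating $\Scomp_+$'s and $\Lcomp_+$'s feed into a common target $V^+(012345)$; the triangles in these ladders commute by Corollary~\ref{cor:Z+decomp}, and the squares commute by explicit graphical comparison of the $2$-morphisms involved (this is where the Gray-monoid interchanger axioms do their work, as planar isotopy of string diagrams). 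Finally both ladders are compared in a single triangle (Figure~\ref{fig:bigcommutativediagram3}) whose bottom leg is an isomorphism, forcing the two right-column composites to agree. This factorization through the intermediate spaces $V^+(01345)$, $V^+(01235)$, etc., and ultimately $V^+(012345)$, is what replaces the nonexistent $K_5$ axiom; you should adopt it rather than invoke coherence as a black box.
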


\begin{lemma}[Invariance under the $(2,4)$-bistellar move]\label{lem:Pachner24}Let $I$ and $J$ be as in Figure~\ref{fig:interior24}. Then, the following holds for every $\tc{C}$-state $\partial I_{(2)} \To \Delta \tc{C}^{\sk}$ of $\partial I$:%
\begin{equation}\nonumber
\begin{split}
Z_+&(01235)Z_+(01345)\\
&\hspace*{-4pt}=
\!\!\sum_{\substack{\Xs{24}, \fs{024}, \fs{245},\\ \fs{234}, \fs{124}}}\!\! \frac{\tdim{024}\tdim{245} \tdim{234} \tdim{124}}{\odim{24}} Z_+(02345) Z_+(01245) Z_+(01234) Z_-(12345) 
 \end{split}
\end{equation}
The sums are over simple objects $\Xs{24} \in \Delta \tc{C}_1^{\sk}$, and simple $1$-morphisms $\Delta\tc{C}_2^{\sk}\ni \fs{ijk}: \Xs{ij}\xz \Xs{jk} \to \Xs{ik}$.
\end{lemma}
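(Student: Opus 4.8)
The plan is to adapt to the present general setting the strategy used by Barrett--Westbury for the three-dimensional bistellar moves and by Cui~\cite{Cui} for the four-dimensional ones. By Lemma~\ref{prop:gluinginvariance}, invariance of the state sum under the $(2,4)$-move is equivalent to the displayed equality of linear maps $k \to W_{\tc{C}}(\partial I, o|_{\partial I}, \Delta\tc{C}^{\sk})$; and, unwinding the definitions of $Z_\pm$ and of the with-boundary state sum, each side of the claimed identity becomes, for a fixed boundary state, a finite sum of $2$-spherical traces of closed surface-with-defect diagrams assembled from the pentagonator $2$-morphisms $\Xs{ijkl}$ and $\conj{\Xs{ijkl}}$, contracted along the copairings $\cup_{abcd}$ and weighted by the indicated dimension factors. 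The first step is therefore to record these two canonical graphical forms precisely: the left side is the trace of a diagram built from the two pentagonators of $\langle 01235\rangle$ and $\langle 01345\rangle$ glued along $\cup_{0135}$, and the right side is the $\big(\tdim{024}\tdim{245}\tdim{234}\tdim{124}/\odim{24}\big)$-weighted sum, over the new simple object $\Xs{24}$ and the new simple $1$-morphisms $\Xs{024},\Xs{124},\Xs{234},\Xs{245}$, of the trace of a diagram built from the four pentagonators of $\langle 01234\rangle$, $\langle 01245\rangle$, $\langle 02345\rangle$, $\langle 12345\rangle$ glued along the six interior copairings $\cup_{0124},\cup_{0234},\cup_{0245},\cup_{2345},\cup_{1245},\cup_{1234}$.

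The heart of the argument combines two ingredients. First, I would establish the ``completeness'' (or ``bubble'') relations available in any spherical prefusion $2$-category: using local semisimplicity together with planar pivotality, sphericality, Proposition~\ref{prop:leftrighttrace}, and the nondegeneracy of the Hom-pairing (Proposition~\ref{prop:pairing}), one obtains that the identity $2$-morphism on a composite $1$-morphism decomposes as a dimension-weighted sum over simple $1$-morphisms, and that a ``bubble'' of identity $2$-morphisms collapses to a sum over simple objects $X$ weighted by $\big(\dim(X)\,\dim(\End_{\tc{C}}(X))\, n(X)\big)^{-1}$ --- here Proposition~\ref{prop:sameblock} and the dimension formulas of Appendix~B enter. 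Second, I would use the pentagonator coherence of the monoidal structure, equivalently the already-established $(3,3)$-move relation (Lemma~\ref{lem:Pachner33}). The proof then proceeds by: (i) inserting the appropriate completeness relations into the left-hand diagram --- geometrically, ``subdividing'' the two $4$-simplices of $I$ into the cone $\langle 24\rangle \star \partial\langle 0135\rangle$ --- which introduces the new labels $\Xs{24}, \Xs{024}, \Xs{124}, \Xs{234}, \Xs{245}$ together with their dimension weights; (ii) applying the $(3,3)$-relation a bounded number of times to rebracket the resulting diagram into exactly the four-pentagonator form of the right-hand side; and (iii) collecting the accumulated scalar factors and checking, via the Appendix~B dimension identities, that they combine to precisely $\tdim{024}\tdim{245}\tdim{234}\tdim{124}/\odim{24}$.

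I expect the principal obstacle to be twofold. On the bookkeeping side, one must track which copairing contracts which leg of which pentagonator and keep all orientations consistent with the signs $\epsilon^{\sIV}(\sIII)$, so that the inserted resolutions and the invocations of the $(3,3)$-relation land exactly and not merely up to unspecified isotopy. The genuinely new difficulty relative to Cui's crossed-braided case is pinning down the scalar normalization: because a simple object of a spherical prefusion $2$-category can have a nontrivial endomorphism fusion category and nontrivial non-equivalence morphisms to other simples in its component, the correct object-level completeness relation must carry the full weight $\odim{24} = \dim(\Xs{24})\,\dim(\End_{\tc{C}}(\Xs{24}))\, n(\Xs{24})$ rather than merely a quantum dimension, and verifying this requires the component-uniformity of Hom-dimensions (Proposition~\ref{prop:sameblock}) together with the dimension formulas of Appendix~B. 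Once these two points are handled, the remaining manipulations are routine diagrammatic algebra.
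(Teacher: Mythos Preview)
Your proposal is correct in substance and close to the paper's argument, but the paper organizes it much more efficiently. Rather than working diagrammatically, the paper introduces normalized direct-sum maps $\ZH_\pm(ijklm)$ (absorbing all dimension weights into the normalization) and proves once that $\ZH_-(ijklm)$ is a section of $\ZH_+(ijklm)$ (Corollary~\ref{cor:Z+Z-}); this section-retraction identity \emph{is} your object-and-$1$-morphism ``completeness relation'', packaged as a single algebraic fact. With that in hand, the $(2,4)$-proof is three lines: rewrite both sides in terms of $\ZH_\pm$, apply the already-established $(3,3)$-identity~\eqref{eq:pachnersimplified} once to replace $\ZH_+(02345)\ZH_+(01245)\ZH_+(01234)$ by $\ZH_+(01235)\ZH_+(01345)\ZH_+(12345)$, and then cancel $\ZH_+(12345)\ZH_-(12345)=\id$. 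Your steps (i)--(iii) would unfold this same computation at the level of surface diagrams, re-deriving the section-retraction identity in situ rather than isolating it as a lemma; that works, but the bookkeeping you anticipate is exactly what the $\ZH_\pm$ packaging is designed to absorb. In particular, only a single application of $(3,3)$ is needed (not ``a bounded number of times''), and the scalar factors fall out automatically from the normalization rather than needing separate verification --- though of course the section-retraction property itself is ultimately grounded in Appendix~B (specifically Corollary~\ref{cor:formulabasis}, via Lemma~\ref{lem:rightinverse}), so your instinct about where the $\odim{24}$ weight comes from is right.
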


\begin{lemma}[Invariance under the $(1,5)$-bistellar move] \label{lem:Pachner15}Let $I$ and $J$ be as in Figure~\ref{fig:interior15}. Then, the following holds for every $\tc{C}$-state $\partial I_{(2)} \To \Delta \tc{C}^{\sk}$ of $\partial I$:%
\begin{equation}\nonumber 
\begin{split}\hspace*{-1pt}
Z_+(01235)=\dim\left(\tc{C}\right)^{-1}\!\!&
\!\!\sum_{\substack{\Xs{ij}, 0\leq i<j\leq 5\\i=4\text{ or }j=4}} \,\,\sum_{\substack{\fs{ijk}, 0\leq i<j<k\leq 5\\j=4\text{ or }k=4}} \left(\prod_{\substack{\Xs{ij}, i<j\\i=4\text{ or }j=4}} \odim{ij}\right)^{-1}\!\left(\prod_{\substack{\fs{ijk}, i<j<k\\j=4\text{ or }k=4}} \tdim{ijk} \right)\\[5pt]
&\Tr_{V^+(0345)}\left(\vphantom{\frac{a}{b}}Z_+(02345) Z_+(01245) Z_+(01234) Z_-(12345)Z_-(01345) \right)
 \end{split}
\end{equation}
The sum is over simple objects $\Xs{ij} \in \Delta\tc{C}_1^{\sk}$ and simple $1$-morphisms $\Delta\tc{C}_2^{\sk} \ni \fs{ijk}:\Xs{ij}\xz \Xs{jk} \to \Xs{ik}$, and the (partial) trace is over the vector space $V^+(0345)$.
\end{lemma}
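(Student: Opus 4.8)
The plan is to prove this identity by a direct computation in the graphical surface-with-defect calculus of the pivotal $2$-category, collapsing the interior of the cone subdivision $J=\{4\}\star\partial\langle 01235\rangle$ one dimension at a time until only the single $10j$ symbol $Z_+(01235)$---which is the with-boundary state sum of the single facet $I=\langle 01235\rangle$, since $I$ has no interior faces---and one leftover factor $\dim(\tc{C})^{-1}$ remain. This is the bistellar move whose invariance pins down the per-vertex normalization: unlike the $(3,3)$ and $(2,4)$ moves it is not a composite of the others---it is the only $4$-dimensional bistellar move that introduces an interior vertex, and the link of that vertex is a $3$-sphere---so it must be verified from scratch. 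Unpacking the with-boundary state sum of Section~\ref{sec:boundary} applied to $J$, the right-hand side is, up to the overall $\dim(\tc{C})^{-|\interior{J}_0|}=\dim(\tc{C})^{-1}$, the sum over the labels of the interior simplices of $J$---the five edges $\langle i4\rangle$, $i\in\{0,1,2,3,5\}$, and the ten triangles $\langle ij4\rangle$, all containing the vertex $4$---of the dimension-weighted composite of the five pentagonator $10j$ symbols along the ten interior tetrahedra. Both sides are then linear maps between the same tensor products of boundary associator spaces $V^\pm$ of the five facets of $\langle 01235\rangle$, and the claim is their equality.

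First I would carry out the sum over the ten interior triangle (i.e.\ $1$-morphism) labels $[ij4]$, with the interior edge labels held fixed. Read together with the copairing $\cup$ on the adjacent interior tetrahedron, each such sum is an instance of the nondegeneracy of the associator-state pairing (Proposition~\ref{prop:pairing}); performing them cancels the five pentagonators pairwise along the cone, exactly as in the proof of Lemma~\ref{lem:Pachner24}, and what remains is a single ``boundary'' pentagonator composite---essentially $Z_+(01235)$---wrapped around a closed loop of $1$-morphisms labelled by the five interior edges $[i4]$. Next I would sum over the five interior object labels $[i4]$. Each of these is a ``bubble'' sum of the type $\sum\dim(f)(\cdots)$, evaluated using the loop and bubble formulas established in Appendix~B; combined with the weight $\bigl(\prod_{i}\dm([i4])\bigr)^{-1}$---and using that $\dim(\End_{\tc{C}}(x))$ and $n(x)$ are constant along a component (Proposition~\ref{prop:sameblock}) together with the defining identity $\dim(\tc{C})=\sum_{[x]\in\pi_0\tc{C}}\dim(\End_{\tc{C}}(x))^{-1}$ of Definition~\ref{def:dimension2cat}---these sums produce exactly one factor of $\dim(\tc{C})$, cancelling the overall $\dim(\tc{C})^{-1}$. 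Throughout, Proposition~\ref{prop:leftrighttrace} (equality of left and right planar traces) and sphericality are used to slide the loops around the $3$-sphere $\partial I$ and, at the end, to identify the leftover $\Tr_{V^+(0345)}$ of the concatenation with the $2$-spherical trace implicit in $Z_+(01235)$.

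The main obstacle is the dimension bookkeeping in this second stage: the sum ranges over five objects and ten $1$-morphisms, and the factors $\dim([i4])$, $\dim(\End_{\tc{C}}([i4]))$, $n([i4])$, and $\dim([ij4])$ must conspire---through precisely the identities of Appendix~B and the per-component constancy just noted---to leave behind a single clean copy of $\dim(\tc{C})$; arranging the five bubble evaluations so that the component data is neither over- nor under-counted is the delicate point. A secondary subtlety is keeping the graphical steps honest: in the pivotal $2$-categorical calculus closed-surface values are not invariant under equivalence of objects (Warning~\ref{warn:pivotal}), but because the entire computation is performed with a fixed simplicial skeleton $\Delta\tc{C}^\sk$ and every step invokes an algebraic identity rather than an appeal to surface isotopy, this causes no difficulty. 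Finally one must check that the orientation signs $\epsilon^{\sIV}(\sIII)$ of the five $4$-simplices of $J$ tabulated in Section~\ref{sec:4dbistellar} match the placement of $Z_+$ and $Z_-$ in the statement, which is a routine verification.
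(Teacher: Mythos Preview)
Your overall architecture---collapse the five $10j$ symbols to the single $Z_+(01235)$ times a scalar, then verify that scalar equals $\dim(\tc{C})$---matches the paper's, but the mechanism you propose for the collapse is not the one the paper uses and is not justified as stated.

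The paper does not perform a direct graphical computation. It works algebraically with the summed normalized maps $\ZH_\pm$ and invokes the already-established $(2,4)$ relation~\eqref{eq:pachnerdirectsum24} to replace $\ZH_+(02345)\ZH_+(01245)\ZH_+(01234)\ZH_-(12345)$ by $\ZH_+(01235)\ZH_+(01345)$, and then the section--retraction identity $\ZH_+(01345)\ZH_-(01345)=\id$ of Corollary~\ref{cor:Z+Z-} to remove the remaining pair. Since the tetrahedron $\langle 0345\rangle$ is not a face of $\langle 01235\rangle$, the partial trace $\Tr_{V^+(0345)}$ factors off as the scalar $\dim V^+(0345)$. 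This reduces the lemma to a scalar identity in only \emph{three} interior objects $[04],[34],[45]$ and \emph{three} interior $1$-morphisms $[034],[045],[345]$---not five and ten as in your second stage---which is then dispatched by Corollary~\ref{cor:formulaHom}, Proposition~\ref{prop:factoringthroughsimple}, and Corollary~\ref{cor:formulaglobaldim}.

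The gap in your first stage is the claim that summing over the ten interior triangle labels, ``together with the copairing $\cup$ on the adjacent interior tetrahedron,'' cancels the pentagonators pairwise via Proposition~\ref{prop:pairing}. This conflates two different sums: the copairing $\cup_{\Gamma,\sIII}$ runs over a basis of an associator space $V^\pm$ at a \emph{fixed} labeling, whereas the sum over $[ij4]$ runs over isomorphism classes of simple $1$-morphisms. Nondegeneracy of the pairing controls the former but says nothing about the latter. The genuine cancellation mechanism is the right-invertibility of $\Lcomp_+$ (Lemma~\ref{lem:rightinverse}), which yields $\ZH_+\ZH_-=\id$; and the proof of Lemma~\ref{lem:Pachner24} you cite as a model is itself a two-line algebraic reduction via \eqref{eq:pachnersimplified} and Corollary~\ref{cor:Z+Z-}, not a graphical pairwise cancellation. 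Your direct route may be salvageable in principle, but as written the reduction step is unsubstantiated, and the five-object, ten-morphism bookkeeping you anticipate would be considerably heavier than the three-variable scalar identity the paper's argument leaves behind.
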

\nid
The range of the sums and the appearance of the trace in the preceeding lemmas follow directly from comparing the explicit expressions for $Z_{\tc{C}}(I, o|_I, \Delta\tc{C}^{\sk})$ and $Z_{\tc{C}}(J, o|_J, \Delta \tc{C}^{\sk})$, where we consider $I$ and $J$ as oriented combinatorial manifolds with common boundary $\partial I$. Note that the expressions for $Z_{\tc{C}}(J, o|_J, \Delta \tc{C}^{\sk})$ in Section~\ref{sec:boundary} are written in terms of the maps $z_{\pm}$ instead of the maps $Z_{\pm}$, which are precomposed with $\cup$-maps. For example, the trace in Lemma~\ref{lem:Pachner15} arises explicitly from the fact that both vector spaces $V^+(0345)$ and $V^-(0345)$ appear in the codomain of $\bigotimes_{\sIII \in J_3} \cup_{\Gamma, \sIII}$ and the domain of $\bigotimes_{\sIV \in J_4} z(\Gamma, \sIV)$ (explicitly, in the domain of $z_-(01345)$ and $z_+(02345)$, respectively) and are hence composed over, but the vector space $V^-(0345)$ in the domain of $z_+(02345)$ is transformed into the vector space $V^+(0345)$ in the codomain of $Z_+(02345)$. The formula in Lemma~\ref{lem:Pachner15} therefore involves a trace over the vector space $V^+(0345)$ in the domain of $Z_-(01345)$ and the codomain of $Z_+(02345)$. 

\subsubsection{Pentagonator compositions in prefusion 2-categories}

The `10j symbol' linear map $z(\Gamma,\sIV)$ defined in Figure~\ref{fig:definitionz} extracts the matrix coefficients of the pentagonator of the monoidal $2$-category $\tc{C}$. The corresponding `partially dualized 10j symbol' maps $Z_{\pm}(ijklm)$ (defined in Section~\ref{sec:4dbistellar}) can be thought of as first `composing associators' along one side of the pentagon, and then `uncomposing' along the other side of the pentagon. More precisely, we will express $Z_+(ijklm)$ as the composite of a linear map $\Lcomp_+(ijklm)$ (that composes associators along the long side of the pentagon), followed by the dual of a linear map $\Scomp_-(ijklm)$ (that composes associators along the short side of the pentagon); similarly $Z_-(ijklm)$ will be a composite of a linear map $\Scomp_+(ijklm)$ and the dual of a linear map $\Lcomp_-(ijklm)$.  We will show that the dual of $\Scomp_-(ijklm)$ is proportional to the inverse of $\Scomp_+(ijklm)$ (this inverse may be though of as `uncomposing along the short side of the pentagon') and that the dual of $\Lcomp_-(ijklm)$ is proportional to a section of $\Lcomp_+(ijklm)$ (this section is thus `uncomposing along the long side of the pentagon').  This detailed decomposition of the 10j symbols will be crucial in proving invariance under the bistellar moves in Section~\ref{sec:Pachnerdetail}.

\skiptocparagraph{Composing along the pentagon}
For a $4$-simplex $\lan ijklm\ran \in \Delta^5_4$, we extend Notation~\ref{not:parenthesisnotation} as follows:
\begin{equation}\nonumber \fs{((ijk)l)m}:= \fs{ilm}\xo\left(\fs{ikl}\xz \Io_{\Xs{lm}} \right)\xo \left(\fs{ijk}\xz \Io_{\Xs{kl}}\xz \Io_{\Xs{lm}}\right)  
\end{equation}
The alternative parenthesizations $\fs{(i(jkl))m},~\fs{i((jkl)m)}, \fs{i(j(klm))}$ are defined analogously. We define the following vector spaces:%
\begin{align*}
V^+(ijklm)&:=\Hom_{\tc{C}}\left(\fs{((ijk)l)m}, \fs{i(j(klm))}\right)\\ 
V^-(ijklm)&:=\Hom_{\tc{C}}\left(\fs{i(j(klm))},\fs{((ijk)l)m}\right)
\end{align*}

As mentioned, the linear maps $Z_{\pm}(ijklm)$ can be considered as first `composing associators' along one side of the pentagon, factoring through the vector spaces $V^+(ijklm)$ or $V^-(ijklm)$ respectively, and then `uncomposing' along the other side of the pentagon. In Figures~\ref{fig:SComp} and~\ref{fig:LComp} we therefore define linear maps 
\begin{align}\nonumber
\Scomp_\pm(ijklm) &: \bigoplus_{\fs{ikm}} V^\pm(iklm) \otimes V^\pm(ijkm)  \to V^\pm(ijklm)\\\nonumber
\Lcomp_\pm(ijklm) &: \bigoplus_{\substack{\Xs{jl}, \fs{ijl}\\\fs{jkl}, \fs{jlm}}} V^\pm(ijkl) \otimes V^\pm(ijlm) \otimes V^\pm(jklm) \to V^\pm(ijklm)
\end{align}
for $4$-simplices $\lan ijklm\ran\in \Delta^5_4$, expressing the composition of associators along the short ($\Scomp$) or long side ($\Lcomp$) of the pentagon.
\begin{figure}[h]
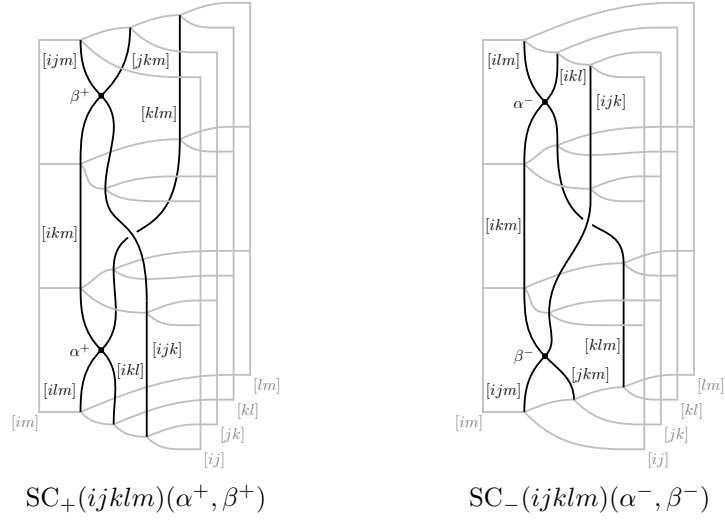

\begin{calign}\nonumber
\def\d{-0.5}
 \begin{tz}[td,scale=1.1]
     \begin{scope}[on layer=front,yzplane=4]
 \node[omor] at (4.5, -1) {$[ilm]$};
 \node[omor] at (2.79, -0.5) {$[ikl]$};
 \node[omor] at (1.9, 0.) {$[ijk]$};
 \node[omor] at (2.1,5.7) {$[klm]$};
 \node[omor] at (4.5, 2.8) {$[ikm]$};
 \node[omor] at (4.5, 7) {$[ijm]$};
 \node[omor] at (2.3,7.0) {$[jkm]$};
 \end{scope}
 \begin{scope}[xyplane=3*\h]
\draw[slice, on layer=front] (0,\d) to(0,0) to [out=up, in=\dl]  (1.5,3) to (1.5,4);
 	\draw[slice, on layer =frontb] (1,\d) to (1,0) to [out=up, in=\dl] (2,2) to [out=up, in=\dr] (1.5,3);
 	\draw[slice] (2,\d) to (2,0) to [out=up, in=\dl] (2.5,1) to [out=up, in=\dr] (2,2);
 	\draw[slice] (3,\d) to (3,0) to [out=up, in=\dr] (2.5,1) ;
 \end{scope}
 \begin{scope}[xyplane=2*\h]
 	\draw[slice, on layer=fronta] (0,\d) to (0,0) to [out=up, in=\dl]  (0.5,2) to [out=\dr, in=up] (1,0) to (1,\d);
 	\draw[slice]  (0.5,2) to [out=up, in=\dl] (1.5,3) to (1.5,4);
 	\draw[slice] (2,\d) to (2,0) to [out=up, in=\dl] (2.5,1) to [out=up, in=\dr] (1.5,3);
 	\draw[slice] (3,\d) to (3,0) to [out=up, in=\dr] (2.5,1) ;
 \end{scope}
 \begin{scope}[xyplane=\h]
 	\draw[slice, on layer=front] (0,\d) to (0,0) to [out=up, in=\dl]  (0.5,1);
 	\draw[slice, on layer=front] (0.5,1) to [out=up, in=\dl] (1.5,3);
 	\draw[slice] (1.5,3) to (1.5,4);
 	\draw[slice] (1,\d) to (1,0) to [out=up, in=\dr] (0.5,1) ;
 	\draw[slice] (2,\d) to (2,0) to [out=up, in=\dl] (2.25,2.5) to [out=up, in=\dr] (1.5,3);
 	\draw[slice] (3,\d) to (3,0) to [out=up, in=\dr] (2.25,2.5) ;
 \end{scope}
 \begin{scope}[xyplane = 0]
  	\draw[slice](0,\d) to (0,0) to [out=up, in=\dl] (0.5,1) to [out=up, in=\dl] (1,2) to [out=up, in=\dl] (1.5,3) to (1.5,4);
 	\draw[slice] (1,\d) to (1,0) to [out=up, in=\dr] (0.5,1);
 	\draw[slice]  (2,\d) to (2,0) to [out=up, in=\dr] (1,2);
 	\draw[slice] (3,\d) to (3,0) to [out=up, in=\dr] (1.5,3);
 \end{scope}
 \begin{scope}[xzplane=\d]
 	\draw[slice,short] (0,0) to (0,3*\h);
 	\draw[slice,short] (1,0) to (1,3*\h);
 	\draw[slice,short] (2,0) to (2,3*\h);
 	\draw[slice,short] (3,0) to (3,3*\h);
 \end{scope}
 \begin{scope}[xzplane=4]
 	\draw[slice,short] (1.5,0) to (1.5, 3*\h);
 \end{scope}
 \coordinate (L2) at (2.5,1.5, 2.55*\h);
 \coordinate (L3) at (2.5, 1.5, 0.5*\h);
 \draw[wire, on layer=front] (3,1.5, 3*\h) to  [out=down, in=\ul] (L2) to [out=\dl, in=up] (3, 1.5, 2*\h);
 \draw[wire,on layer=front] (3,1.5,2*\h) to (3,1.5, \h);
 \draw[wire, on layer=front] (3, 1.5, \h) to [out=down, in=\ul] (L3);
 \draw[wire] (L3) to [out=\dl, in=up] (3, 1.5, 0) ;
 \draw[wire, on layer=frontb] (2,2,3*\h) to [out=down, in=\ur] (L2) to [out=\dr, in=up]  (2, 0.5, 2*\h) to [out=down, in=up, in looseness=2] node[mask point, pos=0.6](MP){} (1,0.5, 1.1*\h); 
 \draw[wire, on layer=front] (1,0.5,1.1*\h) to (1, 0.5, 0);
\cliparoundone{MP}{ \draw[wire](1, 2.5, 3*\h) to (1,2.5, 2*\h) to [out=down, in=up, out looseness=2] (2.5, 2.25, \h) to [out=down, in=\ur] (L3) to [out=\dr, in=up] (2, 1, 0);}
\node[dot] at (L2){};
\node[dot] at (L3){};
\node[tmor, left] at ([xshift=-0.12cm]L2) {~$\beta^+$};
\node[tmor, left] at ([xshift=-0.12cm]L3) {~$\alpha^+$};
\node[obj, below right] at (\d+0.1,0.1,0) {$[ij]$};
\node[obj, below right] at (\d+0.1,1.1,0) {$[jk]$};
\node[obj, below right] at (\d+0.1,2.1,0) {$[kl]$};
\node[obj, below right] at (\d+0.1,3.1,0) {$[lm]$};
\node[obj, below left] at (3.9,1.6,0) {$[im]$};
 \end{tz}
&
\def\d{-0.5}
 \begin{tz}[td,scale=1.1]
   \begin{scope}[on layer=front,yzplane=4]
 \node[omor] at (4.5, 8) {$[ijm]$};
 \node[omor] at (2.4, 8.4) {$[jkm]$};
 \node[omor] at (2.1, 9.1) {$[klm]$};
 \node[omor] at (4.5,12) {$[ikm]$};
 \node[omor] at (4.5, 16) {$[ilm]$};
 \node[omor] at (2.82,15.6) {$[ikl]$};
 \node[omor] at (1.9,15.){$[ijk]$};
 \end{scope}
  \begin{scope}[xyplane=3*\h]
 	\draw[slice, on layer=front] (0,\d) to(0,0) to [out=up, in=\dl]  (1.5,3) to (1.5,4);
 	\draw[slice, on layer =frontb] (1,\d) to (1,0) to [out=up, in=\dl] (2,2) to [out=up, in=\dr] (1.5,3);
 	\draw[slice] (2,\d) to (2,0) to [out=up, in=\dl] (2.5,1) to [out=up, in=\dr] (2,2);
 	\draw[slice] (3,\d) to (3,0) to [out=up, in=\dr] (2.5,1) ;
 \end{scope}
 \begin{scope}[xyplane=4*\h]
 	\draw[slice, on layer=fronta] (0,\d) to (0,0) to [out=up, in=\dl]  (0.5,2) to [out=\dr, in=up] (1,0) to (1,\d);
 	\draw[slice]  (0.5,2) to [out=up, in=\dl] (1.5,3) to (1.5,4);
 	\draw[slice] (2,\d) to (2,0) to [out=up, in=\dl] (2.5,1) to [out=up, in=\dr] (1.5,3);
 	\draw[slice] (3,\d) to (3,0) to [out=up, in=\dr] (2.5,1) ;
 \end{scope}
 \begin{scope}[xyplane=5*\h]
 	\draw[slice, on layer=front] (0,\d) to (0,0) to [out=up, in=\dl]  (0.5,1);
 	\draw[slice, on layer=front] (0.5,1) to [out=up, in=\dl] (1.5,3);
 	\draw[slice] (1.5,3) to (1.5,4);
 	\draw[slice] (1,\d) to (1,0) to [out=up, in=\dr] (0.5,1) ;
 	\draw[slice] (2,\d) to (2,0) to [out=up, in=\dl] (2.25,2.5) to [out=up, in=\dr] (1.5,3);
 	\draw[slice] (3,\d) to (3,0) to [out=up, in=\dr] (2.25,2.5) ;
 \end{scope}
 \begin{scope}[xyplane = 6*\h, on layer=superfront]
  	\draw[slice](0,\d) to (0,0) to [out=up, in=\dl] (0.5,1) to [out=up, in=\dl] (1,2) to [out=up, in=\dl] (1.5,3) to (1.5,4);
 	\draw[slice] (1,\d) to (1,0) to [out=up, in=\dr] (0.5,1);
 	\draw[slice]  (2,\d) to (2,0) to [out=up, in=\dr] (1,2);
 	\draw[slice] (3,\d) to (3,0) to [out=up, in=\dr] (1.5,3);
 \end{scope}
 \begin{scope}[xzplane=\d]
 	\draw[slice,short] (0,3*\h) to (0,6*\h);
 	\draw[slice,short] (1,3*\h) to (1,6*\h);
 	\draw[slice,short] (2,3*\h) to (2,6*\h);
 	\draw[slice,short] (3,3*\h) to (3,6*\h);
 \end{scope}
 \begin{scope}[xzplane=4]
 	\draw[slice,short] (1.5,3*\h) to (1.5, 6*\h);
 \end{scope}
 \coordinate (L2) at (2.5,1.5, 3.45*\h);
 \coordinate (L3) at (2.5, 1.5, 5.5*\h);
 \draw[wire, on layer=front](3,1.5, 3*\h) to  [out=up, in=\dl] (L2) to [out=\ul, in=down] (3, 1.5, 4*\h);
 \draw[wire,on layer=front] (3,1.5,4*\h) to (3,1.5, 5*\h);
 \draw[wire, on layer=front] (3, 1.5, 5*\h) to [out=up, in=\dl] (L3);
 \draw[wire] (L3) to [out=\ul, in=down] (3, 1.5, 6*\h) ;
 \draw[wire, on layer=frontb] (2,2,3*\h) to [out=up, in=\dr] (L2) to [out=\ur, in=down]  (2, 0.5, 4*\h) to [out=up, in=down] node[mask point, pos=0.8](MP){} (1,0.5, 4.9*\h); 
 \draw[wire, on layer=front] (1,0.5,4.9*\h) to (1, 0.5, 6*\h);
\cliparoundone{MP}{ \draw[wire]  (1, 2.5, 3*\h) to (1,2.5, 4*\h) to [out=up, in=down, in looseness=2] (2.5, 2.25, 5*\h) to [out=up, in=\dr] (L3) to [out=\ur, in=down] (2, 1, 6*\h);}
\node[dot] at (L2){};
\node[dot] at (L3){};
\node[tmor, left] at ([xshift=-0.12cm]L2) {~$\beta^-$};
\node[tmor, left] at ([xshift=-0.12cm]L3) {~$\alpha^-$};
\node[obj, below right] at (\d+0.1,0.1,3*\h) {$[ij]$};
\node[obj, below right] at (\d+0.1,1.1,3*\h) {$[jk]$};
\node[obj, below right] at (\d+0.1,2.1,3*\h) {$[kl]$};
\node[obj, below right] at (\d+0.1,3.1,3*\h) {$[lm]$};
\node[obj, below left] at (3.9,1.6,3*\h) {$[im]$};
 \end{tz}
 \\\nonumber
 \Scomp_+(ijklm)(\alpha^+, \beta^+)
 &
 \Scomp_-(ijklm)(\alpha^-, \beta^-)
\end{calign}
\caption{$\Scomp_\pm(ijklm)$ for $\alpha^\pm \in V^\pm(iklm)$ and $\beta^\pm \in V^\pm(ijkm)$.}
\label{fig:SComp}
\end{figure}
\begin{figure}[h]
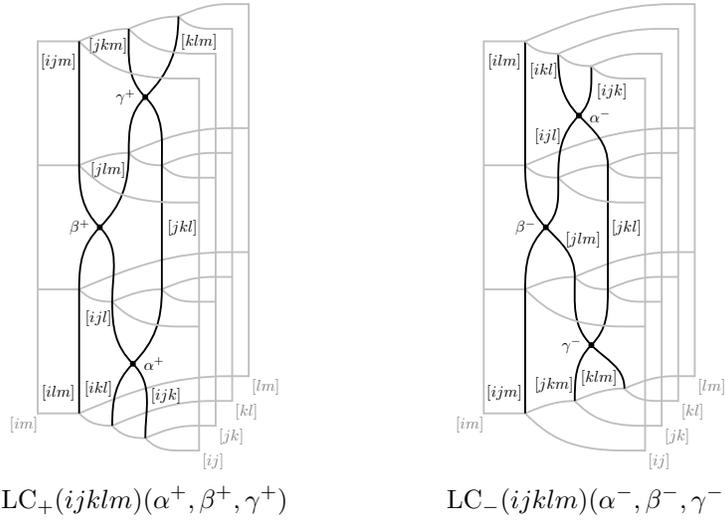

\begin{calign}\nonumber
\def\d{-0.5}
 \begin{tz}[td,scale=1.1]
 \begin{scope}[on layer=front,yzplane=4]
 \node[omor] at (4.5, -1) {$[ilm]$};
 \node[omor] at (3.53, -0.9) {$[ikl]$};
 \node[omor] at (1.9, -1.07) {$[ijk]$};
 \node[omor] at (1.5,3) {$[jkl]$};
 \node[omor] at (3.53, 0.8) {$[ijl]$};
 \node[omor] at (4.5, 7) {$[ijm]$};
 \node[omor] at (3.3,7.4) {$[jkm]$};
 \node[omor] at (1.1,7.5){$[klm]$};
 \node[omor] at (3.3,4.38){$[jlm]$};
 \end{scope}
 \begin{scope}[xyplane=0]
 		\draw[slice](0,\d) to (0,0) to [out=up, in=\dl] (0.5,1) to [out=up, in=\dl] (1,2) to 					[out=up, in=\dl] (1.5,3) to (1.5,4);
		\draw[slice] (1,\d) to (1,0) to [out=up, in=\dr] (0.5,1);
 		\draw[slice] (2,\d) to (2,0) to [out=up, in=\dr] (1,2);
		\draw[slice] (3,\d) to (3,0) to [out=up, in=\dr] (1.5,3);
 \end{scope}
 \begin{scope}[xyplane=\h]
 	\draw[slice, on layer=front](0,\d) to (0,0) to [out=up, in=\dl]  (1,2);
 	\draw[slice] (1,2) to [out=up, in=\dl] (1.5,3) to (1.5,4);
 	\draw[slice] (1,\d) to (1,0) to [out=up, in=\dl] (1.5,1);
 	\draw[slice] (2,\d) to (2,0) to [out=up, in=\dr] (1.5,1) to [out=up, in=\dr] (1,2);
 	\draw[slice] (3,\d) to (3,0) to [out=up, in=\dr] (1.5,3);
 \end{scope}
 \begin{scope}[xyplane=2*\h]
 	\draw[slice, on layer=front] (0,\d) to (0,0) to [out=up, in=\dl]  (1.5,3) to (1.5,4);
 	\draw[slice] (1,\d) to (1,0) to [out=up, in=\dl] (1.5,1);
 	\draw[slice] (2,\d) to (2,0) to [out=up, in=\dr] (1.5,1) to [out=up, in=\dl] (2,2);
 	\draw[slice] (3,\d) to (3,0) to [out=up, in=\dr] (2,2) to [out=up, in=\dr] (1.5,3);
 \end{scope}
 \begin{scope}[xyplane=3*\h]
 	\draw[slice, on layer=front] (0,\d) to(0,0) to [out=up, in=\dl]  (1.5,3) to (1.5,4);
 	\draw[slice, on layer =frontb] (1,\d) to (1,0) to [out=up, in=\dl] (2,2) to [out=up, in=\dr] (1.5,3);
 	\draw[slice] (2,\d) to (2,0) to [out=up, in=\dl] (2.5,1) to [out=up, in=\dr] (2,2);
 	\draw[slice] (3,\d) to (3,0) to [out=up, in=\dr] (2.5,1) ;
 \end{scope}
  \begin{scope}[xzplane=\d]
 	\draw[slice,short] (0,0) to (0,3*\h);
 	\draw[slice,short] (1,0) to (1,3*\h);
 	\draw[slice,short] (2,0) to (2,3*\h);
 	\draw[slice,short] (3,0) to (3,3*\h);
 \end{scope}
 \begin{scope}[xzplane=4]
 	\draw[slice,short] (1.5,0) to (1.5, 3*\h);
 \end{scope}
  \coordinate (L1) at (2.5, 1.5, 1.5*\h);
 \coordinate (R1) at (1.5, 1, 0.5*\h);
 \coordinate (R2) at (1.5, 1.75, 2.5*\h);
 \draw[wire, on layer=front] (3,1.5,0) to (3,1.5, \h) to [out=up, in=\dl] (L1) to [out=\ul, in=down] (3, 1.5, 2*\h) to (3,1.5, 3*\h);
 \draw[wire,on layer=front] (2, 1,0) to [out=up, in=\dl] (R1)  to [out=\ul, in=down] (2,1,\h) to [out=up, in=\dr] (L1);
 \draw[wire, on layer=frontb](L1) to [out=\ur, in=down] (2, 2, 2*\h) to [out=up, in=\dl] (R2) to [out=\ul, in=down] (2,2,3*\h);
 \draw[wire] (1, 0.5,0) to [out=up, in=\dr] (R1) to [out=\ur, in=down] (1, 1.5, \h) to (1,1.5,2*\h) to [out=up, in=\dr] (R2) to [out=\ur, in=down] (1, 2.5, 3*\h);
 \node[dot] at (L1){};
 \node[dot] at (R1){};
\node[dot] at (R2){};
\node[obj, below right] at (\d+0.1,0.1,0) {$[ij]$};
\node[obj, below right] at (\d+0.1,1.1,0) {$[jk]$};
\node[obj, below right] at (\d+0.1,2.1,0) {$[kl]$};
\node[obj, below right] at (\d+0.1,3.1,0) {$[lm]$};
\node[obj, below left] at (3.9,1.6,0) {$[im]$};
 \node[tmor, right] at (R1) {~$\alpha^+$};
\node[tmor, left] at ([xshift=-0.1cm]L1) {$\beta^+$};
\node[tmor, left] at ([xshift=-0.1cm]R2) {~$\gamma^+$};
 \end{tz}
 &
 \def\d{-0.5}%
 \begin{tz}[td,scale=1.1]
  \begin{scope}[on layer=front,yzplane=4]
 \node[omor] at (4.5, 8) {$[ijm]$};
 \node[omor] at (3.3, 8.2) {$[jkm]$};
  \node[omor] at (2.21, 8.35) {$[klm]$};
 \node[omor] at (1.55,12) {$[jkl]$};
  \node[omor] at (2.6, 11.7) {$[jlm]$};
 \node[omor] at (4.5, 16) {$[ilm]$};
 \node[omor] at (3.55,15.8) {$[ikl]$};
 \node[omor] at (1.9,15.3){$[ijk]$};
 \node[omor] at (3.45,14.2){$[ijl]$};
 \end{scope}
 \begin{scope}[xyplane=6*\h, on layer=superfront]
 		\draw[slice](0,\d) to (0,0) to [out=up, in=\dl] (0.5,1) to [out=up, in=\dl] (1,2) to 					[out=up, in=\dl] (1.5,3) to (1.5,4);
		\draw[slice] (1,\d) to (1,0) to [out=up, in=\dr] (0.5,1);
 		\draw[slice] (2,\d) to (2,0) to [out=up, in=\dr] (1,2);
		\draw[slice] (3,\d) to (3,0) to [out=up, in=\dr] (1.5,3);
 \end{scope}
 \begin{scope}[xyplane=5*\h]
 	\draw[slice, on layer=front](0,\d) to (0,0) to [out=up, in=\dl]  (1,2);
 	\draw[slice] (1,2) to [out=up, in=\dl] (1.5,3) to (1.5,4);
 	\draw[slice] (1,\d) to (1,0) to [out=up, in=\dl] (1.5,1);
 	\draw[slice] (2,\d) to (2,0) to [out=up, in=\dr] (1.5,1) to [out=up, in=\dr] (1,2);
 	\draw[slice] (3,\d) to (3,0) to [out=up, in=\dr] (1.5,3);
 \end{scope}
 \begin{scope}[xyplane=4*\h]
 	\draw[slice, on layer=front] (0,\d) to (0,0) to [out=up, in=\dl]  (1.5,3) to (1.5,4);
 	\draw[slice] (1,\d) to (1,0) to [out=up, in=\dl] (1.5,1);
 	\draw[slice] (2,\d) to (2,0) to [out=up, in=\dr] (1.5,1) to [out=up, in=\dl] (2,2);
 	\draw[slice] (3,\d) to (3,0) to [out=up, in=\dr] (2,2) to [out=up, in=\dr] (1.5,3);
 \end{scope}
  \begin{scope}[xyplane=3*\h]
 	\draw[slice, on layer=front] (0,\d) to(0,0) to [out=up, in=\dl]  (1.5,3) to (1.5,4);
 	\draw[slice, on layer =frontb] (1,\d) to (1,0) to [out=up, in=\dl] (2,2) to [out=up, in=\dr] (1.5,3);
 	\draw[slice] (2,\d) to (2,0) to [out=up, in=\dl] (2.5,1) to [out=up, in=\dr] (2,2);
 	\draw[slice] (3,\d) to (3,0) to [out=up, in=\dr] (2.5,1) ;
 \end{scope}
  \begin{scope}[xzplane=\d]
 	\draw[slice,short] (0,3*\h) to (0,6*\h);
 	\draw[slice,short] (1,3*\h) to (1,6*\h);
 	\draw[slice,short] (2,3*\h) to (2,6*\h);
 	\draw[slice,short] (3,3*\h) to (3,6*\h);
 \end{scope}
 \begin{scope}[xzplane=4]
 	\draw[slice,short] (1.5,3*\h) to (1.5, 6*\h);
 \end{scope}
  \coordinate (L1) at (2.5, 1.5, 4.5*\h);
 \coordinate (R1) at (1.5, 1, 5.5*\h);
 \coordinate (R2) at (1.5, 1.75, 3.5*\h);
 \draw[wire, on layer=front] (3,1.5,6*\h) to (3,1.5, 5*\h) to [out=down, in=\ul] (L1) to [out=\dl, in=up] (3, 1.5, 4*\h) to (3,1.5, 3*\h); 
 \draw[wire,on layer=front] (2, 1,6*\h) to [out=down, in=\ul] (R1)  to [out=\dl, in=up] (2,1,5*\h) to [out=down, in=\ur] (L1);
 \draw[wire, on layer=frontb](L1) to [out=\dr, in=up] (2, 2, 4*\h) to [out=down, in=\ul] (R2) to [out=\dl, in=up] (2,2,3*\h); 
 \draw[wire] (1, 0.5,6*\h) to [out=down, in=\ur] (R1) to [out=\dr, in=up] (1, 1.5, 5*\h) to (1,1.5,4*\h) to [out=down, in=\ur] (R2) to [out=\dr, in=up] (1, 2.5, 3*\h); 
\node[dot] at (L1){};
\node[dot] at (R1){};
\node[dot] at (R2){};
\node[tmor, right] at (R1) {~$\alpha^-$};
\node[tmor, left] at ([xshift=-0.1cm]L1) {$\beta^-$};
\node[tmor, left] at ([xshift=-0.1cm]R2) {~$\gamma^-$};
\node[obj, below right] at (\d+0.1,0.1,3*\h) {$[ij]$};
\node[obj, below right] at (\d+0.1,1.1,3*\h) {$[jk]$};
\node[obj, below right] at (\d+0.1,2.1,3*\h) {$[kl]$};
\node[obj, below right] at (\d+0.1,3.1,3*\h) {$[lm]$};
\node[obj, below left] at (3.9,1.6,3*\h) {$[im]$};
 \end{tz}\\\nonumber
 \Lcomp_+(ijklm)(\alpha^+, \beta^+, \gamma^+)
 &
 \Lcomp_-(ijklm)(\alpha^-, \beta^-, \gamma^-)
 \end{calign}
 \caption{$\Lcomp_\pm(ijklm)$ for $\alpha^\pm \in V^\pm(ijkl)$, $\beta^\pm \in V^\pm(ijlm)$, and $\gamma^\pm \in V^\pm(jklm)$.}
\label{fig:LComp}
\end{figure}

We denote the duals of these linear maps with respect to the non-degenerate pairing between $V^+$ and $V^-$ by
\begin{align}\nonumber \Scomp^\vee_\pm(ijklm)&: V^\mp(ijklm) \to \bigoplus_{\fs{ikm}}  V^\mp(iklm)\otimes V^\mp(ijkm)
\\\nonumber
\Lcomp^\vee_\pm(ijklm) &:  V^\mp(ijklm)\to \bigoplus_{\substack{\Xs{jl}, \fs{ijl}\\\fs{jkl}, \fs{jlm}}} V^\mp(ijkl) \otimes V^\mp(ijlm) \otimes V^\mp(jklm).
\end{align}%
By definition, $Z_+(ijklm)$ and $Z_-(ijklm)$ are the direct sum coefficients of the following linear maps, respectively:
\begin{calign}\nonumber \Scomp_-^\vee(ijklm)\circ \Lcomp_+(ijklm)
&
\Lcomp_-^\vee(ijklm)\circ \Scomp_+(ijklm)
\end{calign}

\skiptocparagraph{Invertibility of the shortside pentagon composite}
The aforementioned heuristic, that the map $Z_+$ is given by composing associators along one side of the pentagon and then `uncomposing' along the other side, of course only makes sense if one of the composition maps $\Lcomp$ or $\Scomp$ is indeed invertible. This is the content of the following lemma.

In the following, we will often write expressions such as $\lambda_i F$, where $\lambda_i \in k$, $i \in I$, $I$ is a finite set, $F$ is a linear map with (co)domain $\bigoplus_{i \in I} W_i$ and $\{W_i\}_{i \in I}$ is a family of vector spaces indexed by $I$. This is shorthand notation for the composite of $F$ with the diagonal map $\bigoplus_{i \in I} \lambda_i \mathrm{id}_{W_i} : \bigoplus_{i \in I} W_i \to \bigoplus_{i \in I} W_i$.
\begin{lemma}[The shortside pentagon composite is invertible] \label{lem:scompinv}
The function $\Scomp_+(ijklm)$ is invertible with inverse $\tdim{ikm}\Scomp^\vee_-(ijklm)$.
\end{lemma}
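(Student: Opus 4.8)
The plan is to recognize $\Scomp_+(ijklm)$ as a "fusion'' (composition) map and $\tdim{ikm}\,\Scomp^\vee_-(ijklm)$ as the corresponding "splitting'' map, and to check they are mutually inverse by a bubble-popping computation in the surface calculus of Section~\ref{sec:graphicalcalculus} --- the level-raised analogue of the fact that composing along, and then decomposing back through, a complete set of intermediate fusion channels weighted by quantum dimension is the identity in the $1$-categorical Turaev--Viro--Barrett--Westbury state sum.

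First I would unfold Figure~\ref{fig:SComp}. By definition $\Scomp_+(ijklm)(\alpha^+,\beta^+)$ is the vertical composite of $\alpha^+\in V^+(iklm)$ followed by $\beta^+\in V^+(ijkm)$, each whiskered by the appropriate $1$-morphisms ($[ijk]\xz\Io_{[kl]}\xz\Io_{[lm]}$ on one side, $\Io_{[ij]}\xz\Io_{[jk]}\xz[klm]$ on the other) and glued with the interchanger $2$-isomorphisms of Definition~\ref{def:monoidal2cat}, producing a $2$-morphism $\fs{((ijk)l)m}\To\fs{i(j(klm))}$ that factors through the intermediate $1$-morphism $[ikm]\xo([ijk]\xz[klm])$. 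The same picture read in the opposite vertical direction is $\Scomp_-(ijklm)(\alpha^-,\beta^-)$: it runs the same two associator steps backwards, so $\Scomp_-$ is the $(-)$-version of $\Scomp_+$, and under the nondegenerate pairing of Proposition~\ref{prop:pairing} the dual map $\Scomp^\vee_-(ijklm)$ is precisely the operation that resolves a $2$-morphism $\fs{((ijk)l)m}\To\fs{i(j(klm))}$ into its components along the simple intermediate $1$-morphisms $[ikm]\in\Delta\tc{C}^{\sk}_2$ via the copairing $\cup$. I would also record, as a sublemma, that $\bigoplus_{[ikm]}[ikm]\xo([ijk]\xz[klm])$ realizes the full simple decomposition of the intermediate $1$-morphism (each simple $[ikm]$ appearing once), so that $\Scomp_+(ijklm)$ is surjective, not merely injective.

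The heart of the argument is then two bubble computations. Composing $\Scomp_+(ijklm)$ with $\bigl(\tdim{ikm}\,\Scomp^\vee_-(ijklm)\bigr)$ inserts a copairing $\cup$ on the intermediate spaces; composing with $\Scomp_+$ closes it into a loop carrying the $1$-morphism $[ikm]$ on the relevant sphere, with all whiskering $1$-morphisms and interchangers cancelling in pairs; this loop evaluates to $\dim([ikm])=\tdim{ikm}$ by Definition~\ref{def:quantumdimension}, cross terms between distinct skeletal $[ikm]\neq[ikm]'$ vanish because $\Hom_{\tc{C}}([ik]\xz[km],[im])$ is semisimple (no nonzero $2$-morphisms between non-isomorphic simples, cf.\ Proposition~\ref{prop:Schur}), and the surviving $\tdim{ikm}^{-1}$ is cancelled by the weight, yielding $\id_{V^+(ijklm)}$. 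The symmetric computation --- now the loop sits on the intermediate side of $\bigl(\tdim{ikm}\,\Scomp^\vee_-(ijklm)\bigr)\circ\Scomp_+(ijklm)$ --- gives the identity on $\bigoplus_{[ikm]}V^+(iklm)\otimes V^+(ijkm)$; with the completeness sublemma in hand either identity already forces invertibility with the asserted inverse.

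The step I expect to be the main obstacle is the bookkeeping: checking in the surface calculus that the whiskering $1$-morphisms and interchanger $2$-isomorphisms of $\Scomp_+$ and $\Scomp_-$ cancel pairwise once the two maps are composed --- leaving a single clean $[ikm]$-labeled loop times an identity --- and proving the completeness sublemma (that summing over the skeletal simple $1$-morphisms $[ikm]$ reproduces the entire simple decomposition of $[ikm]\xo([ijk]\xz[klm])$ as $[ikm]$ varies). Both are routine in spirit but demand care with the nudging conventions of Notation~\ref{notation:nudging} and with the sphere placements in the pivotal $2$-category graphical calculus of Section~\ref{sec:piv}.
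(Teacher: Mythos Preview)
Your approach is correct and close in spirit to the paper's, but the organization differs in a useful way. The paper first establishes that $\Scomp_+(ijklm)$ is invertible by an explicit factorization (Figure~\ref{fig:factoring}): rewriting $V^+(iklm)$ and $V^+(ijkm)$ via adjunctions as $\Hom$-spaces into and out of the simple $[ikm]$, then composing through $[ikm]$, and finally undoing the adjunction moves. Each step is visibly an isomorphism, with the middle step being exactly the semisimple decomposition you call the ``completeness sublemma.'' Having invertibility in hand, the paper then checks only the single pairing identity $\tdim{ikm}\langle\Scomp_+(\alpha,\beta),\Scomp_-(\gamma,\delta)\rangle=\langle\alpha\otimes\beta,\gamma\otimes\delta\rangle$ (Figure~\ref{fig:pairingsc}), which is your second bubble computation; one direction suffices. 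Your plan to do both bubble computations directly works too, but the paper's factorization sidesteps the interchanger bookkeeping you flag as the main obstacle for the $\Scomp_+\circ\bigl(\tdim{ikm}\Scomp_-^\vee\bigr)=\id$ direction. One small correction: the vanishing of cross terms between distinct $[ikm]\neq[ikm]'$ is not Proposition~\ref{prop:Schur} (which concerns $1$-morphisms between simple objects) but simply local semisimplicity of $\Hom_{\tc{C}}([ik]\xz[km],[im])$.
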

\begin{proof}
We first prove that $\Scomp_+(ijklm)$ is invertible. Consider the factorization of $\Scomp_+(ijklm)$ in Figure~\ref{fig:factoring}.  The top vertical map and the horizontal map are evidently isomorphisms, the bottom vertical map is an isomorphism since $\tc{C}$ is locally semisimple. Hence, $\Scomp_+(ijklm)$ is invertible. 
\def\di{3.}
\begin{figure}[h]
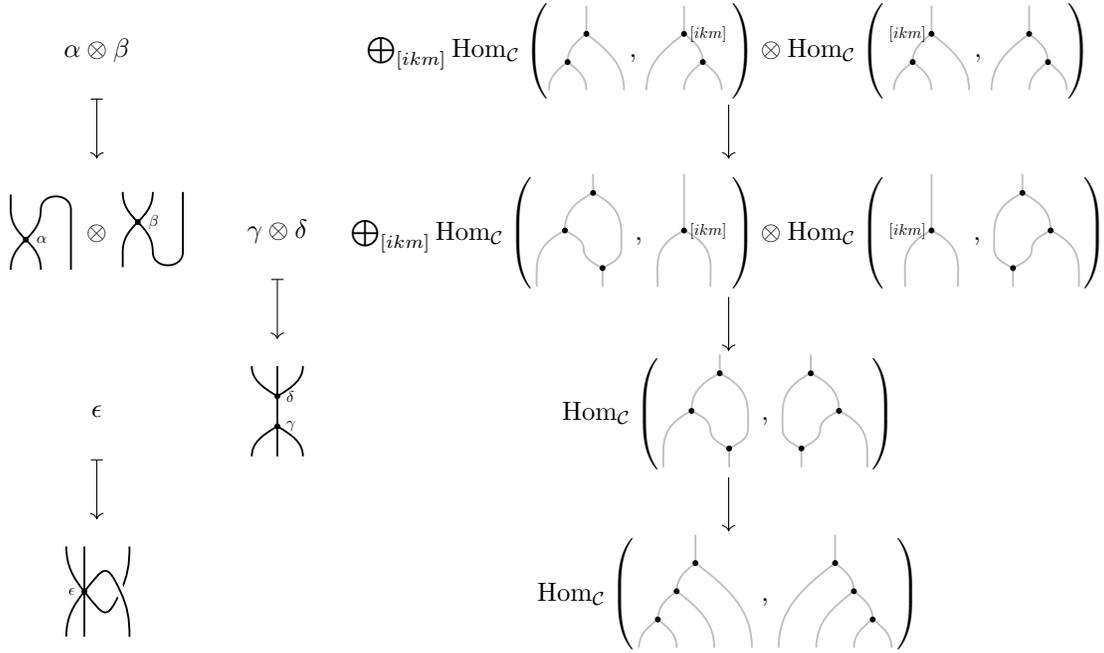

\[\hspace{-0.3cm}
\begin{tz}[scale=0.8]
\node (T) at (0,2*\di) {$\bigoplus_{[ikm]} \Hom_{\tc{C}} \left(
	\begin{tz}[scale=0.5, yscale=0.75]
		\draw[slice] (0,0) to [out=up, in=\dl] (0.5,1) to [out=up, in=\dl] (1,2) to (1,3);
		\draw[slice] (1,0) to [out=up, in=\dr] (0.5,1);
		\draw[slice] (2,0) to [out=up, in=\dr] (1,2);
		\node[dot] at (0.5,1){};
		\node[dot] at (1,2){};
	\end{tz} 
	\,,\,
	\begin{tz}[scale=0.5, yscale=0.75,xscale=-1]
		\draw[slice] (0,0) to [out=up, in=\dl] (0.5,1) to [out=up, in=\dl] (1,2) to (1,3);
		\draw[slice] (1,0) to [out=up, in=\dr] (0.5,1);
		\draw[slice] (2,0) to [out=up, in=\dr] (1,2);
		\node[dot] at (0.5,1){};
		\node[dot](A) at (1,2){};
		\node[right, tmor] at (A) {$[ikm]$};
	\end{tz} 
	\right)\otimes \Hom_{\tc{C}}\left(
	\begin{tz}[scale=0.5, yscale=0.75]
		\draw[slice] (0,0) to [out=up, in=\dl] (0.5,1) to [out=up, in=\dl] (1,2) to (1,3);
		\draw[slice] (1,0) to [out=up, in=\dr] (0.5,1);
		\draw[slice] (2,0) to [out=up, in=\dr] (1,2);
		\node[dot] at (0.5,1){};
		\node[dot] (A) at (1,2){};
		\node[left, tmor] at (A) {$[ikm]$};
	\end{tz} 
	\,,\,
	\begin{tz}[scale=0.5, yscale=0.75,xscale=-1]
		\draw[slice] (0,0) to [out=up, in=\dl] (0.5,1) to [out=up, in=\dl] (1,2) to (1,3);
		\draw[slice] (1,0) to [out=up, in=\dr] (0.5,1);
		\draw[slice] (2,0) to [out=up, in=\dr] (1,2);
		\node[dot] at (0.5,1){};
		\node[dot] at (1,2){};
	\end{tz}
	\right)$};
\node (M) at (0,\di) {$\bigoplus_{[ikm]} \Hom_{\tc{C}}\left(
	\begin{tz}[yscale=0.5,scale=0.5]
		\draw[slice] (1,0) to (1,1) to [out=\ul, in=down] (0.5,2) to [out=up, in=\dr] (0,3) to [out=up, in=\dl] (0.75,5) to (0.75,6);
		\draw[slice] (-0.75,0) to [out=up, in=\dl] (0,3) ;
		\draw[slice] (1,1) to [out=\ur, in=down] (1.5, 2) to [out=up, in=\dr] (0.75,5);
		\node[dot] at (1,1){};
		\node[dot] at (0,3) {};
		\node[dot] at (0.75,5){};
	\end{tz}
	\,\,,\,
	\begin{tz}[yscale=0.5,scale=0.5]
		\draw[slice] (-0.2,0) to [out=up, in=\dl] (0.5,3) to (0.5,6);
		\draw[slice] (1.2,0) to [out=up, in=\dr] (0.5,3);
		\node[dot](A)  at (0.5,3){};
		\node[tmor, right] at (A) {$[ikm]$};
	\end{tz}
	\right)\otimes \Hom_{\tc{C}}\left(
	\begin{tz}[yscale=0.5,scale=0.5]
		\draw[slice] (-0.2,0) to [out=up, in=\dl] (0.5,3) to (0.5,6);
		\draw[slice] (1.2,0) to [out=up, in=\dr] (0.5,3);
		\node[dot] (A) at (0.5,3){};
		\node[tmor, left] at (A) {$[ikm]$};
	\end{tz}
	\,\,,\,
	\begin{tz}[yscale=0.5,scale=0.5,xscale=-1]
		\draw[slice] (1,0) to (1,1) to [out=\ul, in=down] (0.5,2) to [out=up, in=\dr] (0,3) to [out=up, in=\dl] (0.75,5) to (0.75,6);
		\draw[slice] (-0.75,0) to [out=up, in=\dl] (0,3) ;
		\draw[slice] (1,1) to [out=\ur, in=down] (1.5, 2) to [out=up, in=\dr] (0.75,5);
		\node[dot] at (1,1){};
		\node[dot] at (0,3) {};
		\node[dot] at (0.75,5){};
	\end{tz}	
	\right)$};
\node (B) at (0,0){$\Hom_{\tc{C}}\left( 
	\begin{tz}[yscale=0.5,scale=0.5]
		\draw[slice] (1,0) to (1,1) to [out=\ul, in=down] (0.5,2) to [out=up, in=\dr] (0,3) to [out=up, in=\dl] (0.75,5) to (0.75,6);
		\draw[slice] (-0.75,0) to [out=up, in=\dl] (0,3) ;
		\draw[slice] (1,1) to [out=\ur, in=down] (1.5, 2) to [out=up, in=\dr] (0.75,5);
		\node[dot] at (1,1){};
		\node[dot] at (0,3) {};
		\node[dot] at (0.75,5){};
	\end{tz}
	\,\,,\,
	\begin{tz}[yscale=0.5,scale=0.5,xscale=-1]
		\draw[slice] (1,0) to (1,1) to [out=\ul, in=down] (0.5,2) to [out=up, in=\dr] (0,3) to [out=up, in=\dl] (0.75,5) to (0.75,6);
		\draw[slice] (-0.75,0) to [out=up, in=\dl] (0,3) ;
		\draw[slice] (1,1) to [out=\ur, in=down] (1.5, 2) to [out=up, in=\dr] (0.75,5);
		\node[dot] at (1,1){};
		\node[dot] at (0,3) {};
		\node[dot] at (0.75,5){};
	\end{tz}
\right)$};
\node (R) at (0,-\di){$\Hom_{\tc{C}}\left( \,
	\begin{tz}[yscale=0.5*6/4,scale=0.5]
		\draw[slice] (0,0) to [out=up, in=\dl] (0.5,1) to [out=up, in=\dl] (1,2) to [out=up, in=\dl] (1.5,3) to (1.5,4);
		\draw[slice] (1,0) to [out=up, in=\dr] (0.5,1);
		\draw[slice] (2,0) to [out=up, in=\dr] (1,2);
		\draw[slice] (3,0) to  [out=up, in=\dr] (1.5,3);
		\node[dot] at (0.5,1){};
		\node[dot] at (1,2){};
		\node[dot] at (1.5,3){};
	\end{tz}
	\,\,,\,
	\begin{tz}[yscale=0.5*6/4,scale=0.5,xscale=-1]
		\draw[slice] (0,0) to [out=up, in=\dl] (0.5,1) to [out=up, in=\dl] (1,2) to [out=up, in=\dl] (1.5,3) to (1.5,4);
		\draw[slice] (1,0) to [out=up, in=\dr] (0.5,1);
		\draw[slice] (2,0) to [out=up, in=\dr] (1,2);
		\draw[slice] (3,0) to  [out=up, in=\dr] (1.5,3);
		\node[dot] at (0.5,1){};
		\node[dot] at (1,2){};
		\node[dot] at (1.5,3){};
	\end{tz}
	\,\right)$};
\draw[->] (0,5.1) to (0,4.2);
\draw[->] (0,1.9) to (0,1.);
\draw[->] (0,-1.1) to (0,-2);
\node at (-10.5, 6) {$\alpha\otimes \beta$};
\draw[|->] (-10.5,5.2) to (-10.5,4.2);
\node at (-10.5, 3) {$
	\begin{tz}[scale=0.4]
		\draw[wire] (0,0) to [out=up, in=\dl] (0.5,1) to [out=\dr, in=up] (1,0);
		\draw[wire] (0.5,1) to [out=\ul, in=down] (0,2) to (0,2.5);
		\draw[wire] (0.5,1) to [out=\ur, in=down] (1,2) to [out=up, in=up, looseness=1.5] (2,2) to (2,0);
		\node[dot](A) at (0.5,1){};
		\node[right, tmor] at ([xshift=5pt]A) {$\alpha$};
	\end{tz}
	~~\otimes ~~
	\begin{tz}[scale=0.4,yscale=-1]
		\draw[wire] (0,0) to [out=up, in=\dl] (0.5,1) to [out=\dr, in=up] (1,0);
		\draw[wire] (0.5,1) to [out=\ul, in=down] (0,2) to (0,2.5);
		\draw[wire] (0.5,1) to [out=\ur, in=down] (1,2) to [out=up, in=up, looseness=1.5] (2,2) to (2,0);
		\node[dot](A) at (0.5,1){};
		\node[right, tmor] at ([xshift=5pt]A) {$\beta$};
	\end{tz}
	$};
\node at (-7.5, 3) {$\gamma\otimes \delta$};
\draw[|->] (-7.5,2.2) to (-7.5,1.2);
\node at (-7.5, 0) {$
	\begin{tz}[scale=0.4,xscale=1.7]
		\draw[wire] (0,0) to [out=up, in=\dl] (0.5,1) to (0.5,0);
		\draw[wire] (1,0) to [out=up, in=\dr] (0.5,1) to (0.5,3);
		\draw[wire] (0.5,2) to [out=\ul, in=down] (0,3) ;
		\draw[wire] (0.5,2) to [out=\ur, in=down] (1,3);
		\node[dot] (A) at (0.5,1){};
		\node[dot] (B) at (0.5,2) {};
		\node[tmor, right] at ([xshift=2pt]A) {$\gamma$};
		\node[tmor, right] at ([xshift=2pt]B) {$\delta$};
	\end{tz}
	$};
\node at (-10.5,0) {$\epsilon$};
\draw[|->] (-10.5,-0.8) to (-10.5,-1.8);
\node at (-10.5, -3) {$
	\begin{tz}[scale=0.6, xscale=0.8]
		\draw[wire] (0.5,1) to [out=\ur, in=120, looseness=2]  node[mask point, pos=1] (MP){}(1.5,1) to [out=-60, in=up] (1.75,0);
		\cliparoundone{MP}{
		\draw[wire] (0.5,1) to [out=\dr, in=-120, looseness=2] (1.5,1) to [out=60, in=down] (1.75, 2);
		}
		\draw[wire] (0,2) to [out=down, in=\ul] (0.5,1) to (0.5,2);
		\draw[wire] (0,0) to [out=up, in=\dl] (0.5,1) to (0.5,0);
		\node[dot] (A)  at (0.5,1){};
		\node[tmor, left] at ([xshift=-3pt]A) {$\epsilon$};
	\end{tz}
	$};
\end{tz}
\]
\caption{Factoring $\Scomp_+(ijklm)$.}\label{fig:factoring}
\end{figure}

To show that $\tdim{ikm}\Scomp_-^\vee(ijklm)$ is an inverse, it therefore suffices to prove that \[\tdim{ikm}\Scomp_-^\vee(ijklm) \circ \Scomp_+(ijklm) = \id.\] This follows from the calculation in Figure~\ref{fig:pairingsc}, for $\alpha,\beta,\gamma $ and $\delta$ of appropriate type, such that $\alpha\otimes \beta$ and $\gamma\otimes \delta$ are in a direct summand labeled by a simple object $[ikm]$ and $[ikm]'$, respectively.
\end{proof}

\begin{figure}[h]
\begin{align}\nonumber
\tdim{ikm} & \left\langle\vp  \Scomp_+(ijklm)(\alpha,\beta), \Scomp_-(ijklm)(\gamma,\delta) \right\rangle 
\\[-10pt]\nonumber
 &= \eqgap \tdim{ikm}~\begin{tz}[scale=0.5,yscale=0.8]
\draw[wire]  (0.5,1)  to [out=\ul, in=down] (0,2) to (0,4) to  [out=up, in=\dl] (0.5,5);
\draw[wire] (0.5,-1) to [out=\dl, in=up] (0,-2) to (0,-4) to [out=down, in=\ul] (0.5,-5);
\draw[wire] (0.5, -5) to [out=\ur, in=down] (1,-4) to [out=up, in=down] node[mask point, pos=0.5] (MPB){} (2,-2) to (2,2) to [out=up, in=down] node[mask point, pos=0.5] (MPT){} (1,4) to [out=up, in=\dr] (0.5,5);
\cliparoundtwo{MPB}{MPT}{
\draw[wire] (0.5,1) to [out=\ur, in=down] (1,2) to [out=up, in=down] (2,4) to (2,6)  to [out=up, in=up, looseness=1.5] (3,6) to (3,-6)  to [out=down, in=down, looseness=1.5] (2,-6) to (2,-4) to [out=up, in=down] (1,-2) to [out=up, in=\dr] (0.5,-1);
}
\draw[wire] (0.5,-1) to [out=\ul, in=down] (0,0) to [out=up, in=\dl] (0.5,1) to [out=\dr, in=up] (1,0) to [out=down, in=\ur] (0.5,-1);
\draw[wire] (0.5,5) to [out=\ur, in=down] (1,6) to [out=up, in=up, looseness=1.5] (4,6) to (4,-6) to [out=down, in=down, looseness=1.5] (1,-6) to [out=up, in=\dr] (0.5,-5);
\draw[wire] (0.5,5) to [out=\ul, in=down] (0,6) to [out=up, in=up, looseness=1.5] (5,6) to (5,-6) to [out=down, in=down, looseness=1.5] (0,-6) to [out=up, in=\dl] (0.5,-5);
\insidepath{arrow data={0}{>}}{
	(-1.5,0) to (-1.5,6) to [out=up, in=up, looseness=1.5] (6,6) to (6,-6) to [out=down, in=down, looseness=1.5] (-1.5,-6) to cycle}
\node[dot](BB) at (0.5,-5){};
\node[dot](B) at (0.5,-1){};
\node[dot](T) at (0.5,1){};
\node[dot](TT) at (0.5,5){};
\node[right,tmor] at ([xshift=3pt]T) {$\alpha$};
\node[right,tmor] at ([xshift=3pt]TT) {$\beta$};
\node[right,tmor] at ([xshift=3pt]B) {$\gamma$};
\node[right,tmor] at ([xshift=3pt]BB) {$\delta$};
\node[omor, left] at (0.1,3) {$[ikm]'$};
\node[omor, left] at (0,-3) {$[ikm]$};
\end{tz}
\eqgap=\eqgap\tdim{ikm}
\begin{tz}[scale=0.5,yscale=0.8]
\draw[wire]  (0.5,1)  to [out=\ul, in=down] (0,2) to (0,4) to  [out=up, in=\dl] (0.5,5);
\draw[wire] (0.5,-1) to [out=\dl, in=up] (0,-2) to (0,-4) to [out=down, in=\ul] (0.5,-5);
\draw[wire] (0.5, -5) to [out=\ur, in=down] (1,-4) to [out=up, in=down] node[mask point, pos=0.5] (MPB){} (3,-2) to (3,2) to [out=up, in=down] node[mask point, pos=0.5] (MPT){} (1,4) to [out=up, in=\dr] (0.5,5);
\draw[wire] (0.5,1) to [out=\ur, in=down] (1,2) to [out=up, in=up, looseness=1.5] (2,2) to (2,-2) to [out=down, in=down, looseness=1.5] (1,-2) to [out=up, in=\dr] (0.5,-1);
\draw[wire] (0.5,-1) to [out=\ul, in=down] (0,0) to [out=up, in=\dl] (0.5,1) to [out=\dr, in=up] (1,0) to [out=down, in=\ur] (0.5,-1);
\draw[wire] (0.5,5) to [out=\ur, in=down] (1,6) to [out=up, in=up, looseness=1.5] (4,6) to (4,-6) to [out=down, in=down, looseness=1.5] (1,-6) to [out=up, in=\dr] (0.5,-5);
\draw[wire] (0.5,5) to [out=\ul, in=down] (0,6) to [out=up, in=up, looseness=1.5] (5,6) to (5,-6) to [out=down, in=down, looseness=1.5] (0,-6) to [out=up, in=\dl] (0.5,-5);
\insidepath{arrow data={0}{>}}{
	(-1.5,0) to (-1.5,6) to [out=up, in=up, looseness=1.5] (6,6) to (6,-6) to [out=down, in=down, looseness=1.5] (-1.5,-6) to cycle}
\node[dot](BB) at (0.5,-5){};
\node[dot](B) at (0.5,-1){};
\node[dot](T) at (0.5,1){};
\node[dot](TT) at (0.5,5){};
\node[right,tmor] at ([xshift=3pt]T) {$\alpha$};
\node[right,tmor] at ([xshift=3pt]TT) {$\beta$};
\node[right,tmor] at ([xshift=3pt]B) {$\gamma$};
\node[right,tmor] at ([xshift=3pt]BB) {$\delta$};
\node[omor, left] at (0.1,3) {$[ikm]'$};
\node[omor, left] at (0,-3) {$[ikm]$};
\end{tz}
\\[-10pt]\nonumber
&\superequals{\begin{minipage}{1.5cm} \centering simplicity of\\$[ikm], [ikm]'$\end{minipage}}\eqgap \hspace{0.2cm} \eqgap 
\delta_{[ikm], [ikm]'}~ \Tr\left(\alpha\xt \gamma\right) ~\Tr\left(\beta \xt \delta\right) = \left\langle \alpha \otimes \beta, \gamma \otimes \delta \right \rangle
\qedhere
\end{align}
\caption{Pairing the positive and negative shortside pentagon composites.}
\label{fig:pairingsc}
\end{figure}

\skiptocparagraph{Right invertibility of the longside pentagon composite}
While $\Scomp_+$ is invertible, allowing us to express $\ZH_+$ as a `composition around the pentagon', by contrast $\Lcomp_+$ is not in general invertible. This situation is in contrast to the situation with endotrivial fusion categories, in which both $\Scomp_+$ and $\Lcomp_+$ are invertible~\cite{Mackaay}. However, we will show below that $\Lcomp_+$ has a section, or a right inverse, and that $\ZH_-$ can indeed be expressed as a composite of $\Scomp_+$ and this section; hence $\ZH_-$ is itself a section of $\ZH_+$. This suffices later to verify invariance under bistellar moves. 

\begin{lemma}[The longside pentagon composite is right invertible] \label{lem:rightinverse} 
The linear map $\Lcomp_+(ijklm)$ fulfills the following equation:
\begin{equation}\nonumber \Lcomp_+(ijklm) \circ \left(\frac{\tdim{ijl} \tdim{jkl} \tdim{jlm}}{\odim{jl}} \Lcomp^\vee_-(ijklm) \right) = \id
\end{equation}
\end{lemma}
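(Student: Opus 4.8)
The plan is to mirror the structure of the proof of Lemma~\ref{lem:scompinv}, with the essential difference that $\Lcomp_+(ijklm)$ is only surjective, not invertible, so the target is a one-sided (section) statement rather than a genuine two-sided inverse. Write $c := \frac{\tdim{ijl}\tdim{jkl}\tdim{jlm}}{\odim{jl}}$ for the scalar attached to the summand of $\bigoplus_{\Xs{jl},\fs{ijl},\fs{jkl},\fs{jlm}} V^+(ijkl)\otimes V^+(ijlm)\otimes V^+(jklm)$ indexed by $(\Xs{jl},\fs{ijl},\fs{jkl},\fs{jlm})$, so that the map in the statement is the endomorphism $\Lcomp_+(ijklm)\circ(c\,\Lcomp^\vee_-(ijklm))$ of $V^+(ijklm)$. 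Since the pairing $\langle\cdot,\cdot\rangle$ between $V^-(ijklm)=\Hom_{\tc{C}}(\fs{i(j(klm))},\fs{((ijk)l)m})$ and $V^+(ijklm)$ is nondegenerate by Proposition~\ref{prop:pairing}, it suffices to verify that $\langle\xi,\Lcomp_+(ijklm)(c\,\Lcomp^\vee_-(ijklm)(\zeta))\rangle = \langle\xi,\zeta\rangle$ for all $\xi\in V^-(ijklm)$ and $\zeta\in V^+(ijklm)$.

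First I would expand the left-hand side into a single $2$-spherical trace. Using the definition of the dual $\Lcomp^\vee_-(ijklm)$ with respect to the nondegenerate pairings, rewrite $\Lcomp^\vee_-(ijklm)(\zeta)$ in terms of the copairings $\cup_{ijkl},\cup_{ijlm},\cup_{jklm}$ of the three associator state spaces; substituting into $\Lcomp_+(ijklm)$ and pairing with $\xi$ produces, for each summand label, a surface diagram obtained by gluing the $\Lcomp_+$-picture of Figure~\ref{fig:LComp} along its $\alpha^+,\beta^+,\gamma^+$ slots to the corresponding $\Lcomp_-$-picture (the latter closed up with $\zeta$), together with $\xi$, and taking the $2$-spherical trace. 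After routine diagrammatic manipulations parallel to those in Figures~\ref{fig:factoring} and~\ref{fig:pairingsc} --- isotopies allowed by Definition~\ref{def:monoidal2cat}, planar pivotality, the swallowtail and cusp relations, and the cyclicity of the spherical trace (Proposition~\ref{prop:leftrighttrace}) --- the intermediate wires labeled $\fs{ijl},\fs{jkl},\fs{jlm}$ over the object $\Xs{jl}$ are isolated into a closed ``bubble''. The crux of the argument is then a completeness (resolution-of-the-identity) relation in the spherical fusion $2$-category: the weighted sum over simple objects $\Xs{jl}$ and simple $1$-morphisms $\fs{ijl}\colon\Xs{ij}\xz\Xs{jl}\to\Xs{il}$, $\fs{jkl}\colon\Xs{jk}\xz\Xs{kl}\to\Xs{jl}$, $\fs{jlm}\colon\Xs{jl}\xz\Xs{lm}\to\Xs{jm}$ of this bubble configuration, with the exact weight $c$, equals the identity $2$-morphism on the relevant composite $1$-morphism. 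Granting this, the bubble collapses, the diagram reduces to the trace of $\xi\xt\zeta$, and the left-hand side becomes $\langle\xi,\zeta\rangle$, as required.

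The main obstacle is establishing that completeness relation with the precise normalization $c=\frac{\tdim{ijl}\tdim{jkl}\tdim{jlm}}{\odim{jl}}$. It is the categorified, ``off-diagonal'' analogue of the ordinary fusion-$1$-category identity $\Io_{A\otimes B}=\sum_{C\ \mathrm{simple},\ t}\frac{\dim C}{\dim A\,\dim B}(\cdots)$, and its proof requires the dimension formulas for objects and $1$-morphisms in spherical fusion $2$-categories collected in Appendix~B: one must show that a closed bubble on a simple $1$-morphism $f$ carries the factor $\dim(f)^{-1}$, that three such bubbles stacked over a shared simple object $\Xs{jl}$ interact through the object normalization $\odim{jl}=\dim(\Xs{jl})\,\dim(\End_{\tc{C}}(\Xs{jl}))\,n(\Xs{jl})$, and that these assemble to give exactly $c^{-1}$ as the weight of the diagonal projector onto a single summand. (Note that the same computation does \emph{not} give $c\,\Lcomp^\vee_-(ijklm)\circ\Lcomp_+(ijklm)=\id$ on the source space: for a \emph{fixed} label the bubble-with-weight-$c$ is an idempotent rather than the identity, and it reassembles the identity $2$-morphism only after summing over \emph{all} intermediate labels --- which is precisely the reason $\Lcomp_+(ijklm)$ is surjective but not injective, and why only the section statement holds.) Everything else is bookkeeping entirely parallel to the shortside case treated in Lemma~\ref{lem:scompinv}.
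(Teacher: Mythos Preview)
Your overall strategy—pairing against $\xi\in V^-(ijklm)$, unwinding the composite into a single $2$-spherical trace, and reducing to a completeness relation from Appendix~B—is correct in spirit and is indeed how the paper proceeds. But the execution you sketch has a gap.

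The three intermediate labels $\fs{ijl},\fs{jkl},\fs{jlm}$ over $\Xs{jl}$ do not play symmetric roles, and there is no ``three-bubble'' resolution of the identity in Appendix~B with weight $c=\frac{\tdim{ijl}\tdim{jkl}\tdim{jlm}}{\odim{jl}}$. The only completeness statement available is Corollary~\ref{cor:formulabasis}, which resolves $\It_f$ for a $1$-morphism $f:A\to B$ as a sum over \emph{one} intermediate simple object $C$ and \emph{two} simple $1$-morphisms $h:A\to C$, $g:C\to B$, with weight $\frac{\dim(g)\dim(h)}{\dmo(C)}$. The paper closes this gap by factoring $\Lcomp_\pm(ijklm)=b_\pm\circ\bigl(\bigoplus a_\pm\otimes\id\bigr)$: the map $a_\pm$ sums only over $\fs{ijl}$ (composing the $(ijkl)$- and $(ijlm)$-associators through the intermediate $\fs{ijl}$), while $b_\pm$ sums over $\Xs{jl},\fs{jkl},\fs{jlm}$. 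Crucially, $a_+$ is genuinely \emph{invertible} with inverse $\tdim{ijl}\,a^\vee_-$, by exactly the shortside argument of Lemma~\ref{lem:scompinv}; this peels off the factor $\tdim{ijl}$ and reduces the claim to $b_+\circ(\lambda\,b^\vee_-)=\id$ with $\lambda=\frac{\tdim{jkl}\tdim{jlm}}{\odim{jl}}$. That remaining identity now matches the shape of Corollary~\ref{cor:formulabasis} and is established via an auxiliary adjointness $\langle b_-(\widehat{F},\widehat{c}),G\rangle=\langle\widehat{F},r(G,\widehat{c})\rangle$ that lets the basis sum over $V^+(i(jkl)m)$ collapse first.

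So your remark that ``everything else is bookkeeping parallel to the shortside case'' is where the argument falls short: recognizing that the $\fs{ijl}$-sum is invertible (shortside-like) while the $(\Xs{jl},\fs{jkl},\fs{jlm})$-sum is only right-invertible, and implementing that split to reduce to the two-morphism completeness of Corollary~\ref{cor:formulabasis}, is the substantive content of the proof.
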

\begin{proof} In this proof, we use the following notation:
\begin{align}\nonumber V^+(i(jkl)m) &:= \Hom_{\tc{C}}(\fs{((ijk)l)m}, \fs{i((jkl)m)}) 
\\ \nonumber
V^-(i(jkl)m) &:=\Hom_{\tc{C}}( \fs{i((jkl)m)},\fs{((ijk)l)m}) 
\end{align}We define linear maps 
\begin{align}\nonumber 
a_\pm:& \hspace{0.57cm}\bigoplus_{\fs{ijl}}\hspace{0.57cm} V^\pm(ijkl) \otimes V^\pm(ijlm) \to V^\pm(i(jkl)m)\\\nonumber
b_\pm:&\bigoplus_{\Xs{jl}, \fs{jkl}, \fs{jlm}} V^\pm(i(jkl)m)\otimes V^\pm(jklm) \to V^\pm(ijklm)\end{align}
as follows, for $\alpha, \alpha', \beta, \beta',\gamma, \gamma', \delta$ and $\delta'$ of appropriate type:

\begin{calign}\nonumber
a_+(\alpha, \beta) := 
\begin{tz}[scale=0.5]
\draw[wire] (0,0) to [out=up, in=\dl] (0.5,1) to [out=\ul, in=down] (0,2) to [out=up, in=\dr] (-0.5,3) to [out=\ur, in=down] (0,4);
\draw[wire] (-1,0) to (-1,2) to [out=up, in=\dl] (-0.5,3) to [out=\ul, in=down] (-1,4);
\draw[wire] (1,0) to [out=up, in=\dr] (0.5,1) to [out=\ur, in=down] (1,2) to (1,4);
\node[dot] (B) at (0.5,1){};
\node[dot] (T) at (-0.5,3){};
\node[right, tmor] at ([xshift=3pt]B) {$\alpha\vphantom{\alpha'_+}$};
\node[left, tmor] at ([xshift=-3pt]T) {$\beta\vphantom{\beta'_+}$};
\end{tz}
&
a_-(\alpha', \beta'):=
\begin{tz}[scale=0.5,xscale=-1]
\draw[wire] (0,0) to [out=up, in=\dl] (0.5,1) to [out=\ul, in=down] (0,2) to [out=up, in=\dr] (-0.5,3) to [out=\ur, in=down] (0,4);
\draw[wire] (-1,0) to (-1,2) to [out=up, in=\dl] (-0.5,3) to [out=\ul, in=down] (-1,4);
\draw[wire] (1,0) to [out=up, in=\dr] (0.5,1) to [out=\ur, in=down] (1,2) to (1,4);
\node[dot] (B) at (0.5,1){};
\node[dot] (T) at (-0.5,3){};
\node[left, tmor] at ([xshift=3pt]B) {$\beta'\vphantom{\beta}$};
\node[right, tmor] at ([xshift=-3pt]T) {$\alpha'\vphantom{\alpha_-}$};
\end{tz}
&
b_+(\gamma, \delta):=
\begin{tz}[scale=0.5]
\draw[wire] (-1,0) to [out=up, in=\dl] (0,1) to [out=\ul, in=down] (-1,2) to (-1,4);
\draw[wire] (0,0) to (0,2) to [out=up, in=\dl] (0.5,3) to [out=\ul, in=down] (0,4);
\draw[wire] (1,0) to [out=up, in=\dr] (0,1) to [out=\ur, in=down] (1,2) to [out=up, in=\dr] (0.5,3) to [out=\ur, in=down] (1,4);
\node[dot] (B)  at (0,1){};
\node[dot] (T) at (0.5,3){};
\node[right,tmor] at ([xshift=3pt]B){$\gamma\vphantom{\gamma'_+}$};
\node[left, tmor] at ([xshift=-3pt]T) {$\delta\vphantom{\delta'_+}$};
\end{tz}
&
b_-(\gamma',\delta'):=
\begin{tz}[scale=0.5,yscale=-1]
\draw[wire] (-1,0) to [out=up, in=\dl] (0,1) to [out=\ul, in=down] (-1,2) to (-1,4);
\draw[wire] (0,0) to (0,2) to [out=up, in=\dl] (0.5,3) to [out=\ul, in=down] (0,4);
\draw[wire] (1,0) to [out=up, in=\dr] (0,1) to [out=\ur, in=down] (1,2) to [out=up, in=\dr] (0.5,3) to [out=\ur, in=down] (1,4);
\node[dot] (B)  at (0,1){};
\node[dot] (T) at (0.5,3){};
\node[right,tmor] at ([xshift=3pt]B){$\gamma'\vphantom{\gamma_-}$};
\node[left, tmor] at ([xshift=-3pt]T) {$\delta'\vphantom{\delta}$};
\end{tz}
\end{calign}
Analogously to the proof of Lemma~\ref{lem:scompinv}, it follows that $a_+$ is invertible with inverse $\tdim{ijl} a^\vee_-$.

The maps $a_\pm$ and $b_\pm$ give rise to a factorization of $\Lcomp_\pm(ijklm)$ as follows: 
\[\Lcomp_\pm(ijklm) = b_\pm\circ \left(\bigoplus_{\Xs{jl}, \fs{jkl}, \fs{jlm}} a_\pm\otimes \id_{V^\pm(jklm)}\right)\] 
In particular, abbreviating the scalar $\lambda =\frac{ \tdim{jkl} \tdim{jlm}}{\odim{jl}}$, and omitting direct sum and tensor products with identities, the following holds:
\[
\shrinker{.9}{
\Lcomp_+(ijklm) \circ \left( \frac{\tdim{ijl} \tdim{jkl} \tdim{jlm}}{\odim{jl})}  \Lcomp^\vee_-(ijklm)\right) = b_+ \circ a_+ \circ \left( \tdim{ijl} a^\vee_-\right) \circ \left( \lambda b^\vee_-\right) = b_+ \circ \left( \lambda b^\vee_-\right)
}
\]
To prove Lemma~\ref{lem:rightinverse}, it therefore suffices to show that $b_+ \circ \left( \lambda b^\vee_-\right) = \id$, or equivalently that the following map is the identity:
\begin{equation}\nonumber
\sum_\alpha \lambda~\left\langle\vp b_- (\widehat{\alpha}), - \right\rangle b_+(\alpha) : V^+(ijklm) \to V^+(ijklm)
\end{equation}
Here, the sum is over a basis $\{\alpha\}$ of $\bigoplus_{\Xs{jl}, \fs{jkl}, \fs{jlm}} V^+(i(jkl)m) \otimes V^+(jklm)$ with corresponding dual basis $\{\widehat{\alpha}\}$ of $\bigoplus_{\Xs{jl}, \fs{jkl}, \fs{jlm}} V^-(i(jkl)m) \otimes V^-(jklm)$ with respect to the pairing $\langle\cdot ,\cdot \rangle$.
To prove this, we introduce the following auxilliary map:
\begin{calign}\nonumber r: V^+(ijklm) \otimes V^-(jklm) \to V^+(i(jkl)m) 
&r(\alpha, \beta):=
\begin{tz}[scale=0.5]
\draw[wire] (-1,0) to [out=up, in=\dl] (0,1) to [out=\ul, in=down] (-1,2) to (-1,4);
\draw[wire] (0,0) to (0,2) to [out=up, in=\dl] (0.5,3) to [out=\ul, in=down] (0,4);
\draw[wire] (1,0) to [out=up, in=\dr] (0,1) to [out=\ur, in=down] (1,2) to [out=up, in=\dr] (0.5,3) to [out=\ur, in=down] (1,4);
\node[dot] (B)  at (0,1){};
\node[dot] (T) at (0.5,3){};
\node[right,tmor] at ([xshift=3pt]B){$\alpha$};
\node[left, tmor] at ([xshift=-3pt]T) {$\beta$};
\end{tz}
\end{calign}
It follows from definition that for $\widehat{F}\in V^-(i(jkl)m)$, $\widehat{c}\in V^-(jklm)$ and $G\in V^+(ijklm)$,
\[
\left\langle b_-(\widehat{F},\widehat{c}), G\right\rangle = \left\langle \widehat{F}, r(G,\widehat{c})\right\rangle .
\]
Choosing bases $\{F\}$ of $V^+(i(jkl)m)$ and $\{c\}$ of $V^+(jklm)$ with corresponding dual bases $\{\widehat{F}\}$ and $\{\widehat{c}\}$ induces a basis $\{\alpha\}$ of $\bigoplus_{\Xs{jl}, \fs{jkl}, \fs{jlm}} V^+(i(jkl)m) \otimes V^+(jklm)$ with which the above expression, evaluated at $G\in V^+(ijklm)$ becomes the following:
\[\sum_{\alpha} \lambda \left \langle\vp  b_-(\widehat{\alpha}), G\right \rangle b_+(\alpha) = \hspace{-0.25cm}
\sum_{\substack{\Xs{jl}, \fs{jkl}, \fs{jlm}\\ F, c}}\hspace{-0.2cm}  \lambda \left\langle b_-(\widehat{F}, \widehat{c}) ,G\right\rangle b_+(F, c) =
\hspace{-0.25cm} \sum_{\substack{\Xs{jl}, \fs{jkl}, \fs{jlm}\\ F, c}}\hspace{-0.2cm} \lambda \left\langle \widehat{F}, r(G, \widehat{c})\right\rangle b_+(F, c) 
\]
\[= \sum_{\Xs{jl}, \fs{jkl}, \fs{jlm}, c} \lambda ~ b_+\left(\vphantom{\frac{a}{b}} r(G, \widehat{c}), c \right)
= \sum_{\Xs{jl}, \fs{jkl}, \fs{jlm}, c} \frac{ \tdim{jkl} \tdim{jlm}}{\odim{jl}}\hspace{0.25cm}
\begin{tz}[scale=0.5,yscale=0.8]
\draw[wire] (0,0) to [out=up, in=\dl] (1,1) to [out=\ul, in=down] (0,2) to (0,6);
\draw[wire] (1,0) to (1,2) to [out=up, in=\dl] (1.5,3) to [out=\ul, in=down] (1,4) to [out=up, in=\dl] (1.5, 5) to [out=\ul, in=down] (1,6);
\draw[wire] (2,0) to [out=up, in=\dr] (1,1) to [out=\ur, in=down] (2,2) to [out=up, in=\dr] (1.5,3) to [out=\ur, in=down] (2,4) to [out=up, in=\dr] (1.5,5) to [out=\ur, in=down] (2,6);
\node[dot] (BB) at (1,1) {};
\node[dot] (B) at (1.5,3) {};
\node[dot] (T) at (1.5,5){};
\node[tmor, right] at ([xshift=3pt]BB){$G$};
\node[tmor,right] at ([xshift=3pt]B){$\widehat{c}\vphantom{c_+}$};
\node[tmor, right] at ([xshift=3pt]T){$c$};
\end{tz}
\]
By Corollary~\ref{cor:formulabasis}, this equals $G$.
\end{proof}

\skiptocparagraph{The summed normalized 10j symbols and their factorization} 

For a $\tc{C}$-state $\Delta^5_{(2)}\To \Delta\tc{C}^{\sk}$ and a $4$-simplex $\lan ijklm\ran \in \Delta^5_4$, we define the following normalized linear maps:%
\begin{calign} \nonumber 
\zh_+(ijklm) := \tdim{ikm} Z_+(ijklm)&
\zh_-(ijklm) :=\frac{\tdim{ijl} \tdim{jkl} \tdim{jlm}}{\odim{jl}} Z_-(ijklm)
\end{calign}
Taking the direct sum over simple objects $\Xs{jl}\in \Delta\tc{C}^{\sk}_1$ and compatible simple $1$-morphisms $\fs{ijl}, \fs{jkl}, \fs{jlm}$ and $\fs{ikm}$ in $\Delta\tc{C}^{\sk}_2$ 
gives rise to the following linear maps:
\begin{align}\nonumber \ZH_+(ijklm) &:= \bigoplus_{\substack{\Xs{jl}, \fs{ijl},\\ \fs{jkl}, \fs{jlm}}} \bigoplus_{\fs{ikm}} \zh_+(ijklm):\\\nonumber
& \bigoplus_{\substack{\Xs{jl}, \fs{ijl},\\ \fs{jkl}, \fs{jlm}}} V^+(ijkl) \otimes V^+(ijlm) \otimes V^+(jklm) \to \bigoplus_{\fs{ikm}} V^+(iklm) \otimes V^+(ijkm)\\\nonumber
\ZH_-(ijklm) &:= \bigoplus_{\fs{ikm}} \bigoplus_{\substack{\Xs{jl}, \fs{ijl},\\ \fs{jkl}, \fs{jlm}}} \zh_-(ijklm) : \\ \nonumber
&\bigoplus_{\fs{ikm}} V^+(iklm) \otimes V^+(ijkm)  \to  \bigoplus_{\substack{\Xs{jl}, \fs{ijl},\\ \fs{jkl}, \fs{jlm}}} V^+(ijkl) \otimes V^+(ijlm)\otimes V^+(jklm) 
\end{align}

\begin{corollary}[The summed normalized 10j symbol factors around the pentagon] \label{cor:Z+decomp}
The direct sum map $\ZH_+(ijklm)$ factors as follows through $V^+(ijklm)$:
\[ 
\ZH_+(ijklm) = \left(\Scomp_+(ijklm)\right)^{-1} \circ\Lcomp_+(ijklm)
\]
\end{corollary}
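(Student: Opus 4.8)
\textbf{Proof proposal for Corollary~\ref{cor:Z+decomp}.}

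The plan is to unwind the definitions of $\ZH_+(ijklm)$, $\Scomp_+(ijklm)$, and $\Lcomp_+(ijklm)$ and observe that the asserted factorization is almost immediate once one has Lemma~\ref{lem:scompinv}. First I would recall that, by the definitions just given in Section~\ref{sec:4dbistellar} and the discussion of pentagonator compositions, the unnormalized map $Z_+(ijklm)$ is exactly the direct-sum coefficient of $\Scomp_-^\vee(ijklm)\circ \Lcomp_+(ijklm)$; that is, the composite $\Scomp_-^\vee(ijklm)\circ \Lcomp_+(ijklm)$ is a sum (over the internal labels $\Xs{jl}$, $\fs{ijl}$, $\fs{jkl}$, $\fs{jlm}$ on the domain side and $\fs{ikm}$ on the codomain side) of copies of $Z_+(ijklm)$. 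Passing to the summed, normalized version $\ZH_+(ijklm) := \bigoplus \tdim{ikm}\, Z_+(ijklm)$, I would then check that the $\tdim{ikm}$ prefactor is precisely the scalar appearing in Lemma~\ref{lem:scompinv}, so that $\ZH_+(ijklm) = \big(\tdim{ikm}\Scomp_-^\vee(ijklm)\big) \circ \Lcomp_+(ijklm)$ as maps between the relevant direct sums of associator state spaces.

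Next I would invoke Lemma~\ref{lem:scompinv}, which states that $\Scomp_+(ijklm)$ is invertible with inverse $\tdim{ikm}\,\Scomp_-^\vee(ijklm)$. Substituting this identification into the previous line yields directly
\[
\ZH_+(ijklm) = \big(\Scomp_+(ijklm)\big)^{-1}\circ \Lcomp_+(ijklm),
\]
which is the claim. The only genuine content beyond bookkeeping is making sure that the direct-sum indexing matches on both sides: the domain of $\Lcomp_+(ijklm)$ is $\bigoplus_{\Xs{jl},\fs{ijl},\fs{jkl},\fs{jlm}} V^+(ijkl)\otimes V^+(ijlm)\otimes V^+(jklm)$, its codomain is $V^+(ijklm)$, and the domain of $\big(\Scomp_+(ijklm)\big)^{-1}$ (equivalently the codomain of $\tdim{ikm}\,\Scomp_-^\vee(ijklm)$) is $V^+(ijklm)$ with codomain $\bigoplus_{\fs{ikm}} V^+(iklm)\otimes V^+(ijkm)$; these are exactly the domain and codomain of $\ZH_+(ijklm)$, so the composite is well-typed and the matching of direct summands is forced.

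I do not expect any substantial obstacle here: the corollary is essentially a repackaging of Lemma~\ref{lem:scompinv} together with the definition of $Z_+$ as `compose along the long side, then uncompose along the short side'. The one point requiring a little care is the precise placement of the scalar factor $\tdim{ikm}$ — it must be absorbed into the short-side inverse rather than left floating — and confirming that the swap maps and tensor-with-identity insertions suppressed by the concatenation convention of Section~\ref{sec:4dbistellar} are consistent on both sides; but this is purely a matter of tracking conventions, and the graphical pictures in Figures~\ref{fig:SComp} and~\ref{fig:LComp} make the identification transparent.
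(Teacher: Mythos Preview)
Your proposal is correct and follows essentially the same approach as the paper: the paper's proof is a terse two-sentence version of exactly what you wrote, observing that $Z_+(ijklm)$ is by definition a direct sum coefficient of $\Scomp_-^\vee(ijklm)\circ\Lcomp_+(ijklm)$ and then invoking Lemma~\ref{lem:scompinv} together with the normalization in the definition of $\ZH_+(ijklm)$. Your additional remarks about type-checking the direct sums and tracking the placement of the scalar $\tdim{ikm}$ are accurate elaborations of what the paper leaves implicit.
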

\begin{proof} By definition, $Z_+(ijklm)$ is a direct sum coefficient of the composite $\Scomp_-^\vee(ijklm) \xo \Lcomp_+(ijklm)$. The corollary therefore follows from Lemma~\ref{lem:scompinv} and the definition of \linebreak $\ZH_+(ijklm)$ as a normalized direct sum.
\end{proof}

\begin{corollary}[The negative and positive summed normalized 10j symbols form a section-retraction pair] \label{cor:Z+Z-}
$\ZH_-(ijklm)$ is a section of $\ZH_+(ijklm)$:
\[ 
\ZH_+(ijklm) \xo \ZH_-(ijklm) = \id
\]
\end{corollary}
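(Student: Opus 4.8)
The plan is to derive Corollary~\ref{cor:Z+Z-} directly from the two preceding corollaries and lemmas, with essentially no new geometric input. By Corollary~\ref{cor:Z+decomp} we have the factorization $\ZH_+(ijklm) = \left(\Scomp_+(ijklm)\right)^{-1}\circ \Lcomp_+(ijklm)$ through the vector space $V^+(ijklm)$. So the first step is to establish the analogous factorization for $\ZH_-(ijklm)$, namely
\[
\ZH_-(ijklm) = \left(\frac{\tdim{ijl}\tdim{jkl}\tdim{jlm}}{\odim{jl}}\Lcomp^\vee_-(ijklm)\right)\circ \Scomp_+(ijklm).
\]
This follows in the same way as Corollary~\ref{cor:Z+decomp}: by definition $Z_-(ijklm)$ is a direct-sum coefficient of $\Lcomp_-^\vee(ijklm)\circ \Scomp_+(ijklm)$, and the normalizing scalar $\tfrac{\tdim{ijl}\tdim{jkl}\tdim{jlm}}{\odim{jl}}$ in $\zh_-(ijklm)$ is exactly the scalar appearing in the statement of Lemma~\ref{lem:rightinverse}. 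So $\ZH_-(ijklm)$ is the normalized direct-sum assembly of $\Lcomp_-^\vee(ijklm)\circ \Scomp_+(ijklm)$, which is what the displayed equation says once we absorb the scalar into $\Lcomp^\vee_-$.

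Second, I would simply compose the two factorizations and cancel. We get
\[
\ZH_+(ijklm)\circ \ZH_-(ijklm)
= \left(\Scomp_+(ijklm)\right)^{-1}\circ \Lcomp_+(ijklm)\circ \left(\tfrac{\tdim{ijl}\tdim{jkl}\tdim{jlm}}{\odim{jl}}\Lcomp^\vee_-(ijklm)\right)\circ \Scomp_+(ijklm).
\]
By Lemma~\ref{lem:rightinverse}, the middle composite $\Lcomp_+(ijklm)\circ \left(\tfrac{\tdim{ijl}\tdim{jkl}\tdim{jlm}}{\odim{jl}}\Lcomp^\vee_-(ijklm)\right)$ is the identity on $V^+(ijklm)$. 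That leaves $\left(\Scomp_+(ijklm)\right)^{-1}\circ \Scomp_+(ijklm) = \id$, using that $\Scomp_+(ijklm)$ is invertible by Lemma~\ref{lem:scompinv}. This gives exactly $\ZH_+(ijklm)\circ \ZH_-(ijklm) = \id$.

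The only genuinely delicate point — and the place I would be most careful — is bookkeeping of the direct sums and the domains/codomains, so that ``$\left(\Scomp_+(ijklm)\right)^{-1}$'' and the ``section'' really compose as claimed over the right indexing sets (the sums over $\Xs{jl}$, $\fs{ijl}$, $\fs{jkl}$, $\fs{jlm}$, $\fs{ikm}$). In particular $\Scomp_+(ijklm)$ as used in Corollary~\ref{cor:Z+decomp} is implicitly summed over $\fs{ikm}$, and the normalized maps $\ZH_\pm$ bundle in the scalar factors; one must check that the scalar in $\zh_-$ matches the scalar in Lemma~\ref{lem:rightinverse} slot-for-slot, and that the $\tdim{ikm}$ factor in $\zh_+$ is precisely the one that makes $\tdim{ikm}\Scomp^\vee_-(ijklm)$ the inverse of $\Scomp_+(ijklm)$ per Lemma~\ref{lem:scompinv}. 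Once that matching is verified, the proof is a two-line composition argument; I expect it to occupy only a few lines of LaTeX, citing Corollary~\ref{cor:Z+decomp}, Lemma~\ref{lem:scompinv}, and Lemma~\ref{lem:rightinverse}.
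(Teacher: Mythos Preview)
Your proposal is correct and follows essentially the same approach as the paper: first establish the factorization $\ZH_-(ijklm) = \left(\tfrac{\tdim{ijl}\tdim{jkl}\tdim{jlm}}{\odim{jl}}\Lcomp^\vee_-(ijklm)\right)\circ \Scomp_+(ijklm)$ from the definition of $Z_-$ as a direct-sum coefficient, then invoke Corollary~\ref{cor:Z+decomp} and Lemma~\ref{lem:rightinverse}. The paper's proof is indeed just the two-line composition argument you anticipated.
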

\begin{proof}Since $Z_-(ijklm)$ is a direct sum coefficient of $\Lcomp^\vee_-(ijklm) \xo \Scomp_+(ijklm)$, it follows that the normalized direct sum map $\ZH_-$ can be written as follows: 
\[
\ZH_-(ijklm) = \left(\frac{\tdim{ijl} \tdim{jkl} \tdim{jlm}}{\odim{jl}} \Lcomp^\vee_-(ijklm) \right)  \circ  \Scomp_+(ijklm).
\]
The corollary therefore follows from Corollary~\ref{cor:Z+decomp} and Lemma~\ref{lem:rightinverse}.
\end{proof}

\subsubsection{Invariance under the bistellar moves}\label{sec:Pachnerdetail}
In the following, we prove Lemma~\ref{lem:Pachner33}, Lemma~\ref{lem:Pachner24}, and Lemma~\ref{lem:Pachner15}, showing invariance of the state sum under bistellar moves.

\skiptocparagraph{Invariance under the $(3,3)$-bistellar move.}
We start with the $(3,3)$-bistellar move and prove Lemma~\ref{lem:Pachner33}.

\begin{proof}[Proof of Lemma~\ref{lem:Pachner33}] Expressed in terms of the normalized $\zh_+$ and $\zh_-$, the equation in Lemma~\ref{lem:Pachner33} reads as follows:
\[\sum_{\fs{135}} \zh_+(01235) \zh_+(01345) \zh_+(12345) = \sum_{\fs{024}} \zh_+(02345) \zh_+(01245) \zh_+(01234)
\]
Rewritten in terms of the direct sums $\ZH_+$ and $\ZH_-$, and omitting all direct sum symbols, this becomes the following equation:
\begin{equation}\label{eq:pachnersimplified}\tag{B3-3}
\ZH_+(01235)\ZH_+(01345)\ZH_+(12345) = \ZH_+(02345)\ZH_+(01245) \ZH_+(01234)
\end{equation}
between linear maps
\begin{multline}\nonumber
\shrinker{.92}{
		\bigoplus\limits_{\substack{\Xs{13},\fs{123}\\\fs{013}}}	
		\bigoplus\limits_{\substack{\Xs{14},\fs{134}\\ \fs{014},\fs{145}}}
		\bigoplus\limits_{\substack{\Xs{24}, \fs{234}\\\fs{124},\fs{245}}}
	\V{1234}\otimes \V{1245}\otimes \V{2345}\otimes \V{0134}\otimes \V{0145}\otimes\V{0123}
}	
\\
\shrinker{.92}{
		\longrightarrow 
		\bigoplus\limits_{\fs{025},\fs{035}}
		\V{0125} \otimes 
		\V{0235} \otimes \V{0345}.
}
\end{multline}
 The full expression, including direct sum symbols, for the left-hand side and right-hand side of equation~\eqref{eq:pachnersimplified} are given as the composite of the rightmost column of vertical arrows in Figure~\ref{fig:bigcommutativediagram1} and Figure~\ref{fig:bigcommutativediagram2}, respectively. In these figures, the vector spaces $V^+(ijkl)$ are abbreviated as $[ijkl]$ and the parts of the domain (codomain) on which the involved linear maps act non-trivially are underlined (overlined).

\def\RR{11.25}%
\def\RL{8.5}%
\def\LR{3}%
\def\LL{0.5}%
\def\RN{15.5}%
\def\MN{5.5}%
\def\LN{-0.25}%
\def\Rha{12}%
\def\Rhb{8}%
\def\Rhc{4}%
\def\Rhd{0}%
\def\Mha{10}%
\def\Mhb{6}%
\def\Mhc{2}%
\def\scl{0.8}%
\def\distmapsto{2cm}%
\def\shortenmapsto{1.5cm}%
\def\shortenmapsfrom{0cm}%
\def\nodescale{0.7*\scl}%
\def\arrowdist{0.8cm}%
\def\arrowgap{0.2cm}%
\def\arrowshift{0.15cm}%
\def\middlerightshiftb{0.3cm}%
\def\middlerightshiftc{0.3cm}%
\renewcommand\V[1]{[#1]}%
\newcommand\rb[1]{\raisebox{-0.05cm}{$ #1 $}}%
\newcommand\rbz[1]{\raisebox{-0.2cm}{$ #1 $}}%
\tikzset{diagramarrow/.style={draw, ->}}%
\tikzset{arrowlabel/.style={scale=\nodescale, above,sloped}}%
\tikzset{downarrowlabel/.style={scale=\nodescale, above, rotate =180, sloped}}%
\begin{figure}
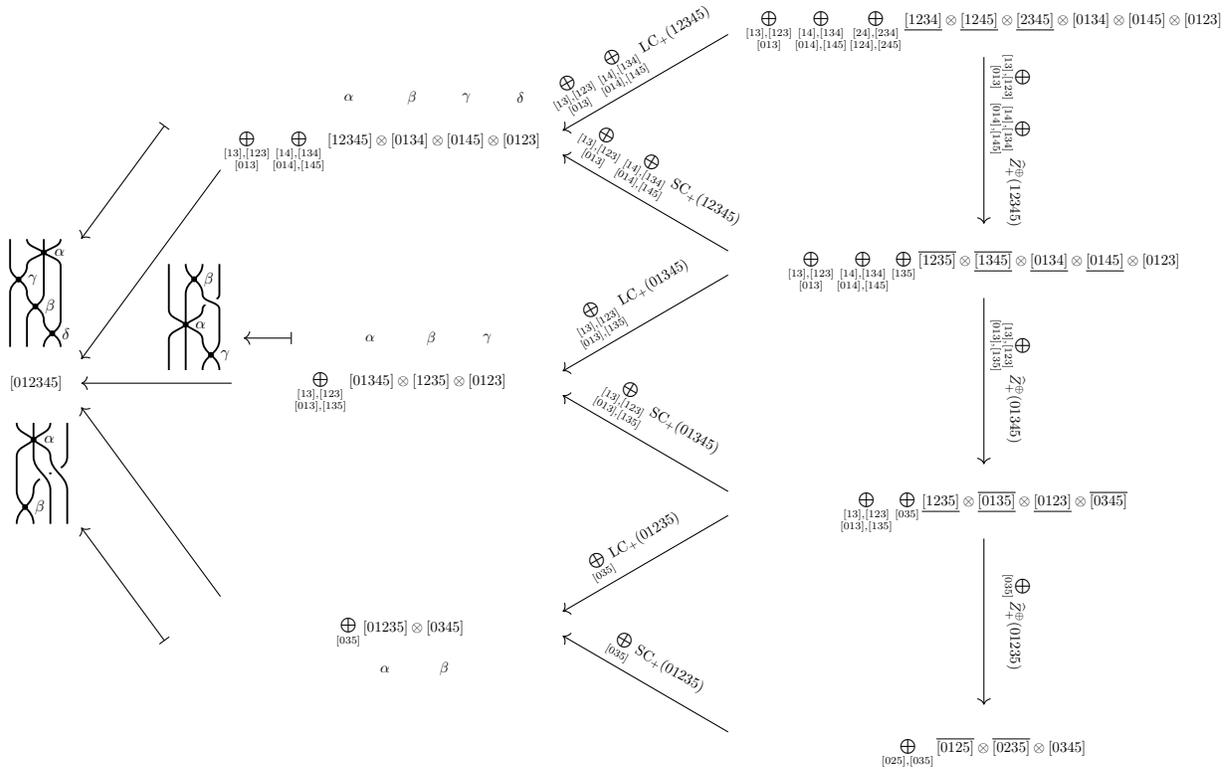

\[\hspace{-1cm}
\begin{tz}[scale=\scl]
\node[scale=\nodescale](rightnodea) at (\RN, \Rha){
		$
		\bigoplus\limits_{\substack{\Xs{13},\fs{123}\\\fs{013}}}	
		\bigoplus\limits_{\substack{\Xs{14},\fs{134}\\ \fs{014},\fs{145}}}
		\bigoplus\limits_{\substack{\Xs{24}, \fs{234}\\\fs{124},\fs{245}}}
		\rb{
		\underline{\V{1234}}\otimes \underline{\V{1245}}\otimes 				
		\underline{\V{2345}}\otimes \V{0134}\otimes \V{0145}\otimes\V{0123}
		}$};
\node[scale=\nodescale](rightnodeb) at (\RN, \Rhb){
		$\bigoplus\limits_{\substack{\Xs{13},\fs{123}\\\fs{013}}}
		\bigoplus\limits_{\substack{\Xs{14},\fs{134}\\ \fs{014},\fs{145}}}
		\bigoplus\limits_{\fs{135}}
		\rb{
		\overline{\V{1235}}\otimes \underline{\overline{\V{1345}}} \otimes \underline{\V{0134}} \otimes \underline{\V{0145}} \otimes \V{0123} 
		}$};
\node[scale=\nodescale] (rightnodec) at (\RN, \Rhc){
		$\bigoplus\limits_{\substack{\Xs{13},\fs{123}\\\fs{013},\fs{135}}}\bigoplus\limits_{\fs{035}} 
		\rb{
		\underline{\V{1235}} \otimes 
			\underline{\overline{\V{0135}}} \otimes \underline{\V{0123}} \otimes 
			\overline{\V{0345}}
			}$};
\node[scale=\nodescale] (rightnoded) at (\RN, \Rhd){
		$\bigoplus\limits_{\fs{025},\fs{035}} 
		\rb{
		\overline{\V{0125}} \otimes 
			\overline{\V{0235}} \otimes \V{0345}
			}$};
\node[scale=\nodescale] (middlenoda) at (\MN, \Mha){
		$\bigoplus\limits_{\substack{\Xs{13},\fs{123}\\\fs{013}}}
		\bigoplus\limits_{\substack{\Xs{14},\fs{134}\\ \fs{014},\fs{145}}}
		\rb{\V{12345} \otimes \V{0134} \otimes \V{0145} \otimes \V{0123}}$};
\node[scale=\nodescale] (middlenodb) at ([xshift=\middlerightshiftb]\MN, \Mhb){
		$\bigoplus\limits_{\substack{\Xs{13},\fs{123}\\\fs{013},\fs{135}}}
		\rb{\V{01345}\otimes \V{1235} \otimes \V{0123} }$};
\node[scale=\nodescale] (middlenodc) at ([xshift=\middlerightshiftc]\MN, \Mhc){
		$\bigoplus\limits_{\fs{035}}\rb{\V{01235}\otimes \V{0345}}$};
\node[scale=\nodescale] (leftnode) at ([yshift=\arrowshift]\LN, \Mhb){
		$\V{012345}$};
\draw[diagramarrow] ([yshift=-\arrowdist]rightnodea) to node[downarrowlabel] {$
		\bigoplus\limits_{\substack{\Xs{13},\fs{123}\\\fs{013}}}
		\bigoplus\limits_{\substack{\Xs{14},\fs{134}\\ \fs{014},\fs{145}}}
		\rbz{\ZH_+(12345)}$} ([yshift=\arrowdist]rightnodeb);
\draw[diagramarrow] ([yshift=-\arrowdist]rightnodeb) to node[downarrowlabel] {$\bigoplus\limits_{\substack{\Xs{13},\fs{123}\\\fs{013},\fs{135}}} \rbz{\ZH_+(01345)}$} ([yshift=\arrowdist]rightnodec);
\draw[diagramarrow] ([yshift=-\arrowdist]rightnodec) to node[downarrowlabel] {$\bigoplus\limits_{\substack{\fs{035}\\\vphantom{\fs{123}}}} 
\rbz{\ZH_+(01235)}$} ([yshift=\arrowdist]rightnoded);
\draw[diagramarrow] ([yshift=-\arrowgap+\arrowshift]\RR, \Rha) to node[arrowlabel]{$
		\bigoplus\limits_{\substack{\Xs{13},\fs{123}\\\fs{013}}}
		\bigoplus\limits_{\substack{\Xs{14},\fs{134}\\ \fs{014},\fs{145}}}
		\rbz{\Lcomp_+(12345)}$}
		 ([yshift=\arrowgap+\arrowshift]\RL, \Mha);
\draw[diagramarrow] ([yshift=\arrowgap+\arrowshift]\RR, \Rhb) to node[arrowlabel] {$
		\bigoplus\limits_{\substack{\Xs{13},\fs{123}\\\fs{013}}}
		\bigoplus\limits_{\substack{\Xs{14},\fs{134}\\ \fs{014},\fs{145}}}
		\rbz{\Scomp_+(12345)}$} 
		([yshift=-\arrowgap+\arrowshift]\RL, \Mha);
\draw[diagramarrow] ([yshift=-\arrowgap+\arrowshift]\RR, \Rhb) to node[arrowlabel] {$
	\bigoplus\limits_{\substack{\Xs{13},\fs{123}\\\fs{013},\fs{135}}}
	\rbz{\Lcomp_+(01345)}$} 
	([yshift=\arrowgap+\arrowshift]\RL, \Mhb);
\draw[diagramarrow] ([yshift=\arrowgap+\arrowshift]\RR, \Rhc) to node[arrowlabel] {$
	\bigoplus\limits_{\substack{\Xs{13},\fs{123}\\\fs{013},\fs{135}}}
	\rbz{\Scomp_+(01345)}$}  ([yshift=-\arrowgap+\arrowshift]\RL, \Mhb);
\draw[diagramarrow] ([yshift=-\arrowgap+\arrowshift]\RR, \Rhc) to node[arrowlabel] {$\bigoplus\limits_{\fs{035}} 
\rb{\Lcomp_+(01235)}$} ([yshift=\arrowgap+\arrowshift]\RL, \Mhc);
\draw[diagramarrow] ([yshift=\arrowgap+\arrowshift]\RR, \Rhd) to node[arrowlabel] {$
	\bigoplus\limits_{\fs{035}} 
	\rb{\Scomp_+(01235)}$} ([yshift=-\arrowgap+\arrowshift]\RL, \Mhc);
\draw[diagramarrow,shorten <=0.25cm] ([yshift=-\arrowgap+\arrowshift]\LR, \Mha) to ([yshift=2*\arrowgap+\arrowshift]\LL, \Mhb);
\draw[diagramarrow] ([yshift=\arrowshift]\LR, \Mhb) to ([yshift=\arrowshift]\LL, \Mhb);
\draw[diagramarrow,shorten <=0.25cm] ([yshift=\arrowgap+\arrowshift]\LR, \Mhc) to ([yshift=-2*\arrowgap+\arrowshift]\LL, \Mhb);
\node (Pict) at ([yshift=\arrowshift]-0.15,7.5) {$
	\begin{tz}[scale=\nodescale,scale=0.4, yscale=0.8]
		\draw[wire] (2,0) to [out=up, in=\dl] (2.5,1) to [out=\ul,in= down] (2,2)  to [out=up, in=\dr] (1.5, 3) to [out=\ur, in=down] (2,4) to (2,8);
		\coordinate (A) at (2.5,1);
		\coordinate (B) at (1.5,3);
		\coordinate (C) at (0.5,5);
		\coordinate (D) at (2, 7);
		\draw[wire] (3,0) to [out=up, in=\dr] (2.5,1) to [out=\ur, in=down] (3,2) to (3,6) to [out=up, in=\dr] (2,7) to [out=\ur, in=down] (3,8);
		\draw[wire] (1,0) to (1,2) to [out=up, in=\dl] (1.5,3) to [out=\ul, in=down] (1,4)  to [out=up, in=\dr]  (0.5,5) to [out=\ur, in=down] (1,6) to [out=up, in=\dl] (2,7) to [out=\ul, in=down] (1,8);
		\draw[wire] (0,0) to (0,4) to [out=up, in=\dl] (0.5,5) to [out=\ul, in=down] (0,6) to (0,8);
		\node[dot] at (A){};
		\node[dot] at (B){};
		\node[dot] at (C){};
		\node[dot] at (D){};
		\node[tmor, right] at ([xshift=0.25cm]A) {$\delta$};
		\node[tmor, right] at ([xshift=0.25cm]B) {$\beta$};
		\node[tmor, right] at ([xshift=0.25cm]C) {$\gamma$};
		\node[tmor, right] at ([xshift=0.3cm]D) {$\alpha$};
	\end{tz}
	$};
\node (Picb) at ([yshift=\arrowshift]-0.15,4.5) {$
	\begin{tz}[scale=\nodescale,scale=0.4]
		\draw[wire] (0,0) to [out=up, in=\dl] (0.5,1) to [out=\ul, in=down] (0,2) to (0,4) to [out=up, in=\dl] (1,5) to [out=\ul, in=down] (0,6);
		\draw[wire] (2,0) to (2,2) to [out=up, in=down] node[scale=2.5,mask point, pos=0.39] (MPL){} (1,4) to (1,6);
		\draw[wire] (3,0) to (3,2) to [out=up, in=down] node[scale=2.5,mask point, pos=0.6] (MPR){} (2,4)to [out=up, in=\dr] (1,5) to [out=\ur, in=down] (2,6);
		\cliparoundtwo{MPL}{MPR}{\draw[wire] (1,0) to [out=up, in=\dr] (0.5,1) to [out=\ur, in=down] (1,2) to [out=up, in=down] (3,4) to (3,6);}
		\coordinate (A) at  (0.5,1) ;
		\coordinate (B) at (1,5) ;
		\node[dot] at (A){};
		\node[dot] at (B){};
		\node[tmor, right] at ([xshift=0.25cm]A) {$\beta$};
		\node[tmor, right] at ([xshift=0.3cm]B) {$\alpha$};
	\end{tz}
	$};
\node (Picc) at ([yshift=1.1cm+\arrowshift]2.5,\Mhb) {$
	\begin{tz}[scale=\nodescale,scale=0.4,yscale=0.9]
		\draw[wire] (3,0) to [out=up, in=\dr] (2.5,1) to [out=\ur, in=down] (3,2)  to (3,4) to [out=up, in=down] node[mask point, scale=2.5,pos=0.5] (MP){} (2,5)  to [out=up, in=\dr] (1.5,6) to [out=\ur, in=down] (2,7);
		\cliparoundone{MP}{
				\draw[wire] (2,0) to [out=up, in=\dl] (2.5,1) to [out=\ul, in=down] (2,2) to [out=up, in=\dr] (1,3) to [out=\ur, in=down] (2,4) to [out=up, in=down] (3,5) to (3,7);
				}
		\draw[wire] (0,0) to (0,2) to [out=up, in=\dl] (1,3) to [out=\ul, in=down] (0,4) to (0,7);
		\draw[wire] (1,0) to (1,5) to [out=up, in=\dl] (1.5,6) to [out=\ul, in=down] (1,7);
		\coordinate (A) at (2.5,1);
		\coordinate (B) at (1,3);
		\coordinate (C) at (1.5, 6);
		\node[dot] at (A){};
		\node[dot] at (B){};
		\node[dot] at (C){};
		\node[tmor, right] at ([xshift=0.25cm]A) {$\gamma$};
		\node[tmor, right] at ([xshift=0.25cm]B) {$\alpha$};
		\node[tmor, right] at ([xshift=0.25cm]C) {$\beta$};
	\end{tz}
	$};
\node[scale=\nodescale] at ([yshift=0.75cm+\arrowshift] 4.95, \Mha) {$\alpha$};
\node[scale=\nodescale] at ([yshift=0.75cm+\arrowshift] 6, \Mha) {$\beta$};
\node[scale=\nodescale] at ([yshift=0.75cm+\arrowshift] 6.9, \Mha) {$\gamma$};
\node[scale=\nodescale] at ([yshift=0.75cm+\arrowshift] 7.8, \Mha) {$\delta$};
\node[scale=\nodescale] at ([yshift=0.75cm+\arrowshift] 5.3, \Mhb) {$\alpha$};
\node[scale=\nodescale] at ([yshift=0.75cm+\arrowshift] 6.32, \Mhb) {$\beta$};
\node[scale=\nodescale] at ([yshift=0.75cm+\arrowshift] 7.25, \Mhb) {$\gamma$};
\node[scale=\nodescale] at ([yshift=-0.75cm+\arrowshift] 5.55, \Mhc) {$\alpha$};
\node[scale=\nodescale] at ([yshift=-0.75cm+\arrowshift] 6.53, \Mhc) {$\beta$};
\draw[diagramarrow, shorten <=\shortenmapsto, shorten >=\shortenmapsfrom, |->] ([yshift=\distmapsto, yshift=-\arrowgap+\arrowshift]\LR, \Mha) to ([yshift=\distmapsto+2*\arrowgap+\arrowshift] \LL, \Mhb);
\draw[diagramarrow,shorten <=\shortenmapsto,shorten >= \shortenmapsfrom, |->] ([yshift=-\distmapsto, yshift=\arrowgap+\arrowshift]\LR, \Mhc) to ([yshift=-\distmapsto-2*\arrowgap+\arrowshift] \LL, \Mhb);
\draw[diagramarrow, |->] ([yshift=0.75cm+\arrowshift] 4,\Mhb) to ([yshift=0.75cm+\arrowshift] 3.2, \Mhb);
\end{tz}
\]
\caption{The left hand side of the equation in the proof of Lemma~\ref{lem:Pachner33}.}
\label{fig:bigcommutativediagram1}
\end{figure}%
\begin{figure}
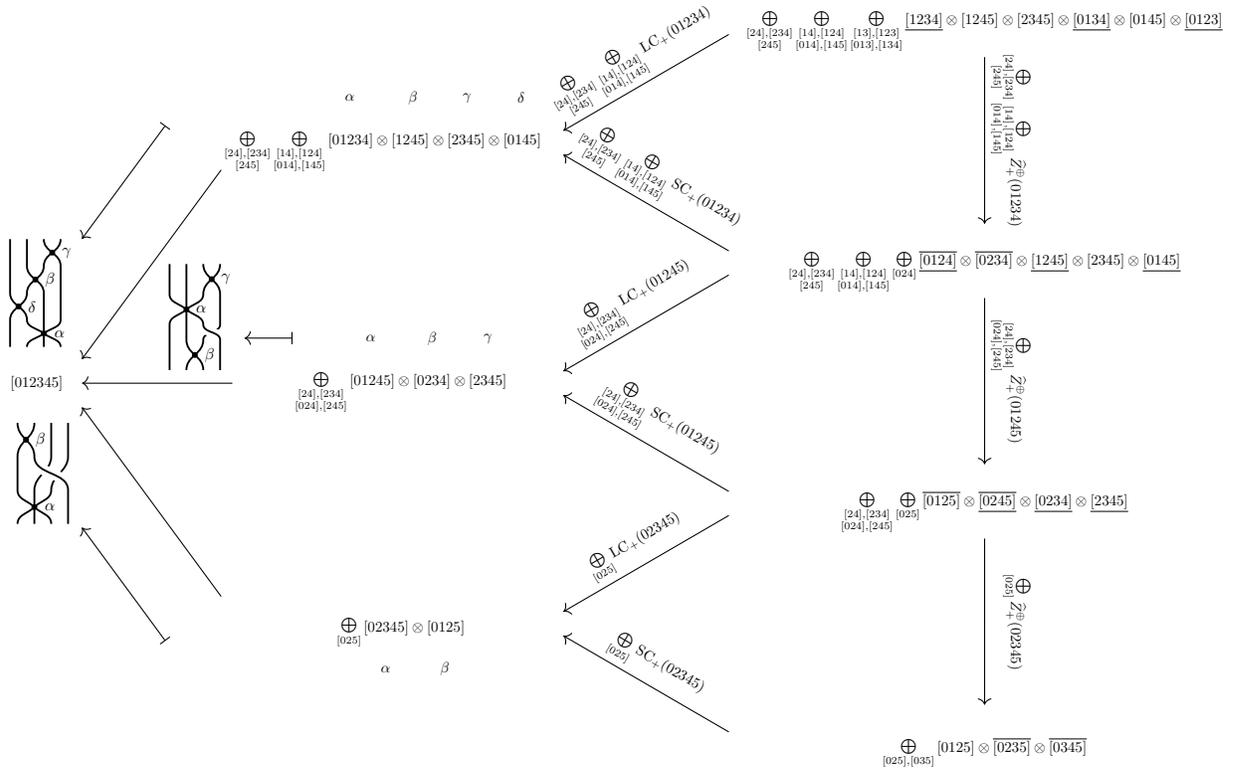

\[\hspace{-1cm}
\begin{tz}[scale=\scl]
\node[scale=\nodescale](rightnodea) at (\RN, \Rha){
		$
		\bigoplus\limits_{\substack{\Xs{24},\fs{234}\\\fs{245}}}	
		\bigoplus\limits_{\substack{\Xs{14},\fs{124}\\ \fs{014},\fs{145}}}
		\bigoplus\limits_{\substack{\Xs{13}, \fs{123}\\\fs{013},\fs{134}}}
		\rb{
		\underline{\V{1234}}\otimes \V{1245}\otimes 				
		\V{2345}\otimes \underline{\V{0134}}\otimes \V{0145}\otimes\underline{\V{0123}}
		}$};
\node[scale=\nodescale](rightnodeb) at (\RN, \Rhb){
		$\bigoplus\limits_{\substack{\Xs{24},\fs{234}\\\fs{245}}}
		\bigoplus\limits_{\substack{\Xs{14},\fs{124}\\ \fs{014},\fs{145}}}
		\bigoplus\limits_{\fs{024}}
		\rb{
		\overline{\underline{\V{0124}}}\otimes \overline{\V{0234}} \otimes \underline{\V{1245}} \otimes \V{2345} \otimes \underline{\V{0145}} 
		}$};
\node[scale=\nodescale] (rightnodec) at (\RN, \Rhc){
		$\bigoplus\limits_{\substack{\Xs{24},\fs{234}\\\fs{024},\fs{245}}}\bigoplus\limits_{\fs{025}} 
		\rb{
		\overline{\V{0125}} \otimes 
			\underline{\overline{\V{0245}}} \otimes \underline{\V{0234}} \otimes 
			\underline{\V{2345}}
			}$};
\node[scale=\nodescale] (rightnoded) at (\RN, \Rhd){
		$\bigoplus\limits_{\fs{025},\fs{035}} 
		\rb{
		\V{0125} \otimes 
			\overline{\V{0235}} \otimes \overline{ \V{0345}}
			}$};
\node[scale=\nodescale] (middlenoda) at (\MN, \Mha){
		$		
		\bigoplus\limits_{\substack{\Xs{24},\fs{234}\\\fs{245}}}	
		\bigoplus\limits_{\substack{\Xs{14},\fs{124}\\ \fs{014},\fs{145}}}
		\rb{\V{01234} \otimes \V{1245} \otimes \V{2345} \otimes \V{0145}}$};
\node[scale=\nodescale] (middlenodb) at ([xshift=\middlerightshiftb]\MN, \Mhb){
		$
		\bigoplus\limits_{\substack{\Xs{24},\fs{234}\\\fs{024},\fs{245}}}
		\rb{\V{01245}\otimes \V{0234} \otimes \V{2345} }$};
\node[scale=\nodescale] (middlenodc) at ([xshift=\middlerightshiftc]\MN, \Mhc){
		$\bigoplus\limits_{\fs{025}}\rb{\V{02345}\otimes \V{0125}}$};
\node[scale=\nodescale] (leftnode) at ([yshift=\arrowshift]\LN, \Mhb){
		$\V{012345}$};
\draw[diagramarrow] ([yshift=-\arrowdist]rightnodea) to node[downarrowlabel] {$
		\bigoplus\limits_{\substack{\Xs{24},\fs{234}\\\fs{245}}}	
		\bigoplus\limits_{\substack{\Xs{14},\fs{124}\\ \fs{014},\fs{145}}}
		\rbz{\ZH_+(01234)}$} ([yshift=\arrowdist]rightnodeb);
\draw[diagramarrow] ([yshift=-\arrowdist]rightnodeb) to node[downarrowlabel] {$\bigoplus\limits_{\substack{\Xs{24},\fs{234}\\\fs{024},\fs{245}}} \rbz{\ZH_+(01245)}$} ([yshift=\arrowdist]rightnodec);
\draw[diagramarrow] ([yshift=-\arrowdist]rightnodec) to node[downarrowlabel] {$\bigoplus\limits_{\substack{\fs{025}\\\vphantom{\fs{123}}}} 
\rbz{\ZH_+(02345)}$} ([yshift=\arrowdist]rightnoded);
\draw[diagramarrow] ([yshift=-\arrowgap+\arrowshift]\RR, \Rha) to node[arrowlabel]{$
		\bigoplus\limits_{\substack{\Xs{24},\fs{234}\\\fs{245}}}	
		\bigoplus\limits_{\substack{\Xs{14},\fs{124}\\ \fs{014},\fs{145}}}
		\rbz{\Lcomp_+(01234)}$}
		 ([yshift=\arrowgap+\arrowshift]\RL, \Mha);
\draw[diagramarrow] ([yshift=\arrowgap+\arrowshift]\RR, \Rhb) to node[arrowlabel] {$
		\bigoplus\limits_{\substack{\Xs{24},\fs{234}\\\fs{245}}}	
		\bigoplus\limits_{\substack{\Xs{14},\fs{124}\\ \fs{014},\fs{145}}}
		\rbz{\Scomp_+(01234)}$} 
		([yshift=-\arrowgap+\arrowshift]\RL, \Mha);
\draw[diagramarrow] ([yshift=-\arrowgap+\arrowshift]\RR, \Rhb) to node[arrowlabel] {$
	\bigoplus\limits_{\substack{\Xs{24},\fs{234}\\\fs{024},\fs{245}}}
	\rbz{\Lcomp_+(01245)}$} 
	([yshift=\arrowgap+\arrowshift]\RL, \Mhb);
\draw[diagramarrow] ([yshift=\arrowgap+\arrowshift]\RR, \Rhc) to node[arrowlabel] {$
	\bigoplus\limits_{\substack{\Xs{24},\fs{234}\\\fs{024},\fs{245}}}
	\rbz{\Scomp_+(01245)}$}  ([yshift=-\arrowgap+\arrowshift]\RL, \Mhb);
\draw[diagramarrow] ([yshift=-\arrowgap+\arrowshift]\RR, \Rhc) to node[arrowlabel] {$\bigoplus\limits_{\fs{025}} 
\rb{\Lcomp_+(02345)}$} ([yshift=\arrowgap+\arrowshift]\RL, \Mhc);
\draw[diagramarrow] ([yshift=\arrowgap+\arrowshift]\RR, \Rhd) to node[arrowlabel] {$
	\bigoplus\limits_{\fs{025}} 
	\rb{\Scomp_+(02345)}$} ([yshift=-\arrowgap+\arrowshift]\RL, \Mhc);
\draw[diagramarrow,shorten <=0.25cm] ([yshift=-\arrowgap+\arrowshift]\LR, \Mha) to ([yshift=2*\arrowgap+\arrowshift]\LL, \Mhb);
\draw[diagramarrow] ([yshift=\arrowshift]\LR, \Mhb) to ([yshift=\arrowshift]\LL, \Mhb);
\draw[diagramarrow,shorten <=0.25cm] ([yshift=\arrowgap+\arrowshift]\LR, \Mhc) to ([yshift=-2*\arrowgap+\arrowshift]\LL, \Mhb);
\node (Pict) at ([yshift=\arrowshift]-0.15,7.5) {$
	\begin{tz}[scale=\nodescale,scale=0.4, yscale=-0.8]
		\draw[wire] (2,0) to [out=up, in=\dl] (2.5,1) to [out=\ul,in= down] (2,2)  to [out=up, in=\dr] (1.5, 3) to [out=\ur, in=down] (2,4) to (2,8);
		\coordinate (A) at (2.5,1);
		\coordinate (B) at (1.5,3);
		\coordinate (C) at (0.5,5);
		\coordinate (D) at (2, 7);
		\draw[wire] (3,0) to [out=up, in=\dr] (2.5,1) to [out=\ur, in=down] (3,2) to (3,6) to [out=up, in=\dr] (2,7) to [out=\ur, in=down] (3,8);
		\draw[wire] (1,0) to (1,2) to [out=up, in=\dl] (1.5,3) to [out=\ul, in=down] (1,4)  to [out=up, in=\dr]  (0.5,5) to [out=\ur, in=down] (1,6) to [out=up, in=\dl] (2,7) to [out=\ul, in=down] (1,8);
		\draw[wire] (0,0) to (0,4) to [out=up, in=\dl] (0.5,5) to [out=\ul, in=down] (0,6) to (0,8);
		\node[dot] at (A){};
		\node[dot] at (B){};
		\node[dot] at (C){};
		\node[dot] at (D){};
		\node[tmor, right] at ([xshift=0.25cm]A) {$\gamma$};
		\node[tmor, right] at ([xshift=0.25cm]B) {$\beta$};
		\node[tmor, right] at ([xshift=0.25cm]C) {$\delta$};
		\node[tmor, right] at ([xshift=0.3cm]D) {$\alpha$};
	\end{tz}
	$};
\node (Picb) at ([yshift=\arrowshift]-0.15,4.5) {$
	\begin{tz}[scale=\nodescale,scale=0.4,yscale=-1]
		\draw[wire] (0,0) to [out=up, in=\dl] (0.5,1) to [out=\ul, in=down] (0,2) to (0,4) to [out=up, in=\dl] (1,5) to [out=\ul, in=down] (0,6);
		\draw[wire] (1,0) to [out=up, in=\dr] (0.5,1) to [out=\ur, in=down] (1,2) to [out=up, in=down]node[mask point, pos=0.5,scale=2.5] (MP){} (3,4) to (3,6);
		\cliparoundone{MP}{
		\draw[wire] (3,0) to (3,2) to [out=up, in=down] (2,4)to [out=up, in=\dr] (1,5) to [out=\ur, in=down] (2,6);
		}
		\cliparoundone{MP}{
		\draw[wire] (2,0) to (2,2) to [out=up, in=down]  (1,4) to (1,6);
		}
		\coordinate (A) at  (0.5,1) ;
		\coordinate (B) at (1,5) ;
		\node[dot] at (A){};
		\node[dot] at (B){};
		\node[tmor, right] at ([xshift=0.25cm]A) {$\beta$};
		\node[tmor, right] at ([xshift=0.3cm]B) {$\alpha$};
	\end{tz}
	$};
\node (Picc) at ([yshift=1.1cm+\arrowshift]2.5,\Mhb) {$
	\begin{tz}[scale=\nodescale,scale=0.4,yscale=-0.9]
		\draw[wire] (2,0) to [out=up, in=\dl] (2.5,1) to [out=\ul, in=down] (2,2) to [out=up, in=\dr] (1,3) to [out=\ur, in=down] (2,4) to [out=up, in=down] node[mask point, pos=0.5, scale=2.5] (MP){} (3,5) to (3,7);
		\cliparoundone{MP}{
		\draw[wire] (3,0) to [out=up, in=\dr] (2.5,1) to [out=\ur, in=down] (3,2)  to (3,4) to [out=up, in=down] (2,5)  to [out=up, in=\dr] (1.5,6) to [out=\ur, in=down] (2,7);
		}
		\draw[wire] (0,0) to (0,2) to [out=up, in=\dl] (1,3) to [out=\ul, in=down] (0,4) to (0,7);
		\draw[wire] (1,0) to (1,5) to [out=up, in=\dl] (1.5,6) to [out=\ul, in=down] (1,7);
		\coordinate (A) at (2.5,1);
		\coordinate (B) at (1,3);
		\coordinate (C) at (1.5, 6);
		\node[dot] at (A){};
		\node[dot] at (B){};
		\node[dot] at (C){};
		\node[tmor, right] at ([xshift=0.25cm]A) {$\gamma$};
		\node[tmor, right] at ([xshift=0.25cm]B) {$\alpha$};
		\node[tmor, right] at ([xshift=0.25cm]C) {$\beta$};
	\end{tz}
	$};
\node[scale=\nodescale] at ([yshift=0.75cm+\arrowshift] 4.95, \Mha) {$\alpha$};
\node[scale=\nodescale] at ([yshift=0.75cm+\arrowshift] 6, \Mha) {$\beta$};
\node[scale=\nodescale] at ([yshift=0.75cm+\arrowshift] 6.9, \Mha) {$\gamma$};
\node[scale=\nodescale] at ([yshift=0.75cm+\arrowshift] 7.8, \Mha) {$\delta$};
\node[scale=\nodescale] at ([yshift=0.75cm+\arrowshift] 5.3, \Mhb) {$\alpha$};
\node[scale=\nodescale] at ([yshift=0.75cm+\arrowshift] 6.32, \Mhb) {$\beta$};
\node[scale=\nodescale] at ([yshift=0.75cm+\arrowshift] 7.25, \Mhb) {$\gamma$};
\node[scale=\nodescale] at ([yshift=-0.75cm+\arrowshift] 5.55, \Mhc) {$\alpha$};
\node[scale=\nodescale] at ([yshift=-0.75cm+\arrowshift] 6.53, \Mhc) {$\beta$};
\draw[diagramarrow, shorten <=\shortenmapsto, shorten >=\shortenmapsfrom, |->] ([yshift=\distmapsto, yshift=-\arrowgap+\arrowshift]\LR, \Mha) to ([yshift=\distmapsto+2*\arrowgap+\arrowshift] \LL, \Mhb);
\draw[diagramarrow,shorten <=\shortenmapsto,shorten >= \shortenmapsfrom, |->] ([yshift=-\distmapsto, yshift=\arrowgap+\arrowshift]\LR, \Mhc) to ([yshift=-\distmapsto-2*\arrowgap+\arrowshift] \LL, \Mhb);
\draw[diagramarrow, |->] ([yshift=0.75cm+\arrowshift] 4,\Mhb) to ([yshift=0.75cm+\arrowshift] 3.2, \Mhb);
\end{tz}
\]
\caption{The right hand side of the equation in the proof of Lemma~\ref{lem:Pachner33}.}
\label{fig:bigcommutativediagram2}
\end{figure}%
\def\RL{6.5}
\def\MN{5.75}
\def\distmapsto{1cm}
\def\shortenmapsto{0.6cm}
\def\arrowtipshift{0.6cm}
\begin{figure}
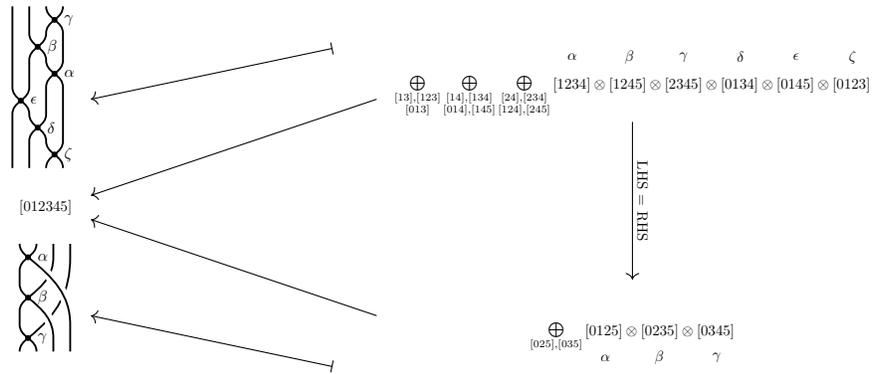

\[
\begin{tz}[scale=\scl]
\node[scale=\nodescale](rightnodea) at (\RN, \Rha){
		$		\bigoplus\limits_{\substack{\Xs{13},\fs{123}\\\fs{013}}}	
		\bigoplus\limits_{\substack{\Xs{14},\fs{134}\\ \fs{014},\fs{145}}}
		\bigoplus\limits_{\substack{\Xs{24}, \fs{234}\\\fs{124},\fs{245}}}
		\rb{
		\V{1234}\otimes \V{1245}\otimes 				
		\V{2345}\otimes \V{0134}\otimes \V{0145}\otimes\V{0123}
		}$
};
\node[scale=\nodescale] (rightnodeb) at (\RN, \Rhb){
		$\bigoplus\limits_{\fs{025},\fs{035}} 
		\rb{
		\V{0125} \otimes 
			\V{0235} \otimes \V{0345}
			}$
};
\node[scale=\nodescale] (middlenodb) at ([yshift=+\arrowshift]\MN, \Mha){
		$\V{012345}$};
\draw[diagramarrow] ([yshift=-\arrowdist+\arrowshift]rightnodea) to node[downarrowlabel] {LHS = RHS} ([yshift=\arrowdist+\arrowshift]rightnodeb);
\draw[diagramarrow] ([yshift=-\arrowgap+\arrowshift]\RR, \Rha) to node[arrowlabel]{} ([yshift=\arrowgap+\arrowshift]\RL, \Mha);
\draw[diagramarrow] ([yshift=\arrowgap+\arrowshift]\RR, \Rhb) to node[arrowlabel] {} ([yshift=-\arrowgap+\arrowshift]\RL, \Mha);
\draw[diagramarrow,shorten <=\shortenmapsto, |->] ([yshift=\distmapsto, yshift=-\arrowgap+\arrowshift]\RR, \Rha) to ([yshift=\distmapsto+\arrowgap+\arrowtipshift+\arrowshift] \RL, \Mha);
\draw[diagramarrow,shorten <=\shortenmapsto, |->] ([yshift=-\distmapsto, yshift=\arrowgap+\arrowshift]\RR, \Rhb) to ([yshift=-\distmapsto-\arrowgap-\arrowtipshift+\arrowshift] \RL, \Mha);
\node[scale=\nodescale] at ([yshift=0.5cm+\arrowshift] 14.5, \Rha) {$\alpha$};
\node[scale=\nodescale] at ([yshift=0.5cm+\arrowshift] 15.45, \Rha) {$\beta$};
\node[scale=\nodescale] at ([yshift=0.5cm+\arrowshift] 16.35, \Rha) {$\gamma$};
\node[scale=\nodescale] at ([yshift=0.5cm+\arrowshift] 17.3, \Rha) {$\delta$};
\node[scale=\nodescale] at ([yshift=0.5cm+\arrowshift] 18.22, \Rha) {$\epsilon$};
\node[scale=\nodescale] at ([yshift=0.5cm+\arrowshift] 19.16, \Rha) {$\zeta$};
\node[scale=\nodescale] at ([yshift=-0.5cm+\arrowshift] 15.05, \Rhb) {$\alpha$};
\node[scale=\nodescale] at ([yshift=-0.5cm+\arrowshift] 15.95, \Rhb) {$\beta$};
\node[scale=\nodescale] at ([yshift=-0.5cm+\arrowshift] 16.9, \Rhb) {$\gamma$};
\node (Pict) at ([yshift=\arrowshift]5.75,8.5) {$
	\begin{tz}[scale=\nodescale,scale=0.4, yscale=0.8]
		\draw[wire] (0,0) to [out=up, in=\dl] (0.5,1) to [out=\ul, in=down] (0,2) to (0,3) to [out=up, in=\dl] (0.5,4) to [out=\ul,in=down] (0,5) to (0,6) to [out=up, in=\dl] (0.5,7) to [out=\ul, in=down] (0,8);
		\draw[wire] (3,0) to (3,2) to [out=up, in=\dr] node[mask point, scale=2.5, pos=0.4] (MP1){} node[mask point,scale=2.5, pos=0.7] (MP2){}(0.5,7) to [out=\ur, in=down] (1,8);
		\cliparoundone{MP2}{\draw[wire] (2,0) to (2,1) to  [out=up, in=\dr] node[mask point, scale=2.5, pos=0.42] (MP3){} (0.5,4) to [out=\ur, in=down] (2,7) to (2,8);
		}
		\cliparoundtwo{MP1}{MP3}{
		\draw[wire] (1,0) to [out=up, in=\dr] (0.5,1) to [out=\ur, in=down]  (3,7) to (3,8);
		}
		\coordinate (A) at (0.5,1);
		\coordinate (B) at (0.5,4);
		\coordinate (C) at (0.5,7);
		\node[dot] at (A){};
		\node[dot] at (B){};
		\node[dot] at (C){};
		\node[tmor, right] at ([xshift=0.25cm]A) {$\gamma$};
		\node[tmor, right] at ([xshift=0.25cm]B) {$\beta$};
		\node[tmor, right] at ([xshift=0.25cm]C) {$\alpha$};
	\end{tz}
	$};
\node (Pict) at ([yshift=\arrowshift]5.75,12) {$
	\begin{tz}[scale=\nodescale,scale=0.4, yscale=0.8]
		\draw[wire] (0,0) to (0,4) to [out=up, in=\dl] (0.5,5) to [out=\ul, in=down] (0,6) to (0,12);
		\draw[wire] (1,0) to (1,2) to [out=up, in=\dl] (1.5,3) to [out=\ul, in=down] (1,4) to [out=up, in=\dr] (0.5,5) to [out=\ur, in=down] (1,6)to (1,8) to [out=up, in=\dl] (1.5,9) to [out=\ul, in=down] (1,10) to (1,12);
		\draw[wire] (2,0) to [out=up, in=\dl] (2.5,1)to [out=\ul, in=down] (2,2) to [out=up, in=\dr] (1.5,3) to [out=\ur, in=down] (2,4) to (2,6) to [out=up, in=\dl] (2.5,7)to [out=\ul, in=down] (2,8) to [out=up, in=\dr] (1.5,9) to [out=\ur, in=down] (2,10) to [out=up, in=\dl] (2.5,11) to [out=\ul, in=down] (2,12);
		\draw[wire] (3,0) to [out=up, in=\dr] (2.5,1) to [out=\ur, in=down] (3,2) to (3,6) to [out=up, in=\dr] (2.5,7) to [out=\ur, in=down] (3,8) to (3,10) to [out=up, in=\dr] (2.5,11) to [out=\ur, in=down] (3,12);
		\coordinate (A) at (2.5,1);
		\coordinate (B) at (1.5,3);
		\coordinate (C) at (0.5,5);
		\coordinate (D) at (2.5,7);
		\coordinate (E) at (1.5,9);
		\coordinate(F) at (2.5,11);
		\node[dot] at (A){};
		\node[dot] at (B){};
		\node[dot] at (C){};
		\node[dot] at (D){};
		\node[dot] at (E){};
		\node[dot] at (F){};
		\node[tmor, right] at ([xshift=0.25cm]A) {$\zeta$};
		\node[tmor, right] at ([xshift=0.25cm]B) {$\delta$};
		\node[tmor, right] at ([xshift=0.25cm]C) {$\epsilon$};
		\node[tmor, right] at ([xshift=0.25cm]D) {$\alpha$};
		\node[tmor, right] at ([xshift=0.25cm]E) {$\beta$};
		\node[tmor, right] at ([xshift=0.25cm]F) {$\gamma$};
	\end{tz}
	$};
\end{tz}
\]
\caption{Comparing the left and right hand side of the equation in the proof of Lemma~\ref{lem:Pachner33}.}
\label{fig:bigcommutativediagram3}
\end{figure}
Consider the diagram in Figure~\ref{fig:bigcommutativediagram1}.  Each of the triangles in the right column commutes by Corollary~\ref{cor:Z+decomp}. The squares in the left column can be shown to commute by explicitly comparing the graphical expressions of the involved maps.  A similar analysis shows that the diagram in Figure~\ref{fig:bigcommutativediagram2} is commutative.  It follows from an explicit comparison of the graphical expressions of the involved maps that the composites from the top-right to the left in Figures~\ref{fig:bigcommutativediagram1} and~\ref{fig:bigcommutativediagram2} coincide. Similarly, the composites from the bottom-right to the left coincide.
Hence, these composites fit into a commutative diagram, depicted in Figure~\ref{fig:bigcommutativediagram3}, in which the vertical map can either be the composite of the rightmost column of maps in Figure~\ref{fig:bigcommutativediagram1} or in Figure~\ref{fig:bigcommutativediagram2}. The bottom map of Figure~\ref{fig:bigcommutativediagram3} is a composite of invertible linear maps (which again follows from local semisimplicity, analogously to the proof of Lemma~\ref{lem:scompinv}) and is therefore invertible. Thus, the composites of the rightmost column of maps in Figure~\ref{fig:bigcommutativediagram1} and~\ref{fig:bigcommutativediagram2} coincide, proving Lemma~\ref{lem:Pachner33}.
\end{proof}
\begin{remark}[$(3,3)$-bistellar move as nonabelian 4-cocycle relation]
The $(3,3)$-bistellar move essentially encodes the higher associativity equation fulfilled by the pentagonator of a monoidal $2$-category. Accordingly, the proof of Lemma~\ref{lem:Pachner33} proceeds by proving that two large graphical expressions are isotopic, making crucial use of the Gray monoid axioms (see Definition~\ref{def:monoidal2cat}) which we use here to model monoidal 2-categories.
\end{remark}
\nid
\skiptocparagraph{Invariance under the $(2,4)$-bistellar move.}
Next, we prove Lemma~\ref{lem:Pachner24}, showing invariance of the state sum under the $(2,4)$-bistellar move.
\begin{proof}[Proof of Lemma~\ref{lem:Pachner24}]
Expressed in terms of the normalized maps $\zh_+$ and $\zh_-$, the equation in Lemma~\ref{lem:Pachner24} becomes
\[
\zh_+(01235)\zh_+(01345)=
\!\!\sum_{\substack{\Xs{24}, \fs{024}, \fs{245},\\ \fs{234}, \fs{124}}}\!\! \zh_+(02345) \zh_+(01245) \zh_+(01234) \zh_-(12345).
\]
Rewritten in terms of the direct sum maps $\ZH_+$ and $\ZH_-$, and again omitting all direct sum symbols, this becomes the following equation
\begin{equation}\label{eq:pachnerdirectsum24}\tag{B2-4}
\ZH_+(01235)\ZH_+(01345)= \ZH_+(02345) \ZH_+(01245) \ZH_+(01234) \ZH_-(12345)
\end{equation}
between linear maps 
\[\begin{split}
\bigoplus_{\substack{\Xs{14},\fs{134}\\\fs{014},\fs{145}}}& \bigoplus_{\substack{\Xs{13}, \fs{123}\\\fs{013}, \fs{135}}} V^+(0134) \otimes \otimes V^+(0145) \otimes V^+(1345) \otimes V^+(0123)  \otimes V^+(1235) \\
&\longrightarrow  \bigoplus_{\fs{035}, \fs{025}} V^+(0345) \otimes V^+(0235) \otimes V^+(0125).
\end{split}
\]
Using equation~\eqref{eq:pachnersimplified}, this is equivalent to the following equation:
\[\ZH_+(01235) \ZH_+(01345) = \ZH_+(01235) \ZH_+(01345) \ZH_+(12345) \ZH_-(12345)
\]
The lemma is therefore a direct consequence of Corollary~\ref{cor:Z+Z-}.
\end{proof}

\skiptocparagraph{Invariance under the $(1,5)$-bistellar move.}
Lastly, we prove Lemma~\ref{lem:Pachner15}, showing invariance of the state sum under the $(1,5)$-bistellar move.
\begin{proof}[Proof of Lemma~\ref{lem:Pachner15}]
Expressed in terms of the normalized $\zh_+$ and $\zh_-$, the equation in Lemma~\ref{lem:Pachner15} becomes the following:
\begin{equation}\nonumber
\begin{split}
\zh_+(01235)=\dim\left(\tc{C}\right)^{-1}\!\!
\!\!&\sum_{\substack{\Xs{ij}, 0\leq i<j\leq 5\\i=4\text{ or }j=4}} \,\,\sum_{\substack{\fs{ijk}, 0\leq i<j<k\leq 5\\j=4\text{ or }k=4}}
\frac{\tdim{034}\tdim{045}\tdim{345}}{\tdim{035}\odim{04}\odim{34}\odim{45}} \\[5pt]
&\Tr_{V^+(0345)}\left(\vphantom{\frac{a}{b}}\zh_+(02345) \zh_+(01245) \zh_+(01234) \zh_-(12345)\zh_-(01345) \right)
 \end{split}
\end{equation}
Rewritten in terms of the direct sum maps $\ZH_+$ and $\ZH_-$, again omitting direct sum symbols, this becomes the equation
\[
\begin{split}
\ZH_+(01235)=\dim\left(\tc{C}\right)^{-1}\!\!
&\sum_{\Xs{04}, \Xs{34}, \Xs{45}} \sum_{\fs{034}, \fs{045}, \fs{345}}
\frac{\tdim{034}\tdim{045}\tdim{345}}{\tdim{035}\odim{04}\odim{34}\odim{45}} \\[5pt]
&\Tr_{V^+(0345)}\left(\vphantom{\frac{a}{b}}\ZH_+(02345) \ZH_+(01245) \ZH_+(01234) \ZH_-(12345)\ZH_-(01345) \right)
 \end{split}
\]
between linear maps
\[
\bigoplus_{\substack{\Xs{13},\fs{123}\\\fs{013},\fs{135}}}\bigoplus_{\fs{035}} V^+(0123) \otimes V^+(0135) \otimes V^+(1235) \longrightarrow 
\bigoplus_{\fs{025},\fs{035}} V^+(0235) \otimes V^+(0125).
\]
Using equation~\eqref{eq:pachnerdirectsum24} and Corollary~\ref{cor:Z+Z-}, this can be simplified as follows:
\begin{equation}\nonumber
\shrinker{.9}{
\ZH_+(01235)=\dim\left(\tc{C}\right)^{-1}\!\!
\sum_{\Xs{04}, \Xs{34}, \Xs{45}}\sum_{\fs{034}, \fs{045}, \fs{345}}
\frac{\tdim{034}\tdim{045}\tdim{345}}{\tdim{035}\odim{04}\odim{34}\odim{45}} \Tr_{V^+(0345)}\left(\vphantom{\frac{a}{b}}\ZH_+(01235)\right)
}
\end{equation}
Since the $3$-simplex $\lan 0345\ran$ is not in the boundary of the $4$-simplex $\lan 01235\ran$, it follows that 
\[\Tr_{V^+(0345)}\left( \ZH_+(01235)\right) = \dim(V^+(0345)) \ZH_+(01235).\]
Hence, to prove Lemma~\ref{lem:Pachner15}, it suffice to prove the following: 
\begin{equation}\nonumber
\dim\left(\tc{C}\right) = \sum_{\Xs{04},\Xs{34},\Xs{45}} \sum_{\fs{034}, \fs{045},\fs{345}} \frac{\tdim{034}\tdim{045} \tdim{345}}{\tdim{035}\odim{04} \odim{34} \odim{45}}\dim(V_+(0345))
\end{equation}
Observing that
\def\scl{0.6}%
\def\ys{0.2cm}
\def\xs{0.0cm}
\[V^+(0345)
~~~=~~~
\Hom_{\tc{C}}\left(~~
\begin{tz}[yscale=5/6,scale=\scl]
\draw[slice] (0,0) to [out=up, in=\dl] (0.5,1) to [out=up, in=\dl] (1,2) to (1,3);
\draw[slice] (1,0) to [out=up, in=\dr] (0.5,1);
\draw[slice] (2,0) to  [out=up, in=\dr] (1,2);
\coordinate (A) at (0.5,1);
\coordinate (B) at (1,2);
\node[dot] at (A){};
\node[dot] at (B) {};
\node[omor, right] at ([xshift=\xs,yshift=\ys]A) {$[034]$};
\node[omor, right] at ([xshift=\xs,yshift=\ys]B) {$[045]$};
\end{tz}
~~~,~~~
\begin{tz}[yscale=5/6,scale=\scl,xscale=-1]
\draw[slice] (0,0) to [out=up, in=\dl] (0.5,1) to [out=up, in=\dl] (1,2) to (1,3);
\draw[slice] (1,0) to [out=up, in=\dr] (0.5,1);
\draw[slice] (2,0) to  [out=up, in=\dr] (1,2);
\coordinate (A) at (0.5,1);
\coordinate (B) at (1,2);
\node[dot] at (A){};
\node[dot] at (B) {};
\node[omor, left] at ([xshift=\xs,yshift=\ys]A) {$[345]$};
\node[omor, left] at ([xshift=\xs,yshift=\ys]B) {$[035]$};
\end{tz}
~~\right)
\eqgap \iso \eqgap
\Hom_{\tc{C}}\left(~~
\begin{tz}[yscale=0.5,scale=\scl]
\draw[slice] (0,-1) to (0,0) to [out=up, in=\dl] (0.5,1) to (0.5,2) to  [out=up, in=\dl] (1,3) to (1,4);
\draw[slice] (1,-1) to (1,0) to [out=up, in=\dr] (0.5,1);
\draw[slice] (1,3) to [out=\dr, in=up] (1.5,2) to [out=down, in=down, looseness=3] (2,2) to (2,4);
\coordinate (A) at (0.5,1);
\coordinate(B) at (1,3);
\node[dot] at (A) {};
\node[dot] at (B) {};
\node[omor, right] at ([xshift=\xs,yshift=\ys]A) {$[034]$};
\node[omor, right] at ([xshift=\xs,yshift=\ys]B) {$[045]$};
\end{tz}
~~~,~~~
\begin{tz}[yscale=0.5,scale=\scl]
\draw[slice] (0,-1) to (0,0) to  [out=up, in=\dl] (0.5,1) to [out=\dr, in=up] (1,0) to [out=down, in=down, looseness=3] (1.5,0) to (1.5,4);
\draw[slice] (0.5,1) to (0.5,2) to [out=up, in=\dr] (0,3) to (0,4);
\draw[slice] (-0.5,-1) to (-0.5,2) to [out=up, in=\dl] (0,3);
\coordinate(A) at (0.5,1);
\coordinate(B) at (0,3);
\node[dot] at (A){};
\node[dot] at (B){};
\node[omor, right] at ([xshift=\xs,yshift=\ys]A) {$[345]$};
\node[omor, right] at ([xshift=\xs,yshift=\ys]B) {$[035]$};
\end{tz}
~~\right),
\]
it therefore follows from Corollary~\ref{cor:formulaHom}, that 
\[\sum_{\Xs{04}, \fs{034}, \fs{045}}\dim(V_+(0345))\frac{\tdim{034} \tdim{045}}{\odim{04}} = \tdim{0(345)} .
\]
Since $\Xs{35}$ is a simple object, it follows from Proposition~\ref{prop:factoringthroughsimple} that
\[\tdim{0(345)} = \frac{\tdim{345} \tdim{035}}{\dim(\Xs{35})}.
\]
The desired equation therefore simplifies to
\[\dim\left(\tc{C}\right) = \sum_{\Xs{34}, \Xs{45}, \fs{345}}\frac{\tdim{345}^2}{\dim(\Xs{35}) \odim{34} \odim{45}}.
\]
This equality is proven in Corollary~\ref{cor:formulaglobaldim}.
%
\ignore{
Using Lemma~\ref{lem:Pachner24}, we can reexpress~\eqref{eq:Pachner15} as follows:
\begin{equation}\nonumber
\begin{split}Z_+(01235) = \dim\left(\tc{C}\right)^{-1} \sum_{\substack{\Xs{04},\Xs{14},\\\Xs{34},\Xs{45}}}\,\, \sum_{\substack{\fs{034}, \fs{014},\fs{134},\\\fs{045},\fs{145},\fs{345}}}\frac{\tdim{034}\tdim{014}\tdim{134} \tdim{045} \tdim{145} \tdim{345}}{\odim{04} \odim{14} \odim{34} \odim{45}}
\\ \Tr_{V^+(0345)}\left(\vphantom{\frac{a}{b}} Z_+(01235)Z_+(01345)Z_-(01345)\right)
\end{split}
\end{equation}
Written in terms of $\zh_+$ and $\zh_-$ (see~\eqref{eq:c+-}), this can be reexpressed as follows:
\[\zh_+(01235) = \dim\left(\tc{C}\right)^{-1}\!\! \sum_{\substack{\Xs{04},\Xs{14},\\\Xs{34},\Xs{45}}}\,\, \sum_{\substack{\fs{034}, \fs{014},\fs{134},\\\fs{045},\fs{145},\fs{345}}}\frac{\tdim{034}\tdim{045} \tdim{345}}{\tdim{035}\odim{04} \odim{34} \odim{45}}\Tr_{V_+(0345)}\left(\zh_+(01235) \zh_+(01345) \zh_-(01345)\right)
\]
Rewritten in terms of the direct sums $\ZH_+$ and $\ZH_-$, and noting that the $4$-simplex $[01235]$ does not contain the $3$-simplex $[0345]$ in its boundary, it suffices to prove the following equation:
\[\dim\left(\tc{C}\right)=  \sum_{\Xs{04},\Xs{34},\Xs{45}} \sum_{\fs{034}, \fs{045},\fs{345}} \frac{\tdim{034}\tdim{045} \tdim{345}}{\tdim{035}\odim{04} \odim{34} \odim{45}}\Tr_{V_+(0345)}\left( \ZH_+(01345) \ZH_-(01345)\right)
\]
Using equation~\eqref{eq:C+C-}, this equation becomes the following:
\begin{equation}\label{eq:proof15}
\dim\left(\tc{C}\right) = \sum_{\Xs{04},\Xs{34},\Xs{45}} \sum_{\fs{034}, \fs{045},\fs{345}} \frac{\tdim{034}\tdim{045} \tdim{345}}{\tdim{035}\odim{04} \odim{34} \odim{45}}\dim(V_+(0345))
\end{equation}
Observing that
\def\scl{0.6}%
\def\ys{0.2cm}
\def\xs{0.0cm}
\[V^+(0345)
~~~=~~~
\Hom_{\tc{C}}\left(~~
\begin{tz}[yscale=5/6,scale=\scl]
\draw[slice] (0,0) to [out=up, in=\dl] (0.5,1) to [out=up, in=\dl] (1,2) to (1,3);
\draw[slice] (1,0) to [out=up, in=\dr] (0.5,1);
\draw[slice] (2,0) to  [out=up, in=\dr] (1,2);
\coordinate (A) at (0.5,1);
\coordinate (B) at (1,2);
\node[dot] at (A){};
\node[dot] at (B) {};
\node[omor, right] at ([xshift=\xs,yshift=\ys]A) {$[034]$};
\node[omor, right] at ([xshift=\xs,yshift=\ys]B) {$[045]$};
\end{tz}
~~~,~~~
\begin{tz}[yscale=5/6,scale=\scl,xscale=-1]
\draw[slice] (0,0) to [out=up, in=\dl] (0.5,1) to [out=up, in=\dl] (1,2) to (1,3);
\draw[slice] (1,0) to [out=up, in=\dr] (0.5,1);
\draw[slice] (2,0) to  [out=up, in=\dr] (1,2);
\coordinate (A) at (0.5,1);
\coordinate (B) at (1,2);
\node[dot] at (A){};
\node[dot] at (B) {};
\node[omor, left] at ([xshift=\xs,yshift=\ys]A) {$[345]$};
\node[omor, left] at ([xshift=\xs,yshift=\ys]B) {$[035]$};
\end{tz}
~~\right)
\eqgap \iso \eqgap
\Hom_{\tc{C}}\left(~~
\begin{tz}[yscale=0.5,scale=\scl]
\draw[slice] (0,-1) to (0,0) to [out=up, in=\dl] (0.5,1) to (0.5,2) to  [out=up, in=\dl] (1,3) to (1,4);
\draw[slice] (1,-1) to (1,0) to [out=up, in=\dr] (0.5,1);
\draw[slice] (1,3) to [out=\dr, in=up] (1.5,2) to [out=down, in=down, looseness=3] (2,2) to (2,4);
\coordinate (A) at (0.5,1);
\coordinate(B) at (1,3);
\node[dot] at (A) {};
\node[dot] at (B) {};
\node[omor, right] at ([xshift=\xs,yshift=\ys]A) {$[034]$};
\node[omor, right] at ([xshift=\xs,yshift=\ys]B) {$[045]$};
\end{tz}
~~~,~~~
\begin{tz}[yscale=0.5,scale=\scl]
\draw[slice] (0,-1) to (0,0) to  [out=up, in=\dl] (0.5,1) to [out=\dr, in=up] (1,0) to [out=down, in=down, looseness=3] (1.5,0) to (1.5,4);
\draw[slice] (0.5,1) to (0.5,2) to [out=up, in=\dr] (0,3) to (0,4);
\draw[slice] (-0.5,-1) to (-0.5,2) to [out=up, in=\dl] (0,3);
\coordinate(A) at (0.5,1);
\coordinate(B) at (0,3);
\node[dot] at (A){};
\node[dot] at (B){};
\node[omor, right] at ([xshift=\xs,yshift=\ys]A) {$[345]$};
\node[omor, right] at ([xshift=\xs,yshift=\ys]B) {$[035]$};
\end{tz}
~~\right)
\]
it therefore follows from Corollary~\ref{cor:formulaHom}, that 
\[\sum_{\Xs{04}, \fs{034}, \fs{045}}\dim(V_+(0345))\frac{\tdim{034} \tdim{045}}{\odim{04}} = \tdim{0(345)} 
\]
Since $\Xs{35}$ is simple, it follows from Proposition~\ref{prop:factoringthroughsimple} that 
\[\tdim{0(345)} = \frac{\tdim{345} \tdim{035}}{\dim(\Xs{35})}
\]
Thus, the right hand side of equation~\eqref{eq:proof15} becomes
\[\sum_{\Xs{34}, \Xs{45}, \fs{345}}\frac{\tdim{345}^2}{\dim(\Xs{35}) \odim{34} \odim{45}}
\]
This expression is proven in Corollary~\ref{cor:formulaglobaldim} to equal $\dim\left(\tc{C}\right)$.
}%
\end{proof}
\nid
These lemmas can now be assembled into a proof of Lemma~\ref{lem:invariancebistellar}, proving that the state sum is invariant under bistellar moves, and hence an invariant of singular piecewise linear $4$-manifolds. 
\begin{proof}[Proof of Lemma~\ref{lem:invariancebistellar}] Combining Lemmas~\ref{lem:Pachner15},~\ref{lem:Pachner24} and~\ref{lem:Pachner33} proves Lemma~\ref{lem:invariancebistellar}.
\end{proof}

\newpage

\appendix

\addtocontents{toc}{\protect\vspace{8pt}}

\section{Separable monads and idempotent completion of $2$-categories} \label{app:ic}

\addtocontents{toc}{\SkipTocEntry}
\subsection{Monads and their bimodules}

Recall that a \emph{monad} in a $2$-category is a $1$-morphism $P:A\to A$, together with $2$-morphisms  $P\xo P \To[m] P$ (the `multiplication') and $\Io_A\To[u] P$ (the `unit') fulfilling the following equations:
\[\left(P\xo P \xo P \To[m\xo P] P \xo P \To[m] P \right) = \left( P\xo P \xo P\To[P \xo m] P\xo P \To[m] P\right)
\]\[
\left(P \To[u\xo P] P \xo P \To[m] P \right) =\left( P \To[\It_P] P\right) =  \left( P\To[P \xo u] P\xo P \To[m] P\right)
\]
Given monads $(A\to[P] A, P\xo P\To[m_P] P, \Io_A\To[u_P] P)$ and $(B\to[Q] B, Q\xo Q\To[m_Q] Q,  \Io_B\To[u_Q] Q)$, recall that a $Q$--$P$-\emph{bimodule} ${}_QM_P:A\to B$ is a $1$-morphism $M:A\to B$ together with a $2$-morphism $\rho: Q\xo M \xo P\To M$ (the `action') fulfilling the following equations:
\shrinkalign{.9}{
\begin{align*}
\left(Q\xo Q \xo M \xo P \xo P \To[m_Q \xo M\xo m_P] Q\xo M \xo P \To[\rho] M \right)&= \left(Q\xo Q\xo M \xo P \xo P\To[Q\xo \rho \xo P] Q\xo M \xo P \To[\rho] M\right)
\\
\left(M\To[u_Q\xo M \xo u_P] Q\xo M \xo P\To[\rho] M\right) &=\left( M\To[\It_M] M\right)
\end{align*}
}
A \emph{bimodule map} ${}_QM_P\To {}_QN_P$ is a $2$-morphism $f: M\To N$ intertwining the action:
\[\left(Q\xo M \xo P \To [\rho] M \To[f]M \right) = \left( Q\xo M \xo P \To[Q\xo f\xo P] Q\xo M \xo P \To[\rho] M\right)
\]

\addtocontents{toc}{\SkipTocEntry}
\subsection{Eilenberg--Moore and Kleisli morphisms and their splittings} \label{sec:em}

Given a monad $P:A\to A$ in a $2$-category $\tc{C}$, we write $\LMod_P(X)$ for the 1-category of left $P$-modules with domain $X$: its objects are pairs $(t:X\to A, \rho: P\xo t \To t)$ of $1$-morphisms $t$ carrying a left module structure $\rho$ of $P$, and its morphisms are $2$-morphisms $t\To t'$ in $\tc{C}$ intertwining the action of $P$. An \emph{Eilenberg--Moore morphism} of $P$ is a $1$-morphism $R:A^P \to A$ together with a left $P$-module structure $\rho: P\xo R\To R$, such that for every object $X$ of $\tc{C}$, the induced functor 
\[
R\xo -: \Hom_{\tc{C}}(X, A^P) \to \LMod_P(X)
\]
is an equivalence. In other words, an Eilenberg--Moore morphism is a universal left $P$-module. Analogously, a \emph{Kleisli morphisms} $L:A\to A_P$ is a universal right $P$-module, that is a right $P$-module such that for every object $X$ of $\tc{C}$, the induced functor
\[
- \xo L: \Hom_{\tc{C}}(A_P,X) \to \RMod_P(X)
\]
is an equivalence. The objects $A^P$ and $A_P$ are often known as Eilenberg--Moore and Kleisli objects, respectively. 

A \emph{splitting} of a monad $P:A \to A$ in a $2$-category $\tc{C}$ is an adjunction $R \vdash L:A \to B $ together with an isomorphism of monads $\psi:R\xo L \To P$. A splitting $(R\vdash L, \psi)$ is an \emph{Eilenberg--Moore splitting} if $R:B\to A$ together with the action of $P$ on $R$ induced by $\psi$ is an Eilenberg--Moore morphism of $P$. Note that any Eilenberg--Moore morphism $R:A^P \to A$ of a monad $P$ admits a left adjoint $L:A\to A^P$ and a canonical isomorphism of monads $R\xo L \iso P$, hence gives rise to an Eilenberg--Moore splitting of $P$. Analogously, a \emph{Kleisli splitting} of $P$ is a splitting $(R\vdash L, \psi)$ such that $L:A\to B$ together with the induced $P$-action on $L$ is a Kleisli morphism of $P$. 

\addtocontents{toc}{\SkipTocEntry}
\subsection{Separable monads and their splittings}

Recall that a monad $P:A \to A$ is \emph{separable} if there exists a $P$-$P$ bimodule section $\Delta:P \To P\xo P$ of the multiplication of $P$. A splitting $(R\vdash L, \psi)$ of a monad is \emph{separable} if the counit $\epsilon: L\xo R \To \Io_{B}$ admits a section.

\begin{theorem}[Separable, Eilenberg--Moore, and Kleisli splittings are equivalent] \label{thm:EMKleisliSeparable}
Let $P:A\to A$ be a separable monad in a locally idempotent complete $2$-category and let $(R\vdash L:A \to B,\psi)$ be a splitting of $P$. Then, the following are equivalent:
\begin{enumerate}
\item The splitting is separable.
\item The splitting is an Eilenberg--Moore splitting.
\item The splitting is a Kleisli splitting.
\end{enumerate}
\end{theorem}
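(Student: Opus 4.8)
The plan is to prove the cycle of implications $(1) \Leftrightarrow (2)$ and $(1) \Leftrightarrow (3)$; by symmetry between left and right modules (i.e.\ passing to the $2$-category $\tc{C}^{\mathrm{co}}$ obtained by reversing $2$-morphisms, which swaps Eilenberg--Moore and Kleisli morphisms and preserves separability), it suffices to establish $(1) \Leftrightarrow (2)$ in full and then invoke duality for $(1) \Leftrightarrow (3)$. So the heart of the argument is: a splitting $(R \vdash L, \psi)$ of a separable monad $P$ is separable if and only if it is an Eilenberg--Moore splitting.

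First I would set up the standard comparison. Given the splitting, $R \xo -: \Hom_{\tc{C}}(X,B) \to \LMod_P(X)$ is always well-defined: the left $P$-module structure on $R$ transported along $\psi: R \xo L \To P$ makes $R \xo t$ a left $P$-module for any $t: X \to B$, naturally in $t$. Being an Eilenberg--Moore splitting means this functor is an equivalence for every $X$. One direction is a formal calculation: if the splitting is Eilenberg--Moore, then $L \xo R$ carries the universal left $P$-module structure built out of the free module on $\Io_A$, and comparing with the bimodule section $\Delta: P \To P \xo P$ of $m_P$ (which exists since $P$ is separable) produces, via the equivalence $R \xo -$, a $2$-morphism splitting the counit $\epsilon: L \xo R \To \Io_B$; concretely, $\Delta$ transported through $\psi$ gives a map $R \xo L \To R \xo L \xo R \xo L$, and the universal property lets one extract a section of $\epsilon$. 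This is the routine half.

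For the converse --- separable splitting implies Eilenberg--Moore splitting --- I would argue as follows. Let $\phi: \Io_B \To L \xo R$ be the given section of the counit $\epsilon$, so $\epsilon \xt \phi = \It_{\Io_B}$. The goal is to show $R \xo -: \Hom_{\tc{C}}(X,B) \to \LMod_P(X)$ is essentially surjective, full, and faithful. Faithfulness and fullness follow by a section-retraction argument: given a left $P$-module $(t: X \to A, \rho)$, the adjunction $R \vdash L$ and the $P$-action let one form $L \xo t: X \to B$, and using $\phi$ one checks the unit/counit of the adjunction restrict to give $R \xo L \xo t \cong t$ \emph{as $P$-modules} --- this is where separability is essential, as the section $\phi$ lets one build the inverse $2$-morphism and verify it is a module map using the bimodule section $\Delta$ and the interchange/cusp equations. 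Essential surjectivity is then immediate: $t \cong R \xo (L \xo t)$. Hence $R \xo -$ is an equivalence for every $X$, i.e.\ the splitting is Eilenberg--Moore. (The faithfulness step uses that $\Hom_{\tc{C}}(X,A)$ and $\Hom_{\tc{C}}(X,B)$ are idempotent complete so that the relevant idempotents --- namely $\iota \xo \rho$-type composites --- actually split, identifying module maps with maps in $\tc{C}$.)

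The main obstacle I expect is the careful bookkeeping in the converse direction: verifying that the isomorphism $R \xo L \xo t \cong t$ produced from the adjunction data and the section $\phi$ is genuinely a $P$-module isomorphism, not merely an isomorphism of $1$-morphisms. This requires juggling the unit $\eta_{L \xo R}$, the counit $\epsilon$, the section $\phi$, the monad multiplication/unit transported along $\psi$, and the bimodule section $\Delta$, all subject to the monad and bimodule axioms recalled in the appendix; local idempotent completeness is what guarantees the contortionist idempotents split so these identifications make sense. Once $(1) \Leftrightarrow (2)$ is in hand, the duality argument for $(1) \Leftrightarrow (3)$ is a one-line observation: separability of a monad and of a splitting are self-dual conditions under reversing $2$-morphisms, while Eilenberg--Moore splittings become Kleisli splittings, so the already-proved equivalence transports.
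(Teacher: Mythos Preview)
Your overall plan---prove $(1)\Leftrightarrow(2)$ and then dualize for $(1)\Leftrightarrow(3)$---matches the paper's, and your sketch of $(2)\Rightarrow(1)$ (extract a section of $\epsilon$ from the bimodule section $\Delta$ via the universal property of $R$) is exactly what the paper does. But there are two genuine problems.

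\textbf{The essential surjectivity argument in $(1)\Rightarrow(2)$ is wrong.} You claim that for a left $P$-module $(t,\rho)$ one has $R\xo L\xo t\cong t$ as $P$-modules, so that $t$ lies in the essential image of $R\xo-$. But $R\xo(L\xo t)$ carries the \emph{free} $P$-module structure (action by $m_P$ on the left), and the free module on the underlying object of $t$ is almost never isomorphic to $t$---already for the separable $k$-algebra $P=k\times k$ and the simple module $t=k$ one has $P\otimes_k t\cong k^2\not\cong t$. The action map $\rho:P\xo t\To t$ is a module map, but its candidate inverse $u\xo t:t\To P\xo t$ is not. What the paper does instead is form the idempotent $\rho':=(L\xo\rho)\xt(\delta\xo L\xo t)$ on $L\xo t$ (where $\delta$ is your section $\phi$), \emph{split this idempotent} using local idempotent completeness to obtain $S:X\to B$, and then verify $R\xo S\cong t$ as $P$-modules. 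This is precisely where local idempotent completeness enters the proof---not in the full-faithfulness step as you suggest, but in producing the object $S$ for essential surjectivity.

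\textbf{The duality uses the wrong opposite.} Passing to $\tc{C}^{\mathrm{co}}$ (reversing $2$-morphisms) turns a monad into a comonad, so separable monads and their splittings are not preserved. The correct duality, as the paper uses, is $\tc{C}^{\op}$ (reversing $1$-morphisms): a monad in $\tc{C}$ remains a monad in $\tc{C}^{\op}$, left modules become right modules, and an Eilenberg--Moore splitting in $\tc{C}$ becomes a Kleisli splitting in $\tc{C}^{\op}$, while separability of the splitting is manifestly self-dual.
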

\begin{proof}
We use the isomorphism $\psi$ to identify $P$ with $R\xo L$.

\emph{(1) $\Rightarrow$ (2)} Suppose that $(R\vdash L:A\to B, \psi)$ is a separable splitting; that is suppose that the counit $\epsilon: L \xo R\To \Io_{B}$ is split by the section $\delta:\Io_{B} \To L \xo R$. For any object $X$ of $\tc{C}$, we claim that the functor $R\xo -: \Hom_\tc{C}(X,B) \to \LMod_{R\xo L}(X)$ is an equivalence. To prove that $R\xo -$ is essentially surjective, we let $T:X\to A$ be a left $(R\xo L)$-module with action $\rho: R\xo L\xo T \To T$. Precomposing $L\xo  \rho$ with $\delta$ results in an idempotent $2$-morphism $\rho': L\xo T \To L\xo T$. Splitting this idempotent yields a $1$-morphism $S: X\to B$ and $2$-morphisms $p: L\xo T \To S$ and $i:S \To L\xo T$ such that $p\xt i = \It_{S}$ and $i\xt p = \rho'$. Now observe that the left $(R\xo L)$-module $R\xo S$ is isomorphic to $T$, as follows. Using the adjunction between $R$ and $L$, we turn $p$ into a $2$-morphism $p':T \To R\xo S$ and define the $2$-morphism $i': R\xo S\To[R\xo i] R\xo L\xo T \To[\rho] T$. Direct computation shows that $i'$ and $p'$ are inverse $R\xo L$-module maps. Full faithfulness of $R\xo -$ can be proven from the existence of $\delta: \Io_{B} \To L \xo R$ with $\epsilon\xt  \delta = \It_{\Io_{B}}$.

\emph{(2) $\Rightarrow$ (1)} 
Separability of $P$ implies that there is an $(R\xo L)$--$(R \xo L)$-bimodule $2$-morphism $\Delta: R\xo L \To R\xo L\xo R\xo L$ that splits the multiplication of the monad $R\xo L$. Since $R$ is an Eilenberg--Moore morphism of $R\xo L$ and $\Delta$ is a left $R\xo L$-module map, it follows that there is a $2$-morphism $f:L\To L\xo R\xo L$ such that $\Delta = R\xo f$. Using the adjunction between $R$ and $L$, we obtain a $2$-morphism $f': R\To R\xo L\xo R$. The fact that $\Delta$ is a right $(R\xo L)$-module, combined with the faithfulness of $R\xo -$, implies that $f'$ is a left $(R\xo L)$-module map. Therefore, there is a $2$-morphism $\delta: \Io_{B} \To L\xo R$ such that $f' = R \xo \delta$ and thus $\Delta = R\xo \delta \xo L$.  The fact that $\Delta$ splits the multiplication of $P$ then implies that $\delta$ splits the counit, i.e.\ $\epsilon\xt \delta = \It_{B}$.

\emph{(1) $\Leftrightarrow$ (3)} Note that a splitting $(R\vdash L, \psi)$ is separable in $\tc{C}$ if and only if the splitting $(L\vdash R,\psi)$ is separable in $\tc{C}^{\op}$. (By $\tc{C}^{\op}$ we mean $\tc{C}$ with reversed direction of $1$-morphisms, but not $2$-morphisms.) Applying $(1)\Leftrightarrow (2)$ to $\tc{C}^{\op}$ proves $(1)\Leftrightarrow (3)$.
\end{proof}
\nid In particular, Eilenberg--Moore objects and Kleisli objects of separable monads in locally idempotent complete $2$-categories coincide. 
\begin{corollary}[Eilenberg--Moore and Kleisli morphisms are adjoint] 
Given a separable monad $P$ in a locally idempotent complete $2$-category, a $1$-morphism $R:A^P \to A$ is an Eilenberg--Moore morphism of $P$ if and only if it has a left adjoint $L:A\to A^P$ that is a Kleisli morphism of $P$.
\end{corollary}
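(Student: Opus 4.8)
The statement to prove is the final Corollary (``Eilenberg--Moore and Kleisli morphisms are adjoint''): for a separable monad $P$ in a locally idempotent complete $2$-category, a $1$-morphism $R : A^P \to A$ is an Eilenberg--Moore morphism of $P$ if and only if it has a left adjoint $L : A \to A^P$ that is a Kleisli morphism of $P$. This is essentially a corollary of Theorem~\ref{thm:EMKleisliSeparable}, and the plan is to deduce it by unpacking the relationship between morphisms, splittings, and the adjunction structure already developed in Section~\ref{sec:em}.

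The plan is to argue both directions using the standard fact that an Eilenberg--Moore morphism, being a universal left module, automatically carries an adjoint structure. For the forward direction, suppose $R : A^P \to A$ is an Eilenberg--Moore morphism of $P$. As recalled in Section~\ref{sec:em}, any Eilenberg--Moore morphism admits a left adjoint $L : A \to A^P$ together with a canonical isomorphism of monads $R \xo L \iso P$, so that $(R \vdash L, \psi)$ is an Eilenberg--Moore splitting of $P$. By Theorem~\ref{thm:EMKleisliSeparable}, applied to this splitting of the separable monad $P$ in the locally idempotent complete $2$-category, the splitting is also a Kleisli splitting; by definition of Kleisli splitting, this means that $L$, equipped with the right $P$-action induced by $\psi$, is a Kleisli morphism of $P$. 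This gives the desired left adjoint $L$.

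For the converse, suppose $R : A^P \to A$ has a left adjoint $L : A \to A^P$ that is a Kleisli morphism of $P$. First I would note that from the adjunction $R \vdash L$ one obtains a monad $L \xo R$ on $A^P$ (wait --- the composite on $A$ is $R \xo L : A \to A$), so more precisely $R \xo L$ is a monad on $A$, and since $L$ is a Kleisli morphism of $P$ there is a canonical isomorphism of monads $R \xo L \iso P$ (this is the Kleisli-splitting analogue of the canonical isomorphism in the Eilenberg--Moore case, obtained by dualizing to $\tc{C}^{\op}$ as in the proof of Theorem~\ref{thm:EMKleisliSeparable}). Thus $(R \vdash L, \psi)$ is a Kleisli splitting of $P$. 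Again invoking Theorem~\ref{thm:EMKleisliSeparable} --- this time using the equivalence (3) $\Rightarrow$ (2) --- the splitting is also an Eilenberg--Moore splitting, and hence $R$ equipped with the induced left $P$-action is an Eilenberg--Moore morphism of $P$, as required.

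I expect the main obstacle to be purely bookkeeping rather than conceptual: one must be careful that the ``canonical isomorphism of monads'' attached to an Eilenberg--Moore morphism (respectively a Kleisli morphism) is really the one that makes $(R \vdash L, \psi)$ an Eilenberg--Moore splitting (respectively Kleisli splitting) in the precise sense of Section~\ref{sec:em}, so that Theorem~\ref{thm:EMKleisliSeparable} applies verbatim. In particular, for the converse direction one needs the fact that a Kleisli morphism $L$ admits a right adjoint $R$ with $R \xo L \iso P$ as monads --- the dual of the statement ``any Eilenberg--Moore morphism admits a left adjoint'' --- which follows by applying that statement in $\tc{C}^{\op}$, exactly as in the $(1) \Leftrightarrow (3)$ step of the theorem's proof. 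Once these identifications are in place, both implications are immediate consequences of the theorem, so no genuinely new computation is needed.
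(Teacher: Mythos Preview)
Your proposal is correct and follows exactly the approach the paper intends: the corollary is stated without proof immediately after Theorem~\ref{thm:EMKleisliSeparable}, and your argument---pass from an Eilenberg--Moore (resp.\ Kleisli) morphism to the associated splitting via the facts recorded in Section~\ref{sec:em}, then apply the theorem to swap to the other kind of splitting---is precisely the intended deduction. The bookkeeping you flag (that a Kleisli morphism admits a right adjoint with $R\xo L\iso P$, by dualizing to $\tc{C}^{\op}$, and that adjoints are unique so the given $R$ works) is the only thing to check, and you handle it correctly.
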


\begin{remark}[Monadicity theorem for separable monads] 
Recall that a $1$-morphism $R:A\to B$ in a $2$-category is called monadic if it has a left adjoint $L$ and is an Eilenberg--Moore morphism for the induced monad $R\xo L$. Analogously, a $1$-morphism $R$ is separable monadic if it is monadic and if moreover the monad $R\xo L$ is separable. From this perspective, Theorem~\ref{thm:EMKleisliSeparable} can be understood as a monadicity theorem for separable monads: a $1$-morphism $R$ is separable monadic if and only if it has a left adjoint such that the counit of the adjunction admits a section. A similar result specifically in the $2$-category of categories appears in~\cite{separable}.
\end{remark}

\addtocontents{toc}{\SkipTocEntry}
\subsection{The relative composition of bimodules over a separable monad}

\begin{definition}[Relative composition of modules]
Let $M_P:B\to C$ be a right module and let ${}_PN:A\to B$ be a left module of a separable monad $P:B\to B$ in a locally idempotent complete $2$-category $\tc{C}$. We define their \emph{relative composition} $M\xo_P N:A\to C$ as the $1$-morphism obtained from splitting the idempotent 2-morphism
\[M\xo N \To[M\xo u \xo N] M \xo P \xo N \To[M\xo \Delta \xo N] M \xo P \xo P \xo N \To[\rho_M \xo \rho_N] M \xo N, \]
Here $u: \Io_{B} \To P$ is the unit of $P$, and $\rho_M: M \xo P \To M$ and $\rho_N: P\xo N \To N$ denote the action of $P$, and $\Delta:P \To P\xo P$ is a $P$-$P$ bimodule section of the multiplication of $P$. 
\end{definition}
\nid Note that the resulting $1$-morphism $M\xo_P N$ is, up to isomorphism, independent of the choice of section $\Delta$.
If ${}_QM_P$ and ${}_PN_R$ are bimodules, then the idempotent 2-morphism is a $Q$--$R$-bimodule map, inducing a $Q$--$R$-bimodule structure on the relative composition ${}_QM\xo_PN_R$.

We can reexpress the condition that a separable monad admits a separable splitting in terms of bimodule triviality as follows.  We will say that a monad $P:A \to A$ is bimodule trivial (that is, trivial up to bimodule equivalence) if there are modules ${}_PM :B\to A$ and $N_P:A\to B$ such that ${}_PM \xo N_P\iso {}_PP_P$ as bimodules and $N\xo_P M\iso \Io_B$.
\begin{prop}[Separably split is bimodule trivial] \label{prop:bimodule} 
In a locally idempotent complete $2$-category, a separable monad $P:A\to A$ admits a separable splitting if and only if it is bimodule trivial. 
\end{prop}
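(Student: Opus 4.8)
\textbf{Proof proposal for Proposition~\ref{prop:bimodule}.}
The plan is to establish the two implications of the biconditional separately, in each case translating between a separable splitting $\iota \vdash \rho$ of $P$ and a pair of mutually inverse (up to the appropriate relative composition) bimodules. Throughout, I will identify $P$ with $\iota \xo \rho$ once a splitting is in hand, and I will use the relative composition of modules over a separable monad developed in the preceding subsection, together with the fact (from Theorem~\ref{thm:EMKleisliSeparable}) that a separable splitting is simultaneously an Eilenberg--Moore and a Kleisli splitting.

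For the forward direction, suppose $P \cong \iota \xo \rho$ for a separable adjunction $\iota: B \to A$, $\rho: A \to B$ with counit $\epsilon: \rho \xo \iota \To \Io_B$ admitting a section $\phi$. I would take $M := \iota: B \to A$ and $N := \rho: A \to B$. The monad $P = \iota \xo \rho$ acts on $\iota$ on the left via $m$ (equivalently via $\iota \xo \epsilon \xo \rho$, using the unit/counit), making $\iota$ a left $P$-module, and similarly acts on $\rho$ on the right, making $\rho$ a right $P$-module; the bimodule isomorphism ${}_P M \xo N_P \cong {}_P P_P$ is then just the defining isomorphism $\iota \xo \rho \cong P$, which one checks intertwines the two-sided $P$-actions because the $P$-action on $P$ is by multiplication. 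For the other relative composite, $N \xo_P M = \rho \xo_P \iota$ is by definition the splitting of the idempotent on $\rho \xo \iota$ built from the unit of $P$ and a bimodule section $\Delta$ of $m$; a direct computation, using that $\epsilon$ is split by $\phi$ and that $\Delta$ can be taken to be $\iota \xo \phi \xo \rho$ (suitably normalized so that $m \xt \Delta = \It_P$), shows this idempotent is conjugate to $\phi \xt \epsilon: \rho \xo \iota \To \rho \xo \iota$, whose splitting object is $\Io_B$. Hence $N \xo_P M \cong \Io_B$, so $P$ is bimodule trivial.

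For the reverse direction, suppose we are given ${}_P M: B \to A$ and $N_P: A \to B$ with ${}_P M \xo N_P \cong {}_P P_P$ and $N \xo_P M \cong \Io_B$. I would show that $M$, with its left $P$-module structure, is an Eilenberg--Moore morphism for $P$; by Theorem~\ref{thm:EMKleisliSeparable} (the implication (2)$\Rightarrow$(1)) this yields a separable splitting. To see that $M$ is Eilenberg--Moore, I would exhibit, for each object $X$, an equivalence $M \xo -: \Hom_{\tc{C}}(X,B) \to \LMod_P(X)$: the functor in the other direction sends a left $P$-module $(t: X \to A, \lambda)$ to $N \xo_P t$ (the relative composite of the right module $N_P$ with the left module $t$), and the two composites are naturally isomorphic to the identity using the two given bimodule isomorphisms ${}_P M \xo N_P \cong {}_P P_P$ and $N \xo_P M \cong \Io_B$ together with the unit-absorption property of relative composition over $P$ (that $P \xo_P (-) \cong (-)$ on left modules). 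Alternatively — and perhaps more cleanly — I would observe that the data of such mutually inverse bimodules is precisely a Morita context exhibiting $P$ as Morita trivial, and invoke the standard fact that a Morita-trivializing bimodule over a separable monad produces an adjunction whose associated monad is $P$ and whose counit splits; one then feeds this adjunction into Theorem~\ref{thm:EMKleisliSeparable}.

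The main obstacle I anticipate is the careful bookkeeping in the reverse direction: promoting the bimodule isomorphisms to an actual adjunction $\iota \vdash \rho$ with $\iota \xo \rho \cong P$ as monads requires constructing candidate unit and counit $2$-morphisms out of the bimodule data and verifying the triangle (cusp) identities, and the section of the counit must be extracted from the isomorphism $N \xo_P M \cong \Io_B$ in a way compatible with the idempotent defining the relative composition. Getting the coherence isomorphisms for relative composition (associativity, unitality) to line up — all of which hold only up to canonical isomorphism, and all of which depend on the chosen section $\Delta$ of $m$ — is where the genuine work lies; everything else is formal manipulation with adjunctions and module structures. I expect that routing the argument through the Eilenberg--Moore characterization, rather than constructing the adjunction by hand, will localize this difficulty to a single essential-surjectivity-and-full-faithfulness check for the functor $M \xo -$.
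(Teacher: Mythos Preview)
Your proposal is correct and, for the nontrivial direction (bimodule trivial $\Rightarrow$ separable splitting), matches the paper's proof exactly: show that $M \xo -$ is an equivalence with inverse $N \xo_P -$, conclude $M$ is an Eilenberg--Moore morphism, and invoke Theorem~\ref{thm:EMKleisliSeparable}. The paper does not even spell out the forward direction, which you handle correctly by taking $M=\iota$, $N=\rho$ and computing $\rho \xo_P \iota \cong \Io_B$ via the split counit.
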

\begin{proof} Let ${}_P M:B \to A$ and $N_P: A \to B$ be a bimodule trivialization of $P$.  For any object $X$ of $\tc{C}$, the functor ${}_PM \xo -: \Hom_{\tc{C}}(X,B)\to \LMod_P(X)$ is an equivalence with inverse $N\xo_P - : \LMod_P(X) \to \Hom_{\tc{C}}(X,B)$. Hence, $M:B\to A$ is an Eilenberg--Moore morphism of $P$. The proposition follows from Theorem~\ref{thm:EMKleisliSeparable} and the fact that every Eilenberg--Moore morphism gives rise to an Eilenberg--Moore splitting. 
\end{proof}

\addtocontents{toc}{\SkipTocEntry}
\subsection{Idempotent completion of a $2$-category} \label{sec:appicdef}

A $2$-category is \emph{idempotent complete} if it is locally idempotent complete and if every separable monad admits a separable splitting. By Theorem~\ref{thm:EMKleisliSeparable} this is equivalent to requiring that every separable monad admits an Eilenberg--Moore or Kleisli object.

\begin{definition}[Idempotent completion of a 2-category] \label{def:idempotentcompletion} 
Let $\tc{C}$ be a locally idempotent complete $2$-category. Its \emph{idempotent completion} $\tc{C}^\idm$ is the $2$-category whose objects are separable monads in $\tc{C}$, whose $1$-morphisms are bimodules, and whose $2$-morphisms are bimodule maps. The composition of $1$-morphisms is the relative composition of bimodules. The identity $1$-morphism on a separable monad $P$ is the trivial bimodule ${}_PP_P$.
\end{definition}
\begin{remark}[Well-definition of composition in the idempotent completion] \label{rem:chosensplitting} 
As a splitting of an idempotent, the relative composite of bimodules is only defined up to isomorphism. As stated, the idempotent completion $\tc{C}^\idm$ is therefore only defined for locally idempotent complete $2$-categories $\tc{C}$ with chosen splittings of their idempotent $2$-morphisms. However, since splittings of an idempotent are unique up to a unique isomorphism, different choices of splittings give rise to equivalent completions $\tc{C}^\idm$, and (assuming the axiom of choice) we can always make such choices of splittings.
\end{remark}
\begin{prop}[The idempotent completion is idempotent complete] \label{prop:completioniscomplete} 
The idemepotent completion $\tc{C}^\idm$ of a locally idempotent complete $2$-category $\tc{C}$ is idempotent complete.
\end{prop}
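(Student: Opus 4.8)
The plan is to establish the two conditions that make up idempotent completeness of $\tc{C}^\idm$ (Definition~\ref{def:idempotentcompletion}): that it is locally idempotent complete, and that every separable monad in it admits a separable splitting.

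For local idempotent completeness, I would fix separable monads $P\colon A\to A$ and $Q\colon B\to B$ in $\tc{C}$, viewed as objects $\tilde P,\tilde Q$ of $\tc{C}^\idm$, and an idempotent $2$-morphism $e\colon M\To M$ of $\tc{C}^\idm$ on a $1$-morphism $M\colon\tilde P\to\tilde Q$, that is an idempotent $Q$-$P$-bimodule map. Its underlying $2$-morphism of $\tc{C}$ is idempotent, hence splits since $\tc{C}$ is locally idempotent complete, giving $N$ with $i\colon N\To M$ and $r\colon M\To N$ such that $r\xt i=\It_N$ and $i\xt r=e$. I would then equip $N$ with the action $\rho_N:=r\xt\rho_M\xt(Q\xo i\xo P)$ and check, using that $e$ intertwines $\rho_M$ together with the identities $e\xt i=i$ and $r\xt e=r$, that $\rho_N$ is a $Q$-$P$-action and that $i$ and $r$ are bimodule maps; this produces the desired splitting of $e$ inside $\Hom_{\tc{C}^\idm}(\tilde P,\tilde Q)$. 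These checks are routine.

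The substance is in splitting separable monads. A separable monad on an object $\tilde P=(A,P)$ of $\tc{C}^\idm$ is a $P$-$P$-bimodule $E$ together with bimodule maps $m\colon E\xo_P E\To E$ and $u\colon P\To E$ satisfying the monad axioms over relative composition, plus a separability section $\Delta\colon E\To E\xo_P E$ of $m$. The key observation is that $E$ itself underlies a separable monad $Q$ on $A$ in $\tc{C}$: its multiplication is the composite of the projection $E\xo E\To E\xo_P E$ with $m$, its unit is $\Io_A\To[u_P] P\To[u] E$, and $\Delta$ followed by the inclusion $E\xo_P E\To E\xo E$ provides a bimodule section of this multiplication. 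Verifying the monad axioms for $Q$ and that this section is a $Q$-$Q$-bimodule map is the main technical step and, I expect, the principal obstacle: it is a diagram chase that combines associativity and unitality of $m$ over $P$ with the compatibility of the $P$-module structure on $E$ with the chosen idempotent splittings defining the relative composites, and every identity must be checked compatibly with the fact that relative composition is defined only up to canonical isomorphism. Once this is in hand, $Q$ is an object $\tilde Q$ of $\tc{C}^\idm$.

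It then remains to produce a separable splitting of the monad $E$ through $\tilde Q$. I would take both adjoint $1$-morphisms $\iota\colon\tilde Q\to\tilde P$ and $\rho\colon\tilde P\to\tilde Q$ to be $E$, regarded respectively as a $P$-$Q$-bimodule and a $Q$-$P$-bimodule with the original $P$-actions and with the $Q$-actions furnished by the multiplication of $Q$; take the unit of the adjunction to be $u\colon P\To E$ and the counit to be $m\colon E\xo_P E\To E$. Since relative composition over $\tilde Q$ collapses ($E$ being the identity bimodule on $Q$), one has $\iota\xo\rho\iso E$ and $\rho\xo\iota\iso E\xo_P E$ in $\tc{C}^\idm$; under these identifications I would verify that the cusp equations for $\iota\vdash\rho$ reduce to the unit and associativity axioms of $(E,m,u)$ over $P$, that the induced monad structure on $\iota\xo\rho$ is precisely $(m,u)$, and that $\Delta$, read as a $2$-morphism of $\tc{C}^\idm$, is a right inverse of the counit $m$, so that the splitting is separable. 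Equivalently, one may phrase these last steps through Proposition~\ref{prop:bimodule}: the bimodule $E$, with the structures just described on each side, exhibits the monad $E$ in $\tc{C}^\idm$ as bimodule trivial. Combining this with local idempotent completeness gives that $\tc{C}^\idm$ is idempotent complete.
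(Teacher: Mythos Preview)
Your proposal is correct and follows essentially the same route as the paper. Both parts match: for local idempotent completeness you split the underlying idempotent in $\tc{C}$ and transport the bimodule structure along the section--retraction pair, exactly as the paper does; for separable monads, you observe that a separable monad $(E,m,u,\Delta)$ on $(A,P)$ in $\tc{C}^\idm$ yields a separable monad $Q$ on $A$ in $\tc{C}$ with multiplication $E\xo E\twoheadrightarrow E\xo_P E\To[m] E$ and unit $\Io_A\To P\To[u] E$, and then split the original monad through $\tilde Q$ via the bimodules ${}_PE_Q$ and ${}_QE_P$ with unit $u$ and counit $m$, the section $\Delta$ witnessing separability of the splitting---this is precisely the paper's construction (with $M$ in place of your $E$). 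Your alternative phrasing via Proposition~\ref{prop:bimodule} is an equivalent repackaging of the same argument.
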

\begin{proof} 
The $2$-category $\tc{C}^\idm$ is locally idempotent complete since any idempotent bimodule map $p: {}_AM_B\To {}_AM_B$ splits in $\tc{C}$, resulting in a $1$-morphism $N$ and $2$-morphisms $N\To[r] M \To[s] N$ such that $r\xt s= p$ and $s\xt r = \It_{N}$. The splitting $2$-morphisms are themselves bimodule maps for the bimodule structure $A\xo N \xo B \To [A\xo r\xo B] A\xo M \xo B \To[\rho] M \To[s] M$ on $N$ (here $\rho:A\xo M\xo B \To M$ denotes the $A$-$B$ action on $M$) and hence split the idempotent $p$ in $\tc{C}^\idm$.

We show that every separable monad in $\tc{C}^\idm$ admits a separable splitting. A separable monad in $\tc{C}^\idm$ is a monad $P:A\to A$ in $\tc{C}$ together with a bimodule ${}_PM_P$ and bimodule 2-morphisms $m:{}_PM \xo_P M_P \To[] {}_PM_P$ and  $u: {}_PP_P\To {}_PM_P$ fulfilling the defining equations for a monad in $\tc{C}^\idm$, and such that $m$ has an $M$--$M$-bimodule section $\Delta:M\xo_PM \To M$. 
The $2$-morphisms 
\begin{calign}\nonumber \widehat{m}:=M\xo M \To M\xo_PM \To[m] M 
&
\widehat{u}:=\Io_{A}\To P \To[u] M
\end{calign}
make the $1$-morphism $M:A\to A$ into a separable monad in $\tc{C}$. Here, the 2-morphism $\Io_{A}\To P$ is the unit of the monad $P$ and $M\xo M \To M \xo_P M$ is the projection onto the image of the idempotent defining the composite $M\xo_P M$. By definition, the multiplication $\widehat{m}$ intertwines the left and right action of $P$ on $M$, leading to bimodules ${}_PM_M$ and ${}_MM_P$.
The bimodule morphisms 
\begin{calign}\nonumber \eta:={}_PP_P \To[u] {}_PM_P \To[\cong] {}_PM\xo_M M_P
&
\epsilon:={}_MM\xo_P M_M \To[m] {}_MM_M
\end{calign}
constitute the unit and counit of an adjunction ${}_PM_M \vdash {}_MM_P$ in $\tc{C}^\idm$. By definition, the $M$-$M$ bimodule section $\Delta: M\xo_P M\To M$ is a right inverse of $\epsilon$. The multiplication $\widehat{m}:M\xo M\To M$ induces an isomorphism of bimodules $\psi:{}_PM\xo_MM_P\To {}_PM_P$. This isomorphism is compatible with the monad structure on ${}_PM\xo_M M_P$ induced from the adjunction ${}_PM_M \vdash {}_MM_P$ and defines an isomorphism of monads in $\tc{C}^\idm$. Hence, $\left({}_PM_M\vdash {}_MM_P, \psi\right)$ is a separable splitting of the separable monad ${}_PM_P$ in $\tc{C}^\Delta$.
\end{proof}

There is a fully faithful $2$-functor $i_{\tc{C}}: \tc{C} \to \tc{C}^\idm$ that sends an object $A$ of $\tc{C}$ to the trivial separable monad $\Io_A:A\to A$ and sends $1$- and $2$-morphisms of $\tc{C}$ to themselves seen as bimodules, or bimodule maps, for the respective trivial monad. 

\begin{prop}[A 2-category is idempotent complete when idempotent completion is an equivalence] \label{prop:icequiv}
A locally idempotent complete $2$-category $\tc{C}$ is idempotent complete if and only if the $2$-functor $i_{\tc{C}}:\tc{C} \to \tc{C}^\idm$ is an equivalence. 
\end{prop}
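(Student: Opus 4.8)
The plan is to reduce the statement, as usual for fully faithful functors, to essential surjectivity on objects: the $2$-functor $i_{\tc{C}}$ is already fully faithful (that is, an equivalence on $\Hom$ categories), so it is an equivalence if and only if every object of $\tc{C}^\idm$ is equivalent to one in the image of $i_{\tc{C}}$.

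For the forward direction I would assume $\tc{C}$ is idempotent complete and take an arbitrary object of $\tc{C}^\idm$, i.e.\ a separable monad $P : A \to A$ in $\tc{C}$. By hypothesis $P$ admits a separable splitting $\iota \vdash \rho : A \to B$ with an isomorphism of monads $P \iso \iota \xo \rho$, so by Proposition~\ref{prop:bimodule} the monad $P$ is bimodule trivial: there are a left $P$-module ${}_PM : B \to A$ and a right $P$-module $N_P : A \to B$ with ${}_PM \xo N_P \iso {}_PP_P$ as bimodules and $N \xo_P M \iso \Io_B$. The key observation is then purely a matter of reading these data inside $\tc{C}^\idm$: the right $P$-module $N_P$ is exactly a $1$-morphism $P \to \Io_B = i_{\tc{C}}(B)$ in $\tc{C}^\idm$, the left $P$-module ${}_PM$ is a $1$-morphism $\Io_B \to P$, relative composition over the trivial monad $\Io_B$ is ordinary composition in $\tc{C}$, and the identity $1$-morphism on $P$ is ${}_PP_P$; hence the displayed isomorphisms are precisely $2$-isomorphisms $({}_PM)\xo(N_P) \iso \id_P$ and $(N_P)\xo_P({}_PM) \iso \id_{\Io_B}$ exhibiting $P \equiv i_{\tc{C}}(B)$ in $\tc{C}^\idm$. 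This gives essential surjectivity, hence the equivalence.

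For the backward direction I would assume $i_{\tc{C}}$ is an equivalence. By Proposition~\ref{prop:completioniscomplete} the $2$-category $\tc{C}^\idm$ is idempotent complete, so it suffices to transport this property back along the equivalence. Local idempotent completeness is not even at issue here, since it is a standing hypothesis of the proposition. For the separable-monad condition I would use Remark~\ref{rem:absolutemonad}: separability of a monad and the existence of a separable splitting are equational, hence preserved by every $2$-functor, and (by Theorem~\ref{thm:EMKleisliSeparable} and the surrounding discussion) a separable splitting is the same data as an Eilenberg--Moore/Kleisli object, an absolute $2$-(co)limit. Consequently, given a separable monad $P$ in $\tc{C}$, its image $i_{\tc{C}}(P)$ is a separable monad in $\tc{C}^\idm$, which admits a separable splitting; applying a quasi-inverse $G$ of $i_{\tc{C}}$ yields a separable splitting of $G(i_{\tc{C}}(P))$, and conjugating the adjunction and the section of its counit along the monad isomorphism $G(i_{\tc{C}}(P)) \iso P$ induced by the pseudonatural equivalence $G \circ i_{\tc{C}} \equiv \id_{\tc{C}}$ produces a separable splitting of $P$ itself. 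Thus $\tc{C}$ is idempotent complete.

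The real mathematical content is entirely in the already-established Propositions~\ref{prop:bimodule} and~\ref{prop:completioniscomplete}; what remains is bookkeeping, and that is where the only genuine care is needed. In the forward direction one must match "relative composition with the trivial monad $\Io_B$" against "ordinary composition in $\tc{C}$" and check that the triviality isomorphisms are bimodule maps for the correct bimodule structures, so that they literally constitute an equivalence in the $2$-category $\tc{C}^\idm$ as defined (with the right bimodules in the right slots and $\id_P = {}_PP_P$). In the backward direction one must carry out the routine verification that conjugating an adjunction, together with a compatible monad isomorphism, along an equivalence of the underlying monad again gives a separable splitting. Neither is deep, but the slot-matching in $\tc{C}^\idm$ is the main place an argument could go subtly wrong.
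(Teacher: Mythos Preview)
Your proof is correct, and the forward direction matches the paper exactly. The backward direction, however, takes a different and longer route than the paper.

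The paper's argument is entirely symmetric: since $i_{\tc{C}}$ is fully faithful, it is an equivalence iff it is essentially surjective; essential surjectivity means precisely that every separable monad $P$ is equivalent in $\tc{C}^\idm$ to some $\Io_B = i_{\tc{C}}(B)$, which is exactly the statement that $P$ is bimodule trivial; and Proposition~\ref{prop:bimodule} says bimodule triviality is equivalent to admitting a separable splitting. Both directions are handled at once by the biconditional in Proposition~\ref{prop:bimodule}, and Proposition~\ref{prop:completioniscomplete} is never invoked.

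Your backward direction instead imports idempotent completeness of $\tc{C}^\idm$ from Proposition~\ref{prop:completioniscomplete} and transports it back along a quasi-inverse, appealing to the equational/absolute nature of separable splittings. This is valid, but it is doing more work than necessary: you are proving that idempotent completeness is equivalence-invariant, whereas the paper simply reads off the separable splitting directly from the equivalence $P \equiv \Io_B$ in $\tc{C}^\idm$ via the ``only if'' half of Proposition~\ref{prop:bimodule}. Your approach has the mild advantage of illustrating the general principle that idempotent completeness transfers along equivalences, but the paper's is shorter and avoids the bookkeeping with pseudonatural equivalences that you yourself flag as the delicate part.
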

\begin{proof} Since $i_{\tc{C}}$ is fully faithful, it is an equivalence if and only if it is essentially surjective. Essential surjectivity of $i_{\tc{C}}$ is in turn equivalent to the requirement that any separable monad in $\tc{C}$ is split as a bimodule. By Proposition~\ref{prop:bimodule}, this is equivalent to every separable monad admitting a separable splitting.
\end{proof}

\begin{corollary}[Idempotent completion is an idempotent operation]
For any locally idempotent complete $2$-category, the $2$-functor $i_{\tc{C}^\idm}: \tc{C}^\idm \to \left(\tc{C}^\idm\right)^\idm$ is an equivalence. 
\end{corollary}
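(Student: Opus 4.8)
The statement to prove is the Corollary that, for any locally idempotent complete $2$-category $\tc{C}$, the canonical $2$-functor $i_{\tc{C}^\idm}\colon \tc{C}^\idm \to (\tc{C}^\idm)^\idm$ is an equivalence. By Proposition~\ref{prop:icequiv} applied to the $2$-category $\tc{C}^\idm$ in place of $\tc{C}$, it suffices to check two things: first, that $\tc{C}^\idm$ is itself locally idempotent complete, so that Proposition~\ref{prop:icequiv} is even applicable; and second, that $\tc{C}^\idm$ is idempotent complete. But both of these are exactly the content of Proposition~\ref{prop:completioniscomplete}, which asserts precisely that the idempotent completion $\tc{C}^\idm$ of a locally idempotent complete $2$-category is idempotent complete (and its proof first establishes local idempotent completeness). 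So the argument is: invoke Proposition~\ref{prop:completioniscomplete} to conclude $\tc{C}^\idm$ is idempotent complete, then invoke Proposition~\ref{prop:icequiv} (with $\tc{C}^\idm$ as the input $2$-category) to conclude that $i_{\tc{C}^\idm}$ is an equivalence.

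First I would observe that Proposition~\ref{prop:icequiv} is stated for an arbitrary locally idempotent complete $2$-category, so there is no obstruction to feeding it the $2$-category $\tc{C}^\idm$: we just need $\tc{C}^\idm$ to be locally idempotent complete, which is the first half of the proof of Proposition~\ref{prop:completioniscomplete}. Then the "if" direction of Proposition~\ref{prop:icequiv} says that if a locally idempotent complete $2$-category $\tc{D}$ is idempotent complete, then $i_{\tc{D}}$ is an equivalence; taking $\tc{D} = \tc{C}^\idm$ and using idempotent completeness of $\tc{C}^\idm$ from Proposition~\ref{prop:completioniscomplete} gives that $i_{\tc{C}^\idm}$ is an equivalence. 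This is really a one-line deduction once the two cited results are in hand.

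\textbf{Potential subtlety.} The only point requiring a moment's care is the well-definedness caveat in Remark~\ref{rem:chosensplitting}: the construction $(-)^\idm$ depends on a choice of splittings of idempotent $2$-morphisms, and so $(\tc{C}^\idm)^\idm$ involves a choice of splittings in $\tc{C}^\idm$. But Remark~\ref{rem:chosensplitting} already notes that different choices give equivalent completions, and the first paragraph of the proof of Proposition~\ref{prop:completioniscomplete} exhibits explicit splittings in $\tc{C}^\idm$ (built from splittings in $\tc{C}$), so any coherent set of choices works; the equivalence $i_{\tc{C}^\idm}$ is independent of them up to the natural equivalences of Remark~\ref{rem:chosensplitting}. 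I expect no genuine obstacle here — the corollary is a formal consequence of the two preceding propositions — and I would present it in two sentences: apply Proposition~\ref{prop:completioniscomplete} to see $\tc{C}^\idm$ is idempotent complete, then apply Proposition~\ref{prop:icequiv} with $\tc{C}^\idm$ in place of $\tc{C}$ to conclude $i_{\tc{C}^\idm}$ is an equivalence.
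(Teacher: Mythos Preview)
Your proposal is correct and is exactly the intended argument: the corollary follows immediately from Proposition~\ref{prop:completioniscomplete} (which shows $\tc{C}^\idm$ is idempotent complete, in particular locally idempotent complete) together with Proposition~\ref{prop:icequiv} applied to $\tc{C}^\idm$. The paper presents this as a bare corollary with no proof, since it is a direct consequence of those two propositions.
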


\nid This corollary is analogous to a result of Carqueville--Runkel~\cite{Carqueville:2016}, who show that replacing a 2-category by the 2-category of internal separable Frobenius algebras is an idempotent operation.

\addtocontents{toc}{\SkipTocEntry}
\subsection{Idempotent completion of $2$-functors}

Any $2$-functor $F: \tc{C} \to \tc{D}$ between locally idempotent complete $2$-categories maps separable monads to separable monads, bimodules to bimodules, and bimodule maps to bimodule maps, hence gives rise to a $2$-functor $F^\idm : \tc{C}^\idm \to \tc{D}^\idm$ between the completions.
\begin{remark}[Well-defiinition of idempotent completion of 2-functors]  
As in Remark~\ref{rem:chosensplitting}, $F^\idm$ is really defined for $2$-functors $F:\tc{C} \to \tc{D}$ between $2$-categories with chosen splittings of their idempotent $2$-morphisms. Choosing different splittings leads to a $2$-functor $F^{\idm'} : \tc{C}^{\idm'} \to \tc{D}^{\idm'}$ equivalent to $\tc{C}^{\idm'} \equiv \tc{C}^\idm \to[F^\idm] \tc{D}^\idm \equiv \tc{D}^{\idm'}$, where $\tc{C}^{\idm'} \equiv \tc{C}^\idm$ and $\tc{D}^{\idm'} \equiv \tc{D}^\idm$ are the canonical `splitting change' equivalences from Remark~\ref{rem:chosensplitting}. 

More generally, we expect $\left(-\right)^\idm$ to be a $3$-functor on the $3$-category of locally idempotent $2$-categories with chosen splittings, with $2$-functors, natural transformations, and modifications between them. A choice of splittings for every locally idempotent complete $2$-category provides an inverse to the forgetful $3$-functor from the preceding $3$-category to the $3$-category of locally idempotent complete $2$-categories, hence induces a $3$-functor $(-)^\idm$ on the latter $3$-category. Different choices of splittings lead to distinct, but equivalent, inverses to the forgetful $3$-functor, and therefore to equivalent $3$-functors $(-)^\idm$.

Henceforth, whenever we refer to $F^\idm: \tc{C}^\idm \to \tc{D}^\idm$, we have implicitly chosen a splitting of every idempotent $2$-morphism in $\tc{C}$ and $\tc{D}$.
\end{remark}

\begin{remark}[Idempotent completion is an idempotent 3-monad] 
Together with the transformation $i_{C} : \tc{C} \to \tc{C}^\idm$ and the equivalence $\left(\tc{C}^\idm\right)^\idm\to \tc{C}^\idm$, we expect $(-)^\idm$ to be an idempotent $3$-monad on the $3$-category of locally idempotent complete $2$-categories.
\end{remark}
We establish several properties of $(-)^\idm$, all of them consequences of what it would mean to be a $3$-monad on a $2$-category.
\begin{prop}[Invariance and functoriality of idempotent completion] \label{prop:assortedproperties} 
Let $F, G:\tc{C} \to \tc{D}$ and $H:\tc{D} \to \tc{E}$ be $2$-functors between locally idempotent complete $2$-categories. Then the following hold, where $\equiv$ denotes equivalence of $2$-functors:
\begin{enumerate}
\item If $F\equiv G$, then $F^\idm \equiv G^\idm$.
\item $(H\xo G)^\idm \equiv H^\idm \xo G^\idm$.
\item $F^\idm \xo i_{\tc{C}} \equiv i_{\tc{D}}\xo F$.
\item $(i_{\tc{C}})^\idm \equiv i_{\tc{C}^\idm}$.
\end{enumerate}
\end{prop}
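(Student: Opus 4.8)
Proposition~\ref{prop:assortedproperties} asserts four compatibility properties of the idempotent completion $2$-functor, and my plan is to prove each of the four items in turn, treating $(-)^\idm$ throughout as a strictly computable construction on the chosen-splitting data (Remark~\ref{rem:chosensplitting} and the subsequent remark on $F^\idm$). The underlying principle in every case is the same: an object of $\tc{C}^\idm$ is a separable monad $P$ in $\tc{C}$, a $1$-morphism is a bimodule, a $2$-morphism is a bimodule map, and $F^\idm$ acts by applying $F$ to all of this data; composition in $\tc{C}^\idm$ is the relative composite of bimodules, which is a chosen splitting of an explicit idempotent $2$-morphism. So each claimed equivalence of $2$-functors will come down to exhibiting an invertible natural transformation whose components are built from the canonical comparison isomorphisms between two splittings of the ``same'' idempotent (or between $F$ applied to a splitting and a splitting of $F$ applied to an idempotent). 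None of these is deep; the work is bookkeeping.

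First I would handle (3), $F^\idm \xo i_{\tc{C}} \equiv i_{\tc{D}} \xo F$, since it is the most transparent. On objects: $i_{\tc{C}}(A)$ is the trivial monad $\Io_A$, and $F^\idm(\Io_A) = \Io_{FA}$ as a separable monad in $\tc{D}$ (since $F$ is a $2$-functor, $F(\Io_A) = \Io_{FA}$ and $F$ preserves the multiplication and unit, which are identity $2$-morphisms), which is exactly $i_{\tc{D}}(FA)$; likewise on $1$- and $2$-morphisms, a morphism of $\tc{C}$ regarded as a bimodule over trivial monads is sent by $F^\idm$ to $F$ of it regarded as a bimodule, matching $i_{\tc{D}} \xo F$. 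The only subtlety is that relative composition over a trivial monad $\Io_A$ is the splitting of an idempotent that is already an identity; with the natural convention that the chosen splitting of an identity idempotent is the object itself, $F^\idm \xo i_{\tc{C}}$ and $i_{\tc{D}} \xo F$ agree strictly (or, if one does not make that convention, they agree up to the canonical splitting-comparison isomorphisms, still giving an equivalence). Item (4), $(i_{\tc{C}})^\idm \equiv i_{\tc{C}^\idm}$, is similar in spirit but lives one level up: $i_{\tc{C}} : \tc{C} \to \tc{C}^\idm$ is a $2$-functor, so $(i_{\tc{C}})^\idm : \tc{C}^\idm \to (\tc{C}^\idm)^\idm$ sends a separable monad $P$ in $\tc{C}$ to the trivial separable monad on the object $i_{\tc{C}}(P) \in \tc{C}^\idm$ equipped with the bimodule structure $F(\text{data of }P)$ --- but unwinding the definition, the trivial monad on $i_{\tc{C}}(P)$ in $(\tc{C}^\idm)^\idm$ is precisely what $i_{\tc{C}^\idm}$ assigns to the object $P \in \tc{C}^\idm$. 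Again the comparison is essentially tautological once one carefully identifies the monad $i_{\tc{C}}(P) = {}_{\Io_A}\! (\text{the monad }P)_{\Io_A}$ inside $\tc{C}^\idm$ with $P$ itself.

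For (1), $F \equiv G \Rightarrow F^\idm \equiv G^\idm$: a natural equivalence $\theta : F \To G$ has components $\theta_A : FA \to GA$ invertible in $\tc{D}$, naturality data $\theta_f$, etc. Given a separable monad $P : A \to A$, conjugation by $\theta_A$ turns the monad $FP$ into a monad isomorphic to $GP$ (the iso built from $\theta_{P}$ and the monad coherence of $\theta$), i.e.\ $\theta_A$ lifts to a $1$-morphism in $\tc{D}^\idm$ from $F^\idm P = FP$ to $G^\idm P = GP$; this lift, together with its inverse (built from $\theta_A^{-1}$), is an equivalence in $\tc{D}^\idm$, and these assemble into a natural equivalence $F^\idm \To G^\idm$. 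The verification that this respects relative composition is where one uses naturality of $\theta$ against the idempotent defining the relative composite; it is routine but somewhat lengthy. Item (2), $(H \xo G)^\idm \equiv H^\idm \xo G^\idm$, is the statement that completion is (pseudo)functorial: on objects $(HG)^\idm P = HGP$ and $H^\idm(G^\idm P) = H^\idm(GP) = HGP$ agree on the nose; on $1$-morphisms one must compare $HG$ of a relative composite idempotent in $\tc{C}$ with a relative composite idempotent in $\tc{D}^\idm$ of $HG$-images, and since $HG$ preserves the constituent $2$-morphisms (units, sections $\Delta$, actions) of the idempotent, $HG$ of a chosen splitting is \emph{a} splitting of the $HG$-image idempotent, hence canonically isomorphic to the chosen one --- these canonical isomorphisms are the components of the required equivalence.

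The main obstacle I anticipate is not any single item but the pervasive need to keep track of the chosen-splitting ambiguity: relative composition, and hence composition in $\tc{C}^\idm$, is only well-defined up to canonical isomorphism, so every equivalence above is really an equivalence between two pseudofunctors that differ by insertions of splitting-comparison isomorphisms, and checking that these assemble coherently (i.e.\ that the comparison $2$-morphisms I write down actually satisfy the naturality squares and, for the equivalences of $2$-functors, the modification-type coherence conditions) is the delicate part. In practice I would set this up once, cleanly, by invoking the ``$3$-monad'' perspective flagged in the excerpt: fix a global choice of splittings, observe that $(-)^\idm$ is then \emph{strictly} functorial on the $3$-category of locally idempotent complete $2$-categories \emph{with chosen splittings}, deduce (2), (3), (4) at that strict level, and then transport across the canonical splitting-change equivalences of Remark~\ref{rem:chosensplitting} to the splitting-free $3$-category, which only weakens strict equalities to the equivalences claimed; item (1) then follows from functoriality applied to a chosen-splitting refinement of $\theta$. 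This reduces the whole proposition to the single lemma that $(-)^\idm$ is strictly functorial in the chosen-splitting setting, whose proof is the bookkeeping described above and uses nothing beyond the definitions of separable monad, bimodule, bimodule map, and relative composition recalled in this appendix, together with Proposition~\ref{prop:completioniscomplete}.
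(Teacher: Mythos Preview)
Your proposal is correct and follows essentially the same approach as the paper: items (2), (3), (4) are treated as direct consequences of the definitions (the paper dismisses them in a single sentence, while you spell out more of the bookkeeping), and item (1) is the one requiring real work, namely constructing an explicit natural equivalence $F^\idm \To G^\idm$ from a given $\theta: F \To G$. The paper's construction makes your ``conjugation by $\theta_A$'' precise: the component at a separable monad $(P,m,u)$ on $A$ is the $G(P)$--$F(P)$-bimodule $G(P)\xo \theta_A$, with inverse $\theta_A^{-1}\xo G(P)$, and the bimodule isomorphism at a bimodule $M$ is built from $\theta_M$ via canonical identifications $G(P)\xo \theta_B \xo_{F(Q)} F(M) \cong \theta_B \xo F(M)$ and $G(M)\xo_{G(P)} G(P)\xo\theta_A \cong G(M)\xo\theta_A$. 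Your $3$-monad organizational strategy is a reasonable alternative packaging (and the paper alludes to it in a remark), but the paper proceeds by direct construction rather than invoking strict functoriality on a chosen-splittings $3$-category.
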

\begin{proof}The equivalences (2), (3) and (4) are direct consequences of the definition of $F^\idm$ and $i_{\tc{C}}$. We prove property (1). Let $\eta: F\To G$ be a natural equivalence with component equivalences $\eta_A:F(A) \to G(A)$ for objects $A$ in $\tc{C}$, and isomorphisms $\eta_f: \eta_B \xo F(f) \To  G(f) \xo \eta_A$ for $1$-morphisms $f:A\to B$ in $\tc{C}$. For a separable monad $(A\to[P] A, P \xo P \To[m] P, \Io_A\To[u] P)$ in $\tc{C}$, we define --- omitting all coherence isomorphisms of $F$ and $G$ --- the $G(P)$--$F(P)$-bimodule $\eta^\idm_{(P,m,u)}:= G(P)\xo\eta_A$ with action
\[ 
\shrinker{.85}{
G(P) \xo G(P) \xo  \eta_A \xo F(P) \To[G(m)\xo \eta_A\xo F(P)] G(P)  \xo \eta_A\xo F(P) \To[G(P) \xo \eta_P]   G(P) \xo G(P) \xo \eta_A \To[G(m) \xo \eta_A] G(P) \xo \eta_A.
}
\] 
This bimodule is invertible; an inverse is the bimodule $\eta_A^{-1} \xo G(P)$, with action as above, where $\eta^{-1}:G\To F$ denotes an inverse of the natural equivalence $\eta$. 

For a bimodule $(A\to[M]B, Q\xo M \xo P\To[\rho] M)$ in $\tc{C}$, the $G(P)$--$F(P)$-bimodule morphisms 
\[G(P) \xo  \eta_B \xo F(M) \To[\eta_{P}^{-1}\xo F(M)] \eta_B \xo F(P) \xo F(M) \To[\eta_B \xo F(\rho) ] \eta_B \xo F(M)
\]
\[G(M) \xo G(P) \xo \eta_A \To[G(\rho) \xo \eta_A] G(M) \xo \eta_A
\]
induce bimodule isomorphisms 
\begin{align*}
\eta^\idm_{(Q,m_Q,u_Q)} \xo_{F^\idm(Q,m_Q,u_Q)} F^\idm(M,\rho) &\iso \eta_B \xo F(M)\\
G^\idm(M,\rho) \xo_{G^\idm(P, m_P, u_P)} \eta^\idm_{(P, m_P,u_P)} &\iso G(M) \xo \eta_A.
\end{align*}
Using these isomorphisms, we define the bimodule isomorphism $\eta^\idm_{(M,\rho)}$ as follows:
\[
\shrinker{.9}{
\eta^\idm_{(Q,m_Q,u_Q)} \xo_{F^\idm(Q,m_Q,u_Q)} F^\idm(M,\rho) \iso \eta_B \xo F(M) \To[\eta_M]  G(M) \xo \eta_A \iso G^\idm(M,\rho) \xo_{G^\idm(P, m_P, u_P)} \eta^\idm_{(P, m_P,u_P)}
} 
\]
It can be verified that the equivalences $\eta^\idm_{(P, m, u)}$ and the bimodule isomorphisms $\eta^\idm_{(M, \rho)}$ form a natural equivalence between $F^\idm$ and $G^\idm$. 
\end{proof}

We now show that the $2$-category $\tc{C}^\idm$ deserves the name `idempotent completion': every $2$-functor from the locally idempotent complete $2$-category $\tc{C}$ into an idempotent complete $2$-category $\tc{D}$ factors uniquely through $\tc{C}^\idm$.
\begin{prop}[The idempotent completion is initial among idempotent complete targets] \label{prop:extension} 
Let $\tc{C}$ be a locally idempotent complete $2$-category and let $\tc{D}$ be an idempotent complete $2$-category. Then, any $2$-functor $F:\tc{C} \to \tc{D}$ uniquely extends to a $2$-functor $\widehat{F}: \tc{C}^\idm \to \tc{D}$. That is, there exists a $2$-functor $\widehat{F}: \tc{C}^\idm \to \tc{D}$ such that $\widehat{F} \xo i_{\tc{C}}$ is equivalent to $F$, and if $\widehat{F}, \widehat{F}': \tc{C}^\idm \to \tc{D}$ are $2$-functors such that $\widehat{F} \xo i_{\tc{C}} \equiv  \widehat{F}' \xo i_{\tc{C}}$, then $\widehat{F}$ and $\widehat{F}'$ are equivalent.
\end{prop}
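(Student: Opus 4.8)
I would build $\widehat{F}$ from the idempotent completion of $2$-functors, and derive uniqueness from the functoriality of $(-)^\idm$ recorded in Proposition~\ref{prop:assortedproperties} together with the fact (Proposition~\ref{prop:icequiv}) that $i_{\tc{D}}$ is an equivalence when $\tc{D}$ is idempotent complete. Concretely: since $\tc{D}$ is idempotent complete, fix a quasi-inverse $j\colon\tc{D}^\idm\to\tc{D}$ of $i_{\tc{D}}$, with chosen equivalences $j\circ i_{\tc{D}}\equiv\mathrm{id}_{\tc{D}}$ and $i_{\tc{D}}\circ j\equiv\mathrm{id}_{\tc{D}^\idm}$. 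Given $F\colon\tc{C}\to\tc{D}$, set
\[
\widehat{F} := j\circ F^\idm\colon \tc{C}^\idm \to \tc{D}.
\]
Unwinding $F^\idm$ and $j$, this sends an object of $\tc{C}^\idm$, i.e.\ a separable monad $P\colon A\to A$ in $\tc{C}$, to the Eilenberg--Moore (equivalently Kleisli, by Theorem~\ref{thm:EMKleisliSeparable}) object of the separable monad $F(P)\colon F(A)\to F(A)$ in $\tc{D}$, which exists by idempotent completeness of $\tc{D}$, and sends bimodules and bimodule maps to the induced $1$- and $2$-morphisms between those objects; this is the explicit form of the promised extension. Like $F^\idm$ itself, $\widehat{F}$ depends on chosen splittings of idempotent $2$-morphisms, but only up to equivalence, which is all that is asserted.

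\textbf{Existence check.} By Proposition~\ref{prop:assortedproperties}(3) applied to $F$ we have $F^\idm\circ i_{\tc{C}}\equiv i_{\tc{D}}\circ F$; composing with $j$ and using $j\circ i_{\tc{D}}\equiv\mathrm{id}_{\tc{D}}$ gives
\[
\widehat{F}\circ i_{\tc{C}} \;=\; j\circ F^\idm\circ i_{\tc{C}} \;\equiv\; j\circ i_{\tc{D}}\circ F \;\equiv\; F,
\]
so $\widehat F$ genuinely extends $F$ along $i_{\tc{C}}$.

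\textbf{Uniqueness.} Suppose $\widehat{F},\widehat{F}'\colon\tc{C}^\idm\to\tc{D}$ satisfy $\widehat{F}\circ i_{\tc{C}}\equiv\widehat{F}'\circ i_{\tc{C}}$. Applying $(-)^\idm$ and using Proposition~\ref{prop:assortedproperties}(1) (invariance) and (2) (functoriality) yields $\widehat{F}^\idm\circ(i_{\tc{C}})^\idm\equiv\widehat{F}'^\idm\circ(i_{\tc{C}})^\idm$, and Proposition~\ref{prop:assortedproperties}(4) replaces $(i_{\tc{C}})^\idm$ by $i_{\tc{C}^\idm}$, so $\widehat{F}^\idm\circ i_{\tc{C}^\idm}\equiv\widehat{F}'^\idm\circ i_{\tc{C}^\idm}$ (note $\tc{C}^\idm$ is locally idempotent complete by Proposition~\ref{prop:completioniscomplete}, so these applications are legitimate). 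On the other hand, Proposition~\ref{prop:assortedproperties}(3) applied to the $2$-functors $\widehat{F}$ and $\widehat{F}'$ gives $\widehat{F}^\idm\circ i_{\tc{C}^\idm}\equiv i_{\tc{D}}\circ\widehat{F}$ and $\widehat{F}'^\idm\circ i_{\tc{C}^\idm}\equiv i_{\tc{D}}\circ\widehat{F}'$. Chaining these three equivalences gives $i_{\tc{D}}\circ\widehat{F}\equiv i_{\tc{D}}\circ\widehat{F}'$, and since $i_{\tc{D}}$ is an equivalence (Proposition~\ref{prop:icequiv}) we cancel it to conclude $\widehat{F}\equiv\widehat{F}'$.

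\textbf{Expected main obstacle.} There is no deep difficulty once the appendix machinery is in place: completeness of $\tc{D}$ enters only through the statement that $i_{\tc{D}}$ is an equivalence, and everything else is bookkeeping with the equivalences of Proposition~\ref{prop:assortedproperties}. The one point needing care is coherence --- composing the chosen quasi-inverse $j$ with the natural equivalences of Proposition~\ref{prop:assortedproperties}, and, in the uniqueness argument, chaining three such equivalences, must respect the coherences of $2$-functors of $2$-categories; each step is routine but should be spelled out. It is worth remarking that the only substantive input is hidden in Proposition~\ref{prop:icequiv}: an "elementary" proof that avoided $(-)^\idm$ on $2$-functors would instead have to show directly that a $2$-functor into an idempotent complete $2$-category preserves, up to coherent equivalence, the splittings of separable monads --- which is precisely what Proposition~\ref{prop:icequiv} packages, and which is what makes Corollary~\ref{cor:extensionmultifusion} work.
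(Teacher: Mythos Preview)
Your proposal is correct and follows essentially the same approach as the paper: define $\widehat{F}$ as the composite of $F^\idm$ with a chosen inverse of $i_{\tc{D}}$, verify the extension property via Proposition~\ref{prop:assortedproperties}(3), and deduce uniqueness from parts (1)--(4) of that proposition together with the invertibility of $i_{\tc{D}}$. The paper organizes the uniqueness step slightly differently---showing directly that any extension $\widehat{F}$ satisfies $\widehat{F}\equiv i_{\tc{D}}^{-1}\circ(\widehat{F}\circ i_{\tc{C}})^\idm$, hence depends only on its restriction---but the ingredients and logic are the same as yours.
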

\begin{proof} Since $\tc{D}$ is idempotent complete, the $2$-functor $i_\tc{D}: \tc{D} \to \tc{D}^\idm$ is an equivalence by Proposition~\ref{prop:icequiv}; we fix an inverse $i_{\tc{D}}^{-1}: \tc{D}^\idm \to \tc{D}$. Given a $2$-functor $F:\tc{C} \to \tc{D}$, we define its extension
\[\widehat{F}:= \tc{C}^\idm \to[F^\idm] \tc{D}^\idm \to[i_{\tc{D}}^{-1}] \tc{D}.
\] 
It follows from Proposition~\ref{prop:assortedproperties}(3) that $ \widehat{F} \xo i_{\tc{C}} \equiv i_{\tc{D}}^{-1} \xo i_{\tc{D}} \xo F \equiv F $.

Given two extensions $\widehat{F}, \widehat{F}': \tc{C}^\idm \to \tc{D}$ such that $\widehat{F} \xo i_{\tc{C}} \equiv \widehat{F}' \xo i_{\tc{C}}$, by Proposition~\ref{prop:assortedproperties}(2--4), we have
\[  i_{\tc{D}}^{-1} \xo \left(\widehat{F}\xo i_{\tc{C}}\right)^\idm   \equiv 
i_{\tc{D}}^{-1} \xo \widehat{F}^\idm  \xo i_{\tc{C}}^\idm
\equiv  i_{\tc{D}}^{-1} \xo \widehat{F}^\idm  \xo i_{\tc{C}^\idm}\equiv 
i_{\tc{D}}^{-1} \xo i_{\tc{D}}\xo  \widehat{F} 
\equiv \widehat{F}.
\]
It follows from Proposition~\ref{prop:assortedproperties}(1) that $\widehat{F} \equiv i_{\tc{D}}^{-1} \xo \left( \widehat{F} \xo i_{\tc{C}}\right)^\idm \equiv i_{\tc{D}}^{-1} \xo \left( \widehat{F}' \xo i_{\tc{C}}\right)^\idm \equiv \widehat{F}'$. 
\end{proof}

\newpage

\addtocontents{toc}{\protect\vspace{8pt}}

\section{Dimension formulas in spherical prefusion $2$-categories}

We prove various formulas relating the dimensions of objects and $1$-morphisms in spherical prefusion $2$-categories.  

Recall from Definition~\ref{def:dimhom} that for simple objects $A$ and $B$ of a finite presemisimple 2-category $\tc{C}$, the dimension $\dim(\Hom_{\tc{C}}(A,B))$ is the sum over simple 1-morphisms from $A$ to $B$ of the squared norm of the 1-morphism.  Similarly recall from Definition~\ref{def:dimension2cat} that for a finite presemisimple 2-category $\tc{C}$, the dimension $\dim(\tc{C})$ is the sum over components of the reciprocal of the dimensions of the endomorphism categories.  Finally recall from Definition~\ref{def:quantumdimension} that for a 1-morphism $f$ in a spherical prefusion 2-category, the dimension $\dim(f)$ is the (2-spherical) trace of the identity of $f$, and for an object $A$, the dimension $\dim(A)$ is the dimension of the identity of $A$.

For a simple object $A$ of a spherical prefusion 2-category $\tc{C}$, we will denote by $n(A)$ the number of equivalence classes of simple objects in the component of $A$.  We will also use the abbreviation $\dmo(A) := \dim(A) \dim(\End_{\tc{C}}(A)) n(A)$.

\begin{appprop}[Dimension of 1-morphisms is relatively multiplicative] \label{prop:factoringthroughsimple} 
Let $g:A\to B$ and $f:B\to C$ be $1$-morphisms in a spherical prefusion $2$-category and assume that $B$ is simple. Then
\[
\dim(f\xo g) =\langle \tr_R(\It_g)\rangle  \dim(f) = \langle \tr_L(\It_f) \rangle \dim(g) = \frac{\dim(f) \dim(g)}{\dim(B)}
\]
\end{appprop}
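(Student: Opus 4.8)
The plan is to reduce everything to the back $2$-spherical trace construction and then use pivotality plus simplicity of $B$. First I would unpack $\dim(f \xo g) = \Tr(\It_{f\xo g})$. By Definition~\ref{def:spheretrace}, this equals $\tr_R\!\left[e_C \xo (\It_{f\xo g} \xz C^\#)\right]$; using the planar pivotal compatibility with tensor and the functoriality of adjoints, the relevant $1$-morphism $e_C \xo ((f\xo g) \xz C^\#)$ can be rewritten as a composite in which the planar trace can be taken in stages. Concretely, I would insert the identity on $f\xo g$ written via the cusp/triangle identities so that the picture becomes a $g$-loop based at the $B$-labeled region nested inside an $f$-loop based at the sphere; the point is that $\tr_R$ of $\It_{f\xo g}$, taken in the planar pivotal $2$-category, factors as $\tr_R(\It_g)$ (a scalar endomorphism of $\Io_B$, by simplicity of $B$) times the remaining $f$-loop, which after closing up on the sphere is exactly $\dim(f) = \Tr(\It_f)$. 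This gives $\dim(f\xo g) = \langle \tr_R(\It_g)\rangle \, \dim(f)$. The symmetric manipulation, taking the trace of the $f$-loop first (which by Proposition~\ref{prop:leftrighttrace} and cyclicity of the planar trace may equally be realized as $\tr_L(\It_f)$), yields $\dim(f\xo g) = \langle \tr_L(\It_f)\rangle \, \dim(g)$.

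Next I would identify the scalars $\langle \tr_R(\It_g)\rangle$ and $\langle \tr_L(\It_f)\rangle$ with ratios of dimensions. Apply the already-established identity $\dim(f\xo g) = \langle \tr_R(\It_g)\rangle \dim(f)$ in the special case $f = \Io_B$, $g = \Io_B$ (legitimate since $B$ is simple so $\Io_B$ is a simple $1$-endomorphism of a simple object): this gives $\dim(\Io_B) = \langle \tr_R(\It_{\Io_B})\rangle \dim(\Io_B)$, hence $\langle \tr_R(\It_{\Io_B})\rangle = 1$. More usefully, taking $g$ arbitrary and $f = \Io_B$ gives $\dim(g) = \langle \tr_R(\It_g)\rangle \dim(\Io_B) = \langle \tr_R(\It_g)\rangle \dim(B)$, so $\langle \tr_R(\It_g)\rangle = \dim(g)/\dim(B)$; here I use that $\dim(B) \neq 0$ by the Corollary following Proposition~\ref{prop:simplenonzero}. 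Symmetrically, taking $g = \Io_B$ in the left-trace identity gives $\langle \tr_L(\It_f)\rangle = \dim(f)/\dim(B)$. Substituting these back produces $\dim(f\xo g) = \dim(f)\dim(g)/\dim(B)$, completing the chain of equalities.

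The main obstacle I anticipate is the first step: carefully justifying that the planar trace of $\It_{f\xo g}$ really does split as the scalar $\langle\tr_R(\It_g)\rangle$ times the $f$-loop. This requires using that $(f\xo g)^* = g^* \xo f^*$ and that the unit and counit of the composite adjunction factor as in condition C3 of Definition~\ref{def:planarpivotal} (so $\eta_{f\xo g} = (g^* \xo \eta_f \xo g)\xt \eta_g$ and $\epsilon_{f\xo g} = \epsilon_f \xt (f \xo \epsilon_g \xo f^*)$), together with simplicity of $\Io_B$ (equivalently $B$, via Proposition~\ref{prop:simpleiffidentity}) to extract the scalar $\langle \tr_R(\It_g)\rangle$ from the inner loop; then one applies pivotality (compatibility of units/counits with tensor, Definition~\ref{def:monoidalplanarpivotal}) to see that the outer trace on the sphere is unchanged by scalar multiplication of the inner $B$-region data and reproduces $\Tr(\It_f)$ exactly. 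The remaining steps are then essentially substitutions once $\dim(B) \neq 0$ is invoked.
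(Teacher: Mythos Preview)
Your proposal is correct and follows essentially the same approach as the paper: factor the planar trace of $\It_{f\xo g}$ using the functoriality of units and counits (Definition~\ref{def:planarpivotal}, C3) so that simplicity of $B$ extracts the scalar $\langle\tr_R(\It_g)\rangle$ and leaves $\Tr(\It_f)=\dim(f)$, then specialize to $f=\Io_B$ or $g=\Io_B$ to identify the scalars as $\dim(g)/\dim(B)$ and $\dim(f)/\dim(B)$. The paper compresses your first step into the single line $\dim(f\xo g)=\Tr(\It_f\xo\tr_R(\It_g))$, but the content is the same.
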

\begin{proof}Simplicity of $B$ implies that $\tr_R(\It_g) = \langle \tr_R(\It_g)\rangle \It_{\Io_B}$. Hence, $\dim(f\xo g) = \Tr(\It_f\xo\tr_R(\It_g)) = \langle\tr_R(\It_g) \rangle \dim(f)$. The second equation follows analogously after an application of Proposition~\ref{prop:leftrighttrace}. The last equation follows since $\dim(f) = \langle \tr_L(\It_f) \rangle \dim(B)$, $\dim(g) = \langle \tr_R(\It_g)\rangle \dim(B)$, and $\dim(B)$ is nonzero.
\end{proof}

\begin{applemma}[Dimension of 1-morphisms is additive]\label{prop:dimensionhom}
Let $f:A\to B$ be a $1$-morphism in a spherical prefusion $2$-category. Then
\[ \sum_{h:A\to B} \dim\left(\Hom_{\tc{C}}(h,f)\right) \dim(h) = \dim(f),
\]
where the sum is over representatives of the simple $1$-morphisms $h:A\to B$.
\end{applemma}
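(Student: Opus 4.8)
The plan is to decompose $f$ into simple $1$-morphisms and use the additivity of $\dim$ together with basic properties of the nondegenerate pairing from Definition~\ref{def:pairing}. First I would write $f \iso \bigoplus_{i \in I} s_i$ as a direct sum of simple $1$-morphisms $s_i : A \to B$ (using local semisimplicity of $\tc{C}$), with inclusion and projection $2$-morphisms $\iota_i : s_i \leftrightarrows f : p_i$ satisfying $p_i \xt \iota_i = \It_{s_i}$ and $\sum_i \iota_i \xt p_i = \It_f$. Since the $2$-spherical trace $\Tr$ is linear and cyclic (it is built from planar traces, which are cyclic by the argument in the definition of the $2$-spherical trace), we get $\dim(f) = \Tr(\It_f) = \sum_i \Tr(\iota_i \xt p_i) = \sum_i \Tr(p_i \xt \iota_i) = \sum_i \Tr(\It_{s_i}) = \sum_i \dim(s_i)$.

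Next I would identify $\sum_i \dim(s_i)$ with the right-hand side of the claimed formula. Grouping the summands $i \in I$ by the isomorphism class of $s_i$, the multiplicity with which a fixed representative simple $h : A \to B$ occurs among the $s_i$ is exactly $\dim_k \Hom_{\tc{C}}(h, f)$: indeed, since $h$ is simple, $\End_{\tc{C}}(h) = k$, and the number of summands of $f$ isomorphic to $h$ equals the dimension of the space of maps $h \To f$ (equivalently of $f \To h$) by semisimplicity of the Hom category $\Hom_{\tc{C}}(A,B)$. Here I should be careful about a potential mismatch: the statement writes $\dim(\Hom_{\tc{C}}(h,f))$ rather than $\dim_k \Hom_{\tc{C}}(h,f)$, but since $h$ and $f$ are $1$-morphisms and $\Hom_{\tc{C}}(h,f)$ is a vector space, $\dim$ here means the $k$-linear dimension; I would make this explicit. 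Then $\sum_i \dim(s_i) = \sum_{h} \dim_k(\Hom_{\tc{C}}(h,f)) \dim(h)$, the sum over representatives of simple $1$-morphisms $A \to B$, which is precisely the desired identity since $\dim(h)$ depends only on the isomorphism class of $h$ (by planar pivotality, as noted after Definition~\ref{def:quantumdimension}).

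The only mild subtlety—and the step I expect to require the most care—is justifying that the multiplicity of a simple $h$ in the decomposition of $f$ coincides with $\dim_k \Hom_{\tc{C}}(h,f)$, and that this is a well-defined finite number independent of the chosen decomposition; this is standard semisimple linear algebra in the finite semisimple category $\Hom_{\tc{C}}(A,B)$, using that $\End_{\tc{C}}(h) = k$ for $h$ simple and that distinct simple $1$-morphisms have no nonzero maps between them. Everything else is a direct application of linearity and cyclicity of $\Tr$, so the proof is short; I would present it essentially as the two-sentence computation above, with the multiplicity bookkeeping spelled out.
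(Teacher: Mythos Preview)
Your proposal is correct and follows essentially the same argument as the paper: decompose $f$ into simple $1$-morphisms, use linearity and cyclicity of the trace to get $\dim(f) = \sum_i \dim(s_i)$, and then group the summands by isomorphism class using that the multiplicity of a simple $h$ equals $\dim_k \Hom_{\tc{C}}(h,f)$. The paper's proof is exactly this two-step computation, with the same chain of equalities $\Tr(\It_f) = \sum_j \Tr(i_j \xt p_j) = \sum_j \Tr(p_j \xt i_j) = \sum_j \dim(f_j)$ followed by the multiplicity bookkeeping.
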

\begin{proof}
Let $\{i_j:f_j \leftrightarrows f: p_j\}_{j \in J}$ be a direct sum decomposition of $f$ into simple $1$-morphisms. For $h:A\to B$ a simple $1$-morphism, let $J_h \subseteq J$ be the subset of the $1$-morphisms $f_j$ isomorphic to $h$. Since $|J_h| = \dim(\Hom_{\tc{C}}(h,f))$, it follows that 
\[
\begin{split}\dim(f) &= \Tr(\It_f) =  \sum_{j \in J} \Tr(i_j\xt p_j) = \sum_{j \in J} \Tr(p_j \xt i_j) = \sum_{j \in J} \dim(f_j) \\ &= \sum_{h:A\to B} |J_h| \dim(h)= \sum_{h:A\to B} \dim\left( \Hom_{\tc{C}}(h,f) \right) \dim(h).
\end{split}
\]
In the second step we used that the trace is additive for $2$-morphisms, that is $\Tr(\alpha + \beta) = \Tr(\alpha) + \Tr(\beta)$ for $\alpha, \beta: g\To h$.
\end{proof}
\begin{appprop}[Dimension of a 1-morphism from its precompositions] \label{prop:maindimensionformula}
Let $f:A\to B$ be a $1$-morphism in a spherical prefusion $2$-category $\tc{C}$. Then
\[
\sum_{C, C\to[g]A} \frac{\dim(g) \dim(f\xo g)}{\dmo(C)} = \dim(f),
\]
where the sum is over representatives of the simple objects $C$ and simple $1$-morphisms $g:C\to A$.
\end{appprop}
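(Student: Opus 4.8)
The plan is to deduce the formula from Propositions~\ref{prop:factoringthroughsimple} and~\ref{prop:sameblock} together with Definitions~\ref{def:squarednorm} and~\ref{def:dimhom}, first handling the case in which the target object $A$ is simple and then reducing the general case to it.

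For the first step I would record, for a simple $1$-morphism $g\colon C\to A$ between simple objects, the identities $\dim(g)=\langle\tr_R(\It_g)\rangle\dim(A)=\langle\tr_L(\It_g)\rangle\dim(C)$, obtained by applying Proposition~\ref{prop:factoringthroughsimple} to $\Io_A\xo g$ and to $g\xo\Io_C$, and the identity $\|g\|=\langle\tr_L(\It_g)\rangle\langle\tr_R(\It_g)\rangle$, obtained by unwinding Definition~\ref{def:squarednorm} in the planar pivotal setting (taking the isomorphism $a\colon g\To g^{**}$ to be $\It_g$ and using $g^{**}=g$). Multiplying gives $\dim(g)^2=\|g\|\,\dim(A)\dim(C)$. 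Using this together with Definition~\ref{def:dimhom} and Proposition~\ref{prop:sameblock}, I would then prove the auxiliary identity $\sum_{C,\,g\colon C\to A}\dim(g)^2\,\dmo(C)^{-1}=\dim(A)$ for $A$ simple: after substitution the left side becomes $\dim(A)\sum_C\bigl(\dim(\End_{\tc{C}}(C))\,n(C)\bigr)^{-1}\dim(\Hom_{\tc{C}}(C,A))$, and by Proposition~\ref{prop:sameblock} the summand equals $n(C)^{-1}$ when $C$ lies in the component of $A$ and $0$ otherwise, so the sum over the $n(A)$ simple objects of that component is $1$. The Proposition for $A$ simple then follows at once, since Proposition~\ref{prop:factoringthroughsimple} gives $\dim(f\xo g)=\dim(f)\dim(g)/\dim(A)$, whence the left side of the statement is $\bigl(\dim(f)/\dim(A)\bigr)\sum_{C,g}\dim(g)^2\,\dmo(C)^{-1}=\dim(f)$.

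For general $A$, I would fix a decomposition $A\equiv\bigboxplus_i A_i$ into simple objects with inclusions $\iota_i$ and projections $\rho_i$, and use that every simple $g\colon C\to A$ is isomorphic to $\iota_i\xo g'$ for a unique index $i$ and a simple $g'\colon C\to A_i$, determined up to isomorphism, giving a bijection on isomorphism classes of simple $1$-morphisms. Applying Proposition~\ref{prop:factoringthroughsimple} twice with the simple middle object $A_i$ yields $\dim(g)\dim(f\xo g)=\bigl(\dim(g')^2/\dim(A_i)^2\bigr)\dim(\iota_i)\dim(f\xo\iota_i)$; summing over $C$ and $g'$ for fixed $i$ and invoking the auxiliary identity for $A_i$ collapses the $i$-th contribution to $\dim(\iota_i)\dim(f\xo\iota_i)/\dim(A_i)$. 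To finish I would show $\dim(\iota_i)=\dim(\rho_i)$, using $\iota_i^*\iso\rho_i$, the planar pivotal identity $\tr_R(\It_{\iota_i^*})=\tr_L(\It_{\iota_i})$, and the source- and target-simple instances of Proposition~\ref{prop:factoringthroughsimple}; then $\sum_i\dim(\iota_i)\dim(f\xo\iota_i)/\dim(A_i)=\sum_i\langle\tr_R(\It_{\rho_i})\rangle\dim(f\xo\iota_i)=\sum_i\dim(f\xo\iota_i\xo\rho_i)=\dim\bigl(f\xo\bigoplus_i\iota_i\xo\rho_i\bigr)=\dim(f\xo\Io_A)=\dim(f)$, using that $\dim$ is additive on direct sums of $1$-morphisms (as in the proof of Lemma~\ref{prop:dimensionhom}).

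The main obstacle is the auxiliary identity: pinning down $\dim(g)^2=\|g\|\dim(A)\dim(C)$ correctly — in particular matching the two factors of the squared norm with the left and right planar traces of $\It_g$ — and then reconciling the normalization factor $\dmo(C)^{-1}$ with $\dim(\Hom_{\tc{C}}(C,A))$ through Proposition~\ref{prop:sameblock} so that precisely one factor of $\dim(A)$ survives. Once that identity and the equality $\dim(\iota_i)=\dim(\rho_i)$ are established, the general-$A$ reduction is essentially formal bookkeeping with Proposition~\ref{prop:factoringthroughsimple}.
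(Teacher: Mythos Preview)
Your proposal is correct and follows essentially the same route as the paper. The only organisational difference is that for simple $A$ you first isolate the auxiliary identity $\sum_{C,g}\dim(g)^2/\dmo(C)=\dim(A)$ (which the paper records afterwards as Corollary~\ref{cor:formuladimobject}) and then multiply through by $\dim(f)/\dim(A)$, whereas the paper computes $\dim(g)\dim(f\xo g)=\langle\tr_L(\It_g)\rangle\langle\tr_R(\It_g)\rangle\dim(C)\dim(f)$ directly; the general-$A$ reduction via the bijection $g\leftrightarrow(i,g')$ and the identity $\langle\tr_L(\It_{\iota_i})\rangle=\langle\tr_R(\It_{\rho_i})\rangle$ is identical in both.
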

\begin{proof}
First, suppose that $A$ is simple and fix a simple $C$. If $A$ and $C$ are in different components, then $\sum_{C\to[g]A} \dim(g)\dim(f\xo g)=0$. If $A$ and $C$ are in the same component, then
\[
\begin{split}
\sum_{C\to[g] A} \dim(g)&\dim(f\xo g)=\!\!\sum_{C\to[g]A} \langle \tr_L(\It_g)\rangle \langle \tr_R(\It_g) \rangle~~\dim(C) \dim(f)\\&
=\dim\left(\Hom_{\tc{C}}(C,A)\right) \dim(C) \dim(f)\hspace{0.6cm}
\superequals{Prop.~\ref{prop:sameblock}}\hspace{0.6cm} \dim\left(\End_{\tc{C}}(C)\right)\dim(C) \dim(f).
\end{split}
\]
Summing over simples $C$ gives the desired formula.

Now suppose that $A$ is an arbitrary object and again fix a simple $C$. Let $\{\iota_i:A_i \leftrightarrows A: \rho_i\}_{i \in I}$ be a direct sum decomposition of $A$ into simple objects $\{A_i\}_{i\in I}$. Observe that if $g:C\to A$ is a simple $1$-morphism, then there is a unique $\alpha(g)\in I$ such that $\rho_{\alpha(g)} \xo g$ is nonzero --- otherwise $g\iso \bigoplus_{i \in I}\iota_i \xo \rho_i \xo g$ would be a non-trivial direct sum decomposition of $g$. Similarly $\rho_{\alpha(g)}\xo g:C\to A_{\alpha(g)}$ itself must be a simple $1$-morphism. Observe that the functions 
\begin{calign}\nonumber 
\{\text{iso classes of simples $C\to A$}\} &\leftrightarrow &\sqcup_{i \in I}\{\text{iso classes of simples $C\to A_i$}\}
\\\nonumber
g &\mapsto& \rho_{\alpha(g)} \xo g: C \to A_{\alpha(g)}
\\\nonumber
\iota_i \xo h&\mapsfrom& h:C \to A_i
\end{calign}
are inverse to each other. It therefore follows that
\[\begin{split}
\sum_{C\to[g] A} \dim(g) \dim(f\xo g) &= \sum_{i\in I} \sum_{C\to[h] A_i} \dim(\iota_i \xo h) \dim(f\xo \iota_i \xo h)\\& \superequals{Prop~\ref{prop:factoringthroughsimple}}\hspace{0.5cm}\sum_{i\in I} \sum_{C\to[h] A_i}\langle\tr_L(\It_{\iota_i})\rangle\dim(h)\dim(f\xo \iota_i \xo h).
\end{split}
\]

For a component $x\in \pi_0\tc{C}$, define the subset $I_x:=\{i \in I~|~ A_i\in x\}$. For a simple object $C$, let $[C]\in \pi_0\tc{C}$ denote the component of $C$. It follows from the previous calculation for simple $A$ that
\[\sum_{i \in I}\sum_{C\to[h] A_i}\langle\tr_L(\It_{\iota_i})\rangle \dim(h)\dim(f\xo \iota_i \xo h) = \sum_{i\in I_{[C]}} \langle \tr_L(\It_{\iota_i}) \rangle \dim\left(\End_{\tc{C}}(C)\right) \dim(C) \dim(f\xo \iota_i).
\]
By Proposition~\ref{prop:coherencedirectsums}, $\iota_i$ is adjoint to $\rho_i$ and hence $\tr_L(\iota_i) = \tr_R(\rho_i)$. Therefore, 
\[\langle \tr_L(\It_{\iota_i}) \rangle \dim(f\xo \iota_i) = \dim(f\xo \iota_i \xo \rho_i).
\]
We conclude that 
\[\sum_{C\to[g] A} \dim(g) \dim(f\xo g) = \dim\left(\End_{\tc{C}}(C)\right) \dim(C) \sum_{i \in I_{[C]}} \dim(f\xo \iota_i \xo \rho_i). 
\]
It follows that 
\[\begin{split}
\sum_{C, C\to[g]A} &\frac{\dim(g) \dim(f\xo g)}{\dim(C) \dim\left(\End_{\tc{C}}(C)\right)n(C)} = \sum_{C}\frac{1}{n(C)} \sum_{i \in I_{[C]}} \dim(f\xo\iota_i \xo \rho_i) \\&= \sum_{x\in \pi_0\tc{C}} \sum_{i \in I_x} \dim(f\xo\iota_i \xo \rho_i) = \sum_{i \in I} \dim(f\xo\iota_i \xo \rho_i) = \dim(f).
\end{split}
\]
In the last step, we have used that for $1$-morphisms $h$ and $k$, it holds that $\dim(h\oplus k) = \dim(h) + \dim(k)$. 
\end{proof}

\begin{appcorollary}[Dimension of an object from incoming morphisms] \label{cor:formuladimobject} 
Let $A$ be an object in a spherical prefusion $2$-category $\tc{C}$. Then
\begin{equation}\nonumber \sum_{B, B\to[f] A} \frac{\dim(f)^2}{\dmo(B)} = \dim(A),
\end{equation}
where the sum is over representatives of the simple objects $B$ and simple $1$-morphisms $f:B \to A$. 
\end{appcorollary}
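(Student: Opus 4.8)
The plan is to derive Corollary~\ref{cor:formuladimobject} directly from Proposition~\ref{prop:maindimensionformula} by specializing to the identity $1$-morphism. Indeed, apply Proposition~\ref{prop:maindimensionformula} with $f := \Io_A : A \to A$. The proposition gives
\[
\sum_{B, B\to[g]A} \frac{\dim(g)\,\dim(\Io_A \xo g)}{\dmo(B)} = \dim(\Io_A),
\]
where the sum ranges over representatives of simple objects $B$ and simple $1$-morphisms $g: B \to A$.

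Next I would simplify both sides. On the right, by Definition~\ref{def:quantumdimension} we have $\dim(\Io_A) = \dim(A)$. On the left, for each simple $1$-morphism $g: B \to A$ we have $\Io_A \xo g = g$ (the identity $1$-morphism acts trivially under composition), so $\dim(\Io_A \xo g) = \dim(g)$, and hence $\dim(g)\,\dim(\Io_A \xo g) = \dim(g)^2$. Substituting these into the displayed equation yields
\[
\sum_{B, B\to[f]A} \frac{\dim(f)^2}{\dmo(B)} = \dim(A),
\]
which is exactly the claimed formula (after the harmless renaming of the summation variable $g$ to $f$).

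There is essentially no obstacle here: the corollary is an immediate specialization, and the only thing to check is the trivial identity $\Io_A \xo g = g$, which holds in any (strict) $2$-category. The real content resides in Proposition~\ref{prop:maindimensionformula}, which in turn rests on Proposition~\ref{prop:factoringthroughsimple}, Proposition~\ref{prop:sameblock}, Proposition~\ref{prop:coherencedirectsums}, and the additivity and sphericality properties of the $2$-spherical trace; all of these are available by the point this corollary is stated. So the proof is a one-line deduction, and I would present it as such.
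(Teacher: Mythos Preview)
Your proposal is correct and takes exactly the same approach as the paper: the paper's proof is the single sentence ``This follows from Proposition~\ref{prop:maindimensionformula} for $f=\Io_A$,'' and your argument is precisely the unpacking of that specialization.
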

\begin{proof}
This follows from Proposition~\ref{prop:maindimensionformula} for $f=\Io_A$. 
\end{proof}

\begin{appcorollary}[Dimension of a prefusion 2-category from its fusion rule morphisms] \label{cor:formulaglobaldim}
Let $A$ be a simple object in a spherical prefusion $2$-category $\tc{C}$. Then 
\[
\sum_{B,C, f:B\xz C \to A } \frac{\dim(f)^2}{\dim(A)\dmo(B)\dmo(C) } = \dim(\tc{C}),
\]
where the sum is over representatives of the simple objects $B$ and $C$ and simple $1$-morphisms $f$. 
\end{appcorollary}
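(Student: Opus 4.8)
\textbf{Proof proposal for Corollary~\ref{cor:formulaglobaldim}.}
The plan is to reduce the triple-sum over $B$, $C$, and $f\colon B\xz C\to A$ to the already-established Corollary~\ref{cor:formuladimobject}, by first fixing $B$ and summing over $C$ and $f$, using that for a \emph{fixed} simple object $B$, a simple $1$-morphism $B\xz C\to A$ is the same data as a simple $1$-morphism $C\to A'$ for an appropriate object $A'$ depending on $B$ and $A$. Concretely, since $B$ has a right dual $B^\#$, adjunction gives an equivalence $\Hom_{\tc{C}}(B\xz C, A)\simeq \Hom_{\tc{C}}(C, B^\#\xz A)$, which matches up simple $1$-morphisms on both sides (this is Proposition~\ref{prop:factoringthroughsimple}-style reasoning, or just the duality between $B$ and $B^\#$). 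Under this identification, the dimension $\dim(f)$ for $f\colon B\xz C\to A$ is related to the dimension of the corresponding $g\colon C\to B^\#\xz A$: I expect $\dim(f) = \dim(g)$ by pivotality and the compatibility of folds with tensor (Definition~\ref{def:pivotal}-C3), possibly up to a controlled scalar which will turn out to be $1$ because $B$ is simple and $\Io_B$ has a spherical structure.

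With that identification in hand, the inner sum $\sum_{C, f\colon B\xz C\to A}\dim(f)^2/\dmo(C)$ becomes $\sum_{C, g\colon C\to B^\#\xz A}\dim(g)^2/\dmo(C)$, which by Corollary~\ref{cor:formuladimobject} equals $\dim(B^\#\xz A)$. So I would next need the relation $\dim(B^\#\xz A) = \dim(B^\#)\dim(A)\cdot(\text{something})$—more precisely, because $A$ is \emph{simple} and $B^\#$ need not be, I would decompose $B^\#\xz A$ into simples and use additivity of dimension, or better, apply Proposition~\ref{prop:factoringthroughsimple} in reverse: $\dim(B^\#\xz A)$ should be computable from $\dim(B^\#) = \dim(B)$ (sphericality) and $\dim(A)$. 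The cleanest route is probably to observe that the fold $1$-morphisms realize $B^\#\xz A$ appropriately and that $\dim(e_B)$, $\dim(i_B)$-type traces contribute the factor $\dim(B)$. After substituting, the outer sum becomes $\sum_B \dim(B^\#\xz A)/(\dim(A)\dmo(B))$; if $\dim(B^\#\xz A) = \dim(B)\dim(A)$ modulo the subtlety above, this collapses to $\sum_B \dim(B)/\dmo(B) = \sum_B 1/(\dim(\End_{\tc{C}}(B)) n(B))$. Grouping the simple objects $B$ by component, each component $[x]$ with $n(x)$ simples contributes $n(x)\cdot 1/(\dim(\End_{\tc{C}}(x)) n(x)) = 1/\dim(\End_{\tc{C}}(x))$, using Proposition~\ref{prop:sameblock} that $\dim(\End_{\tc{C}}(B))$ is constant on a component; summing over components gives exactly $\dim(\tc{C})$ by Definition~\ref{def:dimension2cat}.

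The main obstacle will be pinning down the precise dimension relation for $B^\#\xz A$ and for the $1$-morphism $f$ versus $g$ under the duality adjunction—that is, making sure no stray scalar factor (a ratio of traces of folds, or a $\dim(B)$ versus $\dim(B^\#)$ discrepancy) sneaks in. This is where sphericality and the pivotal compatibility conditions C2--C8 of Definition~\ref{def:pivotal} are genuinely needed: sphericality guarantees $\dim(B^\#) = \dim(B)$ (front versus back traces of $B$-labeled spheres agree), and the fold-compatibility axioms guarantee that the bending-around-a-dual operation is trace-preserving. I would handle this by drawing the relevant closed-surface diagrams: $\dim(B^\#\xz A)$ is a sphere labeled by $B^\#\xz A$, which by the graphical calculus (and C3, C4) decomposes as a $\dim(B)$-weighted $A$-labeled sphere, and likewise $\dim(f)$ as an $A$-sphere with an $f$-loop transforms into a $(B^\#\xz A)$-sphere with a $g$-loop without scalar change. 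Everything else is bookkeeping: additivity of dimension over direct sums (Lemma~\ref{prop:dimensionhom} and the additivity of $\Tr$), Corollary~\ref{cor:formuladimobject}, Proposition~\ref{prop:sameblock}, and Definition~\ref{def:dimension2cat}.
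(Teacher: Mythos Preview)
Your proposal is correct and follows essentially the same route as the paper: dualize one tensor factor to rewrite $f\colon B\xz C\to A$ as a morphism into a tensor product, apply Corollary~\ref{cor:formuladimobject} to collapse the inner double sum to a single object dimension, use multiplicativity of $\dim$ and $\dim(X^\#)=\dim(X)$, and then group by components via Proposition~\ref{prop:sameblock}. The only cosmetic difference is that the paper dualizes $C$ (rewriting $f$ as $B\to A\xz C^\#$ and summing over $B$ first) whereas you dualize $B$; the two are symmetric. The ``stray scalar'' you worry about does not appear---the paper simply states that dimension is multiplicative on objects and invariant under $(-)^\#$, and that $\dim(f)$ is preserved under bending, all of which follow directly from sphericality and the pivotal axioms as you anticipated.
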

\begin{proof} 
By sphericality and Corollary~\ref{cor:formuladimobject}, the left-hand side of this equation can be expressed as follows:
\[\sum_{B,C, B\to[f] A\xz C^{\#}} \frac{\dim(f)^2}{\dim(A) \dmo(B) \dmo(C)}
= 
\sum_{C} \frac{1}{\dim(A)\dmo(C)} \dim(A\xz C^\#)
=
\sum_C \frac{\dim(C)}{\dmo(C)}
\]
Note that in the last equality we have used that the dimension of objects is multiplicative, that is $\dim(A \xz B) = \dim(A) \dim(B)$, and that the dimension of an object and its dual agree, that is $\dim(A^\#) = \dim(A)$.  We furthermore have 
\[  \sum_C \frac{\dim(C)}{\dmo(C)} = \sum_C \frac{1}{\dim(\End_{\tc{C}}(C)) n(C)} =\sum_{[x]\in \pi_0\tc{C}}\frac{1}{\dim(\End_{\tc{C}}(x))} = \dim(\tc{C}). \qedhere 
\]
\end{proof}

\begin{appcorollary}[Dimension of a 1-morphism from its factorizations] \label{cor:formulaHom} 
Let $f:A\to B$ be a $1$-morphism in a spherical prefusion $2$-category $\tc{C}$. Then
\[
\sum_{C, A\to[h]C, C\to[g]B} \dim\left(\Hom_{\tc{C}}(g\xo h, f) \right)\frac{\dim(g) \dim(h)}{\dmo(C)} = \dim(f),
\]
where the sum is over representatives of the simple objects $C$ and simple $1$-morphisms $g$ and $h$.
\end{appcorollary}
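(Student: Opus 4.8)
The plan is to reduce the claimed formula to the dimension formula for $1$-morphisms from their precompositions, Proposition~\ref{prop:maindimensionformula}, by first summing over the simple $1$-morphisms $g:C\to B$ and keeping $C$ and $h:A\to C$ fixed. Concretely, fix a simple object $C$ and a simple $1$-morphism $h:A\to C$, and consider the inner sum $\sum_{C\to[g] B} \dim\left(\Hom_{\tc{C}}(g\xo h, f)\right)\dim(g)$. The key observation is that $\sum_{C\to[g]B} \dim\left(\Hom_{\tc{C}}(g\xo h, f)\right)\dim(g)$ should equal $\dim(f\xo h^*)$ — more precisely, since $h$ is a simple $1$-morphism between simple objects (here $A$ and $C$; note $A$ need not be simple, so one first decomposes $A$ into simples exactly as in the proof of Proposition~\ref{prop:maindimensionformula}), the space $\Hom_{\tc{C}}(g\xo h, f)$ is, by adjunction, isomorphic to $\Hom_{\tc{C}}(g, f\xo h^*)$. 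Thus $\sum_{C\to[g]B}\dim\left(\Hom_{\tc{C}}(g, f\xo h^*)\right)\dim(g) = \dim(f\xo h^*)$ by Lemma~\ref{prop:dimensionhom} (dimension of $1$-morphisms is additive), applied to the $1$-morphism $f\xo h^*: C\to B$.

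With that inner sum evaluated, the claimed identity becomes
\[
\sum_{C, A\to[h] C} \frac{\dim(h)\,\dim(f\xo h^*)}{\dmo(C)} = \dim(f).
\]
Now I would like to recognize the left-hand side as an instance of Proposition~\ref{prop:maindimensionformula}. In that proposition the sum is over simple objects $C$ and simple $1$-morphisms $g:C\to A$, with summand $\dim(g)\dim(f\xo g)/\dmo(C)$. Here instead the summand involves $h:A\to C$ and $h^*:C\to A$. Using planar pivotality, $h\mapsto h^*$ is a bijection between isomorphism classes of simple $1$-morphisms $A\to C$ and simple $1$-morphisms $C\to A$ (simplicity is preserved since taking mates gives $\End_{\tc{C}}(h)\iso\End_{\tc{C}}(h^*)$), and $\dim(h)=\dim(h^*)$ by planar pivotality (Proposition~\ref{prop:leftrighttrace}) together with the definition of $\dim$. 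Reindexing the sum along $g:=h^*$ then gives exactly $\sum_{C, C\to[g] A} \dim(g)\dim(f\xo g)/\dmo(C)$, which is $\dim(f)$ by Proposition~\ref{prop:maindimensionformula}. This completes the argument.

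I expect the main obstacle to be the bookkeeping around the non-simplicity of $A$: Proposition~\ref{prop:maindimensionformula} is stated for general $A$, so this should be clean, but I need to double-check that the adjunction isomorphism $\Hom_{\tc{C}}(g\xo h, f)\iso\Hom_{\tc{C}}(g, f\xo h^*)$ genuinely respects the direct sum decomposition of $f\xo h^*$ into simples in the way Lemma~\ref{prop:dimensionhom} requires — i.e.\ that $\dim\left(\Hom_{\tc{C}}(g\xo h, f)\right)$ really counts, with the right multiplicity, the copies of $g$ in $f\xo h^*$. This is a standard adjunction/Schur argument (the Hom-space between simple $1$-morphisms $g$ and a given simple summand is one-dimensional, and $\Hom(g\xo h, f)$ splits as a direct sum over the simple summands of $f$, matching the summands of $f\xo h^*$ isomorphic to $g$), but it is the one place where I would be careful to get the identification exactly right rather than off by a dimension factor. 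A secondary, purely cosmetic point is making sure the reindexing $g\leftrightarrow h^*$ and the use of $\dim(h^*)=\dim(h)$ are invoked in the correct order so that the $\dmo(C)$ denominators line up without extra factors of $\dim(C)$.
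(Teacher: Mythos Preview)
Your proposal is correct and structurally identical to the paper's proof: both use adjunction to rewrite $\Hom_{\tc{C}}(g\xo h,f)$, apply Lemma~\ref{prop:dimensionhom} to collapse one inner sum, and finish with Proposition~\ref{prop:maindimensionformula}. The only difference is a mirror symmetry: the paper adjoints $g$ (obtaining $\Hom_{\tc{C}}(h,g^*\xo f)$, summing over $h$ first, and applying Proposition~\ref{prop:maindimensionformula} to $f^*$), whereas you adjoint $h$ (summing over $g$ first and applying Proposition~\ref{prop:maindimensionformula} to $f$ after the reindex $g=h^*$). Your concern about the non-simplicity of $A$ is unfounded: Lemma~\ref{prop:dimensionhom} is pure additivity of $\dim$ over a direct sum decomposition into simples and requires no simplicity hypothesis on the source or target objects, so no preliminary decomposition of $A$ is needed.
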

\begin{proof}
By pivotality, $\Hom_{\tc{C}}(g\xo h, f) \iso \Hom_{\tc{C}}(h, g^*\xo f)$. Lemma~\ref{prop:dimensionhom} implies that, for fixed simple $C$, 
\[\sum_{A\to[h]C} \dim\left( \Hom_{\tc{C}}(h, g^*\xo f)\right)~\dim(h) = \dim(g^*\xo f) = \dim(f^*\xo g).
\]
The left-hand side of the desired formula becomes
\[\sum_{C, C\to[g] B} \frac{\dim(g) \dim(f^*\xo g)}{\dmo(C)} \hspace{0.5cm}\superequals{Prop~\ref{prop:maindimensionformula}} \hspace{0.5cm} \dim(f^*) ~= ~\dim(f).\qedhere
\]
\end{proof}

\begin{appcorollary}[Total factorization of the identity on a 1-morphism] \label{cor:formulabasis}
Let $A\to[f] B$ be a $1$-morphism in a spherical prefusion $2$-category $\tc{C}$. Then
\[
\sum_{C,C\to[g]B, A\to[h] C} \frac{\dim(g) \dim(h)}{\dmo(C)} \sum_{\gamma \in \Hom_{\tc{C}}(g\xo h ,f)} \gamma \xt \widehat{\gamma}= \It_f
\]
where the left sum is over representatives of the simple objects $C$ and simple $1$-morphisms $g$ and $h$, the right sum is over a basis $\{\gamma\}$ of the vector space $\Hom_{\tc{C}}(g\xo h, f)$, and $\{\widehat{\gamma}\}$ denotes the dual basis of $\Hom_{\tc{C}}(f, g\xo h)$ with respect to the pairing $\langle\cdot, \cdot \rangle$ (see Definition~\ref{def:pairing}).
\end{appcorollary}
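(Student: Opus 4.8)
<br>

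The plan is to establish Corollary~\ref{cor:formulabasis} by combining the scalar identity of Corollary~\ref{cor:formulaHom} with the nondegeneracy of the trace pairing on $\Hom$-spaces from Proposition~\ref{prop:pairing}, reducing everything to a statement about a single $\Hom$-space summand. First I would fix representatives $C$ of simple objects and $g : C \to B$, $h : A \to C$ of simple $1$-morphisms, and consider the linear map $P := \sum_{C,g,h} \frac{\dim(g)\dim(h)}{\dmo(C)} \sum_{\gamma} \gamma \xt \widehat{\gamma} : f \To f$. Since $f$ is an object of the finite semisimple category $\Hom_{\tc{C}}(A,B)$, it suffices to check that $P$ acts as the identity on $\End_{\tc{C}}(f)$, and since every such endomorphism is a linear combination of $\iota_k \xt p_k$ for a simple decomposition $f \iso \bigoplus_k f_k$, it in fact suffices to check the identity after tracing: $\langle P, \beta \rangle = \langle \It_f, \beta \rangle = \Tr(\beta)$ for all $\beta : f \To f$, which by Proposition~\ref{prop:pairing} forces $P = \It_f$.

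The key step is then the computation of $\langle P, \beta \rangle$. Expanding, $\langle P, \beta \rangle = \sum_{C,g,h} \frac{\dim(g)\dim(h)}{\dmo(C)} \sum_{\gamma} \Tr(\gamma \xt \widehat{\gamma} \xt \beta)$. For fixed $C, g, h$, the inner sum $\sum_{\gamma} \Tr(\gamma \xt \widehat{\gamma} \xt \beta)$ is, by the definition of the dual basis with respect to the pairing $\langle \alpha, \delta \rangle = \Tr(\alpha \xt \delta)$, precisely $\dim\!\left(\Hom_{\tc{C}}(g \xo h, f)\right) \cdot \langle \text{something} \rangle$—more carefully, one uses cyclicity of the trace to write $\Tr(\gamma \xt \widehat{\gamma} \xt \beta) = \Tr(\widehat{\gamma} \xt \beta \xt \gamma)$, notes that $\beta \xt \gamma$ ranges over $\Hom_{\tc{C}}(g\xo h, f)$ as $\gamma$ does when $\beta = \It_f$, and in general the sum $\sum_\gamma \widehat{\gamma} \xt (\beta \xt \gamma)$ is the trace (in the vector-space sense) of the linear operator $\alpha \mapsto \beta \xt \alpha$ on $\Hom_{\tc{C}}(g\xo h,f)$. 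Hmm—to avoid circularity here, the cleanest route is: $\sum_\gamma \Tr(\gamma \xt \widehat\gamma \xt \beta)$ equals the categorical trace of $\beta$ computed through the summand $g \xo h$, which collapses when one observes that $\sum_\gamma \gamma \xt \widehat\gamma$ is itself the projection-summed-over onto $\Hom(g\xo h,-)$-isotypic parts. I would instead organize it so that setting $\beta = \It_f$ gives directly $\langle P, \It_f\rangle = \sum_{C,g,h} \frac{\dim(g)\dim(h)}{\dmo(C)}\dim\!\left(\Hom_{\tc{C}}(g\xo h,f)\right)$, which is exactly $\dim(f)$ by Corollary~\ref{cor:formulaHom}, and $\langle \It_f, \It_f\rangle = \Tr(\It_f) = \dim(f)$ as well; then one argues the same summand-wise comparison for general $\beta$ using bilinearity and the fact that $\widehat{\gamma}$ is defined precisely to make $\sum_\gamma \langle \gamma, \widehat\gamma \rangle$-style contractions reproduce traces.

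The main obstacle I anticipate is making the bilinear-algebra bookkeeping rigorous for general $\beta$ rather than just $\beta = \It_f$: one must show that for each simple $C$ and simple $g,h$, the contribution $\frac{\dim(g)\dim(h)}{\dmo(C)}\sum_\gamma \Tr(\gamma \xt \widehat\gamma \xt \beta)$ equals $\frac{\dim(g)\dim(h)}{\dmo(C)}\dim\!\left(\Hom_{\tc{C}}(g\xo h,f)\right)\cdot(\text{a universal constant depending only on }\beta)$—and the universal constant must be $\Tr(\beta)/\dim(f)$. The slickest way to see this is probably to observe that $\sum_\gamma \gamma \xt \widehat\gamma \in \End_{\tc{C}}(f)$ is, up to scalar, the component of $\It_f$ supported on the $(g\xo h)$-isotypic part, so $\Tr\!\left((\sum_\gamma \gamma\xt\widehat\gamma) \xt \beta\right)$ is the trace of $\beta$ restricted to that isotypic part; summing over all $C,g,h$ with the correct normalization recovers all of $\It_f$ by Corollary~\ref{cor:formulaHom}, hence $\langle P,\beta\rangle = \Tr(\beta)$. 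Once this is in place, nondegeneracy of $\langle\cdot,\cdot\rangle$ on $\End_{\tc{C}}(f)$ (Proposition~\ref{prop:pairing}) yields $P = \It_f$, completing the proof.
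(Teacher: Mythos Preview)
Your overall strategy---define $P$ to be the left-hand side, show $\langle P, \beta\rangle = \Tr(\beta)$ for all $\beta \in \End_{\tc{C}}(f)$, and invoke nondegeneracy (Proposition~\ref{prop:pairing})---is sound, and the case $\beta = \It_f$ goes through exactly as you say via Corollary~\ref{cor:formulaHom}. The gap is in your treatment of general $\beta$. The claim that for fixed $C,g,h$ the inner sum $\sum_\gamma \Tr(\gamma \xt \widehat\gamma \xt \beta)$ equals $\dim\!\bigl(\Hom_{\tc{C}}(g\xo h, f)\bigr)$ times a constant depending only on $\beta$ is \emph{false}. By cyclicity of the trace, this inner sum is the ordinary vector-space trace of the operator $\beta\xt(-)$ on $\Hom_{\tc{C}}(g\xo h, f)$; if $f \cong \bigoplus_i s_i^{\oplus n_i}$ with $\beta$ acting as $\beta_i$ on the $s_i$-isotypic piece, that vector-space trace is $\sum_i \tr(\beta_i)\,\dim\!\bigl(\Hom_{\tc{C}}(g\xo h, s_i)\bigr)$, which is not in general a fixed scalar multiple of $\dim\!\bigl(\Hom_{\tc{C}}(g\xo h, f)\bigr) = \sum_i n_i \dim\!\bigl(\Hom_{\tc{C}}(g\xo h, s_i)\bigr)$. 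For the same reason, the interpretation of $\sum_\gamma \gamma\xt\widehat\gamma$ as ``the projection onto the $(g\xo h)$-isotypic part of $\It_f$'' is not well-posed, since $g\xo h$ is typically not simple.

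The fix is precisely the decomposition you mention in your first paragraph and then drop: by linearity it suffices to test against $\beta = \iota \xt p$ where $p: f \To s$, $\iota: s \To f$ pick out a simple summand with $p\xt\iota = \It_s$. Then $\Tr(\beta) = \dim(s)$, the vector-space trace of $\beta\xt(-)$ on $\Hom_{\tc{C}}(g\xo h, f)$ equals $\dim\!\bigl(\Hom_{\tc{C}}(g\xo h, s)\bigr)$, and the required identity is Corollary~\ref{cor:formulaHom} applied to the simple $s$ rather than to $f$. With this repair your argument is correct; it is then a mild repackaging of the paper's own proof, which carries out the same reduction to simple summands explicitly by choosing factored bases $\gamma = \alpha^i_j \xt \beta^i_k$ through each simple $s_i$ and computing the dual basis by hand.
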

\begin{proof} First note that $\sum_{\gamma} \gamma\xt \widehat{\gamma}$ is independent of the choice of basis. Let $\{s_i\}_{i \in I}$ be a set of representative simple $1$-morphisms $s_i:A\to B$. For each $i\in I$, let $\{ \alpha^i_j\}_{j \in J_i}$ be a basis of $\Hom_{\tc{C}}(s_i, f)$ and let $\{\beta^i_k\}_{k \in K_i}$ be a basis of $\Hom_{\tc{C}}(g\xo h, s_i)$. By local semisimplicity, the set
\begin{equation}\nonumber\mathcal{B}:=\sqcup_{i \in I} \{\alpha^i_j\xt\beta^i_k \in \Hom_{\tc{C}}(g\xo h, f)~|~ j \in J_i, k \in K_i\}
\end{equation}
forms a basis of $\Hom_{\tc{C}}(g\xo h, f)$. 
Let $\{\widehat{\alpha}^i_j \in \Hom_{\tc{C}}(f, s_i)\}_{j\in J_i}$ and $\{\widehat{\beta}^i_k \in \Hom_{\tc{C}}(s_i, g\xo h)\}_{k \in K_i}$ be dual bases to $\{\alpha^i_j\}_j$ and $\{\beta^i_k\}_k$ under the usual non-degenerate `simple $1$-morphism pairings':
\begin{calign}\nonumber\Hom_{\tc{C}}(f,s_i) \otimes \Hom_{\tc{C}}(s_i,f)\to k & \Hom_{\tc{C}}(g\xo h, s_i) \otimes \Hom_{\tc{C}}(s_i, g\xo h)\to k  \\\nonumber
\mu\otimes \nu \mapsto \langle \mu\xt \nu\rangle & \mu\otimes \nu \mapsto \langle \mu\xt \nu\rangle
\end{calign}
Direct calculation shows that the set 
\[\sqcup_{i \in I}\bigg\{\frac{1}{\dim(s_i)}~\widehat{\beta}^i_k\xt\widehat{\alpha}^i_j\bigg\}_{j \in J_i, k \in K_i}
\]
is a dual basis of the basis~$\mathcal{B}$ with respect to the `sphere-pairing' $\Hom_{\tc{C}}(g\xo h, f) \otimes \Hom_{\tc{C}}(f, g\xo h) \to k$ from Definition~\ref{def:pairing}. With this choice of basis, the left-hand side of the desired equation becomes
\[ \sum_{C,C\to[g]B, A\to[h] C} \frac{\dim(g) \dim(h)}{\dmo(C)}\sum_{i \in I}\frac{1}{\dim(s_i)}\sum_{j \in J_i, k\in K_i} \alpha^i_j \xt \beta^i_k \xt\widehat{\beta}^i_k \xt\widehat{\alpha}^i_j.
\]
Observing that $\sum_{k \in K_i} \beta^i_k \xt \widehat{\beta}^i_k = \dim\left( \Hom_{\tc{C}}(s_i, g\xo h)\right)~ \It_{s_i}$, this expression becomes
\[\sum_{i \in I, j\in J_i}\frac{1}{\dim(s_i)} \alpha^i_j \xt \widehat{\alpha}^i_j~\Bigg(\sum_{C, C\to[g] B, A\to[h] C} \frac{\dim(g)\dim(h)}{\dmo(C)} \dim\left( \Hom_{\tc{C}}(s_i, g\xo h)\right)\Bigg).
\]
By Corollary~\ref{cor:formulaHom} and the fact that for $1$-morphism $k,h$ in a locally semisimple $2$-category $\Hom_{\tc{C}}(h,k) \iso \Hom_{\tc{C}}(k,h)$, the expression in parenthesis is $\dim(s_i)$. Hence, the expression in question reduces to
\[\sum_{i \in I, j \in J_i} \alpha^i_j \xt \widehat{\alpha}^i_j = \It_{f}.\qedhere
\]
\end{proof}

\newpage

\addtocontents{toc}{\protect\vspace{8pt}}

\addtocontents{toc}{\SkipTocEntry}
\bibliographystyle{initalphaSort}
\bibliography{ftc}

\end{document}